\newtheorem{thm}{Theorem}[section]
\newtheorem{lemma}[thm]{Lemma}
\newtheorem{proposition}[thm]{Proposition}
\newtheorem{definition}[thm]{Definition}
\newtheorem{corollary}[thm]{Corollary}
\newtheorem{notation}[thm]{Notation}
\newtheorem{beginthm}{Theorem}
\newtheorem{assumption}[thm]{Assumption}
\newcommand{\p}{\mathbb{P}}
\newcommand{\q}{\mathbb{Q}}
\newcommand{\cf}{\mathrm{cf}}
\newcommand{\cof}{\mathrm{cof}}
\newcommand{\dom}{\mathrm{dom}}
\newcommand{\cl}{\mathrm{cl}}
\begin{document}

\title{The approachability ideal without a maximal set}

\author{John Krueger}

\address{John Krueger \\ Department of Mathematics \\ 
University of North Texas \\
1155 Union Circle \#311430 \\
Denton, TX 76203}
\email{jkrueger@unt.edu}

\date{July 2016; revised October 2018}

\thanks{2010 \emph{Mathematics Subject Classification:} 
Primary 03E35, 03E40; Secondary 03E05.
}

\thanks{\emph{Key words and phrases.} Adequate set, 
approachability ideal, 
side conditions, partial square sequence.}

\thanks{This material is based upon work supported by the National 
Science Foundation under Grant No.\ DMS-1464859.}

\begin{abstract}
We develop a forcing poset with finite conditions 
which adds a partial square sequence on a given 
stationary set, with adequate sets of models as side conditions. 
We then develop a kind of side condition product forcing for simultaneously 
adding partial square sequences on multiple stationary sets. 
We show that certain quotients of such forcings have the 
$\omega_1$-approximation property. 
We apply these ideas to prove, assuming the consistency of a greatly Mahlo 
cardinal, that it is consistent that 
the approachability ideal $I[\omega_2]$ does not have a 
maximal set modulo clubs.
\end{abstract}

\maketitle

\tableofcontents

\newpage

\begin{center}
\textbf{Introduction}
\end{center}

\bigskip

Let $\kappa$ be an uncountable cardinal. 
For a given sequence $\vec a = \langle a_i : i < \kappa^+ \rangle$ 
of subsets of $\kappa^+$ with size less than $\kappa$, define $S_{\vec a}$ 
to be the set of limit ordinals $\beta < \kappa^+$ for which there exists 
a set $c \subseteq \beta$, which is cofinal in $\beta$, and has order type 
equal to $\cf(\beta)$, which is approximated by the sequence 
$\vec a \restriction \beta$ in the sense that for all $\gamma < \beta$, 
$c \cap \gamma \in \{ a_i : i < \beta \}$.

Define the approachability ideal 
$I[\kappa^+]$ as the collection of sets $S \subseteq \kappa^+$ such that 
for some sequence $\vec a$ as above, 
and for some club $C \subseteq \kappa^+$, 
$S \cap C \subseteq S_{\vec a}$. 
In other words, $I[\kappa^+]$ is the ideal on $\kappa^+$ which is 
generated over the nonstationary ideal on $\kappa^+$ 
by sets of the form $S_{\vec a}$. 
The collection $I[\kappa^+]$ is a normal ideal on $\kappa^+$.

The approachability ideal was introduced by Shelah in the 1970's (\cite{shelah1}), 
and since then it 
has played a role as an important tool in combinatorial set theory and forcing. 
A major result on the approachability ideal is that if 
$\kappa$ is a regular uncountable cardinal, then the 
set $\kappa^+ \cap \cof(<\! \kappa)$ 
is a member of $I[\kappa^+]$ (\cite[Section 4]{shelah2}).
Hence, when $\kappa$ is a regular uncountable cardinal, 
the structure of $I[\kappa^+]$ is completely determined by which 
stationary subsets of $\kappa^+ \cap \cof(\kappa)$ are in $I[\kappa^+]$.

Shelah \cite{shelah2} raised the question whether it is consistent that 
there are no stationary subsets of $\kappa^+ \cap \cof(\kappa)$ in 
$I[\kappa^+]$. 
This problem was solved by Mitchell \cite{mitchell}, who proved that 
it is consistent, relative to the consistency of a greatly Mahlo cardinal, 
that there is no stationary subset of $\omega_2 \cap \cof(\omega_1)$ 
in $I[\omega_2]$. 
In the process of solving this problem, Mitchell introduced a number of powerful 
new ideas in forcing, including strongly generic conditions, strong properness, 
and a method for using side conditions to add by forcing a club subset of 
$\omega_2$ with finite conditions (see Friedman \cite{friedman} for a similar 
method which was introduced independently).

Assuming that 
$(\kappa^+)^{< \kappa} = \kappa^+$, 
we can enumerate all subsets of $\kappa^+$ of size less than $\kappa$ 
in a single sequence $\vec b = \langle b_i : i < \kappa^+ \rangle$. 
It is not hard to show that if $\vec a = \langle a_i : i < \kappa^+ \rangle$ 
is any sequence of subsets of $\kappa^+$ of size less than $\kappa$, then 
there exists a club $C \subseteq \kappa^+$ such that 
$S_{\vec a} \cap C \subseteq S_{\vec b}$. 
It follows that $I[\kappa^+]$ is generated over the nonstationary 
ideal on $\kappa^+$ by the single set $S_{\vec b}$. 
Another way of describing this conclusion is that $I[\kappa^+]$ has a 
maximum set modulo clubs, which is easily seen to be equivalent to it having a 
\emph{maximal} set modulo clubs.

A natural question is whether the approachability ideal $I[\kappa^+]$ must 
\emph{always} have a maximal set modulo clubs, regardless of any 
cardinal arithmetic assumptions. 
That is, is it consistent that $I[\kappa^+]$ does not have a single generator 
over the nonstationary ideal. 
By the normality of $I[\kappa^+]$, 
this possibility is equivalent to having not fewer 
than $\kappa^{++}$ many generators. 
This question was first raised by Shelah in \cite{shelah2}, in the same place where 
he mentions the possibility of $I[\kappa^+]$ not containing any stationary subset 
of $\kappa^+ \cap \cof(\kappa)$. 
The problem also appears at the end of \cite{mitchell}, where Mitchell 
suggests that the methods introduced in his paper are likely to be useful 
for answering the question. 
In this paper we will solve this problem.

\begin{beginthm}
Assuming the consistency of a greatly Mahlo cardinal, it is consistent that 
$I[\omega_2]$ does not have a maximal set modulo clubs.
\end{beginthm}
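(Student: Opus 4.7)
The plan is to force over a ground model $V$ containing a greatly Mahlo cardinal $\kappa$ with a single poset $\p$ that simultaneously collapses $\kappa$ to $\omega_2$ (via a Mitchell--Friedman style preparation with finite conditions and adequate side-condition models) and adds partial square sequences on each member of a pairwise almost-disjoint family $\langle S_\alpha : \alpha < \kappa^{++} \rangle$ of stationary subsets of $\kappa \cap \cof(\omega_1)$ which I fix in $V$. The poset $\p$ is the side-condition product forcing advertised in the abstract, with the $\alpha$-th coordinate being the single-stationary-set partial square poset $\p_{S_\alpha}$ and with the finite sets of adequate models serving as a common system of side conditions that glues the coordinates together. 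By the paper's design one expects $\p$ to preserve $\omega_1$, to force $\kappa = \omega_2$, to have the $\kappa^{++}$-cc, and to preserve stationarity of every $S_\alpha$. The generic partial square sequence on $S_\alpha$ then witnesses $S_\alpha \in I[\omega_2]^{V[G]}$, so $I[\omega_2]^{V[G]}$ has at least $\kappa^{++}$ candidate generators modulo clubs.

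To show that no single set works as a maximal generator, fix any $\vec a = \langle a_i : i < \omega_2 \rangle \in V[G]$. By the $\kappa^{++}$-cc, $\vec a$ has a name of hereditary size $\kappa^+$, so $\vec a \in V[G \restriction A]$ for some $A \subseteq \kappa^{++}$ of size $\kappa^+$, where $G \restriction A$ denotes the portion of the generic living on the coordinates in $A$. Pick $\alpha \in \kappa^{++} \setminus A$ and suppose toward contradiction that $S_\alpha \setminus S_{\vec a}$ is nonstationary in $V[G]$. Then for club-many $\beta \in S_\alpha$ there is a cofinal $c_\beta \subseteq \beta$ of order type $\omega_1$ whose every proper initial segment lies in $\{a_i : i < \beta\} \subseteq V[G \restriction A]$. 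Applying the $\omega_1$-approximation property of the quotient $V[G \restriction A] \subseteq V[G]$ (the principal technical output of the paper's forcing analysis) to each such $c_\beta$ yields $c_\beta \in V[G \restriction A]$, and hence $S_\alpha \in I[\omega_2]^{V[G \restriction A]}$ as witnessed by $\vec a$.

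The contradiction should then reduce to an isolation statement, which I expect to be the main obstacle: in $V[G \restriction A]$ every stationary subset of $\omega_2 \cap \cof(\omega_1)$ belonging to $I[\omega_2]$ is, modulo clubs, covered by $\bigcup_{\beta \in A} S_\beta$. Since the $S_\alpha$'s are pairwise almost-disjoint and $\alpha \notin A$, this would directly contradict $S_\alpha \in I[\omega_2]^{V[G \restriction A]}$. Its proof should pass through a further quotient analysis that isolates, within $V[G \restriction A]$, any generator of $I[\omega_2]$ concentrated on $\cof(\omega_1)$ as arising essentially from the partial squares added on the coordinates in $A$, while the side-condition preparation alone contributes no new stationary approachable subset of $\cof(\omega_1)$ beyond what $V$ already provides, and by Mitchell's theorem together with the greatly Mahlo hypothesis $V$ itself provides none. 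The side-condition quotient machinery and the $\omega_1$-approximation property developed earlier in the paper are precisely the tools designed to execute this final step, and carrying it out in the multi-coordinate setting is where the bulk of the technical work should lie.
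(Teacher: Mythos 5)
There is a genuine gap, and it sits exactly where you predicted: the ``isolation statement.'' Your plan reduces everything to the claim that in $V[G\restriction A]$ every stationary subset of $\omega_2\cap\cof(\omega_1)$ lying in $I[\omega_2]$ is, modulo clubs, covered by $\bigcup_{\beta\in A}S_\beta$; nothing in the paper (or in your sketch) proves this, and it is essentially a Mitchell-type theorem for this specific forcing, i.e.\ the hardest part of the whole problem. The appeal to ``Mitchell's theorem together with the greatly Mahlo hypothesis'' does not work as a citation: Mitchell's result concerns a particular forcing extension, and the fact that the collapse/side-condition part of \emph{this} poset adds no new stationary approachable sets on $\cof(\omega_1)$ would have to be proven from scratch. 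Moreover, the quotient machinery actually developed in the paper does not apply to your intermediate model: the $\omega_1$-approximation theorem (Theorem 12.7) is proved only for quotients by $P\cap\q$ where $P\in\mathcal Y$ is a \emph{simple model of size less than} $\kappa$ with $P\cap\kappa\in\kappa$ and, crucially, $P\cap\kappa\notin S_i$ for all $i\in P$. A restriction $G\restriction A$ to a set of $\kappa^+$ (or $\kappa^{++}$) many coordinates is a different kind of suborder: because the adequate-set side conditions are shared across coordinates, it is not even established that the coordinate-restricted subposet is a regular suborder of $\q$, let alone that its quotient has the approximation property. Finally, by choosing the $S_\alpha$ inside $\cof(\omega_1)$ you discard the structure the paper's endgame needs (each $S_i\subseteq B_i\setminus B_{i+1}$, a set of strongly inaccessible cardinals from the greatly Mahlo hierarchy), so you could not fall back on the paper's contradiction even if you wanted to.

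For contrast, the paper avoids any global isolation lemma. Given a putative maximal set $S_{\vec a}$, the name $\vec a$ is captured (via $\kappa$-c.c.) in a model $X$ of size $\kappa$ with $X\cap\kappa^+=\tau$; since $S_{\tau+1}\in I[\omega_2]$ in $V[G]$, maximality gives a club $C$ with $S_{\tau+1}\cap C\subseteq S_{\vec a}$. One then picks a single point $\beta\in S_{\tau+1}\cap C$ which is strongly inaccessible in $V$, together with a simple $P\in\mathcal Y$ with $P\cap\kappa=\beta$, arranged (using the Mahlo hierarchy and the placement $S_i\subseteq B_i\setminus B_{i+1}$) so that $\beta\notin S_i$ for every $i\in P$; Theorem 12.7 then gives the approximation property of $\q/\dot G_{P\cap\q}$, Lemma 13.5 puts each $a_i$ ($i<\beta$) into $V[G\cap P]$, so the witnessing club $c$ for $\beta\in S_{\vec a}$ lands in $V[G\cap P]$ and singularizes $\beta$ there, contradicting that $P\cap\q$ is $\beta$-c.c. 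This local argument is what replaces your unproven covering statement, and it is exactly the step your proposal leaves open.
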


Let us give an overview of the structure of the paper 
and the ideas which will be used in the proof of the theorem.  
In Part 1, we present the material on side conditions which will be the 
foundation for everything else in the paper. 
We will use the adequate set framework of side conditions. 
This framework was first introduced in \cite{jk21}. 
In this paper, 
we will follow the presentation which was given later in \cite{jk27}.

In Part II, we develop a forcing poset with finite conditions 
for adding a partial square sequence to a 
given stationary set 
$S \subseteq \omega_2 \cap \cof(\omega_1)$. 
Recall that a \emph{partial square sequence on $S$} is a sequence 
$\langle c_\alpha : \alpha \in S \rangle$ satisfying that for each $\alpha \in S$, 
$c_\alpha$ is a club subset of $\alpha$ with order type equal to $\omega_1$, 
and whenever $\gamma$ is a common limit point of $c_\alpha$ and 
$c_\beta$, then $c_\alpha \cap \gamma = c_\beta \cap \gamma$. 

If there exists a partial square sequence on $S$, then 
$S$ is in $I[\omega_2]$. 
Namely, define a sequence 
$\vec a = \langle a_\gamma : \gamma < \omega_2 \rangle$ 
as follows. 
For a given ordinal $\gamma$, if there exists some $\alpha \in S$ 
strictly greater than $\gamma$ 
such that $\gamma$ is a limit point of $c_\alpha$, then let 
$a_\gamma := c_\alpha \cap \gamma$. 
Define $a_\xi$ for all other ordinals $\xi$ in such a way as to include any 
initial segment of any set of the form 
$a_\gamma$, where $\gamma$ is an ordinal of the first type. 
One can easily check that for some club $C \subseteq \omega_2$, 
$S \cap C \subseteq S_{\vec a}$. 
Therefore, $S \in I[\omega_2]$.

Forcing a square sequence with finite conditions was first achieved by 
Dolinar and Dzamonja \cite{mirna}, using Mitchell's style of models as 
side conditions \cite{mitchell}. 
Later, Krueger \cite{jk23} developed a forcing poset 
for adding a square sequence with 
finite conditions using the framework of coherent adequate sets. 
And Neeman \cite{neeman} 
defined a forcing poset for adding 
a square sequence using his framework of two-type side conditions.

The forcing poset we present in Part II for adding a 
partial square sequence 
is similar to the forcings of \cite{jk23} and 
\cite{neeman} for adding a square sequence. 
However, we will need to develop the properties of our forcing poset in much 
greater detail than was done in those papers, so that we can use it to prove 
the consistency result. 
In particular, in Sections 8 and 9 we will derive some very technical information 
in order to prove that certain quotients of the 
forcing poset satisfy the $\omega_1$-approximation property.

In Part III, we develop a forcing poset $\q$ 
which simultaneously adds a partial square 
sequence on multiple sets. 
This forcing poset is similar to a product forcing, since the 
different posets which are incorporated in the forcing are 
independent of each other, 
except for the presence of a shared side condition. 
We believe that it is likely that this kind of side condition product will have 
other applications in the future.

A crucial property of the product forcing $\q$ presented in 
Section 10 for proving our consistency result on $I[\omega_2]$ is that 
certain quotients of it satisfy the $\omega_1$-approximation property. 
More specifically, in Section 12 we will show that 
for certain uncountable models $P$, 
$P \cap \q$ is a regular suborder of $\q$, and the quotient forcing 
$\q / \dot G_{P \cap \q}$ has the $\omega_1$-approximation property 
in $V^{P \cap \q}$.

A similar result about certain quotients having the 
$\omega_1$-approximation property 
was used by Mitchell \cite{mitchell} in his proof of the consistency 
that $I[\omega_2]$ does not contain a stationary subset of 
$\omega_2 \cap \cof(\omega_1)$. 
This result followed from the equation 
$$
(p \land q) \restriction P = 
(p \restriction P) \land (q \restriction P),
$$
where $\land$ denotes greatest lower bound, 
which holds below a strongly $P$-generic condition which is tidy 
(see \cite[Definition 2.20, Lemma 2.22]{mitchell}). 
Unfortunately, our forcing poset $\q$ does not satisfy this equation. 
First, our forcing poset $\q$ does not even have greatest lower bounds. 
Secondly, even if the definition of $\q$ is adjusted so that $\q$ has 
greatest lower bounds, which is possible, the above equation still fails, 
even on any dense set.

Nonetheless, we are able to make use of some of the ideas in 
Mitchell's original argument for 
the $\omega_1$-approximation property \cite[Lemma 2.22]{mitchell}, 
by replacing the above equation with something weaker, and more complicated, 
namely, 
$$
(q \oplus^N p) \restriction P = 
(q \restriction P) \oplus^{N \cap P} (p \restriction P). 
$$
In this equation, $a \oplus^M b$ denotes the amalgam of a condition $b$ 
with a condition $a$ which is in the model $M$ and is below the projection 
$b \restriction M$ (see Proposition 12.6). 
We believe that this equation will be useful in future applications 
for verifying the approximation property, in cases 
where Mitchell's original tidy property fails.

Finally, in Section 13 we complete the proof of the consistency that $I[\omega_2]$ 
does not have a maximal set modulo clubs. 
Assuming that $\kappa$ is a greatly Mahlo cardinal, we get a sequence 
$\langle B_i : i < \kappa^+ \rangle$ of Mahlo sets. 
We use the forcing poset $\q$ from Part 3 to simultaneously 
add partial square sequences on $B_i \setminus B_{i+1}$, for each 
$i < \kappa^+$, while collapsing $\kappa$ to become $\omega_2$. 
This will place each such set in the approachability ideal $I[\omega_2]$. 
We make use of the approximation property of certain quotients of $\q$ to 
show that $I[\omega_2]$ does not have a maximal set.

\bigskip

I would like to thank Thomas Gilton for carefully proofreading several drafts 
of this paper and making many useful suggestions.

\bigskip

\part{Background}

\bigskip

\addcontentsline{toc}{section}{1. Preliminaries}

\textbf{\S 1. Preliminaries}

\stepcounter{section}

\bigskip

The prerequisites for reading this paper are a 
background of one year of graduate level study in set theory, 
a working knowledge of forcing, and some basic familiarity with 
proper forcing and generalized stationarity.

For a regular uncountable cardinal $\lambda$ 
and a set $X$ with $\lambda \subseteq X$, we let 
$P_\lambda(X)$ denote the set $\{ a \subseteq X : |a| < \lambda \}$. 
A set $S \subseteq P_\lambda(X)$ is stationary iff 
for any function $F : X^{<\omega} \to X$, 
there exists $b \in S$ such that $b \cap \lambda \in \lambda$ and 
$b$ is closed under $F$. 

\bigskip

In this paper, a \emph{forcing poset} is a pair $(\p,\le_\p)$, where 
$\p$ is a nonempty set and $\le_\p$ is a reflexive and transitive relation on $\p$. 
To simplify notation, we usually refer to $\p$ itself as a forcing poset, with 
the relation $\le_\p$ being implicit. 
If $\q$ is a forcing poset, we will write $\dot G_\q$ for the 
canonical $\q$-name for a generic filter on $\q$.

Let $\p$ and $\q$ be forcing posets. 
Then $\p$ is a \emph{suborder} of $\q$ if $\p \subseteq \q$ and 
$\le_\p \ = \ \le_\q \cap \ (\p \times \p)$. 
Let $\p$ be a suborder of $\q$. 
We say that $\p$ is a \emph{regular suborder} of $\q$ if:
\begin{enumerate}
\item whenever $p$ and $q$ are in $\p$ and are incompatible in $\p$, then 
$p$ and $q$ are incompatible in $\q$;
\item if $A$ is a maximal antichain of $\p$, then $A$ is predense in $\q$.
\end{enumerate}

\begin{lemma}
Suppose that $\p_0$ is a suborder of $\p_1$, and $\p_1$ is a suborder 
of $\q$. 
Assume, moreover, that $\p_0$ and $\p_1$ are both regular 
suborders of $\q$. 
Then $\p_0$ is a regular suborder of $\p_1$.
\end{lemma}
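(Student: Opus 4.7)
The plan is to verify the two clauses in the definition of regular suborder directly, using the hypothesis on each of $\p_0$ and $\p_1$ in turn. First I would observe the obvious transitivity of the suborder relation: since $\le_{\p_0} = \le_{\p_1} \cap (\p_0 \times \p_0)$ and $\le_{\p_1} = \le_\q \cap (\p_1 \times \p_1)$, we get $\le_{\p_0} = \le_\q \cap (\p_0 \times \p_0)$, so $\p_0$ is already a suborder of $\p_1$ in the required sense.

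For clause (1), suppose $p, q \in \p_0$ are incompatible in $\p_0$. Since $\p_0$ is a regular suborder of $\q$, $p$ and $q$ are incompatible in $\q$. If they were compatible in $\p_1$ via some common extension $r \in \p_1$, then $r \in \q$ would witness their compatibility in $\q$ (because $\p_1$ is a suborder of $\q$), a contradiction. Hence $p$ and $q$ are incompatible in $\p_1$.

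For clause (2), let $A$ be a maximal antichain of $\p_0$, and fix an arbitrary $r \in \p_1$. Because $\p_0$ is a regular suborder of $\q$, $A$ is predense in $\q$, so there exists $a \in A$ compatible with $r$ in $\q$; say $s \in \q$ is a common extension. Now both $a$ and $r$ lie in $\p_1$, and they are compatible in $\q$. Applying the contrapositive of clause (1) to the pair $\p_1 \subseteq \q$ (using that $\p_1$ is a regular suborder of $\q$), the compatibility of $a$ and $r$ in $\q$ forces their compatibility in $\p_1$. Thus some element of $A$ is compatible in $\p_1$ with every $r \in \p_1$, which shows $A$ is predense in $\p_1$.

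There is no real obstacle here; the only subtlety is realizing that clause (2) needs the regularity of the \emph{upper} inclusion $\p_1 \subseteq \q$ (in order to pull compatibility back from $\q$ to $\p_1$), whereas clause (1) needs the regularity of the \emph{lower} inclusion $\p_0 \subseteq \q$. I would take care to state the argument in that order to make the respective uses of the two hypotheses transparent.
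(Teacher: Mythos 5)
Your proof is correct, and since the paper dismisses this lemma as "straightforward," your argument is exactly the intended one: use predensity of $A$ in $\q$ (regularity of $\p_0\subseteq\q$) together with the pull-back of compatibility from $\q$ to $\p_1$ (clause (1) of regularity of $\p_1\subseteq\q$). No gaps.
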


\begin{proof}
Straightforward.
\end{proof}

Let $\p$ be a regular suborder of $\q$, and assume that $G$ is a generic 
filter on $\p$. 
In $V[G]$, define the forcing poset $\q / G$ to consist of conditions $q \in \q$ 
such that for all $s \in G$, $q$ and $s$ are compatible in $\q$, with the same 
ordering as $\q$. 
Then $\q$ is forcing equivalent to the two-step iteration 
$\p * (\q / \dot G_\p)$. 
Moreover:

\begin{lemma}
Let $\p$ be a regular suborder of $\q$.
\begin{enumerate}
\item Suppose that $H$ is a $V$-generic filter on $\q$. 
Then $H \cap \p$ is a $V$-generic filter on $\p$, and 
$H$ is a $V[H \cap \p]$-generic filter on $\q / (H \cap \p)$. 
\item Suppose that $G$ is a $V$-generic filter on $\p$ and $H$ is a 
$V[G]$-generic filter on $\q / G$. 
Then $H$ is a $V$-generic filter on $\q$, 
$G = H \cap \p$, and $V[G][H] = V[H]$.
\end{enumerate}
\end{lemma}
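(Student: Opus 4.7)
Both parts are standard facts about regular suborders; together they express that $\q$ is forcing-equivalent to the two-step iteration $\p * (\q / \dot G_\p)$. I outline my approach to each direction.

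For part (1), first show that $G := H \cap \p$ is a $V$-generic filter on $\p$. Upward closure follows from $\p$ being a suborder of $\q$. For downward directedness, given $p, q \in G$, they are compatible in $\q$ (both lie in $H$) hence in $\p$ by the contrapositive of clause~(1) in the definition of regular suborder; to locate a common extension inside $G$, I would form a maximal antichain $A$ of $\p$ contained in the dense set
$$
\{ r \in \p : r \le p \text{ and } r \le q \} \cup \{ r \in \p : r \perp_\p p \text{ or } r \perp_\p q \},
$$
apply clause~(2) to get $A$ predense in $\q$ and so met by $H$, and observe that the witness must come from the first part of the union (else clause~(1) lifts the $\p$-incompatibility to $\q$, contradicting that $H$ is a filter). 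Genericity of $G$ on $\p$ is immediate from clause~(2): any maximal antichain of $\p$ in $V$ is predense in $\q$ and hence met by $H$, so also by $G$.

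Next I would show that $H$ is $V[G]$-generic on $\q/G$. The inclusion $H \subseteq \q/G$ and the filter property of $H$ on $\q/G$ are both inherited from $H$ being a filter on $\q$ together with $G \subseteq H$. For genericity, suppose $\dot D$ is a $\p$-name for a dense subset of $\q / \dot G_\p$, and fix $p_0 \in G$ forcing this density. The strategy is to construct in $V$ a predense subset $E$ of $\q$ below $p_0$ derived from $\dot D$, for instance the set of those $q \in \q$ such that for some $p \le p_0$ in $\p$ one has $q \le p$ and $p \Vdash_\p \check q \in \dot D$. Predensity of $E$ below $p_0$ follows from the forced density of $\dot D$ together with the $\p$-forcing relation: any $q_0 \le p_0$ in $\q$ can be refined using a $\p$-condition deciding $\check q_0 \in \q / \dot G_\p$ and then, in the affirmative case, further extended to a $\p$-condition forcing a witness in $\dot D$ below $q_0$. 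Then $V$-genericity of $H$ on $\q$ supplies some $q \in H \cap E$, and its witnessing $\p$-condition must lie in $G$ (else by clause~(1) it would be $\q$-incompatible with some element of $G \subseteq H$), yielding $q \in H \cap D$.

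For part (2), the filter property of $H$ on $\q$ is inherited from $\q / G$, and $V[G][H] = V[H]$ holds since $G = H \cap \p \in V[H]$ and $H \in V[G][H]$. The identity $G = H \cap \p$ follows because $G \subseteq H \cap \p$ is immediate and, conversely, if $p \in H \cap \p \setminus G$, then extending $\{p\}$ to a maximal antichain of $\p$ and using genericity of $G$ produces some $q \in G$ with $q \perp_\p p$, so by clause~(1) also $q \perp_\q p$, contradicting $p, q \in H$. For $V$-genericity of $H$ on $\q$, given a maximal antichain $A \in V$ of $\q$, the plan is to define $A' := \{ a \in A : a \in \q / G \}$ in $V[G]$ and argue that $A'$ is predense in $\q / G$: for each $q \in \q / G$ one refines, via a $\p$-density argument using genericity of $G$, a $\q$-compatibility witness between $q$ and some $a \in A$ so that the witness lies in $\q / G$. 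Then $V[G]$-genericity of $H$ on $\q / G$ yields $H \cap A' \ne \emptyset$, hence $H \cap A \ne \emptyset$. The main obstacle in both parts is the predensity of the auxiliary sets ($E$ in part~(1) and $A'$ in part~(2)), where the interplay between the quotient structure $\q / G$ and the $\p$-forcing relation requires careful handling.
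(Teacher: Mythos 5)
Your overall plan (filter and genericity of $H \cap \p$ via clauses (1) and (2), auxiliary predense sets for the quotient genericity) is the standard one, and the first half of part (1) is fine. The genuine gap is in your set $E$ for the second half of part (1). Membership $q \in E$ requires a single $p \le p_0$ in $\p$ with $q \le p$ and $p \Vdash_\p \check q \in \dot D$; since this in particular forces $\check q \in \q / \dot G_\p$, it says that every $\p$-extension of $p$ is $\q$-compatible with $q$, i.e.\ $q$ lies below a \emph{reduction} of itself. In a general regular suborder (unlike a product, or an iteration presented as pairs) such conditions need not be dense, and one can build a regular pair $\p \subseteq \q$ together with a name $\dot D$ for a dense subset of the quotient for which your $E$ is empty: take $\p$ to be Cohen forcing on even-length binary strings inside the poset $\q$ of pairs $(t,m)$, $m \in \{0,1\}$, where the flag $m=1$ may only be attached to odd-length $t$, and let $\dot D$ name the flagged initial segments of the generic real; then $p \Vdash_\p (\check t, 1) \in \dot D$ forces $p \supseteq t$ while $(t,1) \le (p,0)$ forces $t \supseteq p$, a parity contradiction. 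Your sketched density argument breaks at exactly this point: having found $p_3 \le p_0$ and $q_1 \le q_0$ with $p_3 \Vdash_\p \check q_1 \in \dot D$, there is no reason that $q_1 \le p_3$, and passing to a common extension of $q_1$ and $p_3$ destroys the property of being forced into $\dot D$. The standard repair is to let $E$ record a pair of witnesses: $q \in E$ iff there are $p \in \p$ and $d \in \q$ with $q \le p$, $q \le d$, and $p \Vdash_\p \check d \in \dot D$. Density of this $E$ below $p_0$ is then provable, but only via reductions: given $q_0 \le p_0$, clause (2) yields a reduction of $q_0$ in $\p$, which can be taken below $p_0$ and which forces $\check q_0 \in \q / \dot G_\p$ (your phrase ``a $\p$-condition deciding $\check q_0 \in \q/\dot G_\p$'' must be such a reduction, since a condition deciding the statement negatively gives nothing); and when $q \in H \cap E$, upward closure of $H$ gives $p \in H \cap \p = G$ and $d \in H \cap \dot D^G$.

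In part (2) there is a smaller but real gap: ``$G \subseteq H \cap \p$ is immediate'' is not. $H$ is only assumed generic on $\q / G$, and for $s \in G$ the density in $\q/G$ of $\{q \in \q/G : q \le s\}$ needs an argument, because $\q/G$ is upward- but not downward-closed in $\q$: a common $\q$-extension of $q$ and $s$ need not lie in $\q/G$. Either prove this density directly (another reduction argument), or first carry out your $A'$ argument for the $V$-genericity of $H$ on $\q$ (which is the right plan, though there too the compatibility witness must be certified to lie in $\q/G$ by producing a reduction of it inside $G$), and then deduce $G = H \cap \p$ from the fact that a condition lies outside a generic filter iff some member of the filter is incompatible with it, combined with $H \subseteq \q/G$ and clause (1). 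Also note that in part (1) your argument that the witnessing $\p$-condition lies in $G$ can be simplified: from $q \le p$ and $q \in H$ one gets $p \in H \cap \p = G$ by upward closure.
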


\begin{proof}
See \cite[Lemma 1.6]{jk26}.
\end{proof}

Let $\p$ and $\q$ be forcing posets with maximum conditions. 
A function $\pi : \q \to \p$ is said to be a \emph{projection mapping} if:
\begin{enumerate}
\item $\pi$ maps the maximum condition in $\q$ to the maximum condition 
in $\p$;
\item if $q \le p$ in $\q$, then $\pi(q) \le \pi(p)$ in $\p$;
\item if $v \le \pi(q)$ in $\p$, then there is 
$r \le q$ in $\q$ such that $\pi(r) \le v$ in $\p$.
\end{enumerate}

If $\pi : \q \to \p$ is a projection mapping, and $G$ is a generic filter on $\q$, 
then the set 
$\{ s \in \p : \exists p \in G \ \pi(p) \le s \}$ is a generic filter 
on $\p$.

\bigskip

Let $\q$ be a forcing poset. 
For a set $N$ and a condition $q \in \q$, we say that $q$ is 
\emph{strongly $N$-generic} if whenever $D$ is a dense subset of 
$N \cap \q$, then $D$ is predense below $q$ in $\q$.

We say that $\q$ is \emph{strongly proper on a stationary set} if 
for all sufficiently large cardinals $\lambda$ with 
$\q \subseteq H(\lambda)$, 
there are stationarily many $N$ in $P_{\omega_1}(H(\lambda))$ such that 
for all $p \in N \cap \q$, there is $q \le p$ which is strongly $N$-generic. 
If $\q$ is strongly proper on a stationary set, then $\q$ preserves $\omega_1$, 
because being a strongly $N$-generic condition implies being an $N$-generic 
condition in the sense of proper forcing.

Let $\lambda_\q$ denote the smallest cardinal such that 
$\q \subseteq H(\lambda_\q)$. 
Note that a condition $q$ is strongly $N$-generic iff 
$q$ is strongly $(N \cap H(\lambda_\q))$-generic. 
Using this fact, 
standard arguments show that $\q$ is strongly proper on a stationary set 
iff there are stationarily many $N$ in $P_{\omega_1}(H(\lambda_\q))$ 
such that for all $p \in N \cap \q$, there is $q \le p$ which is 
strongly $N$-generic.

\bigskip

Let $V \subseteq W$ be transitive class models of \textsf{ZFC}. 
A set $X \subseteq V$ is said to be \emph{countably approximated by $V$} 
if for any set $a \in V$ which is countable in $V$, $a \cap X \in V$. 
We say that the pair $(V,W)$ has the 
\emph{$\omega_1$-approximation property} if whenever 
$X$ is a subset of $V$ in $W$ which is countably approximated 
by $V$, then $X \in V$. 
A forcing poset $\q$ is said to have the 
\emph{$\omega_1$-approximation property} if $\q$ forces that 
the pair $(V,V^\q)$ has the 
$\omega_1$-approximation property.

Note that the $\omega_1$-approximation property is equivalent 
to the definition in the previous paragraph, 
except replacing the assumption that $X$ is a 
subset of $V$ with the assumption that $X$ is a set of ordinals. 
Namely, if $X \subseteq V$, then for some $\alpha$, $X \subseteq V_\alpha$. 
And in $V$ we can fix a bijection $g : V_\alpha \to \mu$ for some 
ordinal $\mu$. 
Then $X$ is countably approximated by $V$ iff $g[X]$ is countably 
approximated by $V$, and $X \in V$ iff $g[X] \in V$.

The next lemma shows that the $\omega_1$-approximation property defined above 
is equivalent to the 
version of the property used in \cite{mitchell}.

\begin{lemma}
A pair $(V,W)$ has the $\omega_1$-approximation property iff 
whenever $\mu$ is an ordinal, 
$k : \mu \to On$ is in $W$, and 
for any set $a$ in $V$ which is 
countable in $V$, $k \restriction a \in V$, then $k \in V$.
\end{lemma}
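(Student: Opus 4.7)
The plan is to prove the equivalence by passing back and forth between a set and a function: a function $k$ with countably approximated restrictions corresponds to its graph being a countably approximated set, and a set $X$ being countably approximated corresponds to its characteristic function $\chi_X$ having countably approximated restrictions.

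For the forward direction, I would assume the approximation property as stated in the paper and let $k : \mu \to On$ be a function in $W$ such that $k \restriction a \in V$ for every countable-in-$V$ set $a \in V$. Consider the graph $X = \{ (\alpha, k(\alpha)) : \alpha < \mu \}$, which is a subset of $V$ (its elements are ordered pairs of ordinals lying in $V$). Given a countable $a \in V$, set $b = \{\alpha : \exists \beta \, (\alpha,\beta) \in a\} \cap \mu$, which is a countable set in $V$. Then
\[
X \cap a = \bigl\{ (\alpha,\beta) \in a : \alpha \in b, \ (k \restriction b)(\alpha) = \beta \bigr\},
\]
so $X \cap a \in V$ because both $a$ and $k \restriction b$ are in $V$. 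By the approximation property applied to $X$ we conclude $X \in V$, whence $k \in V$.

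For the reverse direction, I would assume the function version of the property and let $X$ be a subset of $V$ in $W$ that is countably approximated by $V$. Using the reduction already recorded in the paper just before the lemma, one may assume $X$ is a set of ordinals, say $X \subseteq \mu$. Define $k = \chi_X : \mu \to 2$ in $W$. For any countable $a \in V$, the set $a \cap \mu$ is countable and in $V$, and the restriction $k \restriction (a \cap \mu)$ is uniformly computable from $X \cap (a \cap \mu) = X \cap a \cap \mu$ and $a \cap \mu$, both of which lie in $V$. Since $k \restriction a = k \restriction (a \cap \mu)$, the hypothesis of the function version applies, giving $k \in V$, and therefore $X = k^{-1}\{1\} \in V$.

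There is no serious obstacle here; the only thing that needs care is making sure that the countable approximations of the two representations correspond, which amounts to the trivial observations that the first projection of a countable set of pairs is countable and that the intersection of a countable set with an ordinal is countable. No new forcing-theoretic content is used; the lemma is purely a translation between a "set of ordinals" and a "function on ordinals" formulation of the same property.
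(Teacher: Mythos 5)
Your proof is correct and follows essentially the same route as the paper: the forward direction treats the graph of $k$ as a countably approximated subset of $V$ via the first projection of each countable set, and the reverse direction uses the reduction to sets of ordinals and the characteristic function, exactly as in the paper's argument. No gaps.
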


\begin{proof}
Assume that the pair $(V,W)$ 
has the $\omega_1$-approximation property, 
and let $k : \mu \to On$ be a function in $W$ satisfying that 
for any countable set $a$ in $V$, 
$k \restriction a \in V$. 
We will show that $k \in V$. 
It suffices to show that whenever $x \in V$ is countable in $V$, 
then $k \cap x \in V$.

Suppose that $x$ is a countable set in $V$, and we will show 
that $k \cap x \in V$. 
Define $x_0 := \{ \xi \in \mu : \exists z \ (\xi,z) \in x \}$. 
Then $x_0$ is a countable subset of $\mu$ in $V$, so 
$k \restriction x_0 \in V$. 
It is easy to check that 
$k \cap x = (k \restriction x_0) \cap x$, 
and hence $k \cap x \in V$.

Conversely, suppose that 
whenever $\mu$ is an ordinal, 
$k : \mu \to On$ is in $W$, and 
for any set $a$ in $V$ which is 
countable in $V$, $k \restriction a \in V$, then $k \in V$. 
We will prove that $(V,W)$ has the $\omega_1$-approximation property. 
Let $X$ be a subset of $V$ in $W$ which is countably 
approximated by $V$, and we will show that $X \in V$. 
By the comments preceding the lemma, 
we may assume that $X \subseteq \mu$ 
for some ordinal $\mu$.

Let $k : \mu \to 2$ be the characteristic function of $X$, 
so that $k(\alpha) = 1$ iff $\alpha \in X$. 
Then $k \in V$ iff $X \in V$, so it suffices to show that $k \in V$. 
To show that $k \in V$, it suffices to show that whenever $a$ is a 
countable set in $V$, then $k \restriction a \in V$. 
So let $a$ be a countable set in $V$. 
Then $X \cap a \in V$ by assumption. 
But $k \restriction a$ is equal to the function with domain 
$a \cap \mu$ such that 
for all $\alpha \in a \cap \mu$, 
$\alpha$ is mapped to $1$ iff $\alpha \in X \cap a$. 
Since $a \cap \mu$ and 
$X \cap a$ are in $V$, so is $k \restriction a$.
\end{proof}

\begin{lemma}
Let $\p$ be a regular suborder of $\q$, and suppose that 
$\q$ forces that the pair 
$(V[\dot G_{\q} \cap \p],V[\dot G_{\q}])$ has the 
$\omega_1$-approximation property. 
Then $\p$ forces that $\q / \dot G_{\p}$ has the 
$\omega_1$-approximation property.
\end{lemma}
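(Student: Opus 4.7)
The plan is to unwind the $\omega_1$-approximation property on the quotient side into one about the original forcing $\q$, and then invoke the hypothesis directly via Lemma 1.2.

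First I would fix a $V$-generic filter $G$ on $\p$, and argue in $V[G]$. To show that $\q / G$ forces the pair $(V[G], V[G][\dot H])$ to have the $\omega_1$-approximation property, it suffices to fix a $V[G]$-generic filter $H$ on $\q / G$ and prove that $(V[G], V[G][H])$ has the property. By Lemma 1.2 (part 2), $H$ is in fact $V$-generic on $\q$, and moreover $G = H \cap \p$ and $V[G][H] = V[H]$.

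With these identifications in hand, the assumption applied to the $V$-generic filter $H$ on $\q$ says precisely that the pair $(V[H \cap \p], V[H])$ has the $\omega_1$-approximation property. Substituting $V[H \cap \p] = V[G]$ and $V[H] = V[G][H]$ gives the desired statement about $(V[G], V[G][H])$.

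I do not expect any genuine obstacle: the content is essentially bookkeeping, the two nontrivial ingredients being Lemma 1.2(2) (so that a generic for the quotient lifts to a generic for $\q$ with the right intersection with $\p$) and the observation $V[G][H] = V[H]$ (so that the ground model and extension of the ``quotient pair'' match the ``$\q$-pair'' to which the hypothesis applies). The only thing to be careful about is the order of quantifiers --- one must start from an arbitrary quotient-generic $H$ and recover that it is $\q$-generic, rather than starting from a $\q$-generic and projecting, so that the universal quantifier in the conclusion is the correct one.
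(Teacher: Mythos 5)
Your proof is correct and is essentially the same as the paper's: both fix a $V$-generic $G$ on $\p$, take an arbitrary $V[G]$-generic $H$ on $\q/G$, use Lemma 1.2(2) to see that $H$ is $V$-generic on $\q$ with $H\cap\p = G$ and $V[G][H]=V[H]$, and then apply the hypothesis to the pair $(V[H\cap\p],V[H])$. Your remark about the order of quantifiers matches the structure of the paper's argument as well.
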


\begin{proof}
Let $G$ be a $V$-generic filter on $\p$. 
Then by Lemma 1.2(2), 
whenever $H$ is a $V[G]$-generic filter on $\q / G$, 
then $H$ is a $V$-generic filter on $\q$, $H \cap \p = G$, and 
$V[H] = V[G][H]$. 
By assumption, 
the pair $(V[H \cap \p],V[H])$ has the 
$\omega_1$-approximation property. 
But $(V[H \cap \p],V[H]) = (V[G],V[G][H])$. 
Thus, for any $V[G]$-generic filter $H$ on $\q / G$, 
the pair $(V[G],V[G][H])$ has the $\omega_1$-approximation property. 
This means that $\q / G$ has the $\omega_1$-approximation property 
in $V[G]$.
\end{proof}

\bigskip

\addcontentsline{toc}{section}{2. Side conditions}

\textbf{\S 2. Side conditions}

\stepcounter{section}

\bigskip

In this section, we lay out the basic framework of side conditions which will 
serve as the foundation for almost everything in the paper. 
Our goal is to make this material as self-contained as possible. 
However, we do not want to prove all of the results from scratch, since 
that has already been done in other papers, and some of the proofs are tedious. 
So several of the assumptions and results in this section will be 
stated without proof; we will provide specific references here and in 
Section 13 so that an interested reader can easily find the complete details.

\bigskip

The basic objects we introduce are the cardinals $\kappa$ and $\lambda$, 
the set $\Lambda$, the structure $\mathcal A$ on $H(\lambda)$, 
the classes of countable models $\mathcal X$ and uncountable 
models $\mathcal Y$, 
and the comparison point $\beta_{M,N}$, for all 
$M$ and $N$ in $\mathcal X$.

\begin{notation}
For the remainder of the paper, $\kappa$ is a regular cardinal 
satisfying that $\omega_2 \le \kappa$, and $\lambda$ is a regular cardinal 
such that $\kappa \le \lambda$.
\end{notation}

In this paper, our interest will be in the cases where either 
$\kappa = \omega_2$, or $\kappa$ is an inaccessible cardinal which is intended 
to become $\omega_2$ in some generic extension. 
In the context of adding a single object by forcing, it is natural to let 
$\lambda = \kappa$. 
If multiple objects are being added by forcing, then $\lambda$ will be 
at least $\kappa^+$.

\begin{notation}
Fix a set $\Lambda$ such that for some club $C^* \subseteq \kappa$, 
$$
\Lambda = C^* \cap \cof(>\! \omega).
$$
\end{notation}

\begin{notation}
Fix a structure $\mathcal A$, whose underlying set is $H(\lambda)$, 
which has a well-ordering of $H(\lambda)$ 
as a predicate, 
and for which the sets $\kappa$ and $\Lambda$ are definable predicates.
\end{notation}

Since $\mathcal A$ has a well-ordering as a predicate, it has definable 
Skolem functions. 
For any set $a \subseteq H(\lambda)$, let $Sk(a)$ denote the Skolem 
hull of $a$ in $\mathcal A$ under some (any) complete set of definable 
Skolem functions.

\begin{notation}
Fix sets $\mathcal X$ and $\mathcal Y$ satisfying:
\begin{enumerate}
\item for all $M \in \mathcal X$, $M$ is a countable elementary 
substructure of $\mathcal A$;
\item for all $P \in \mathcal Y$, $P$ is an elementary substructure of 
$\mathcal A$, $|P| < \kappa$, $P \cap \kappa \in \kappa$, and 
$\cf(P \cap \kappa) > \omega$.
\end{enumerate}
\end{notation}

The next assumption describes some closure properties of $\mathcal X$ and 
$\mathcal Y$.

\begin{assumption}
\begin{enumerate}
\item If $P$ and $Q$ are in $\mathcal Y$, then $P \cap Q \in \mathcal Y$.
\item If $M \in \mathcal X$ and $P \in \mathcal Y$, then 
$M \cap P \in \mathcal X$.
\end{enumerate}
\end{assumption}

Following Friedman \cite{friedman}, we say that 
a stationary set $T^* \subseteq P_{\omega_1}(\kappa)$ is 
\emph{thin} 
if for all $\beta < \kappa$, 
$$
| \{ a \cap \beta : a \in T^* \} | < \kappa.
$$
Implicit in our listed assumptions is the 
existence of a thin stationary set $T^*$, which is used in the actual 
definitions of $\mathcal A$ and $\mathcal X$. 
For example, it will be the case that 
for all $M \in \mathcal X$, $M \cap \kappa \in T^*$. 
In particular, there are a limited number of sets of the form $M \cap \kappa$, 
where $M \in \mathcal X$. 
The next assumption follows as a consequence; see 
\cite[Proposition 1.11]{jk21} for the details.

\begin{assumption}
If $M \in \mathcal X$, $\alpha \in \Lambda \cup \{ \kappa \}$, 
and $Sk(\alpha) \cap \kappa = \alpha$ if 
$\alpha < \kappa$, then $M \cap \alpha \in Sk(\alpha)$.
\end{assumption}

Note that there are club many $\alpha < \kappa$ such that 
$Sk(\alpha) \cap \kappa = \alpha$.  
Also, $Sk(\kappa) \cap \kappa = \kappa$ is immediate.

We have enough information now to derive some useful properties.

\begin{lemma}
Let $M \in \mathcal X$ and 
$\alpha \in \Lambda \cup \{ \kappa \}$. 
If $\alpha < \kappa$, assume that 
$Sk(\alpha) \cap \kappa = \alpha$. Then:
\begin{enumerate}
\item $Sk(M \cap \alpha) = M \cap Sk(\alpha)$;
\item $Sk(M \cap \alpha) \cap \kappa = M \cap \alpha$.
\end{enumerate}
\end{lemma}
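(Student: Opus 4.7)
The plan is to establish part (1) first and deduce part (2) as an immediate corollary.

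The inclusion $Sk(M \cap \alpha) \subseteq M \cap Sk(\alpha)$ is routine: $M \cap \alpha \subseteq M$, and $M$ is an elementary substructure of $\mathcal{A}$ and so is closed under the definable Skolem functions, so $Sk(M \cap \alpha) \subseteq M$; likewise $M \cap \alpha \subseteq \alpha \subseteq Sk(\alpha)$ gives $Sk(M \cap \alpha) \subseteq Sk(\alpha)$, and intersecting yields the claim.

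For the reverse inclusion $M \cap Sk(\alpha) \subseteq Sk(M \cap \alpha)$, fix $x \in M \cap Sk(\alpha)$. Since $x \in Sk(\alpha)$, there is a finite tuple of ordinals $\bar{\gamma} \in \alpha^{<\omega}$ with $x \in Sk(\bar{\gamma})$. The main obstacle is that naive elementarity of $M$ in $\mathcal{A}$ produces some witness tuple in $M$ but gives no reason it should lie in $\alpha^{<\omega}$, since $\alpha$ itself need not be an element of $M$. To bypass this, I exploit the definable well-ordering of $\mathcal{A}$ via a \emph{least-max} trick: set $m := \min\{\max(\bar{\delta}) : \bar{\delta} \in \mathrm{Ord}^{<\omega},\ x \in Sk(\bar{\delta})\}$, and let $\bar{\gamma}^*$ be the least (under the fixed well-ordering) finite tuple of ordinals satisfying $x \in Sk(\bar{\gamma}^*)$ and $\max(\bar{\gamma}^*) = m$. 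The mere existence of the original witness in $\alpha^{<\omega}$ forces $m < \alpha$, so $\bar{\gamma}^* \in \alpha^{<\omega}$. Since $\bar{\gamma}^*$ is definable in $\mathcal{A}$ from the single parameter $x$, and $x \in M \prec \mathcal{A}$, we have $\bar{\gamma}^* \in M$, hence $\bar{\gamma}^* \in (M \cap \alpha)^{<\omega}$ and $x \in Sk(\bar{\gamma}^*) \subseteq Sk(M \cap \alpha)$. (The degenerate case $\bar{\gamma}^* = \emptyset$ simply places $x \in Sk(\emptyset) \subseteq Sk(M \cap \alpha)$.)

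Part (2) is then a one-line computation from (1): $Sk(M \cap \alpha) \cap \kappa = M \cap Sk(\alpha) \cap \kappa = M \cap (Sk(\alpha) \cap \kappa) = M \cap \alpha$, where the last equality uses the hypothesis $Sk(\alpha) \cap \kappa = \alpha$ when $\alpha < \kappa$, and the observation $Sk(\kappa) \cap \kappa = \kappa$ recorded just before the lemma when $\alpha = \kappa$.
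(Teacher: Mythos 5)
Your forward inclusion and part (2) are fine, and your overall strategy for the reverse inclusion (choose a canonical least witness tuple and pull it into $M$ by elementarity) is essentially the paper's. But the key step has a genuine gap: you define $m$ and $\bar\gamma^*$ by minimizing over all tuples $\bar\delta$ of ordinals with $x \in Sk(\bar\delta)$, and then assert that $\bar\gamma^*$ is definable in $\mathcal A$ from the parameter $x$. The relation ``$x \in Sk(\bar\delta)$'' is not a first-order condition over $\mathcal A$: membership in the Skolem hull is an infinite disjunction over the countably many definable Skolem functions (equivalently, over all Skolem terms), and a uniform evaluation of arbitrary such terms would amount to a satisfaction predicate for $\mathcal A$, which Tarski's theorem rules out being definable in $\mathcal A$. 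Consequently elementarity gives no reason that $m$ or $\bar\gamma^*$ belongs to $M$: the model $M$ is closed under functions definable in $\mathcal A$, and the map $x \mapsto \bar\gamma^*$ has not been shown to be such a function. This is exactly the point where the argument as written fails.

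The repair is simple and is what the paper does: since $x \in Sk(\alpha)$, first fix a single definable Skolem function $f$ and ordinals $\beta_0,\dots,\beta_{n-1} < \alpha$ with $x = f(\beta_0,\dots,\beta_{n-1})$, and then minimize only among tuples $\bar\delta$ satisfying $x = f(\bar\delta)$. For a fixed $f$ this condition is definable in $\mathcal A$, so the chosen tuple is definable from $x$ and hence lies in $M$. The paper takes the lexicographically least such tuple in $\kappa^{<\omega}$ and then uses the hypothesis $Sk(\alpha) \cap \kappa = \alpha$ (via closure of $Sk(\alpha)$ under the definable choice) to see the tuple lies below $\alpha$; your least-max variant, applied to the fixed $f$, lands below $\alpha$ directly because the original witness has maximum below $\alpha$. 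With that modification the rest of your argument, including the deduction of (2), goes through.
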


\begin{proof}
(1) The forward inclusion is immediate. 
For the reverse inclusion, suppose that $x \in M \cap Sk(\alpha)$. 
Then since $x \in Sk(\alpha)$, there are ordinals 
$\beta_0,\ldots,\beta_{n-1}$ in $\alpha$ and a definable Skolem function $f$ 
of $\mathcal A$ such that $x = f(\beta_0,\ldots,\beta_{n-1})$. 
Since $\kappa$ and $f$ are definable in $\mathcal A$, the 
lexicographically least tuple of ordinals 
$\gamma_0,\ldots,\gamma_{n-1}$ in $\kappa$ 
such that $x = f(\gamma_0,\ldots,\gamma_{n-1})$ is 
definable in $\mathcal A$ from $x$. 
Since $x \in M \cap Sk(\alpha)$, it follows that 
$\gamma_0,\ldots,\gamma_{n-1}$ are in 
$M \cap Sk(\alpha)$. 
But $Sk(\alpha) \cap \kappa = \alpha$, 
so $\gamma_0,\ldots,\gamma_{n-1} \in M \cap \alpha$. 
Therefore, $x = f(\gamma_0,\ldots,\gamma_{n-1}) \in Sk(M \cap \alpha)$.

(2) Using (1) and our assumption about $\alpha$, we have 
$$
Sk(M \cap \alpha) \cap \kappa = (M \cap Sk(\alpha)) \cap \kappa = 
M \cap (Sk(\alpha) \cap \kappa) = M \cap \alpha.
$$
\end{proof}

\begin{lemma}
Let $M$ and $N$ be in $\mathcal X$ 
and $\alpha \in \Lambda \cup \{ \kappa \}$. 
If $\alpha < \kappa$, assume that 
$Sk(\alpha) \cap \kappa = \alpha$. Then:
\begin{enumerate}
\item If $M \cap \alpha \in N$, then 
$M \cap \alpha \in Sk(N \cap \alpha)$;
\item If $M \cap \kappa \subseteq \alpha$ and $M \cap \kappa \in N$, then 
$M \cap \kappa \in Sk(N \cap \alpha)$.
\end{enumerate}
\end{lemma}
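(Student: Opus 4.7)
The plan is to derive both parts very quickly from Assumption 2.6 together with Lemma 2.7(1), which tells us that $Sk(N \cap \alpha) = N \cap Sk(\alpha)$ under the hypothesis on $\alpha$.

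For part (1), I would start by applying Assumption 2.6 to $M$ and $\alpha$, which gives $M \cap \alpha \in Sk(\alpha)$. Combining this with the assumption $M \cap \alpha \in N$, I get
\[
M \cap \alpha \in N \cap Sk(\alpha).
\]
Then Lemma 2.7(1) applied to $N$ and $\alpha$ rewrites $N \cap Sk(\alpha)$ as $Sk(N \cap \alpha)$, which yields the desired conclusion.

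For part (2), the key observation is that under the hypothesis $M \cap \kappa \subseteq \alpha$ the two sets $M \cap \kappa$ and $M \cap \alpha$ actually coincide: any element of $M \cap \kappa$ lies in $\alpha$ by hypothesis, and any element of $M \cap \alpha$ automatically lies below $\kappa$ since $\alpha \le \kappa$. So $M \cap \alpha = M \cap \kappa \in N$, and part (1) applies verbatim to give $M \cap \kappa = M \cap \alpha \in Sk(N \cap \alpha)$.

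There is really no substantive obstacle; the whole argument is just the bookkeeping identification $M \cap \alpha = M \cap \kappa$ plus one invocation each of Assumption 2.6 and Lemma 2.7(1). The only thing to double-check when writing up is that the case split on whether $\alpha < \kappa$ or $\alpha = \kappa$ does not cause any trouble, and indeed in both cases the extra hypothesis $Sk(\alpha) \cap \kappa = \alpha$ (which is automatic when $\alpha = \kappa$) is exactly what is needed to invoke Assumption 2.6 and Lemma 2.7(1).
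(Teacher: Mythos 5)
Your proof is correct and is essentially identical to the paper's own argument: part (1) is Assumption 2.6 followed by Lemma 2.7(1), and part (2) reduces to part (1) via the observation that $M \cap \kappa \subseteq \alpha$ gives $M \cap \kappa = M \cap \alpha$. No gaps.
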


\begin{proof}
(1) By Assumption 2.6, $M \cap \alpha \in Sk(\alpha)$. 
By Lemma 2.7(1), $M \cap \alpha \in N \cap Sk(\alpha) = Sk(N \cap \alpha)$. 
(2) If $M \cap \kappa \subseteq \alpha$, then 
$M \cap \kappa = M \cap \alpha$. 
So (2) follows immediately from (1).
\end{proof}

We now introduce the \emph{comparison point} $\beta_{M,N}$, 
for all $M$ and $N$ in $\mathcal X$. 
The actual definition of $\beta_{M,N}$ is not important for us 
in this paper. 
The only properties of $\beta_{M,N}$ which we will need are stated 
in Lemma 2.10 and Proposition 2.11 below.

\begin{notation}
For $M$ and $N$ in $\mathcal X$, $\beta_{M,N}$ will denote the 
\emph{comparison point} of $M$ and $N$, as defined in 
\cite[Definition 1.14]{jk27}.
\end{notation}

\begin{lemma}
Let $M$ and $N$ be in $\mathcal X$. 
Then:
\begin{enumerate}
\item $\beta_{M,N} \in \Lambda$ and $\beta_{M,N} = \beta_{N,M}$;
\item if $\beta < \beta_{M,N}$ and $\beta \in \Lambda$, then 
$M \cap [\beta,\beta_{M,N}) \ne \emptyset$;
\item if $K \in \mathcal X$ and $K \subseteq M$, then 
$\beta_{K,N} \le \beta_{M,N}$.
\end{enumerate}
\end{lemma}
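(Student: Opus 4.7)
The plan is to verify each of the three clauses directly from the definition of $\beta_{M,N}$ given in \cite[Definition 1.14]{jk27}. That definition proceeds by a case split on the relative positions of $M \cap \kappa$ and $N \cap \kappa$ (whether one lies in the other, whether they agree on $\Lambda$-cofinally many points, etc.), and in each case $\beta_{M,N}$ is selected as a minimal element of a set definable from $M$ and $N$ that lies inside $\Lambda$. So the entire lemma is really a bookkeeping exercise about that definition.

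For (1), the value of $\beta_{M,N}$ is taken from $\Lambda$ in every branch of the case analysis, so $\beta_{M,N} \in \Lambda$ is immediate. Symmetry $\beta_{M,N} = \beta_{N,M}$ amounts to observing that the case analysis treats $M$ and $N$ symmetrically: the branch ``$M \cap \kappa \in N$'' pairs with ``$N \cap \kappa \in M$'', the branch where neither holds is self-symmetric, and the $\Lambda$-ordinal selected in each branch depends only on the unordered pair $\{M,N\}$.

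For (2), the idea is to exploit minimality. If $\beta \in \Lambda$ with $\beta < \beta_{M,N}$, then $\beta$ fails to satisfy the defining clause of $\beta_{M,N}$. Translating this failure requires the elementarity of $M$ in $\mathcal A$ together with Assumption 2.6 and Lemma 2.7, which identify $M \cap \beta$ as an element of $Sk(\beta)$ and hence give $M$ access to objects defined from $\beta$. From this one extracts an ordinal of $M$ in the interval $[\beta,\beta_{M,N})$ that witnesses the separation between $\beta$ and $\beta_{M,N}$ in the definition. For (3), if $K \subseteq M$ then the data parametrizing the definition of $\beta_{K,N}$ is contained in that for $\beta_{M,N}$, so each witness for $\beta_{M,N}$ is also available for $\beta_{K,N}$; tracking through the cases (and checking that transitions between cases, e.g.\ passing from ``$K \cap \kappa \in N$'' to ``$M \cap \kappa \notin N$'', can only decrease the output) yields $\beta_{K,N} \le \beta_{M,N}$.

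I expect (2) to be the main obstacle, since it is the only clause that requires one to produce an actual element of $M$ rather than to read a property off the shape of the definition. The argument ultimately amounts to the one carried out in detail in \cite[Proposition 1.15]{jk27} and its neighboring lemmas, so my proof would sketch the case distinctions and then cite that reference for the full verification.
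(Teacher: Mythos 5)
The paper gives no argument for this lemma at all: since the definition of $\beta_{M,N}$ is deliberately not reproduced here, its proof consists solely of the citation \cite[Definition 1.14, Lemma 1.16(1,3)]{jk27}. Your proposal, after a sketch of the definition's internal structure that cannot be checked against anything in this paper, ultimately defers to the same reference, so it is essentially the same approach.
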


\begin{proof}
See \cite[Definition 1.14, Lemma 1.16(1,3)]{jk27}.
\end{proof}

For a set of ordinals $a$, let $\cl(a)$ denote the union of $a$ together with the 
set of limit points of $a$.

\begin{proposition}
Let $M$ and $N$ be in $\mathcal X$. 
Then 
$$
\cl(M \cap \kappa) \cap \cl(N \cap \kappa) \subseteq \beta_{M,N}.
$$
\end{proposition}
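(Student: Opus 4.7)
The plan is to argue by contradiction. Suppose $\gamma \in \cl(M \cap \kappa) \cap \cl(N \cap \kappa)$ with $\gamma \ge \beta_{M,N}$, and take $\gamma$ minimal with this property. The idea is to split on whether $\gamma = \beta_{M,N}$ or $\gamma > \beta_{M,N}$, and in each case extract enough structure from Lemma 2.10 and the underlying definition of the comparison point to force a contradiction.

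First I would dispatch the case $\gamma = \beta_{M,N}$. By Lemma 2.10(1), $\beta_{M,N} \in \Lambda$, so $\cf(\beta_{M,N}) > \omega$; since $M, N \in \mathcal{X}$ are countable, neither $M \cap \kappa$ nor $N \cap \kappa$ can have $\beta_{M,N}$ as a limit point. Hence $\gamma \in M \cap N$. I would then form a small elementary substructure $K = Sk(\{\beta_{M,N}\})$, which sits inside both $M$ and $N$ and belongs to $\mathcal{X}$ by the closure properties stated in Assumption 2.5. Applying Lemma 2.10(3) gives $\beta_{K,N} \le \beta_{M,N}$. The defining property of the comparison point from \cite[Definition 1.14]{jk27}, however, should yield $\beta_{K,N} > \beta_{M,N}$ whenever $\beta_{M,N}$ itself is captured by $K$ and by $N$, producing the desired contradiction.

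For the case $\gamma > \beta_{M,N}$, I would exploit the minimality of $\gamma$ to conclude $\cl(M \cap \kappa) \cap \cl(N \cap \kappa) \cap [\beta_{M,N}, \gamma) = \emptyset$. If $\gamma \in M \cap \kappa$, then $K = Sk(\{\gamma\}) \subseteq M$ again allows Lemma 2.10(3) and the argument of the previous case to apply after pushing $\beta_{K,N}$ past $\beta_{M,N}$. If instead $\gamma$ is a limit point of both $M \cap \kappa$ and $N \cap \kappa$, then $\cf(\gamma) = \omega$, and I would apply Lemma 2.10(2) together with its symmetric version from $\beta_{M,N} = \beta_{N,M}$ to interleave $M$- and $N$-witnesses arbitrarily close to $\gamma$ from below; a cofinality-and-closure argument should then produce a common closure point in $[\beta_{M,N}, \gamma)$, contradicting minimality.

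The main obstacle is that items (1)--(3) of Lemma 2.10 are not strong enough on their own to rule out $\beta_{M,N} \in M \cap N$, so the argument must genuinely invoke the precise definition of $\beta_{M,N}$ from \cite[Definition 1.14]{jk27} rather than only its exposed properties. Given the author's explicit warning at the start of the section that many such results are stated without proof and merely referenced, I expect the cleanest presentation is to cite the corresponding proof in \cite{jk27} directly, with the outline above serving as a sanity-check sketch of why the listed properties, augmented by the underlying definition, suffice.
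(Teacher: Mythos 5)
Your bottom line—that this proposition cannot be established from the properties exposed in the paper and should simply be cited—coincides with what the paper actually does: its entire proof is a reference to \cite[Lemma 1.15]{jk27}, since the definition of $\beta_{M,N}$ is deliberately withheld (Notation 2.9 says only Lemma 2.10 and Proposition 2.11 will be used). You are also right that Lemma 2.10 alone cannot yield the statement: its three listed properties place no lower bound on $\beta_{M,N}$ relative to $M \cap N$, so the proposition is genuinely additional information imported from \cite{jk27}.

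The sketch you offer as a sanity check, however, would not survive scrutiny, so do not present it as evidence that the exposed properties ``augmented by the underlying definition'' suffice. Concretely: (i) Assumption 2.5 gives closure of $\mathcal X$ only under intersections with members of $\mathcal Y$; it does not put $Sk(\{\beta_{M,N}\})$ or $Sk(\{\gamma\})$ into $\mathcal X$, and membership in $\mathcal X$ is tied to the thin stationary set $T^*$, so $\beta_{K,N}$ need not even be defined for your $K$. (ii) The step ``the defining property of the comparison point should yield $\beta_{K,N} > \beta_{M,N}$'' appeals to a definition that is not in the paper and is not a consequence of anything stated, so the contradiction in the case $\gamma = \beta_{M,N}$ is unsupported. (iii) In the case $\gamma > \beta_{M,N}$ with $\gamma$ a common limit point, interleaving elements of $M \cap \kappa$ and $N \cap \kappa$ converging to $\gamma$ only re-establishes that $\gamma$ itself is a common limit point; it does not produce a common limit point in $[\beta_{M,N},\gamma)$ (the two traces below $\gamma$ can be interleaved successor ordinals with no common limit points), so minimality of $\gamma$ gives no contradiction, and Lemma 2.10(2) says nothing about the interval above $\beta_{M,N}$. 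The correct course is exactly the citation, with no claim that the sketch fills the gap.
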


\begin{proof}
See \cite[Lemma 1.15]{jk27}.
\end{proof}

Since the property described  in Proposition 2.11 is extremely important for 
what follows, let us review it for emphasis. 
The property says that if $\gamma$ is an ordinal which is either in $M \cap \kappa$ 
or is a limit point of $M \cap \kappa$, and at the same time, is either in 
$N \cap \kappa$ or is a limit point of $N \cap \kappa$, then 
$\gamma < \beta_{M,N}$.

\begin{lemma}
Let $P \in \mathcal Y$ and $M \in P \cap \mathcal X$. 
Then for all $K \in \mathcal X$, $\beta_{K,M} < P \cap \kappa$.
\end{lemma}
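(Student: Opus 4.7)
The plan is to locate an ordinal $\alpha \in \Lambda \cap P$ just above $\sup(M \cap \kappa)$, and then to argue via Lemma 2.10 that $\beta_{K,M}$ cannot exceed $\alpha$. Morally, $M$ is countable and sits inside $P$, so $M \cap \kappa$ is bounded well below $P \cap \kappa$; the comparison point, which must live in $\Lambda$, cannot leap past the next element of $\Lambda$ above $\sup(M \cap \kappa)$, and that next element is already inside $P$.

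First I would set $\alpha_0 := \sup(M \cap \kappa)$ and verify that $\alpha_0 \in P \cap \kappa$. Since the predicate $\kappa$ is definable in $\mathcal A$ and $M \in P \preceq \mathcal A$, elementarity gives $M \cap \kappa \in P$, and hence $\alpha_0 \in P$. Because $M$ is countable and $\kappa$ is regular uncountable, $\alpha_0 < \kappa$, so $\alpha_0 < P \cap \kappa$.

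Next I would check that $\Lambda$ is cofinal in $\kappa$, using $\omega_2 \le \kappa$: for any $\beta < \kappa$, the supremum of the first $\omega_1$ elements of $C^*$ above $\beta$ lies in $C^*$ by closure and has cofinality $\omega_1$, so it lies in $\Lambda$. Thus $\alpha := \min(\Lambda \setminus (\alpha_0 + 1))$ exists and is less than $\kappa$; since it is definable from $\alpha_0$ in $\mathcal A$, elementarity gives $\alpha \in P$, hence $\alpha < P \cap \kappa$.

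Finally, I would derive the desired bound. Suppose for contradiction that $\alpha < \beta_{K,M}$. By Lemma 2.10(1) we have $\beta_{K,M} = \beta_{M,K}$, and Lemma 2.10(2) applied to the pair $(M,K)$ with $\beta := \alpha \in \Lambda$ yields $M \cap [\alpha, \beta_{K,M}) \ne \emptyset$. But every element of $M \cap \kappa$ is at most $\alpha_0 < \alpha$, so the intersection is empty, a contradiction. Hence $\beta_{K,M} \le \alpha < P \cap \kappa$. The only step requiring any real care is confirming cofinality of $\Lambda$ in $\kappa$; the rest is straightforward elementarity combined with a single appeal to Lemma 2.10.
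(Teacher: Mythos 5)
Your proof is correct, and it takes a genuinely different route from the paper. The paper does not prove the lemma internally: it cites \cite[Lemma 1.34(1)]{jk27}, sketching a counting argument — given $M$, there are only countably many possible values of $\beta_{K,M}$ as $K$ ranges over $\mathcal X$, this set is definable from $M$, hence lies in $P$ by elementarity — an argument that needs the actual definition of the comparison point, which this paper deliberately treats as a black box. Your argument instead stays entirely inside the black-box interface: $\sup(M \cap \kappa)$ lies in $P \cap \kappa$ by elementarity (using $M \in P$ and that $\kappa$ is a predicate of $\mathcal A$), the cofinality of $\Lambda$ in $\kappa$ (your $\omega_1$-sup argument inside the club $C^*$ is fine, using $\kappa \ge \omega_2$ regular) gives $\alpha := \min(\Lambda \setminus (\sup(M\cap\kappa)+1)) \in P \cap \kappa$, and Lemma 2.10(1,2) applied to the pair $(M,K)$ rules out $\alpha < \beta_{K,M}$, since otherwise $M \cap [\alpha,\beta_{M,K})$ would be a nonempty subset of $M \cap \kappa$ lying strictly above $\sup(M \cap \kappa)$. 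What your approach buys is self-containment: it proves the lemma from the properties the paper explicitly lists (Lemma 2.10, Notations 2.2--2.4) without appealing to the external construction of $\beta_{M,N}$; what the paper's approach buys is a stronger structural fact (all possible comparison points with $M$ already lie in $P$), which is more information than the single bound needed here but requires opening the definition from \cite{jk27}.
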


\begin{proof}
Roughly speaking, the reason why this is true is because, given $M$, 
there are 
only countably many possibilities for the value of $\beta_{K,M}$, and hence they 
are all in $P$ by elementarity. 
See \cite[Lemma 1.34(1)]{jk27} for the proof.
\end{proof}

We now use the comparison point $\beta_{M,N}$ to introduce a way of 
comparing models in $\mathcal X$.

\begin{definition}
Let $M$ and $N$ be in $\mathcal X$.
\begin{enumerate}
\item Let $M < N$ if $M \cap \beta_{M,N} \in N$.
\item Let $M \sim N$ if 
$M \cap \beta_{M,N} = N \cap \beta_{M,N}$.
\item Let $M \le N$ if either $M < N$ or $M \sim N$.
\end{enumerate}
\end{definition}

If $M < N$, then by elementarity, the set 
$\cl(M \cap \beta_{M,N})$ is a member of $N$. 
Since $\cl(M \cap \beta_{M,N})$ is countable, it follows that 
$\cl(M \cap \beta_{M,N}) \subseteq N$. 
Also, every initial segment of the set of ordinals 
$M \cap \beta_{M,N}$ is in $N$, since there are only countably 
many initial segments.

\begin{definition}
A finite set $A \subseteq \mathcal X$ is said to be \emph{adequate} 
if for all $M$ and $N$ in $A$, either $M < N$, $M \sim N$, or $N < M$.
\end{definition}

Note that $A$ is adequate iff for all $M$ and $N$ in $A$, 
$\{ M, N \}$ is adequate. 
If $A$ is adequate and $B \subseteq A$, then $B$ is adequate. 
If $M$ and $N$ are in an adequate set $A$, then either 
$M \le N$ or $N \le M$.

\begin{lemma}
Suppose that $M \le N$. 
Then 
$$
M \cap \beta_{M,N} = M \cap N \cap \kappa = 
M \cap N \cap \beta_{M,N}.
$$
\end{lemma}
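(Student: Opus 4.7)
The plan is to split on the definition of $M \le N$: either $M < N$, meaning $M \cap \beta_{M,N} \in N$, or $M \sim N$, meaning $M \cap \beta_{M,N} = N \cap \beta_{M,N}$. In either case, my first goal is to get the containment $M \cap \beta_{M,N} \subseteq N$. The $\sim$ case is immediate. For the $<$ case, I would use the fact (noted in the paragraph after Definition 2.13) that the countable set $M \cap \beta_{M,N}$, being a member of the elementary submodel $N$, must actually be a subset of $N$ — concretely, $N$ contains a bijection from $\omega$ onto it and hence contains each of its elements.

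Once I have $M \cap \beta_{M,N} \subseteq N$, one direction of all three equalities is easy. Since $\beta_{M,N} \in \Lambda \subseteq \kappa$ by Lemma 2.10(1), we get $M \cap \beta_{M,N} \subseteq M \cap N \cap \beta_{M,N} \subseteq M \cap N \cap \kappa$.

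For the reverse containment, the whole point is to invoke Proposition 2.11. Given any $\gamma \in M \cap N \cap \kappa$, trivially $\gamma \in \cl(M \cap \kappa) \cap \cl(N \cap \kappa)$, so Proposition 2.11 forces $\gamma < \beta_{M,N}$. Hence $\gamma \in M \cap \beta_{M,N}$, finishing the chain $M \cap N \cap \kappa \subseteq M \cap \beta_{M,N}$.

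There is no real obstacle here; the lemma is essentially a reformulation of Proposition 2.11 combined with the defining clause $M \cap \beta_{M,N} \in N$ (or its $\sim$-analog). The only point to be a little careful about is justifying $M \cap \beta_{M,N} \subseteq N$ in the $<$ case, which requires the standard countability-plus-elementarity observation rather than being immediate from $\in$.
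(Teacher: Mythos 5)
Your proof is correct and follows essentially the same route as the paper: obtain $M \cap \beta_{M,N} \subseteq N$ from either clause of $M \le N$ (with the countability-plus-elementarity remark after Definition 2.13 covering the $<$ case), and then use Proposition 2.11 for the reverse inclusions. No issues.
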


\begin{proof}
Since $M \le N$, either $M \cap \beta_{M,N} \in N$ or 
$M \cap \beta_{M,N} = N \cap \beta_{M,N}$. 
In either case, $M \cap \beta_{M,N} \subseteq N$. 
So $M \cap \beta_{M,N} \subseteq M \cap N \cap \kappa$. 
Conversely, by Proposition 2.11, 
$M \cap N \cap \kappa \subseteq \beta_{M,N}$, so 
$M \cap N \cap \kappa \subseteq M \cap \beta_{M,N}$. 
This proves the first equality. 
For the second equality, the reverse inclusion is trivial, and the 
forward inclusion follows from Proposition 2.11.
\end{proof}

\begin{lemma}
If $A$ is adequate, $N \in \mathcal X$, and $A \in N$, then 
$A \cup \{ N \}$ is adequate.
\end{lemma}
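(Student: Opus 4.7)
The plan is to verify that for each $M \in A$, the pair $\{M,N\}$ satisfies the adequacy condition, specifically by proving $M < N$. (Comparing $N$ with itself is immediate, since $N \sim N$ holds trivially, and $A$ itself is already adequate by hypothesis.) So fix $M \in A$.

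First I would use elementarity to push $M$ inside $N$. Since $A \in N$ and $A$ is a finite set, every element of $A$ lies in $N$; in particular $M \in N$. Because $M$ is countable and $N$ is an elementary substructure of $\mathcal{A}$ (so in $N$ one can enumerate $M$ by $\omega$), it follows that $M \subseteq N$ as a set.

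The key step is then to invoke Proposition 2.11. From $M \subseteq N$ we get $M \cap \kappa \subseteq N \cap \kappa$, hence $\cl(M \cap \kappa) \subseteq \cl(N \cap \kappa)$, so
\[
\cl(M \cap \kappa) \cap \cl(N \cap \kappa) \;=\; \cl(M \cap \kappa).
\]
Proposition 2.11 says that this intersection is bounded by $\beta_{M,N}$, so in particular $M \cap \kappa \subseteq \beta_{M,N}$. Therefore $M \cap \beta_{M,N} = M \cap \kappa$. Since $\kappa$ is a definable predicate of $\mathcal{A}$ and $M \in N$, elementarity gives $M \cap \kappa \in N$, and thus $M \cap \beta_{M,N} \in N$, i.e.\ $M < N$ by Definition 2.13. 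As $M \in A$ was arbitrary, $A \cup \{N\}$ is adequate.

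There is no real obstacle here: once one notices that $M \subseteq N$ forces $\cl(M \cap \kappa) \subseteq \beta_{M,N}$ via Proposition 2.11, the quantity $M \cap \beta_{M,N}$ collapses to the concrete definable object $M \cap \kappa$, which is visibly in $N$. The only subtlety worth flagging is the passage from $M \in N$ to $M \subseteq N$, which uses both that $M$ is countable and that $N$ is elementary in the fixed structure $\mathcal{A}$.
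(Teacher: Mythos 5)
Your proof is correct and follows essentially the same route as the paper: finiteness of $A$ gives $A \subseteq N$, countability of $M$ gives $M \cap \kappa \subseteq N$, Proposition 2.11 then yields $M \cap \kappa \subseteq \beta_{M,N}$ so that $M \cap \beta_{M,N} = M \cap \kappa$, which lies in $N$ by elementarity, giving $M < N$. The detour through $\cl(M \cap \kappa) \subseteq \cl(N \cap \kappa)$ is a harmless repackaging of the same application of Proposition 2.11.
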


\begin{proof}
Since $A$ is finite, $A \subseteq N$. 
So it suffices to show that if $M \in N \cap \mathcal X$, then $M < N$. 
But as $M$ is countable, by elementarity every initial segment of 
$M \cap \kappa$ is in $N$, and in particular, $M \cap \beta_{M,N}$ is in $N$ 
(in fact, Proposition 2.11 implies that $M \cap \kappa = M \cap \beta_{M,N}$). 
So $M < N$.
\end{proof}

It turns out that if $\{ M, N \}$ is adequate, then which relation holds between 
$M$ and $N$ is determined by comparing the ordinals 
$M \cap \omega_1$ and $N \cap \omega_1$.

\begin{lemma}
Let $\{ M, N \}$ be adequate. Then:
\begin{enumerate}
\item $M < N$ iff $M \cap \omega_1 < N \cap \omega_1$;
\item $M \sim N$ iff $M \cap \omega_1 = N \cap \omega_1$;
\item $M \le N$ iff $M \cap \omega_1 \le N \cap \omega_1$.
\end{enumerate}
\end{lemma}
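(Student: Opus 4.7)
The plan is to first establish the three forward implications separately (using adequacy to split into the three exhaustive cases $M<N$, $M\sim N$, $N<M$), and then deduce the reverse directions from trichotomy of ordinals applied to $M\cap\omega_1$ and $N\cap\omega_1$. Clause (3) will then fall out of (1) and (2).

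The key preliminary observation is that $\beta_{M,N}\ge\omega_1$. By Lemma 2.10(1) we have $\beta_{M,N}\in\Lambda$, and by Notation 2.2 every element of $\Lambda$ has cofinality strictly greater than $\omega$; since the least ordinal of uncountable cofinality is $\omega_1$, this gives $\beta_{M,N}\ge\omega_1$. In particular, $M\cap\omega_1=(M\cap\beta_{M,N})\cap\omega_1$ and likewise for $N$, so any statement comparing $M\cap\beta_{M,N}$ and $N\cap\beta_{M,N}$ can be intersected with $\omega_1$ to obtain a statement about $M\cap\omega_1$ and $N\cap\omega_1$.

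Now I handle the three cases. If $M\sim N$, then $M\cap\beta_{M,N}=N\cap\beta_{M,N}$, and intersecting with $\omega_1\le\beta_{M,N}$ gives $M\cap\omega_1=N\cap\omega_1$. If $M<N$, then $M\cap\beta_{M,N}\in N$; since $\omega_1$ is definable in $\mathcal A$ and hence $\omega_1\in N$, elementarity gives $M\cap\omega_1=(M\cap\beta_{M,N})\cap\omega_1\in N$. But $M\cap\omega_1$ is itself an ordinal (since $M$ is countable), and it lies in $N\cap\omega_1$, which is also an ordinal; hence $M\cap\omega_1<N\cap\omega_1$. The case $N<M$ is symmetric and yields $N\cap\omega_1<M\cap\omega_1$. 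This proves the forward directions of (1) and (2).

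For the reverse directions, suppose $M\cap\omega_1<N\cap\omega_1$. Adequacy of $\{M,N\}$ forces exactly one of $M<N$, $M\sim N$, $N<M$ to hold; but the forward implications just proved rule out $M\sim N$ (which would give equality) and $N<M$ (which would give the reverse inequality), so $M<N$. The other reverse implications in (1) and (2) are established identically. Finally, (3) is immediate: $M\le N$ iff $M<N$ or $M\sim N$, which by (1) and (2) is equivalent to $M\cap\omega_1<N\cap\omega_1$ or $M\cap\omega_1=N\cap\omega_1$, i.e.\ to $M\cap\omega_1\le N\cap\omega_1$. There is no real obstacle here; the only point to be careful about is the observation that $\beta_{M,N}\ge\omega_1$, which must be extracted from the definition of $\Lambda$ before anything else.
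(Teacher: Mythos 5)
Your proof is correct and follows essentially the same route as the paper: prove the forward implications of (1) and (2) using $\omega_1\le\beta_{M,N}$, then get the converses by ruling out the other cases via adequacy, with (3) immediate. The only cosmetic difference is that you obtain $M\cap\omega_1\in N$ from $\omega_1\in N$ and elementarity, whereas the paper notes that $M\cap\omega_1$ is an initial segment of $M\cap\beta_{M,N}$ and all such initial segments lie in $N$; both are fine.
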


\begin{proof}
Suppose that $M < N$, and we will show that 
$M \cap \omega_1 < N \cap \omega_1$. 
Since $\beta_{M,N}$ has uncountable cofinality, 
$\omega_1 \le \beta_{M,N}$. 
Therefore, $M \cap \omega_1$ is an initial segment of $M \cap \beta_{M,N}$, 
and hence is in $N$. 
So $M \cap \omega_1 < N \cap \omega_1$.

Suppose that $M \sim N$, and we will show that 
$M \cap \omega_1 = N \cap \omega_1$. 
Then $M \cap \beta_{M,N} = N \cap \beta_{M,N}$. 
Since $\omega_1 \le \beta_{M,N}$, 
$M \cap \omega_1 = N \cap \omega_1$. 

Conversely, if $M \cap \omega_1 < N \cap \omega_1$, then the implications 
which we just proved rule out the possibilities that 
$N < M$ and $N \sim M$. 
Therefore, $M < N$. 
This completes the proof of (1) and (2), and 
(3) follows immediately.
\end{proof}

\begin{lemma}
Let $A$ be an adequate set. 
Then the relation $<$ is irreflexive and transitive on $A$, 
$\sim$ is an equivalence relation on $A$, $\le$ is 
transitive on $A$, 
and the relations $<$ and $\le$ respect $\sim$.
\end{lemma}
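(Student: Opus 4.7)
The plan is to reduce everything to the corresponding (trivial) properties of the usual order on countable ordinals by repeatedly invoking Lemma 2.18. Since any two-element subset of an adequate set $A$ is itself adequate, Lemma 2.18 applies to every pair $\{M,N\} \subseteq A$ and gives the translations $M < N \Leftrightarrow M \cap \omega_1 < N \cap \omega_1$, $M \sim N \Leftrightarrow M \cap \omega_1 = N \cap \omega_1$, and $M \le N \Leftrightarrow M \cap \omega_1 \le N \cap \omega_1$. So the map $M \mapsto M \cap \omega_1$ transports the three relations on $A$ to $<$, $=$, and $\le$ on a finite set of countable ordinals.

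From this translation the conclusions are immediate. First, $<$ is irreflexive on $A$ because $<$ is irreflexive on ordinals (equivalently, $M \sim M$ always holds since $M \cap \beta_{M,M} = M \cap \beta_{M,M}$, which prevents $M < M$). Transitivity of $<$ on $A$: if $M < N$ and $N < K$ in $A$, then $M \cap \omega_1 < N \cap \omega_1 < K \cap \omega_1$, so $M \cap \omega_1 < K \cap \omega_1$, and since $\{M,K\} \subseteq A$ is adequate, Lemma 2.18(1) gives $M < K$. The same argument, with equalities or $\le$ replacing $<$ as needed, yields transitivity of $\sim$ and of $\le$, as well as reflexivity and symmetry of $\sim$.

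For the statement that $<$ and $\le$ respect $\sim$, suppose $M,M',N,N' \in A$ with $M \sim M'$, $N \sim N'$, and $M < N$. Then $M \cap \omega_1 = M' \cap \omega_1$, $N \cap \omega_1 = N' \cap \omega_1$, and $M \cap \omega_1 < N \cap \omega_1$, so $M' \cap \omega_1 < N' \cap \omega_1$, and by Lemma 2.18(1) applied to the adequate pair $\{M',N'\}$, $M' < N'$. The same argument, using Lemma 2.18(3), handles $\le$.

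The only potential subtlety, and the one thing worth verifying explicitly, is that Lemma 2.18 really does apply to every pair drawn from $A$; this is immediate from the remark just before Definition 2.15 that every subset of an adequate set is adequate. Beyond that, the proof is purely formal, since all the listed properties are inherited from the standard order on ordinals via the well-defined map $M \mapsto M \cap \omega_1$.
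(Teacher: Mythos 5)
Your proof is correct and takes essentially the same route as the paper, whose entire proof is ``Immediate from Lemma 2.17'': you spell out exactly that reduction via $M \mapsto M \cap \omega_1$, including the needed observation that any pair drawn from an adequate set is adequate (note your citation is off by one — the $\omega_1$-characterization is Lemma 2.17 in the paper, and the statement being proved is 2.18). The parenthetical claiming that $M \sim M$ by itself prevents $M < M$ is slightly loose, but it is harmless since irreflexivity already follows directly from the characterization $M < M \Leftrightarrow M \cap \omega_1 < M \cap \omega_1$.
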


\begin{proof}
Immediate from Lemma 2.17.
\end{proof}

We state a closure property of $\mathcal X$ as an assumption.

\begin{assumption}
Suppose that $M$ and $N$ are in $\mathcal X$ and 
$\{ M, N \}$ is adequate. 
Then $M \cap N \in \mathcal X$.
\end{assumption}

The proof of this assumption depends on the actual definition of 
$\mathcal X$, which we are not giving here. 
The actual definitions of the 
sets of models $\mathcal X$ and $\mathcal Y$ 
which we will use in the main consistency result, as 
well as the proofs that they satisfy the 
assumptions listed in this section, will not appear until Section 13.
We note that the last assumption is fairly easy to verify in the case that 
$\kappa = \lambda$. 
However, when $\lambda > \kappa$, the assumption is highly nontrivial, 
and requires a very careful definition of $\mathcal X$. 
Similar comments apply to the existence of stationarily many simple models, 
which we describe next.

\begin{definition}
A set $N \in \mathcal X$ is said to be \emph{simple} 
if for all $M \in \mathcal X$, if $M < N$ then $M \cap N \in N$.
\end{definition}

\begin{definition}
A set $P \in \mathcal Y$ is said to be \emph{simple} 
if for all $M \in \mathcal X$, $M \cap P \in P$.
\end{definition}

\begin{assumption}
The set of $N \in P_{\omega_1}(H(\lambda))$ such that $N \in \mathcal X$ 
and $N$ is simple is stationary.
\end{assumption}

\begin{assumption}
The set of $P \in P_{\kappa}(H(\lambda))$ such that $P \in \mathcal Y$ 
and $P$ is simple is stationary.
\end{assumption}

We now state the main theorems of the basic theory of adequate sets. 
The proofs of these results depend on technical, and sometimes tedious, 
facts about comparison points, so we will omit them.

First, we handle countable models.

\begin{proposition}
Let $A$ be adequate and $N \in A$. Then 
the set 
$$
B := A \cup \{ M \cap N : M \in A, \ M < N \}
$$
is adequate. 
Moreover, for all $M \in B$, if $M < N$ then $M \cap N \in B$.
\end{proposition}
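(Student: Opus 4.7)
The plan is to split the proof into three tasks: $B \subseteq \mathcal{X}$, the ``moreover'' clause, and pairwise adequacy. The first two are nearly immediate. For $B \subseteq \mathcal{X}$: each new element $M \cap N$ lies in $\mathcal{X}$ by Assumption 2.19, since $\{M, N\} \subseteq A$ is adequate. For the ``moreover'' clause: if $M' \in B$ with $M' < N$, then either $M' \in A$, in which case $M' \cap N \in B$ by the definition of $B$, or else $M' = M \cap N$ for some $M \in A$ with $M < N$, so that $M' \cap N = M \cap N = M' \in B$.

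The main work is pairwise adequacy. Pairs inside $A$ are already adequate. So fix $M \in A$ with $M < N$ and set $L := M \cap N$. First note that $L \cap \omega_1 = M \cap \omega_1$: since $M < N$ implies $M \cap \omega_1 < N \cap \omega_1$, and ordinals below $N \cap \omega_1$ are elements of $N$, we have $M \cap \omega_1 \subseteq N$. We must verify adequacy of $\{L, K\}$ for every $K \in A$ and of $\{L, L'\}$ for every other new element $L' = M' \cap N$ with $M' \in A$, $M' < N$.

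The clean subcase is $K = N$: since $L \subseteq N$ is countable, Proposition 2.11 applied to $\{L, N\}$ gives $L \cap \kappa \subseteq \beta_{L, N}$, so $L \cap \beta_{L, N} = L \cap \kappa$. By Lemma 2.15 applied to the pair $\{M, N\}$, $L \cap \kappa = M \cap N \cap \kappa = M \cap \beta_{M, N}$, which is in $N$ because $M < N$. Hence $L < N$. The remaining subcases ($K \in A \setminus \{N\}$, or the two-new-elements pair $\{L, L'\}$) use Lemma 2.10(3) to bound $\beta_{L, K} \le \beta_{M, K}$ (respectively $\beta_{L, L'} \le \beta_{M, M'}$), so that initial segments of $L$ up to $\beta_{L, K}$ sit inside initial segments of $M$ up to $\beta_{M, K}$. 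One then splits on whether $M < K$, $M \sim K$, or $K < M$ holds in $A$ and verifies the matching relation for $L$ versus $K$, using that every countable initial segment of $M \cap \beta_{M, K}$ is an element of $K$.

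The main obstacle is these last subcases. The difficulty is that $L = M \cap N$ need not be an element of $K$, so Lemma 2.16 does not apply, and one must instead exhibit $L \cap \beta_{L, K}$ as a set definable from data already present in $K$. The strategy is to combine the ``good'' set $M \cap \beta_{M, K} \in K$ with the identification $L \cap \kappa = M \cap \beta_{M, N} \in N$, using Proposition 2.11 to localize the relevant initial segments below the various comparison points and then intersect appropriately. The case $\{L, L'\}$ is handled in the same spirit, via the adequacy of $\{M, M'\}$ in $A$. This is the technical bookkeeping that the author alludes to in calling the proof tedious.
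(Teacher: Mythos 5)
The easy parts of your proposal are fine: $B \subseteq \mathcal X$ via Assumption 2.19, the ``moreover'' clause, and the pair $\{M \cap N, N\}$ (your argument there is correct, and is essentially Lemma 2.16/2.26-style reasoning). But the substance of the proposition — adequacy of the pairs $\{M \cap N, K\}$ for $K \in A$ other than $N$, and of pairs of two new models — is only sketched, and the sketch as stated does not go through in two of the three subcases. Write $L := M \cap N$, so $L \cap \kappa = M \cap \beta_{M,N}$ by Lemma 2.15. When $K < M$ you must show $K < L$, i.e.\ $K \cap \beta_{K,L} \in M \cap N$; the relevant object is an initial segment of $K$, not of $L$, so your plan of ``exhibiting $L \cap \beta_{L,K}$ from data already present in $K$'' addresses only the opposite direction ($M < K$). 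The missing ingredient is that $K < N$ (by Lemma 2.17, since $K \cap \omega_1 < M \cap \omega_1 < N \cap \omega_1$), whence by Lemma 2.10(3) $\beta_{K,L} \le \beta_{K,M}, \beta_{K,N}$, so $K \cap \beta_{K,L}$ is an initial segment of both $K \cap \beta_{K,M} \in M$ and $K \cap \beta_{K,N} \in N$, hence lies in $M \cap N = L$. When $K \sim M$ you must show $K \sim L$, and the crux is the inequality $\beta_{K,L} \le \beta_{M,N}$; this does not follow from Lemma 2.10(3) and Proposition 2.11 alone (the only tools you invoke), but needs Lemma 2.10(2): if $\beta_{M,N} < \beta_{K,L}$, then $L$ would have to meet $[\beta_{M,N}, \beta_{K,L})$, which is impossible since $L \cap \kappa \subseteq \beta_{M,N}$. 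With that inequality, $K \cap \beta_{K,L} = M \cap \beta_{K,L} = (M \cap \beta_{M,N}) \cap \beta_{K,L} = L \cap \beta_{K,L}$. Likewise, for two new models $L = M \cap N$, $L' = M' \cap N$ with $M \sim M'$, the needed equality of traces requires an extra transfer through $N$ (any $\gamma \in M' \cap \beta_{M',N}$ lies in $N$ because $M' < N$, hence in $M \cap \beta_{M,N}$ by Lemma 2.15), which your sketch does not supply.

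So as written the proposal leaves the core of the proposition unproven; acknowledging the bookkeeping is not the same as doing it, and the strategy you describe is misdirected for the $K < M$ and $K \sim M$ cases. For comparison, the paper itself does not prove the first statement either: it cites \cite[Proposition 1.25]{jk27}, remarking that the proof uses technical facts about the comparison point whose definition is omitted here. The good news is that your starting point can be pushed through entirely from the abstract properties available in this paper (Lemmas 2.10, 2.15, 2.17, Proposition 2.11, and the remark that all initial segments of $M \cap \beta_{M,N}$ belong to $N$), provided you add the steps above — in particular the use of Lemma 2.10(2) and the observation $K < N$.
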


\begin{proof}
The first statement is proven in \cite[Proposition 1.25]{jk27}. 
The second statement is easy to prove.
\end{proof}

\begin{proposition}[Amalgamation over countable models]
Let $A$ be adequate, $N \in A$, and suppose that for all $M \in A$, 
if $M < N$ then $M \cap N \in A$. 
Assume that $N$ is simple. 
Suppose that $B$ is adequate and 
$$
A \cap N \subseteq B \subseteq N.
$$ 
Then $A \cup B$ is adequate.
\end{proposition}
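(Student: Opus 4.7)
The plan is to verify that every pair $\{K, L\}$ with $K \in A$ and $L \in B$ is adequate; this suffices, since pairs within $A$ or within $B$ are adequate by hypothesis. I would split the argument into cases according to how $K$ sits relative to $N$.

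First, I would dispose of the easy cases. If $K \in N$, then $K \in A \cap N \subseteq B$, so $\{K, L\} \subseteq B$ and adequacy is immediate. If $K = N$, then since $L \in N$ we have $\{L\} \in N$ by elementarity, and Lemma 2.16 applied to the singleton $\{L\}$ and $N$ yields the adequacy of $\{L, N\} = \{K, L\}$. This leaves the case where $K \in A$ with $K \ne N$ and $K \notin N$; since $\{K, N\} \subseteq A$ is adequate, one of $K < N$, $K \sim N$, or $N < K$ holds.

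The central subcase is $K < N$. By hypothesis $K \cap N \in A$, and since $N$ is simple and $K < N$, also $K \cap N \in N$; hence $K' := K \cap N$ lies in $A \cap N \subseteq B$, and $\{K', L\} \subseteq B$ is adequate. I would then argue that whichever of the three relations holds between $K'$ and $L$ also holds between $K$ and $L$. The crucial ingredient is that $L \in N$ is countable, whence $L \subseteq N$, so $\cl(L \cap \kappa) \subseteq \cl(N \cap \kappa)$. Combined with Proposition 2.11 applied to $\{K, N\}$, this forces $\cl(K \cap \kappa) \cap \cl(L \cap \kappa) \subseteq \beta_{K, N}$; Lemma 2.15 then identifies $K' \cap \kappa$ with $K \cap \beta_{K, N}$, so that $K$ and $K'$ have the same $\cl$-intersection with $\cl(L \cap \kappa)$. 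Using Lemma 2.10, one then concludes that $\beta_{K, L} = \beta_{K', L}$ and $K \cap \beta_{K, L} = K' \cap \beta_{K', L}$, so the witnessing initial segments for $\{K', L\}$ serve equally well for $\{K, L\}$.

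The remaining subcases are similar in spirit. For $K \sim N$, the identity $K \cap \beta_{K,N} = N \cap \beta_{K,N}$ lets $N$ play the role that $K'$ played above: since $L \in N$ and $L \cap \omega_1 < N \cap \omega_1 = K \cap \omega_1$, one verifies $L < K$ directly by showing $L \cap \beta_{L, K} = L \cap \beta_{L, N} \in N \cap \beta_{K, N} = K \cap \beta_{K, N} \subseteq K$. For $N < K$, Lemma 2.15 gives $N \cap K = N \cap \beta_{N,K} \in K$, and $L \subseteq N$ confines $L$'s ordinals to $N \cap \beta_{N,K}$, yielding $L < K$ by an analogous comparison-point computation. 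The principal obstacle in all three subcases is the bookkeeping around comparison points: one must confirm that, because $L$ is trapped inside $N$, the relation between $K$ and $L$ is governed by the portion of $K$ preserved inside $B$ (either $K \cap N$ or the $\beta_{K,N}$-initial segment shared with $N$). Once this is set up, transferring the relation from the substitute pair to $\{K, L\}$ is routine.
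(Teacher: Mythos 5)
Your overall plan (reduce to pairs $\{K,L\}$ with $K\in A$, $L\in B$; replace $K$ by $K':=K\cap N$, which lands in $A\cap N\subseteq B$ by the hypothesis on $A$ together with simplicity of $N$; then transfer the relation using $L\in N\Rightarrow L\subseteq N$) is a sensible skeleton, and your easy cases are fine. But note that the paper does not prove this proposition at all: its "proof" is a citation of \cite[Proposition 1.29]{jk27}, and it says explicitly before Proposition 2.24 that these results depend on technical facts about comparison points which are omitted. That is exactly where your argument has a genuine gap. The pivotal step "using Lemma 2.10, one concludes that $\beta_{K,L}=\beta_{K',L}$ and $K\cap\beta_{K,L}=K'\cap\beta_{K',L}$" is not available from the paper's interface. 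Lemma 2.10(3) gives only $\beta_{K',L}\le\beta_{K,L}$ (monotonicity under $K'\subseteq K$); nothing in Lemma 2.10 or Proposition 2.11 yields the reverse inequality. Your computation that $\cl(K\cap\kappa)\cap\cl(L\cap\kappa)=\cl(K'\cap\kappa)\cap\cl(L\cap\kappa)\subseteq\beta_{K,N}$ is correct, but Proposition 2.11 is only a lower-bound constraint on the comparison point (the closure intersection sits below it), so equal closure intersections do not force equal comparison points from the listed properties. In particular you cannot rule out, from this interface, that $\beta_{K,L}>\beta_{K,N}$ while $K$ has ordinals in $[\beta_{K,N},\beta_{K,L})$; such ordinals are outside $N\supseteq L$, and then $K\cap\beta_{K,L}$ is neither an element of $L$ nor equal to $L\cap\beta_{K,L}$, so the witnesses for $\{K',L\}$ no longer serve. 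The fact you actually need — that $\beta_{M,N}$ is determined by $\cl(M\cap\kappa)\cap\cl(N\cap\kappa)$ — is true for the real definition in \cite{jk27}, but it is precisely the kind of fact this paper black-boxes; invoking "Lemma 2.10" does not supply it. The same unjustified identification ($\beta_{L,K}=\beta_{L,N}$) appears in your $K\sim N$ subcase, though there monotonicity at least gives $\beta_{L,K}\le\beta_{N,K}$ since $L\subseteq N$.

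There is a second gap in the $K\sim N$ and $N<K$ subcases: your displayed chains at best yield $L\cap\beta_{L,K}\subseteq K$, whereas $L<K$ requires the membership $L\cap\beta_{L,K}\in K$ (and the step "$L\cap\beta_{L,N}\in N\cap\beta_{K,N}$" is type-confused as written: a set of ordinals is being asserted to belong to a set of ordinals). From $K\cap\beta_{K,N}=N\cap\beta_{K,N}$ you only get agreement of $K$ and $N$ on ordinals below $\beta_{K,N}$; you do get $L\cap\beta_{L,K}\in N$ (it is one of the countably many initial segments of $L\cap\kappa$ and $L\in N$), but transferring that membership from $N$ to $K$ is not automatic and needs the Skolem-hull machinery (Assumption 2.6, Lemmas 2.7 and 2.8, applied at an $\alpha$ with $Sk(\alpha)\cap\kappa=\alpha$) together with further facts about $\Lambda$ and $\beta_{M,N}$ that the paper defers to \cite{jk27}. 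Likewise, in the $N<K$ subcase it is not true that $L\subseteq N$ confines $L$'s ordinals to $N\cap\beta_{N,K}$; only $L\cap\beta_{L,K}$ is so confined, via $\beta_{L,K}\le\beta_{N,K}$. So the honest options are to cite \cite[Proposition 1.29]{jk27} as the paper does, or to import the actual definition of the comparison point and prove the two missing ingredients: invariance of the comparison point under replacing $K$ by $K\cap N$ when the other model lies inside $N$, and the membership-transfer step from $N$ to $K$.
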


\begin{proof}
See \cite[Proposition 1.29]{jk27}.
\end{proof}

Let us derive an easy consequence of Proposition 2.24.

\begin{lemma}
Suppose that $M < N$. 
Then $M \sim M \cap N$.
\end{lemma}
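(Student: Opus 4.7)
The plan is to verify $M \sim M \cap N$ directly from Definition 2.13(2). Set $\beta := \beta_{M, M \cap N}$; this makes sense because $M < N$ means $\{M, N\}$ is adequate, so by Assumption 2.20, $M \cap N \in \mathcal{X}$. What must be shown is $M \cap \beta = (M \cap N) \cap \beta$, and the reverse inclusion is immediate from $M \cap N \subseteq M$.

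The key observation is that $\beta \le \beta_{M,N}$. This is a direct application of Lemma 2.10(3): taking the inclusion $M \cap N \subseteq N$ and comparing against $M$, together with the symmetry from Lemma 2.10(1), yields
$$
\beta_{M, M \cap N} \;=\; \beta_{M \cap N, M} \;\le\; \beta_{N, M} \;=\; \beta_{M,N}.
$$

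Now since $M < N$, by definition $M \cap \beta_{M,N} \in N$. As already observed in the discussion following Definition 2.13, a countable element of $N$ is a subset of $N$ (using that $N \prec \mathcal{A}$ and $\mathcal{A}$ has definable Skolem functions), so $M \cap \beta_{M,N} \subseteq N$. Combined with the inequality above, $M \cap \beta \subseteq M \cap \beta_{M,N} \subseteq N$, and hence $M \cap \beta \subseteq (M \cap N) \cap \beta$, giving the required equality. There is no real obstacle here; the whole proof turns on locating the correct monotonicity property of comparison points, which is exactly Lemma 2.10(3).
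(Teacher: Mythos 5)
Your proof is correct, but it takes a different route from the paper. You verify the relation $M \sim M \cap N$ directly from Definition 2.13(2): the inclusion $(M \cap N) \cap \beta_{M, M\cap N} \subseteq M \cap \beta_{M, M\cap N}$ is trivial, and for the other direction you use the monotonicity of comparison points, Lemma 2.10(3) (together with the symmetry in Lemma 2.10(1)), to get $\beta_{M, M\cap N} \le \beta_{M,N}$, and then the fact that $M < N$ gives $M \cap \beta_{M,N} \subseteq N$ (the countable-element observation recorded after Definition 2.13). The paper instead first applies Proposition 2.24 to conclude that $\{ M, N, M \cap N \}$ is adequate, uses Lemma 2.15 to see that $M$ and $M \cap N$ have the same intersection with $\omega_1$, and then invokes the characterization of $\sim$ for adequate pairs in Lemma 2.17(2). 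Your argument is more self-contained: it never needs adequacy of $\{ M, M \cap N \}$ nor the (externally cited, nontrivial) Proposition 2.24, only the elementary properties of $\beta_{M,N}$ from Lemma 2.10; the paper's argument, by contrast, reuses machinery it has already set up and avoids handling comparison points by hand, which is in the spirit of how $\sim$ is used elsewhere in the paper. One small slip: the closure of $\mathcal X$ under intersections of adequate pairs is Assumption 2.19, not 2.20 (Definition 2.20 is the notion of a simple countable model); the content you invoke is the right one.
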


Note that by Assumption 2.19, $M \cap N \in \mathcal X$.

\begin{proof}
Applying Proposition 2.24 to the adequate set $\{ M, N \}$, 
we get that the set $\{ M, N, M \cap N \}$ is adequate. 
In particular, $\{ M, M \cap N \}$ is adequate. 
By Lemma 2.15, $M < N$ implies that 
$M \cap \beta_{M,N} = M \cap N \cap \beta_{M,N}$. 
Since $\omega_1 \le \beta_{M,N}$, it follows that 
$M \cap \omega_1 = M \cap N \cap \omega_1$. 
As $\{ M, M \cap N \}$ is adequate, Lemma 2.17(2) implies that 
$M \sim M \cap N$.
\end{proof}

Next, we handle uncountable models.

\begin{proposition}
Let $A$ be adequate and $P \in \mathcal Y$. Then 
the set 
$$
B := A \cup \{ M \cap P : M \in A \}
$$
is adequate. 
Moreover, for all $M \in B$, $M \cap P \in B$.
\end{proposition}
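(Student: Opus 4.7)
The ``moreover'' clause is immediate from the idempotence $(M \cap P) \cap P = M \cap P$: for $M \in A$, $M \cap P \in B$ by definition, and if $M'$ has the form $M \cap P$ for some $M \in A$, then $M' \cap P = M \cap P \cap P = M \cap P = M'$, so again $M' \cap P \in B$. Before turning to adequacy, note that Assumption 2.5(2) guarantees $M \cap P \in \mathcal X$ for every $M \in A$, so $B \subseteq \mathcal X$.

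To show $B$ is adequate, it suffices to verify pairwise adequacy. Fix two distinct elements of $B$ and split into three cases: (i) both in $A$; (ii) one is $N \in A$ and the other is $M \cap P$ for some $M \in A$; (iii) both have the form $M \cap P$, $N \cap P$ for $M, N \in A$. Case (i) is immediate by adequacy of $A$. The plan for (ii) is to use Lemma 2.10(3) together with $M \cap P \subseteq M$ to obtain $\beta_{M \cap P, N} \leq \beta_{M, N}$, then use Proposition 2.11 and the definition of the comparison point (\cite[Definition 1.14]{jk27}) to show that $(M \cap P) \cap \beta_{M \cap P, N}$ either belongs to $N$ or equals $N \cap \beta_{M \cap P, N}$. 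Case (iii) is analogous but one applies the bound from Lemma 2.10(3) on both sides to get $\beta_{M \cap P, N \cap P} \leq \beta_{M, N}$, and then tracks how intersection with $P$ interacts with the initial segment computation, using the relation between $M$ and $N$ inherited from $A$.

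Once the pairs are known to be adequate, Lemma 2.17 identifies the specific relation via $\omega_1$-values. Since $M \cap P \subseteq M$, we have $(M \cap P) \cap \omega_1 \leq M \cap \omega_1$, which by Lemma 2.17(3) will give $M \cap P \leq M$, and similar arithmetic settles the relation between $M \cap P$ and $N \cap P$ based on how $M$ and $N$ compare inside $A$.

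The main obstacle will be Case (iii), where one needs precise control of $\beta_{M \cap P, N \cap P}$ in terms of $\beta_{M,N}$ and the ``uncountable boundary'' $P \cap \kappa$. This is the analogue, over an uncountable model, of the content behind Proposition 2.24, and it is exactly the place where the technical properties of the comparison point from \cite{jk27} (rather than just the three features stated in Lemma 2.10) come into play. The actual verification, which I would omit, parallels the comparison-point lemmas of \cite[Section 1]{jk27}.
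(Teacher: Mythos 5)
Your handling of the ``moreover'' clause is correct and is exactly what the paper has in mind when it calls that part easy. The problem is the main claim. The paper itself does not prove adequacy of $B$ from the material in Section 2: it cites \cite[Proposition 1.33]{jk27}, precisely because the verification needs the actual definition of the comparison point, not just the black-box facts recorded here (Lemma 2.10, Proposition 2.11, Lemma 2.12). Your proposal does not supply that verification either: in cases (ii) and (iii) you describe what should be controlled and then explicitly omit the computation, deferring to ``technical properties of the comparison point'' that are not stated in this paper. That omitted step is not a routine detail --- it is the entire content of the proposition --- so as written the proof has a genuine gap.

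There is also a concrete error in the announced target for case (ii). You say you will show that $(M \cap P) \cap \beta_{M \cap P, N}$ either belongs to $N$ or equals $N \cap \beta_{M \cap P, N}$, i.e.\ that $M \cap P \le N$. That cannot hold in general: since $\omega_1 \subseteq P$, one has $(M \cap P) \cap \omega_1 = M \cap \omega_1$, so if $N < M$ in $A$ then $N \cap \omega_1 < (M \cap P) \cap \omega_1$ and the only admissible outcome (once the pair is known to be adequate) is $N < M \cap P$, which is incompatible with $M \cap P \le N$. So even as a plan, case (ii) must aim at the full trichotomy, with the outcome dictated by how $M$ and $N$ compare inside $A$, and likewise in case (iii). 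One further caution: you cannot shortcut this by invoking Lemma 2.29 ($M \sim M \cap P$), since in the paper that lemma is deduced from the present proposition, so using it here would be circular.
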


\begin{proof}
The first statement is proven in \cite[Proposition 1.33]{jk27}. 
The second statement is easy to prove.
\end{proof}

\begin{proposition}[Amalgamation over uncountable models]
Let $A$ be adequate, $P \in \mathcal Y$, and suppose that for all $M \in A$, 
$M \cap P \in A$. 
Assume that $P$ is simple. 
Suppose that $B$ is adequate and 
$$
A \cap P \subseteq B \subseteq P.
$$ 
Then $A \cup B$ is adequate.
\end{proposition}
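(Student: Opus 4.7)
The plan is to reduce to pairs of models that are already inside $B$. Since $A$ and $B$ are individually adequate, I only need to verify adequacy for mixed pairs $\{M,N\}$ with $M \in A \setminus B$ and $N \in B$. Fix such a pair and set $M' := M \cap P$. By hypothesis $M' \in A$, and by simplicity of $P$ we have $M' = M \cap P \in P$, so $M' \in A \cap P \subseteq B$; hence $\{M',N\}$ is adequate inside $B$. Since $M \in A$ but $M \notin B$, the inclusion $A \cap P \subseteq B$ forces $M \notin P$, and thus $M' \subsetneq M$ (if $M \subseteq P$, then $M = M \cap P \in P$ by simplicity of $P$).

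Next I transfer the known relation between $M'$ and $N$ up to the pair $\{M,N\}$. Since $N \in B \subseteq P$, Lemma 2.12 gives $\beta_{M,N} < P \cap \kappa$, so every ordinal in $M \cap \beta_{M,N}$ lies in $M \cap P = M'$, yielding
$$
M \cap \beta_{M,N} = M' \cap \beta_{M,N}.
$$
The central step is then to prove the equality $\beta_{M,N} = \beta_{M',N}$. One direction, $\beta_{M',N} \le \beta_{M,N}$, is Lemma 2.10(3) applied with $K := M' \subseteq M$. For the reverse direction, I would suppose $\beta_{M',N} < \beta_{M,N}$ and apply Lemma 2.10(2) (using the symmetry $\beta_{M,N} = \beta_{N,M}$ from Lemma 2.10(1)) to both $M$ and $N$ inside the interval $[\beta_{M',N}, \beta_{M,N})$; since $M \cap \beta_{M,N} \subseteq M'$, any ordinal so produced lies in $M'$ as well as in $N$, generating a member of $\cl(M' \cap \kappa) \cap \cl(N \cap \kappa)$ at or above $\beta_{M',N}$, contradicting Proposition 2.11 applied to the pair $\{M',N\}$. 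This equality of comparison points is the step I expect to be the main technical obstacle, as it is the only place where the specific definition of $\beta_{M,N}$ really enters the argument.

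With both equalities in hand, each of the three relations $M' < N$, $M' \sim N$, $N < M'$ transfers verbatim to $\{M,N\}$. Indeed, in the first case $M \cap \beta_{M,N} = M' \cap \beta_{M',N} \in N$, so $M < N$; in the second $M \cap \beta_{M,N} = M' \cap \beta_{M',N} = N \cap \beta_{M',N} = N \cap \beta_{M,N}$, so $M \sim N$; and in the third $N \cap \beta_{N,M} = N \cap \beta_{N,M'} \in M' \subseteq M$, so $N < M$. In every case $\{M,N\}$ is adequate, completing the verification and thus the proof.
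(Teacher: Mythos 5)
Your reduction and transfer scheme are sound and in fact mirror the argument that the paper outsources to \cite[Proposition 1.35]{jk27}: with $M' := M \cap P$ you correctly get $M' \in A \cap P \subseteq B$, hence $\{M',N\}$ adequate; Lemma 2.12 (applied to $N \in P \cap \mathcal X$) does give $\beta_{M,N} < P \cap \kappa$ and hence $M \cap \beta_{M,N} = M' \cap \beta_{M,N}$; and \emph{if} one knows $\beta_{M',N} = \beta_{M,N}$, then each of $M' < N$, $M' \sim N$, $N < M'$ transfers to $\{M,N\}$ exactly as you describe. The gap is precisely at the step you flagged. Your argument for $\beta_{M,N} \le \beta_{M',N}$ does not go through: applying Lemma 2.10(2) with $\beta := \beta_{M',N}$ to $\{M,N\}$ (and using symmetry for $N$) produces an ordinal $\gamma \in M \cap [\beta_{M',N},\beta_{M,N})$ and an ordinal $\delta \in N \cap [\beta_{M',N},\beta_{M,N})$, but these are in general \emph{different} ordinals. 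Knowing $\gamma \in M'$ says nothing about $\gamma \in \cl(N \cap \kappa)$, and $\delta$ need not lie in $\cl(M' \cap \kappa)$; Proposition 2.11 for $\{M',N\}$ only yields a contradiction if you exhibit a single ordinal $\ge \beta_{M',N}$ in both closures, and nothing in your construction provides one. You also cannot iterate Lemma 2.10(2) to build a common limit point, since the interval $[\beta_{M',N},\beta_{M,N})$ may contain no further element of $\Lambda$.

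This is not just a presentational slip: the equality $\beta_{M\cap P,N} = \beta_{M,N}$ (for $N \in P$) is simply not derivable from the blackbox properties in Lemma 2.10, Proposition 2.11 and Lemma 2.12. Those properties are compatible with $\beta_{M\cap P,N}$ being strictly smaller, because Proposition 2.11 only forbids \emph{common} closure points of $M'$ and $N$ in the gap, not separate elements of each; so any proof of the equality must use the actual definition of the comparison point, which this paper deliberately withholds ("the actual definition of $\beta_{M,N}$ is not important for us in this paper"). The equality is true and is part of the machinery developed in \cite{jk27}, where the corresponding facts about $\beta_{M,N}$ and models in $\mathcal Y$ are proved from the definition and then used to prove this amalgamation proposition; that is exactly the content hidden behind the citation of \cite[Proposition 1.35]{jk27}. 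So your outline is the right one, but the central step is a genuine hole that cannot be closed with the tools available inside this paper alone.
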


\begin{proof}
See \cite[Proposition 1.35]{jk27}.
\end{proof}

\begin{lemma}
Suppose that $M \in \mathcal X$ and $P \in \mathcal Y$. 
Then $M \sim M \cap P$.
\end{lemma}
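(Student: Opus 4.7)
The plan is to reduce this to Lemma 2.17(2) applied to the pair $\{M, M \cap P\}$, after checking that this pair is adequate and that $M \cap \omega_1 = (M \cap P) \cap \omega_1$.

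First I would observe that $M \cap P \in \mathcal X$ and $\{M, M \cap P\}$ is an adequate set. This follows by applying Proposition 2.27 to the (trivially adequate) set $A = \{M\}$ together with the uncountable model $P \in \mathcal Y$: the conclusion gives that $B = \{M, M \cap P\}$ is adequate, which in particular puts $M \cap P$ into $\mathcal X$ and makes $M \sim M \cap P$, $M < M \cap P$, or $M \cap P < M$ the only possibilities.

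Next I would verify $M \cap \omega_1 = (M \cap P) \cap \omega_1$. The inclusion $\supseteq$ is immediate since $M \cap P \subseteq M$. For $\subseteq$, I would use the defining properties of $\mathcal Y$ from Notation 2.4: $P \cap \kappa$ is an ordinal and $\cf(P \cap \kappa) > \omega$. Combined with $\kappa \geq \omega_2$ from Notation 2.1, this forces $\omega_1 \leq P \cap \kappa$, so $\omega_1 \subseteq P$. Hence $M \cap \omega_1 \subseteq M \cap P$, and therefore $M \cap \omega_1 \subseteq (M \cap P) \cap \omega_1$.

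With adequacy of $\{M, M \cap P\}$ established and $M \cap \omega_1 = (M \cap P) \cap \omega_1$ verified, Lemma 2.17(2) immediately yields $M \sim M \cap P$. There is no real obstacle here; the only point that needs care is knowing that $M \cap P$ lands in $\mathcal X$ in a way compatible with the adequacy relation, which is exactly what the uncountable amalgamation machinery of Proposition 2.27 supplies for free when applied to $A = \{M\}$.
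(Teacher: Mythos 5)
Your proof is correct and follows essentially the same route as the paper: apply Proposition 2.27 to the adequate set $\{M\}$ to get adequacy of $\{M, M \cap P\}$, note that $\omega_1 \le P \cap \kappa$ gives $M \cap \omega_1 = M \cap P \cap \omega_1$, and conclude via Lemma 2.17(2). The only minor quibble is that membership of $M \cap P$ in $\mathcal X$ comes from Assumption 2.5(2) rather than being a consequence of Proposition 2.27 itself, as the paper notes right after the lemma statement.
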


Note that by Assumption 2.5(2), $M \cap P \in \mathcal X$.

\begin{proof}
Applying Proposition 2.27 to the adequate set $\{ M \}$, 
we get that $\{ M, M \cap P \}$ is adequate. 
Since $\omega_1 \le P \cap \kappa$, we have that 
$M \cap \omega_1 = M \cap P \cap \omega_1$. 
Hence, by Lemma 2.17(2), $M \sim M \cap P$.
\end{proof}

We will need one more result about simple models.

\begin{lemma}
Suppose that $N \in \mathcal X$ is simple and $P \in \mathcal Y \cap N$ 
is simple. 
Then $N \cap P$ is simple.
\end{lemma}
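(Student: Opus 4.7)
By Assumption 2.5(2), $N \cap P \in \mathcal X$. Fix $M \in \mathcal X$ with $M < N \cap P$; the plan is to verify $M \cap (N \cap P) \in N \cap P$ by showing separately that it lies in $P$ and in $N$.

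For membership in $P$: since $\{M, N \cap P\}$ is adequate, Assumption 2.19 gives $M \cap (N \cap P) \in \mathcal X$, and then simplicity of $P$ applied to this countable model yields $M \cap (N \cap P) = (M \cap (N \cap P)) \cap P \in P$.

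For membership in $N$: I would first upgrade $M < N \cap P$ to $M < N$, and then use simplicity of $N$ together with $P \in N$. By Lemma 2.29, $N \sim N \cap P$, and since $\omega_1 \le \beta_{N, N \cap P}$ this forces $N \cap \omega_1 = (N \cap P) \cap \omega_1$. Combined with $M \cap \omega_1 < (N \cap P) \cap \omega_1$ obtained from $M < N \cap P$ via Lemma 2.17(1), we get $M \cap \omega_1 < N \cap \omega_1$, so adequacy of $\{M, N\}$ together with Lemma 2.17(1) gives $M < N$. Simplicity of $N$ then yields $M \cap N \in N$, and since $P \in N$ and intersection is definable in $\mathcal A$, elementarity of $N$ produces $(M \cap N) \cap P \in N$, which is exactly $M \cap (N \cap P)$.

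The main obstacle I expect is the step which silently uses that $\{M, N\}$ is adequate, since Lemma 2.17 requires adequacy as a hypothesis. This should be inherent to the side-condition framework being developed (with $\mathcal X$ arranged so that any two of its members are comparable under $<, \sim, >$); if it is not automatic, one can extract it by first applying Proposition 2.27 to the adequate set $\{M, N \cap P\}$ with the model $P$ to produce the adequate set $\{M, N \cap P, M \cap P\}$, and then combining this with the trivially adequate set $\{N, N \cap P\}$ arising from $N \sim N \cap P$ to locate $M$ and $N$ in a common adequate set.
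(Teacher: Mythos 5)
Your argument for membership in $P$ is fine and is the same as the paper's: $M \cap (N \cap P) \in \mathcal X$ by Assumption 2.19, and simplicity of $P$ applied to this model gives $M \cap (N \cap P) \in P$. The gap is on the $N$ side. To conclude $M < N$ from $M \cap \omega_1 < N \cap \omega_1$ via Lemma 2.17(1) you must already know that $\{ M, N \}$ is adequate, and this is neither a hypothesis nor a consequence of $M < N \cap P$. Adequacy of a pair is a genuinely restrictive condition in this framework: $M < N \cap P$ only determines $M$ below $\beta_{M, N \cap P}$, and since $N \cap P \in P$ (simplicity of $P$), Lemma 2.12 gives $\beta_{M, N \cap P} < P \cap \kappa$; above $P \cap \kappa$ the models $M$ and $N$ can interact arbitrarily. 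In particular $\beta_{M,N}$ can be much larger than $\beta_{M, N \cap P}$ (for instance if $M$ and $N$ share an ordinal above $P \cap \kappa$), and then each of $M$ and $N$ may contain ordinals below $\beta_{M,N}$ missing from the other, which rules out all three of $M < N$, $M \sim N$, $N < M$ (recall that a countable element of $N$ is a subset of $N$). So the claim $M < N$ can simply fail. Your proposed repair does not close this: knowing that $\{ M, N \cap P, M \cap P \}$ and $\{ N, N \cap P \}$ are each adequate does not make their union adequate -- adequacy is not preserved by uniting adequate sets that share a member, which is precisely why the amalgamation results (Propositions 2.25 and 2.28) carry hypotheses, and neither applies here since $M$ lies in neither $N$ nor $P$.

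The paper's proof sidesteps this by never comparing $M$ with $N$: it shows the weaker statement $M \cap N \cap P < N$, which is all that is needed, since simplicity of $N$ applied to the model $M \cap N \cap P$ then gives $(M \cap N \cap P) \cap N = M \cap (N \cap P) \in N$. Concretely, from $M < N \cap P$ one has $M \cap \beta_{M, N \cap P} \in N \cap P$; one checks $\beta_{M \cap N \cap P, N} \le \beta_{M, N \cap P}$ (via Lemma 2.10(2) and Proposition 2.11), so $M \cap \beta_{M \cap N \cap P, N} \in N$, and since $M \cap \beta_{M \cap N \cap P, N} \subseteq M \cap \beta_{M, N \cap P} \subseteq N \cap P$, this set equals $(M \cap N \cap P) \cap \beta_{M \cap N \cap P, N}$, witnessing $M \cap N \cap P < N$. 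If you want to salvage your outline, replace the step ``$M < N$'' by this comparison of $M \cap N \cap P$ with $N$; the rest of your argument then goes through without needing $P \in N$ and elementarity at the end.
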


Note that by Assumption 2.5(2), $N \cap P \in \mathcal X$.

\begin{proof}
Let $M \in \mathcal X$ be such that $M < N \cap P$, 
and we will show that $M \cap (N \cap P) \in N \cap P$. 
It suffices to show that $M \cap N \cap P < N$. 
For then, since $N$ is simple, 
$$
M \cap (N \cap P) = (M \cap N \cap P) \cap N \in N,
$$
and since $P$ is simple,
$$
M \cap (N \cap P) = (M \cap N \cap P) \cap P \in P.
$$
So $M \cap (N \cap P) \in N \cap P$.

Since $M < N \cap P$, we have that 
$M \cap \beta_{M,N \cap P} \in N \cap P$. 
In particular, $M \cap \beta_{M,N \cap P} \in N$. 
We claim that $\beta_{M \cap N \cap P,N} \le \beta_{M,N \cap P}$. 
If not, then by Lemma 2.10(2), we can fix 
$\gamma \in 
(M \cap N \cap P) \cap [\beta_{M,N \cap P},\beta_{M \cap N \cap P,N})$. 
Then by Proposition 2.11, 
$\gamma < \beta_{M,N \cap P}$, which is a contradiction.

Since $M \cap \beta_{M,N \cap P} \in N$ and 
$\beta_{M \cap N \cap P,N} \le \beta_{M,N \cap P}$, it follows that 
$$
M \cap \beta_{M \cap N \cap P,N} \in N.
$$
But $M < N \cap P$ implies that 
$$
M \cap \beta_{M \cap N \cap P,N} \subseteq 
M \cap \beta_{M,N \cap P} \subseteq N \cap P.
$$
Thus, 
$$
M \cap \beta_{M \cap N \cap P,N} = 
(M \cap N \cap P) \cap \beta_{M \cap N \cap P,N},
$$
and so this set is in $N$. 
Hence, $M \cap N \cap P < N$.
\end{proof}

We end this section by deriving some specialized 
consequences of adequacy, which 
will play a role in the arguments concerning 
the forcing poset developed in Part II.

\begin{definition}
A set $a \subseteq P_{\omega_1}(\kappa)$ is said to be an 
\emph{$\in$-chain} if for all $x$ and $y$ in $a$, either 
$x = y$, $x \in Sk(y)$, or $y \in Sk(x)$.
\end{definition}

\begin{lemma}
Suppose that $a \subseteq P_{\omega_1}(\kappa)$ is an $\in$-chain. 
Assume that for all $x \in a$, $Sk(x) \cap \kappa = x$. 
Then for all $x$ and $y$ in $a$, $x \in Sk(y)$ iff $\sup(x) < \sup(y)$, 
and $x = y$ iff $\sup(x) = \sup(y)$.
\end{lemma}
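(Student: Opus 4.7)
The plan is to proceed in two stages. First, I would establish that for every $y \in a$, $\sup(y)$ is a limit ordinal that does not belong to $y$. Specifically, if $\alpha \in y$, then $\alpha \in Sk(y)$, so $\alpha + 1 \in Sk(y)$ by definability of the successor; since $\alpha + 1 < \kappa$ (as $\kappa \ge \omega_2$ is regular), we get $\alpha + 1 \in Sk(y) \cap \kappa = y$. Because $y$ is countable and $\kappa$ is regular uncountable, $\sup(y) < \kappa$, so this closure under successor precludes $\sup(y) \in y$ and forces $\sup(y)$ to be a limit ordinal. Note also that the hypothesis $Sk(x) \cap \kappa = x$ implies $x$ is nonempty, since all finite ordinals and $\omega$ are definable and hence lie in $Sk(x) \cap \kappa$.

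Next, I would prove the key forward implication: $x \in Sk(y) \implies \sup(x) < \sup(y)$. The idea is that $\mathcal A$ sees $x$ as a countable subset of $\kappa$, so by elementarity there is a surjection $f : \omega \to x$ in $Sk(y)$; since $\omega \subseteq Sk(y)$ by definability, each value $f(n)$ lies in $Sk(y)$, giving $x \subseteq Sk(y) \cap \kappa = y$. Moreover $\sup(x)$ is definable from $x$ and hence lies in $Sk(y) \cap \kappa = y$. Combining this with the first stage yields $\sup(x) \in y$ while $\sup(y) \notin y$, so $\sup(x) < \sup(y)$.

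From these two facts, both biconditionals follow immediately from the $\in$-chain hypothesis, which furnishes one of the three alternatives $x = y$, $x \in Sk(y)$, or $y \in Sk(x)$ for each pair. Given $\sup(x) = \sup(y)$, neither $x \in Sk(y)$ nor $y \in Sk(x)$ can hold, since each would yield a strict inequality by the forward implication; hence $x = y$. Given $\sup(x) < \sup(y)$, both $x = y$ and $y \in Sk(x)$ are ruled out, leaving $x \in Sk(y)$. The main technical point is the elementarity argument that $x \in Sk(y)$ together with countability of $x$ forces $x \subseteq Sk(y)$; everything else is routine bookkeeping with definable Skolem functions.
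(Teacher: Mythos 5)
Your proof is correct, and since the paper dismisses this lemma as ``Straightforward,'' your argument is exactly the intended one: closure of each $y = Sk(y)\cap\kappa$ under successor gives $\sup(y)\notin y$, the surjection-from-$\omega$ elementarity argument gives $x\subseteq Sk(y)$ and $\sup(x)\in Sk(y)\cap\kappa = y$ whenever $x\in Sk(y)$, and the trichotomy of the $\in$-chain then yields both biconditionals. No gaps.
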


\begin{proof}
Straightforward.
\end{proof}

\begin{lemma}
Let $A$ be an adequate set, $\alpha \in \Lambda$, and assume that 
$Sk(\alpha) \cap \kappa = \alpha$. 
Then the set 
$$
C := \{ M \cap \alpha : M \in A, \ \alpha \in M \}
$$
is a finite $\in$-chain and a subset of $Sk(\alpha)$.
\end{lemma}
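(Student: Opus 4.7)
The plan is to verify the three claims (finiteness, containment in $Sk(\alpha)$, and the $\in$-chain property) in order, with most of the work going into the last one. Finiteness is immediate since $A$ itself is finite. The inclusion $C \subseteq Sk(\alpha)$ is a direct appeal to Assumption 2.6: for any $M \in A$ with $\alpha \in M$, we have $M \cap \alpha \in Sk(\alpha)$ (noting $\alpha \in \Lambda$ and $Sk(\alpha) \cap \kappa = \alpha$ by hypothesis).

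The core of the argument is verifying the $\in$-chain property. Fix $M, N \in A$ both containing $\alpha$; I need to show that $M \cap \alpha$ and $N \cap \alpha$ are comparable. Since $A$ is adequate, I may assume without loss of generality that $M \le N$. The key preliminary observation is that $\alpha < \beta_{M,N}$: indeed, $\alpha \in M \cap N \cap \kappa$, so $\alpha \in \cl(M \cap \kappa) \cap \cl(N \cap \kappa)$, and Proposition 2.11 forces $\alpha < \beta_{M,N}$.

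Now I split into the two subcases given by Definition 2.13. If $M \sim N$, then $M \cap \beta_{M,N} = N \cap \beta_{M,N}$, and intersecting both sides with $\alpha \subseteq \beta_{M,N}$ yields $M \cap \alpha = N \cap \alpha$. If instead $M < N$, then $M \cap \beta_{M,N} \in N$; combining this with $\alpha \in N$ and the fact that $\alpha < \beta_{M,N}$, elementarity gives
\[
M \cap \alpha = (M \cap \beta_{M,N}) \cap \alpha \in N.
\]
Then Lemma 2.8(1), applied with $M \cap \alpha \in N$ and the standing assumption on $\alpha$, yields $M \cap \alpha \in Sk(N \cap \alpha)$, as required.

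There is no real obstacle here; the only point that requires any care is spotting that $\alpha$ lies strictly below the comparison point $\beta_{M,N}$, which is what lets the $\sim$-case collapse the two intersections and the $<$-case feed into Lemma 2.8(1). Everything else is bookkeeping from the definitions and the earlier assumptions.
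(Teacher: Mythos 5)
Your proof is correct and follows essentially the same route as the paper: Assumption 2.6 for containment in $Sk(\alpha)$, the observation $\alpha < \beta_{M,N}$ via Proposition 2.11, and the split into $M \sim N$ versus $M < N$ with Lemma 2.8(1) finishing the second case. The only (harmless) difference is that you get $M \cap \alpha \in N$ by elementarity from $\alpha, M \cap \beta_{M,N} \in N$, whereas the paper uses the remark that every initial segment of $M \cap \beta_{M,N}$ lies in $N$; both are fine.
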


\begin{proof}
By Assumption 2.6, $C$ is a subset of $Sk(\alpha)$. 
Let $M$ and $N$ be in $A$ such that $\alpha \in M \cap N$. 
We will show that either $M \cap \alpha = N \cap \alpha$, 
$M \cap \alpha \in Sk(N \cap \alpha)$, or $N \cap \alpha \in Sk(M \cap \alpha)$. 
Without loss of generality, assume that $M \le N$. 
Then either $M \cap \beta_{M,N} = N \cap \beta_{M,N}$, or 
$M \cap \beta_{M,N} \in N$.

Since $\alpha \in M \cap N \cap \kappa$, it follows that 
$\alpha < \beta_{M,N}$ by Proposition 2.11. 
So if $M \cap \beta_{M,N} = N \cap \beta_{M,N}$, then intersecting 
both sides of this equation with $\alpha$ we get that 
$M \cap \alpha = N \cap \alpha$. 
Assume that $M \cap \beta_{M,N} \in N$. 
Then since $M \cap \alpha$ is an initial segment of $M \cap \beta_{M,N}$, 
$M \cap \alpha \in N$. 
By Lemma 2.8(1), $M \cap \alpha \in Sk(N \cap \alpha)$.
\end{proof}

\bigskip

\addcontentsline{toc}{section}{3. Remainders}

\textbf{\S 3. Remainders}

\stepcounter{section}

\bigskip

In the standard development of the basic ideas of adequate sets, the next topic 
which comes up is the idea of a remainder point 
(see \cite[Section 2]{jk27}). 
In this paper, however, only a particular kind of remainder point 
will be relevant, namely, those which are in the set $r^*$ defined next.

\begin{definition}
Let $A$ be an adequate set. 
Define $r^*(A)$ as the set of ordinals $\gamma$ satisfying that for some 
$K$ and $M$ in $A$ with $K \sim M$,
$$
\gamma = \min((M \cap \kappa) \setminus \beta_{K,M}).
$$
\end{definition}

Note that $r^*(A)$ is finite. 
Also, $A \subseteq B$ implies that $r^*(A) \subseteq r^*(B)$.

Before analyzing the set $r^*(A)$, we first prove a very useful lemma.

\begin{lemma}
Suppose that $M$ and $N$ are in $\mathcal X$ and $\{ M, N \}$ is adequate. 
Assume that $\alpha$ and $\gamma$ are uncountable ordinals satisfying:
\begin{enumerate}
\item $\alpha \in M \cap \kappa$;
\item $\gamma \in (N \cap \kappa) \cup \{ \kappa \}$;
\item $\alpha \ne \gamma$;
\item $M \cap \alpha = N \cap \gamma$.
\end{enumerate}
Then $M \sim N$ and 
$\alpha = \min((M \cap \kappa) \setminus \beta_{M,N})$. 
\end{lemma}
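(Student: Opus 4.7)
The plan is to first establish that $M \sim N$ using Lemma 2.17(2), and then to use the equation $M \cap \beta_{M,N} = M \cap N \cap \kappa = N \cap \beta_{M,N}$ of Lemma 2.15 to pin down $\alpha$ as the minimum element of $M \cap \kappa$ not below $\beta_{M,N}$. The first part is immediate: since $\alpha$ and $\gamma$ are uncountable, $\omega_1$ is an initial segment of both, so intersecting $M \cap \alpha = N \cap \gamma$ with $\omega_1$ gives $M \cap \omega_1 = N \cap \omega_1$. Adequacy of $\{M,N\}$ and Lemma 2.17(2) then yield $M \sim N$, and Lemma 2.15 gives the displayed equation.

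For the second part, I first observe that $M \cap \alpha = N \cap \gamma \subseteq N$, so $M \cap \alpha \subseteq M \cap N \cap \kappa$, and by Proposition 2.11, $M \cap \alpha \subseteq \beta_{M,N}$. This immediately rules out any $\delta \in M \cap \kappa$ with $\beta_{M,N} \le \delta < \alpha$, so it only remains to show $\alpha \ge \beta_{M,N}$. Suppose toward a contradiction that $\alpha < \beta_{M,N}$. Then $\alpha \in M \cap \beta_{M,N} = N \cap \beta_{M,N}$, and in particular $\alpha \in N$. A short case split finishes the argument: if $\alpha < \gamma$ (which covers the case $\gamma = \kappa$), then $\alpha \in N \cap \gamma = M \cap \alpha$, giving $\alpha \in \alpha$, impossible; if $\gamma < \alpha$, then $\gamma \in N \cap \kappa$ and $\gamma < \alpha < \beta_{M,N}$, so $\gamma \in N \cap \beta_{M,N} = M \cap \beta_{M,N}$, so $\gamma \in M$, whence $\gamma \in M \cap \alpha = N \cap \gamma$, again impossible.

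Combining the two halves, $\alpha \in M \cap \kappa$ with $\alpha \ge \beta_{M,N}$ and no element of $M \cap \kappa$ lies in the interval $[\beta_{M,N},\alpha)$, which is exactly the statement $\alpha = \min((M \cap \kappa) \setminus \beta_{M,N})$. The only place any real thought is required is the case analysis for $\alpha \ge \beta_{M,N}$; once $M \sim N$ is in hand, the whole argument is driven by the symmetry of the equation $M \cap \alpha = N \cap \gamma$ together with the basic closure facts of the adequate set framework.
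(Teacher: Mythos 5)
Your proof is correct and follows essentially the same route as the paper's: Lemma 2.17(2) yields $M \sim N$, and the rest rests on Proposition 2.11 together with the equality $M \cap \beta_{M,N} = N \cap \beta_{M,N}$. The only difference is organizational — you establish the absence of elements of $M \cap \kappa$ in $[\beta_{M,N},\alpha)$ uniformly via $M \cap \alpha \subseteq \beta_{M,N}$ and then obtain $\beta_{M,N} \le \alpha$ by contradiction, whereas the paper argues both points separately inside each of the cases $\gamma < \alpha$ and $\alpha < \gamma$; this is a harmless streamlining of the same argument.
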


\begin{proof}
Since $\alpha$ and $\gamma$ are uncountable, 
$M \cap \alpha = N \cap \gamma$ 
implies that $M \cap \omega_1 = N \cap \omega_1$. 
As $\{ M, N \}$ is adequate, Lemma 2.17(2) implies that $M \sim N$. 
In particular, $M \cap \beta_{M,N} = N \cap \beta_{M,N}$.

Since $\alpha \ne \gamma$, 
either $\gamma < \alpha$ or $\alpha < \gamma$. 
First, assume that $\gamma < \alpha$. 
Then $\gamma \in N \cap \kappa$. 
Since $\gamma < \alpha$ and $M \cap \alpha = N \cap \gamma$, it follows 
that $\gamma \notin M$. 
But $\gamma \in N \cap \kappa$. 
As $M \cap \beta_{M,N} = N \cap \beta_{M,N}$ and $\gamma \notin M$, 
$\beta_{M,N} \le \gamma$. 
So $\beta_{M,N} \le \gamma < \alpha$.

We claim that $\alpha = \min((M \cap \kappa) \setminus \beta_{M,N})$. 
If not, then there is $\alpha_0 \in M \cap \kappa$ such that 
$\beta_{M,N} \le \alpha_0 < \alpha$. 
Then $\alpha_0 \in M \cap \alpha = N \cap \gamma$, so 
$\alpha_0 \in N$. 
By Proposition 2.11, $\alpha_0 \in M \cap N \cap \kappa 
\subseteq \beta_{M,N}$. 
So $\alpha_0 < \beta_{M,N}$, which contradicts the choice of $\alpha_0$.

Secondly, assume that $\alpha < \gamma$. 
Since $M \cap \alpha = N \cap \gamma$, we have that $\alpha \notin N$. 
As $M \cap \beta_{M,N} = N \cap \beta_{M,N}$ and 
$\alpha \in M \cap \kappa$, it follows that 
$\beta_{M,N} \le \alpha$.

We claim that $\alpha = \min((M \cap \kappa) \setminus \beta_{M,N})$. 
If not, then there is $\alpha_0 \in M \cap \kappa$ such that 
$\beta_{M,N} \le \alpha_0 < \alpha$. 
But then $\alpha_0 \in M \cap \alpha = N \cap \gamma$. 
By Proposition 2.11, it follows that 
$\alpha_0 \in M \cap N \cap \kappa \subseteq \beta_{M,N}$. 
So $\alpha_0 < \beta_{M,N}$, which contradicts the choice of $\alpha_0$.
\end{proof}

The main goal of this section is to prove Proposition 3.5, which 
handles models in $\mathcal X$, and Proposition 3.8, which handles 
models in $\mathcal Y$.

First, we analyze $r^*$ in the context of models in $\mathcal X$.

\begin{lemma}
Suppose that $M < N$ and 
$(M \cap \kappa) \setminus \beta_{M,N} \ne \emptyset$. 
Then 
$$
\min((M \cap \kappa) \setminus \beta_{M,N}) = 
\min((M \cap \kappa) \setminus \beta_{M,M \cap N}).
$$
\end{lemma}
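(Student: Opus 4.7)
The plan is to prove the equality by a two-sided inequality. Write $\alpha := \min((M \cap \kappa) \setminus \beta_{M,N})$ and $\gamma := \min((M \cap \kappa) \setminus \beta_{M,M \cap N})$. First I would note that the right-hand side makes sense: since $M < N$, the pair $\{M,N\}$ is adequate, so by Assumption 2.19, $M \cap N \in \mathcal X$, and by Lemma 2.25, $M \sim M \cap N$, so $\beta_{M, M \cap N}$ is defined and $\{M, M \cap N\}$ is adequate.

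The easier half is $\gamma \le \alpha$. I would establish it by showing $\beta_{M, M \cap N} \le \beta_{M,N}$: apply Lemma 2.10(3) to $K := M \cap N \subseteq N$, which gives $\beta_{M \cap N, M} \le \beta_{N,M}$, and then use the symmetry from Lemma 2.10(1). This inequality between the comparison points yields the inclusion $(M \cap \kappa) \setminus \beta_{M,N} \subseteq (M \cap \kappa) \setminus \beta_{M, M \cap N}$, from which $\gamma \le \alpha$ is immediate (and in particular $(M \cap \kappa) \setminus \beta_{M, M \cap N}$ is nonempty).

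For the reverse direction $\alpha \le \gamma$, I would show directly that $M \cap \kappa$ has no element in the half-open interval $[\beta_{M, M \cap N}, \beta_{M,N})$. Suppose for contradiction $\xi \in M \cap \kappa$ with $\beta_{M, M \cap N} \le \xi < \beta_{M, N}$. Since $M < N$, Definition 2.13 gives $M \cap \beta_{M,N} \in N$, and since this set is countable it is in fact a subset of $N$; thus $\xi \in N$, and hence $\xi \in M \cap N \cap \kappa$. Now applying Proposition 2.11 to the pair $M$ and $M \cap N$,
$$
\xi \in (M \cap \kappa) \cap ((M \cap N) \cap \kappa) \subseteq \cl(M \cap \kappa) \cap \cl((M \cap N) \cap \kappa) \subseteq \beta_{M, M \cap N},
$$
contradicting the choice of $\xi$. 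Therefore the minimum $\gamma$ of $(M \cap \kappa) \setminus \beta_{M, M \cap N}$ must actually lie at or past $\beta_{M,N}$, which gives $\alpha \le \gamma$.

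The main obstacle, such as it is, is simply to spot the right monotonicity of comparison points (via Lemma 2.10(3) with the second argument shrunk to $M \cap N$) and then to invoke Proposition 2.11 on $M$ and $M \cap N$ rather than on $M$ and $N$; once these two maneuvers are in place, the argument is forced.
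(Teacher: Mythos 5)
Your proof is correct, but it takes a different route from the paper. The paper's argument is a one-step application of Lemma 3.2: writing $\alpha := \min((M \cap \kappa) \setminus \beta_{M,N})$, it uses Lemma 2.26 ($M \sim M \cap N$) together with Lemma 2.15 and the minimality of $\alpha$ to get $M \cap \alpha = M \cap N \cap \kappa$, and then applies Lemma 3.2 to the objects $M$, $M \cap N$, $\alpha$, $\kappa$ to conclude $\alpha = \min((M \cap \kappa) \setminus \beta_{M,M \cap N})$. You instead argue by a two-sided inequality: the monotonicity of comparison points (Lemma 2.10(3), with the symmetry 2.10(1)) gives $\beta_{M,M\cap N} \le \beta_{M,N}$ and hence one direction, and for the other you show directly that $M \cap \kappa$ meets the interval $[\beta_{M,M\cap N},\beta_{M,N})$ in no point, using $M \cap \beta_{M,N} \subseteq N$ (from $M < N$ and countability) and Proposition 2.11 applied to the pair $M$, $M \cap N$. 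Both arguments bottom out in Proposition 2.11, but yours bypasses Lemmas 2.15, 2.26, and 3.2 entirely and in fact yields the slightly stronger fact that the two tail sets $(M\cap\kappa)\setminus\beta_{M,N}$ and $(M\cap\kappa)\setminus\beta_{M,M\cap N}$ coincide above $\beta_{M,M\cap N}$; the paper's route is shorter on the page because Lemma 3.2 was set up precisely to be reused in several such computations (Lemmas 3.4, 3.6, 3.7, 4.5, etc.). One cosmetic slip: the fact $M \sim M \cap N$ is Lemma 2.26, not 2.25, though you never actually use it — membership of $M \cap N$ in $\mathcal X$ (Assumption 2.19) is all that is needed for $\beta_{M,M\cap N}$ to make sense.
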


Note that by Assumption 2.19, $M < N$ implies that 
$M \cap N \in \mathcal X$.

\begin{proof}
By Lemma 2.26, $M \sim M \cap N$.
Let $\alpha := \min((M \cap \kappa) \setminus \beta_{M,N})$. 
Then by Lemma 2.15 and the minimality of $\alpha$, 
$$
M \cap N \cap \kappa = M \cap \beta_{M,N} = M \cap \alpha.
$$
So $M \cap \alpha = M \cap N \cap \kappa$. 
Applying Lemma 3.2 to $M$, $M \cap N$, $\alpha$, and $\kappa$, 
we get that 
$\alpha = \min((M \cap \kappa) \setminus \beta_{M,M \cap N})$.
\end{proof}

\begin{lemma}
Let $\{ K, M, N \}$ be adequate, and assume that 
$M \in N \cap \mathcal X$ and $K \sim M$. 
Then:
\begin{enumerate}
\item $K < N$ and $M \sim K \cap N$;

\item if $\alpha = \min((M \cap \kappa) \setminus \beta_{K,M})$, then 
$\alpha = \min((M \cap \kappa) \setminus \beta_{M,K \cap N})$;

\item if $\alpha = \min((K \cap \kappa) \setminus \beta_{K,M})$, then 
either $\alpha = \min((K \cap N \cap \kappa) \setminus 
\beta_{K \cap N,M})$, or 
$\alpha = \min((K \cap \kappa) \setminus \beta_{K,K \cap N})$.
\end{enumerate}
\end{lemma}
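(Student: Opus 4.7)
The plan is to handle the three parts in order, using the adequacy framework and Lemma 2.15, which identifies $M \cap \beta_{M,N}$ with $M \cap N \cap \kappa$ whenever $M \le N$, and which will be the main computational engine throughout. The guiding intuition is that $M \in N$ forces $M \subseteq N$ (since $M$ is countable), so intersections of the form $M \cap K$ are unaffected by further intersecting with $N$; this is the single observation that drives both the identification of comparison points and the case split in part (3).

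For part (1), I would note that $K \sim M$ gives $K \cap \omega_1 = M \cap \omega_1$, and $M \in N$ together with $M$ countable gives $M \cap \omega_1 \in N \cap \omega_1$. Since $\{K,N\}$ is adequate, Lemma 2.17(1) then yields $K < N$. Next, Lemma 2.26 applied to $K < N$ gives $K \sim K \cap N$, and transitivity of $\sim$ (Lemma 2.18) combined with $K \sim M$ yields $M \sim K \cap N$.

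For part (2), the key identity I would establish is $M \cap \alpha = M \cap (K \cap N) \cap \kappa$. Starting from $\alpha = \min((M \cap \kappa) \setminus \beta_{K,M})$ and $K \sim M$, Lemma 2.15 gives $M \cap \beta_{K,M} = M \cap K \cap \kappa$, and the minimality of $\alpha$ gives $M \cap \alpha = M \cap \beta_{K,M}$. Since $M \in N$ implies $M \subseteq N$, we get $M \cap K \cap \kappa = M \cap K \cap N \cap \kappa$. Applying Lemma 2.15 to $M \sim K \cap N$ (from part (1)) gives $M \cap \beta_{M,K \cap N} = M \cap (K \cap N) \cap \kappa$, which equals $M \cap \alpha$. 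A short argument (if $\alpha < \beta_{M,K \cap N}$ then $\alpha \in M \cap \alpha$, absurd; if some $\xi \in M$ lies in $[\beta_{M,K \cap N},\alpha)$ then $\xi \in M \cap \alpha = M \cap \beta_{M,K \cap N}$, absurd) finishes part (2).

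For part (3), I would split on whether $\alpha \in N$ or $\alpha \notin N$. In the first case, using $M \subseteq N$ and the identity $K \cap \alpha = K \cap M \cap \kappa$ from Lemma 2.15 and minimality, I would compute $(K \cap N) \cap \alpha = K \cap \alpha$, then apply Lemma 2.15 to $K \cap N \sim M$ to obtain $(K \cap N) \cap \beta_{K \cap N,M} = K \cap M \cap \kappa = (K \cap N) \cap \alpha$, and a minimality argument as in part (2) gives $\alpha = \min((K \cap N \cap \kappa) \setminus \beta_{K \cap N, M})$. In the second case, I would use Lemma 2.26 and Lemma 2.15 to obtain $K \cap \beta_{K,K \cap N} = K \cap N \cap \kappa = K \cap \beta_{K,N}$, deducing from $\alpha \notin N$ that $\alpha \ge \beta_{K,K \cap N}$. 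A candidate $\xi \in K$ with $\beta_{K,K \cap N} \le \xi < \alpha$ would lie in $K \cap \alpha = K \cap M \cap \kappa \subseteq K \cap N$, putting it below $\beta_{K,K \cap N}$, a contradiction. The main obstacle will simply be bookkeeping: tracking which of the three models $M$, $K$, $K \cap N$ is being compared to which at each step, and confirming that the adequacy of $\{K, M, N\}$ (together with Assumption 2.19 giving $K \cap N \in \mathcal X$) legitimizes every invocation of Lemma 2.15.
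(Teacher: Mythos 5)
Your argument is correct. For part (1) it coincides with the paper's proof, but for (2) and (3) you take a somewhat different route: the paper funnels part (2) and the case $\alpha \in N$ of part (3) through the general Lemma 3.2 (introducing an auxiliary ordinal $\gamma$ and verifying its hypotheses), whereas you inline the underlying minimality computation directly, using Lemma 2.15 together with the relations $M \sim K \cap N$ and $K \cap N \sim M$ already secured in (1); and in the case $\alpha \notin N$ you work directly with $\beta_{K,K\cap N}$ via the identity $K \cap \beta_{K,K\cap N} = K \cap N \cap \kappa$ (Lemmas 2.26 and 2.15), whereas the paper first shows $\alpha = \min((K\cap\kappa)\setminus\beta_{K,N})$ and then transfers to $\beta_{K,K\cap N}$ by Lemma 3.3. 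In substance these are the same trace-equality computations ($M\cap\beta_{K,M} = K\cap N\cap\beta_{K,M}$, $K\cap\alpha = K\cap M\cap\kappa \subseteq N$, etc.), so your version amounts to unpacking Lemmas 3.2 and 3.3; what it buys is self-containedness and a slightly shorter path in the $\alpha\notin N$ case, at the cost of repeating an argument the paper reuses as a black box. One point to tighten in part (1): Lemma 2.18 gives transitivity of $\sim$ only within an adequate set, so before concluding $M \sim K\cap N$ from $M \sim K \sim K\cap N$ you must know that $\{M, K\cap N\}$ is adequate; the paper obtains this by first applying Proposition 2.24 to get that $\{K,M,N,K\cap N\}$ is adequate, and you should cite that (Assumption 2.19, which you do mention, only yields $K\cap N \in \mathcal X$, not the comparability needed for Lemma 2.18).
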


\begin{proof}
(1) Since $K \sim M$, it follows that 
$K \cap \omega_1 = M \cap \omega_1$ 
by Lemma 2.17(2). 
As $M \in N$, $M \cap \omega_1 < N \cap \omega_1$. 
Therefore, $K \cap \omega_1 < N \cap \omega_1$. 
By Lemma 2.17(1), $K < N$. 
By Proposition 2.24, $\{ K, M, N, K \cap N \}$ is adequate. 
By Lemma 2.26, $K \sim K \cap N$. 
So $M \sim K \sim K \cap N$, which by Lemma 2.18 implies 
that $M \sim K \cap N$.

(2) We apply Lemma 3.2 to the objects $M$, $K \cap N$, 
$\alpha$, and $\gamma$, 
where 
$$
\gamma := \min(((K \cap N \cap \kappa) \cup \{ \kappa \}) \setminus 
\beta_{K,M}).
$$
Provided that the assumptions of Lemma 3.2 are true for these objects, 
we get that $\alpha = \min((M \cap \kappa) \setminus \beta_{M,K \cap N})$, 
which finishes the proof of (2).

Assumptions (1) and (2) of Lemma 3.2 are immediate, and (3) 
follows from Proposition 2.11 since $\alpha$ and $\gamma$ are greater 
than or equal to $\beta_{K,M}$. 
For assumption (4), 
we need to show that $M \cap \alpha = K \cap N \cap \gamma$. 
By the minimality of $\alpha$ and $\gamma$, we have that 
$M \cap \alpha = M \cap \beta_{K,M}$ and 
$K \cap N \cap \gamma = K \cap N \cap \beta_{K,M}$. 
So it suffices to show that 
$$
M \cap \beta_{K,M} = K \cap N \cap \beta_{K,M}.
$$
Since $K \sim M$, $M \cap \beta_{K,M} = K \cap \beta_{K,M}$. 
This last equation implies the reverse inclusion of the displayed equation, 
and noting that 
$M \in N$ implies that $M \subseteq N$, it implies the forward 
inclusion as well.

(3) Let 
$$
\gamma := \min((M \cap \kappa) \cup \{ \kappa \}) \setminus \beta_{K,M}).
$$
Then by the minimality of $\alpha$ and $\gamma$ and the fact that 
$K \sim M$, we have that 
$$
K \cap \alpha = K \cap \beta_{K,M} = M \cap \beta_{K,M} = 
M \cap \gamma.
$$

\bigskip

First, assume that $\alpha \in N$. 
We apply Lemma 3.2 to the objects 
$K \cap N$, $M$, $\alpha$, and $\gamma$. 
Provided that the assumptions of Lemma 3.2 hold for these objects, 
we can conclude that 
$$
\alpha = \min((K \cap N \cap \kappa) \setminus \beta_{K \cap N,M}),
$$
which completes the proof of (3). 
Assumptions (1) and (2) of Lemma 3.2 are immediate, and 
(3) follows from Proposition 2.11 since $\alpha$ and $\gamma$ are 
greater than or equal to $\beta_{K,M}$.

For (4), we need to show that $K \cap N \cap \alpha = M \cap \gamma$. 
From the above, we already know that $K \cap \alpha = M \cap \gamma$, 
so it suffices to show that 
$$
K \cap \alpha = K \cap N \cap \alpha.
$$
Since $K < N$, $K \cap \beta_{K,N} = K \cap N \cap \beta_{K,N}$ 
by Lemma 2.15. 
As $\alpha \in K \cap N \cap \kappa$, we have that 
$\alpha < \beta_{K,N}$ by Proposition 2.11. 
Hence, 
$$
K \cap \alpha = K \cap \beta_{K,N} \cap \alpha = 
K \cap N \cap \beta_{K,N} \cap \alpha = K \cap N \cap \alpha.
$$

\bigskip

Secondly, assume that $\alpha \notin N$. 
Since $K < N$ by (1), we have that 
$K \cap \beta_{K,N} \subseteq N$, and therefore 
$\beta_{K,N} \le \alpha$. 
To show that 
$\alpha = \min((K \cap \kappa) \setminus \beta_{K,K \cap N})$, 
by Lemma 3.3 it suffices to show that 
$\alpha = \min((K \cap \kappa) \setminus \beta_{K,N})$.

Suppose for a contradiction that 
$\alpha$ is not equal to $\min((K \cap \kappa) \setminus \beta_{K,N})$. 
Then there is $\alpha_0 \in K$ such that 
$\beta_{K,N} \le \alpha_0 < \alpha$. 
But then $\alpha_0 \in K \cap \alpha$. 
Since $\alpha = \min((K \cap \kappa) \setminus \beta_{K,M})$, 
it follows that $\alpha_0 < \beta_{K,M}$. 
Now $K \sim M$ means that $K \cap \beta_{K,M} = M \cap \beta_{K,M}$. 
As $\alpha_0 \in K \cap \beta_{K,M}$, we have that $\alpha_0 \in M$. 
But $M \in N$, so $M \subseteq N$, and hence $\alpha_0 \in N$. 
Therefore, $\alpha_0 \in K \cap N \cap \kappa$, which implies that 
$\alpha_0 < \beta_{K,N}$ by Proposition 2.11, contradicting the choice 
of $\alpha_0$.
\end{proof}

\begin{proposition}
Assume that 
$A$ is adequate, $N \in A$ is simple, and for all 
$M \in A$, if $M < N$ then $M \cap N \in A$. 
Let $B$ be adequate, and suppose that 
$$
A \cap N \subseteq B \subseteq N.
$$
Then 
$$
r^*(A \cup B) = r^*(A) \cup r^*(B).
$$
\end{proposition}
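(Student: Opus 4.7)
The plan is to prove the two inclusions separately. The reverse inclusion $r^*(A) \cup r^*(B) \subseteq r^*(A \cup B)$ is immediate from the monotonicity of $r^*$ noted after Definition 3.1. For the forward direction, I will fix a witness: pick $\gamma \in r^*(A \cup B)$, so there exist $K, M \in A \cup B$ with $K \sim M$ and $\gamma = \min((M \cap \kappa) \setminus \beta_{K,M})$. Note that $A \cup B$ is indeed adequate by Proposition 2.24, so all pairs behave well. I will then split into cases according to the ``sides'' in which $K$ and $M$ lie. If both $K, M \in A$ then $\gamma \in r^*(A)$, and if both lie in $B$ then $\gamma \in r^*(B)$; so the only real work is the mixed case, where one witness belongs to $A \setminus B$ and the other to $B \setminus A$.

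For the mixed case, the strategy is to invoke Lemma 3.4 with $N$ playing its designated role and reduce to one of the pure cases. Suppose first $K \in A$ and $M \in B$. Then $M \in B \subseteq N$, so Lemma 3.4 applies directly: part (1) gives $K < N$ and $M \sim K \cap N$, and part (2) rewrites $\gamma$ as $\min((M \cap \kappa) \setminus \beta_{M, K \cap N})$. By hypothesis on $A$, $K \cap N \in A$; by simplicity of $N$, $K \cap N \in N$; so $K \cap N \in A \cap N \subseteq B$. Both $M$ and $K \cap N$ now lie in $B$ with $M \sim K \cap N$, giving $\gamma \in r^*(B)$. Suppose instead $K \in B$ and $M \in A$. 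Now it is $K$ that lies in $N$, so I apply Lemma 3.4 after swapping the roles of $K$ and $M$ (legitimate since $\sim$ is symmetric), which puts us in case (3) of the lemma. This yields two alternatives: either $\gamma = \min((M \cap N \cap \kappa) \setminus \beta_{M \cap N, K})$ or $\gamma = \min((M \cap \kappa) \setminus \beta_{M, M \cap N})$. Using again that $M < N$ (from Lemma 3.4(1)) and the hypothesis on $A$, we get $M \cap N \in A$, and by simplicity of $N$, $M \cap N \in A \cap N \subseteq B$. In the first alternative, $M \cap N, K \in B$ with $M \cap N \sim K$ (from (1)), so $\gamma \in r^*(B)$; in the second, $M, M \cap N \in A$, and since $M \sim K \sim M \cap N$ one has $M \sim M \cap N$ by Lemma 2.18, so $\gamma \in r^*(A)$.

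The bookkeeping is the main obstacle: tracking which of the two alternatives in Lemma 3.4(3) occurs and verifying in each case that the newly produced witness pair genuinely lives in a single side ($A$ or $B$). The ingredients that make this work are the hypothesis that $A$ is closed under $M \mapsto M \cap N$ for $M < N$, the simplicity of $N$ which places $K \cap N$ and $M \cap N$ into $N$, and the containment $A \cap N \subseteq B$ which transfers such intersections into $B$. Transitivity of $\sim$ (Lemma 2.18) is what salvages the last subcase, where the witness $M \cap N$ must be shown $\sim$-equivalent to $M$ via the intermediate model $K$.
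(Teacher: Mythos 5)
Your proposal is correct and takes essentially the same route as the paper's proof: reduce to mixed pairs, invoke Lemma 3.4(1)--(3), and use the closure of $A$ under $M \mapsto M \cap N$, the simplicity of $N$, and $A \cap N \subseteq B$ to move the intersected model into $B$ (your only cosmetic deviation is obtaining $M \sim M \cap N$ by transitivity in $A \cup B$ via Lemma 2.18, where the paper quotes Lemma 2.26 directly). One small correction: the adequacy of $A \cup B$ comes from Proposition 2.25 (amalgamation over countable models), not Proposition 2.24, which concerns the set $A \cup \{ M \cap N : M \in A, \ M < N \}$.
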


\begin{proof}
By Proposition 2.25, $A \cup B$ is adequate. 
The reverse inclusion is immediate. 
For the forward inclusion, it suffices to show that if $K \in A$, $M \in B$, 
and $K \sim M$,  
then $r^*(\{ K, M \}) \subseteq r^*(A) \cup r^*(B)$.

Since $M \in B \subseteq N$, we have that $M \in N$. 
Since $K$ and $N$ are in $A$ and $K \sim M$, Lemma 3.4(1) implies that 
$K < N$ and $M \sim K \cap N$. 
So by our assumptions about $A$, $K \cap N \in A$. 
And since $N$ is simple, $K \cap N \in N$. 
Therefore, $K \cap N \in A \cap N \subseteq B$. 
So $K \cap N$ and $M$ are both in $B$.
Also, by Lemma 2.26, $K \sim K \cap N$.

Suppose that $\alpha \in r^*(\{ K, M \})$, and we will show 
that $\alpha \in r^*(A) \cup r^*(B)$. 
First, assume that $\alpha = \min((M \cap \kappa) \setminus \beta_{K,M})$. 
Then by Lemma 3.4(2), 
$\alpha = \min((M \cap \kappa) \setminus \beta_{M,K \cap N})$. 
Since $K \cap N$ and $M$ are in $B$ and $M \sim K \cap N$, 
$\alpha \in r^*(B)$.

Secondly, assume that 
$\alpha = \min((K \cap \kappa) \setminus \beta_{K,M})$. 
Then by Lemma 3.4(3), 
either $\alpha = \min((K \cap N \cap \kappa) \setminus \beta_{K \cap N,M})$, 
or $\alpha = \min((K \cap \kappa) \setminus \beta_{K,K \cap N})$. 
In the first case, $\alpha \in r^*(B)$, since $K \cap N$ and $M$ 
are in $B$ and $M \sim K \cap N$. 
In the second case, $\alpha \in r^*(A)$, 
since $K$ and $K \cap N$ are in $A$ and $K \sim K \cap N$.
\end{proof}

Next, we analyze the set $r^*$ in the context of models in $\mathcal Y$.

\begin{lemma}
Suppose that $M \in \mathcal X$, $P \in \mathcal Y$, and 
$(M \cap \kappa) \setminus (P \cap \kappa) \ne \emptyset$. 
Then 
$$
\min((M \cap \kappa) \setminus (P \cap \kappa)) = 
\min((M \cap \kappa) \setminus \beta_{M \cap P,M}).
$$
\end{lemma}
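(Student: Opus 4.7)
The plan is to show both minima equal a single ordinal by a direct two-way inequality, using only Lemma 2.29 (which gives $M \sim M \cap P$) and Proposition 2.11 (the fundamental bound on common points in the closures).

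Let $\alpha := \min((M \cap \kappa) \setminus (P \cap \kappa))$, which exists by hypothesis, and let $\alpha' := \min((M \cap \kappa) \setminus \beta_{M \cap P, M})$, which I will verify also exists. The first step is to extract a general fact: every element of $M \cap \kappa$ lying below $\beta_{M \cap P, M}$ belongs to $P$. Indeed, by Lemma 2.29, $M \sim M \cap P$, so
$$
M \cap \beta_{M \cap P, M} = (M \cap P) \cap \beta_{M \cap P, M} \subseteq P.
$$

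Next I show $\alpha \ge \beta_{M \cap P, M}$, which will also confirm that $\alpha'$ is well defined and give $\alpha \ge \alpha'$. If instead $\alpha < \beta_{M \cap P, M}$, then by the displayed inclusion $\alpha \in P$, contradicting the choice of $\alpha$. Since $\alpha \in M \cap \kappa$, it follows that $\alpha \in (M \cap \kappa) \setminus \beta_{M \cap P, M}$, so $\alpha' \le \alpha$.

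For the reverse inequality I check that $\alpha' \notin P \cap \kappa$, whence $\alpha \le \alpha'$ by minimality of $\alpha$. Suppose toward a contradiction that $\alpha' \in P$. Then $\alpha' \in M \cap (M \cap P) \cap \kappa$, so by Proposition 2.11 applied to $M$ and $M \cap P$,
$$
\alpha' \in \cl(M \cap \kappa) \cap \cl((M \cap P) \cap \kappa) \subseteq \beta_{M, M \cap P} = \beta_{M \cap P, M},
$$
which contradicts $\alpha' \ge \beta_{M \cap P, M}$. Hence $\alpha' \notin P$ and therefore $\alpha \le \alpha'$, giving $\alpha = \alpha'$ as desired.

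There is no serious obstacle here; the only subtlety is recognizing that the symmetric use of Proposition 2.11 (with the pair $M$, $M \cap P$, which is adequate because $M \sim M \cap P$) simultaneously yields both the upward bound used in step two and the contradiction used in step three. Everything else is unpacking the definitions of $\sim$ and $\beta_{M,N}$.
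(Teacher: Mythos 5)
Your proof is correct. The difference from the paper is one of routing: the paper's proof is a two-line reduction to Lemma 3.2 — it observes that minimality of $\alpha := \min((M \cap \kappa) \setminus (P \cap \kappa))$ gives $M \cap \alpha = M \cap P \cap \kappa$, and then invokes Lemma 3.2 with the objects $M$, $M \cap P$, $\alpha$, $\kappa$ to conclude. You instead bypass Lemma 3.2 entirely and argue both inequalities directly from Lemma 2.29 and Proposition 2.11; in effect you re-prove inline the special case of Lemma 3.2 in which $N = M \cap P$ and $\gamma = \kappa$ (your step showing $\alpha \ge \beta_{M \cap P, M}$, and your step showing $\alpha' \notin P$ via Proposition 2.11, are exactly the two halves of that lemma's argument specialized to this situation). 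The paper's approach buys economy, since Lemma 3.2 is the workhorse reused in Lemmas 3.3, 3.4, 3.7, 4.5 and elsewhere; yours buys self-containment and makes visible that only $M \sim M \cap P$ and the comparison-point bound are really needed. One small remark: adequacy of $\{M, M \cap P\}$ is not needed to apply Proposition 2.11 (it is stated for arbitrary pairs in $\mathcal X$), though your parenthetical claim that the pair is adequate is itself true via Lemma 2.29; what you do need, and have, is that $M \cap P \in \mathcal X$ (Assumption 2.5(2)) and the symmetry $\beta_{M, M \cap P} = \beta_{M \cap P, M}$ from Lemma 2.10(1).
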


\begin{proof}
By Lemma 2.29, $M \sim M \cap P$. 
Let $\alpha := \min((M \cap \kappa) \setminus (P \cap \kappa))$. 
Then by the minimality of $\alpha$, 
$$
M \cap \alpha = M \cap P \cap \kappa. 
$$
Applying Lemma 3.2 to $M$, $M \cap P$, $\alpha$, and $\kappa$, 
we get that 
$\alpha = \min((M \cap \kappa) \setminus \beta_{M,M \cap P})$.
\end{proof}

\begin{lemma}
Let $K \in \mathcal X$, $P \in \mathcal Y$, 
and assume that $M \in P \cap \mathcal X$ and $K \sim M$. 
Then:
\begin{enumerate}
\item $K \cap P \sim M$;

\item if $\alpha = \min((M \cap \kappa) \setminus \beta_{K,M})$, then 
$\alpha = \min((M \cap \kappa) \setminus \beta_{M,K \cap P})$;

\item if $\alpha = \min((K \cap \kappa) \setminus \beta_{K,M})$, then 
either $\alpha = \min((K \cap P \cap \kappa) \setminus 
\beta_{K \cap P,M})$, or 
$\alpha = \min((K \cap \kappa) \setminus \beta_{K \cap P,K})$.
\end{enumerate}
\end{lemma}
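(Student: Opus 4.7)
The plan is to follow the template of Lemma 3.4, with $P \in \mathcal Y$ replacing $N \in \mathcal X$ and Proposition 2.27 (amalgamation over uncountable models) replacing Proposition 2.24. Throughout, I use that since $M \in P$ is countable and $P$ is elementary in $\mathcal A$ (with $\omega \subseteq P$), $M \subseteq P$, so in particular $M \cap P = M$. For (1), Proposition 2.27 applied to $\{K, M\}$ gives that $\{K, M, K \cap P\}$ is adequate (using $M \cap P = M$); combining $K \sim K \cap P$ from Lemma 2.29 with the hypothesis $K \sim M$ and the transitivity of $\sim$ on adequate sets (Lemma 2.18) yields $K \cap P \sim M$.

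For (2), I apply Lemma 3.2 to $M$, $K \cap P$, $\alpha$, and
$$
\gamma := \min\bigl(((K \cap P \cap \kappa) \cup \{\kappa\}) \setminus \beta_{K,M}\bigr).
$$
Clauses (1) and (2) of Lemma 3.2 are immediate; clause (3) follows from Proposition 2.11, since any common value of $\alpha$ and $\gamma$ would lie in $M \cap K \cap P \cap \kappa \subseteq \beta_{K,M}$ yet both $\alpha, \gamma \ge \beta_{K,M}$. For clause (4), minimality yields $M \cap \alpha = M \cap \beta_{K,M}$ and $K \cap P \cap \gamma = K \cap P \cap \beta_{K,M}$, and $K \sim M$ together with $M \subseteq P$ gives
$$
M \cap \beta_{K,M} = K \cap \beta_{K,M} = K \cap P \cap \beta_{K,M}.
$$
Lemma 3.2 then produces $\alpha = \min((M \cap \kappa) \setminus \beta_{M, K \cap P})$.

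For (3), set $\gamma := \min(((M \cap \kappa) \cup \{\kappa\}) \setminus \beta_{K,M})$, so that $K \cap \alpha = K \cap \beta_{K,M} = M \cap \beta_{K,M} = M \cap \gamma$ by $K \sim M$ and minimality. I split on whether $\alpha \in P$. When $\alpha \in P$, I apply Lemma 3.2 to $K \cap P$, $M$, $\alpha$, $\gamma$; the only nontrivial hypothesis is (4), which reduces to $K \cap P \cap \alpha = K \cap \alpha$, and this holds because $\alpha \in K \cap P \cap \kappa$ forces $\alpha < \beta_{K, K \cap P}$ by Proposition 2.11, while $K \sim K \cap P$ (Lemma 2.29) allows intersecting $K \cap \beta_{K, K \cap P} = K \cap P \cap \beta_{K, K \cap P}$ with $\alpha$. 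When $\alpha \notin P$, $K \sim K \cap P$ forces $\beta_{K, K \cap P} \le \alpha$; I then show $\alpha = \min((K \cap \kappa) \setminus \beta_{K, K \cap P})$ by contradiction. Any putative $\alpha_0 \in K$ with $\beta_{K, K \cap P} \le \alpha_0 < \alpha$ would, by minimality of $\alpha$, satisfy $\alpha_0 < \beta_{K,M}$, hence $\alpha_0 \in K \cap \beta_{K,M} = M \cap \beta_{K,M} \subseteq P$ by $K \sim M$; so $\alpha_0 \in K \cap P \cap \kappa$, and Proposition 2.11 forces $\alpha_0 < \beta_{K, K \cap P}$, a contradiction. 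Using Lemma 2.10(1) to identify $\beta_{K, K \cap P}$ with $\beta_{K \cap P, K}$ closes this case.

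The main obstacle, exactly as in Lemma 3.4(3), is the case $\alpha \notin P$: the argument requires identifying $\beta_{K, K \cap P}$ as the correct comparison point and threading both similarity relations $K \sim M$ and $K \sim K \cap P$ through a single contradiction. Once the set-up is chosen, each step reduces to a direct application of Proposition 2.11 or to minimality of $\alpha$, and no new conceptual ingredient beyond those already used in Lemma 3.4 should be needed.
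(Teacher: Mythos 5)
Your proof is correct and follows essentially the same route as the paper: the same applications of Lemma 3.2 with the same auxiliary ordinals $\gamma$, and the same case split in (3) (your $\alpha \in P$ versus $\alpha \notin P$ is exactly the paper's $\alpha < P \cap \kappa$ versus $P \cap \kappa \le \alpha$). The only deviations are cosmetic local substitutions — deriving (1) from Proposition 2.27, Lemma 2.29 and transitivity instead of Lemma 2.12 with Lemma 2.10(3), using $M \subseteq P$ in place of Lemma 2.12, and inlining the content of Lemma 3.6 via Proposition 2.11 in the hard case of (3) — all of which are valid.
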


\begin{proof}
(1) Since $K \sim M$, $K \cap \beta_{K,M} = M \cap \beta_{K,M}$. 
By Lemma 2.12, since $M \in P$, $\beta_{K,M} < P \cap \kappa$. 
Thus,
$$
K \cap P \cap \beta_{K,M} = M \cap \beta_{K,M}.
$$
By Lemma 2.10(3), $\beta_{K \cap P,M} \le \beta_{K,M}$. 
Therefore,
$$
K \cap P \cap \beta_{K \cap P,M} = M \cap \beta_{K \cap P,M}.
$$
So $K \cap P \sim M$.

(2) We apply Lemma 3.2 to the objects $M$, $K \cap P$, $\alpha$, 
and $\gamma$, where 
$$
\gamma := \min(((K \cap P \cap \kappa) \cup \{ \kappa \}) \setminus 
\beta_{K,M}).
$$
Provided that the assumptions of Lemma 3.2 are true for these objects, 
it follows that 
$\alpha = \min((M \cap \kappa) \setminus \beta_{M,K \cap P})$, and 
we are done.

Assumptions (1) and (2) are immediate, and (3) follows from 
Proposition 2.11 since $\alpha$ and $\gamma$ are greater than or 
equal to $\beta_{K,M}$. 
It remains to show that $M \cap \alpha = K \cap P \cap \gamma$. 
By the minimality of $\alpha$ and $\gamma$, we have that 
$M \cap \alpha = M \cap \beta_{K,M}$ and 
$K \cap P \cap \gamma = K \cap P \cap \beta_{K,M}$. 
So it suffices to show that 
$$
M \cap \beta_{K,M} = K \cap P \cap \beta_{K,M}.
$$

Since $K \sim M$, $M \cap \beta_{K,M} = K \cap \beta_{K,M}$. 
Hence, it is enough to show that 
$$
K \cap \beta_{K,M} = K \cap P \cap \beta_{K,M}.
$$
But this follows immediately from the fact that 
$\beta_{K,M} < P \cap \kappa$, which is true by Lemma 2.12.

(3) Let
$$
\gamma := 
\min(((M \cap \kappa) \cup \{ \kappa \}) \setminus \beta_{K,M}).
$$
By Proposition 2.11, $\alpha \ne \gamma$, since $\alpha$ and $\gamma$ 
are greater than or equal to $\beta_{K,M}$. 
By the minimality of $\alpha$ and $\gamma$ and the fact that 
$K \sim M$, we have that 
$$
K \cap \alpha = K \cap \beta_{K,M} = M \cap \beta_{K,M} = 
M \cap \gamma.
$$

First, assume that $\alpha < P \cap \kappa$. 
Then $K \cap \alpha = K \cap P \cap \alpha$.  
Therefore, by the last paragraph, 
$$
K \cap P \cap \alpha = M \cap \gamma.
$$
Applying Lemma 3.2 to the objects 
$K \cap P$, $M$, $\alpha$, and $\gamma$, it follows that 
$$
\alpha = \min((K \cap P \cap \kappa) \setminus \beta_{K \cap P,M}).
$$

Secondly, assume that $P \cap \kappa \le \alpha$. 
To show that 
$\alpha = \min((K \cap \kappa) \setminus \beta_{K \cap P,K})$, 
by Lemma 3.6 
it suffices to show that 
$\alpha = \min((K \cap \kappa) \setminus (P \cap \kappa))$.

Suppose for a contradiction that $\alpha$ is not equal to 
$\min((K \cap \kappa) \setminus (P \cap \kappa))$. 
Then there is $\alpha_0 \in K \cap \kappa$ such that 
$P \cap \kappa \le \alpha_0 < \alpha$. 
Then $\alpha_0 \in K \cap \alpha$. 
Since $\alpha = \min((K \cap \kappa) \setminus \beta_{K,M})$, it follows that 
$\alpha_0 < \beta_{K,M}$. 
Hence, $\alpha_0 \in K \cap \beta_{K,M}$. 
As $K \sim M$, we have that $K \cap \beta_{K,M} = M \cap \beta_{K,M}$. 
So $\alpha_0 \in M$. 
But $M \in P$, so $M \subseteq P$. 
Therefore, $\alpha_0 \in P \cap \kappa$, which contradicts 
the choice of $\alpha_0$.
\end{proof}

\begin{proposition}
Suppose that $A$ is adequate, 
$P \in \mathcal Y$ is simple, 
and for all $M \in A$, $M \cap P \in A$. 
Let $B$ be adequate, and suppose that 
$$
A \cap P \subseteq B \subseteq P.
$$
Then 
$$
r^*(A \cup B) = r^*(A) \cup r^*(B).
$$
\end{proposition}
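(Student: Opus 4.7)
The plan is to mirror the proof of Proposition 3.5, with Proposition 2.28 replacing Proposition 2.25 and Lemma 3.7 replacing Lemma 3.4; the model $K \cap P$ will serve as the bridge between $A$ and $B$, playing the role that $K \cap N$ played in the countable case.

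First I would invoke Proposition 2.28 (amalgamation over uncountable models) to see that $A \cup B$ is adequate, so $r^*(A \cup B)$ is defined. The inclusion $r^*(A) \cup r^*(B) \subseteq r^*(A \cup B)$ follows from monotonicity of $r^*$, noted just after Definition 3.1. For the reverse inclusion, every remainder point of $A \cup B$ is witnessed by some $\sim$-pair: pairs contained in $A$ contribute to $r^*(A)$ and pairs contained in $B$ contribute to $r^*(B)$, so it suffices to handle a mixed pair $K \in A$, $M \in B$ with $K \sim M$ and show that $r^*(\{K,M\}) \subseteq r^*(A) \cup r^*(B)$.

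The key observation is that $K \cap P$ lies in both $A$ and $B$. Since $M \in B \subseteq P$, we have $M \in P \cap \mathcal X$. The closure hypothesis on $A$ (applied with $K$ in the role of $M$) gives $K \cap P \in A$, and simplicity of $P$ gives $K \cap P \in P$, so $K \cap P \in A \cap P \subseteq B$. By Lemma 2.29, $K \sim K \cap P$, and by Lemma 3.7(1), $K \cap P \sim M$. Given $\alpha \in r^*(\{K,M\})$, I would then apply Lemma 3.7(2) in the case $\alpha = \min((M \cap \kappa) \setminus \beta_{K,M})$: it rewrites $\alpha$ as $\min((M \cap \kappa) \setminus \beta_{M, K \cap P})$, which is a remainder for the $\sim$-pair $\{K \cap P, M\}$ inside $B$, so $\alpha \in r^*(B)$. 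In the remaining case $\alpha = \min((K \cap \kappa) \setminus \beta_{K,M})$, Lemma 3.7(3) expresses $\alpha$ either as $\min((K \cap P \cap \kappa) \setminus \beta_{K \cap P, M})$, giving $\alpha \in r^*(B)$ via the $\sim$-pair $\{K \cap P, M\} \subseteq B$, or as $\min((K \cap \kappa) \setminus \beta_{K \cap P, K})$, giving $\alpha \in r^*(A)$ via the $\sim$-pair $\{K, K \cap P\} \subseteq A$.

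All the conceptual heavy lifting has already been packaged into Lemma 3.7, so I expect no genuine obstacle in the proof itself. The only point that needs care is the verification that $K \cap P$ is simultaneously an element of $A$ and of $B$, which is precisely what the closure of $A$ under intersection with $P$, the simplicity of $P$, and the inclusion $A \cap P \subseteq B$ are set up to deliver.
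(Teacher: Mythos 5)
Your proof is correct and follows essentially the same route as the paper's: Proposition 2.28 for adequacy of $A \cup B$, reduction to a mixed pair $K \in A$, $M \in B$ with $K \sim M$, the observation that $K \cap P \in A \cap P \subseteq B$ (via the closure hypothesis and simplicity of $P$), and then the case analysis through Lemma 2.29 and Lemma 3.7(1)--(3). No gaps.
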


\begin{proof}
By Proposition 2.28, $A \cup B$ is adequate. 
The reverse inclusion is immediate. 
For the forward inclusion, it suffices to show that if 
$K \in A$, $M \in B$, and $K \sim M$, 
then $r^*(\{ K, M \}) \subseteq r^*(A) \cup r^*(B)$. 

Since $M \in B \subseteq P$, we have that $M \in P$. 
Thus, the assumptions of Lemma 3.7 are satisfied.
By Lemma 3.7(1), $M \sim K \cap P$. 
By the assumptions on $A$, $K \cap P \in A$. 
As $P$ is simple, $K \cap P \in P$. 
So $K \cap P \in A \cap P \subseteq B$. 
Hence, $K \cap P$ is in $B$. 
By Lemma 2.29, $K \sim K \cap P$.

Suppose that $\alpha \in r^*(\{ K, M \})$, and we will show 
that $\alpha \in r^*(A) \cup r^*(B)$. 
First, assume that 
$\alpha = \min((M \cap \kappa) \setminus \beta_{K,M})$. 
Then by Lemma 3.7(2), 
$\alpha = \min((M \cap \kappa) \setminus \beta_{M,K \cap P})$. 
Since $K \cap P$ and $M$ are in $B$ and $M \sim K \cap P$, it follows that 
$\alpha \in r^*(B)$.

Secondly, assume that 
$\alpha = \min((K \cap \kappa) \setminus \beta_{K,M})$. 
Then by Lemma 3.7(3), either 
$\alpha = \min((K \cap P \cap \kappa) \setminus \beta_{K \cap P,M})$, or 
$\alpha = \min((K \cap \kappa) \setminus \beta_{K \cap P,K})$. 
In the first case, since $K \cap P$ and $M$ are in $B$ and $K \cap P \sim M$, 
it follows that $\alpha \in r^*(B)$. 
In the second case, since $K$ and $K \cap P$ are in $A$ and $K \sim K \cap P$, 
it follows that $\alpha \in r^*(A)$.
\end{proof}

\bigskip

\part{The Single Forcing}

\bigskip

\addcontentsline{toc}{section}{4. The forcing poset}

\textbf{\S 4. The forcing poset}

\stepcounter{section}

\bigskip

We introduce a forcing poset for adding a partial square sequence 
to a stationary set $S \subseteq \kappa \cap \cof(>\! \omega)$. 
This forcing poset will preserve $\omega_1$, is $\kappa$-c.c., and 
if $\kappa > \omega_2$, then 
it will collapse $\kappa$ to become $\omega_2$.

\begin{notation}
Fix, for the remainder of Part 2, 
a set $S \subseteq \Lambda$ which is stationary in $\kappa$ 
and satisfies that for all 
$\alpha \in S$, $Sk(\alpha) \cap \kappa = \alpha$.
\end{notation}

\begin{definition}
Let $\p$ be the forcing poset whose conditions are triples of the form 
$p = (f_p,g_p,A_p)$ satisfying the following 
requirements:\footnote{We will sometimes refer to 
$f_p$, $g_p$, and $A_p$ as the $f$, $g$, and $A$ components of $p$.}
\begin{enumerate}

\item $A_p$ is an adequate set;

\item $f_p$ is a function with a finite domain, 
and for all $x \in \dom(f_p)$, 
either $x \in S$, or there is $M \in A_p$ and 
$$
\alpha \in (M \cap \dom(f_p) \cap S) \cup \{ \kappa \}
$$ 
satisfying that $x = M \cap \alpha$; moreover, for all $x \in \dom(f_p)$, 
$f_p(x)$ is a finite $\in$-chain and a subset of 
$Sk(x)$;

\item if $x \in \dom(f_p)$, then $f_p(x) \subseteq \dom(f_p)$, and for 
all $K \in f_p(x)$, 
$$
f_p(K) = f_p(x) \cap Sk(K);
$$

\item $g_p$ is a function whose domain is the set of all pairs 
$(K,x)$ such that $K \in f_p(x)$, and for all $(K,x) \in \dom(g_p)$, 
$g_p(K,x)$ is a finite subset of $x \setminus \sup(K)$;

\item if $K \in f_p(L)$ and $L \in f_p(x)$, then 
$g_p(K,x) \subseteq g_p(K,L)$;\footnote{Note that if $K \in f_p(L)$ and 
$L \in f_p(x)$, then $K \in f_p(x)$ by requirement (3).}

\item if $\alpha \in \dom(f_p) \cap S$, $M \in A_p$, and $\alpha \in M$, 
then $M \cap \alpha \in f_p(\alpha)$;

\item $r^*(A_p) \cap S \subseteq \dom(f_p)$.
\end{enumerate}
For conditions $p$ and $q$ in $\p$, we let $q \le p$ if:
\begin{enumerate}
\item[(a)] $A_p \subseteq A_q$;

\item[(b)] $\dom(f_p) \subseteq \dom(f_q)$, and 
for all $x \in \dom(f_p)$, $f_p(x) \subseteq f_q(x)$;

\item[(c)] for all 
$(K,x) \in \dom(g_p)$, $g_p(K,x) \subseteq g_q(K,x)$;

\item[(d)] if $K$ and $x$ are in 
$\dom(f_p)$ and $K \in f_q(x)$, then $K \in f_p(x)$.
\end{enumerate}
\end{definition}

Let us summarize some of the main properties which we will prove about $\p$. 
The forcing poset $\p$ will be shown to be strongly proper on a stationary 
set, and thus preserve $\omega_1$, and to be $\kappa$-c.c. 
In particular, $\p$ preserves the stationarity of $S$. 
If $\kappa > \omega_2$, then $\p$ collapses all cardinals $\mu$ such that 
$\omega_1 < \mu < \kappa$ to have size $\omega_1$, and hence 
forces that $\kappa = \omega_2$. 
Finally, $\p$ forces that there exists a partial square sequence on $S$, 
and in particular, forces that $S$ is in the approachability 
ideal $I[\omega_2]$.

The properties of $\p$ just listed will be proved in Sections 4--7. 
In Sections 8 and 9, we will derive some additional information 
about the forcing poset $\p$, and use this information 
to show that certain quotients of $\p$ have the 
$\omega_1$-approximation property.

In the remainder of the current section, we will prove some basic facts 
about $\p$ which we will need.

\bigskip

\begin{lemma}
Let $p \in \p$ and $x \in \dom(f_p)$. Then:
\begin{enumerate}
\item $Sk(x) \cap \kappa = x$;
\item for all $K \in f_p(x)$, $K \subseteq x$;
\item if $N \in \mathcal X$, $\alpha \in S$, $x \in N \setminus S$, and $x \subseteq \alpha$, 
then $x \in Sk(N \cap \alpha)$;
\item if $P \in \mathcal Y$ and $\sup(x) < P \cap \kappa$, then 
$x \in P$.
\end{enumerate}
\end{lemma}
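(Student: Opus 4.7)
The plan is to exploit the Definition 4.2(2) dichotomy for $x \in \dom(f_p)$: either $x \in S$, or $x = M \cap \gamma$ for some $M \in A_p$ and $\gamma \in (M \cap \dom(f_p) \cap S) \cup \{\kappa\}$. In the latter case, $\gamma \in \Lambda \cup \{\kappa\}$ (since $S \subseteq \Lambda$) and $Sk(\gamma) \cap \kappa = \gamma$ (by Notation 4.1 if $\gamma \in S$, or trivially if $\gamma = \kappa$), so Assumption 2.6 and Lemma 2.7 both apply to $M$ at $\gamma$.

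For (1), if $x \in S$ the identity is Notation 4.1; otherwise $x = M \cap \gamma$ and Lemma 2.7(2) gives $Sk(M \cap \gamma) \cap \kappa = M \cap \gamma$. For (2), each $K \in f_p(x)$ is a countable subset of $\kappa$ lying in $Sk(x)$ by Definition 4.2(2). Each $\xi \in K$ is definable in $\mathcal{A}$ from $K$ together with its index $n < \omega$ in the natural enumeration of $K$, so $\xi \in Sk(x)$; hence $K \subseteq Sk(x) \cap \kappa = x$ by (1).

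For (3), since $x \notin S$ we write $x = M \cap \gamma$ as above and set $\beta := \min(\gamma, \alpha)$. The hypothesis $x \subseteq \alpha$ yields $M \cap \gamma = M \cap \beta$, while $\beta \in \Lambda \cup \{\kappa\}$ has $Sk(\beta) \cap \kappa = \beta$. Assumption 2.6 then gives $x = M \cap \beta \in Sk(\beta) \subseteq Sk(\alpha)$, and combining this with $x \in N$, Lemma 2.7(1) yields $x \in N \cap Sk(\alpha) = Sk(N \cap \alpha)$.

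For (4), the case $x \in S$ is immediate since $x = \sup(x) < P \cap \kappa$ forces $x \in P \cap \kappa \subseteq P$. In the remaining case $x = M \cap \gamma$, the main step is to locate an ordinal $\delta$ satisfying $\delta \in P \cap \Lambda$, $Sk(\delta) \cap \kappa = \delta$, and $\sup(x) < \delta \le \gamma$. The set $D := \Lambda \cap \{\delta < \kappa : Sk(\delta) \cap \kappa = \delta\}$ is stationary in $\kappa$ and definable in $\mathcal{A}$, so $D \in P$; elementarity of $P$ and $\cf(P \cap \kappa) > \omega$ guarantee that $D \cap P$ is unbounded in $P \cap \kappa$, so one can choose such $\delta$ above $\sup(x)$ (and if $\gamma \in S$ with $\gamma < P \cap \kappa$ one simply takes $\delta = \gamma$). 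When $\gamma \in S$, $\gamma$ has uncountable cofinality and $M$ is countable, so $\sup(x) < \gamma$ and the constraint $\delta \le \gamma$ is achievable; then $M \cap \gamma = M \cap \delta$ because every element of $M \cap \gamma$ is $\le \sup(x) < \delta$. Now Assumption 2.6 gives $M \cap \delta \in Sk(\delta)$, and since $\delta \in P$ with $\delta < P \cap \kappa$ implies $\delta \subseteq P$, closure of $P$ under definable Skolem functions yields $Sk(\delta) \subseteq P$; hence $x = M \cap \delta \in P$. The main obstacle is this calibration of $\delta$ in part (4) together with the verification that $M \cap \gamma = M \cap \delta$, which is what transports the $Sk(\delta)$-membership given by Assumption 2.6 inside $P$.
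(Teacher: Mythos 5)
Parts (1)--(3) of your proposal are correct and essentially follow the paper's own route: (1) is the same dichotomy argument, (2) is the paper's elementarity argument (though your ``index $n<\omega$ in the natural enumeration'' should be phrased via the $\mathcal A$-least surjection $h:\omega\to K$, since $K$ is in general infinite and its increasing enumeration has order type beyond $\omega$; this is a cosmetic fix), and (3) just inlines Lemma 2.8(1) as Assumption 2.6 plus Lemma 2.7(1) with the $\beta=\min(\gamma,\alpha)$ device, which is the same content as the paper's two-case argument.

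Part (4), however, has a genuine gap in the main case $\gamma\ge P\cap\kappa$ (including $\gamma=\kappa$). There you need some $\delta$ with $\sup(x)<\delta<P\cap\kappa$, $\delta\in\Lambda$, and $Sk(\delta)\cap\kappa=\delta$, and your justification is that $D:=\Lambda\cap\{\delta<\kappa: Sk(\delta)\cap\kappa=\delta\}$ is definable in $\mathcal A$, so that elementarity of $P$ makes $D\cap P$ cofinal in $P\cap\kappa$. But the condition $Sk(\delta)\cap\kappa=\delta$ says that $\delta$ is closed under \emph{every} definable Skolem function of $\mathcal A$, i.e., it quantifies over the satisfaction relation of $\mathcal A$, which by Tarski's theorem is not definable over $\mathcal A$; nothing in the paper's setup makes this set a predicate of $\mathcal A$ or an element of $P$, so the elementarity step collapses, and no other part of your argument produces such a $\delta$. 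Note also that the obvious substitute --- iterating $\delta\mapsto\sup(Sk(\delta)\cap\kappa)$ $\omega$ times and taking the supremum --- only yields closure points of countable cofinality, which are excluded from $\Lambda$, so it cannot repair the appeal to definability. The repair is the paper's choice: take $\delta:=P\cap\kappa$ itself. Since $P\cap\kappa\subseteq P$ and $P$ is closed under the definable Skolem functions, $Sk(P\cap\kappa)\subseteq P$, hence $Sk(P\cap\kappa)\cap\kappa=P\cap\kappa$ with no definability of $D$ needed; and $P\cap\kappa\in\Lambda$ because $\Lambda$ \emph{is} a definable predicate of $\mathcal A$ (so $\Lambda\cap P$ is cofinal in $P\cap\kappa$, putting $P\cap\kappa$ in the closed set $C^*$) while $\cf(P\cap\kappa)>\omega$ by the definition of $\mathcal Y$. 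Then Assumption 2.6 gives $M\cap(P\cap\kappa)\in Sk(P\cap\kappa)\subseteq P$; when $\gamma\ge P\cap\kappa$ the hypothesis $\sup(x)<P\cap\kappa$ yields $x=M\cap\gamma=M\cap(P\cap\kappa)\in P$, and when $\gamma<P\cap\kappa$ (so necessarily $\gamma\in S$) your own observation $\delta=\gamma$, or equivalently the paper's remark that $x$ is an initial segment of $M\cap(P\cap\kappa)$, finishes the proof.
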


\begin{proof}
(1) By Definition 4.2(2), either $x \in S$, or there is $M \in A_p$ and 
$\alpha \in (M \cap \dom(f_p) \cap S) \cup \{ \kappa \}$ such that 
$x = M \cap \alpha$. 
Then $Sk(x) \cap \kappa = x$ holds by Notation 4.1 in the first case, 
and by Lemma 2.7(2) in the second case. 

(2) Suppose that $K \in f_p(x)$, and we will show that $K \subseteq x$. 
By Definition 4.2(2,3), $K$ is a countable subset of $\kappa$ in $Sk(x)$. 
By the elementarity of $Sk(x)$, $K \subseteq Sk(x)$. 
As $K \subseteq \kappa$, it follows by (1) that 
$K \subseteq Sk(x) \cap \kappa = x$.

(3) Fix $M \in A_p$ and $\beta \in (M \cap \dom(f_p) \cap S) \cup \{ \kappa \}$ 
such that $x = M \cap \beta$. 
Since $M \cap \beta = x \in N$, we have that 
$x = M \cap \beta \in Sk(N \cap \beta)$ by Lemma 2.8(1). 
If $\beta \le \alpha$, then $x \in Sk(N \cap \beta) \subseteq Sk(N \cap \alpha)$, 
and we are done. 
If $\alpha < \beta$, then since $x = M \cap \beta \subseteq \alpha$, 
$M \cap \beta = M \cap \alpha$. 
So $M \cap \alpha = x \in N$. 
Hence, $x \in Sk(N \cap \alpha)$ by Lemma 2.8(1).

(4) If $x \in S$, then $x = \sup(x) \in P$. 
Otherwise by Definition 4.2(2) there is $M \in A_p$ and 
$\beta \in (M \cap \dom(f_p) \cap S) \cup \{ \kappa \}$ such that 
$x = M \cap \beta$. 
Let $\alpha := P \cap \kappa$. 
Then by the elementarity of $P$, 
$Sk(\alpha) \cap \kappa = \alpha$. 
By Assumption 2.6, $M \cap \alpha \in Sk(\alpha) \subseteq P$. 
So $M \cap \alpha \in P$. 
If $\beta \le \alpha$, then $x = M \cap \beta$ is an initial segment of 
$M \cap \alpha$, and hence is in $P$. 
If $\alpha < \beta$, then since $\sup(x) = \sup(M \cap \beta) < \alpha$, 
$M \cap \alpha = M \cap \beta = x$, which is in $P$.
\end{proof}

\begin{lemma}
Let $p \in \p$ and $z \in \dom(f_p)$. 
Then for all $x$ and $y$ in $f_p(z)$, 
$x \in Sk(y)$ iff $\sup(x) < \sup(y)$, and 
$x = y$ iff $\sup(x) = \sup(y)$.
\end{lemma}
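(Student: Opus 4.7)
The plan is to reduce this immediately to Lemma 2.33, which is the abstract version of the statement for $\in$-chains. I need to verify its two hypotheses for the set $a := f_p(z)$: that $a$ is an $\in$-chain of elements of $P_{\omega_1}(\kappa)$, and that $Sk(x) \cap \kappa = x$ for every $x \in a$.

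First I would note that by Definition 4.2(2), $f_p(z)$ is by hypothesis a finite $\in$-chain, and its members are elements of $Sk(z) \setminus S$, which in particular are countable subsets of $\kappa$, so $f_p(z) \subseteq P_{\omega_1}(\kappa)$. Thus the first hypothesis of Lemma 2.33 is immediate.

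Next, fix any $x \in f_p(z)$. By Definition 4.2(3), $f_p(z) \subseteq \dom(f_p)$, so $x \in \dom(f_p)$. Then Lemma 4.3(1) gives $Sk(x) \cap \kappa = x$, verifying the second hypothesis. Applying Lemma 2.33 to $a = f_p(z)$ now yields both equivalences, completing the proof. There is no real obstacle here; the lemma is a bookkeeping corollary of Lemma 2.33, and the only substantive step is recognizing that Definition 4.2(3) forces members of $f_p(z)$ into $\dom(f_p)$ so that Lemma 4.3(1) applies.
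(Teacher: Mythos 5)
Your proof is correct and is essentially the paper's own argument: Definition 4.2(2,3) together with Lemma 4.3(1) verify the hypotheses of the abstract $\in$-chain lemma, which then yields both equivalences with $a = f_p(z)$. The only slip is the citation: the lemma you describe and use is Lemma 2.32 (the abstract statement about $\in$-chains with $Sk(x) \cap \kappa = x$), not Lemma 2.33, which concerns the chain $\{ M \cap \alpha : M \in A,\ \alpha \in M \}$ arising from an adequate set.
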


\begin{proof}
By Definition 4.2(3), $f_p(z) \subseteq \dom(f_p)$. 
So by Lemma 4.3(1), if $x \in f_p(z)$ then 
$Sk(x) \cap \kappa = x$. 
The lemma now follows from Lemma 2.32, letting $a = f_p(z)$.
\end{proof}

\begin{lemma}
Let $p \in \p$ and $\alpha \in S \setminus \dom(f_p)$. 
If $M \in A_p$ and $\alpha \in M$, 
then $M \cap \alpha$ is not in $\dom(f_p)$.
\end{lemma}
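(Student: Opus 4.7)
The plan is to argue by contradiction: assume $M \cap \alpha \in \dom(f_p)$, and derive that $\alpha \in \dom(f_p)$, contradicting the hypothesis. Unfolding clause~(2) of Definition~4.2 for the element $M \cap \alpha$ of $\dom(f_p)$ yields two cases to handle: either (a) $M \cap \alpha \in S$, or (b) there exist $N \in A_p$ and $\beta \in (N \cap \dom(f_p) \cap S) \cup \{\kappa\}$ with $M \cap \alpha = N \cap \beta$.

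Case~(a) I would dispose of immediately: since $M \in \mathcal{X}$ is countable, $M \cap \alpha$ is countable as a set, so even if it happened to be an ordinal it would have countable cofinality; but $S \subseteq \Lambda \subseteq \cof(>\omega)$, so $M \cap \alpha \notin S$.

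For case~(b), I first dispose of the subcase $\beta = \alpha$ trivially: it gives $\alpha = \beta \in \dom(f_p)$, contradicting the hypothesis. So I may assume $\beta \ne \alpha$. At this point the plan is to apply Lemma~3.2 with the models $M, N$ (which form an adequate pair, since $M, N \in A_p$), with ordinal $\alpha \in M \cap \kappa$ and ordinal $\gamma := \beta \in (N \cap \kappa) \cup \{\kappa\}$. Both $\alpha$ and $\beta$ are uncountable: $\alpha \in S$ has uncountable cofinality, and $\beta$ is either $\kappa$ or in $S$. The equality hypothesis of Lemma~3.2 is exactly $M \cap \alpha = N \cap \beta$. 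The lemma then yields $M \sim N$ and $\alpha = \min((M \cap \kappa) \setminus \beta_{M,N})$, i.e.\ $\alpha \in r^*(A_p)$. Since $\alpha \in S$, clause~(7) of Definition~4.2 forces $\alpha \in \dom(f_p)$, the desired contradiction.

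The argument is essentially a direct application of Lemma~3.2 together with the definition of $r^*$ and the closure requirement (7) on conditions; I do not anticipate any real obstacle. The only subtlety worth checking carefully is simply that the hypotheses of Lemma~3.2 are all satisfied in case~(b), which reduces to noting that $S$-elements are uncountable ordinals and that $\{M,N\} \subseteq A_p$ is adequate.
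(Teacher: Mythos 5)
Your proof is correct and follows essentially the same route as the paper: unfold Definition 4.2(2) for $M \cap \alpha$, rule out $\beta = \alpha$, apply Lemma 3.2 to get $M \sim M_1$ and $\alpha = \min((M\cap\kappa)\setminus\beta_{M,M_1})$, and contradict clause (7). The only difference is that you explicitly dispose of the case $M \cap \alpha \in S$ (via countability), which the paper passes over tacitly.
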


\begin{proof}
Suppose for a contradiction that $M \cap \alpha \in \dom(f_p)$. 
Then by Definition 4.2(2), there is $M_1 \in A_p$ and 
$\beta \in (M_1 \cap \dom(f_p) \cap S) \cup \{ \kappa \}$ 
such that $M \cap \alpha = M_1 \cap \beta$. 
Since $\beta \in \dom(f_p)$ and $\alpha \notin \dom(f_p)$, 
$\alpha \ne \beta$. 
By Lemma 3.2 applied to $M$, $M_1$, $\alpha$, and $\beta$, we have that 
$M \sim M_1$ and 
$$
\alpha = \min((M \cap \kappa) \setminus \beta_{M,M_1}).
$$ 
In particular, $\alpha \in r^*(A_p) \cap S$. 
So by Definition 4.2(7), $\alpha \in \dom(f_p)$, 
which contradicts our assumptions.
\end{proof}

We show next that for any condition $p$ and 
any ordinal $\alpha$ in $S$, there is $q \le p$ with 
$\alpha \in \dom(f_q)$. 
Among other things, this fact will allow us to prove that 
$\p$ adds a partial square sequence whose domain is all of $S$.

\begin{lemma}
Let $p \in \p$, and let $\alpha$ and $\beta$ be distinct ordinals 
in $S \setminus \dom(f_p)$. 
Then the sets 
$$
\dom(f_p), \ \{ M \cap \alpha : M \in A_p, \ \alpha \in M \}, \ 
\{ N \cap \beta : N \in A_p, \ \beta \in N \}
$$
are pairwise disjoint.
\end{lemma}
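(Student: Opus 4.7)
My plan is to break the statement into three pairwise disjointness claims and handle them separately. Specifically, I need to show: (i) $\dom(f_p)$ is disjoint from $\{M\cap\alpha : M\in A_p,\ \alpha\in M\}$; (ii) $\dom(f_p)$ is disjoint from $\{N\cap\beta : N\in A_p,\ \beta\in N\}$; and (iii) the two sets of intersection models are disjoint from each other.

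Claims (i) and (ii) follow immediately from Lemma 4.5, which is the key lemma already proven. Since $\alpha,\beta \in S \setminus \dom(f_p)$, that lemma directly rules out $M\cap\alpha$ or $N\cap\beta$ belonging to $\dom(f_p)$ for any $M,N\in A_p$ with $\alpha\in M$ or $\beta\in N$ respectively. So these two cases are essentially free.

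The interesting case is (iii), where I would argue by contradiction. Suppose that $M\cap\alpha = N\cap\beta$ for some $M,N\in A_p$ with $\alpha\in M$ and $\beta\in N$. Since $A_p$ is adequate, $\{M,N\}$ is adequate. Since $\alpha,\beta\in S\subseteq\Lambda$ and $\Lambda$ consists of ordinals of uncountable cofinality, both $\alpha$ and $\beta$ are uncountable ordinals. This sets up exactly the hypotheses of Lemma 3.2 with $\gamma = \beta$: we have $\alpha\in M\cap\kappa$, $\beta\in N\cap\kappa$, $\alpha\ne\beta$, and $M\cap\alpha = N\cap\beta$. Applying Lemma 3.2 yields $M\sim N$ together with
\[
\alpha = \min\bigl((M\cap\kappa)\setminus\beta_{M,N}\bigr).
\]
Therefore $\alpha\in r^*(A_p)$ by the definition of $r^*$, and since $\alpha\in S$, we have $\alpha\in r^*(A_p)\cap S$. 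But then Definition 4.2(7) forces $\alpha\in\dom(f_p)$, contradicting the assumption that $\alpha\in S\setminus\dom(f_p)$.

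The only potential subtlety is making sure all four hypotheses of Lemma 3.2 are in force; in particular that $\alpha$ and $\beta$ are uncountable, which is immediate from $S\subseteq\Lambda$, and that $\alpha\ne\beta$, which is given. Everything else is either bookkeeping or a direct invocation of previously established results, so I do not expect any real obstacle.
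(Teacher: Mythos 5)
Your proposal is correct and follows the paper's proof essentially verbatim: Lemma 4.5 handles disjointness of $\dom(f_p)$ from each of the two other sets, and the remaining case is settled by applying Lemma 3.2 to $M$, $N$, $\alpha$, $\beta$ to get $\alpha \in r^*(A_p) \cap S$, contradicting Definition 4.2(7) together with $\alpha \notin \dom(f_p)$. No gaps.
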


\begin{proof}
By Lemma 4.5, the first and second sets are disjoint, and the first and third 
sets are disjoint. 
If the second and third sets are not disjoint, 
then $M \cap \alpha = N \cap \beta$, for some 
$M$ and $N$ in $A_p$ with $\alpha \in M$ and $\beta \in N$. 
Applying Lemma 3.2 to $M$, $N$, $\alpha$, and $\beta$, 
we get that $M \sim N$ and 
$\alpha = \min((M \cap \kappa) \setminus \beta_{M,N})$. 
In particular, $\alpha \in r^*(A_p) \cap S$. 
So by Definition 4.2(7), $\alpha \in \dom(f_p)$, 
which contradicts our assumptions.
\end{proof}

\begin{definition}
Let $p \in \p$, and let $x$ be a finite subset of $S \setminus \dom(f_p)$. 
Define $p + x$ as the triple $(f,g,A)$ satisfying:
\begin{enumerate}
\item $A := A_p$;
\item $\dom(f) := \dom(f_p) \cup x \cup 
\{ M \cap \alpha : M \in A_p, \ \alpha \in M \cap x \}$;
\item for each $z \in \dom(f_p)$, $f(z) := f_p(z)$;
\item for each $\alpha \in x$, 
$f(\alpha) :=  \{ M \cap \alpha : M \in A_p, \ \alpha \in M \}$;
\item for each $\alpha \in x$ and $M \in A_p$ with $\alpha \in M$, 
$f(M \cap \alpha) := f(\alpha) \cap Sk(M \cap \alpha)$, where $f(\alpha)$ 
was defined in (4);
\item the domain of $g$ is the set of pairs $(K,z)$ such that 
$K \in f(z)$;
\item $g(K,z) = g_p(K,z)$ if $(K,z) \in \dom(g_p)$, and 
$g(K,z) = \emptyset$ if $(K,z) \in \dom(g) \setminus \dom(g_p)$.
\end{enumerate}
\end{definition}

Note that by Lemma 4.6, $f(M \cap \alpha)$ in (5) is well-defined, 
since the set $M \cap \alpha$ is not in $\dom(f_p)$ and 
uniquely determines $\alpha$.

\begin{lemma}
Let $p \in \p$, and let $x$ be a finite subset of $S \setminus \dom(f_p)$. 
Then $p + x$ is a condition in $\p$, and $p + x \le p$.
\end{lemma}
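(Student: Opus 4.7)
The plan is to unwind the seven requirements of Definition 4.2 for the triple $(f,g,A)$ defining $p+x$, and then verify the four clauses of the ordering $p+x \le p$. Clauses (1), (6), and (7) are immediate: $A = A_p$ is adequate, $M\cap\alpha$ is placed in $f(\alpha)$ by construction whenever $\alpha \in x$ and $M \in A_p$ contains $\alpha$, and $r^*(A) = r^*(A_p)$ so clause (7) for $p$ carries over. Clause (2) is also straightforward: each $\alpha \in x$ lies in $S$, and the remaining new elements of $\dom(f)$ are of the form $M\cap\alpha$ with $M \in A_p$ and $\alpha \in M \cap \dom(f) \cap S$. That $f(\alpha) = \{M\cap\alpha : M \in A_p,\ \alpha \in M\}$ is a finite $\in$-chain contained in $Sk(\alpha)\setminus S$ follows from Lemma 2.33, using that $Sk(\alpha)\cap\kappa = \alpha$ (Notation 4.1) together with the fact that each $M\cap\alpha$ is a countable set of ordinals and so cannot belong to $\Lambda \supseteq S$; the same considerations give the property for $f(M\cap\alpha) = f(\alpha)\cap Sk(M\cap\alpha)$, as a subset of $f(\alpha)$.

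The main technical point is verifying clause (3), i.e., that for every $z \in \dom(f)$ and $K \in f(z)$ we have $K \in \dom(f)$ and $f(K) = f(z)\cap Sk(K)$. For $z \in \dom(f_p)$ this is inherited from $p$, and Lemma 4.5 guarantees that no $M\cap\alpha$ with $\alpha \in x$ lies in $\dom(f_p)$, so the definitions of $f$ on the new $K$'s are unambiguous. For $z = \alpha \in x$ and $K = M\cap\alpha \in f(\alpha)$, the equation $f(K) = f(\alpha)\cap Sk(K)$ is exactly clause (5) of Definition 4.7. The interesting case is $z = M\cap\alpha$ with $\alpha \in x$: here $f(z) = f(\alpha)\cap Sk(M\cap\alpha)$ and any $K \in f(z)$ has the form $N\cap\alpha$, so by clause (5) of Definition 4.7 we have $f(K) = f(\alpha)\cap Sk(K)$, and the required identity reduces to showing $f(\alpha)\cap Sk(K) \subseteq Sk(M\cap\alpha)$. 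But $K \in Sk(M\cap\alpha)$ together with $Sk(M\cap\alpha)\cap\kappa = M\cap\alpha$ (Lemma 2.7(2)) forces $K \subseteq M\cap\alpha$, whence $Sk(K) \subseteq Sk(M\cap\alpha)$ by monotonicity of Skolem hulls, and the containment follows.

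Clause (4) is clear from the defining domain of $g$. Clause (5) splits into two cases: if $K \in f(L)$ and $L \in f(z)$ with $z \in \dom(f_p)$, then $f(z) = f_p(z)$ forces $L,K \in \dom(f_p)$ and we inherit the inclusion from $p$; otherwise $z \notin \dom(f_p)$, so $L$ and $K$ are of the form $N\cap\alpha$ with $\alpha \in x$, so by Lemma 4.5 none of the pairs $(K,z), (K,L)$ lies in $\dom(g_p)$ and both $g$-values are $\emptyset$. Finally, the ordering conditions (a)--(d) hold directly from the construction: $A_p = A$; $\dom(f_p) \subseteq \dom(f)$ with $f_p(z) = f(z)$ for $z \in \dom(f_p)$; $g$-values at pairs in $\dom(g_p)$ are preserved by definition; and (d) follows because $f(z) = f_p(z)$ whenever $z \in \dom(f_p)$. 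I expect the only step that requires real thought to be the mixed case of clause (3) handled above; everything else is routine bookkeeping.
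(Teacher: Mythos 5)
Your proof is correct and follows essentially the same route as the paper: a direct verification of requirements (1)--(7) of Definition 4.2 for $p+x$, using Lemma 2.33 for the $\in$-chain property of $f(\alpha)$ and Lemmas 4.5/4.6 for unambiguity, followed by the routine check of clauses (a)--(d). The only difference is emphasis — the paper treats requirement (3) as easy and spells out (2) and (5) instead — and your detailed argument for the mixed case of (3), via $Sk(K) \subseteq Sk(M \cap \alpha)$, is a correct rendering of that omitted check.
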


\begin{proof}
Let $q := p + x$. 
Assuming that $q$ is a condition, it is easy to check that $q \le p$ from 
the definition of $q$ and the fact that for all $M$ and $z$ in $\dom(f_p)$, 
$M \in f_p(z)$ iff $M \in f_q(z)$.

To show that $q$ is a condition, we 
verify requirements (1)--(7) of Definition 4.2. 
(1), (4), (6), and (7) are immediate from the definition of $q$, and 
(3) is easy to check using the definition of $f_q$. 
It remains to prove (2) and (5).

(2) Clearly $f_q$ is a function with a finite domain, and every $z \in \dom(f_q)$ 
has the right form. 
Let $z \in \dom(f_q)$, and we will show that $f_q(z)$ is a finite $\in$-chain 
and $f_q(z) \subseteq Sk(z) \setminus S$. 
This is immediate if $z \in \dom(f_p)$.

If $z = M \cap \alpha$, for some $M \in A_p$ with $\alpha \in M \cap x$, then 
$f_q(M \cap \alpha) = f_q(\alpha) \cap Sk(M \cap \alpha)$ is obviously 
a subset of $Sk(M \cap \alpha)$, and will be a finite $\in$-chain disjoint 
from $S$ provided that $f_q(\alpha)$ is. 
So it suffices to show that 
$f_q(\alpha) = \{ M \cap \alpha : M \in A_p , \ \alpha \in M \}$ 
is a finite $\in$-chain and is a subset of $Sk(\alpha) \setminus S$. 
This set is obviously disjoint from $S$, and it is 
a finite $\in$-chain and a subset of $Sk(\alpha)$ by Lemma 2.33.

(5) Suppose that $K \in f_q(L)$ and $L \in f_q(z)$. 
We will show that $g_q(K,z) \subseteq g_q(K,L)$. 
If $z \in \dom(f_p)$, then $f_q(z) = f_p(z)$; this implies that 
$K$ and $L$ are in $\dom(f_p)$ as well, so $K \in f_p(L)$ 
and $L \in f_p(z)$. 
Hence, $g_q(K,z) = g_p(K,z) \subseteq g_p(K,L) = g_q(K,L)$. 
On the other hand, if $z$ is not in $\dom(f_p)$, then 
$(K,z) \notin \dom(g_p)$. 
Therefore, $g_q(K,z) = \emptyset \subseteq g_q(K,L)$.
\end{proof}

The next lemma will be needed in the amalgamation arguments of Section 7.

\begin{lemma}
Let $p$ be a condition. 
Then there is $q \le p$ satisfying that whenever 
$K \in f_q(x)$ and $x \in f_q(y)$, then 
$$
g_q(K,x) \subseteq g_q(K,y).\footnote{Note that by Definition 4.2(5), 
any such condition satisfies that 
whenever $K \in f_q(x)$ and $x \in f_q(y)$, then 
$g_q(K,x) = g_q(K,y)$.}
$$
Moreover, $f_q = f_p$ and $A_q = A_p$.
\end{lemma}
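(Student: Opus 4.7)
The plan is to leave $A_q := A_p$ and $f_q := f_p$ and modify only the $g$-component, enlarging each value to absorb the data sitting at inner levels of the $f$-tower. Concretely, for each $(K, y) \in \dom(g_p)$, I would set
$$
g_q(K, y) := g_p(K, y) \cup \bigcup \bigl\{ g_p(K, x) : x \in f_p(y), \ K \in f_p(x) \bigr\}.
$$
This is a finite union of finite sets, so $g_q(K, y)$ is finite. For each $x \in f_p(y)$ we have $x \subseteq y$ by Lemma 4.3(2), so each summand lies in $y \setminus \sup(K)$, verifying Definition 4.2(4). Requirements (1)--(3), (6), and (7) of Definition 4.2 are immediate because $A_q$ and $f_q$ are unchanged, and the clauses of $q \le p$ hold trivially from $g_p(K, y) \subseteq g_q(K, y)$.

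The main obstacle is verifying that Definition 4.2(5) survives the enlargement: if $K \in f_q(L)$ and $L \in f_q(z)$, I must show $g_q(K, z) \subseteq g_q(K, L)$. Since $f_q = f_p$, the hypothesis holds in $p$ as well, and the $g_p(K, z)$ summand of $g_q(K, z)$ lies in $g_p(K, L) \subseteq g_q(K, L)$ by Definition 4.2(5) applied to $p$. For each $x$ with $K \in f_p(x)$ and $x \in f_p(z)$, I would use that $f_p(z)$ is an $\in$-chain together with Lemma 4.4 to split into cases according to the relationship of $L$ and $x$. If $L = x$, then $g_p(K, x) = g_p(K, L)$. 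If $x \in Sk(L)$, then $x \in f_p(z) \cap Sk(L) = f_p(L)$ by Definition 4.2(3), so $g_p(K, x)$ is one of the sets in the union defining $g_q(K, L)$. If $L \in Sk(x)$, then similarly $L \in f_p(z) \cap Sk(x) = f_p(x)$, and Definition 4.2(5) for $p$ gives $g_p(K, x) \subseteq g_p(K, L) \subseteq g_q(K, L)$.

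Finally, to verify the desired property of $q$, suppose $K \in f_q(x)$ and $x \in f_q(y)$. Then $g_p(K, x) \subseteq g_q(K, y)$ by taking $x$ itself as the witness in the defining union. For any $w \in f_p(x)$ with $K \in f_p(w)$, Definition 4.2(3) yields $w \in f_p(x) = f_p(y) \cap Sk(x) \subseteq f_p(y)$, so $g_p(K, w) \subseteq g_q(K, y)$ as well. Taking the union gives $g_q(K, x) \subseteq g_q(K, y)$, as required.
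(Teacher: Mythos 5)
Your proposal is correct and is essentially the paper's own proof: the definition of $g_q(K,y)$ is the same (the paper writes the $x=y$ case inside the union rather than separately), and your verification of Definition 4.2(5) and of the final absorption property follows the same case analysis on the position of $L$ relative to $x$ in the $\in$-chain $f_p(z)$.
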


\begin{proof}
Define $q$ as follows. 
Let $f_q := f_p$ and $A_q := A_p$. 
For any $K$ and $y$ such that $K \in f_p(y)$, define 
$$
g_q(K,y) := \bigcup 
\{ g_p(K,x) : 
x = y, \ \textrm{or} \ (K \in f_p(x) \ \textrm{and} \ 
x \in f_p(y)) \}.
$$

It is trivial to check that if $q$ is a condition, then $q \le p$. 
To show that $q$ is a condition, we verify requirements (1)--(7) 
of Definition 4.2. 
(1), (2), (3), (6), and (7) are immediate. 
It remains to prove (4) and (5).

(4) The domain of $g_q$ is equal to the set of pairs $(K,y)$, where 
$K \in f_q(y)$. 
Let $K \in f_q(y)$, and we will show that $g_q(K,y)$ is a finite 
subset of $y \setminus \sup(K)$. 
By the definition of $g_q(K,y)$ and the fact that $p$ is a condition, 
it is clear that $g_q(K,y)$ is finite and 
every ordinal in $g_q(K,y)$ is greater than or equal to $\sup(K)$. 
It remains to show that $g_q(K,y) \subseteq y$.

Let $\xi \in g_q(K,y)$, and we will show that $\xi \in y$. 
By the definition of $g_q$, 
either $\xi \in g_p(K,y)$, 
or else there is some $x$ satisfying that 
$K \in f_p(x)$, $x \in f_p(y)$, and $\xi \in g_p(K,x)$. 
In the first case, $\xi \in y$, since $p$ is a condition. 
In the second case, $\xi \in g_p(K,x) \subseteq x$. 
But $x \in f_p(y)$, which implies that $x \subseteq y$ by Lemma 4.3(2). 
So $\xi \in y$.

(5) Suppose that $K \in f_q(L)$ and $L \in f_q(y)$. 
We will show that $g_q(K,y) \subseteq g_q(K,L)$. 
Since $f_q = f_p$, we have that 
$K \in f_p(L)$ and $L \in f_p(y)$. 
Let $\xi \in g_q(K,y)$, and we will show that $\xi \in g_q(K,L)$. 
By the definition of $g_q(K,y)$, either
\begin{enumerate}
\item[(a)] $\xi \in g_p(K,y)$, or
\item[(b)] there is $x$ such that $K \in f_p(x)$, 
$x \in f_p(y)$, and $\xi \in g_p(K,x)$. 
\end{enumerate}
In case a, since $p$ is a condition, 
$g_p(K,y) \subseteq g_p(K,L)$. 
Also, $g_p(K,L) \subseteq g_q(K,L)$ by the definition of $g_q$. 
So $\xi \in g_p(K,y) \subseteq g_p(K,L) \subseteq g_q(K,L)$.

Consider case b. 
Since $x$ and $L$ are both in $f_p(y)$, 
either $L \in f_p(x)$, $x = L$, or 
$x \in f_p(L)$. 
Assume that $L \in f_p(x)$. 
Then, since $p$ is a condition, 
$$
\xi \in g_p(K,x) \subseteq g_p(K,L) \subseteq g_q(K,L).
$$
Assume that $x = L$. 
Then 
$$
\xi \in g_p(K,x) = g_p(K,L) \subseteq g_q(K,L).
$$
Finally, assume that $x \in f_p(L)$. 
Then by the definition of $g_q(K,L)$, since 
$K \in f_p(x)$ and $x \in f_p(L)$, $g_p(K,x) \subseteq g_q(K,L)$. 
So $\xi \in g_q(K,L)$. 

\bigskip

This completes the proof that $q$ is a condition. 
To show that $q$ is as required, suppose that 
$K \in f_q(x)$ and $x \in f_q(y)$, and we will show that 
$g_q(K,x) \subseteq g_q(K,y)$. 
Then $K \in f_p(x)$ and $x \in f_p(y)$. 

Let $\xi \in g_q(K,x)$, and we will show that $\xi \in g_q(K,y)$. 
By the definition of $g_q(K,x)$, either $\xi \in g_p(K,x)$, or 
$\xi \in g_p(K,x_0)$ for some $x_0$ with $K \in f_p(x_0)$ 
and $x_0 \in f_p(x)$. 
In the second case, $x_0 \in f_p(x)$ and $x \in f_p(y)$ imply that 
$x_0 \in f_p(y)$. 

Let $x'$ be equal to $x$ or $x_0$ depending on the first or the second case. 
Then in either case, $K \in f_p(x')$ and $x' \in f_p(y)$, and also 
$\xi \in g_p(K,x')$. 
By the definition of $g_q$, 
$g_p(K,x') \subseteq g_q(K,y)$. 
Thus, $\xi \in g_q(K,y)$, as required.
\end{proof}

\bigskip

\addcontentsline{toc}{section}{5. A partial square sequence}

\textbf{\S 5. A partial square sequence}

\stepcounter{section}

\bigskip

In Sections 6 and 7, we will prove that $\p$ preserves $\omega_1$, 
is $\kappa$-c.c., and collapses $\kappa$ to become $\omega_2$. 
The proofs of these facts are quite involved. 
So it makes sense, from 
the expositional point of view, to assume for the time being that they are true, 
and show that the forcing poset 
$\p$ does what it is intended to do, namely, to add a partial 
square sequence on $S$.

To be precise, in this section we will 
assume exactly that $\p$ preserves $\omega_1$, forces that 
$\kappa$ is equal to $\omega_2$, and that 
Lemma 7.1 from Section 7 below holds.

\bigskip

Let $\dot f$ be a $\p$-name for a function 
such that $\p$ forces that for all $x$,
$$
\dot f(x) = \bigcup \{ f_p(x) : p \in \dot G_\p, \ x \in \dom(f_p) \}.
$$
It is easy to check that $\p$ forces that $\dot f(x)$ is an 
$\in$-chain of countable subsets of $\kappa$ 
and is a subset of $Sk(x) \setminus S$. 
Note that by Lemma 4.4, 
$\p$ forces that if $J$ and $K$ are in $\dot f(x)$, 
then $\sup(J) < \sup(K)$ iff $J \in Sk(K)$.

For each $\alpha \in S$, 
let $\dot c_\alpha$ be a $\p$-name such that $\p$ forces that 
$$
\dot c_\alpha = \{ \sup(M) : M \in \dot f(\alpha) \}.
$$
We will prove that $\p$ forces that the sequence 
$$
\langle \dot c_\alpha : \alpha \in S \rangle
$$
is a partial square sequence on $S$.

\begin{lemma}
Let $\alpha \in S$. 
Then $\p$ forces that for all $K \in \dot f(\alpha)$, 
$$
\dot f(K) = \dot f(\alpha) \cap Sk(K)
$$
and 
$$
\dot c_\alpha \cap \sup(K) = \{ \sup(J) : J \in \dot f(K) \}.
$$
\end{lemma}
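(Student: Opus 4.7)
The plan is to work in $V[G]$ for an arbitrary $V$-generic $G \subseteq \p$, fix some $K \in \dot f(\alpha)^G$, and prove the two equalities by unpacking the definition of $\dot f$ together with the structural requirements of Definition~4.2. First, pick a condition $p \in G$ witnessing $K \in \dot f(\alpha)^G$, i.e.\ $p$ has $\alpha \in \dom(f_p)$ and $K \in f_p(\alpha)$. Then $K \in \dom(f_p)$ by Definition~4.2(3), so $\dot f(K)$ really does have a value in the generic extension.

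For the first equality, I would establish both inclusions by a compatibility argument against $p$. Given $J \in \dot f(K)^G$, choose $q \in G$ with $q \le p$ and $J \in f_q(K)$; since $K \in f_p(\alpha) \subseteq f_q(\alpha)$ by Definition~4.2(b), Definition~4.2(3) applied to $q$ gives $f_q(K) = f_q(\alpha) \cap Sk(K)$, so $J \in f_q(\alpha) \cap Sk(K) \subseteq \dot f(\alpha)^G \cap Sk(K)$. Conversely, given $J \in \dot f(\alpha)^G \cap Sk(K)$, choose $q \in G$ with $q \le p$ and $J \in f_q(\alpha)$; then $K \in f_q(\alpha)$ as well, and Definition~4.2(3) gives $J \in f_q(\alpha) \cap Sk(K) = f_q(K)$, whence $J \in \dot f(K)^G$. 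No use of Lemma~7.1 is needed here; it is purely the coherence clause 4.2(3) that does the work.

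For the second equality, I would reduce from sup-ordering to $\in$-membership via Lemma~4.4. If $\xi \in \dot c_\alpha^G \cap \sup(K)$, write $\xi = \sup(M)$ for some $M \in \dot f(\alpha)^G$ with $\sup(M) < \sup(K)$; pick $q \in G$ containing both $M$ and $K$ in $f_q(\alpha)$ (by taking a common extension of witnesses). Lemma~4.4 applied to $f_q(\alpha)$ converts $\sup(M) < \sup(K)$ into $M \in Sk(K)$, so $M \in \dot f(\alpha)^G \cap Sk(K) = \dot f(K)^G$ by the first part, giving $\xi \in \{\sup(J) : J \in \dot f(K)^G\}$. The reverse direction is symmetric: $J \in \dot f(K)^G$ yields $J \in \dot f(\alpha)^G \cap Sk(K)$ by the first part, hence $\sup(J) \in \dot c_\alpha^G$, and $J \in Sk(K)$ forces $\sup(J) < \sup(K)$ by Lemma~4.4 again.

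I do not expect any genuine obstacle; the argument is essentially a bookkeeping exercise in chasing Definition~4.2(3) through witnesses in the generic filter, with Lemma~4.4 mediating between $\sup$ and $\in$. The only point requiring care is ensuring, at each step, that the two countable models whose sups one wants to compare lie simultaneously in $f_q(\alpha)$ for a single $q \in G$; this is immediate from the directedness of $G$ together with Definition~4.2(b).
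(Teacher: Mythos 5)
Your proof is correct and is exactly the routine verification the paper intends — its own proof of this lemma is simply ``Straightforward.'' Your witness-chasing through the directedness of $G$ plus Definition 4.2(3), and the use of Lemma 4.4 to pass between $\sup$-comparison and $Sk$-membership (a fact the paper itself records in the remark just before the lemma), is the expected argument.
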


\begin{proof}
Straightforward.
\end{proof}

\begin{lemma}
Let $\alpha \in S$. 
Then $\p$ forces that $\dot c_\alpha$ is a cofinal subset of $\alpha$ 
with order type $\omega_1$.
\end{lemma}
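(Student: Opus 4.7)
My plan is to verify the three conditions separately: $\dot c_\alpha \subseteq \alpha$, cofinality in $\alpha$, and order type $\omega_1$. The containment is almost immediate: every $M \in \dot f(\alpha)$ is a countable set (in $V$) contained in $Sk(\alpha)$, so by Lemma 4.3(1), $M \subseteq Sk(\alpha) \cap \kappa = \alpha$. Since $\alpha \in \Lambda$ has uncountable cofinality in $V$ and $M$ is countable, $\sup(M) < \alpha$, and this bound persists in $V[G]$.

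Cofinality of $\dot c_\alpha$ in $\alpha$ will follow from a density argument. Given $p$ and $\beta < \alpha$, I would use Assumption 2.22 to pick a simple $N \in \mathcal{X}$ containing $p$, $\alpha$, and $\beta$, and (for the order type argument below) with $N \cap \omega_1$ above a prescribed $\gamma < \omega_1$. If $\alpha \notin \dom(f_p)$, I would first replace $p$ by $p + \{\alpha\}$ using Lemma 4.9. Then Lemma 7.1, which we are assuming, produces $q \le p$ with $N \in A_q$; clause (6) of Definition 4.2 then forces $N \cap \alpha \in f_q(\alpha)$, and $\beta \in N \cap \alpha$ gives $\sup(N \cap \alpha) > \beta$, as desired.

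For the order type, I would prove both bounds separately. For $\ot(\dot c_\alpha) \le \omega_1$: by Lemma 5.1, the predecessors of any $K \in \dot f(\alpha)$ (under $\in$, equivalently under $\sup$ by Lemma 4.4) form $\dot f(K) \subseteq Sk(K)$, which is countable since $K$ is countable in $V$. Hence every element of $\dot c_\alpha$ has only countably many predecessors, bounding the order type by $\omega_1$. For $\ot(\dot c_\alpha) \ge \omega_1$: varying $\gamma < \omega_1$ in the density argument yields $\omega_1$-many distinct sets $N \cap \alpha$ in $\dot f(\alpha)$, distinguished by $(N \cap \alpha) \cap \omega_1 = N \cap \omega_1$, and by Lemma 4.4 these produce $\omega_1$-many distinct sups in $\dot c_\alpha$. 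The main obstacle will be invoking Lemma 7.1 in the density step, since adding $N$ to $A_p$ triggers further required additions to $\dom(f_q)$ via clauses (6) and (7) and compatibility conditions (3) and (5) on $g_q$, and one must check that these can be realized consistently.
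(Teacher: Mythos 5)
Your proposal is correct, and for most of the lemma it follows the paper's own route: the cofinality step is exactly the paper's argument (extend $p$ so that $\alpha \in \dom(f_p)$ via $p + \{\alpha\}$ --- note this is Definition 4.7 and Lemma 4.8, not Lemma 4.9 --- then apply Lemma 7.1 to a countable $N \in \mathcal X$ containing the extended condition, $\alpha$, and $\beta$, and read off $N \cap \alpha \in f_q(\alpha)$ from Definition 4.2(6)), and your upper bound on the order type is the same counting the paper uses, namely that by Lemma 5.1 the predecessors of $\sup(K)$ in $\dot c_\alpha$ are the suprema of members of $\dot f(K) \subseteq Sk(K)$, a countable set. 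Where you genuinely diverge is the lower bound: the paper observes that $\dot c_\alpha$ is cofinal in $\alpha$, which has uncountable cofinality, so the order type has uncountable cofinality and is therefore at least $\omega_1$, and then rules out order type greater than $\omega_1$ by the Lemma 5.1 counting; you instead run the density argument over all $\gamma < \omega_1$, producing members $N \cap \alpha$ of $\dot f(\alpha)$ with $N \cap \omega_1$ arbitrarily large below $\omega_1$, hence (by Lemma 4.4) $\omega_1$-many distinct suprema in $\dot c_\alpha$. Your variant costs one more density argument but uses only the preservation of $\omega_1$, which is explicitly among the standing assumptions of Section 5, whereas the paper's phrasing tacitly uses that $\alpha$ keeps uncountable cofinality in the extension; both are fine. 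Your closing worry about invoking Lemma 7.1 is moot here, since Section 5 explicitly assumes that lemma, so the bookkeeping triggered by adding $N$ to the side condition is already discharged there.
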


\begin{proof}
We first show that $\dot c_\alpha$ is forced to be a cofinal 
subset of $\alpha$. 
So let $\gamma < \alpha$ and $p \in \p$. 
Using Lemma 4.8, we can fix $q \le p$ with $\alpha \in \dom(f_q)$. 
By Lemma 7.1, fix $r \le q$ such that for some $N \in \mathcal X$ 
with $\gamma$ and $\alpha$ in $N$, $N \in A_r$. 
By Definition 4.2(6), $N \cap \alpha \in f_r(\alpha)$. 
So $r$ forces that 
$\gamma < \sup(N \cap \alpha) \in \dot c_\alpha$.

Now we show that $\dot c_\alpha$ is forced to have order 
type equal to $\omega_1$. 
Since $\alpha$ has uncountable cofinality and $\dot c_\alpha$ is 
forced to be cofinal in $\alpha$, clearly $\dot c_\alpha$ is forced to have 
an order type of uncountable cofinality. 
If it is not forced to have order type equal to $\omega_1$, then some condition 
forces that it has a proper initial segment of order type $\omega_1$. 
Hence, for some $p \in \p$ and $K \in f_p(\alpha)$, $p$ forces that 
$\dot c_\alpha \cap \sup(K)$ has order type equal to $\omega_1$.

Let $G$ be a generic filter on $\p$ which contains $p$, and let 
$c_\alpha := \dot c_\alpha^G$ and $f := \dot f^G$. 
By Lemma 5.1, $c_\alpha \cap \sup(K) = \{ \sup(J) : J \in f(K) \}$. 
Since $c_\alpha \cap \sup(K)$ is uncountable, 
it follows that $f(K)$ is uncountable. 
But since $K \in f(\alpha)$, by Lemma 5.1 we have that 
$f(K) = f(\alpha) \cap Sk(K) \subseteq Sk(K)$. 
As $Sk(K)$ is countable, so is $f(K)$, and we have a contradiction.
\end{proof}

\begin{proposition}
Let $\alpha \in S$. 
Suppose that $p \in \p$ and $p$ 
forces that $\xi < \alpha$ is a limit point of $\dot c_\alpha$. 
Then there is $q \le p$ such that for some $M \in f_q(\alpha)$, 
$\sup(M) = \xi$. 
In particular, $\p$ forces that $\dot c_\alpha$ is closed.
\end{proposition}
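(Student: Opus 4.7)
The goal is to produce, in $V$, a countable $N \in \mathcal{X}$ with $\alpha \in N$ and $\sup(N \cap \alpha) = \xi$. Once this is achieved, an application of Lemma 7.1 gives $q \le p$ with $N \in A_q$ and $\alpha \in \dom(f_q)$; then Definition 4.2(6) forces $M := N \cap \alpha$ into $f_q(\alpha)$, and $\sup(M) = \xi$ is the required witness. The \emph{in particular} clause is then immediate: any $\xi$ forced to be a limit point of $\dot c_\alpha$ is forced to belong to $\dot c_\alpha$, since below the produced $q$ one has $\sup(M) = \xi \in \dot c_\alpha$.

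First I would strengthen $p$ using Lemma 4.8 so that $\alpha \in \dom(f_p)$, and further extend so that $p$ decides $\xi$ as a specific ordinal $\xi^* \in V$. Next I would argue that $\cf^V(\xi^*) = \omega$. In any $V[G]$ with $p \in G$, $\dot c_\alpha^G$ has order type $\omega_1$ (Lemma 5.2) and is cofinal in $\alpha$; if $\cf^{V[G]}(\xi^*) = \omega_1$, then $\dot c_\alpha^G \cap \xi^*$ would have order type $\omega_1$ and exhaust $\dot c_\alpha^G$, contradicting its cofinality in $\alpha > \xi^*$. Hence $\cf^{V[G]}(\xi^*) = \omega$, and the preservation properties of $\p$ yield $\cf^V(\xi^*) = \omega$ as well.

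Let $E := \{\sup(N' \cap \alpha) : N' \in \mathcal{X},\ \alpha \in N'\} \in V$. Since every $M' \in \dot f(\alpha)$ has the form $N' \cap \alpha$ for some $N' \in \mathcal{X}$ with $\alpha \in N'$ (by Definitions 4.2(2) and 4.2(6)), $p$ forces $\dot c_\alpha \subseteq E$, so $p$ forces $\xi^*$ is a limit point of $E$. By absoluteness, $\xi^*$ is a limit point of $E$ in $V$. I would then fix a cofinal $\omega$-sequence $\eta_n \in E \cap \xi^*$ with $\eta_n \to \xi^*$, together with witnesses $N_n \in \mathcal{X}$ satisfying $\alpha \in N_n$ and $\sup(N_n \cap \alpha) = \eta_n$. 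Amalgamate these into a single countable $N \in \mathcal{X}$ by recursively constructing a chain $N_0^* \in N_1^* \in \cdots$ of countable elementary submodels (each containing $p$, $\alpha$, and all previous $N_k, N_k^*$) while maintaining the invariant $\sup(N_n^* \cap \alpha) < \xi^*$, and set $N := \bigcup_n N_n^*$. The invariant gives $N \cap \alpha \subseteq \xi^*$, while the inclusion of each $N_n$ yields $\sup(N \cap \alpha) \ge \sup_n \eta_n = \xi^*$.

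The main obstacle is maintaining the invariant $\sup(N_n^* \cap \alpha) < \xi^*$ along the chain. Skolem closure can generate ordinals above $\xi^*$ via definable operations on the accumulated data, so each successor $N_{n+1}^*$ must be chosen carefully, using the density of witnesses strictly below $\xi^*$ supplied by the limit-point hypothesis to anchor the growth. Once this is handled, Lemma 7.1 and Definition 4.2(6) finish the argument.
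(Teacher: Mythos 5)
There is a genuine gap, and it sits exactly where you flag your ``main obstacle'': the existence of a countable $N \in \mathcal{X}$ with $\alpha \in N$ and $\sup(N \cap \alpha) = \xi$ is not a technicality to be arranged by a careful chain construction --- in general it is false. The set $E$ of ordinals of the form $\sup(N' \cap \alpha)$ with $\alpha \in N'$ is not closed under limits of $\omega$-sequences: when you close $N_n^*$ under Skolem functions together with the parameter $\alpha$ (and the witnesses $N_k$), the hull can acquire ordinals in the interval $(\xi,\alpha)$, and nothing in the hypothesis ``$p$ forces $\xi$ is a limit point of $\dot c_\alpha$'' rules this out; that $\xi$ is a limit of $E$ does not put $\xi$ in $E$. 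This is precisely the approachability-type obstruction that the forcing is built to overcome, so a proof that manufactures the witness purely in the ground model cannot work. (A side issue: your claim that $p \Vdash \dot c_\alpha \subseteq E$ mis-reads Definition 4.2(2),(6) --- elements of $f_p(\alpha)$ have the form $M \cap \beta$ for $\beta \in (M \cap \dom(f_p) \cap S) \cup \{\kappa\}$, not necessarily $M \cap \alpha$ with $\alpha \in M$.) Moreover, even if such an $N$ existed, your route for inserting it fails: Lemma 7.1 requires $p \in N$, and then every ordinal mentioned by $p$ below $\alpha$ lies in $N$; since $p$ may well already have $K \in f_p(\alpha)$ with $\xi < \sup(K) < \alpha$ (the club will have points above $\xi$), one gets $\sup(N \cap \alpha) > \xi$, and the paper has no lemma allowing an arbitrary model not containing $p$ to be added to $A_p$ --- adequacy, requirement (6), and the $\in$-chain structure of $f_p(\alpha)$ can all break.

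The paper's proof uses a completely different, forcing-internal mechanism that your proposal never touches: the $g$-component of conditions. One first extends $p$ to $q$ so that the membership-least $M \in f_q(\alpha)$ with $\xi \le \sup(M)$ exists, and argues by contradiction that $\sup(M) = \xi$. If instead $\sup(M) > \xi$, one uses the limit-point hypothesis to find a further extension containing a new element $K$ of $f(\alpha)$ with $\sup(K)$ close to but below $\xi$, and then builds $r \le q$ in which the ordinal $\zeta = \min(M \setminus \xi)$ is placed into $g_r(K,\alpha)$. By Definition 4.2(4),(5) this ``promise'' forces every later element $L$ of $\dot f(\alpha)$ with $K \in Sk(L)$ to contain $\zeta \ge \xi$, so no element of $\dot c_\alpha$ can appear strictly between $\sup(K)$ and $\xi$ --- contradicting that $r \le p$ forces $\xi$ to be a limit point. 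That use of the $g$-promises is the essential idea of the proposition, and it is missing from your argument; without it (or some substitute exploiting the forcing hypothesis at the level of conditions rather than ground-model models), the approach cannot be completed.
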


\begin{proof}
Note that by Lemma 5.2, $\xi$ must have cofinality $\omega$. 
Extend $p$ to $q$ so that for some $M \in f_{q}(\alpha)$, 
$q$ forces that $M$ is the membership least element of $\dot f(\alpha)$ 
with $\xi \le \sup(M)$. 
We will prove that $\sup(M) = \xi$, which finishes the proof.

\bigskip

\noindent \emph{Claim 1:} 
If $K \in f_q(\alpha)$ and $\sup(K \cap \xi) < \xi$, then 
$\sup(K) < \xi$.

\bigskip

Suppose for a contradiction that $K \in f_q(\alpha)$, 
$\sup(K \cap \xi) < \xi$, but $\xi \le \sup(K)$. 
Since $\xi$ is forced by $q$ to be a limit point of $\dot c_\alpha$, 
there exist $t \le q$ and $N$ such that 
$N \in f_t(\alpha)$ and 
$$
\sup(K \cap \xi) < \sup(N) < \xi.
$$
As $K$ and $N$ are in $f_t(\alpha)$ and $\sup(N) < \xi \le \sup(K)$, 
it follows that $N \in Sk(K)$ by Lemma 4.4. 
By elementarity, $\sup(N) \in Sk(K)$. 
By Lemma 4.3(1), $Sk(K) \cap \kappa = K$. 
So $\sup(N) \in K$. 
Thus, $\sup(N) \in K \cap \xi$, contradicting that 
$\sup(K \cap \xi) < \sup(N)$.

\bigskip

It easily follows from Claim 1 that $f_q(\alpha)$ is the union of the sets 
$A_1$ and $A_2$ defined by
$$
A_1 := \{ K \in f_q(\alpha) : \sup(K) < \xi \},
$$
$$
A_2 := \{ K \in f_q(\alpha) : \sup(K \cap \xi) = \xi \}.
$$
Namely, if $K \in f_q(\alpha)$, then either 
$\sup(K \cap \xi) < \xi$, in which case $\sup(K) < \xi$ by Claim 1, 
or else $\sup(K \cap \xi) = \xi$. 
Note that since $f_q(\alpha)$ is an $\in$-chain, 
if $K \in A_1$ and $L \in A_2$, then 
$\sup(K) < \sup(L)$, and therefore $K \in Sk(L)$ by Lemma 4.4. 

Observe that since $M \in f_q(\alpha)$ and $\xi \le \sup(M)$, 
we have that $M \in A_2$. 
So for all $K \in A_1$, $K \in Sk(M)$. 
Also, since $M$ is the membership least element of $f_q(\alpha)$ 
with $\xi \le \sup(M)$, we have that for all $N \in A_2$, 
either $M = N$ or $M \in Sk(N)$. 
In particular, for all $N \in A_2$, $M \subseteq N$.

\bigskip

Now we prove the proposition. 
Assume for a contradiction that $\sup(M) \ne \xi$. 
Then $M \cap [\xi,\alpha) \ne \emptyset$.
Fix $\gamma < \xi$ large enough so that if $J \in A_1$, 
then $\sup(J) < \gamma$. 
This is possible since $A_1$ is finite.

As $q$ forces that $\xi$ is a limit point of $\dot c_\alpha$, 
we can fix $s \le q$ and $K$ such that:
\begin{enumerate}
\item $K$ is the membership largest element of $f_s(\alpha)$ with 
$\sup(K) < \xi$;
\item $K \notin \dom(f_q)$;
\item $\gamma < \sup(K)$;
\item $K \cap \omega_1$ is different from $L \cap \omega_1$, for 
all $L \in A_q$.
\end{enumerate} 
By Definition 4.2(2), fix $K_1 \in A_s$ and 
$\beta \in (K_1 \cap \dom(f_s) \cap S) \cup \{ \kappa \}$ 
such that $K = K_1 \cap \beta$.

We use $s$, $K$, $K_1$, and $\beta$ to define an extension 
$r$ of $q$. 
Let 
$$
A_r := A_q \cup \{ K_1 \}.
$$
If $\beta = \kappa$, then let 
$$
\dom(f_r) := \dom(f_q) \cup \{ K \} \cup 
\{ K_1 \cap \delta : \delta \in K_1 \cap \dom(f_q) \cap S \},
$$
and if $\beta < \kappa$, then let 
\begin{multline*}
\dom(f_r) := 
\dom(f_q) \cup \{ K \} \cup 
\{ K_1 \cap \delta : \delta \in K_1 \cap \dom(f_q) \cap S \} \ \cup \\ 
\{ \beta \} \cup \{ L \cap \beta : L \in A_q, \ \beta \in L \}.
\end{multline*}
Note that the domain of $f_r$ is a subset of the domain of $f_s$, 
since $s \le q$, $K \in f_s(\alpha)$, $K_1 \in A_s$, and 
$\beta \in (K_1 \cap \dom(f_s) \cap S) \cup \{ \kappa \}$. 
Thus, it makes sense to define, for each $x \in \dom(f_r)$, 
$$
f_r(x) := f_s(x) \cap \dom(f_r).
$$
Observe that since $K \in f_s(\alpha)$ 
and $K$ and $\alpha$ are in $\dom(f_r)$, we have that 
$K \in f_r(\alpha)$.

Let $J \in f_r(x)$, and we will define $g_r(J,x)$. 
We let $g_r(J,x) := g_s(J,x)$, unless 
$J = K$ and $x \in f_r(\alpha) \cup \{ \alpha \}$, in which 
case we let 
$$
g_r(J,x) := g_s(J,x) \cup \{ \zeta \},
$$
where 
$$
\zeta := \min(M \setminus \xi).
$$
The ordinal $\zeta$ exists because we are assuming for a 
contradiction that $\sup(M) > \xi$. 
Note that in either case, 
we have that $g_s(J,x) \subseteq g_r(J,x)$.

\bigskip

We will prove that $r$ is a condition and $r \le q$. 
Let us see that this gives us a contradiction. 
If $r$ is a condition and $r \le q$, then $r$ forces that $\xi$ is a limit 
point of $\dot c_\alpha$. 
But $\sup(K) < \xi$, so we can find $u \le r$ and $L \in f_u(\alpha)$ 
such that $\sup(K) < \sup(L) < \xi$. 
Then $K$ and $L$ are in $f_u(\alpha)$, and since 
$\sup(K) < \sup(L)$, $K \in Sk(L)$ by Lemma 4.4. 
So $K \in f_u(\alpha) \cap Sk(L) = f_u(L)$. 
Therefore, by Definition 4.2(4,5), 
$$
g_u(K,\alpha) \subseteq g_u(K,L) \subseteq L.
$$
By the definition of $g_r$, $\zeta \in g_r(K,\alpha)$. 
Hence,
$$
\zeta \in g_r(K,\alpha) \subseteq g_u(K,\alpha) \subseteq L.
$$
So $\zeta \in L$. 
This is a contradiction, since $\sup(L) < \xi \le \zeta$.

\bigskip

Suppose for a moment that $r$ is a condition, and let us prove that $r \le q$. 
We verify properties (a)--(d) of Definition 4.2. 
(a,b) By the definition of $r$, $A_q \subseteq A_r$ and 
$\dom(f_q) \subseteq \dom(f_r)$. 
If $x \in \dom(f_q)$, then since $s \le q$, 
$$
f_q(x) \subseteq f_s(x) \cap \dom(f_q) \subseteq 
f_s(x) \cap \dom(f_r) = f_r(x).
$$
(c) Suppose that $(J,x) \in \dom(g_q)$. 
Then $J \ne K$, since $K \notin \dom(f_q)$. 
So by the definition of $g_r$, $g_r(J,x) = g_s(J,x)$. 
Since $s \le q$, we have that 
$g_q(J,x) \subseteq g_s(J,x) = g_r(J,x)$. 
(d) Assume that $J$ and $x$ are in $\dom(f_q)$ and 
$J \in f_r(x)$. 
Then by the definition of $f_r$, $J \in f_s(x)$. 
Since $s \le q$, it follows that $J \in f_q(x)$.

\bigskip

It remains to prove that $r$ is a condition. 
We verify requirements (1)--(7) of Definition 4.2.

\bigskip

(1) We have that $A_r = A_q \cup \{ K_1 \} \subseteq A_s$. 
Since $A_s$ is adequate, so is $A_r$.

\bigskip

(2) It is obvious that $f_r$ is a function with a finite domain. 
Let $x \in \dom(f_r)$, and we will show that either $x \in S$, 
or there is $L \in A_r$ and 
$\delta \in (L \cap \dom(f_r) \cap S) \cup \{ \kappa \}$ such that 
$x = L \cap \delta$. 
If $x \in \dom(f_q)$, then this statement follows from the fact 
that $q$ is a condition. 
If $x = K$, then $x = K_1 \cap \beta$, where $K_1 \in A_r$ 
and $\beta \in (K_1 \cap \dom(f_r) \cap S) \cup \{ \kappa \}$. 
If $x = K_1 \cap \delta$, where $\delta \in K_1 \cap \dom(f_q) \cap S$, 
then we are done since $K_1 \in A_r$ and $\dom(f_q) \subseteq \dom(f_r)$.

If $\beta = \kappa$, then we have already handled all 
possibilities for $x$. 
Suppose that $\beta < \kappa$. 
Then we also have the possibility that $x = \beta$, in which case 
$x \in S$, or $x = L \cap \beta$, where $\beta \in L$ and $L \in A_q$. 
In the second case, $L \in A_r$ and $\beta \in L \cap \dom(f_r) \cap S$, 
so we are done.

Let $x \in \dom(f_r)$, and we will show that 
$f_r(x)$ is a finite $\in$-chain and a subset of $Sk(x) \setminus S$. 
But by the definition of $f_r$, 
$f_r(x) \subseteq f_s(x)$. 
Since $s$ is a condition, $f_s(x)$ is a finite $\in$-chain and a subset 
of $Sk(x) \setminus S$. 
Hence, $f_r(x)$ is as well.

\bigskip

(3) Let $x \in \dom(f_r)$. 
Then $f_r(x) = f_s(x) \cap \dom(f_r)$, and therefore 
$f_r(x) \subseteq \dom(f_r)$. 
Let $K \in f_r(x)$, and we will show that 
$f_r(K) = f_r(x) \cap Sk(K)$. 
But $f_r(K) = f_s(K) \cap \dom(f_r)$, and since $s$ is a condition, 
$f_s(K) = f_s(x) \cap Sk(K)$. 
Thus, 
\begin{multline*}
f_r(K) = f_s(K) \cap \dom(f_r) = (f_s(x) \cap Sk(K)) \cap \dom(f_r) = \\
(f_s(x) \cap \dom(f_r)) \cap Sk(K) = f_r(x) \cap Sk(K).
\end{multline*}

\bigskip

(4) Consider $J \in f_r(x)$, and we will show that $g_r(J,x)$ is a finite 
subset of $x \setminus \sup(J)$. 
If $g_r(J,x) = g_s(J,x)$, then since $s$ is a condition, 
$g_s(J,x)$ is a finite subset of $x \setminus \sup(J)$, 
and we are done.

Suppose that $g_r(J,x) \ne g_s(J,x)$. 
Then by the definition of $g_r$, 
$J = K$, $x \in f_r(\alpha) \cup \{ \alpha \}$, and 
$g_r(J,x) = g_s(K,x) \cup \{ \zeta \}$, where 
$\zeta = \min(M \setminus \xi)$. 
Since $s$ is a condition, 
$g_s(J,x)$ is a finite subset of $x \setminus \sup(J)$. 
So it suffices 
to show that $\zeta \in x \setminus \sup(K)$. 
We already know that $\sup(K) < \xi \le \zeta$, so 
$\zeta \notin \sup(K)$.

It remains to show that $\zeta \in x$. 
If $x = \alpha$, then certainly $\zeta < \alpha$, since 
$\zeta \in M$ and $M \subseteq \alpha$. 
Assume that $x \in f_r(\alpha)$. 
We consider the different possibilities for why $x$ is in $f_r(\alpha)$. 

First, assume that $x \in \dom(f_q)$. 
Then, since $x \in f_r(\alpha) \subseteq f_s(\alpha)$, 
we have that $x \in f_s(\alpha)$. 
But $x$ and $\alpha$ are in $\dom(f_q)$. 
Since $s \le q$, it follows that $x \in f_q(\alpha)$. 
So either $x \in A_1$ or $x \in A_2$.

By the choice of $\gamma$ and $K$, for all $J \in A_1$, 
$\sup(J) < \gamma < \sup(K)$. 
Since $K \in f_r(x)$, $K \in Sk(x)$, and so 
$\sup(K) < \sup(x)$. 
Therefore, $x \notin A_1$.
So $x \in A_2$. 
As noted above, the minimality of $M$ implies that for all 
$J \in A_2$, $M \subseteq J$. 
So $M \subseteq x$. 
Since $\zeta \in M$, $\zeta \in x$, and we are done.

Secondly, assume that $x \notin \dom(f_q)$. 
Since $K \in f_r(x)$, $K \in Sk(x)$. 
So 
$$
K_1 \cap \omega_1 = K \cap \omega_1 < x \cap \omega_1.
$$
It follows that $x$ is not equal to $K$, and 
$x$ is not equal to $K_1 \cap \delta$ for 
any $\delta \in K_1 \cap \dom(f_q) \cap S$.

The remaining possibility is that $\beta < \kappa$ and 
$x = L \cap \beta$, where $L \in A_q$ and $\beta \in L$. 
Since $x \in f_r(\alpha) \subseteq f_s(\alpha)$, we have that
$x \in f_s(\alpha)$. 
As $x$ and $M$ are both in $f_s(\alpha)$, they 
are membership comparable.

Since $K \in f_s(x) = f_s(L \cap \beta)$ and 
$K$ is the membership largest member of $f_s(\alpha)$ with 
$\sup(K) < \xi$, we have that $\xi \le \sup(L \cap \beta)$. 
But recall that 
$q$ forces that $M$ is the membership least element of $\dot f(\alpha)$ 
with $\xi \le \sup(M)$, and $s \le q$. 
Hence, it is not the case that $L \cap \beta \in Sk(M)$. 
So either $L \cap \beta = M$ or $M \in Sk(L \cap \beta)$. 
In either case, $\zeta \in M \subseteq L \cap \beta$, so 
$\zeta \in L \cap \beta = x$.

\bigskip

(5) Suppose that $J \in f_r(L)$ and $L \in f_r(x)$. 
Then $J \in f_s(L)$ and $L \in f_s(x)$. 
We will show that $g_r(J,x) \subseteq g_r(J,L)$. 
If $g_r(J,x) = g_s(J,x)$, then since $s$ is a condition and 
$g_s(J,L) \subseteq g_r(J,L)$, we have that 
$$
g_r(J,x) = g_s(J,x) \subseteq g_s(J,L) \subseteq g_r(J,L).
$$

Assume that $g_r(J,x) \ne g_s(J,x)$. 
Then by the definition of $g_r$, 
we have that $J = K$, $x \in f_r(\alpha) \cup \{ \alpha \}$, 
and $g_r(K,x) = g_s(K,x) \cup \{ \zeta \}$, where 
$\zeta = \min(M \setminus \xi)$. 
Again, $g_s(K,x) \subseteq g_s(K,L) \subseteq g_r(K,L)$. 
So it suffices to show that $\zeta \in g_r(K,L)$. 

By the definition of $g_r$, in order to show that $\zeta \in g_r(K,L)$, 
it is enough to show that $L \in f_r(\alpha) \cup \{ \alpha \}$, for 
then $g_r(K,L)$ is defined as $g_s(K,L) \cup \{ \zeta \}$. 
But $L \in f_r(x)$ and $x \in f_r(\alpha) \cup \{ \alpha \}$. 
So if $x = \alpha$, then $L \in f_r(\alpha)$, and if 
$x \in f_r(\alpha)$, then 
$L \in f_r(x) = f_r(\alpha) \cap Sk(x)$ by requirement (3), 
so $L \in f_r(\alpha)$.

\bigskip

(6) Suppose that 
$\delta \in \dom(f_r) \cap S$, $L \in A_r$, and $\delta \in L$. 
We will show that $L \cap \delta \in f_r(\delta)$. 
Since $\dom(f_r) \subseteq \dom(f_s)$, we have that 
$\delta \in \dom(f_s) \cap S$. 
And $A_r \subseteq A_s$. 
As $s$ is a condition, it follows that $L \cap \delta \in f_s(\delta)$. 
Since $f_r(\delta) = f_s(\delta) \cap \dom(f_r)$, it suffices 
to show that $L \cap \delta \in \dom(f_r)$.

By the definition of $\dom(f_r)$, $\delta$ being in 
$\dom(f_r) \cap S$ implies that either $\delta \in \dom(f_q)$ 
or $\delta = \beta$. 
Also, $A_r = A_q \cup \{ K_1 \}$, so $L$ being in $A_r$ means 
that either $L \in A_q$ or $L = K_1$.

If $\delta \in \dom(f_q)$ and $L \in A_q$, then 
$L \cap \delta \in f_q(\delta) \subseteq
\dom(f_q) \subseteq \dom(f_r)$, 
since $q$ is a condition. 
If $\delta \in \dom(f_q)$ and $L = K_1$, then 
$L \cap \delta = K_1 \cap \delta$ is in $\dom(f_r)$ 
by the definition of $\dom(f_r)$. 
If $\delta = \beta$ and $L \in A_q$, 
then $L \cap \delta = L \cap \beta$ is in 
$\dom(f_r)$ by the definition of $\dom(f_r)$. 
And if $\delta = \beta$ and $L = K_1$, then 
$L \cap \delta = K_1 \cap \beta = K$ is in $\dom(f_r)$ by the 
definition of $\dom(f_r)$.
 
\bigskip

(7) Suppose that $\tau \in r^*(A_r) \cap S$, and we will show that 
$\tau \in \dom(f_r)$. 
Fix $J$ and $L$ in $A_r$ such that $J \sim L$ and 
$\tau = \min((J \cap \kappa) \setminus \beta_{J,L})$. 
Then obviously $J$ and $L$ are different, and since $J \sim L$, 
$J \cap \omega_1 = L \cap \omega_1$ by Lemma 2.17(2). 
Moreover, $A_r = A_q \cup \{ K_1 \}$, and by the choice of $K$, 
$K_1 \cap \omega_1 = K \cap \omega_1$ 
is different from $N \cap \omega_1$ for all $N \in A_q$. 
Therefore, $J$ and $L$ must both be in $A_q$. 
Thus, $\tau \in r^*(A_q) \cap S \subseteq \dom(f_q) \subseteq \dom(f_r)$.
\end{proof}

\begin{proposition}
The forcing poset $\p$ forces that $\langle \dot c_\alpha : 
\alpha \in S \rangle$ is a partial square sequence.
\end{proposition}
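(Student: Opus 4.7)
Lemma 5.2 and Proposition 5.3 already establish that each $\dot c_\alpha$ is forced to be a closed cofinal subset of $\alpha$ of order type $\omega_1$, so the one remaining property to verify is coherence: if $p$ forces $\gamma$ to be a common limit point of $\dot c_\alpha$ and $\dot c_\beta$ for some $\alpha, \beta \in S$, then $p$ forces $\dot c_\alpha \cap \gamma = \dot c_\beta \cap \gamma$.

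The plan is to reduce coherence to the existence in the generic extension of a single model $M \in \dot f(\alpha) \cap \dot f(\beta)$ with $\sup(M) = \gamma$. Granting this, requirement (3) of Definition 4.2 descends to the generic to give $\dot f(\alpha) \cap Sk(M) = \dot f(\beta) \cap Sk(M) = \dot f(M)$, and combined with Lemma 4.4 (which identifies the $\in$-order of each chain $\dot f(x)$ with the $\sup$-order) we obtain
$$
\dot c_\alpha \cap \gamma \;=\; \{\sup(J) : J \in \dot f(M)\} \;=\; \dot c_\beta \cap \gamma.
$$
So it suffices to show that the set of $r \le p$ admitting some such $M \in f_r(\alpha) \cap f_r(\beta)$ with $\sup(M) = \gamma$ is dense below $p$.

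To construct $r$, I would first extend $p$ via Lemma 4.8 so that $\alpha, \beta \in \dom(f)$, and then apply Lemma 7.1 to adjoin a fresh side condition $M_0 \in \mathcal X$ containing $\alpha, \beta$ and satisfying $\sup(M_0 \cap \kappa) = \gamma$. Such $M_0$ exist because $\gamma$ has countable cofinality (it is forced to be a limit of the countably cofinal ordinals $\sup(K)$ for $K \in \dot f(\alpha)$), so a cofinal $\omega$-sequence from $\gamma$ can be placed inside $M_0$ by elementarity. Taking $M := M_0 \cap \kappa$, Assumption 2.6 together with $M \subseteq \gamma < \alpha, \beta$ yields $M \in Sk(\alpha) \cap Sk(\beta) \setminus S$; and since $M \notin \dom(f_p)$, clause (d) of the order does not prevent placing $M$ in both $f_r(\alpha)$ and $f_r(\beta)$. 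To uphold requirement (3) on both sides, I would then symmetrically augment $f_r(\alpha)$ with $f_p(\beta) \cap Sk(M)$ and $f_r(\beta)$ with $f_p(\alpha) \cap Sk(M)$, forcing $f_r(\alpha) \cap Sk(M) = f_r(\beta) \cap Sk(M) = f_r(M)$, and then close up the $g_r$ values using the propagation technique of Lemma 4.9.

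The main obstacle is verifying that the augmented $f_r(\alpha)$ remains a finite $\in$-chain inside $Sk(\alpha)$, and likewise for $f_r(\beta)$. Two elements both drawn from $f_p(\alpha)$ or both from $f_p(\beta)$ are already $\in$-comparable, so the difficulty lies in comparing $J_1 \in f_p(\alpha)$ with $J_2 \in f_p(\beta) \cap Sk(M)$. When $\sup(J_1) < \gamma$, both $J_1$ and $J_2$ are slices of models in the adequate set $A_p$ whose sups lie below $\gamma = \sup(M)$, and comparability follows from Proposition 2.11 together with the remainder analysis of Section 3. When $\sup(J_1) > \gamma$, one shows $M \in Sk(J_1)$ using that $M = M_0 \cap \kappa$ with $M_0 \in A_r$ and that $M_0$ can be arranged by Lemma 7.1 to lie in $Sk(J_1)$, so that $J_2 \in Sk(M) \subseteq Sk(J_1)$. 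Once the $\in$-chain property holds on each side, the remaining clauses of Definition 4.2 are routine, and density delivers the required $M$ in the generic, completing the proof of coherence.
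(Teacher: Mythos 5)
The top of your argument is fine: by Lemma 5.1, a single $M \in \dot f(\alpha) \cap \dot f(\beta)$ with $\sup(M) = \gamma$ would give $\dot c_\alpha \cap \gamma = \dot c_\beta \cap \gamma$. The gap is in the density argument that is supposed to produce such an $M$. First, the construction is internally inconsistent: if $\alpha, \beta \in M_0$ then $\alpha \in M_0 \cap \kappa$, so $\sup(M_0 \cap \kappa) > \alpha > \gamma$, and Lemma 7.1 moreover requires $p \in M_0$, which only makes this worse; in the repaired version you would need $\sup(M_0 \cap \alpha) = \sup(M_0 \cap \beta) = \gamma$ for the \emph{prescribed} ordinal $\gamma$, and neither elementarity nor any cited lemma provides a ground-model $M_0$ with that property (placing an $\omega$-sequence cofinal in $\gamma$ inside $M_0$ gives $\sup(M_0 \cap \gamma) = \gamma$, not $M_0 \cap [\gamma,\alpha) = \emptyset$). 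Second, and more structurally, you are trying to insert a model with $\sup$ exactly $\gamma$ strictly \emph{below} existing members of $f_p(\alpha)$, which is exactly what the $g$-component is designed to obstruct: once $M \in f_r(\alpha)$, every $J \in f_p(\alpha)$ with $\sup(J) < \gamma$ lies in $f_r(M)$, so requirements (4) and (5) of Definition 4.2 force $g_p(J,\alpha) \subseteq M$, and every $J_1 \in f_p(\alpha)$ with $\sup(J_1) > \gamma$ must satisfy $M \in Sk(J_1)$ for the $\in$-chain requirement. Your sketch addresses neither: Lemma 4.9 only propagates existing $g$-values, and Lemma 7.1 cannot ``arrange $M_0 \in Sk(J_1)$'' --- it requires $p \in M_0$, which gives $J_1 \in M_0$, the opposite relation, and $J_1 \in Sk(M)$ is impossible anyway since $\sup(J_1) > \sup(M)$ (Lemma 4.4). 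Lemma 7.1 puts the new model at the top of each chain, never in the middle.

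The construction is also unnecessary, and the paper avoids it entirely. Since you already invoke Proposition 5.3, the clubs are closed in the extension, so a common limit point $\gamma$ satisfies $\gamma \in c_\alpha \cap c_\beta$; hence witnesses $K \in f(\alpha)$ and $L \in f(\beta)$ with $\sup(K) = \sup(L) = \gamma$ already exist generically, and no new condition has to be built. The paper then shows directly that $K = L$: fix $p \in G$ with $K \in f_p(\alpha)$, $L \in f_p(\beta)$, write $K = K_1 \cap \theta$ and $L = L_1 \cap \tau$ with $K_1, L_1 \in A_p$, note that $\cf(\gamma) = \omega < \cf(\theta), \cf(\tau)$ gives $K = K_1 \cap \gamma$ and $L = L_1 \cap \gamma$, so $\gamma$ is a common limit point of $K_1 \cap \kappa$ and $L_1 \cap \kappa$ and thus $\gamma < \beta_{K_1,L_1}$ by Proposition 2.11; if $K_1 < L_1$ then $K_1 \cap \beta_{K_1,L_1} \in L_1$ and elementarity puts $\gamma \in L_1 \cap \tau = L$, a contradiction, so by adequacy $K_1 \sim L_1$ and $K = K_1 \cap \gamma = L_1 \cap \gamma = L$. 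Coherence then follows from Lemma 5.1. If you want to salvage your write-up, replace the entire ``build a common $M$'' step with this comparison-point argument applied to the witnesses that closure already hands you.
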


\begin{proof}
By Lemma 5.2 and Proposition 5.3, for each $\alpha \in S$, $\p$ 
forces that $\dot c_\alpha$ is a club subset of $\alpha$ with 
order type equal to $\omega_1$. 
Let $G$ be a generic filter on $\p$. 
Consider $\alpha$ and $\beta$ in $S$, 
and let $c_\alpha := \dot c_\alpha^G$ and 
$c_\beta := \dot c_\beta^G$. 
Assume that $\xi$ is a common limit point of $c_\alpha$ and $c_\beta$. 
We will show that $c_\alpha \cap \xi = c_\beta \cap \xi$.

Since $c_\alpha$ and $c_\beta$ are closed, it follows that  
$\xi \in c_\alpha \cap c_\beta$. 
Thus, there are $K \in f(\alpha)$ and $L \in f(\beta)$ such that 
$\sup(K) = \xi = \sup(L)$. 
By Lemma 5.1, 
$$
c_\alpha \cap \xi = c_\alpha \cap \sup(K) = 
\{ \sup(J) : J \in f(K) \},
$$
and 
$$
c_\beta \cap \xi = c_\beta \cap \sup(L) = 
\{ \sup(J) : J \in f(L) \}.
$$
Thus, to show that $c_\alpha \cap \xi = c_\beta \cap \xi$, 
it suffices to show that $f(K) = f(L)$. 
We will prove, in fact, that $K = L$.

Since $K \in f(\alpha)$ and $L \in f(\beta)$, we can fix $p \in G$ 
such that $K \in f_p(\alpha)$ and $L \in f_p(\beta)$. 
Then $K$ and $L$ are in $\dom(f_p)$, by Definition 4.2(3). 
By Definition 4.2(2), fix $K_1$ in $A_p$ and 
$\theta \in (K_1 \cap \dom(f_p) \cap S) \cup \{ \kappa \}$ 
such that $K = K_1 \cap \theta$, and 
$L_1$ in $A_p$ and $\tau \in (L_1 \cap \dom(f_p) \cap S) \cup \{ \kappa \}$ 
such that $L = L_1 \cap \tau$.

Since $\sup(K_1 \cap \theta) = \sup(K) = \xi$, it follows that 
$\xi \le \theta$, 
and similarly, 
$\xi \le \tau$. 
As $\theta$ and $\tau$ are in $S \cup \{ \kappa \}$, 
they have uncountable cofinality. 
On the other hand, since $\xi$ is the supremum of the countable 
set $K$, $\xi$ has countable cofinality. 
Therefore, $\xi < \theta$ and $\xi < \tau$. 
Since the sets $K = K_1 \cap \theta$ and $L = L_1 \cap \tau$ are closed under 
successor ordinals by elementarity, $\xi$ is not a member of 
$K_1$ nor $L_1$. 
Therefore, 
$$
K = K_1 \cap \theta = K_1 \cap \xi,
$$
and 
$$
L = L_1 \cap \tau = L_1 \cap \xi.
$$
The ordinal $\xi$, which is the supremum of $K$ and $L$, 
is a common limit point of $K_1 \cap \kappa$ and $L_1 \cap \kappa$. 
So by Proposition 2.11, $\xi < \beta_{K_1,L_1}$. 

We claim that $K_1 \sim L_1$. 
Suppose not, and without loss of generality, assume that $K_1 < L_1$. 
Then $K_1 \cap \beta_{K_1,L_1}$ is in $L_1$. 
Since $\xi < \beta_{K_1,L_1}$, and $\xi$ is a limit point of 
$K = K_1 \cap \theta$, we have that $\xi$ is a limit point of 
$K_1 \cap \beta_{K_1,L_1}$. 
But $K_1 \cap \beta_{K_1,L_1}$ is in $L_1$, and therefore 
$\xi$ is in $L_1$ by elementarity. 
So $\xi \in L_1 \cap \tau = L$, which is a contradiction.

So indeed, $K_1 \sim L_1$. 
Hence $K_1 \cap \beta_{K_1,L_1} = L_1 \cap \beta_{K_1,L_1}$. 
Since $\xi < \beta_{K_1,L_1}$, it follows that 
$K_1 \cap \xi = L_1 \cap \xi$. 
Thus, $K = K_1 \cap \xi = L_1 \cap \xi = L$.
\end{proof}

We point out that if $S$ is chosen so that 
$S = D \cap \cof(> \! \omega)$, for some club set $D \subseteq \kappa$, 
then adding a partial 
square sequence on $S$ will imply that $\Box_{\omega_1}$ holds 
in the generic extension (see the end of \cite{jk23}).  
So as a special case, our forcing poset provides another way to 
force $\Box_{\omega_1}$ with finite conditions. 

\bigskip

\addcontentsline{toc}{section}{6. Amalgamation over uncountable models}

\textbf{\S 6. Amalgamation over uncountable models}

\stepcounter{section}

\bigskip

We now turn to proving that $\p$ is strongly 
proper on a stationary set, and hence preserves $\omega_1$, and 
is $\kappa$-c.c. 
Strong properness is proven using amalgamation of conditions 
over countable models, and the $\kappa$-c.c.\ is proven using 
amalgamation of conditions over uncountable models. 
The uncountable case is similar to, but not as complicated as, 
the countable case, so we will handle the uncountable case first.

Many of the results which we will prove in Sections 6 and 7 
will be used again in Sections 8 and 9, 
where the approximation property of certain quotients of $\p$ 
is verified. 
For this reason, it will be helpful to develop the notation and results 
of Sections 6 and 7 in great detail.

Let us give a brief outline of the main ideas presented in this section. 
The goal is to show that for any simple model $Q \in \mathcal Y$, 
the maximum condition in $\p$ is strongly $Q$-generic. 
This fact will imply that $\p$ is $\kappa$-c.c. 
Let $D_Q$ denote the set of conditions $s \in \p$ such that for 
all $M \in A_s$, $M \cap Q \in A_s$. 
We will show that $D_Q$ is dense in $\p$. 
For each $s \in D_Q$, we will define a condition $s \restriction Q$ 
in $Q \cap \p$. 
This condition will satisfy that for all $w \le s \restriction Q$ in 
$Q \cap \p$, $w$ and $s$ are compatible. 
Since $D_Q$ is dense, it will follow that the maximum condition of $\p$ 
is strongly $Q$-generic.

\begin{lemma}
Let $q \in \p$ and $Q \in \mathcal Y$. 
Then there is $s \le q$ such that for all $M \in A_s$, 
$M \cap Q \in A_s$. 
Moreover, $A_s = A_q \cup \{ M \cap Q : M \in A_q \}$.
\end{lemma}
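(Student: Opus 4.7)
The plan is to keep the $f$ and $g$ components of $q$ essentially unchanged, and only to augment them with the minimal new data forced by the enlargement of the adequate set. Set
$$
A_s := A_q \cup \{ M \cap Q : M \in A_q \}.
$$
Proposition 2.27 (applied with $A = A_q$, $P = Q$) immediately gives that $A_s$ is adequate and closed under intersection with $Q$, which yields both requirement (1) of Definition 4.2 and the ``moreover'' clause of the lemma.

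The only obstacle to simply taking $(f_q, g_q, A_s)$ is requirement (7): new $\sim$-pairs in $A_s$ (such as $(M, M \cap Q)$ for $M \in A_q$, or $(M, N \cap Q)$ where $M \sim N$ in $A_q$) may contribute new remainder points to $r^*(A_s)$, and some of these could lie in $S \setminus \dom(f_q)$. Let
$$
E := (r^*(A_s) \cap S) \setminus \dom(f_q),
$$
which is finite because $A_s$ is finite. Since $E \subseteq S \setminus \dom(f_q)$, Lemma 4.8 supplies a condition $q' := q + E$ with $q' \le q$, $A_{q'} = A_q$, and $E \subseteq \dom(f_{q'})$. Define
$$
s := (f_{q'}, g_{q'}, A_s).
$$

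It remains to verify requirements (1)--(7) of Definition 4.2 for $s$, and $s \le q$. Requirement (1) is Proposition 2.27, and (7) is the definition of $E$. Requirements (2)--(5) are inherited from $q'$: each $x \in \dom(f_s)$ already has the form demanded by (2) via models of $A_q \subseteq A_s$, and the $f$ and $g$ components are unchanged. The only clause needing genuine verification is (6) for the new models $N \cap Q$ with $N \in A_q$. If $\alpha \in (N \cap Q) \cap \dom(f_s) \cap S$, then $\alpha$ is an ordinal in $Q \cap \kappa$, so $\alpha < Q \cap \kappa$ and hence $\alpha \subseteq Q$; therefore $(N \cap Q) \cap \alpha = N \cap \alpha$, and requirement (6) applied to $q'$ with $N \in A_q = A_{q'}$ and $\alpha \in N \cap \dom(f_{q'}) \cap S$ gives $N \cap \alpha \in f_{q'}(\alpha) = f_s(\alpha)$. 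Finally, $s \le q$ is immediate from $q' \le q$ together with $A_q \subseteq A_s$.

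I do not expect any serious obstacle. The main subtlety is identifying where new remainder points can come from after closing $A_q$ under intersection with $Q$ and making sure they are absorbed into $\dom(f)$ \emph{before} the $A$-component is enlarged; the step $q \mapsto q + E$ does exactly this, and then every other requirement is inherited or reduces via $\alpha < Q \cap \kappa$ to a statement already known for $q'$.
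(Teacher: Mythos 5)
Your proposal is correct and follows essentially the same route as the paper: first absorb the new remainder points $r^*(A_q \cup \{M \cap Q : M \in A_q\}) \cap S$ into the $f$-domain via the $q + x$ operation of Lemma 4.8, then enlarge the $A$-component, with requirement (6) for the new models $N \cap Q$ reduced to the old models via $\alpha < Q \cap \kappa$ and $(N \cap Q) \cap \alpha = N \cap \alpha$. No gaps.
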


Recall that by Assumption 2.5(2), if 
$M \in \mathcal X$ and $Q \in \mathcal Y$, then 
$M \cap Q \in \mathcal X$.

\begin{proof}
By Proposition 2.27, the set 
$A_q \cup \{ M \cap Q : M \in A_q \}$ is adequate. 
Define 
$$
x_0 := r^*(A_q \cup \{ M \cap Q : M \in A_q \}) \cap S,
$$
and define 
$$
x := x_0 \setminus \dom(f_q).
$$
Let $r := q + x$. 
By Definition 4.7 and Lemma 4.8, $r$ is a condition, $r \le q$, and  
$A_r = A_q$. 
Also, $x_0 \subseteq \dom(f_r)$.

Define $s$ as follows. 
Let $f_s := f_r$, $g_s := g_r$, and 
$$
A_s := A_r \cup \{ M \cap Q : M \in A_r \}.
$$
We claim that $s$ is as required. 
By Proposition 2.27, for all $M \in A_s$, 
$M \cap Q \in A_s$. 
It is trivial to check that if $s$ is a condition, then $s \le q$. 

It remains to show that $s$ is a condition. 
We verify requirements (1)--(7) of Definition 4.2. 
(1) follows from Proposition 2.27. 
Requirements (2)--(5) follow immediately from $r$ being a condition, 
together with the fact that 
$f_s = f_r$, $g_s = g_r$, and $A_r \subseteq A_s$.

(6) Suppose that 
$\alpha \in \dom(f_s) \cap S$, $M \in A_s$, and $\alpha \in M$. 
We will show that $M \cap \alpha \in f_s(\alpha)$. 
Since $f_r = f_s$, we have that $\alpha \in \dom(f_r) \cap S$.

First, assume that $M \in A_r$. 
Then since $r$ is a condition, 
$M \cap \alpha \in f_r(\alpha) = f_s(\alpha)$, and we are done. 
Secondly, assume that $M = M_1 \cap Q$ for some $M_1 \in A_r$. 
Then $\alpha \in M = M_1 \cap Q$. 
So $\alpha \in M_1$ and $\alpha < Q \cap \kappa$. 
Thus, 
$$
M \cap \alpha = M_1 \cap Q \cap \alpha = M_1 \cap \alpha.
$$
Since $M_1 \in A_r$, $\alpha \in \dom(f_r) \cap S$, 
and $\alpha \in M_1$, it follows that 
$M \cap \alpha = M_1 \cap \alpha \in f_r(\alpha) = f_s(\alpha)$.

(7) We need to show that $r^*(A_s) \cap S \subseteq \dom(f_s)$. 
But since $A_r = A_q$, we have that 
$$
r^*(A_s) \cap S =  r^*(A_q \cup \{ M \cap Q : M \in A_q \}) \cap S 
= x_0 \subseteq \dom(f_r) = \dom(f_s).
$$
\end{proof}

\begin{definition}
For each $Q \in \mathcal Y$, let $D_Q$ denote the set of conditions 
$q \in \p$ such that for all $M \in A_q$, $M \cap Q \in A_q$.
\end{definition}

\begin{lemma}
Let $Q \in \mathcal Y$. 
Then $D_Q$ is dense in $\p$.
\end{lemma}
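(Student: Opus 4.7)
The plan is to observe that this lemma is an essentially immediate corollary of Lemma 6.1, which was proven just above. Indeed, Definition 6.2 says that a condition $q$ lies in $D_Q$ precisely when, for every $M \in A_q$, the intersection $M \cap Q$ is already a member of $A_q$. But this is exactly the conclusion of Lemma 6.1: given any $q \in \p$, Lemma 6.1 produces an extension $s \le q$ whose side condition component $A_s$ is closed under intersection with $Q$.

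Therefore the proof I would write is a single sentence. Let $q \in \p$ be arbitrary. Apply Lemma 6.1 to $q$ and $Q$ to obtain $s \le q$ in $\p$ such that for every $M \in A_s$, $M \cap Q \in A_s$. By Definition 6.2, $s \in D_Q$. Since $q$ was arbitrary, every condition in $\p$ has an extension in $D_Q$, and hence $D_Q$ is dense in $\p$.

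There is no real obstacle here; all the work has already been done in Lemma 6.1, which verified that the natural closure $A_q \cup \{M \cap Q : M \in A_q\}$ can be adjoined to the side condition component while maintaining all seven clauses of Definition 4.2. The density statement simply repackages that construction as a density claim, so the proof amounts to citing the prior lemma.
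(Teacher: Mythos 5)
Your proof is correct and matches the paper exactly: the paper also derives the density of $D_Q$ as an immediate consequence of Lemma 6.1, since that lemma produces below any condition an extension whose $A$-component is closed under intersection with $Q$, which is precisely membership in $D_Q$.
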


\begin{proof}
Immediate from Lemma 6.1.
\end{proof}

Note that if $q \in Q \cap \p$, then $q \in D_Q$. 
Namely, for all $M \in A_q$, $M \in Q$, and therefore 
$M \cap Q = M \in A_q$.

\begin{definition}
Suppose that $Q \in \mathcal Y$ is simple and $q \in D_Q$. 
Let $q \restriction Q$ denote the triple $(f,g,A)$ satisfying:
\begin{enumerate}
\item $\dom(f) := \dom(f_q) \cap Q$, and for all 
$x \in \dom(f)$, $f(x) := f_q(x)$;
\item $\dom(g) := \dom(g_q) \cap Q$, and for all 
$(K,x) \in \dom(g)$, $g(K,x) := g_q(K,x)$;
\item $A := A_q \cap Q$.
\end{enumerate}
\end{definition}

Note that in (1) above, if $x \in \dom(f_q) \cap Q$, then 
$Sk(x) \subseteq Q$. 
So $f_q(x) = f(x)$ is a finite subset of $Q$, and therefore 
is in $Q$. 
In (2), if $K \in f_q(x)$ and $K$ and $x$ are in $Q$, 
then $g_q(K,x) \subseteq x \subseteq Q$. 
So $g_q(K,x)$ is a finite subset of $Q$, and hence is in $Q$. 
Similarly, $A = A_q \cap Q$ is in $Q$. 
It easily follows from these observations 
that $q \restriction Q \in Q$.

\begin{lemma}
Let $Q \in \mathcal Y$ be simple and $q \in D_Q$. 
Then $q \restriction Q$ is in $Q \cap \p$ and 
$q \le q \restriction Q$.
\end{lemma}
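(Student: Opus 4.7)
The plan is to unwind the definition of $q \restriction Q$ and check that each clause of Definition 4.2 carries over from $q$, and then verify that $q \restriction Q$ is both in $Q$ and weaker than $q$. The three things to establish are: (i) $q \restriction Q$ is a condition, (ii) $q \restriction Q \in Q$, and (iii) $q \le q \restriction Q$. Parts (ii) and (iii) are quick: the componentwise definition makes it immediate that $A_{q \restriction Q}$, $f_{q \restriction Q}$, $g_{q \restriction Q}$ are finite subsets of $Q$ (because every $x \in \dom(f_q) \cap Q$ is an ordinal in $Q \cap \kappa$ or a subset of $Q$, and hence $Sk(x) \subseteq Q$); and the order clauses (a)--(d) of Definition 4.2 hold by pure set inclusion together with the fact that $f_{q \restriction Q}(x) = f_q(x)$ on the restricted domain.

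For (i), clauses (1), (3), (4), (5), (6) are routine: $A_q \cap Q$ is adequate as a subset of the adequate set $A_q$; if $x \in \dom(f_q) \cap Q$ and $K \in f_q(x)$, then $K \in Sk(x) \subseteq Q$, so $K$ lies in the restricted domain and the equality $f_q(K) = f_q(x) \cap Sk(K)$ is inherited; the $g$-clauses carry over verbatim; and the coherence clause (6) follows because $A_q \cap Q \subseteq A_q$ and $\dom(f_q) \cap Q \subseteq \dom(f_q)$.

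The two clauses that require real thought are (2) and (7). For (2), given $x \in \dom(f_q) \cap Q$ with $x \notin S$, write $x = M \cap \alpha$ with $M \in A_q$ and $\alpha \in (M \cap \dom(f_q) \cap S) \cup \{\kappa\}$; I use that $q \in D_Q$ and that $Q$ is simple to replace $M$ with $M \cap Q \in A_q \cap Q$, then split into cases $\alpha < Q \cap \kappa$ (in which case $\alpha \in Q$ and $x = (M \cap Q) \cap \alpha$) and $\alpha \ge Q \cap \kappa$ (in which case $x = M \cap Q = (M \cap Q) \cap \kappa$). This is the step I expect to be slightly delicate, because one has to match the two possible presentations of $x$ against the restricted parameters $A_q \cap Q$ and $\dom(f_q) \cap Q$.

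For (7), I use monotonicity of $r^*$ to get $r^*(A_q \cap Q) \cap S \subseteq r^*(A_q) \cap S \subseteq \dom(f_q)$, and then observe that if $\gamma = \min((M \cap \kappa) \setminus \beta_{K,M})$ with $K,M \in A_q \cap Q$, then $\gamma \in M \subseteq Q$ (because $M \in \mathcal{X} \cap Q$ is countable in the sense of $\mathcal{A}$, so $Q$ sees an enumeration of $M$ and hence $M \subseteq Q$). Combining, $\gamma \in \dom(f_q) \cap Q = \dom(f_{q \restriction Q})$, which closes the argument. No amalgamation or comparison-point geometry beyond the inclusion $\beta_{K,M} \in Q$ and $M \subseteq Q$ should be needed here.
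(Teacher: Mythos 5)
Your proposal is correct and takes essentially the same route as the paper: restrict each component to $Q$, use $Sk(x)\subseteq Q$ for $x$ in the restricted domain to get membership in $Q$ and clauses (3),(4), and for clause (2) replace $M$ by $M\cap Q$ (using $q\in D_Q$ and simplicity of $Q$) with the same case split on $\alpha$ versus $Q\cap\kappa$; your explicit argument for (7) via monotonicity of $r^*$ and $M\subseteq Q$ is exactly what the paper leaves as ``immediate.'' The only nit is the phrase ``$x = M\cap Q$'' in the case $Q\cap\kappa\le\alpha$, which should be $x=(M\cap Q)\cap\kappa$ (as you in fact write immediately after), since $M\cap Q$ contains non-ordinals.
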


\begin{proof}
Let $q \restriction Q = (f,g,A)$. 
We already observed that $q \restriction Q \in Q$.
It is trivial to check that 
if $q \restriction Q$ is a condition, then $q \le q \restriction Q$. 
So it suffices to show that $q \restriction Q$ is a condition. 
We verify requirements (1)--(7) of Definition 4.2. 
(1), (5), (6), and (7) are immediate. 
It remains to prove (2), (3), and (4).

\bigskip

(2) Obviously $f$ is a function with a finite domain. 
Let $x \in \dom(f) = \dom(f_q) \cap Q$. 
Since $f(x) = f_q(x)$, it follows that 
$f(x)$ is a finite $\in$-chain and a subset 
of $Sk(x) \setminus S$. 
We claim that either $x \in S$, or there is $M \in A$ and 
$\alpha \in (M \cap \dom(f) \cap S) \cup \{ \kappa \}$ 
such that $x = M \cap \alpha$.

Since $x \in \dom(f_q)$ and $q$ is a condition, 
we have that either $x \in S$, 
or there is $M_1 \in A_q$ and 
$\alpha \in (M_1 \cap \dom(f_q) \cap S) \cup \{ \kappa \}$ 
such that $x = M_1 \cap \alpha$. 
If $x \in S$, then we are done, so assume the second case.

Since $q \in D_Q$ and $Q$ is simple, 
$$
M_1 \cap Q \in A_q \cap Q = A.
$$
So it suffices to show that $x = M_1 \cap Q \cap \beta$, 
for some $\beta$ with 
$$
\beta \in (M_1 \cap Q \cap \dom(f) \cap S) \cup \{ \kappa \}.
$$
We split the proof into the cases of whether  
$\alpha < Q \cap \kappa$ or $Q \cap \kappa \le \alpha$.

First, assume that $\alpha < Q \cap \kappa$. 
Then $\alpha \in \dom(f_q) \cap S \cap Q = \dom(f) \cap S$, 
and 
$$
x = M_1 \cap \alpha = M_1 \cap \alpha \cap Q 
= M_1 \cap Q \cap \alpha.
$$
Hence, $x = M_1 \cap Q \cap \beta$, where 
$\beta = \alpha \in M_1 \cap Q \cap \dom(f) \cap S$.

Secondly, assume that $Q \cap \kappa \le \alpha$. 
Since $x \in Q$, 
$x = M_1 \cap \alpha \subseteq Q \cap \kappa$. 
Therefore, 
$$
x = M_1 \cap \alpha = (M_1 \cap \alpha) \cap (Q \cap \kappa) = 
M_1 \cap Q \cap \kappa.
$$
Thus, $x = M_1 \cap Q \cap \beta$, where $\beta = \kappa$.

\bigskip

(3) Let $x \in \dom(f) = \dom(f_q) \cap Q$, and we will 
show that $f(x) \subseteq \dom(f)$. 
We have that $f_q(x) \subseteq Sk(x) \subseteq Q$. 
And since $q$ is a condition, 
$$
f(x) = f_q(x) \subseteq \dom(f_q) \cap Q = \dom(f).
$$
So $f(x) \subseteq \dom(f)$, as required. 
Now consider $K \in f(x) = f_q(x)$. 
Then since $q$ is a condition, 
$$
f(K) = f_q(K) = f_q(x) \cap Sk(K) = f(x) \cap Sk(K).
$$

\bigskip

(4) We have that 
$(K,x) \in \dom(g)$ iff $(K,x) \in \dom(g_q) \cap Q$ iff 
($K$ and $x$ are in $Q$ and $K \in f_q(x)$) iff $K \in f(x)$. 
For each $(K,x) \in \dom(g)$, 
$g(K,x) = g_q(K,x) \subseteq x \setminus \sup(K)$. 
\end{proof}

The next lemma will not be used until Section 8.

\begin{lemma}
Let $Q \in \mathcal Y$ be simple.
\begin{enumerate}
\item Suppose that $p \in Q \cap \p$, $q \in D_Q$, and $q \le p$. 
Then $q \restriction Q \le p$.
\item Suppose that $q$ and $r$ are in $D_Q$ and 
$r \le q \restriction Q$. 
Then $r \restriction Q \le q \restriction Q$.
\item Suppose that $p$ and $q$ are in $D_Q$ and $q \le p$. 
Then $q \restriction Q \le p \restriction Q$.
\end{enumerate}
\end{lemma}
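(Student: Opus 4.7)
The plan is to prove all three parts by direct verification of the four ordering conditions (a)--(d) of Definition 4.2, exploiting the fact that the restriction operation $\cdot \restriction Q$ simply intersects the $A$-component, the domain of $f$, and the domain of $g$ with $Q$, while preserving all values on those restricted domains. The essential observation making everything work is that the inclusion $r \le s$ is defined purely by containment-of-components plus the ``no spurious membership'' clause (d), both of which behave well under intersection with $Q$.

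For part (1), the key point is that since $p \in Q \cap \p$, all of $A_p$, $\dom(f_p)$, and $\dom(g_p)$ are already subsets of $Q$. So from $q \le p$ and $A_p \subseteq Q$ we get $A_p \subseteq A_q \cap Q = A_{q \restriction Q}$, and similarly for the other components. The coordinatewise inclusions of $f_p(x)$ into $f_q(x) = f_{q\restriction Q}(x)$ and of $g_p(K,x)$ into $g_q(K,x) = g_{q \restriction Q}(K,x)$ follow directly from $q \le p$. For clause (d), if $K$ and $x$ are in $\dom(f_p) \subseteq Q$ and $K \in f_{q \restriction Q}(x)$, then $K \in f_q(x)$, so $K \in f_p(x)$ because $q \le p$.

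For part (2), we use that $\dom(f_{q \restriction Q}) \subseteq Q$ and that $A_{q \restriction Q}, \dom(g_{q \restriction Q}) \subseteq Q$ as well. From $r \le q \restriction Q$, each component of $q \restriction Q$ is contained in the corresponding component of $r$; intersecting with $Q$ gives the inclusion into the corresponding component of $r \restriction Q$. The values of $f$ and $g$ on the restricted domain agree with those of $q$ and $r$ respectively, so the coordinatewise inclusions transfer. Clause (d) reduces to the same clause for $r \le q \restriction Q$ because $f_{r \restriction Q}$ and $f_{q \restriction Q}$ agree with $f_r$ and $f_q$ on the relevant arguments.

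Part (3) combines the ideas above: intersecting the inclusions $A_p \subseteq A_q$, $\dom(f_p) \subseteq \dom(f_q)$, $\dom(g_p) \subseteq \dom(g_q)$ with $Q$ yields the required containments, and the coordinatewise inclusions of $f$ and $g$ values transfer verbatim because the restricted values agree with the original ones. For (d), if $K, x \in \dom(f_{p \restriction Q})$ and $K \in f_{q \restriction Q}(x) = f_q(x)$, then $K \in f_p(x) = f_{p \restriction Q}(x)$ by clause (d) for $q \le p$. I do not expect a real obstacle in any of the three parts; the only thing one must be careful about is to unpack the definition of $\cdot \restriction Q$ correctly and to recognize that the values of $f$ and $g$ after restriction are literally the same as before, so no information is lost when passing between $q$ and $q \restriction Q$ on the intersection with $Q$.
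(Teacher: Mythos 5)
Your proposal is correct and follows essentially the same route as the paper: unwind the definition of $q \restriction Q$ and verify clauses (a)--(d) of Definition 4.2, using that all components of a condition lying in $Q$ (or of a restriction to $Q$) are subsets of $Q$ and that the restricted $f$- and $g$-values agree with the originals. The only difference is organizational: the paper verifies (a)--(d) directly only for part (1), then obtains (2) by applying (1) with $p := q \restriction Q \in Q \cap \p$ (Lemma 6.5) and (3) by noting $q \le p \le p \restriction Q$ and applying (2), whereas you carry out the (identical) verification in each part directly.
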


\begin{proof}
(1) We verify properties (a)--(d) of Definition 4.2. 
(a) Since $p \in Q$, $A_p \subseteq Q$. 
As $q \le p$, we have that 
$$
A_p \subseteq A_q \cap Q = A_{q \restriction Q}.
$$
(b) Since $p \in Q$, $\dom(f_p) \subseteq Q$. 
As $q \le p$, we have that 
$$
\dom(f_p) \subseteq \dom(f_q) \cap Q = \dom(f_{q \restriction Q}).
$$ 
Let $x \in \dom(f_p)$. 
Then 
$$
f_p(x) \subseteq f_q(x) = f_{q \restriction Q}(x).
$$
(c) Let $(K,x) \in \dom(g_p)$. 
Since $q \le p$, 
$$
g_p(K,x) \subseteq g_q(K,x) = g_{q \restriction Q}(K,x).
$$
(d) Assume that $K$ and $x$ are in $\dom(f_p)$ 
and $K \in f_{q \restriction Q}(x)$. 
Then $K \in f_{q \restriction Q}(x) = f_q(x)$. 
Since $q \le p$, it follows that $K \in f_p(x)$.

\bigskip

(2) We know that $q \restriction Q \in Q \cap \p$, 
$r \in D_Q$, and $r \le q \restriction Q$. 
By (1), it follows that $r \restriction Q \le q \restriction Q$.

\bigskip

(3) By Lemma 6.5, we know that $p \le p \restriction Q$. 
So $q \le p \le p \restriction Q$. 
Hence, $q \le p \restriction Q$. 
Thus, $p$ and $q$ are in $D_Q$ and $q \le p \restriction Q$. 
By (2), it follows that $q \restriction Q \le p \restriction Q$. 
\end{proof}

We will now begin analyzing the situation where $q \in D_Q$  
and $w \le q \restriction Q$ is in $Q \cap \p$.

\begin{lemma}
Let $Q \in \mathcal Y$ be simple and $q \in D_Q$. 
Suppose that $w \in Q \cap \p$ and 
$w \le q \restriction Q$. 
Then:
\begin{enumerate}
\item $A_q \cap Q \subseteq A_w$;
\item $\dom(f_q) \cap Q \subseteq \dom(f_w)$, and 
for all $x \in \dom(f_q) \cap Q$, 
$f_q(x) \subseteq f_w(x)$;
\item $\dom(g_q) \cap Q \subseteq \dom(g_w)$, and for all 
$(K,x) \in \dom(g_q) \cap Q$, 
$g_q(K,x) \subseteq g_w(K,x)$.
\end{enumerate}
\end{lemma}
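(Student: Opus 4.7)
The plan is essentially to observe that Lemma 6.7 is a direct unpacking of Definition 6.4 combined with the ordering on $\p$ given by Definition 4.2.

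First I would invoke Lemma 6.5 to know that $q \restriction Q$ is actually a condition in $Q \cap \p$, so that the hypothesis $w \le q \restriction Q$ makes sense and means that items (a)--(d) of Definition 4.2 hold for the pair $(q \restriction Q, w)$. Then I would substitute the explicit formulas from Definition 6.4, namely
\[
A_{q \restriction Q} = A_q \cap Q, \qquad
\dom(f_{q \restriction Q}) = \dom(f_q) \cap Q, \qquad
\dom(g_{q \restriction Q}) = \dom(g_q) \cap Q,
\]
with $f_{q \restriction Q}$ and $g_{q \restriction Q}$ agreeing pointwise with $f_q$ and $g_q$ on their respective domains.

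Clause (a) of $w \le q \restriction Q$ then reads $A_q \cap Q \subseteq A_w$, which is exactly (1). Clause (b) gives $\dom(f_q) \cap Q \subseteq \dom(f_w)$, and for $x$ in this intersection, $f_q(x) = f_{q \restriction Q}(x) \subseteq f_w(x)$, which is exactly (2). Clause (c) gives the inclusion $g_q(K,x) = g_{q \restriction Q}(K,x) \subseteq g_w(K,x)$ for $(K,x) \in \dom(g_{q \restriction Q}) = \dom(g_q) \cap Q$, which yields (3).

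The only small point worth verifying is that whenever $(K,x) \in \dom(g_q) \cap Q$, the pair $(K,x)$ really lies in the domain of $g_{q \restriction Q}$ (so that clause (c) applies to it), and that $(K,x) \in \dom(g_w)$ so the inclusion is meaningful. Both follow by observing that $(K,x) \in Q$ forces $K, x \in Q$ by elementarity of $Q$, hence $K$ and $x$ are in $\dom(f_q) \cap Q = \dom(f_{q \restriction Q})$ with $K \in f_q(x) = f_{q \restriction Q}(x)$, so $(K,x) \in \dom(g_{q \restriction Q})$; then clause (b) applied to the pair $(q \restriction Q, w)$ places $K$ in $f_w(x)$, so $(K,x) \in \dom(g_w)$ by Definition 4.2(4). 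There is no real obstacle in this lemma; it is just bookkeeping, and its purpose is clearly to set up the amalgamation arguments that follow.
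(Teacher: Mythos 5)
Your proof is correct and takes the same route as the paper, whose proof is simply the remark that the lemma is immediate from Definition 6.4 together with $w \le q \restriction Q$; your unpacking of clauses (a)--(c) of the ordering applied to the pair $(q \restriction Q, w)$ is exactly that bookkeeping. The extra check that $(K,x) \in \dom(g_q) \cap Q$ lands in $\dom(g_w)$ (via $K \in f_{q \restriction Q}(x) \subseteq f_w(x)$ and Definition 4.2(4)) is a fine touch, though the paper leaves it implicit.
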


\begin{proof}
Immediate from the definition of $q \restriction Q$ and the 
fact that $w \le q \restriction Q$.
\end{proof}

As discussed at the beginning of the section, we are going to show that 
whenever $w \le q \restriction Q$, where $q \in D_Q$ and 
$w \in Q \cap \p$, then $w$ and $q$ are compatible. 
We now begin the construction of a specific lower bound of $w$ and $q$, 
which we will denote by $w \oplus_Q q$. 
In order to define the amalgam $w \oplus_Q q$, we will need to 
define the $f$, $g$, and $A$ components of $w \oplus_Q q$. 
The amalgam of the $A$-components will be $A_w \cup A_q$. 
We handle the $f$-components next.

\begin{definition}
Let $Q \in \mathcal Y$ be simple and $q \in D_Q$. 
Suppose that $w \in Q \cap \p$ and $w \le q \restriction Q$. 
Define $f_w \oplus_Q f_q = f$ as follows.

The domain of $f$ is equal to $\dom(f_w) \cup \dom(f_q)$. 
The values of $f$ are defined by the following cases:
\begin{enumerate}
\item for all $x \in \dom(f_w)$, $f(x) := f_w(x)$;
\item for all $x \in \dom(f_q) \setminus Q$, if $f_q(x) \cap Q = \emptyset$, 
then $f(x) := f_q(x)$;
\item for all $x \in \dom(f_q) \setminus Q$, if $f_q(x) \cap Q \ne \emptyset$, 
then $f(x) := f_q(x) \cup f_w(M)$, where $M$ is the membership 
largest element of $f_q(x) \cap Q$.
\end{enumerate}
\end{definition}

It is easy to see that cases 1--3 describe all of the 
possibilities for a set being 
in $\dom(f)$, since $\dom(f_q) \cap Q \subseteq \dom(f_w)$ 
by Lemma 6.7(2). 
Moreover, cases 1--3 are obviously disjoint. 

The next three lemmas describe some important properties of 
$f_w \oplus_Q f_q$. 
The first two lemmas are easy, but the third is quite involved.

\begin{lemma}
Let $Q \in \mathcal Y$ be simple and $q \in D_Q$. 
Suppose that $w \in Q \cap \p$ and $w \le q \restriction Q$. 
Let $f := f_w \oplus_Q f_q$. Then:
\begin{enumerate}
\item if $x \in \dom(f_w)$, then $f_w(x) = f(x)$;
\item if $x \in \dom(f_q)$, then $f_q(x) \subseteq f(x)$.
\end{enumerate}
\end{lemma}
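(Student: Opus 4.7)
The plan is to prove both parts by an immediate case analysis based on the three-case definition of $f_w \oplus_Q f_q$ given in Definition 6.8, together with the containment information for $\dom(f_w)$ and $\dom(f_q)$ provided by Lemma 6.7(2) and the fact that $w \in Q$.

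For part (1), I would begin by observing that since $w \in Q$, we have $\dom(f_w) \subseteq Q$, while case (2) and case (3) of the definition only apply to sets in $\dom(f_q) \setminus Q$. Therefore the three cases of Definition 6.8 are mutually exclusive, and for any $x \in \dom(f_w)$ only case (1) can apply, yielding $f(x) = f_w(x)$ directly.

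For part (2), I would split into subcases according to whether $x$ lies in $Q$. If $x \in \dom(f_q) \cap Q$, then by Lemma 6.7(2) we have $x \in \dom(f_w)$ and $f_q(x) \subseteq f_w(x)$, so by part (1) we obtain $f_q(x) \subseteq f_w(x) = f(x)$. If $x \in \dom(f_q) \setminus Q$, then either $f_q(x) \cap Q = \emptyset$, in which case $f(x) = f_q(x)$ by case (2) of Definition 6.8, or $f_q(x) \cap Q \ne \emptyset$, in which case $f(x) = f_q(x) \cup f_w(M) \supseteq f_q(x)$ by case (3). In every case, $f_q(x) \subseteq f(x)$.

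There is no real obstacle here, since both statements follow by matching assumptions to the definitional cases; the only thing requiring care is verifying that the cases of Definition 6.8 partition the relevant situations so that the value of $f$ on an element of $\dom(f_w) \cap \dom(f_q)$ is unambiguously given by case (1). This is why the observation $\dom(f_w) \subseteq Q$, forced by $w \in Q$, is the key input.
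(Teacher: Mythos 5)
Your proof is correct and follows essentially the same route as the paper, which simply reads the two statements off the case distinctions in Definition 6.8. Your extra care with the case $x \in \dom(f_q) \cap Q$ (using Lemma 6.7(2) together with part (1)) is a point the paper leaves implicit, but it is the same argument in slightly fuller detail.
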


\begin{proof}
(1) is by Definition 6.8(1), and (2) follows immediately 
from Definition 6.8(2,3).
\end{proof}

\begin{lemma}
Let $Q \in \mathcal Y$ be simple and $q \in D_Q$. 
Suppose that $w \in Q \cap \p$ and $w \le q \restriction Q$. 
Let $f := f_w \oplus_Q f_q$. Then:
\begin{enumerate}
\item $\dom(f) \cap Q = \dom(f_w)$;
\item if $K \in f(x)$ and $K$ and $x$ are in $\dom(f_w)$, then 
$K \in f_w(x)$;
\item if $K \in f(x)$ and $K$ and $x$ are in $\dom(f_q)$, 
then $K \in f_q(x)$.
\end{enumerate}
\end{lemma}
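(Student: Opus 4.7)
The plan is to verify the three claims essentially by unwinding Definition 6.8 and then using the hypothesis $w \le q \restriction Q$ via property (d) of the order on $\p$ at the crucial moments.

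For (1), note that $\dom(f) = \dom(f_w) \cup \dom(f_q)$ and $\dom(f_w) \subseteq Q$ since $w \in Q$. Therefore $\dom(f) \cap Q = \dom(f_w) \cup (\dom(f_q) \cap Q)$, which equals $\dom(f_w)$ by Lemma 6.7(2).

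For (2), suppose $K \in f(x)$ with $K, x \in \dom(f_w)$. Since $x \in Q$, $x$ does not fall into Cases 2 or 3 of Definition 6.8, so we are in Case 1 and $f(x) = f_w(x)$. Thus $K \in f_w(x)$.

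The substantive work is (3). Assume $K \in f(x)$ and $K, x \in \dom(f_q)$. I would split into the cases $x \in Q$ and $x \notin Q$. If $x \in Q$, then $x \in \dom(f_w)$, so Case 1 of Definition 6.8 gives $f(x) = f_w(x)$; thus $K \in f_w(x)$. By Lemma 4.3(2), $K \subseteq x$, and by elementarity $Sk(x) \subseteq Q$, so $K \in Q$. Hence both $K$ and $x$ lie in $\dom(f_q) \cap Q = \dom(f_{q \restriction Q})$, and $K \in f_w(x) = f_{q \restriction Q}(x) \cup (\text{possibly more})$. Applying property (d) of Definition 4.2 to $w \le q \restriction Q$ with the pair $K, x$ yields $K \in f_{q \restriction Q}(x) = f_q(x)$.

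If instead $x \notin Q$, then $x \in \dom(f_q) \setminus Q$ and we are in Case 2 or Case 3 of Definition 6.8. In Case 2, $f(x) = f_q(x)$ and the conclusion is immediate. The main obstacle is Case 3, where $f(x) = f_q(x) \cup f_w(M)$ for the membership-largest $M \in f_q(x) \cap Q$; here $K$ could enter through $f_w(M)$ without visibly being in $f_q(x)$. If $K \in f_q(x)$ we are done, so assume $K \in f_w(M)$. Since $M \in Q$, elementarity gives $K \in Sk(M) \subseteq Q$, so $K$ and $M$ both lie in $\dom(f_q) \cap Q = \dom(f_{q \restriction Q})$. Applying property (d) to $w \le q \restriction Q$ with the pair $K, M$ gives $K \in f_{q \restriction Q}(M) = f_q(M)$. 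Finally, since $M \in f_q(x)$ and $q$ is a condition, Definition 4.2(3) yields $f_q(M) = f_q(x) \cap Sk(M) \subseteq f_q(x)$, so $K \in f_q(x)$ as required.
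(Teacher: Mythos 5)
Your proof is correct and follows essentially the same route as the paper: part (1) via Lemma 6.7(2), part (2) via Case 1 of Definition 6.8, and part (3) by reducing each case to an application of clause (d) of the ordering for $w \le q \restriction Q$ together with $f_{q \restriction Q} = f_q$ on $Q$ and Definition 4.2(3). Splitting (3) by $x \in Q$ versus $x \notin Q$ is just a repackaging of the paper's case split on Definition 6.8, so there is no substantive difference.
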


\begin{proof}
(1) By Lemma 6.7(2), $\dom(f_q) \cap Q \subseteq \dom(f_w)$.  
Hence, 
$$
\dom(f) \cap Q = (\dom(f_w) \cup \dom(f_q)) \cap Q = \dom(f_w),
$$
where the last equality follows from the fact that 
$\dom(f_w) \cap Q = \dom(f_w)$ and 
$\dom(f_q) \cap Q \subseteq \dom(f_w)$.

(2) Suppose that $K \in f(x)$ and $K$ and $x$ are in $\dom(f_w)$. 
By Definition 6.8(1), $f(x) = f_w(x)$, so $K \in f_w(x)$.

(3) Assume that $K \in f(x)$ and $K$ and $x$ are in $\dom(f_q)$. 
We will show that $K \in f_q(x)$. 
The proof splits into the three cases of Definition 6.8 for how 
$f(x)$ is defined.
In case 2, $f(x) = f_q(x)$, so $K \in f_q(x)$. 

In case 1, $f(x) = f_w(x)$. 
So $K \in f_w(x)$. 
In particular, $K$ and $x$ are in $Q$. 
So $K$ and $x$ are in 
$\dom(f_q) \cap Q = \dom(f_{q \restriction Q})$. 
Since $w \le q \restriction Q$ and $K \in f_w(x)$, it follows 
that $K \in f_{q \restriction Q}(x) = f_q(x)$.

In case 3, $f(x) = f_q(x) \cup f_w(M)$, where $M$ is the membership 
largest element of $f_q(x) \cap Q$. 
So either $K \in f_q(x)$, or $K \in f_w(M)$. 
In the first case we are done, so assume that $K \in f_w(M)$. 
Then $K$ and $M$ are in 
$\dom(f_q) \cap Q = \dom(f_{q \restriction Q})$. 
Since $w \le q \restriction Q$ and $K \in f_w(M)$, it follows 
that $K \in f_{q \restriction Q}(M) = f_q(M)$. 
So $K \in f_q(M)$ and $M \in f_q(x)$. 
Therefore, $K \in f_q(x)$.
\end{proof}

The next lemma will be used to verify that 
$f_w \oplus_Q f_q$ 
satisfies requirements (2) and (3) of Definition 4.2 for 
$w \oplus_Q q$.

\begin{lemma}
Let $Q \in \mathcal Y$ be simple and $q \in D_Q$. 
Suppose that $w \in Q \cap \p$ and 
$w \le q \restriction Q$. 
Let $f := f_w \oplus_Q f_q$. 
Then:
\begin{enumerate}
\item $f$ is a function with a finite domain, and for all 
$x \in \dom(f)$, either $x \in S$, or there is $M \in A_w \cup A_q$ 
and 
$$
\alpha \in (M \cap \dom(f) \cap S) \cup \{ \kappa \}
$$ 
such that $x = M \cap \alpha$; moreover, for all $x \in \dom(f)$, 
$f(x)$ is a finite $\in$-chain and $f(x) \subseteq Sk(x) \setminus S$;

\item if $x \in \dom(f)$, then $f(x) \subseteq \dom(f)$, and for all 
$K \in f(x)$, $f(K) = f(x) \cap Sk(K)$.
\end{enumerate}
\end{lemma}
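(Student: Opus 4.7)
The plan is to verify the listed properties by splitting on the three cases of Definition 6.8 for how $f(x)$ is defined, making crucial use of Lemma 6.7, which tells us that below $q \restriction Q$, the restriction $w$ refines all data of $q$ that lies inside $Q$. In particular, for any $M \in \dom(f_q) \cap Q$, we have $f_q(M) \subseteq f_w(M)$, and more precisely (since $w$ is a condition) $f_q(M) = f_{q \restriction Q}(M)$ is an initial segment of the $\in$-chain $f_w(M)$.

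For part (1), well-definedness is immediate because the three cases of Definition 6.8 are mutually exclusive: case 1 requires $x \in \dom(f_w) \subseteq Q$, while cases 2 and 3 require $x \in \dom(f_q) \setminus Q$. Finiteness is trivial. The required form for each $x \in \dom(f)$ (either $x \in S$, or $x = M \cap \alpha$ for some $M \in A_w \cup A_q$) is inherited directly from $w$ or $q$ being a condition. The content of $f(x)$ being a finite subset of $Sk(x) \setminus S$ is clear in cases 1 and 2; in case 3, where $f(x) = f_q(x) \cup f_w(M)$ with $M$ the membership-largest element of $f_q(x) \cap Q$, we use that $M \in f_q(x)$ gives $M \subseteq x$ and hence $Sk(M) \subseteq Sk(x)$. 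To verify the $\in$-chain property in case 3, given distinct $K_1 \in f_q(x)$ and $K_2 \in f_w(M)$, I compare $K_1$ with $M$ in the $\in$-chain $f_q(x)$: if $K_1 \in Sk(M)$, then $K_1 \in f_q(x) \cap Sk(M) = f_q(M) \subseteq f_w(M)$, reducing to the $\in$-chain property of $f_w(M)$; if $K_1 = M$ or $M \in Sk(K_1)$, then $K_2 \in Sk(M) \subseteq Sk(K_1)$, so $K_2 \in Sk(K_1)$.

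For part (2), the inclusion $f(x) \subseteq \dom(f)$ is immediate from the defining clauses since $f_w(x) \subseteq \dom(f_w)$, $f_q(x) \subseteq \dom(f_q)$, and $f_w(M) \subseteq \dom(f_w)$. For the identity $f(K) = f(x) \cap Sk(K)$, I do a case analysis on $x$. In case 1, both $f(x) = f_w(x)$ and $f(K) = f_w(K)$, and the identity reduces to the same identity for $w$. In case 2, one first shows that $f_q(x) \cap Q = \emptyset$ forces $K \notin Q$ and $f_q(K) \cap Q = \emptyset$ (since $f_q(K) \subseteq f_q(x)$), so both values are given by $f_q$ and we inherit from $q$. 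The main obstacle is case 3, where I must split further on whether $K \in f_w(M)$, $K = M$, or $K \in f_q(x)$ with $M \in Sk(K)$. In the last subcase, I show that the membership-largest element of $f_q(K) \cap Q$ is again $M$ itself (since $M \in f_q(x) \cap Sk(K) = f_q(K)$ and no strictly larger element of $f_q(x) \cap Q$ can exist), so $f(K) = f_q(K) \cup f_w(M)$; then, using $f_w(M) \subseteq Sk(M) \subseteq Sk(K)$, we get $f_w(M) \cap Sk(K) = f_w(M)$, and the identity follows from the corresponding identity $f_q(K) = f_q(x) \cap Sk(K)$ for $q$. The subcase $K \in f_w(M)$ reduces similarly, using the key computation $f_q(x) \cap Sk(K) \subseteq f_q(x) \cap Sk(M) \cap Q = f_q(M) \cap Sk(K) \subseteq f_w(M) \cap Sk(K) = f_w(K)$.
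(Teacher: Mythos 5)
Your proposal is correct and follows essentially the same route as the paper's proof: the same case split driven by Definition 6.8, the same key use of Lemma 6.7(2) (yielding $f_q(x) \cap Q \subseteq f_w(M) \cup \{ M \}$), and the same trichotomy $K \in f_q(x) \setminus Q$, $K = M$, $K \in f_w(M)$ in case 3, the only organizational difference being that you verify the $K \in f_w(M)$ subcase by a direct computation where the paper first settles $K = M$ and then derives that subcase from it (and you leave the easy $K = M$ details implicit). One small caution: your parenthetical claim that $f_q(M) = f_{q \restriction Q}(M)$ is an \emph{initial segment} of the $\in$-chain $f_w(M)$ is not justified --- clause (d) of Definition 4.2 only prevents $w$ from adding pairs of old sets to the $f$-relation, so $w$ may interleave new models below members of $f_q(M)$ --- but your argument only ever uses the inclusion $f_q(M) \subseteq f_w(M)$ together with the coherence identities, so nothing is affected.
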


\begin{proof}
(1) The domain of $f$ is equal to $\dom(f_w) \cup \dom(f_q)$, 
which is finite. 
Let $x \in \dom(f)$. 
Then either $x \in \dom(f_w)$ or $x \in \dom(f_q)$.

If $x \in \dom(f_w)$, then either $x \in S$, 
or there is $M \in A_w$ 
and $\alpha \in (M \cap \dom(f_w) \cap S) \cup \{ \kappa \}$ 
such that $x = M \cap \alpha$. 
If $x \in \dom(f_q)$, then either $x \in S$, or there is $M \in A_q$ 
and $\alpha \in (M \cap \dom(f_q) \cap S) \cup \{ \kappa \}$ 
such that $x = M \cap \alpha$. 
In either case, either $x \in S$, or there is 
$M \in A_w \cup A_q$ and 
$\alpha \in (M \cap \dom(f) \cap S) \cup \{ \kappa \}$ 
such that $x = M \cap \alpha$.

Let $x \in \dom(f)$, and we will show that $f(x)$ is a finite 
$\in$-chain and a subset of $Sk(x) \setminus S$. 
We consider the three cases in the definition of $f(x)$ given 
in Definition 6.8. 
In cases 1 and 2, $f(x)$ is equal to either $f_w(x)$ or $f_q(x)$. 
Since $w$ and $q$ are conditions, then in either case, 
$f(x)$ is a finite $\in$-chain and a subset of $Sk(x) \setminus S$.

Consider case 3, which says that $x \in \dom(f_q) \setminus Q$ 
and $f(x) = f_q(x) \cup f_w(M)$, where $M$ is the membership 
largest element of $f_q(x) \cap Q$. 
Since $w$ and $q$ are conditions, it follows that 
$$
f(x) \subseteq (Sk(x) \cup Sk(M)) \setminus S.
$$
But $M \in f_q(x)$ implies that $M \in Sk(x)$, and therefore 
$Sk(M) \subseteq Sk(x)$. 
Hence,
$$
f(x) \subseteq Sk(x) \setminus S.
$$

Since $w$ and $q$ are conditions, $f_q(x)$ and $f_w(M)$ are 
each finite $\in$-chains. 
So to prove that $f(x)$ is a finite $\in$-chain, it suffices to show that  
whenever $K \in f_w(M)$ and 
$L \in f_q(x) \setminus f_w(M)$, then $K \in Sk(L)$.

If $L = M$, then since $K \in f_w(M)$, $K \in Sk(M) = Sk(L)$, 
and we are done. 
Suppose that $L \ne M$. 
As $L$ and $M$ are different elements of $f_q(x)$, either 
$L \in f_q(M)$ or $M \in f_q(L)$. 
But $f_q(M) \subseteq f_w(M)$ by Lemma 6.7(2), 
and we assumed that $L \notin f_w(M)$. 
Thence, $M \in f_q(L)$. 
But $K \in f_w(M)$ implies that $K \in Sk(M)$, and 
$M \in f_q(L)$ implies that $M \in Sk(L)$. 
Therefore, $K \in Sk(M) \subseteq Sk(L)$, so $K \in Sk(L)$.

\bigskip

(2) Let $x \in \dom(f)$. 
We claim that $f(x) \subseteq \dom(f)$. 
In cases 1 and 2 of Definition 6.8, either 
$f(x) = f_w(x)$ or $f(x) = f_q(x)$. 
Since $w$ and $q$ are conditions, 
$f(x) \subseteq \dom(f_w) \subseteq \dom(f)$ in the first case, 
and $f(x) \subseteq \dom(f_q) \subseteq \dom(f)$ in the second case.

In the third case, $f(x) = f_q(x) \cup f_w(M)$, where $M$ is the 
membership largest element of $f_q(x) \cap Q$. 
Since $w$ and $q$ are conditions, 
$f(x) \subseteq \dom(f_q) \cup \dom(f_w) = \dom(f)$.

\bigskip

Assume that $K \in f(x)$, and we will show that 
$f(K) = f(x) \cap Sk(K)$. 
We split the proof into the three cases of Definition 6.8 
for the definition of $f(x)$.

In case 1, $x \in \dom(f_w)$ and $f(x) = f_w(x)$. 
So $K \in f_w(x) \subseteq \dom(f_w)$.  
Hence, $f(K) = f_w(K)$. 
Since $w$ is a condition,  
$$
f(K) = f_w(K) = f_w(x) \cap Sk(K) = f(x) \cap Sk(K).
$$

In case 2, $x \in \dom(f_q) \setminus Q$, $f_q(x) \cap Q = \emptyset$, 
and $f(x) = f_q(x)$. 
Since $K \in f(x) = f_q(x)$, we have that 
$f_q(K) = f_q(x) \cap Sk(K)$, since 
$q$ is a condition. 
In particular, $K \in \dom(f_q) \setminus Q$ and 
$f_q(K) \cap Q = \emptyset$. 
Therefore, by definition, $f(K) = f_q(K)$. 
So 
$$
f(K) = f_q(K) = f_q(x) \cap Sk(K) = f(x) \cap Sk(K).
$$

In case 3, $x \in \dom(f_q) \setminus Q$ and 
$f(x) = f_q(x) \cup f_w(M)$, where $M$ is the membership largest 
element of $f_q(x) \cap Q$. 
Then either $K \in f_q(x)$, or $K \in f_w(M)$. 
Since $M$ is the largest element of $f_q(x) \cap Q$ and 
$q$ is a condition,
$$
f_q(x) \cap Q = (f_q(x) \cap Sk(M)) \cup \{ M \} = 
f_q(M) \cup \{ M \} \subseteq f_w(M) \cup \{ M \},
$$
where the inclusion holds by Lemma 6.7(2). 
So $f_q(x) \cap Q \subseteq f_w(M) \cup \{ M \}$. 
It easily follows that either $K \in f_q(x) \setminus Q$, $K = M$, or 
$K \in f_w(M)$.

First, assume that $K \in f_q(x) \setminus Q$. 
Then, since $f_q(K) = f_q(x) \cap Sk(K)$, 
$M$ is the membership largest element of $f_q(K) \cap Q$. 
So by definition, $f(K) = f_q(K) \cup f_w(M)$. 
Since $f_w(M) \subseteq Sk(M) \subseteq Sk(K)$, we have that  
\begin{multline*}
f(K) = f_q(K) \cup f_w(M) = 
(f_q(x) \cap Sk(K)) \cup f_w(M) = \\
(f_q(x) \cup f_w(M)) \cap Sk(K) = f(x) \cap Sk(K).
\end{multline*}

Secondly, assume that $K = M$. 
Then $f(K) = f(M) = f_w(M)$. 
Hence, it suffices to show that 
$$
f_w(M) = (f_q(x) \cup f_w(M)) \cap Sk(M).
$$
The forward inclusion is immediate. 
For the reverse inclusion, 
let $J \in (f_q(x) \cup f_w(M)) \cap Sk(M)$, and we will show 
that $J \in f_w(M)$. 
So either $J \in f_q(x) \cap Sk(M)$ or $J \in f_w(M) \cap Sk(M)$. 
In the latter case, we are done. 
In the former case, by Lemma 6.7(2) we have that 
$$
J \in f_q(x) \cap Sk(M) = f_q(M) \subseteq f_w(M),
$$
so $J \in f_w(M)$.

Thirdly, assume that $K \in f_w(M)$. 
Then $K \in \dom(f_w)$, so $f(K) = f_w(K)$. 
Since $w$ is a condition, $f_w(K) = f_w(M) \cap Sk(K)$. 
But by the case in the previous paragraph, 
$f_w(M) = f(M) = f(x) \cap Sk(M)$. 
And since $K \in Sk(M)$, $Sk(K) \subseteq Sk(M)$. 
Therefore,
$$
f(K) = f_w(K) = f_w(M) \cap Sk(K) = 
(f(x) \cap Sk(M)) \cap Sk(K) = f(x) \cap Sk(K).
$$
\end{proof}

We now handle the amalgamation of the $g$-components of 
$w$ and $q$.

\begin{definition}
Let $Q \in \mathcal Y$ be simple and $q \in D_Q$. 
Suppose that $w \in Q \cap \p$ and $w \le q \restriction Q$. 
Let $f := f_w \oplus_Q f_q$.

Define $g_w \oplus_Q g_q$ as the function $g$ with domain 
equal to the set of pairs $(K,x)$ such that $K \in f(x)$, such that 
for all $(K,x) \in \dom(g)$,
$$
g(K,x) := \bigcup \{ g_w(K,y) \cup g_q(K,y) : x = y, \ 
\textrm{or} \ x \in f(y) \}.\footnote{When working with the $g$-components 
of a condition, we will adopt the convention that when $(K,x)$ is not 
a member of the domain of $g$, then 
$g(K,x)$ will denote the empty set. 
In particular, when verifying an inclusion of the form 
$g(K,x) \subseteq y$, the inclusion is trivial in the case that 
$(K,x)$ is not in $\dom(g)$.}
$$
\end{definition}

Note that $g(K,x)$ is finite. 

The next lemma will be used to show that $g_w \oplus_Q g_q$ 
satisfies requirement (4) of Definition 4.2 for 
$w \oplus_Q q$.

\begin{lemma}
Let $Q \in \mathcal Y$ be simple and $q \in D_Q$. 
Suppose that $w \in Q \cap \p$ and $w \le q \restriction Q$. 
Let $g := g_w \oplus_Q g_q$. 
Then for any $(K,x) \in \dom(g)$,
$$
g(K,x) \subseteq x \setminus \sup(K).
$$
\end{lemma}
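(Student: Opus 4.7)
The plan is to unpack Definition 6.12 and verify the two inclusions $g(K,x) \subseteq x$ and $\sup(K) \le \xi$ for every $\xi \in g(K,x)$ separately. Fix $(K,x) \in \dom(g)$ and $\xi \in g(K,x)$; then $\xi \in g_w(K,y) \cup g_q(K,y)$ for some $y$ satisfying either $x = y$ or $x \in f(y)$. The inequality $\sup(K) \le \xi$ is immediate from requirement (4) of Definition 4.2 applied to $w$ and to $q$, so it suffices to prove $\xi \in x$. When $x = y$ this is trivial, so we may assume $x \in f(y)$ with $x \ne y$, and we split on whether $\xi$ came from $g_w$ or $g_q$.

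Suppose first $\xi \in g_w(K,y)$. Then $(K,y) \in \dom(g_w)$ forces $y \in \dom(f_w)$, so Definition 6.8(1) gives $f(y) = f_w(y)$, whence $x \in f_w(y) \subseteq \dom(f_w)$ and $f(x) = f_w(x)$. Since $K \in f(x)$, we have $K \in f_w(x)$, and requirement (5) of Definition 4.2 for $w$ yields $g_w(K,y) \subseteq g_w(K,x) \subseteq x$, so $\xi \in x$.

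Now suppose $\xi \in g_q(K,y)$, so $K \in f_q(y)$ and $y \in \dom(f_q)$. If $y \in Q$, then $y \in \dom(f_w)$ by Lemma 6.7(2), and Lemma 6.7(3) gives $g_q(K,y) \subseteq g_w(K,y)$, reducing to the previous paragraph. Otherwise $y \in \dom(f_q) \setminus Q$, and we consult cases 2 and 3 of Definition 6.8. In case 2, $f(y) = f_q(y)$, so $x \in f_q(y)$; combined with $K \in f(x)$ and Lemma 6.10(3), we get $K \in f_q(x)$, and requirement (5) for $q$ yields $g_q(K,y) \subseteq g_q(K,x) \subseteq x$. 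In case 3, $f(y) = f_q(y) \cup f_w(M)$ with $M$ the membership largest element of $f_q(y) \cap Q$. If $x \in f_q(y)$, the argument is as in case 2, so the remaining possibility, where $x \in f_w(M) \setminus f_q(y)$, is the main obstacle.

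In this last sub-case we route through $M$ twice. From $x \in f_w(M)$ we get $x \in Sk(M) \subseteq Q$, so $x \in \dom(f_w)$ and hence $K \in f(x) = f_w(x)$. On the $q$-side, $K \in Sk(x) \subseteq Sk(M)$ together with $K \in f_q(y)$ gives $K \in f_q(y) \cap Sk(M) = f_q(M)$ by requirement (3) for $q$. Then requirement (5) for $q$ yields $g_q(K,y) \subseteq g_q(K,M)$; Lemma 6.7(3) (applicable since $K, M \in Q$) gives $g_q(K,M) \subseteq g_w(K,M)$; and requirement (5) for $w$ applied to $K \in f_w(x)$ and $x \in f_w(M)$ gives $g_w(K,M) \subseteq g_w(K,x) \subseteq x$. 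Chaining these inclusions yields $\xi \in x$, completing the proof.
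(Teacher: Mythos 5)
Your proof is correct and follows essentially the same route as the paper's: reduce to showing $g_w(K,y) \cup g_q(K,y) \subseteq x$ when $x \in f(y)$, then case-split according to Definition 6.8, with the same key chain $g_q(K,y) \subseteq g_q(K,M) \subseteq g_w(K,M) \subseteq g_w(K,x) \subseteq x$ in the hard sub-case. The only difference is organizational (splitting first on whether $\xi$ comes from $g_w$ or $g_q$, and handling $y \in Q$ by the Lemma 6.7(3) reduction), which is a harmless rearrangement of the paper's argument.
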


\begin{proof}
Let $(K,x) \in \dom(g)$, which means that $K \in f(x)$. 
By definition, any ordinal in $g(K,x)$ is either in 
$g_w(K,x) \cup g_q(K,x)$, or in 
$g_w(K,y) \cup g_q(K,y)$ for some $y$ with $x \in f(y)$. 
In the first case, the ordinal is in 
$x \setminus \sup(K)$, since $w$ and $q$ are conditions. 
In the second case, the ordinal is not less than $\sup(K)$, because 
$w$ and $q$ are conditions. 
Thus, it suffices to show that whenever $x \in f(y)$, then
$$
g_w(K,y) \cup g_q(K,y) \subseteq x.
$$
We split the proof into the three cases of 
Definition 6.8 for how $f(y)$ is defined. 
Note that by Lemma 6.11(2), $K \in f(x)$ and $x \in f(y)$ 
implies that $K \in f(y)$.

\bigskip

(1) Suppose that $y \in \dom(f_w)$, so that $f(y) = f_w(y)$. 
Then $x \in f(y) = f_w(y) \subseteq \dom(f_w)$. 
So $x \in \dom(f_w)$, and therefore $f(x) = f_w(x)$ 
Hence, $K \in f_w(x)$ and $x \in f_w(y)$. 
By Definition 4.2(4,5), 
$$
g_w(K,y) \subseteq g_w(K,x) \subseteq x.
$$
Now assume that $(K,y) \in \dom(g_q)$, and we will show that 
$g_q(K,y) \subseteq x$. 
Then $(K,y) \in \dom(g_q) \cap Q$, so by Lemma 6.7(3), 
$g_q(K,y) \subseteq g_w(K,y)$. 
But we just proved that $g_w(K,y) \subseteq x$. 
So $g_q(K,y) \subseteq x$.

\bigskip

(2) Suppose that $y \in \dom(f_q) \setminus Q$ and 
$f_q(y) \cap Q = \emptyset$. 
Then $f(y) = f_q(y)$, so $x \in f_q(y)$.  
And since $K$ and $x$ are in $f(y) = f_q(y)$, 
$K$ and $x$ are not in $Q$. 

Since $f_q(x) = f_q(y) \cap Sk(x)$, 
we have that $f_q(x) \cap Q = \emptyset$. 
Therefore, by definition, $f(x) = f_q(x)$. 
Hence, $K \in f_q(x)$ and $x \in f_q(y)$. 
By Definition 4.2(4,5), it follows that 
$$
g_q(K,y) \subseteq g_q(K,x) \subseteq x.
$$
On the other hand, 
since $y$ is not in $Q$, $(K,y)$ is not in the domain of $g_w$. 
So the inclusion $g_w(K,y) \subseteq x$ is trivial. 

\bigskip

(3) Suppose that $y \in \dom(f_q) \setminus Q$, and 
$f(y) = f_q(y) \cup f_w(M)$, where $M$ is the membership largest 
element of $f_q(y) \cap Q$. 
Since $y \notin Q$, $(K,y)$ is not in $\dom(g_w)$, and therefore 
$g_w(K,y) = \emptyset$, which is a subset of $x$. 
It remains to show that $g_q(K,y) \subseteq x$. 
This is trivial if $(K,y) \notin \dom(g_q)$, so 
assume that $(K,y) \in \dom(g_q)$, which means 
that $K \in f_q(y)$.

Since $f(y) = f_q(y) \cup f_w(M)$, either $x \in f_q(y)$, or 
$x \in f_w(M)$. 
First, assume that $x \in f_q(y)$. 
Then $K \in f_q(y)$, $x \in f_q(y)$, and $K \in Sk(x)$, which implies 
that $K \in f_q(x)$, since $q$ is a condition. 
Therefore, 
$$
g_q(K,y) \subseteq g_q(K,x) \subseteq x,
$$
since $q$ is a condition.

Secondly, assume that $x \in f_w(M)$. 
So $x \in Q$, and since $K \in Sk(x)$, it follows that $K \in Q$ as well. 
Now $x \in f_w(M)$ implies that $x$, and hence $K$, are in $Sk(M)$. 
Also, $K$ and $M$ are both in $f_q(y)$. 
So $K \in f_q(M)$. 
By Lemma 6.7(2), 
$f_q(M) \subseteq f_w(M)$, so $K \in f_w(M)$. 

Since $K$ and $M$ are in $f_q(y) \cap Q$ and $K \in f_q(M)$, 
$$
g_q(K,y) \subseteq g_q(K,M) \subseteq g_w(K,M),
$$
where the last inclusion holds by Lemma 6.7(3). 
Also, $K \in f_w(M)$, $x \in f_w(M)$, and $K \in Sk(x)$ imply 
that $K \in f_w(x)$, since $w$ is a condition. 
So 
$$
g_w(K,M) \subseteq g_w(K,x) \subseteq x,
$$
since $w$ is a condition. 
Thence, $g_q(K,y) \subseteq g_w(K,M) \subseteq x$.
\end{proof}

We are ready to define the amalgam $w \oplus_Q q$.

\begin{definition}
Let $Q \in \mathcal Y$ be simple and $q \in D_Q$. 
Suppose that $w \in Q \cap \p$ and $w \le q \restriction Q$. 
Let $w \oplus_Q q$ be the triple $(f,g,A)$ defined by:
\begin{enumerate}
\item $f := f_w \oplus_Q f_q$;
\item $g := g_w \oplus_Q g_q$;
\item $A := A_w \cup A_q$.
\end{enumerate}
\end{definition}

We will now show that $w \oplus_Q q$ is a condition below $w$ and $q$. 
We have done most of the work of the proof already.

\begin{proposition}
Let $Q \in \mathcal Y$ be simple and $q \in D_Q$. 
Suppose that $w \in Q \cap \p$ and $w \le q \restriction Q$. 
Then $w$ and $q$ are compatible.  
In fact, $w \oplus_Q q$ is in $\p$ and $w \oplus_Q q \le w, q$.
\end{proposition}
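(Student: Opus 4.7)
The plan is to verify that $w \oplus_Q q = (f,g,A)$ satisfies the seven requirements of Definition 4.2, and then check the four inequalities (a)--(d) for $w \oplus_Q q \le w$ and $w \oplus_Q q \le q$. Most of the technical work has already been packaged in the preceding lemmas, so my role is to assemble those pieces and fill in the argument for the few requirements that are not immediate.

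First, I would verify requirement (1). Since $w \in Q$ gives $A_w \subseteq Q$, and since $w \le q \restriction Q$ yields $A_q \cap Q = A_{q \restriction Q} \subseteq A_w$, the inclusions $A_q \cap Q \subseteq A_w \subseteq Q$ hold. Because $q \in D_Q$, we have $M \cap Q \in A_q$ for every $M \in A_q$, and $Q$ is simple. Hence Proposition 2.28 (amalgamation over uncountable models) applies with $A_q$ in the role of $A$ and $A_w$ in the role of $B$, showing that $A = A_w \cup A_q$ is adequate. Requirements (2) and (3) for $f$ are exactly the content of Lemma 6.11, and requirement (4) for $g$ is Lemma 6.13. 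Requirement (5), that $g(K,x) \subseteq g(K,L)$ whenever $K \in f(L)$ and $L \in f(x)$, follows directly from Definition 6.12 of $g_w \oplus_Q g_q$, since the union defining $g(K,x)$ is a sub-union of the one defining $g(K,L)$ (every $y$ with $x \in f(y)$ also satisfies $L \in f(y)$, using $L \in f(x)$ and Definition 4.2(3) in $f$, which is available from Lemma 6.11(2)).

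The step that requires the most care is requirement (6): given $\alpha \in \dom(f) \cap S$ and $M \in A$ with $\alpha \in M$, to show $M \cap \alpha \in f(\alpha)$. I would split into cases according to whether $M \in A_w$ or $M \in A_q$, and whether $\alpha \in \dom(f_w)$ or $\alpha \in \dom(f_q)$. If $M \in A_w$ and $\alpha \in \dom(f_w)$, then $M \cap \alpha \in f_w(\alpha) = f(\alpha)$ by Lemma 6.9(1). If $M \in A_q$ and $\alpha \in \dom(f_q)$, then $M \cap \alpha \in f_q(\alpha) \subseteq f(\alpha)$ by Lemma 6.9(2). The case $M \in A_w$, $\alpha \in \dom(f_q) \setminus \dom(f_w)$ is impossible, since $M \subseteq Q$ forces $\alpha \in Q$ and hence $\alpha \in \dom(f_q) \cap Q \subseteq \dom(f_w)$ by Lemma 6.7(2). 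The delicate remaining case is $M \in A_q$ and $\alpha \in \dom(f_w)$. Here $\alpha \in Q$ since $\dom(f_w) \subseteq Q$, so $\alpha < Q \cap \kappa$ and $M \cap \alpha = (M \cap Q) \cap \alpha$. Since $q \in D_Q$, $M \cap Q \in A_q$; as $Q$ is simple, $M \cap Q \in Q$, so $M \cap Q \in A_q \cap Q \subseteq A_w$. Applying the condition property for $w$ to $M \cap Q \in A_w$ and $\alpha \in \dom(f_w) \cap S$ with $\alpha \in M \cap Q$, we get $(M \cap Q) \cap \alpha \in f_w(\alpha) = f(\alpha)$, as required. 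For requirement (7), Proposition 3.8 gives $r^*(A_w \cup A_q) = r^*(A_w) \cup r^*(A_q)$, so $r^*(A) \cap S \subseteq (\dom(f_w) \cap S) \cup (\dom(f_q) \cap S) \subseteq \dom(f)$, using that $w$ and $q$ themselves satisfy (7).

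Finally, for $w \oplus_Q q \le w$, items (a)--(c) are immediate from the definitions of $f$, $g$, and $A$, and (d) is exactly Lemma 6.10(2): if $K$ and $x$ are in $\dom(f_w)$ and $K \in f(x)$, then $K \in f_w(x)$. The inequality $w \oplus_Q q \le q$ is analogous, with (a) and the first half of (b) trivial, the rest of (b) and (c) supplied by Lemma 6.9(2) together with the definition of $g$, and (d) supplied by Lemma 6.10(3). The main obstacle, as indicated above, is the hybrid case in requirement (6) where an adequate model from $A_q$ meets a $\dom(f_w)$-ordinal outside $\dom(f_q)$; that is the one place where the interaction between the simple uncountable model $Q$, the $D_Q$-closure of $A_q$, and the projection relation $w \le q \restriction Q$ all need to be combined.
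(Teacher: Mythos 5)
Your proof is correct and follows essentially the same route as the paper: adequacy via Proposition 2.28, requirements (2)--(4) via Lemmas 6.11 and 6.13, requirement (7) via Proposition 3.8, the ordering via Lemmas 6.9 and 6.10, and the same key move in requirement (6) of replacing $M \in A_q$ by $M \cap Q \in A_q \cap Q \subseteq A_w$ when $\alpha \in Q$. The only (harmless) difference is organizational: you split the case analysis in (6) by the location of $M$ first and dispatch the case $M \in A_q$, $\alpha \in \dom(f_q)$ directly through Lemma 6.9(2), whereas the paper splits by the location of $\alpha$ and runs the $M \cap Q$ reduction whenever $\alpha \in \dom(f_w)$.
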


\begin{proof}
We will prove that $w \oplus_Q q$ is a condition and that 
$w \oplus_Q q$ is below $w$ and $q$. 
Let $w \oplus_Q q = (f,g,A)$. 

To show that $w \oplus_Q q$ is a condition, we verify requirements 
(1)--(7) of Definition 4.2.

\bigskip

(1) We apply Proposition 2.28. 
Since $q \in D_Q$, we have that for all $M \in A_q$, 
$M \cap Q \in A_q$. 
Also, $A_w$ is adequate, and by Lemma 6.7(1), 
$$
A_q \cap Q \subseteq A_w \subseteq Q.
$$
By Proposition 2.28, $A_w \cup A_q = A$ is adequate.

\bigskip

(2,3) These statements are immediate from Lemma 6.11.

\bigskip

(4) By Definition 6.12, $g$ is a function whose domain is the set 
of pairs $(K,x)$ such that $K \in f(x)$. 
And by Lemma 6.13, for all $(K,x) \in \dom(g)$, 
$g(K,x) \subseteq x \setminus \sup(K)$. 
Also $g(K,x)$ is finite, by Definition 6.12.

\bigskip

(5) Let $K \in f(L)$ and $L \in f(x)$, and 
we will show that $g(K,x) \subseteq g(K,L)$. 
Let $\xi \in g(K,x)$. 
Then by Definition 6.12, either $\xi \in g_w(K,x) \cup g_q(K,x)$, 
or for some $y$ with $x \in f(y)$, 
$\xi \in g_w(K,y) \cup g_q(K,y)$. 
In the second case, $L \in f(x)$ and $x \in f(y)$ imply by 
requirement (3) that $L \in f(y)$. 
So letting $z := x$ in the first case, and $z := y$ in the second case, 
we have that $L \in f(z)$ and 
$\xi \in g_w(K,z) \cup g_q(K,z)$. 
By Definition 6.12, it follows that $\xi \in g(K,L)$.

\bigskip

(6) Let $\alpha \in \dom(f) \cap S$, $M \in A$, and suppose that 
$\alpha \in M$. 
We will show that $M \cap \alpha \in f(\alpha)$. 
Since $\dom(f) = \dom(f_w) \cup \dom(f_q)$, either 
$\alpha \in \dom(f_w)$ or $\alpha \in \dom(f_q)$. 
As $A = A_w \cup A_q$, either $M \in A_w$ or $M \in A_q$. 

First, assume that $\alpha \in \dom(f_w)$. 
Then $f(\alpha) = f_w(\alpha)$ by Definition 6.8. 
If $M \in A_w$, then $M \cap \alpha \in f_w(\alpha)$, since 
$w$ is a condition. 
But $f(\alpha) = f_w(\alpha)$, so $M \cap \alpha \in f(\alpha)$.

Suppose that $M \in A_q$. 
Since $q \in D_Q$ and $Q$ is simple, 
$M \cap Q \in A_q \cap Q \subseteq A_w$, by Lemma 6.7(1). 
So $M \cap Q \in A_w$. 
As $\alpha \in \dom(f_w)$, $\alpha \in Q$. 
Thus, $\alpha \in (M \cap Q) \cap \dom(f_w)$. 
Since $w$ is a condition, it follows that 
$(M \cap Q) \cap \alpha \in f_w(\alpha) = f(\alpha)$. 
But $\alpha \in Q$ implies that 
$M \cap Q \cap \alpha = M \cap \alpha$. 
So $M \cap \alpha \in f(\alpha)$, as required.

Secondly, assume that $\alpha \in \dom(f_q) \setminus \dom(f_w)$. 
Then $\alpha \notin Q$, for otherwise 
$\alpha \in \dom(f_q) \cap Q \subseteq \dom(f_w)$ 
by Lemma 6.7(2). 
Note that this implies that $M \notin A_w$. 
For otherwise, 
$\alpha \in M \in A_w \subseteq Q$, which implies that $\alpha \in Q$. 
So $M \in A_q$. 
Since $q$ is a condition, $M \cap \alpha \in f_q(\alpha)$. 
But $f_q(\alpha) \subseteq f(\alpha)$, by cases 2 and 3 of 
Definition 6.8. 
So $M \cap \alpha \in f(\alpha)$.

\bigskip

(7) As in (1) above, the assumptions of Proposition 3.8 hold for 
$A_q$ and $A_w$. 
Therefore, 
$$
r^*(A) = r^*(A_w \cup A_q) = 
r^*(A_w) \cup r^*(A_q).
$$
As $w$ and $q$ are conditions, 
$$
r^*(A_w) \cap S \subseteq \dom(f_w), \ \ \ 
r^*(A_q) \cap S \subseteq \dom(f_q).
$$
But $\dom(f_w) \subseteq \dom(f)$ and 
$\dom(f_q) \subseteq \dom(f)$. 
Hence, 
$$
r^*(A) \cap S = (r^*(A_w) \cap S) \cup (r^*(A_q) \cap S) 
\subseteq \dom(f).
$$

\bigskip

This completes the proof that $w \oplus_Q q$ is a condition. 
Now we show that $w \oplus_Q q \le w, q$. 
First, we prove that $w \oplus_Q q \le w$ by verifying 
properties (a)--(d) of Definition 4.2 for $w$. 
(a) Since $A = A_w \cup A_q$, clearly $A_w \subseteq A$. 
(b) follows from Lemma 6.9(1), 
(c) is immediate from Definition 6.12, and (d) was 
proved in Lemma 6.10(2).

Secondly, we prove that $w \oplus_Q q \le q$ by 
verifying properties (a)--(d) of Definition 4.2 for $q$. 
(a) Since $A = A_w \cup A_q$, clearly $A_q \subseteq A$. 
(b) was proved in Lemma 6.9(2),  
(c) is immediate from Definition 6.12, and (d) was 
proved in Lemma 6.10(3).
\end{proof}

\begin{corollary}
The forcing poset $\p$ is $\kappa$-c.c.
\end{corollary}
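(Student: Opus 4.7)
The plan is to deduce the $\kappa$-chain condition from the fact, essentially established by Proposition 6.15, that the maximum condition of $\p$ is strongly $Q$-generic for every simple $Q \in \mathcal{Y}$. Once this is in hand, the $\kappa$-c.c.\ follows by a short elementarity argument: any maximal antichain must be entirely contained in some such $Q$, and hence have size at most $|Q| < \kappa$.

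First I would verify strong $Q$-genericity. Fix a simple $Q \in \mathcal{Y}$ and a dense $D \subseteq Q \cap \p$. Given an arbitrary $q \in \p$, Lemma 6.3 yields $q^* \le q$ with $q^* \in D_Q$, and Lemma 6.5 places $q^* \restriction Q$ in $Q \cap \p$. Using density of $D$, pick $w \in D$ with $w \le q^* \restriction Q$. Then Proposition 6.15 provides the common lower bound $w \oplus_Q q^*$, which sits below both $w$ and $q^*$, and hence below $w$ and $q$. So $w$ is compatible with $q$, showing that every dense subset of $Q \cap \p$ is predense in $\p$.

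Now let $A \subseteq \p$ be a maximal antichain, and pick a simple $Q \in \mathcal{Y}$ with $A \in Q$ (by Assumption 2.22, treating $A$ as a parameter; in the degenerate case $\kappa = \lambda$ where $A$ need not belong to $H(\lambda)$, one passes through an elementary substructure of $H(\lambda^+)$ whose trace on $H(\lambda)$ is simple). By elementarity of $Q$ in $\mathcal{A}$ and the maximality of $A$, every $w \in Q \cap \p$ is compatible with some $a \in A \cap Q$, so $A \cap Q$ is predense in $Q \cap \p$. By the strong $Q$-genericity just established, $A \cap Q$ is therefore predense in $\p$ itself. In particular, every $a' \in A$ is compatible with some $a \in A \cap Q$, and since $A$ is an antichain this forces $a' = a \in Q$. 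Hence $A \subseteq Q$ and $|A| \le |Q| < \kappa$.

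The hard work is entirely contained in Proposition 6.15; the main obstacle of establishing compatibility via amalgamation has already been carried out. The remaining step is just a clean elementarity-plus-pigeonhole argument, modulo the minor bookkeeping needed to ensure $A \in Q$ when $\kappa = \lambda$.
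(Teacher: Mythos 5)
Your argument is essentially the paper's: the same machinery ($D_Q$ dense by Lemma 6.3, the restriction $q\restriction Q$ from Lemma 6.5, amalgamation via Proposition 6.15, and a simple $Q \in \mathcal Y$ obtained from stationarity) carries the whole proof; you merely package it as ``the maximum condition is strongly $Q$-generic, hence any maximal antichain is contained in $Q$,'' whereas the paper picks $s \in A \setminus Q$ and derives a direct contradiction. Two small points of bookkeeping. First, the stationarity of simple models in $\mathcal Y$ is Assumption 2.23 (2.22 is the countable case), and the paper avoids your requirement $A \in Q$ altogether by choosing $Q \prec (H(\lambda),\in,\p,A)$, i.e.\ adding $A$ as a predicate; this matters because $A \notin H(\lambda)$ is not confined to the case $\kappa = \lambda$ --- a maximal antichain can a priori have size $\ge \lambda$ whenever $|\p| \ge \lambda$ (as in the paper's intended application, where $\lambda = \kappa^+$ and $|H(\lambda)| = \lambda$), so your main line ``pick simple $Q$ with $A \in Q$'' can fail even when $\kappa < \lambda$. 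Your $H(\lambda^+)$ detour does repair this in general (by the standard lifting of stationarity), but it should be invoked whenever $A \notin H(\lambda)$, not only in the $\kappa = \lambda$ case; the predicate trick is the cleaner route. Second, strong genericity as defined applies to dense subsets of $Q \cap \p$, while $A \cap Q$ is only predense there; pass to its downward closure $\{w \in Q \cap \p : \exists a \in A \cap Q,\ w \le a\}$, exactly as the paper does for $\q$ in Corollary 11.20. With these adjustments your proof is correct and buys nothing different from the paper's.
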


\begin{proof}
Let $A$ be an antichain of $\p$, and suppose for a contradiction 
that $A$ has size at least $\kappa$. 
Without loss of generality, assume that $A$ is maximal. 
By Assumption 2.23, there are stationarily many simple models 
in $\mathcal Y$, so we can fix a simple model $Q \in \mathcal Y$ 
such that $Q \prec (H(\lambda),\in,\p,A)$. 
As $A$ has size at least $\kappa$ and $|Q| < \kappa$, we can 
fix $s \in A \setminus Q$.

By Lemma 6.3, fix $q \le s$ such that $q \in D_Q$. 
Then $q \restriction Q$ is a condition in $Q \cap \p$. 
By the elementarity of $Q$ and the maximality of $A$, 
there is $t \in A \cap Q$ which is compatible with $q \restriction Q$. 
By elementarity, fix $w \in Q \cap \p$ such that $w \le q \restriction Q, t$. 

By Proposition 6.15, $w$ and $q$ are compatible, so fix  
$v \le w, q$. 
Then $v \le w \le t$, and $v \le q \le s$. 
Hence, $s$ and $t$ are compatible. 
But $s$ and $t$ are in $A$ and $A$ is an antichain. 
Therefore, $s = t$. 
This is impossible, since $t \in Q$ and $s \notin Q$.
\end{proof}

\bigskip

\addcontentsline{toc}{section}{7. Amalgamation over countable models}

\textbf{\S 7. Amalgamation over countable models}

\stepcounter{section}

\bigskip

In this section we will prove that the forcing poset $\p$ 
is strongly proper on a stationary set. 
We will show that for any simple model $N \in \mathcal X$, for 
any $p \in N \cap \p$, there is $q \le p$ which is 
strongly $N$-generic.

For each $p \in N \cap \p$, we will show that 
there is $q \le p$ such that $N \in A_q$. 
We will argue that $q$ is strongly $N$-generic as follows. 
We will define a set $D_N$ which satisfies, among other things, 
that for all $r \in D_N$, 
if $M \in A_r$ and $M < N$, then $M \cap N \in A_r$. 
The set $D_N$ will be dense below $q$. 
For each $r \in D_N$, we will define a condition 
$r \restriction N$ in $N$ satisfying that 
for all $w \in N \cap \p$, if 
$w \le r \restriction N$, then $w$ and $r$ are compatible.

The arguments given in this section are very similar to those 
in the previous section. 
However, since $N \cap \kappa$ is a countable set, rather than an ordinal 
as in the uncountable case, the arguments given in this section 
are more complicated.

\bigskip

The first thing we will prove is that any condition $p \in N$ can be extended 
to a condition containing $N$.

\begin{lemma}
Let $p \in \p$, $N \in \mathcal X$, and suppose that $p \in N$. 
Then there is $q \le p$ such that $N \in A_q$.
\end{lemma}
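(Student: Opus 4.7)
The plan is to build $q \le p$ with $N \in A_q$ by adjoining $N$ to $A_p$ and minimally expanding the $f$- and $g$-components so that every clause of Definition 4.2 continues to hold. The first key observation is that since $p \in N$, we have $A_p \in N$, so by Lemma 2.16 the set $A := A_p \cup \{N\}$ is adequate, and moreover $M < N$ for every $M \in A_p$. In particular no new $\sim$-pair is created, so $r^*(A) = r^*(A_p) \subseteq \dom(f_p)$, and requirement (7) of Definition 4.2 will come for free without any need to pre-extend $\dom(f_p)$.

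I set $q = (f, g, A)$ as follows. Take $\dom(f) := \dom(f_p) \cup \{N \cap \alpha : \alpha \in N \cap \dom(f_p) \cap S\}$. For $z \in \dom(f_p) \cap S \cap N$ define $f(z) := f_p(z) \cup \{N \cap z\}$; for the remaining $z \in \dom(f_p)$ keep $f(z) := f_p(z)$; and for $z = N \cap \alpha$ set $f(z) := f(\alpha) \cap Sk(N \cap \alpha)$, as required by Definition 4.2(3). The $g$-component retains $g_p$ on its old domain, sets $g(K, N \cap \alpha) := g_p(K, \alpha)$ for new pairs $(K, N \cap \alpha)$ with $K \in f(N \cap \alpha)$, and $g(N \cap \alpha, \alpha) := \emptyset$ for the single remaining new pair at each $\alpha$.

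The verification of requirements (1)--(7) splits into routine and subtle parts. Clauses (1) and (7) are handled by the first paragraph, and (4) is immediate from the construction together with the fact that $g_p(K, \alpha) \subseteq N \cap \alpha$ when $p \in N$. For (2), the new value $N \cap z$ lies in $Sk(z) \setminus S$ by Assumption 2.6 together with the countability of $N \cap z$; the $\in$-chain property of $f_p(z) \cup \{N \cap z\}$ follows because every $y \in f_p(z) \subseteq \dom(f_p) \cap N$ satisfies $y \subseteq z$ (Lemma 4.3(2)) and $y \notin S$, so Lemma 4.3(3) places $y \in Sk(N \cap z)$, putting $N \cap z$ at the top of the chain. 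Requirement (3) for $f(N \cap \alpha)$ is built into its definition; the one extra point is that for $K \in f_p(z)$ with $z \in \dom(f_p) \cap S \cap N$, one has $N \cap z \notin Sk(K)$, since $\sup(K) < \sup(N \cap z)$ by the chain structure and Lemma 4.4. Requirement (6) reduces to the single new instance $M = N$, $\alpha \in N \cap \dom(f_p) \cap S$, for which $N \cap \alpha \in f(\alpha)$ holds by construction.

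The main obstacle to watch is requirement (5) at the new pairs. The critical new inclusion is $g(K, \alpha) \subseteq g(K, N \cap \alpha)$, forced by the chain $K \in f(N \cap \alpha)$, $N \cap \alpha \in f(\alpha)$; since $(K, \alpha)$ is an old pair, $g(K, \alpha) = g_p(K, \alpha)$, and the definition $g(K, N \cap \alpha) := g_p(K, \alpha)$ is tuned precisely to make this an equality. The other relevant new inclusions $g(K, N \cap \alpha) \subseteq g(K, L)$ for $L \in f(N \cap \alpha) \subseteq f_p(\alpha)$ and $K \in f(L) = f_p(L)$ collapse to $g_p(K, \alpha) \subseteq g_p(K, L)$, which is valid in $p$. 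Finally, $q \le p$ follows by a direct inspection of (a)--(d) of Definition 4.2, since the construction only adds to $A_p$, $\dom(f_p)$, and $f_p(z)$ and preserves $g_p$ on its old domain.
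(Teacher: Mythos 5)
Your proposal is correct and is essentially the paper's own construction: adjoin $N$ to $A_p$, add the sets $N \cap \alpha$ for $\alpha \in \dom(f_p) \cap S$ on top of $f_p(\alpha)$, set $f(N \cap \alpha) = f_p(\alpha)$ (your $f(\alpha) \cap Sk(N \cap \alpha)$ is the same set once $f_p(\alpha) \subseteq Sk(N \cap \alpha)$ is checked), and copy $g_p(K,\alpha)$ to the new pairs, with the same key verifications ($N \cap \alpha \in Sk(\alpha)$ via Assumption 2.6, $f_p(\alpha) \subseteq Sk(N \cap \alpha)$, and $r^*(A_p \cup \{N\}) = r^*(A_p)$). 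The only cosmetic difference is that you get $f_p(\alpha) \subseteq Sk(N \cap \alpha)$ from Lemma 4.3(3) rather than the paper's initial-segment argument via Lemma 2.8(1), which is a perfectly valid shortcut.
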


\begin{proof}
Define $q$ as follows. 
Let $A_q := A_p \cup \{ N \}$. 
Define $f_q$ by letting 
$$
\dom(f_q) := \dom(f_p) \cup \{ N \cap \alpha : \alpha \in \dom(f_p) \cap S \}.
$$
For each $\alpha \in \dom(f_p) \cap S$, define 
$$
f_q(\alpha) := f_p(\alpha) \cup \{ N \cap \alpha \}.
$$
For each $M \in \dom(f_p) \setminus S$, define $f_q(M) := f_p(M)$. 
Finally, for each $\alpha \in \dom(f_p) \cap S$, define 
$$
f_q(N \cap \alpha) := f_p(\alpha).
$$

For $K \in f_p(x)$, define $g_q(K,x) := g_p(K,x)$. 
Let $\alpha \in \dom(f_p) \cap S$. 
For $K \in f_p(\alpha)$, let $g_q(K,N \cap \alpha) := g_p(K,\alpha)$ and 
$g_q(N \cap \alpha,\alpha) := \emptyset$.

\bigskip

It is easy to verify that if $q$ is a condition, then $q \le p$. 
Also, $N \in A_q$ by definition. 
It remains to prove that $q$ is a condition. 
We verify requirements (1)--(7) of Definition 4.2. 
For (1), $A_q$ is adequate by Lemma 2.16. 
(4), (5), and (6) are easy. 
It remains to prove (2), (3), and (7).

\bigskip

(2) Clearly $f_q$ is a function, and every member of $\dom(f_q)$ is 
of the required form. 
Let $x \in \dom(f_q)$, and we will show that $f_q(x)$ is a finite 
$\in$-chain and a subset of $Sk(x) \setminus S$. 
If $x \in \dom(f_p) \setminus S$, then $f_q(x) = f_p(x)$, 
so we are done since $p$ is a condition.

Suppose that $x = \alpha \in \dom(f_p) \cap S$. 
Then $f_q(\alpha) = f_p(\alpha) \cup \{ N \cap \alpha \}$. 
Since $p$ is a condition, $f_p(\alpha)$ is a finite $\in$-chain and a subset of 
$Sk(\alpha) \setminus S$. 
So it suffices to show that $f_p(\alpha) \subseteq Sk(N \cap \alpha)$ and 
$N \cap \alpha \in Sk(\alpha)$.

By Assumption 2.6, $N \cap \alpha \in Sk(\alpha)$. 
Let $K \in f_p(\alpha)$, and we will show that $K \in Sk(N \cap \alpha)$. 
By Definition 4.2(2), fix $K_1 \in A_p$ and 
$\beta \in (K_1 \cap \dom(f_p) \cap S) \cup \{ \kappa \}$ such that 
$K = K_1 \cap \beta$. 
Since $p \in N$, we have that $K_1$ and $\alpha$ are in $N$. 
Therefore, $K_1 \cap \alpha \in N$. 
By Lemma 2.8(1), $K_1 \cap \alpha \in Sk(N \cap \alpha)$. 

Note that $K = K_1 \cap \beta$ is an initial segment of $K_1 \cap \alpha$  
if $\beta \le \alpha$, and since $K \subseteq \alpha$, 
$K = K_1 \cap \alpha$ if $\alpha < \beta$. 
In either case, $K$ is an initial segment of $K_1 \cap \alpha$. 
Since $K_1 \cap \alpha$ is in $Sk(N \cap \alpha)$, by elementarity so 
is $K$.

Finally, assume that $x = N \cap \alpha$, 
where $\alpha \in \dom(f_p) \cap S$. 
Then $f_q(x) = f_p(\alpha)$. 
We just showed that $f_p(\alpha)$ is a subset of $Sk(N \cap \alpha)$, 
and since $p$ is a condition, it is a finite $\in$-chain disjoint from $S$.

\bigskip

(3) Let $x \in \dom(f_q)$. 
It is easy to check by cases that $f_q(x) \subseteq \dom(f_q)$. 
Let $K \in f_q(x)$, and we will show that $f_q(K) = f_q(x) \cap Sk(K)$.

First, assume that $x \in \dom(f_p) \setminus S$. 
Then $f_q(x) = f_p(x)$, 
so $K \in f_p(x) \subseteq \dom(f_p) \setminus S$. 
So by definition, $f_q(K) = f_p(K)$. 
Therefore, 
$$
f_q(K) = f_p(K) = f_p(x) \cap Sk(K) = f_q(x) \cap Sk(K).
$$

Secondly, assume that $x = \alpha \in \dom(f_p) \cap S$. 
Then $f_q(\alpha) = f_p(\alpha) \cup \{ N \cap \alpha \}$. 
If $K \in f_p(\alpha)$, then $K \in \dom(f_p) \setminus S$, so 
by definition, $f_q(K) = f_p(K)$. 
Thence,
$$
f_q(K) = f_p(K) = f_p(\alpha) \cap Sk(K) = f_q(\alpha) \cap Sk(K),
$$
where the last equality follows from the fact that 
$N \cap \alpha \notin Sk(K)$. 
If $K = N \cap \alpha$, 
then 
$$
f_q(K) = f_q(N \cap \alpha) = f_p(\alpha) = f_q(\alpha) \cap Sk(N \cap \alpha),
$$
where the last equality follows from the fact that 
$f_p(\alpha) \subseteq Sk(N \cap \alpha)$, as shown above. 

Thirdly, assume that $x = N \cap \alpha$. 
Then $f_q(N \cap \alpha) = f_p(\alpha)$. 
Hence, $K \in f_p(\alpha) \setminus S$. 
So by definition, $f_q(K) = f_p(K)$. 
Thus, 
$$
f_q(K) = f_p(K) = f_p(\alpha) \cap Sk(K) = f_q(N \cap \alpha) \cap Sk(K).
$$

\bigskip

(7) Note that $r^*(A_p \cup \{ N \}) = r^*(A_p)$. 
Namely, if $\gamma \in r^*(A_p \cup \{ N \})$, then 
$\gamma \in r^*(\{ K, M \})$ for some distinct $K$ and $M$ in 
$A_p \cup \{ N \}$ such that $K \sim M$. 
But for all $K \in A_p$, $K \not \sim N$. 
Hence, $K$ and $M$ are in $A_p$, and $\gamma \in r^*(A_p)$. 
It follows that 
$$
r^*(A_q) \cap S = r^*(A_p \cup \{ N \}) \cap S = 
r^*(A_p) \cap S \subseteq \dom(f_p) \subseteq \dom(f_q).
$$
\end{proof}

The next lemma will be used to show that the set $D_N$, which we will 
define shortly, is dense below 
any condition which contains $N$.

\begin{lemma}
Let $q \in \p$ and let $N \in A_q$. 
Then there is $s \le q$ such that for all $M \in A_s$, if 
$M < N$ then $M \cap N \in A_s$.
\end{lemma}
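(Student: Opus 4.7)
The plan is to mimic the proof of Lemma 6.1, replacing the uncountable model $Q$ with the countable model $N$ and replacing the closure operation $M \mapsto M \cap Q$ with $M \mapsto M \cap N$ (applied only when $M < N$). First I would apply Proposition 2.24 to the adequate set $A_q$ and its element $N$ to conclude that
$$
B := A_q \cup \{\, M \cap N : M \in A_q,\ M < N \,\}
$$
is adequate. I would then set $x_0 := r^*(B) \cap S$ and $x := x_0 \setminus \dom(f_q)$, and let $r := q + x$ in the sense of Definition 4.7. By Lemma 4.8, $r$ is a condition with $r \le q$, $A_r = A_q$, and $x_0 \subseteq \dom(f_r)$.

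Next, I would define $s$ by $f_s := f_r$, $g_s := g_r$, and
$$
A_s := A_r \cup \{\, M \cap N : M \in A_r,\ M < N \,\},
$$
which equals $B$. It is then trivial to check $s \le q$ (assuming $s$ is a condition), since $f_s = f_r$, $g_s = g_r$, and $A_q \subseteq A_s$. To verify $s$ is a condition I would go through requirements (1)--(7) of Definition 4.2. Requirement (1) is Proposition 2.24 applied above. Requirements (2)--(5) follow immediately from the fact that $r$ is a condition together with $f_s = f_r$, $g_s = g_r$, and $A_r \subseteq A_s$ (the only thing to observe for (2) is that any witness $M_1 \in A_r$ to the form of $x \in \dom(f_r)$ remains in $A_s$). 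Requirement (7) is the reason we enlarged by $x$ in the first place: we have
$$
r^*(A_s) \cap S = r^*(B) \cap S = x_0 \subseteq \dom(f_r) = \dom(f_s).
$$

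The main nontrivial point is verifying (6). Suppose $\alpha \in \dom(f_s) \cap S$, $M \in A_s$, and $\alpha \in M$; I must show $M \cap \alpha \in f_s(\alpha)$. If $M \in A_r$, this is immediate since $r$ is a condition and $f_s(\alpha) = f_r(\alpha)$. Otherwise, $M = M_1 \cap N$ for some $M_1 \in A_r$ with $M_1 < N$. Then $\alpha \in M_1 \cap N \cap \kappa$, so by Proposition 2.11, $\alpha < \beta_{M_1,N}$. Since $M_1 < N$, Lemma 2.15 gives $M_1 \cap \beta_{M_1,N} = M_1 \cap N \cap \beta_{M_1,N}$, and intersecting with $\alpha$ yields
$$
M \cap \alpha = M_1 \cap N \cap \alpha = M_1 \cap \alpha.
$$
Because $r$ is a condition, $M_1 \in A_r$, $\alpha \in M_1 \cap \dom(f_r) \cap S$, requirement (6) for $r$ gives $M_1 \cap \alpha \in f_r(\alpha) = f_s(\alpha)$, so $M \cap \alpha \in f_s(\alpha)$ as required. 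The only place this argument is nontrivial is in identifying $M \cap \alpha$ with $M_1 \cap \alpha$, which relies on the comparison-point bound from Proposition 2.11 combined with Lemma 2.15; everything else is bookkeeping in the style of Lemma 6.1.
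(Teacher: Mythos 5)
Your proof is correct and follows essentially the same route as the paper's: enlarge $\dom(f_q)$ by $r^*(B) \cap S$ via Definition 4.7, then enlarge the side condition to $B$ and verify Definition 4.2, with (6) being the only nontrivial point. The paper derives $M\cap\alpha = M_1\cap\alpha$ from $M_1\cap\beta_{M_1,N}\in N$ rather than by citing Lemma 2.15, but that is a cosmetic difference; just note explicitly (as the paper does) that the required closure property of $A_s$ itself is the ``moreover'' clause of Proposition 2.24.
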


Recall that if $M < N$ are in $\mathcal X$, then $\{ M, N \}$ is adequate, and 
therefore $M \cap N \in \mathcal X$ by Assumption 2.19.

\begin{proof}
By Proposition 2.24, the set 
$A_q \cup \{ M \cap N : M \in A_q, \ M < N \}$ is adequate. 
Define 
$$
x_0 := r^*(A_q \cup \{ M \cap N : M \in A_q, \ M < N \}) \cap S,
$$
and define 
$$
x := x_0 \setminus \dom(f_q).
$$
Let $r := q + x$. 
By Lemma 4.8, $r$ is a condition and $r \le q$. 
By Definition 4.7, $A_r = A_q$, 
and easily, $x_0 \subseteq \dom(f_r)$.

Define $s$ as follows. 
Let $f_s := f_r$, $g_s := g_r$, and 
$$
A_s := A_r \cup \{ M \cap N : M \in A_r, \ M < N \}.
$$
We claim that $s$ is as required.

By Proposition 2.24, for all $M \in A_s$, if $M < N$ then 
$M \cap N \in A_s$. 
It is trivial to check that if $s$ is a condition, then $s \le r$, and 
therefore $s \le q$.

It remains to prove that $s$ is a condition. 
We verify requirements (1)--(7) of Definition 4.2. 
(1) follows from Proposition 2.24. 
(2)--(5) follow immediately from $r$ being a condition, 
together with the fact that 
$f_s = f_r$, $g_s = g_r$, and $A_r \subseteq A_s$.

(6) Suppose that $\alpha \in \dom(f_s) \cap S$, 
$M \in A_s$, and $\alpha \in M$. 
We will show that $M \cap \alpha \in f_s(\alpha)$. 
Since $f_s = f_r$, we have that $\alpha \in \dom(f_r) \cap S$. 
So if $M \in A_r$, then $M \cap \alpha \in f_r(\alpha) = f_s(\alpha)$, since 
$r$ is a condition.

Assume that $M \in A_s \setminus A_r$, which means that 
$M = M_1 \cap N$ for some $M_1 \in A_r$ with $M_1 < N$. 
Then $\alpha \in M \cap \kappa = M_1 \cap N \cap \kappa$. 
By Proposition 2.11, it follows that $\alpha < \beta_{M_1,N}$. 
Since $M_1 < N$, $M_1 \cap \beta_{M_1,N} \in N$, so 
$M_1 \cap \alpha \in N$. 
In particular, $M_1 \cap \alpha \subseteq N$. 
So 
$$
M \cap \alpha = M_1 \cap N \cap \alpha = M_1 \cap \alpha.
$$
But $M_1 \in A_r$ and $\alpha \in M_1$. 
Since $r$ is a condition, 
$$
M \cap \alpha = M_1 \cap \alpha \in f_r(\alpha) = f_s(\alpha).
$$

(7) We need to show that $r^*(A_s) \cap S \subseteq \dom(f_s)$. 
Since $f_s = f_r$, it suffices to show that 
$r^*(A_s) \cap S \subseteq \dom(f_r)$. 
But by the definition of $A_s$ and since $A_r = A_q$, we have that 
$$
r^*(A_s) \cap S = r^*(A_q \cup \{ M \cap N : M \in A_q, \ M < N \}) \cap S = x_0,
$$
and as noted above, $x_0 \subseteq \dom(f_r)$. 
So $r^*(A_s) \cap S \subseteq \dom(f_r)$.
\end{proof}

\begin{definition}
For any $N \in \mathcal X$, let $D_N$ denote the set of conditions 
$r \in \p$ satisfying:
\begin{enumerate}
\item $N \in A_r$;
\item for all $M \in A_r$, if $M < N$ then $M \cap N \in A_r$;
\item whenever $K \in f_r(x)$ and $x \in f_r(y)$, then 
$$
g_r(K,x) \subseteq g_r(K,y).
$$
\end{enumerate}
\end{definition}

Note that by Definition 4.2(5), 
the conclusion of (3) is equivalent to 
$g_r(K,x) = g_r(K,y)$.

The next lemma says that $D_N$ is dense below any condition which 
contains $N$.

\begin{lemma}
Let $N \in \mathcal X$. 
Then for any condition $q \in \p$, if $N \in A_q$, then there is 
$s \le q$ such that $s \in D_N$.
\end{lemma}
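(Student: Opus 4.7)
The plan is to obtain $s$ by applying, in sequence, two lemmas already proved in the paper. Starting from $q$ with $N \in A_q$, I would first invoke Lemma 7.2 to obtain a condition $s_1 \le q$ with the property that for every $M \in A_{s_1}$ with $M < N$, we have $M \cap N \in A_{s_1}$. Since $A_q \subseteq A_{s_1}$, the hypothesis $N \in A_q$ carries over to $N \in A_{s_1}$, so properties (1) and (2) of Definition 7.3 are both secured at this stage.

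Next I would apply Lemma 4.9 to the condition $s_1$ to obtain a condition $s \le s_1$ satisfying the $g$-coherence condition: whenever $K \in f_s(x)$ and $x \in f_s(y)$, then $g_s(K,x) \subseteq g_s(K,y)$. This is exactly clause (3) of Definition 7.3. Crucially, Lemma 4.9 tells us that $f_s = f_{s_1}$ and $A_s = A_{s_1}$, so the passage from $s_1$ to $s$ modifies only the $g$-component.

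It remains to check that properties (1) and (2) persist from $s_1$ to $s$. For (1), $N \in A_{s_1} = A_s$. For (2), if $M \in A_s = A_{s_1}$ and $M < N$, then by the conclusion of Lemma 7.2 applied to $s_1$, $M \cap N \in A_{s_1} = A_s$. Finally, $s \le s_1 \le q$ by transitivity of $\le_\p$, so $s$ is as required.

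There is no real obstacle here, since the two lemmas fit together cleanly: Lemma 7.2 adjusts the $A$-component while extending conservatively, and Lemma 4.9 adjusts only the $g$-component while leaving $A$ and $f$ fixed, so the two improvements do not interfere.
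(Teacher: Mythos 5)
Your proof is correct and is essentially identical to the paper's: the paper also applies Lemma 7.2 first to secure clause (2) of Definition 7.3 and then Lemma 4.9 (which fixes $f$ and $A$) to secure clause (3), observing that $N \in A_q$ persists throughout.
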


\begin{proof}
Let $q \in \p$ be such that $N \in A_q$. 
By Lemma 7.2, there is $r \le q$ such that for all $M \in A_r$, if 
$M < N$ then $M \cap N \in A_r$. 
By Lemma 4.9, there is $s \le r$ such that 
$f_s = f_r$, $A_s = A_r$, 
and whenever $K \in f_s(x)$ and $x \in f_s(y)$, 
then $g_s(K,x) \subseteq g_s(K,y)$. 
Then $s \le q$ and $s \in D_N$.
\end{proof}

\begin{definition}
Suppose that $N \in \mathcal X$ is simple and $r \in D_N$. 
Define $r \restriction N$ as the triple $(f,g,A)$ satisfying:
\begin{enumerate}
\item $\dom(f) = \dom(f_r) \cap N$, and for all 
$x \in \dom(f)$, $f(x) := f_r(x) \cap N$;
\item $\dom(g) = \dom(g_r) \cap N$, and for all 
$(K,x) \in \dom(g)$, $g(K,x) := g_r(K,x)$;
\item $A := A_r \cap N$.
\end{enumerate}
\end{definition}

Observe that in (1), if $x \in \dom(f) \setminus S$, then $x \in N$ 
implies that $Sk(x) \subseteq N$. 
Therefore, $f_r(x) \subseteq N$. 
So in this case, $f(x) = f_r(x) \cap N$ is equal to $f_r(x)$.

Let us prove that $r \restriction N$ is in $N$. 
Obviously $A = A_r \cap N$ and $\dom(f) = \dom(f_r) \cap N$ are in $N$, 
and for all $x \in \dom(f)$, $f(x) = f_r(x) \cap N$ is in $N$. 
Consider $K \in f(x)$. 
Then $K$ and $x$ are in $N$. 
If $x \notin S$, then $x \subseteq N$. 
Therefore, $g(K,x) = g_r(K,x) \subseteq x \subseteq N$. 
So $g(K,x)$ is a finite subset of $N$, and hence is in $N$.

Finally, suppose that $x = \alpha \in S$. 
Then $\alpha \in N$, and as $N \in A_r$, $N \cap \alpha$ is in 
$f_r(\alpha)$ by Definition 4.2(6). 
Also, 
$$
g_r(K,\alpha) \subseteq g_r(K,N \cap \alpha) \subseteq N \cap \alpha
$$
by Definition 4.2(4,5). 
Hence, $g(K,\alpha) = g_r(K,\alpha)$ is a finite subset of $N$, 
and hence is in $N$.

We have proven that all of the components of $r \restriction N$ are 
in $N$. 
Therefore, $r \restriction N$ is in $N$.

\begin{lemma}
Suppose that $N \in \mathcal X$ is simple and $r \in D_N$. 
Then $r \restriction N$ is in $N \cap \p$ and $r \le r \restriction N$.
\end{lemma}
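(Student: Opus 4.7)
Plan. The lemma has two parts: $r \restriction N \in N \cap \p$ and $r \le r \restriction N$. The first is already verified in the remarks preceding the lemma, where each of $A_r \cap N$, $\dom(f_r) \cap N$, and the values $f_r(x) \cap N$ and $g_r(K,x)$ is shown to be a finite set of elements of $N$ and hence a member of $N$. Once $r \restriction N$ is known to be a condition, the inequality $r \le r \restriction N$ will be immediate from the definition of the ordering in Definition 4.2: every component of $r \restriction N$ arises from $r$ by intersection with $N$, and clause (d) uses that $f_{r \restriction N}(x) \subseteq f_r(x)$.

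The main work is verifying clauses (1)--(7) of Definition 4.2 for $r \restriction N$. Clause (1) is immediate, as a subset of the adequate set $A_r$ is adequate. Clause (3) will follow by intersecting the identity $f_r(K) = f_r(x) \cap Sk(K)$ with $N$, using that $Sk(K) \subseteq N$ when $K \in N$. Clauses (4) and (5) transfer essentially verbatim from $r$, since $g_{r \restriction N}(K,x) = g_r(K,x)$. For (6), if $\alpha \in \dom(f_r) \cap N \cap S$ and $M \in A_r \cap N$ with $\alpha \in M$, then $M \cap \alpha \in f_r(\alpha)$ since $r$ is a condition and $M \cap \alpha \in N$ by elementarity, so $M \cap \alpha \in f_r(\alpha) \cap N$. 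For (7), any $\gamma \in r^*(A_r \cap N) \cap S$ is definable from two models in $A_r \cap N \subseteq N$ and so lies in $N$ by elementarity; together with $\gamma \in r^*(A_r) \cap S \subseteq \dom(f_r)$, this places $\gamma$ in $\dom(f_r) \cap N$.

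The heart of the proof is clause (2). Fix $x \in \dom(f_r) \cap N$ with $x \notin S$, and write $x = M_1 \cap \alpha_1$ with $M_1 \in A_r$ and $\alpha_1 \in (M_1 \cap \dom(f_r) \cap S) \cup \{\kappa\}$, using that $r$ is a condition. The plan is a case analysis on the adequate-set relationship between $M_1$ and $N$. If $M_1 \in N$, take $M := M_1$; the witnessing $\alpha$ will be $\alpha_1$ when $\alpha_1 \in N$, and otherwise $\kappa$ after showing $x = M_1 \cap \kappa$. If $M_1 \notin N$ and $M_1 < N$, I will invoke the defining property of $D_N$ together with the simplicity of $N$ to conclude $M_1 \cap N \in A_r \cap N$, set $M := M_1 \cap N$, and verify $x = (M_1 \cap N) \cap \alpha_1$ or $x = (M_1 \cap N) \cap \kappa$ by an analogous subcase analysis (following the template of Lemma 6.5, but replacing the cut $\alpha_1 < Q \cap \kappa$ by the condition $\alpha_1 \in N$). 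The remaining cases $M_1 \sim N$ and $N < M_1$, both forcing $M_1 \notin N$ by Lemma 2.16, are to be excluded by showing that the hypothesis $x \in N$ is impossible: in each, Lemma 2.15 and Proposition 2.11 force $x \subseteq \beta_{M_1,N}$ (respectively $\beta_{N,M_1}$) and ultimately $x = N \cap \gamma$ for some uncountable $\gamma \in N$, contradicting the elementarity-based fact that $N \cap \gamma \notin N$ whenever $\gamma \in N$ is uncountable (for if $y := N \cap \gamma \in N$, then $N \models \exists \xi \in \gamma (\xi \notin y)$, giving $\xi \in N \cap \gamma = y$, a contradiction).

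The main obstacle is clause (2), and specifically the exclusion of the cases $M_1 \sim N$ and $N < M_1$. This step is the principal respect in which the countable argument is harder than the uncountable argument of Lemma 6.5: since $N \cap \kappa$ is not an ordinal, the simple split on $\alpha_1 < Q \cap \kappa$ available in the uncountable case must be replaced by a careful analysis of the comparison points $\beta_{M_1,N}$ combined with the non-reflection fact $N \cap \gamma \notin N$.
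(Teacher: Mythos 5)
Your proposal is correct and, for the most part, follows the same route as the paper: the membership of $r \restriction N$ in $N$ is taken from the remarks preceding the lemma, the ordering $r \le r \restriction N$ is read off from the definition, requirements (1), (3)--(7) are routine, and the heart of the matter is requirement (2), where you use $r \in D_N$ together with the simplicity of $N$ to replace the witness $M_1$ by $M_1 \cap N \in A_r \cap N$ and then split on whether $\alpha_1 \in N$. The one genuine divergence is how you establish that only the case $M_1 < N$ can occur. You treat the exclusion of $M_1 \sim N$ and $N < M_1$ as the main obstacle and propose a comparison-point analysis (Lemma 2.15, Proposition 2.11) culminating in $x = N \cap \gamma$ for an uncountable $\gamma \in N$, contradicting $N \cap \gamma \notin N$; this can indeed be pushed through (in some subcases one must take $\gamma = \min\bigl(((N \cap \kappa) \cup \{\kappa\}) \setminus \delta\bigr)$ for $\delta \in \{\alpha_1, \beta_{M_1,N}\}$, since $\delta$ itself need not lie in $N$), but it is much heavier than necessary. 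The paper's proof disposes of this in one line: since $\alpha_1$ is uncountable, $M_1 \cap \omega_1 = x \cap \omega_1 \in N$, so $M_1 \cap \omega_1 < N \cap \omega_1$, and as $\{M_1, N\} \subseteq A_r$ is adequate, Lemma 2.17(1) gives $M_1 < N$ outright; so the cases you labor to exclude never arise, and at this point the countable setting is no harder than the uncountable one of Lemma 6.5. Two small remarks: in requirement (7) you do not need (and the paper does not permit itself) any definability of $\beta_{K,M}$ -- simply note that $\gamma \in M \cap \kappa \subseteq N$ because $M \in A_r \cap N$ is countable; and in the ordering check it is clause (b), not (d), that uses $f_{r \restriction N}(x) \subseteq f_r(x)$, while (d) only needs that $K \in \dom(f_{r \restriction N})$ implies $K \in N$.
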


\begin{proof}
Let $r \restriction N = (f,g,A)$. 
We have already observed that $r \restriction N \in N$.
It is trivial to check that 
if $r \restriction N$ is a condition, then $r \le r \restriction N$. 
So it suffices to show that $r \restriction N$ is a condition. 
We verify requirements (1)--(7) of Definition 4.2. 
(1), (4), (5), (6), and (7) are easy to check. 
It remains to prove (2) and (3).

(2) Obviously $f$ is a function with a finite domain. 
Let $x \in \dom(f) = \dom(f_r) \cap N$. 
Then $x \in N$. 
We will show that either $x \in S$, or there is $M \in A$ and 
$\alpha \in (M \cap \dom(f) \cap S) \cup \{ \kappa \}$ 
such that $x = M \cap \alpha$, and moreover, 
$f(x)$ is a finite $\in$-chain and $f(x) \subseteq Sk(x) \setminus S$.

We begin by showing that $f(x)$ is 
a finite $\in$-chain and $f(x) \subseteq Sk(x) \setminus S$.
Since $r$ is a condition, $f_r(x)$ is a finite $\in$-chain and 
$f_r(x) \subseteq Sk(x) \setminus S$. 
But $f(x) = f_r(x) \cap N \subseteq f_r(x)$. 
Therefore, $f(x)$ is a finite $\in$-chain 
and $f(x) \subseteq Sk(x) \setminus S$.

Now we show that 
either $x \in S$, or there is $M \in A$ and 
$\alpha \in (M \cap \dom(f) \cap S) \cup \{ \kappa \}$ 
such that $x = M \cap \alpha$. 
Since $x \in \dom(f_r)$ and $r$ is a condition, 
we have that either $x \in S$, 
or there is $M_1 \in A_r$ and 
$\alpha \in (M_1 \cap \dom(f_r) \cap S) \cup \{ \kappa \}$ 
such that $x = M_1 \cap \alpha$. 
In the first case, we are done, so assume the second case.

Since $x = M_1 \cap \alpha \in N$, clearly 
$M_1 \cap \omega_1 \in N$. 
By Lemma 2.17(1), it follows that $M_1 < N$. 
Therefore, as $r \in D_N$, we have that $M_1 \cap N \in A_r$. 
But $N$ is simple, so also $M_1 \cap N \in N$. 
So $M_1 \cap N \in A_r \cap N = A$. 
Hence, to complete the proof, it suffices to show that 
$x = M_1 \cap N \cap \beta$, for some 
$\beta \in (M_1 \cap N \cap \dom(f) \cap S) \cup \{ \kappa \}$.

If $\alpha = \kappa$, then $x = M_1 \cap \kappa \in N$, so 
$M_1 \cap \kappa \subseteq N$. 
Hence 
$$
x = M_1 \cap \kappa = M_1 \cap N \cap \kappa,
$$
and we are done.

Suppose that $\alpha < \kappa$. 
First, assume that $\alpha \in N$. 
Then $\alpha \in \dom(f_r) \cap S \cap N = \dom(f) \cap S$. 
Also, $\alpha \in M_1 \cap N \cap \kappa$, which implies that 
$\alpha < \beta_{M_1,N}$ by Proposition 2.11. 
Since $M_1 < N$, it follows that $M_1 \cap \alpha \subseteq N$. 
Hence, 
$$
x = M_1 \cap \alpha = M_1 \cap N \cap \alpha.
$$
As $\alpha \in M_1 \cap N \cap \dom(f) \cap S$, we are done.

Secondly, assume that $\alpha \notin N$. 
We claim that 
$$
M_1 \cap \alpha = M_1 \cap N \cap \kappa,
$$
which will finish the proof. 
Since $x = M_1 \cap \alpha$ is in $N$, the forward inclusion is immediate. 
For the reverse inclusion, 
let $\gamma \in M_1 \cap N \cap \kappa$, and we 
will show that $\gamma \in M_1 \cap \alpha$. 
Then $\gamma < \beta_{M_1,N}$ by Proposition 2.11, 
and therefore, since $M_1 < N$, we have that 
$$
M_1 \cap \gamma \subseteq M_1 \cap \beta_{M_1,N} \subseteq N.
$$
Since $\alpha \notin N$, it follows that $\alpha \notin M_1 \cap \gamma$. 
But $\alpha \in M_1$. 
Therefore, $\gamma \le \alpha$. 
Also, $\gamma \in N$ and $\alpha \notin N$ implies that 
$\gamma \ne \alpha$, so $\gamma < \alpha$. 
Thus, $\gamma \in M_1 \cap \alpha$, completing the proof.

(3) Let $x \in \dom(f) = \dom(f_r) \cap N$. 
Since $r$ is a condition, we have that 
$$
f(x) = f_r(x) \cap N \subseteq \dom(f_r) \cap N = \dom(f).
$$
Thus, $f(x) \subseteq \dom(f)$.

Let $K \in f(x) = f_r(x) \cap N$, and we will show that 
$f(K) = f(x) \cap Sk(K)$. 
Since $K \in f_r(x)$, $K \notin S$, 
so $Sk(K) \subseteq N$. 
Therefore, $f_r(K) \subseteq Sk(K) \subseteq N$. 
So 
$$
f(K) = f_r(K) \cap N = f_r(K).
$$
As $r$ is a condition, we have that 
$f(K) = f_r(K) = f_r(x) \cap Sk(K)$. 
But $Sk(K) \subseteq N$ implies that 
$$
f_r(x) \cap Sk(K) = f_r(x) \cap Sk(K) \cap N = f(x) \cap Sk(K).
$$
Thus, $f(K) = f(x) \cap Sk(K)$.
\end{proof}

The next lemma will not be used until Section 8.

\begin{lemma}
Suppose that $N \in \mathcal X$ is simple and $r \in D_N$. 
Assume that $p \in N \cap \p$ and $r \le p$. 
Then $r \restriction N \le p$.
\end{lemma}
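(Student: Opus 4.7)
The plan is to prove Lemma 7.7 by a direct verification of conditions (a)--(d) of Definition 4.2, in close analogy with Lemma 6.6(1) for the uncountable case. Since $r \restriction N$ is obtained by restricting the $f$, $g$, and $A$ components of $r$ to $N$, and $p$ lives inside $N$, the key idea is that each component of $p$ is automatically contained in $N$ and thus survives the restriction to $N$ intact.

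For (a), observe that $r \le p$ gives $A_p \subseteq A_r$, and $p \in N$ gives $A_p \subseteq N$; hence $A_p \subseteq A_r \cap N = A_{r \restriction N}$. For (b), the analogous argument gives $\dom(f_p) \subseteq \dom(f_r) \cap N = \dom(f_{r \restriction N})$; and for any $x \in \dom(f_p)$, we have $f_p(x) \subseteq f_r(x)$ because $r \le p$, while $f_p(x) \subseteq N$ because $p \in N$ implies that the finite set $f_p(x)$ is a member of $N$ and therefore a subset of $N$. Combining, $f_p(x) \subseteq f_r(x) \cap N = f_{r \restriction N}(x)$. For (c), any pair $(K,x) \in \dom(g_p)$ satisfies $(K,x) \in N$, and $g_p(K,x) \subseteq g_r(K,x) = g_{r \restriction N}(K,x)$ by the definition of $r \restriction N$ and the fact that $r \le p$.

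The only clause requiring a brief additional observation is (d). Suppose $K$ and $x$ are in $\dom(f_p)$ and $K \in f_{r \restriction N}(x)$. By Definition 7.4(1), $f_{r \restriction N}(x) = f_r(x) \cap N$, so in particular $K \in f_r(x)$. Since $K$ and $x$ are in $\dom(f_p)$ and $r \le p$, condition (d) applied to $r \le p$ yields $K \in f_p(x)$, as required.

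I do not foresee a real obstacle here: every step is routine bookkeeping using that $p \in N$ forces every component of $p$ to be a subset of $N$, so nothing about $p$ is lost when $r$ is cut down to $r \restriction N$. The lemma is, in effect, the countable analogue of Lemma 6.6(1), and the proof structure is identical.
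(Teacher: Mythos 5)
Your proof is correct and follows essentially the same route as the paper: a direct verification of (a)--(d), using that $p \in N$ makes every component of $p$ a subset of $N$, and that clause (d) for $r \le p$ transfers down because $f_{r \restriction N}(x) = f_r(x) \cap N \subseteq f_r(x)$. (Only note that the restriction $r \restriction N$ is given in Definition 7.5, not 7.4.)
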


\begin{proof}
We verify properties (a)--(d) of Definition 4.2. 
(a) Since $p \in N$, $A_p \subseteq N$. 
As $r \le p$, we have that 
$$
A_p \subseteq A_r \cap N = A_{r \restriction N}.
$$
(b) Since $p \in N$, $\dom(f_p) \subseteq N$ and for all $x \in \dom(f_p)$, 
$f_p(x) \subseteq N$. 
As $r \le p$, we have that 
$$
\dom(f_p) \subseteq \dom(f_r) \cap N = \dom(f_{r \restriction N}).
$$
Let $x \in \dom(f_p)$. 
Then 
$$
f_p(x) \subseteq f_r(x) \cap N = f_{r \restriction N}(x).
$$
(c) Let $(K,x) \in \dom(g_p)$. 
Then since $r \le p$, 
$$
g_p(K,x) \subseteq g_r(K,x) = g_{r \restriction N}(K,x).
$$
(d) Assume that $K$ and $x$ are in $\dom(f_p)$ 
and $K \in f_{r \restriction N}(x)$. 
We claim that $K \in f_p(x)$. 
But 
$$
K \in f_{r \restriction N}(x) = f_r(x) \cap N \subseteq f_r(x).
$$
So $K \in f_r(x)$. 
Since $r \le p$, it follows that $K \in f_p(x)$.
\end{proof}

We will now begin analyzing the situation where 
$r \in D_N$ and $w \le r \restriction N$ is in $N \cap \p$.

\begin{lemma}
Let $N \in \mathcal X$ be simple and $r \in D_N$. 
Suppose that $w \in N \cap \p$ and $w \le r \restriction N$. 
Then:
\begin{enumerate}
\item $A_r \cap N \subseteq A_w$;
\item $\dom(f_r) \cap N \subseteq \dom(f_w)$, and for all 
$x \in \dom(f_r) \cap N$, $f_r(x) \cap N \subseteq f_w(x)$;
\item $\dom(g_r) \cap N \subseteq \dom(g_w)$, and for all 
$(K,x) \in \dom(g_r) \cap N$, $g_r(K,x) \subseteq g_w(K,x)$.
\end{enumerate}
\end{lemma}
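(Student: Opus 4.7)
The plan is to derive all three conclusions by directly unpacking Definition 7.4 of $r \restriction N$ and applying the ordering clauses (a)--(d) of Definition 4.2 to the assumption $w \le r \restriction N$. This is the countable analogue of Lemma 6.7, whose proof was similarly immediate, so I expect no real obstacle, only a bit of bookkeeping about where the intersection with $N$ enters.

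First, for (1), I would observe that by Definition 7.4(3), $A_{r \restriction N} = A_r \cap N$. Clause (a) of $w \le r \restriction N$ then gives $A_r \cap N = A_{r \restriction N} \subseteq A_w$, which is exactly (1).

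Next, for (2), Definition 7.4(1) says $\dom(f_{r \restriction N}) = \dom(f_r) \cap N$ and $f_{r \restriction N}(x) = f_r(x) \cap N$ for each $x$ in this domain. Clause (b) of $w \le r \restriction N$ then yields $\dom(f_r) \cap N = \dom(f_{r \restriction N}) \subseteq \dom(f_w)$, and for each $x \in \dom(f_r) \cap N$, $f_r(x) \cap N = f_{r \restriction N}(x) \subseteq f_w(x)$.

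Finally, for (3), by Definition 7.4(2), $\dom(g_{r \restriction N}) = \dom(g_r) \cap N$ and $g_{r \restriction N}(K,x) = g_r(K,x)$ for each pair in this domain. Fix $(K,x) \in \dom(g_r) \cap N$; then $K \in f_r(x) \cap N = f_{r \restriction N}(x)$, so by (2) just proved, $K \in f_w(x)$, whence $(K,x) \in \dom(g_w)$. Clause (c) of $w \le r \restriction N$ now gives $g_r(K,x) = g_{r \restriction N}(K,x) \subseteq g_w(K,x)$, completing (3). The only point requiring any care is verifying in (3) that the pair $(K,x)$ lies in $\dom(g_w)$ so that clause (c) applies, and this is supplied by part (2).
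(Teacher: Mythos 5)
Your proof is correct and follows exactly the route the paper takes: the paper simply declares the lemma immediate from the definition of $r \restriction N$ together with the ordering clauses applied to $w \le r \restriction N$, and your write-up is just that argument spelled out, including the (correct) check that $(K,x) \in \dom(g_w)$ via part (2) so that the ordering clause for the $g$-components applies. No gaps; only note that the restriction $r \restriction N$ is defined in Definition 7.5 of the paper rather than 7.4.
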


\begin{proof}
Immediate from the definition of $r \restriction N$ and the fact that 
$w \le r \restriction N$.
\end{proof}

As we discussed at the beginning of the section, we are going to show 
that whenever $w \le r \restriction N$, where $r \in D_N$ and 
$w \in N \cap \p$, then $w$ and $r$ are compatible. 
As in the previous section, we will construct a specific lower bound 
$w \oplus_N r$ of $w$ and $r$. 
We will describe separately the $f$, $g$, and $A$ components 
of $w \oplus_N r$.
The $A$-component of $w \oplus_N r$ will be defined as 
$A_w \cup A_r$. 

We handle the $f$-component next.  
Unfortunately, the definition of $f_w \oplus_N f_r$ is much more 
complicated than in the previous section. 
The domain of $f_w \oplus_N f_r$ will include not only 
$\dom(f_w) \cup \dom(f_r)$, but also some additional sets.

Before defining $f_w \oplus_N f_r$, 
we prove two lemmas which will help us handle its domain.

\begin{lemma}
Let $N \in \mathcal X$ be simple and $r \in D_N$. 
Suppose that $w \in N \cap \p$ and $w \le r \restriction N$. 
Then the set 
$$
\{ M \cap \alpha : M \in A_r, \ N \le M, \ 
\alpha \in (M \cap \dom(f_w) \cap S) \setminus \dom(f_r) \}
$$
is disjoint from $N$, $\dom(f_w)$, and $\dom(f_r)$. 
\end{lemma}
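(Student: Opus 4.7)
The plan is to verify separately that $x := M \cap \alpha$ fails to belong to each of $\dom(f_r)$, $\dom(f_w)$, and $N$. For $x \notin \dom(f_r)$, since $\alpha \in S \setminus \dom(f_r)$, $M \in A_r$, and $\alpha \in M$, this is immediate from Lemma 4.5 applied to $r$. For $x \notin \dom(f_w)$, note that because $w \in N \cap \p$, $f_w \in N$ and $\dom(f_w)$ is a finite set belonging to $N$, hence a subset of $N$; so it suffices to show $x \notin N$.

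To prove $M \cap \alpha \notin N$, I split according to the hypothesis $N \le M$, using throughout that $\alpha \in S \subseteq \Lambda \subseteq \cof(>\omega)$, so $\alpha$ is a limit ordinal of uncountable cofinality and any countable subset of $\alpha$ has supremum strictly less than $\alpha$. If $N \sim M$, then Proposition 2.11 gives $\alpha < \beta_{N,M}$ (since $\alpha \in M \cap N \cap \kappa$), so intersecting the identity $N \cap \beta_{N,M} = M \cap \beta_{N,M}$ with $\alpha$ yields $M \cap \alpha = N \cap \alpha$ (trivially so if $N = M$). Supposing $N \cap \alpha \in N$, elementarity gives $\eta := \sup(N \cap \alpha) + 1 \in N$; then $\eta < \alpha$ (as $\alpha$ is a limit), so $\eta \in N \cap \alpha$, contradicting $\eta > \sup(N \cap \alpha)$.

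For $N < M$, suppose for contradiction that $M \cap \alpha \in N$. Since $M \cap \alpha$ is countable and lies in $N$, elementarity produces an enumeration $\omega \to M \cap \alpha$ in $N$, so $M \cap \alpha \subseteq N$, and $\eta := \sup(M \cap \alpha) + 1 \in N$ with $\eta < \alpha$ as before. Proposition 2.11 again yields $\alpha < \beta_{N,M}$, so $\eta \in N \cap \beta_{N,M}$. Now $N < M$ means $N \cap \beta_{N,M} \in M$, and this set being countable gives $N \cap \beta_{N,M} \subseteq M$, hence $\eta \in M$, so $\eta \in M \cap \alpha$. This contradicts $\eta > \sup(M \cap \alpha)$.

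The crux is the case $N < M$: the supremum argument internal to $N$ does not close on its own, because $M \cap \alpha$ need not equal $N \cap \alpha$. One must transfer the successor $\eta$ from $N$ into $M$ via the containment $N \cap \beta_{N,M} \subseteq M$ supplied by $N < M$, which is applicable precisely because Proposition 2.11 places $\alpha$ strictly below $\beta_{N,M}$.
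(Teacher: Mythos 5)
Your proof is correct, but it differs from the paper's in one of its two nontrivial parts. For disjointness from $\dom(f_r)$ you cite Lemma 4.5, which is exactly the right tool: its hypotheses ($\alpha \in S \setminus \dom(f_r)$, $M \in A_r$, $\alpha \in M$) are satisfied verbatim, and its proof (Lemma 3.2 plus Definition 4.2(7) via $r^*(A_r)$) is precisely the argument the paper chooses to repeat inline at this point, so here you are in substance identical and in fact slightly more economical. Disjointness from $\dom(f_w)$ via $\dom(f_w) \subseteq N$ is the same as the paper. Where you genuinely diverge is disjointness from $N$: you split $N \le M$ into $N \sim M$ and $N < M$ and run successor-of-supremum arguments, using Proposition 2.11 to get $\alpha < \beta_{M,N}$ (legitimate, since $\alpha \in \dom(f_w) \subseteq N$ and $\alpha \in M$) and, in the $N < M$ case, transferring $\eta = \sup(M \cap \alpha)+1 \in N$ into $M$ through $N \cap \beta_{M,N} \in M$ and countability; this is sound, with the needed facts ($\cf(\alpha) > \omega$ so $\sup$ of a countable subset is below $\alpha$, countable members of elementary submodels are subsets) all available. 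The paper instead disposes of this part in one line at the level of $\omega_1$: $N \le M$ gives $N \cap \omega_1 \le M \cap \omega_1 = (M \cap \alpha) \cap \omega_1$ (Lemma 2.17), whereas $M \cap \alpha \in N$ would make $(M \cap \alpha) \cap \omega_1$ a countable ordinal of $N$, hence strictly below $N \cap \omega_1$. Your route costs a case split and more bookkeeping but uses only Proposition 2.11 and the raw definitions of $<$ and $\sim$; the paper's buys brevity by exploiting the $\omega_1$-trace characterization of the ordering on adequate pairs.
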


\begin{proof}
It is clear that any member $M \cap \alpha$ of the displayed set is 
not in $N$, since $N \le M$ implies that 
$N \cap \omega_1 \le M \cap \omega_1 = M \cap \alpha \cap \omega_1$. 
Since $w \in N$, and hence $\dom(f_w) \subseteq N$, 
it follows that the displayed set is disjoint from $\dom(f_w)$.

Suppose for a contradiction that for some 
$M \in A_r$ with $N \le M$ and some 
$\alpha \in (M \cap \dom(f_w) \cap S) \setminus \dom(f_r)$, 
$M \cap \alpha$ is in $\dom(f_r)$. 
By Definition 4.2(2), fix $M_1 \in A_r$ and 
$\beta \in (M_1 \cap \dom(f_r) \cap S) \cup \{ \kappa \}$ 
such that $M \cap \alpha = M_1 \cap \beta$. 
Since $\alpha \in \kappa \setminus \dom(f_r)$ and 
$\beta \in \dom(f_r) \cup \{ \kappa \}$, $\alpha \ne \beta$. 

Applying Lemma 3.2 to $M$, $M_1$, $\alpha$, and $\beta$, 
we get that $M \sim M_1$ and 
$\alpha = \min((M \cap \kappa) \setminus \beta_{M,M_1})$. 
Since $M$ and $M_1$ are in $A_r$ and $\alpha \in S$, we have that 
$\alpha \in r^*(A_r) \cap S$. 
By Definition 4.2(7), it follows that $\alpha \in \dom(f_r)$. 
But this contradicts the choice of $\alpha$.
\end{proof}

\begin{lemma}
Suppose that $r \in \p$, $N \in A_r$, and $x \in \dom(f_r)$. 
Then there is at most one ordinal $\alpha$ such that 
$\alpha \in \dom(f_r) \cap S \cap N$ and 
$N \cap \alpha \in f_r(x) \cup \{ x \}$. 
\end{lemma}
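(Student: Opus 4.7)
My plan is to argue by contradiction. Suppose there are distinct $\alpha_1, \alpha_2 \in \dom(f_r) \cap S \cap N$ with $N \cap \alpha_i \in f_r(x) \cup \{ x \}$ for $i = 1, 2$, and take without loss of generality $\alpha_1 < \alpha_2$. First I note that $N \cap \alpha_1 \ne N \cap \alpha_2$, since $\alpha_1 \in N \cap \alpha_2$ but $\alpha_1 \notin N \cap \alpha_1$. Splitting into cases by whether $x$ equals $N \cap \alpha_1$, $N \cap \alpha_2$, or neither, I would derive $N \cap \alpha_1 \in Sk(N \cap \alpha_2)$ in every case except one: the case $N \cap \alpha_1 = x$ with $N \cap \alpha_2 \in f_r(x)$ is dispatched directly, since then $N \cap \alpha_2 \in Sk(N \cap \alpha_1)$ and Lemma 2.7(2) forces $\sup(N \cap \alpha_2) \in N \cap \alpha_1$, contradicting $\sup(N \cap \alpha_2) \ge \alpha_1$. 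The remaining cases invoke Lemma 4.4 (via Lemma 4.3(1)) on the $\in$-chain $f_r(x)$, together with the sup estimate $\sup(N \cap \alpha_1) < \alpha_1 \le \sup(N \cap \alpha_2)$.

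Granted $N \cap \alpha_1 \in Sk(N \cap \alpha_2)$, Lemma 2.7(1) gives $Sk(N \cap \alpha_2) = N \cap Sk(\alpha_2)$, and hence $N \cap \alpha_1 \in N$. The crucial move is now a self-referential application of Lemma 2.8(1), with both models taken to be $N$ and the ordinal $\alpha = \alpha_1 \in \Lambda$: this yields $N \cap \alpha_1 \in Sk(N \cap \alpha_1)$. The key point is that $N \cap \alpha_1$ now figures as an actual element of its own Skolem hull, not merely as the set of generators.

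To finish, I would set $\sigma := \sup(N \cap \alpha_1)$ and use the definability of $\sup$ from the parameter $N \cap \alpha_1 \in Sk(N \cap \alpha_1)$ together with Lemma 2.7(2) to place $\sigma \in Sk(N \cap \alpha_1) \cap \kappa = N \cap \alpha_1$, so that $\sigma = \max(N \cap \alpha_1)$. Since $\Lambda$ is a definable predicate of $\mathcal{A}$ and is unbounded in the regular cardinal $\kappa$ (as $\Lambda = C^* \cap \cof(> \omega)$ with $C^*$ a club in $\kappa$), the ordinal $\delta := \min(\Lambda \setminus (\sigma + 1))$ exists in $\mathcal{A}$ and is definable from $\sigma$, so $\delta \in Sk(N \cap \alpha_1) \cap \kappa = N \cap \alpha_1$. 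But $\delta > \sigma = \max(N \cap \alpha_1)$ precludes $\delta \in N \cap \alpha_1$---contradiction. The main obstacle is recognizing the self-referential instance of Lemma 2.8(1) with $M = N$; once $N \cap \alpha_1 \in Sk(N \cap \alpha_1)$ is in hand, the rest is a routine definability argument exploiting the unboundedness of $\Lambda$ in $\kappa$.
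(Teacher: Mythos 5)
Your proof is correct, but it takes a genuinely different route from the paper's, which is only a few lines long. The paper also begins by noting that $N \cap \alpha_1$ and $N \cap \alpha_2$ are membership comparable (both lie in the $\in$-chain $f_r(x) \cup \{x\}$), but it then pivots on the $\omega_1$-trace rather than on suprema: since $\alpha_1, \alpha_2 \geq \omega_1$, both sets have the same intersection with $\omega_1$, and membership-comparable sets with the same $\omega_1$-trace must be equal (strict membership would place the ordinal $N \cap \omega_1$ in $Sk(N\cap\alpha_2) \cap \omega_1 = N \cap \omega_1$); equality then gives the immediate contradiction $\alpha_1 \in N \cap \alpha_2 = N \cap \alpha_1$. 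You instead show the two sets are distinct, force the strict membership $N \cap \alpha_1 \in Sk(N \cap \alpha_2)$ from the sup estimate, conclude $N \cap \alpha_1 \in N$ via Lemma 2.7(1), and refute that with the self-referential instance of Lemma 2.8(1) together with the definability of $\sup$ and of $\min(\Lambda \setminus (\sigma+1))$. All of these applications are legitimate (Lemma 2.8(1) does not require the two models to be distinct, and $\alpha_1, \alpha_2 \in S$ gives $Sk(\alpha_i) \cap \kappa = \alpha_i$), but your ending uses more machinery than needed: once $\sigma = \max(N\cap\alpha_1)$ is in hand, the ordinal $\sigma + 1$ already gives the contradiction, with no appeal to $\Lambda$. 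One small imprecision: in the case $x = N \cap \alpha_2$, the membership $N \cap \alpha_1 \in Sk(x)$ comes directly from Definition 4.2(2) (or from Lemma 2.32 applied to the chain $f_r(x) \cup \{x\}$) rather than from Lemma 4.4, which as stated only compares two members of $f_r(x)$. In sum, the paper's $\omega_1$-trace comparison buys brevity, collapsing your case analysis and the self-referential hull argument into one step, while your version trades that for an explicit sup computation and a direct refutation of $N \cap \alpha_1 \in N$.
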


\begin{proof}
Suppose for a contradiction that $\alpha < \beta$ are in 
$\dom(f_r) \cap S \cap N$, and $N \cap \alpha$ and 
$N \cap \beta$ are both in $f_r(x) \cup \{ x \}$. 
Then $N \cap \alpha$ and $N \cap \beta$ are membership comparable. 
Since $N \cap \alpha$ and $N \cap \beta$ 
have the same intersection with $\omega_1$, 
they must be equal. 
But then $\alpha \in N \cap \beta = N \cap \alpha$, which is impossible.
\end{proof}

We are ready to define $f_w \oplus_N f_r$.

\begin{definition}
Let $N \in \mathcal X$ be simple and $r \in D_N$. 
Suppose that $w \in N \cap \p$ and $w \le r \restriction N$. 
Define $f_w \oplus_N f_r = f$ as follows.

The domain of $f$ is equal to the union of 
$\dom(f_w)$, $\dom(f_r)$, and the set
$$
\{ M \cap \alpha : M \in A_r, \ N \le M, \ 
\alpha \in (M \cap \dom(f_w) \cap S) \setminus \dom(f_r) \}.
$$
The values of $f$ are defined by the following cases:
\begin{enumerate}
\item for all $x \in \dom(f_w) \setminus S$, 
$$
f(x) := f_w(x);
$$

\item for all $\alpha \in \dom(f_w) \cap S \cap \dom(f_r)$, 
$$
f(\alpha) := f_w(\alpha) \cup f_r(\alpha);
$$

\item for all $\alpha \in (\dom(f_w) \cap S) \setminus \dom(f_r)$, 
$$
f(\alpha) := f_w(\alpha) \cup 
\{ M \cap \alpha : M \in A_r, \ N \le M, \ \alpha \in M \};
$$

\item if $x \in \dom(f_r) \setminus N$, 
and for some $\alpha \in \dom(f_r) \cap S \cap N$, 
$N \cap \alpha \in f_r(x) \cup \{ x \}$, then 
$$
f(x) := f_w(\alpha) \cup f_r(x);
$$

\item for all $x \in \dom(f_r) \setminus N$ such that (4) fails, 
if $f_r(x) \cap N = \emptyset$, then 
$$
f(x) := f_r(x);
$$

\item for all $x \in \dom(f_r) \setminus N$ such that (4) fails, 
if $f_r(x) \cap N \ne \emptyset$, then 
$$
f(x) := f_r(x) \cup f_w(M),
$$
where $M$ is the membership largest element of 
$f_r(x) \cap N$;

\item for a set of the form $M \cap \alpha$, where $M \in A_r$, $N \le M$, 
and $\alpha \in (M \cap \dom(f_w) \cap S) \setminus \dom(f_r)$, 
$$
f(M \cap \alpha) := f(\alpha) \cap Sk(M \cap \alpha),
$$
where $f(\alpha)$ was defined in (3).
\end{enumerate}
\end{definition}

It is easy to see that cases 1--7 describe all of the possibilities for a set 
being in $\dom(f)$, using the fact that 
$\dom(f_r) \cap N \subseteq \dom(f_w)$ by Lemma 7.8(2). 
Moreover, cases 1--6 are obviously disjoint, and they are also disjoint 
from case 7 by Lemma 7.9. 
Finally, note that the ordinal $\alpha$ in case 4 is unique by Lemma 7.10, 
so $f(x)$ is well-defined in this case.

The next four lemmas describe some important 
properties of $f_w \oplus_N f_r$. 
Lemmas 7.12 and 7.13 will be used to show that 
$w \oplus_N r$ is below $w$ and $r$ in $\p$.

\begin{lemma}
Let $N \in \mathcal X$ be simple and $r \in D_N$. 
Suppose that $w \in N \cap \p$ and $w \le r \restriction N$. 
Let $f := f_w \oplus_N f_r$. Then: 
\begin{enumerate}
\item $\dom(f_w) \cup \dom(f_r) \subseteq \dom(f)$;
\item if $x \in \dom(f_w)$, then $f_w(x) \subseteq f(x)$;
\item if $x \in \dom(f_r)$, then $f_r(x) \subseteq f(x)$.
\end{enumerate}
\end{lemma}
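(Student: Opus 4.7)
The plan is to handle the three claims in turn, since each reduces to a short check against Definition 7.11. Part (1) is immediate, because $\dom(f)$ is defined explicitly as the union of $\dom(f_w)$, $\dom(f_r)$, and an auxiliary family of sets of the form $M \cap \alpha$.

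For part (2), I would observe that every $x \in \dom(f_w)$ falls under exactly one of the first three defining clauses for $f$ in Definition 7.11: the first if $x \notin S$, the second if $x \in S$ and $x \in \dom(f_r)$, and the third if $x \in S$ but $x \notin \dom(f_r)$. In each of these three formulas, $f_w(x)$ appears explicitly as a summand, so $f_w(x) \subseteq f(x)$ is immediate.

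For part (3), I would split on whether $x \in N$. If $x \in \dom(f_r) \setminus N$, then $x$ falls under exactly one of clauses (4), (5), (6) of Definition 7.11, and in each of those formulas $f_r(x)$ appears explicitly as a summand. If $x \in \dom(f_r) \cap N$, then Lemma 7.8(2) forces $x \in \dom(f_w)$, so $x$ is governed by the first or second clause; the second gives $f(x) = f_w(x) \cup f_r(x)$ and the inclusion $f_r(x) \subseteq f(x)$ is trivial.

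The only delicate case is when $x \in \dom(f_r) \cap N$ and $x \notin S$, where the first clause applies and $f(x) = f_w(x)$, so one must show $f_r(x) \subseteq f_w(x)$. Here I would argue that $x \in N \setminus S$ implies $Sk(x) \subseteq N$ by elementarity, so Definition 4.2(2) applied to $r$ gives $f_r(x) \subseteq Sk(x) \subseteq N$. Therefore $f_r(x) \cap N = f_r(x)$, and Lemma 7.8(2) then yields $f_r(x) = f_r(x) \cap N \subseteq f_w(x)$, completing the proof. This is the only step where the hypothesis $w \le r \restriction N$ is actively used; everything else is bookkeeping against the case split in Definition 7.11.
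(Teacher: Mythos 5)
Your proof is correct and follows essentially the same route as the paper: part (3)'s only nontrivial case is when $x \in \dom(f_r) \cap N$ with $x \notin S$, and your argument there ($Sk(x) \subseteq N$, hence $f_r(x) = f_r(x) \cap N \subseteq f_w(x)$ by Lemma 7.8(2)) is exactly the paper's. The only difference is organizational (splitting on $x \in N$ rather than running through the seven clauses of Definition 7.11 directly), which changes nothing of substance.
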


\begin{proof}
(1,2) By the definition of $f$ in Definition 7.11, 
$\dom(f_w)$ and $\dom(f_r)$ are subsets of $\dom(f)$. 
By Cases 1, 2, and 3 of Definition 7.11, 
for all $x \in \dom(f_w)$, $f_w(x) \subseteq f(x)$. 

(3) Suppose that $x \in \dom(f_r)$, and we will show 
that $f_r(x) \subseteq f(x)$. 
We consider each of the cases 1--7 of Definition 7.11 
in the definition of $f(x)$.

If $f(x)$ is defined by cases 2, 4, 5, or 6, then 
$f_r(x) \subseteq f(x)$ by definition. 
Since $x \in \dom(f_r)$, case 3 does not hold, and case 7 does not 
hold by Lemma 7.9. 
It remains to consider case 1.

For case 1, suppose that $x \in \dom(f_w) \setminus S$ and 
$f(x) = f_w(x)$. 
Then $x \in N$. 
So $f_r(x) \subseteq Sk(x) \subseteq N$. 
Hence, by Lemma 7.8(2), 
$f_r(x) = f_r(x) \cap N \subseteq f_w(x) = f(x)$.
\end{proof}

\begin{lemma}
Let $N \in \mathcal X$ be simple and $r \in D_N$. 
Suppose that $w \in N \cap \p$ and $w \le r \restriction N$. 
Let $f := f_w \oplus_N f_r$. Then: 
\begin{enumerate}
\item $\dom(f) \cap N = \dom(f_w)$;
\item if $K \in f(x)$ and $K$ and $x$ are in $\dom(f_w)$, then 
$K \in f_w(x)$;
\item if $K \in f(x)$ and $K$ and $x$ are in $\dom(f_r)$, 
then $K \in f_r(x)$.
\end{enumerate}
\end{lemma}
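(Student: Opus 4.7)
The plan is to prove the three parts separately, using Definition 7.11, Lemma 7.8, Lemma 7.9, and (for part (2)) the adequacy arguments similar to those used in Lemma 7.6.

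For (1), I would argue both inclusions. For the reverse inclusion, $\dom(f_w) \subseteq N$ since $w \in N$, and $\dom(f_w) \subseteq \dom(f)$ by Lemma 7.12(1). For the forward inclusion, recall $\dom(f) = \dom(f_w) \cup \dom(f_r) \cup X$, where $X$ is the set $\{ M \cap \alpha : M \in A_r, \ N \le M, \ \alpha \in (M \cap \dom(f_w) \cap S) \setminus \dom(f_r) \}$ from case 7. By Lemma 7.9, $X$ is disjoint from $N$, so $X \cap N = \emptyset$. By Lemma 7.8(2), $\dom(f_r) \cap N \subseteq \dom(f_w)$. Combining these yields $\dom(f) \cap N \subseteq \dom(f_w)$.

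For (2), I would use (1) to conclude $x \in \dom(f_w)$, then do case analysis on which clause of Definition 7.11 defines $f(x)$. Since $x \in \dom(f_w)$, only cases 1, 2, 3 of Definition 7.11 can produce this $f(x)$. In case 1, $f(x) = f_w(x)$ directly. In case 2 ($x = \alpha \in \dom(f_w) \cap S \cap \dom(f_r)$), $f(\alpha) = f_w(\alpha) \cup f_r(\alpha)$. If $K \in f_r(\alpha)$, then $K \in \dom(f_w) \subseteq N$, so $K \in f_r(\alpha) \cap N \subseteq f_w(\alpha)$ by Lemma 7.8(2). In case 3 ($x = \alpha \in (\dom(f_w) \cap S) \setminus \dom(f_r)$), $f(\alpha) = f_w(\alpha) \cup \{ M \cap \alpha : M \in A_r, \ N \le M, \ \alpha \in M \}$. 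If $K$ has the form $M \cap \alpha$ with $N \le M$, then since $K \in \dom(f_w) \subseteq N$ and $\alpha$ is uncountable, $M \cap \omega_1 = K \cap \omega_1 \in N$, so by Lemma 2.17(1) $M < N$, contradicting $N \le M$. Therefore $K \in f_w(x)$ in every subcase.

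For (3), I would again do case analysis on the clause of Definition 7.11 that defines $f(x)$. Case 7 does not apply because $\dom(f_r)$ is disjoint from the set $X$ by Lemma 7.9. In cases 2 and 6, and in the piece of case 1 where $x \in \dom(f_w) \setminus S$ with $K \in f_w(x)$, the common tool is: $x \in N$ (since $x \in \dom(f_w)$), $K$ lies in $N$ as well (either because $K \in f_w(y) \subseteq N$ or by containment in $Sk(\ldots) \subseteq N$), so $K, x \in \dom(f_r) \cap N = \dom(f_{r \restriction N})$; then clause (d) of the ordering applied to $w \le r \restriction N$ gives $K \in f_{r \restriction N}(x) \subseteq f_r(x)$. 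In case 6, this gives $K \in f_r(M)$ with $M \in f_r(x)$, and Definition 4.2(3) yields $K \in f_r(x)$. The main obstacle is case 4: there $K \in f_w(\alpha)$ for some $\alpha \in \dom(f_r) \cap S \cap N$ with $N \cap \alpha \in f_r(x) \cup \{x\}$; the above argument gives $K \in f_r(\alpha)$, and the step is then to transfer from $f_r(\alpha)$ to $f_r(x)$. I would use Definition 4.2(6) to note $N \cap \alpha \in f_r(\alpha)$, observe $K \in N \cap Sk(\alpha) = Sk(N \cap \alpha)$ by Lemma 2.7(1), and then apply Definition 4.2(3) in the form $f_r(N \cap \alpha) = f_r(\alpha) \cap Sk(N \cap \alpha)$ and $f_r(N \cap \alpha) = f_r(x) \cap Sk(N \cap \alpha)$ (both holding whether $x = N \cap \alpha$ or $N \cap \alpha \in f_r(x)$) to conclude $K \in f_r(x)$.
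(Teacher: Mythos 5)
Your proposal is correct and follows essentially the same route as the paper: part (1) is identical, part (2) is the same case analysis (your Lemma 2.17 argument in case 3 just re-derives directly the disjointness from $N$ that the paper gets from Lemma 7.9), and part (3) reduces, as in the paper, to clause (d) of the ordering applied to $w \le r \restriction N$ (in case 6 applied to the pair $(K,M)$), with case 4 handled via Definition 4.2(6), $Sk(N \cap \alpha)$, and Definition 4.2(3). The only omissions are the trivial observations that case 3 of Definition 7.11 cannot occur in part (3) and that case 5 gives $f(x) = f_r(x)$ outright, which are immediate.
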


\begin{proof}
(1) The inclusion 
$\dom(f_w) \subseteq \dom(f) \cap N$ is immediate, 
so it suffices 
to show that if $x \in \dom(f) \cap N$, then $x \in \dom(f_w)$. 
Note that $x$ is not equal to 
$M \cap \alpha$, for any $M \in A_r$ with $N \le M$ and 
$\alpha \in (M \cap \dom(f_w) \cap S) \setminus \dom(f_r)$, 
since such a set is not in $N$ by Lemma 7.9. 
So by the definition of $\dom(f)$, we have that either $x \in \dom(f_w)$, 
in which case we are done, or $x \in \dom(f_r)$. 
In the second case, $x \in \dom(f_r) \cap N \subseteq \dom(f_w)$ 
by Lemma 7.8(2).

\bigskip

(2) Suppose that $K \in f(x)$ and $K$ and $x$ are in $\dom(f_w)$. 
We will show that $K \in f_w(x)$. 
Since $K$ and $x$ are in $\dom(f_w)$, they are in $N$. 
Hence, we are in cases 1, 2, or 3 of Definition 7.11. 
In case 1, $f(x) = f_w(x)$, so $K \in f_w(x)$. 
In case 2, $f(x) = f_w(\alpha) \cup f_r(\alpha)$, where 
$x = \alpha \in \dom(f_w) \cap S \cap \dom(f_r)$. 
So either $K \in f_w(\alpha)$, in which case we are done, or 
$K \in f_r(\alpha)$. 
In the second case, by Lemma 7.8(2) we have that 
$$
K \in f_r(\alpha) \cap N \subseteq f_w(\alpha) = f_w(x).
$$
In case 3, $f(x)$ is equal to 
the union of $f_w(x)$ together with a collection of sets 
which are not members of $N$. 
Since $K \in N$, $K \in f_w(x)$.

\bigskip

(3) Suppose that $K \in f(x)$ and $K$ and $x$ are in $\dom(f_r)$. 
We will show that $K \in f_r(x)$.
First, assume that $K$ and $x$ are both in $N$. 
Then $K$ and $x$ are in 
$\dom(f_r) \cap N \subseteq \dom(f_w)$ by Lemma 7.8(2). 
By (2) just proven, $K \in f_w(x)$. 
Yet $K$ and $x$ are in $\dom(f_r) \cap N = \dom(f_{r \restriction N})$. 
Since $w \le r \restriction N$, it follows that 
$K \in f_{r \restriction N}(x) \subseteq f_r(x)$. 
So $K \in f_r(x)$.

Next, we consider each of the cases 1--7 of Definition 7.11 
for $f(x)$. 
Case 1 is immediate, since it implies that $K$ and $x$ are in $N$. 
In case 2, $x = \alpha$ is in $N$ 
and $f(x) = f_w(\alpha) \cup f_r(\alpha)$. 
If $K \in f_w(\alpha)$, then $K$ and $x$ are both in $N$, and we are done. 
Otherwise $K \in f_r(\alpha)$, and we are also done.

Case 3 does not apply, since it says that $x = \alpha$ is not in $\dom(f_r)$. 
In case 5, $f(x) = f_r(x)$, so $K \in f_r(x)$, and we are done.
Case 7 does not apply, since any set of the form described there is 
not in $\dom(f_r)$ by Lemma 7.9. 
It remains to consider cases 4 and 6.

In case 4, there is $\alpha \in \dom(f_r) \cap S \cap N$ such that 
$N \cap \alpha \in f_r(x) \cup \{ x \}$, and 
$f(x) = f_w(\alpha) \cup f_r(x)$. 
So either $K \in f_w(\alpha)$, or $K \in f_r(x)$. 
In the second case we are done, so assume that $K \in f_w(\alpha)$. 
By Lemma 7.12(2), $f_w(\alpha) \subseteq f(\alpha)$, so $K \in f(\alpha)$. 
Since $K$ and $\alpha$ are in $\dom(f_r)$, it follows 
by case 2 just handled that 
$K \in f_r(\alpha)$. 
But also $K \in f_w(\alpha)$ means that $K \in N$. 
Since $N \cap \alpha \in f_r(\alpha)$ by Definition 4.2(6) and $K \in N$, 
we must have that $K \in f_r(N \cap \alpha)$. 
But $N \cap \alpha \in f_r(x) \cup \{ x \}$, so $K \in f_r(x)$.

In case 6, $f(x) = f_r(x) \cup f_w(M)$, 
where $M$ is the membership largest element of $f_r(x) \cap N$. 
If $K \in f_r(x)$ then we are done, so assume that $K \in f_w(M)$. 
Since $M$ is in $\dom(f_r) \cap N \subseteq \dom(f_w)$ by 
Lemma 7.8(2), $M$ is in $\dom(f_w) \setminus S$. 
So $f(M)$ is defined as in case 1 of Definition 7.11. 
Hence, $f(M) = f_w(M)$. 
Therefore, $K \in f(M)$. 
Since $K$ and $M$ are in $\dom(f_r)$, it follows 
by case 1 handled above that $K \in f_r(M)$. 
Thus, $K \in f_r(M)$ and $M \in f_r(x)$, so 
$K \in f_r(x)$.
\end{proof}

The next lemma will be used to show that $f_w \oplus_N f_r$ satisfies 
requirement (2) of Definition 4.2 for $w \oplus_N r$.

\begin{lemma}
Let $N \in \mathcal X$ be simple and $r \in D_N$. 
Suppose that $w \in N \cap \p$ and $w \le r \restriction N$. 
Let $f := f_w \oplus_N f_r$. Then 
$f$ is a function with a finite domain, and for all $x \in \dom(f)$, 
either $x \in S$, or there is $M \in A_w \cup A_r$ and 
$$
\alpha \in (M \cap \dom(f) \cap S) \cup \{ \kappa \}
$$
satisfying that 
$x = M \cap \alpha$; moreover, for all $x \in \dom(f)$, 
$f(x)$ is a finite $\in$-chain and $f(x) \subseteq Sk(x) \setminus S$.
\end{lemma}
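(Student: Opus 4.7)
The plan is to verify the three conclusions in order: finiteness, the required form of elements of $\dom(f)$, and the $\in$-chain plus hull conditions. The first is immediate, since $\dom(f)$ is defined in Definition 7.11 as a union of three finite sets, and the seven defining cases are pairwise disjoint (the first six by inspection, and Case 7 disjoint from the others by Lemmas 7.9 and 7.10), so $f$ is a well-defined function. The "form" part I would split according to which of the three components contains $x$: if $x \in \dom(f_w)$ or $x \in \dom(f_r)$, apply requirement (2) of Definition 4.2 for $w$ or $r$ respectively, and then note $\dom(f_w) \cup \dom(f_r) \subseteq \dom(f)$; if $x = M \cap \alpha$ lies in the third summand of $\dom(f)$, read the form directly from the definition, using $M \in A_r \subseteq A_w \cup A_r$ and $\alpha \in M \cap \dom(f_w) \cap S \subseteq M \cap \dom(f) \cap S$.

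The real work is the $\in$-chain/Skolem-hull claim for $f(x)$, which I would handle case by case through Definition 7.11. Cases (1) and (5) are immediate since $f(x)$ equals $f_w(x)$ or $f_r(x)$ respectively. Case (7) follows from Case (3) once the latter is proven: a subset of a finite $\in$-chain is a finite $\in$-chain, and intersecting with $Sk(M\cap\alpha)$ enforces the Skolem-hull condition while preserving disjointness from $S$. This leaves Cases (2), (3), (4), and (6), each of which expresses $f(x)$ as the union of an $f_w$-piece and an $f_r$-piece that must be shown to be $\in$-comparable.

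The main obstacle—which also provides the key technique—is reconciling these unions, and the organizing principle will be that $N \cap \alpha$ acts as a dividing line. For Case (2), where $f(\alpha) = f_w(\alpha) \cup f_r(\alpha)$ with $\alpha \in \dom(f_w) \cap S \cap \dom(f_r)$, I note that $N \in A_r$ and $\alpha \in N$ force $N \cap \alpha \in f_r(\alpha)$ by Definition 4.2(6), while Lemma 7.8(2) together with $w \le r\restriction N$ gives $f_r(\alpha) \cap N \subseteq f_w(\alpha)$. Any $L \in f_r(\alpha) \setminus f_w(\alpha)$ is then outside $N$, hence comparable with $N \cap \alpha$ in the $\in$-chain $f_r(\alpha)$ only via $N \cap \alpha \in Sk(L)$. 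For any $K \in f_w(\alpha)$, Lemma 4.3(1) gives $K \subseteq \alpha$ and $K \in N \setminus S$, whence Lemma 4.3(3) yields $K \in Sk(N \cap \alpha) \subseteq Sk(L)$, producing the required comparison. Case (4) is essentially identical with $N \cap \alpha \in f_r(x) \cup \{x\}$ playing the same role. For Case (3), where $\alpha \notin \dom(f_r)$, the new sets $\{M \cap \alpha : M \in A_r, N \le M, \alpha \in M\}$ form an $\in$-chain in $Sk(\alpha)$ by Lemma 2.33; since $\alpha \in M \cap N \cap \kappa$, Proposition 2.11 gives $\alpha < \beta_{M,N}$, and then $N \le M$ implies either $N \cap \alpha \in Sk(M \cap \alpha)$ (when $N < M$, via Lemma 2.8) or $N \cap \alpha = M \cap \alpha$ (when $N \sim M$), so elements of $f_w(\alpha)$ lying in $Sk(N \cap \alpha)$ by Lemma 4.3(3) compare correctly with each $M \cap \alpha$.

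For Case (6), where $f(x) = f_r(x) \cup f_w(M)$ with $M$ the $\in$-largest member of $f_r(x) \cap N$, I would argue as follows. Elements of $f_w(M)$ lie in $Sk(M)$, so they are already comparable in $f_w(M)$ itself; comparing them with an arbitrary $L \in f_r(x)$ splits by whether $L = M$, $L \in Sk(M)$ (equivalent by Definition 4.2(3) to $L \in f_r(M)$, which by Lemma 7.8(2) lies in $f_w(M)$ via $w \le r\restriction N$), or $M \in Sk(L)$ (in which case any $K \in f_w(M) \subseteq Sk(M)$ satisfies $K \in Sk(L)$). Throughout, the inclusion $f(x) \subseteq Sk(x) \setminus S$ is immediate from the corresponding inclusions for $f_w$ and $f_r$, together with $Sk(M) \subseteq Sk(x)$ in Case (6) and $Sk(N \cap \alpha) \subseteq Sk(x)$ in Case (4). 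The hard part will be keeping the Case (6) bookkeeping straight, since one must verify that no $L \in f_r(x) \setminus (f_w(M) \cup \{M\})$ lies $\in$-below $M$—but this follows from maximality of $M$ in $f_r(x) \cap N$ combined with the fact that any such $L$ below $M$ would lie in $Sk(M) \cap \dom(f_r) \cap N$ and hence, by Lemma 7.8(2), back in $f_w(M)$.
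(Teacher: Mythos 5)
Your proposal is correct and takes essentially the same route as the paper's proof: the same case analysis through Definition 7.11, with Cases (1), (5), (7) dispatched as you do, and the cross-comparisons in Cases (2), (3), (4), (6) handled via $N \cap \alpha \in f_r(\alpha)$ (Definition 4.2(6)), Lemma 7.8(2), Lemma 4.3(3), Lemma 2.33, and Proposition 2.11, exactly as in the paper. The only cosmetic point is that in Cases (2) and (4) the comparison can also give $L = N \cap \alpha$ rather than $N \cap \alpha \in Sk(L)$, but since $Sk(N \cap \alpha) \subseteq Sk(L)$ holds in that case too, your argument (like the paper's) goes through unchanged.
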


\begin{proof}
It is immediate that $f$ is a function with a finite domain. 
By the definition of the domain of $f$ in Definition 7.11, together with 
the fact that $\dom(f_w) \cup \dom(f_r) \subseteq \dom(f)$ and 
$w$ and $r$ are conditions, it is easy to see that if $x \in \dom(f)$, 
then either $x \in S$, or there is $M \in A_w \cup A_r$ and 
$\alpha \in (M \cap \dom(f) \cap S) \cup \{ \kappa \}$ satisfying that 
$x = M \cap \alpha$.

Let $x \in \dom(f)$, and we will show that $f(x)$ is a finite 
$\in$-chain and $f(x) \subseteq Sk(x) \setminus S$. 
We will consider each of the cases 1--7 of Definition 7.11. 
We are done if either $f(x) = f_w(x)$ or $f(x) = f_r(x)$ as in 
cases 1 and 5, since 
$w$ and $r$ are conditions. 
In case 7, $f(x) \subseteq Sk(x)$ by definition, and 
$f(x)$ is a finite $\in$-chain and disjoint from $S$ provided that 
the result is true for case 3. 
It remains to handle cases 2, 3, 4, and 6.

\bigskip

\noindent \emph{Case 2:} $x = \alpha \in \dom(f_w) \cap S \cap \dom(f_r)$ 
and $f(\alpha) = f_w(\alpha) \cup f_r(\alpha)$. 
Since $w$ and $r$ are conditions, $f_w(\alpha)$ and $f_r(\alpha)$ are 
themselves finite $\in$-chains and subsets of $Sk(\alpha) \setminus S$. 
So it suffices to show that if $K \in f_w(\alpha)$ and 
$M \in f_r(\alpha) \setminus f_w(\alpha)$, then 
$K \in Sk(M)$.

Since $\alpha \in \dom(f_w)$, $\alpha$ is in $N$. 
Thus, $\alpha \in \dom(f_r) \cap S \cap N$. 
Since $N \in A_r$, we have that $N \cap \alpha \in f_r(\alpha)$, 
because $r$ is a condition. 
As $M$ is also in $f_r(\alpha)$, $M$ and $N \cap \alpha$ 
are membership comparable. 
We claim that $M$ is not in $Sk(N \cap \alpha)$. 
Otherwise, $M \in N$, so 
$M \in f_r(\alpha) \cap N \subseteq f_w(\alpha)$ by Lemma 7.8(2). 
Hence, $M \in f_w(\alpha)$, which contradicts the choice of $M$.

Thus, either $M = N \cap \alpha$, or $N \cap \alpha \in Sk(M)$. 
In either case, $Sk(N \cap \alpha) \subseteq Sk(M)$. 
Since $K \in f_w(\alpha)$, $K \subseteq \alpha$. 
Therefore, $K \in N$ implies that $K \in Sk(N \cap \alpha)$ 
by Lemma 4.3(3). 
Hence, $K \in Sk(N \cap \alpha) \subseteq Sk(M)$, 
so $K \in Sk(M)$.

\bigskip

\noindent \emph{Case 3:} $x = \alpha \in (\dom(f_w) \cap S) \setminus \dom(f_r)$ and 
$$
f(\alpha) = f_w(\alpha) \cup 
\{ M \cap \alpha : M \in A_r, \ N \le M, \ \alpha \in M \}.
$$
Since $w$ is a condition, $f_w(\alpha)$ is a finite $\in$-chain and a subset 
of $Sk(\alpha) \setminus S$. 
By Lemma 2.33, the second set in the above union is also a finite 
$\in$-chain and a subset of $Sk(\alpha)$, and it is obviously 
disjoint from $S$. 
So it suffices to show that if $K \in f_w(\alpha)$, $M \in A_r$, $N \le M$, 
and $\alpha \in M$, then $K \in Sk(M \cap \alpha)$.

Since $K \in f_w(\alpha)$, we have that $K \in N$ and $K \subseteq \alpha$. 
By Lemma 4.3(3), $K \in Sk(N \cap \alpha)$. 
Since $\alpha \in M \cap N$, $\alpha < \beta_{M,N}$ by Proposition 2.11. 
As $N \le M$, we have that $N \cap \alpha \subseteq M \cap \alpha$. 
Thus, $Sk(N \cap \alpha) \subseteq Sk(M \cap \alpha)$. 
So $K \in Sk(M \cap \alpha)$.

\bigskip

\noindent \emph{Case 4:} $x \in \dom(f_r) \setminus N$, 
$\alpha \in \dom(f_r) \cap S \cap N$, 
and $N \cap \alpha \in f_r(x) \cup \{ x \}$. 
Then $f(x) = f_w(\alpha) \cup f_r(x)$. 
Since $w$ and $r$ are conditions, $f_w(\alpha)$ and $f_r(x)$ are themselves 
finite $\in$-chains, and subsets of $Sk(\alpha) \setminus S$ and 
$Sk(x) \setminus S$ respectively.
Consider $K \in f_w(\alpha)$, and we will show that $K \in Sk(x)$. 
So $K \in N$ and $K \subseteq \alpha$. 
By Lemma 4.3(3), $K \in Sk(N \cap \alpha)$. 
But $N \cap \alpha \in f_r(x) \cup \{ x \}$, so in particular, 
$Sk(N \cap \alpha) \subseteq Sk(x)$. 
Thus, $K \in Sk(x)$.
This completes the proof that $f(x) \subseteq Sk(x) \setminus S$.

Since $f_w(\alpha)$ and $f_r(x)$ are themselves finite $\in$-chains, 
in order to show that $f(x)$ is a finite $\in$-chain, it suffices to show that if 
$K \in f_w(\alpha)$ and $L \in f_r(x) \setminus f_w(\alpha)$, then 
$K \in Sk(L)$.

Since $K \in N$ and $K \subseteq \alpha$, $K \in Sk(N \cap \alpha)$ 
by Lemma 4.3(3). 
As $L \in f_r(x)$ and $N \cap \alpha \in f_r(x) \cup \{ x \}$, 
$L$ and $N \cap \alpha$ are membership comparable. 
We claim that $L$ is not in $Sk(N \cap \alpha)$. 
Suppose for a contradiction that $L \in Sk(N \cap \alpha)$. 
Then $L \in N$, and $L \in f_r(N \cap \alpha) \subseteq f_r(\alpha)$. 
So $L \in f_r(\alpha) \cap N \subseteq f_w(\alpha)$ by Lemma 7.8(2). 
Hence, $L \in f_w(\alpha)$, which contradicts the choice of $L$. 
Thence, either $L = N \cap \alpha$ or $N \cap \alpha \in Sk(L)$. 
In either case, $Sk(N \cap \alpha) \subseteq Sk(L)$, and in particular, 
$K \in Sk(L)$.

\bigskip

\noindent \emph{Case 6:} $x \in \dom(f_r) \setminus N$ and 
$f(x) = f_r(x) \cup f_w(M)$, where $M$ is the membership largest 
element of $f_r(x) \cap N$. 
Since $r$ and $w$ are conditions, $f_r(x)$ is a finite $\in$-chain and a subset 
of $Sk(x) \setminus S$, 
and $f_w(M)$ is a finite $\in$-chain and a subset of $Sk(M) \setminus S$. 
Since $M \in f_r(x)$, $M \in Sk(x)$. 
Therefore, $f_w(M) \subseteq Sk(M) \subseteq Sk(x)$. 
Hence, both $f_r(x)$ and $f_w(M)$ are finite $\in$-chains and subsets of 
$Sk(x) \setminus S$. 
So it suffices to show that 
if $K \in f_w(M)$ and $L \in f_r(x) \setminus f_w(M)$, then $K \in Sk(L)$.

Since $L$ and $M$ are both in $f_r(x)$, they are membership comparable. 
But if $L \in Sk(M)$, then by Lemma 7.8(2),  
$$
L \in (f_r(x) \cap Sk(M)) = f_r(M) \subseteq f_w(M).
$$
So $L \in f_w(M)$, which contradicts the choice of $L$. 
Therefore, either $L = M$, or $M \in Sk(L)$. 
In either case, $Sk(M) \subseteq Sk(L)$. 
As $K \in f_w(M)$, $K \in Sk(M)$, so 
$K \in Sk(M) \subseteq Sk(L)$.
\end{proof}

The next lemma will be used to show that $f_w \oplus_N f_r$ satisfies 
requirement (3) of Definition 4.2 for $w \oplus_N r$.

\begin{lemma}
Let $N \in \mathcal X$ be simple and $r \in D_N$. 
Suppose that $w \in N \cap \p$ and $w \le r \restriction N$. 
Let $f := f_w \oplus_N f_r$. 
If $x \in \dom(f)$, then $f(x) \subseteq \dom(f)$, and 
for all $K \in f(x)$, $f(K) = f(x) \cap Sk(K)$.
\end{lemma}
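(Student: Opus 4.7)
The plan is to proceed by a case analysis on which of the seven clauses of Definition 7.11 produces $f(x)$. For each case, I first verify the containment $f(x) \subseteq \dom(f)$, which is routine: every element of $f(x)$ lies either in some $f_w(y)$ or $f_r(y)$ (both contained in $\dom(f)$ by Lemma 7.12(1)), or in the third class of sets listed in $\dom(f)$; case (7) is immediate since $f(M \cap \alpha) = f(\alpha) \cap Sk(M \cap \alpha)$ reduces to case (3). The work is then in establishing $f(K) = f(x) \cap Sk(K)$ for each $K \in f(x)$, by first identifying which clause of Definition 7.11 produces $f(K)$, and then matching both sides using that $w$ and $r$ are themselves conditions (so $f_w(K) = f_w(x) \cap Sk(K)$ whenever $K \in f_w(x)$, and similarly for $r$), together with Lemmas 7.8(2), 7.13, and 4.4.

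Cases (1) and (5), where $f(x) = f_w(x)$ or $f(x) = f_r(x)$, are straightforward: $f(K)$ is produced by the same clause, and the identity follows immediately from the corresponding identity for $w$ or $r$. Case (7) reduces to case (3) by definition. Case (3) is also relatively clean because its extra pieces $M \cap \alpha$ with $N \le M$ are disjoint from $N \cup \dom(f_w) \cup \dom(f_r)$ by Lemma 7.9, so the analysis decouples.

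The main obstacle is case (4), with cases (2) and (6) being variants of the same idea. In case (4) we have $x \in \dom(f_r) \setminus N$, $\alpha \in \dom(f_r) \cap S \cap N$, $N \cap \alpha \in f_r(x) \cup \{x\}$, and $f(x) = f_w(\alpha) \cup f_r(x)$. The key transfer lemma is that any $J \in f_r(x) \cap N$ is actually in $f_r(\alpha)$, and hence in $f_w(\alpha)$ by Lemma 7.8(2). To see this, note that $N \cap \alpha \in f_r(\alpha)$ by Definition 4.2(6), so $J$ and $N \cap \alpha$ are $\in$-comparable in $f_r(x)$; since $J \in N$ gives $J \subseteq N$, and $J$ is countable while $\alpha$ has uncountable cofinality (as $\alpha \in S \subseteq \Lambda$), Lemma 4.4 forces $J \in Sk(N \cap \alpha)$, whence $J \in f_r(N \cap \alpha) \subseteq f_r(\alpha)$ by Definition 4.2(3).

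Armed with this transfer, given $K \in f(x) = f_w(\alpha) \cup f_r(x)$ I split on two subcases. If $K \in f_w(\alpha)$, then $K \in \dom(f_w) \setminus S$, so $f(K) = f_w(K) = f_w(\alpha) \cap Sk(K)$ by case (1) and $w$ being a condition; the reverse inclusion amounts to showing $f_r(x) \cap Sk(K) \subseteq f_w(\alpha)$, which follows from the transfer applied to each $J \in f_r(x) \cap Sk(K) \subseteq f_r(x) \cap N$. If $K \in f_r(x) \setminus f_w(\alpha)$, then $K \notin N$ (otherwise the transfer would place $K$ in $f_w(\alpha)$), so $f(K)$ is produced by whichever of cases (4)--(6) applies to $K$, and I match both sides by combining $f_r(K) = f_r(x) \cap Sk(K)$ with the same transfer argument applied to $f_w(\alpha) \cap Sk(K)$. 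Case (6) follows the identical pattern with the membership largest $M \in f_r(x) \cap N$ playing the role of $N \cap \alpha$, using Lemma 7.8(2) to pass between $f_r(M)$ and $f_w(M)$; case (2) is a simpler variant where $x = \alpha$ itself. Throughout, the key bookkeeping device is that membership in the $\in$-chains $f_w(\cdot)$ and $f_r(\cdot)$ is governed entirely by suprema (Lemma 4.4), and the uncountable cofinality of ordinals in $S$ prevents countable sets from accumulating at them.
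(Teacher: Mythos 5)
Your proposal is correct and takes essentially the same route as the paper's proof: a case analysis over the seven clauses of Definition 7.11, with cases (1), (5), (7) immediate or reducing to case (3), case (3) decoupling via Lemma 7.9, and the substantive cases (2), (4), (6) handled exactly as in the paper by transferring members of $f_r(x) \cap N$ into $f_w(\alpha)$ (or $f_w(M)$) using Definition 4.2(3),(6), comparability in the $\in$-chain, and Lemma 7.8(2). Two details to tighten in a full write-up: the transfer step is cleanest by noting that $J = N \cap \alpha$ or $N \cap \alpha \in Sk(J)$ would place $N \cap \alpha$ inside $N$ (impossible since $\cf(\alpha) > \omega$), because $J \subseteq N$ alone does not bound $\sup(J)$ below $\alpha$; and in case (4) with $K \notin N$ it is precisely clause (4) that defines $f(K)$ (since $N \cap \alpha \in f_r(K) \cup \{ K \}$), not merely ``whichever of (4)--(6) applies.''
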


\begin{proof}
We begin by proving that $f(x) \subseteq \dom(f)$. 
This is immediate in cases 1, 2, 4, 5, and 6 of Definition 7.11, since 
the fact that $w$ and $r$ are conditions implies that 
for any $y \in \dom(f_w)$ and $z \in \dom(f_r)$, 
$f_w(y) \subseteq \dom(f_w) \subseteq \dom(f)$ and 
$f_r(z) \subseteq \dom(f_r) \subseteq \dom(f)$. 
Also, case 7 follows from case 3. 
It remains to consider case 3.

In case 3, 
$\alpha$ is in $(\dom(f_w) \cap S) \setminus \dom(f_r)$ 
and $f(\alpha) = f_w(\alpha) \cup \{ M \cap \alpha : M \in A_r, \ N \le M, \ 
\alpha \in M \}$. 
Again, we know that $f_w(\alpha) \subseteq \dom(f_w) \subseteq \dom(f)$. 
And if $M \in A_r$, $N \le M$, and $\alpha \in M$, then 
$M \cap \alpha$ is in $\dom(f)$ by Definition 7.11.

\bigskip

Let $K \in f(x)$, and we will prove that $f(K) = f(x) \cap Sk(K)$. 
The proof splits into the seven cases of Definition 7.11 
in the definition of $f(x)$. 
Note that since $K \in f(x)$, 
it follows that $K \notin S$ by Lemma 7.14.

\bigskip

\noindent \emph{Case 1:} $x \in \dom(f_w) \setminus S$ and 
$f(x) = f_w(x)$. 
Then $K \in f_w(x)$. 
So $K \in \dom(f_w) \setminus S$ as well. 
Hence, $f(K) = f_w(K)$. 
Therefore, 
$$
f(K) = f_w(K) = f_w(x) \cap Sk(K) = f(x) \cap Sk(K).
$$

\bigskip

\noindent \emph{Case 2:} $x = \alpha \in \dom(f_w) \cap S \cap \dom(f_r)$ 
and $f(\alpha) = f_w(\alpha) \cup f_r(\alpha)$. 
So either $K \in f_w(\alpha)$ or $K \in f_r(\alpha)$.

First, assume that $K \notin N$. 
Then $K \in f_r(\alpha) \setminus N$. 
By Definition 4.2(6), $N \cap \alpha \in f_r(\alpha)$. 
So both $K$ and $N \cap \alpha$ are in $f_r(\alpha)$. 
As $K \notin N$, $N \cap \alpha \in f_r(K) \cup \{ K \}$. 
So we are in case 4 of Definition 7.11 in the definition of $f(K)$. 
Therefore, $f(K) = f_w(\alpha) \cup f_r(K)$. 
So it suffices to show that 
$$
f_w(\alpha) \cup f_r(K) = 
(f_w(\alpha) \cup f_r(\alpha)) \cap Sk(K).
$$

Since $w \in N$, $f_w(\alpha) \subseteq N \cap Sk(\alpha) = Sk(N \cap \alpha)$ 
by Lemma 2.7(1). 
Since $N \cap \alpha \in f_r(K) \cup \{ K \}$, 
$N \cap \alpha \subseteq K$, so $Sk(N \cap \alpha) \subseteq Sk(K)$. 
Hence, $f_w(\alpha) \subseteq Sk(K)$. 
Also, as $K \in f_r(\alpha)$, 
$f_r(K) = f_r(\alpha) \cap Sk(K)$. 
Thence, 
\begin{multline*}
$$
(f_w(\alpha) \cup f_r(\alpha)) \cap Sk(K) = \\
= (f_w(\alpha) \cap Sk(K)) \cup (f_r(\alpha) \cap Sk(K)) = 
f_w(\alpha) \cup f_r(K).
\end{multline*}

Secondly, assume that $K \in N$. 
Then either $K \in f_w(\alpha)$, or 
$K \in f_r(\alpha) \cap N \subseteq f_w(\alpha)$ by Lemma 7.8(2). 
In either case, $K \in f_w(\alpha)$. 
So $K \in \dom(f_w) \setminus S$, and therefore, by definition, 
$f(K) = f_w(K)$. 
So 
$$
f(K) = f_w(K) = f_w(\alpha) \cap Sk(K).
$$
Since $K$ and $w$ are $N$, and 
$f_r(\alpha) \cap N \subseteq f_w(\alpha)$ by Lemma 7.8(2), 
we have that 
\begin{multline*}
f(\alpha) \cap Sk(K) \subseteq f(\alpha) \cap N = \\
= (f_w(\alpha) \cup f_r(\alpha)) \cap N \subseteq 
f_w(\alpha) \cup (f_r(\alpha) \cap N) \subseteq f_w(\alpha).
\end{multline*}
So $f(\alpha) \cap Sk(K) \subseteq f_w(\alpha)$. 
On the other hand, by Lemma 7.12(2), $f_w(\alpha) \subseteq f(\alpha)$. 
Therefore,
$$
f(\alpha) \cap Sk(K) = f_w(\alpha) \cap Sk(K) = f(K).
$$

\bigskip

\noindent \emph{Case 3:} $x = \alpha \in (\dom(f_w) \cap S) \setminus \dom(f_r)$ and 
$$
f(\alpha) = f_w(\alpha) \cup 
\{ M \cap \alpha : M \in A_r, \ N \le M, \ \alpha \in M \}.
$$
First, assume that $K \in f_w(\alpha)$. 
Then $f(K)$ is defined as in case 1 of Definition 7.11, so $f(K) = f_w(K)$. 
Since $K \in N$, 
and the members of the second set in the displayed union are not in $N$ 
by Lemma 7.9, and hence not in $Sk(K)$, we have that 
$$
f(\alpha) \cap Sk(K) = f_w(\alpha) \cap Sk(K) = f_w(K) = f(K).
$$

Secondly, assume that $K = M \cap \alpha$, 
where $M \in A_r$, $N \le M$, 
and $\alpha \in M$. 
Then $f(K)$ is defined as in case 7 of Definition 7.11, namely, 
$$
f(K) = f(M \cap \alpha) = f(\alpha) \cap Sk(M \cap \alpha) = 
f(\alpha) \cap Sk(K).
$$

\bigskip

\noindent \emph{Case 4:} $x \in \dom(f_r) \setminus N$, 
$\alpha \in \dom(f_r) \cap S \cap N$, and 
$N \cap \alpha \in f_r(x) \cup \{ x \}$. 
Then $f(x) = f_w(\alpha) \cup f_r(x)$. 
Note that by Definition 4.2(6), $N \cap \alpha \in f_r(\alpha)$.

First, assume that $K \in N$. 
Then either $K \in f_w(\alpha)$, or $K \in f_r(x) \cap N$. 
Consider the second case. 
Since $K$ and $N \cap \alpha$ are both in 
$f_r(x) \cup \{ x \}$ and 
$K \in N$, we must have that $K \in f_r(N \cap \alpha)$. 
Then $K \in f_r(N \cap \alpha) \subseteq f_r(\alpha)$. 
So $K \in f_r(\alpha) \cap N \subseteq f_w(\alpha)$ by Lemma 7.8(2). 
Hence, in either case, $K \in f_w(\alpha)$.

By Definition 7.11(1), $f(K) = f_w(K)$. 
Since $K \in f_w(\alpha)$, 
$f(K) = f_w(K) = f_w(\alpha) \cap Sk(K)$. 
Therefore,
\begin{multline*}
f(x) \cap Sk(K) = (f_w(\alpha) \cup f_r(x)) \cap Sk(K) = \\
= (f_w(\alpha) \cap Sk(K)) \cup (f_r(x) \cap Sk(K)) = 
f_w(K) \cup (f_r(x) \cap Sk(K)).
\end{multline*}
Since $f(K) = f_w(K)$, it suffices to show that 
$f_r(x) \cap Sk(K) \subseteq f_w(K)$, for then 
$$
f(x) \cap Sk(K) = f_w(K) \cup (f_r(x) \cap Sk(K)) = f_w(K) = f(K).
$$

Let $z \in f_r(x) \cap Sk(K)$, and we will show that 
$z \in f_w(K)$. 
Since $K \in N$, $z \in N$. 
As $z \in f_r(x) \cap N$ and $N \cap \alpha \in f_r(x) \cup \{ x \}$, 
we have that $z \in f_r(N \cap \alpha)$. 
So $z \in f_r(N \cap \alpha) \cap N \subseteq f_r(\alpha) \cap N 
\subseteq f_w(\alpha)$ by Lemma 7.8(2). 
So $z \in f_w(\alpha)$. 
Hence, $z \in f_w(\alpha) \cap Sk(K) = f_w(K)$, which completes the proof.

Secondly, assume that $K \notin N$. 
Then since $K \in f(x) = f_w(\alpha) \cup f_r(x)$, and $w \in N$, we must have that 
$K \in f_r(x)$. 
As $N \cap \alpha \in f_r(x) \cup \{ x \}$ and $K \notin N$, it follows that 
$N \cap \alpha \in f_r(K) \cup \{ K \}$. 
So we are in case 4 of Definition 7.11 in the definition of $f(K)$. 
That means that $f(K) = f_w(\alpha) \cup f_r(K)$. 
We have that 
\begin{multline*}
f(x) \cap Sk(K) = (f_w(\alpha) \cup f_r(x)) \cap Sk(K) = \\
= (f_w(\alpha) \cap Sk(K)) \cup (f_r(x) \cap Sk(K)) = (f_w(\alpha) \cap Sk(K)) \cup f_r(K).
\end{multline*}
So $f(x) \cap Sk(K) = (f_w(\alpha) \cap Sk(K)) \cup f_r(K)$. 
On the other hand, 
\begin{multline*}
f(K) = f(K) \cap Sk(K) = 
(f_w(\alpha) \cup f_r(K)) \cap Sk(K) = \\
= (f_w(\alpha) \cap Sk(K)) \cup (f_r(K) \cap Sk(K)) = 
(f_w(\alpha) \cap Sk(K)) \cup f_r(K).
\end{multline*}
So $f(K) = (f_w(\alpha) \cap Sk(K)) \cup f_r(K) = f(x) \cap Sk(K)$.

\bigskip

\noindent \emph{Case 5:} $x \in \dom(f_r) \setminus N$, 
case 4 fails for $x$, 
$f_r(x) \cap N = \emptyset$, and $f(x) = f_r(x)$. 
Then $K \in f_r(x)$. 
Since $f_r(x) \cap N = \emptyset$, $K \notin N$. 
Also, since $f_r(K) = f_r(x) \cap Sk(K)$, 
we have that $f_r(K) \cap N = \emptyset$. 
And as 
$f_r(K) \cup \{ K \} \subseteq f_r(x)$, we cannot have that 
$N \cap \alpha \in f_r(K) \cup \{ K \}$ 
for any $\alpha \in \dom(f_r) \cap S \cap N$, 
since otherwise case 4 would be true for $x$. 
So we are in case 5 of Definition 7.11 in the definition of $f(K)$, and therefore 
$f(K) = f_r(K)$. 
So 
$$
f(K) = f_r(K) = f_r(x) \cap Sk(K) = f(x) \cap Sk(K).
$$

\bigskip

\noindent \emph{Case 6:} $x \in \dom(f_r) \setminus N$, 
case 4 fails, $M$ is the largest member of $f_r(x) \cap N$, and 
$f(x) = f_r(x) \cup f_w(M)$. 
By the maximality of $M$, and since 
$f_r(M) = f_r(x) \cap Sk(M)$, we have that 
$$
f_r(x) \cap N = f_r(M) \cup \{ M \} \subseteq f_w(M) \cup \{ M \},
$$
where the inclusion holds by Lemma 7.8(2) and the fact 
that $f_r(M) = f_r(M) \cap N$. 
Therefore, 
$$
f(x) \cap N = (f_r(x) \cup f_w(M)) \cap N = 
f_w(M) \cup \{ M \}.
$$
Also, since $M \in N$, clearly $f_r(M) \setminus N = \emptyset$, and therefore, 
$$
f(x) \setminus N = (f_r(x) \cup f_w(M)) \setminus N = f_r(x) \setminus N.
$$
To summarize, 
$$
f(x) \cap N = f_w(M) \cup \{ M \}, \ \ \ 
f(x) \setminus N = f_r(x) \setminus N.
$$

We split into the two cases of whether $K \in N$ or $K \notin N$. 
First, assume that $K \in N$. 
Then $K \in f_w(M) \cup \{ M \}$. 
So $f(K)$ is defined as in case 1 of Definition 7.11 and $f(K) = f_w(K)$. 
So it suffices to show that 
$$
f_w(K) = f(x) \cap Sk(K).
$$
For the forward inclusion, $K \in f_w(M) \cup \{ M \}$ implies that 
$$
f_w(K) \subseteq f_w(M) \subseteq f_r(x) \cup f_w(M) = f(x).
$$
So $f_w(K) \subseteq f(x) \cap Sk(K)$. 
For the reverse inclusion, let $J \in f(x) \cap Sk(K)$, and we will show 
that $J \in f_w(K)$. 
Then 
$$
J \in f(x) \cap N = f_w(M) \cup \{ M \}.
$$
Since $K \in f_w(M) \cup \{ M \}$ and $J \in Sk(K)$, clearly 
$J \ne M$. 
Therefore, $J \in f_w(M)$. 
Since $w$ is a condition, 
$$
J \in f_w(M) \cap Sk(K) = f_w(K).
$$

Secondly, assume that $K \notin N$. 
Then $K \in f(x) \setminus N = f_r(x) \setminus N$.
Hence, $f_r(K) = f_r(x) \cap Sk(K)$, since $r$ is a condition. 
As $K$ and $M$ are both in $f_r(x)$, they are membership comparable. 
But as $M \in N$ and $K \notin N$, clearly we must have that 
$M \in Sk(K)$. 
Since $M$ is the membership largest element of $f_r(x) \cap N$ and 
$M \in f_r(K)$, $M$ is the membership largest 
element of $f_r(K) \cap N$. 
Moreover, since $K \in f_r(x)$, 
$f_r(K) \cup \{ K \} \subseteq f_r(x)$, so there is no 
$\alpha \in \dom(f_r) \cap S \cap N$ with $N \cap \alpha \in f_r(K) \cup \{ K \}$.  
So $f(K)$ is defined as in case 6 of Definition 7.11, which means that 
$f(K) = f_r(K) \cup f_w(M)$. 
So
\begin{multline*}
f(K) = f_r(K) \cup f_w(M) = (f_r(x) \cap Sk(K)) \cup f_w(M) = \\
= (f_r(x) \cup f_w(M)) \cap (Sk(K) \cup f_w(M)) 
= f(x) \cap Sk(K),
\end{multline*}
where the last equality follows from the definition of $f(x)$ and 
the fact that $f_w(M) \subseteq Sk(M) \subseteq Sk(K)$. 
So $f(K) = f(x) \cap Sk(K)$.

\bigskip

\noindent \emph{Case 7:} $x = M \cap \alpha$, where $M \in A_r$, $N \le M$, 
and $\alpha \in (M \cap \dom(f_w) \cap S) \setminus \dom(f_r)$. 
Then 
$$
f(x) = f(\alpha) \cap Sk(M \cap \alpha),
$$
where $f(\alpha)$ is defined as in case 3 of Definition 7.11. 
So $K \in f(\alpha)$. 
Also, $x = M \cap \alpha \in f(\alpha)$ by the definition of $f(\alpha)$. 
By case 3 handled above, 
$$
f(K) = f(\alpha) \cap Sk(K), \ \ \ f(x) = f(\alpha) \cap Sk(x).
$$
We also know from Lemma 7.14 that 
$K \in f(x)$ implies that $K \in Sk(x)$, and hence 
$Sk(K) \subseteq Sk(x)$. 
Therefore, $Sk(x) \cap Sk(K) = Sk(K)$. 
Consequently, 
$$
f(K) = f(\alpha) \cap Sk(K) = 
f(\alpha) \cap Sk(x) \cap Sk(K) = f(x) \cap Sk(K).
$$
\end{proof}

This completes our analysis of $f_w \oplus_N f_r$. 
We now turn to amalgamating the $g$-components of $w$ and $r$.

\begin{definition}
Let $N \in \mathcal X$ be simple and $r \in D_N$. 
Suppose that $w \in N \cap \p$ and $w \le r \restriction N$. 
Let $f := f_w \oplus_N f_r$. 
Define $g_w \oplus_N g_r$ as the function $g$ with domain equal 
to the set of pairs $(K,x)$ such that $K \in f(x)$, such that for all 
$(K,x) \in \dom(g)$, 
$$
g(K,x) := \bigcup \{ g_w(K,y) \cup g_r(K,y) : 
x = y, \textrm{or} \ x \in f(y) \}.\footnote{As in the previous section, we let 
$g_w(K,y)$ denote the empty set in the case that $(K,y) \notin \dom(g_w)$, 
and similarly with $g_r$.}
$$
\end{definition}

The next lemma will be used to show that 
$g_w \oplus_N g_r$ satisfies requirement (4) of Definition 4.2 
for $w \oplus_N r$. 

\begin{lemma}
Let $N \in \mathcal X$ be simple and $r \in D_N$. 
Suppose that $w \in N \cap \p$ and $w \le r \restriction N$. 
Let $g := g_w \oplus_N g_r$. 
Then for any $(K,x)$ in $\dom(g)$, 
$$
g(K,x) \subseteq x \setminus \sup(K).
$$
\end{lemma}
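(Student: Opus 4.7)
The plan is to follow the same strategy as in Lemma 6.13, with the more intricate seven-case definition of $f_w \oplus_N f_r$ forcing additional casework. Fix $(K,x) \in \dom(g)$, so $K \in f(x)$. Every $\xi \in g(K,x)$ lies in $g_w(K,y) \cup g_r(K,y)$ for some $y$ with $x = y$ or $x \in f(y)$, and since $w$ and $r$ are conditions, any such $\xi$ satisfies $\sup(K) \le \xi$. It therefore suffices to show that $g_w(K,y) \cup g_r(K,y) \subseteq x$ in each of these two situations. The case $x = y$ is immediate from Definition 4.2(4) applied to $w$ and $r$, so the substantive work lies in the case $x \in f(y)$.

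For that case I would split according to the seven subcases of Definition 7.11, using Lemma 7.13(2,3) throughout: whenever $K$ and the relevant set both lie in $\dom(f_w)$ (respectively, both in $\dom(f_r)$), $K$ lies in $f_w$ (respectively $f_r$) of that set, allowing me to invoke Definition 4.2(4,5) for $w$ or $r$. The routine subcases are 1, 3, 5, and 7. Case 1 ($y \in \dom(f_w) \setminus S$, so $f(y) = f_w(y)$) uses Definition 4.2(5) for $w$, with Lemma 7.8(3) dominating any $g_r(K,y)$ contribution by $g_w(K,y)$. Case 3 involves only $g_w$ because $\alpha \notin \dom(f_r)$; Case 5 involves only $g_r$ because $y \notin N$ forces $(K,y) \notin \dom(g_w)$. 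Case 7 reduces to Case 3 via $f(M \cap \alpha) = f(\alpha) \cap Sk(M \cap \alpha)$. Case 2 combines Cases 1 and 5.

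Cases 4 and 6 are the substantive ones. In Case 4, $y \in \dom(f_r) \setminus N$, $\alpha \in \dom(f_r) \cap S \cap N$, $N \cap \alpha \in f_r(y) \cup \{y\}$, and $f(y) = f_w(\alpha) \cup f_r(y)$. The inclusion $g_r(K,y) \subseteq x$ follows from Definition 4.2(5) for $r$ applied to $K \in f_r(x)$ and $x \in f_r(y)$; and $g_w(K,\alpha) \subseteq x$ follows because $g_w(K,\alpha) \subseteq \alpha$, together with $K \in N$ forcing $g_w(K,\alpha) \subseteq N \cap \alpha \subseteq x$ (since $N \cap \alpha$ is an initial segment of $x$). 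In Case 6, $M$ is the membership largest element of $f_r(y) \cap N$ and $f(y) = f_r(y) \cup f_w(M)$, so I would route through $M$: when $x \in f_w(M)$, the fact that $w$ is a condition together with $K \in Sk(x)$ gives $K \in f_w(x)$, whence $g_w(K,M) \subseteq g_w(K,x) \subseteq x$, and $g_r(K,y) \subseteq g_r(K,M) \subseteq g_w(K,M) \subseteq x$ via Definition 4.2(5) for $r$ and Lemma 7.8(3); when $x \in f_r(y) \setminus N$, the two pieces are handled analogously using $K \in f_r(x)$ and the same routing through $M$.

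The main obstacle is the bookkeeping in Case 6 when $K$ lies in $N$ but $y$ does not: one must coordinate the $\in$-chain structure of $f_r(y)$ with that of $f_w(M)$ to ensure the membership hypotheses of Definition 4.2(5) are available in both $w$ and $r$. This is the analog of the direct inclusion $g_q(K,y) \subseteq g_w(K,y)$ that kept the uncountable proof short; here it is mediated model-by-model through $M \in f_r(y) \cap N$ because $N \cap \kappa$ is a countable set rather than an ordinal cutoff.
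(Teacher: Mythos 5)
Your overall frame (reduce to showing $g_w(K,y) \cup g_r(K,y) \subseteq x$ whenever $x \in f(y)$, then split on the seven cases of Definition 7.11, using Lemma 7.13 and Lemma 7.8(3) to transfer membership facts back to $w$ and $r$) matches the paper, and your treatments of Cases 1, 3, 5, 6, and 7 are essentially correct. But Case 4, which is where the real content of the lemma lies, has a genuine gap. You bound $g_r(K,y)$ by applying Definition 4.2(5) to ``$K \in f_r(x)$ and $x \in f_r(y)$'', but in Case 4 the set $x$ need not be in $\dom(f_r)$ at all: $x \in f(y) = f_w(\alpha) \cup f_r(y)$, and the problematic subcase is $x \in f_w(\alpha) \setminus \dom(f_r)$ (so $x \in N$ while $y \notin N$). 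There your argument is unavailable, and routine monotonicity does not close it: one can get $g_r(K,y) \subseteq g_r(K,N \cap \alpha) \subseteq N \cap \alpha$, but $x$ sits below $N \cap \alpha$ in the chain (indeed $x \in Sk(N \cap \alpha)$), so this does not yield $g_r(K,y) \subseteq x$. The paper gets through precisely by invoking property (3) of Definition 7.3: since $r \in D_N$, $g_r(K,N\cap\alpha) \subseteq g_r(K,\alpha)$, an ``upward'' inclusion that Definition 4.2(5) does not provide, and then $g_r(K,\alpha) \subseteq g_w(K,\alpha) \subseteq g_w(K,x) \subseteq x$ using Lemma 7.8(3) and the fact that $w$ is a condition. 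Your proposal never invokes Definition 7.3(3); that clause of $D_N$ exists exactly for this step (it is the reason Lemma 4.9 is proved), so its absence is a missing idea rather than an omitted routine detail.

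Two smaller points. The inclusion ``$g_w(K,\alpha) \subseteq N \cap \alpha \subseteq x$'' that you place in Case 4 is really the mechanism for Cases 2 and 3, where the index $y$ is $\alpha$ itself (or $M \cap \alpha$); in Case 4 the term $g_w(K,y)$ vanishes because $y \notin N$, so $g_w(K,\alpha)$ is not a contribution attached to that $y$. Moreover ``$N \cap \alpha \subseteq x$'' is false exactly when $x \in f_w(\alpha)$, since then $x \in Sk(N \cap \alpha)$ and the inclusion reverses; it holds only when $x \notin N$ and $x$ is compared with $N \cap \alpha$ inside the $\in$-chain $f_r(\alpha)$ or $f_r(y) \cup \{y\}$. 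Relatedly, dismissing Case 2 as ``combines Cases 1 and 5'' skips the cross-term $K \in f_w(\alpha)$, $x \in f_r(\alpha) \setminus N$, which is precisely where that comparison with $N \cap \alpha$ is needed.
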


\begin{proof}
Let $(K,x) \in \dom(g)$. 
Then $K \in f(x)$. 
By definition, we have that 
$$
g(K,x) = \bigcup \{ g_w(K,y) \cup g_r(K,y) : 
x = y, \textrm{or} \ x \in f(y) \}.
$$
Since $g_w(K,x)$ and $g_r(K,x)$ are subsets of $x \setminus \sup(K)$, 
it suffices to show that if $x \in f(y)$, then 
$g_w(K,y)$ and $g_r(K,y)$ are subsets of $x \setminus \sup(K)$. 
Since $w$ are $r$ are conditions, $g_w(K,y)$ and 
$g_r(K,y)$ are subsets of $y \setminus \sup(K)$. 
Hence, it suffices to show that $g_w(K,y)$ 
and $g_r(K,y)$ are subsets of $x$. 

\bigskip

\noindent \emph{Claim 1:} The result holds if $K$, $x$, and $y$ are all in $N$.

\bigskip

If $K$, $x$, and $y$ are all in $N$, then 
$K \in f(x)$ and $x \in f(y)$ imply by Lemma 7.13(1,2) that 
$K \in f_w(x)$ and $x \in f_w(y)$. 
Since $w$ is a condition, it follows that 
$g_w(K,y) \subseteq g_w(K,x) \subseteq x$. 
To show that $g_r(K,y) \subseteq x$, assume that $(K,y) \in \dom(g_r)$, 
which means that $K \in f_r(y)$. 
Then by Lemma 7.8(3), $g_r(K,y) \subseteq g_w(K,y)$. 
But we just showed that $g_w(K,y) \subseteq x$, so we are done.

\bigskip

\noindent \emph{Claim 2:} The result holds if $K$, $x$, and $y$ are all in 
$\dom(f_r)$. 

\bigskip

If $K$, $x$, and $y$ are all in $\dom(f_r)$, then 
$K \in f(x)$ and $x \in f(y)$ imply by Lemma 7.13(3) that 
$K \in f_r(x)$ and $x \in f_r(y)$. 
Since $r$ is a condition, 
$$
g_r(K,y) \subseteq g_r(K,x) \subseteq x.
$$
To show that $g_w(K,y) \subseteq x$, 
assume that $(K,y) \in \dom(g_w)$, which means that $K \in f_w(y)$. 
In particular, $K$ and $y$ are in $N$. 
So if $x$ is also in $N$, then we are done by Claim 1. 
Assume that $x \notin N$.

Note that $y \in S$. 
For if $y \notin S$, then $y$ is countable. 
So $y \in N$ implies that $Sk(y) \subseteq N$, 
and hence $x \in N$, which is false. 
Therefore, $y = \alpha$ for some $\alpha \in \dom(f_r) \cap S$. 
Since $N \in A_r$ and $\alpha \in N$, $N \cap \alpha \in f_r(\alpha)$ 
by Definition 4.2(6). 
As $x \in f_r(\alpha)$, $x$ and $N \cap \alpha$ are membership comparable. 
But $x \notin N$. 
Therefore, either $x = N \cap \alpha$ or 
$N \cap \alpha \in Sk(x)$. 
In either case, $N \cap \alpha \subseteq x$. 
As $w \in N$, we have that 
$$
g_w(K,y) = g_w(K,\alpha) \subseteq N \cap \alpha \subseteq x.
$$

\bigskip

The rest of the proof splits up into the cases 1--7 of Definition 7.11 
in the definition of $f(y)$.

\bigskip

\noindent \emph{Case 1:} $y \in \dom(f_w) \setminus S$ and $f(y) = f_w(y)$. 
Then $K$, $x$, and $y$ are in $N$. 
So we are done by Claim 1.

\bigskip

\noindent \emph{Case 2:} $y = \alpha \in \dom(f_w) \cap S \cap \dom(f_r)$ 
and $f(y) = f_w(\alpha) \cup f_r(\alpha)$. 
Then $y \in N$. 
By Definition 4.2(6), $N \cap \alpha \in f_r(\alpha)$.

Assume that $(K,\alpha) \in \dom(g_w)$, which means that 
$K \in f_w(\alpha)$. 
We will show that $g_w(K,\alpha) \subseteq x$. 
In particular, $K \in N$. 
If $x$ is also in $N$, then we are done by Claim 1. 
So assume that $x \notin N$.

Since $x \in f(y) = f_w(\alpha) \cup f_r(\alpha)$ and $x \notin N$, 
we have that $x \in f_r(\alpha)$. 
So both $N \cap \alpha$ and $x$ are in $f_r(\alpha)$. 
As $x \notin N$, either $x = N \cap \alpha$ or $N \cap \alpha \in Sk(x)$. 
In either case, $N \cap \alpha \subseteq x$. 
But $g_w(K,\alpha) \subseteq N \cap \alpha$, since $w \in N$. 
Therefore, $g_w(K,\alpha) \subseteq x$.

Assume that $(K,\alpha) \in \dom(g_r)$, 
which means that $K \in f_r(\alpha)$. 
We will show that $g_r(K,\alpha) \subseteq x$. 
So $K$ and $\alpha$ are both in $\dom(f_r)$. 
If $x$ is also in $\dom(f_r)$, then we are done by Claim 2. 
So assume that $x \notin \dom(f_r)$. 
But $x \in f(y) = f_w(\alpha) \cup f_r(\alpha)$, so 
$x \in f_w(\alpha)$. 
So $x \in N$. 
As $K \in Sk(x)$, also $K \in N$. 
So $K$, $x$, and $\alpha$ are all in $N$, and we are done by Claim 1.

\bigskip

\noindent \emph{Case 3:} $y = \alpha \in (\dom(f_w) \cap S) \setminus \dom(f_r)$. 
Since $\alpha \notin \dom(f_r)$, we have that $(K,\alpha) \notin \dom(g_r)$. 
Therefore, $g_r(K,\alpha) = \emptyset \subseteq x$.

Assume that $(K,\alpha) \in \dom(g_w)$, which means that 
$K \in f_w(\alpha)$. 
We will show that $g_w(K,\alpha) \subseteq x$. 
In particular, $K$ and $\alpha$ are both in $N$. 
So if $x$ is also in $N$, then we are done by Claim 1. 

Suppose that $x \notin N$. 
Then by the definition of $f(\alpha)$, we have that 
$x = M \cap \alpha$, for some $M \in A_r$ with $N \le M$ and 
$\alpha \in M$. 
Then $\alpha \in M \cap N \cap \kappa$, so $\alpha < \beta_{M,N}$ 
by Proposition 2.11. 
Since $N \le M$, we have that 
$N \cap \alpha \subseteq M \cap \alpha = x$. 
As $w \in N$, it follows that 
$g_w(K,\alpha) \subseteq N \cap \alpha \subseteq x$.

\bigskip

Before handling cases 4--7, let us note that in each of these cases, $y \notin N$. 
This is immediate in cases 4, 5, and 6, and follows from Lemma 7.9 in case 7. 
Consequently, $(K,y) \notin \dom(g_w)$, and hence 
$g_w(K,y) = \emptyset \subseteq x$. 
Thus, we only need to show that 
$g_r(K,y) \subseteq x$.

Assume that $(K,y) \in \dom(g_r)$, which means that 
$K \in f_r(y)$. 
We will show that $g_r(K,y) \subseteq x$. 
If $x \in \dom(f_r)$, then we are done by Claim 2. 
Hence, we may assume that $x \notin \dom(f_r)$.

\bigskip

\noindent \emph{Case 4:} For some $\alpha \in \dom(f_r) \cap S \cap N$, 
$N \cap \alpha \in f_r(y) \cup \{ y \}$, and 
$f(y) = f_w(\alpha) \cup f_r(y)$. 
By Definition 4.2(6), $N \cap \alpha \in f_r(\alpha)$. 
Since $x \notin \dom(f_r)$, it follows that 
$x \in f_w(\alpha)$. 
In particular, $x \in N$, and therefore also $K \in N$.

By Lemma 7.12(2), since $\alpha \in \dom(f_w)$, 
$f_w(\alpha) \subseteq f(\alpha)$. 
Hence, $x \in f(\alpha)$. 
We also know that $K \in f(x)$. 
Thus, by case 2 handled above, 
$$
g_r(K,\alpha) \subseteq x.
$$
Since $K \in f_r(y)$, $N \cap \alpha \in f_r(y) \cup \{ y \}$, and $K \in N$, 
we have that $K \in f_r(N \cap \alpha)$. 
Therefore, $K \in f_r(\alpha)$. 
By Definition 4.2(5), it follows that 
$$
g_r(K,y) \subseteq g_r(K,N \cap \alpha).
$$
Since $r \in D_N$, by Definition 7.3(3) we have that 
$$
g_r(K,N \cap \alpha) \subseteq g_r(K,\alpha).
$$
Putting it all together, 
$$
g_r(K,y) \subseteq g_r(K,N \cap \alpha) \subseteq g_r(K,\alpha) \subseteq x.
$$

\bigskip

\noindent \emph{Case 5:} 
$y \in \dom(f_r) \setminus N$, $f_r(y) \cap N = \emptyset$, and 
$f(y) = f_r(y)$. 
Since $x \in f(y) = f_r(y)$, we have that $x \in \dom(f_r)$. 
But this contradicts the fact that $x \notin \dom(f_r)$.

\bigskip

\noindent \emph{Case 6:} 
$y \in \dom(f_r) \setminus N$, and 
$f(y) = f_r(y) \cup f_w(M)$, 
where $M$ is the membership largest element of $f_r(y) \cap N$. 
So either $x \in f_r(y)$ or $x \in f_w(M)$. 
Since $x \notin \dom(f_r)$, we have that $x \in f_w(M)$. 
Then $x \in Sk(M)$, and thus $K \in Sk(M)$. 

Since $K$ and $M$ are in $f_r(y)$ and $K \in Sk(M)$, 
it follows that $K \in f_r(M)$. 
Since $r$ is a condition, $g_r(K,y) \subseteq g_r(K,M)$. 
Now $x \in f_w(M)$, and $f(M) = f_w(M)$ by Definition 7.11(1). 
Thus, $K \in f(x)$ and $x \in f(M)$. 
By case 1 handled above, 
$g_r(K,M) \subseteq x$. 
Thus, $g_r(K,y) \subseteq g_r(K,M) \subseteq x$.

\bigskip

\noindent \emph{Case 7:} 
$y = M \cap \alpha$, where $M \in A_r$, $N \le M$, and 
$\alpha \in (M \cap \dom(f_w) \cap S) \setminus \dom(f_r)$. 
By Lemma 7.9, $y$ is not in $\dom(f_r)$. 
But we assumed that $K \in f_r(y)$, so we have a contradiction. 
Thus, this case does not occur.
\end{proof}

We are ready to define the amalgam $w \oplus_N r$.

\begin{definition}
Let $N \in \mathcal X$ be simple and $r \in D_N$. 
Suppose that $w \in N \cap \p$ and $w \le r \restriction N$. 
Define $w \oplus_N r$ as the object $(f,g,A)$ satisfying:
\begin{enumerate}
\item $f := f_w \oplus_N f_r$;
\item $g := g_w \oplus_N g_r$;
\item $A := A_w \cup A_r$.
\end{enumerate}
\end{definition}

We will now show that $w \oplus_N r$ is a condition 
which is below $w$ and $r$. 
We have completed most of the work for this proof in the 
preceding lemmas.

\begin{proposition}
Let $N \in \mathcal X$ be simple and $r \in D_N$. 
Then for all $w \le r \restriction N$ in $N \cap \p$, 
$w$ and $r$ are compatible. 
In fact, $w \oplus_N r$ is in $\p$ and $w \oplus_N r \le w, r$.
\end{proposition}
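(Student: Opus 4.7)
The plan is to mirror the structure of the proof of Proposition 6.15, verifying the seven requirements of Definition 4.2 for $w \oplus_N r = (f,g,A)$ and then checking that $w \oplus_N r$ lies below both $w$ and $r$. Most of the heavy lifting has already been packaged into Lemmas 7.12--7.17, so the proposition will largely be an assembly job, with a handful of case analyses for the requirements not yet covered by those lemmas.

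First, I would verify requirement (1): $A = A_w \cup A_r$ is adequate. By Lemma 7.8(1) we have $A_r \cap N \subseteq A_w$, and since $w \in N$, also $A_w \subseteq N$. Since $r \in D_N$ gives that $M < N$ implies $M \cap N \in A_r$ for every $M \in A_r$, the hypotheses of Proposition 2.25 are met, yielding adequacy of $A_w \cup A_r$. Requirements (2) and (3) follow immediately from Lemmas 7.14 and 7.15. For (4), Definition 7.16 already ensures $\dom(g)$ is the correct set and that $g(K,x)$ is finite, while Lemma 7.17 gives the containment in $x \setminus \sup(K)$.

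Next I would handle requirement (5), which is a direct computation from Definition 7.16: if $K \in f(L)$ and $L \in f(x)$, then by requirement (3) just verified, $K \in f(x)$; then for any $\xi \in g(K,x)$, write $\xi \in g_w(K,z) \cup g_r(K,z)$ where $z = x$ or $x \in f(z)$. In the first case take $z = x$; in the second, $L \in f(x)$ and $x \in f(z)$ force $L \in f(z)$ via (3), so in either case $L \in f(z)$, and by definition $\xi \in g(K,L)$. Requirement (7) reduces via Proposition 3.5 (with $N$ playing the role of the simple countable model) to $r^*(A) \cap S = (r^*(A_w) \cup r^*(A_r)) \cap S \subseteq \dom(f_w) \cup \dom(f_r) \subseteq \dom(f)$, using that $w$ and $r$ are conditions and Lemma 7.12(1).

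The main obstacle is requirement (6): given $\alpha \in \dom(f) \cap S$ and $M \in A$ with $\alpha \in M$, show $M \cap \alpha \in f(\alpha)$. This splits on whether $M \in A_w$ or $M \in A_r$, and whether $\alpha \in \dom(f_w)$ or $\alpha \in \dom(f_r) \setminus \dom(f_w)$. If $M$ and $\alpha$ are both inside $N$, then since $M \in A_w$ (using simplicity of $N$, $D_N$, and Lemma 7.8(1) when $M$ originally lies in $A_r$), we obtain $M \cap \alpha \in f_w(\alpha) \subseteq f(\alpha)$ via the relevant case of Definition 7.11. If $\alpha \notin N$ (so $\alpha \in \dom(f_r) \setminus \dom(f_w)$), then $M \in A_w$ would force $\alpha \in N$, so $M \in A_r$ and $M \cap \alpha \in f_r(\alpha) \subseteq f(\alpha)$. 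The remaining subtle case is $\alpha \in \dom(f_w) \cap S$, $\alpha \notin \dom(f_r)$, and $M \in A_r$ with $N \le M$; here $M \cap \alpha$ lies in $f(\alpha)$ by the explicit clause (3) of Definition 7.11, and when $M < N$ we instead use $M \cap N \in A_r \cap N \subseteq A_w$ and apply the $A_w$-case. Finally I would check $w \oplus_N r \le w$ using Lemma 7.12(2) and Lemma 7.13(2), and $w \oplus_N r \le r$ using Lemma 7.12(3) and Lemma 7.13(3), with property (c) of Definition 4.2 immediate from Definition 7.16.
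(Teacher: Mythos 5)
Your proposal is correct and takes essentially the same route as the paper's own proof, which likewise mirrors Proposition 6.15: requirements (1)--(4) and (7) are discharged via Proposition 2.25, Lemmas 7.14, 7.15, 7.17, and Proposition 3.5, requirement (5) by the same direct computation from Definition 7.16, and the orderings $w \oplus_N r \le w, r$ via Lemmas 7.12 and 7.13. Two small points to tighten in your treatment of requirement (6): your case enumeration omits the easy case $\alpha \in \dom(f_w) \cap \dom(f_r)$ with $M \in A_r \setminus N$ (handled exactly like your second case, since $M \cap \alpha \in f_r(\alpha) \subseteq f(\alpha)$), and in the subcase $M < N$ you still need the identity $M \cap \alpha = (M \cap N) \cap \alpha$ before concluding, which follows from Proposition 2.11 (giving $\alpha < \beta_{M,N}$ since $\alpha \in M \cap N \cap \kappa$) together with Lemma 2.15.
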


\begin{proof}
Let $w \oplus_N r = (f,g,A)$. 
We will prove that $w \oplus_N r$ is a condition which is below 
$w$ and $r$. 
To show that $w \oplus_N r$ is a condition, 
we verify requirements (1)--(7) of Definition 4.2.

\bigskip

(1) We apply Proposition 2.25. 
Since $r \in D_N$, we have that for all $M \in A_r$, if $M < N$ then 
$M \cap N \in A_r$. 
Also, $A_w$ is adequate, and by Lemma 7.8(1), 
$$
A_r \cap N \subseteq A_w \subseteq N.
$$
By Proposition 2.25, $A_w \cup A_r = A$ is adequate.

\bigskip

(2,3) These statements are immediate from Lemmas 7.14 and 7.15.

\bigskip

(4) By Definition 7.16, $g$ is a function whose domain is the set of 
pairs $(K,x)$ such that $K \in f(x)$. 
And by Lemma 7.17, for all $(K,x) \in \dom(g)$, 
$g(K,x) \subseteq x \setminus \sup(K)$.
Also, $g(K,x)$ is finite by Definition 7.16.

\bigskip

(5) Let $K \in f(L)$ and $L \in f(x)$. 
We will show that $g(K,x) \subseteq g(K,L)$. 
So let $\xi \in g(K,x)$. 
Then by Definition 7.16, either $\xi \in g_w(K,x) \cup g_r(K,x)$, 
or for some $y$ with $x \in f(y)$, 
$\xi \in g_w(K,y) \cup g_r(K,y)$. 
In the second case, $L \in f(x)$ and $x \in f(y)$ imply by 
requirement (3) that $L \in f(y)$. 
So letting $z := x$ in the first case, and $z := y$ in the second case, 
we have that $L \in f(z)$ and 
$\xi \in g_w(K,z) \cup g_r(K,z)$. 
By Definition 7.16, it follows that $\xi \in g(K,L)$.

\bigskip

(6) Let $\alpha \in \dom(f) \cap S$ and $M \in A$ with $\alpha \in M$. 
We will show that $M \cap \alpha \in f(\alpha)$. 
By the definition of $\dom(f)$ given in Definition 7.11, clearly 
$\alpha \in \dom(f_w) \cup \dom(f_r)$. 
Also, $A = A_w \cup A_r$, so either $M \in A_w$ or $M \in A_r$.

First, assume that $\alpha \in \dom(f_r)$. 
If $M \in A_r$, then since $r$ is a condition, 
$M \cap \alpha \in f_r(\alpha) \subseteq f(\alpha)$. 
Suppose that $M \in A_w$. 
Then $\alpha \in M \in N$, 
so $\alpha \in \dom(f_r) \cap N \subseteq \dom(f_w)$ 
by Lemma 7.8(2). 
Since $w$ is a condition, 
$M \cap \alpha \in f_w(\alpha) \subseteq f(\alpha)$. 

Secondly, assume that $\alpha \in \dom(f_w) \setminus \dom(f_r)$. 
If $M \in A_w$, then since $w$ is a condition, 
$M \cap \alpha \in f_w(\alpha) \subseteq f(\alpha)$. 
Suppose that $M \in A_r$. 
If $N \le M$, then $M \cap \alpha \in f(\alpha)$ by 
Definition 7.11(3).

Suppose that $M < N$. 
Then since $r \in D_N$, we have that 
$M \cap N \in A_r \cap N \subseteq A_w$ 
by Lemma 7.8(1). 
Since $\alpha \in \dom(f_w)$ and $\alpha \in M \cap N$, 
it follows that 
$M \cap N \cap \alpha \in f_w(\alpha) \subseteq f(\alpha)$. 
Now $\alpha \in M \cap N \cap \kappa$ implies that 
$\alpha < \beta_{M,N}$ by Proposition 2.11. 
Since $M < N$, Lemma 2.15 implies that 
$$
M \cap \alpha = M \cap \beta_{M,N} \cap \alpha = 
M \cap N \cap \alpha.
$$
Thus, $M \cap \alpha \in f(\alpha)$.

\bigskip

(7) As in (1) above, the assumptions of Proposition 3.5 hold 
for $A_r$ and $A_w$. 
Therefore, 
$$
r^*(A) = r^*(A_w \cup A_r) = r^*(A_w) \cup r^*(A_r).
$$
As $w$ and $r$ are conditions, 
$$
r^*(A_w) \cap S \subseteq \dom(f_w), \ \ \ 
r^*(A_r) \cap S \subseteq \dom(f_r).
$$
But $\dom(f_w) \subseteq \dom(f)$ and 
$\dom(f_r) \subseteq \dom(f)$. 
Hence, 
$$
r^*(A) \cap S = 
(r^*(A_w) \cap S) \cup (r^*(A_r) \cap S) 
\subseteq \dom(f).
$$

\bigskip

This completes the proof that $w \oplus_N r$ is a condition. 
Now we show that $w \oplus_N r \le w, r$. 
First, we prove that $w \oplus_N r \le w$ by verifying 
properties (a)--(d) of Definition 4.2 for $w$. 
(a) Since $A = A_w \cup A_r$, clearly $A_w \subseteq A$. 
(b) follows from Lemma 7.12(1,2). 
(c) is immediate from Definition 7.16, and (d) was 
proved in Lemma 7.13(2).

Secondly, we prove that $w \oplus_N r \le r$ by 
verifying properties (a)--(d) of Definition 4.2 for $r$. 
(a) Since $A = A_w \cup A_r$, clearly $A_r \subseteq A$. 
(b) follows from Lemma 7.12(1,3). 
(c) is immediate from Definition 7.16, and (d) was 
proved in Lemma 7.13(3).
\end{proof}

\begin{corollary}
The forcing poset $\p$ is strongly proper on a stationary set. 
In particular, it preserves $\omega_1$.
\end{corollary}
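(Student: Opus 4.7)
The plan is to show that any simple countable elementary substructure $N \in \mathcal X$ (of which there are stationarily many in $P_{\omega_1}(H(\lambda))$ by Assumption 2.22) witnesses strong properness. So fix such a simple $N$ and a condition $p \in N \cap \p$. The goal is to produce $q \le p$ which is strongly $N$-generic, and the natural candidate is any condition obtained by the following two-step construction: first apply Lemma 7.1 to extend $p$ to some $q_0 \le p$ with $N \in A_{q_0}$; then apply Lemma 7.4 to $q_0$ to get $q \le q_0$ with $q \in D_N$.

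To verify that $q$ is strongly $N$-generic, let $D$ be an arbitrary dense subset of $N \cap \p$ and let $r \le q$ be given; it suffices to produce an element of $D$ compatible with $r$. Since $N \in A_q \subseteq A_r$, a second application of Lemma 7.4 yields $r' \le r$ with $r' \in D_N$. By Lemma 7.6, $r' \restriction N$ is a condition in $N \cap \p$ with $r' \le r' \restriction N$. By density of $D$ in $N \cap \p$, pick $w \in D$ with $w \le r' \restriction N$. Then the hypotheses of Proposition 7.19 are satisfied, so $w \oplus_N r'$ witnesses that $w$ and $r'$ are compatible; hence $w \in D$ is compatible with $r$, as required.

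The preservation of $\omega_1$ then follows from the observation, noted in Section 1, that strong properness on a stationary set implies $N$-genericity in the usual sense and hence preservation of $\omega_1$.

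Essentially all the real work is already packaged into earlier results: Lemma 7.1 supplies the initial extension containing $N$, Lemma 7.4 supplies the density of $D_N$ below such conditions, Lemma 7.6 builds the projection $r' \restriction N$ into $N \cap \p$, and Proposition 7.19 (the amalgamation construction $w \oplus_N r'$, which was the technically delicate step) gives the compatibility. Consequently the corollary is essentially a bookkeeping argument, and the main subtlety is only to remember that a given $r \le q$ need not itself lie in $D_N$, so one must pass to a further extension $r' \in D_N$ before being able to form the projection $r' \restriction N$ and apply Proposition 7.19.
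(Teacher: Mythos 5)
Your proof is correct and follows essentially the same route as the paper: extend $p$ using Lemma 7.1 so that $N$ is in the side condition, then for any $r$ below that condition pass to an extension in $D_N$ via Lemma 7.4, restrict to $N$ via Lemma 7.6, and amalgamate with Proposition 7.19. The only difference is your extra preliminary step of moving $q$ itself into $D_N$, which is harmless but unnecessary since the pass to $D_N$ is redone below each $r$ anyway.
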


\begin{proof}
By Assumption 2.22, the set of $N \in \mathcal X$ such that $N$ is simple 
is stationary in $P_{\omega_1}(H(\lambda))$. 
So it suffices to show that for all simple $N \in \mathcal X$, for all 
$p \in N \cap \p$, there is $q \le p$ such that $q$ is strongly $N$-generic. 

Let $p \in N \cap \p$. 
By Lemma 7.1, fix $q \le p$ with $N \in A_q$. 
We claim that $q$ is strongly $N$-generic.
So let $D$ be a dense subset of $N \cap \p$, and we will show that 
$D$ is predense below $q$. 
Let $r \le q$, and we will find $w \in D$ which is compatible 
with $r$.

Since $N \in A_{r}$, we can apply Lemma 7.4 and fix $s \le r$ such that 
$s \in D_N$. 
Then by Lemma 7.6, $s \restriction N$ is in $N \cap \p$. 
As $D$ is dense in $N \cap \p$, fix $w \le s \restriction N$ in $D$. 
By Proposition 7.19, $w$ and $s$ are compatible. 
Since $s \le r$, it follows that $w$ and $r$ are compatible.
\end{proof}

It follows from Corollaries 6.16 and 7.20 that $\p$ preserves 
$\omega_1$ and $\kappa$. 
By the next lemma, no cardinal in between $\omega_1$ and $\kappa$ 
survives.

\begin{lemma}
If $\mu$ is a cardinal and $\omega_1 < \mu < \kappa$, then 
$\p$ forces that $\mu$ is not a cardinal.
\end{lemma}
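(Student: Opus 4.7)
The plan is to show, for every $\alpha < \kappa$, that $\p$ forces $|\alpha| \le \omega_1$; applied to $\alpha = \mu$ this gives $|\mu|^{V[G]} \le \omega_1 < \mu$, so $\mu$ is not a cardinal in $V[G]$. Fix $\alpha < \kappa$ and let $G$ be a $V$-generic filter on $\p$. I would work with the sets
$$
\mathcal{M}_G := \{ M \in \mathcal{X} : \exists p \in G, \ M \in A_p \}, \qquad
\mathcal{M}_G^\alpha := \{ M \in \mathcal{M}_G : \alpha \in M \},
$$
and establish the bound on $|\alpha|^{V[G]}$ by showing first that $\alpha$ is covered by $\{M \cap \alpha : M \in \mathcal{M}_G^\alpha\}$, and second that this family has size at most $\omega_1$ in $V[G]$.

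The first step is a density argument. For each $\gamma < \alpha$ and each $p \in \p$, by Assumption 2.22 (or just by stationarity of $\mathcal{X}$ in $P_{\omega_1}(H(\lambda))$) there is $N \in \mathcal{X}$ with $p, \gamma, \alpha \in N$; by Lemma 7.1 there is $q \le p$ with $N \in A_q$. Hence the set of conditions $r$ such that some $M \in A_r$ contains both $\gamma$ and $\alpha$ is dense in $\p$, and so some $M \in \mathcal{M}_G^\alpha$ satisfies $\gamma \in M \cap \alpha$. Thus $\alpha \subseteq \bigcup\{ M \cap \alpha : M \in \mathcal{M}_G^\alpha \}$.

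The second step uses adequacy to show that $M \cap \alpha$ is determined by $M \cap \omega_1$ for $M \in \mathcal{M}_G^\alpha$. Given $M, M' \in \mathcal{M}_G^\alpha$ with $M \cap \omega_1 = M' \cap \omega_1$, pick $p \in G$ with $M, M' \in A_p$ (possible since $G$ is a filter and the ordering on $\p$ preserves side conditions); then $\{M, M'\}$ is adequate, so by Lemma 2.17(2), $M \sim M'$, which gives $M \cap \beta_{M,M'} = M' \cap \beta_{M,M'}$. Since $\alpha \in M \cap M' \cap \kappa$, Proposition 2.11 yields $\alpha < \beta_{M,M'}$, and intersecting both sides of the previous equality with $\alpha$ gives $M \cap \alpha = M' \cap \alpha$. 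Consequently the map $M \mapsto M \cap \omega_1$ factors through an injection $\{ M \cap \alpha : M \in \mathcal{M}_G^\alpha \} \hookrightarrow \omega_1$, so this family has cardinality at most $\omega_1$ in $V[G]$.

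Combining the two steps, $\alpha$ is the union in $V[G]$ of at most $\omega_1$ many sets, each of which is countable in $V$ and hence of cardinality at most $\omega_1$ in $V[G]$; therefore $|\alpha|^{V[G]} \le \omega_1 \cdot \omega = \omega_1$. I do not anticipate a significant obstacle: the only point requiring care is the density argument in Step 1, which reduces directly to Lemma 7.1 after choosing $N \in \mathcal{X}$ with $p \in N$.
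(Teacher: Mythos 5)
Your proof is correct and follows essentially the same route as the paper's: cover $\mu$ by the traces $M \cap \mu$ of models appearing in side conditions of the generic filter (via a density argument using Lemma 7.1), and then use adequacy, Lemma 2.17, and Proposition 2.11 to see that $M \cap \mu$ is determined by $M \cap \omega_1$, giving at most $\omega_1$ many countable pieces. The only cosmetic difference is that you state the covering claim for arbitrary $\alpha < \kappa$ rather than just $\alpha = \mu$; this changes nothing in substance.
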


\begin{proof}
Let $G$ be a generic filter on $\p$. 
Define 
$$
A := \{ M : \exists p \in G \ M \in A_p, \ \mu \in M \}.
$$
Consider $M$ and $N$ in $A$. 
Then there is $p \in G$ with $M$ and $N$ in $A_p$. 
Since $\mu \in M \cap N \cap \kappa$, 
$\mu < \beta_{M,N}$ by Proposition 2.11.
If $M \cap \omega_1 = N \cap \omega_1$, then 
$M \sim N$ by Lemma 2.17, and 
therefore, $M \cap \beta_{M,N} = N \cap \beta_{M,N}$. 
Since $\mu < \beta_{M,N}$, it follows that 
$M \cap \mu = N \cap \mu$. 
This proves that the map which sends a member of the set 
$$
A^* := \{ M \cap \mu : M \in A \}
$$
to its intersection with $\omega_1$ is an injective function 
from $A^*$ into $\omega_1$. 
Hence, in $V[G]$, $A^*$ has size less than or equal to $\omega_1$.

A density argument using Lemma 7.1 shows that 
for all $\xi < \mu$, there is $N \in A$ with 
$\xi \in N$. 
It follows that $\bigcup A^* = \mu$.
So in $V[G]$, 
$\mu$ is the union of a collection of countable sets of size 
at most $\omega_1$. 
This implies that in $V[G]$, $\mu$ has size at most $\omega_1$. 
Since $\omega_1 < \mu$, $\mu$ is not a cardinal in $V[G]$.
\end{proof}

\begin{corollary}
The forcing poset $\p$ forces that $\kappa = \omega_2$.
\end{corollary}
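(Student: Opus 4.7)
The plan is to combine the three cardinal preservation/collapse facts already established about $\p$. First, by Corollary 7.20, $\p$ is strongly proper on a stationary set and hence preserves $\omega_1$, so $\omega_1^{V[G]} = \omega_1^V$. Second, by Corollary 6.16, $\p$ is $\kappa$-c.c., and since $\kappa$ is a regular uncountable cardinal in $V$, it remains a cardinal in $V[G]$. Third, Lemma 7.21 tells us that every cardinal $\mu$ of $V$ with $\omega_1 < \mu < \kappa$ is collapsed by $\p$ to have cardinality at most $\omega_1$ in $V[G]$.

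Next I would argue that $\kappa$ is precisely the successor of $\omega_1$ in $V[G]$. Let $G$ be $V$-generic on $\p$ and let $\mu$ be any ordinal with $\omega_1 < \mu < \kappa$. Either $\mu$ is a cardinal in $V$, in which case $|\mu|^{V[G]} \le \omega_1$ by Lemma 7.21, or $\mu$ is not a cardinal in $V$, in which case $|\mu|^V = \nu$ for some cardinal $\nu$ of $V$ with $\omega_1 \le \nu < \kappa$; then $|\mu|^{V[G]} \le |\nu|^{V[G]} \le \omega_1$, again by Lemma 7.21 (or trivially if $\nu = \omega_1$). In either case, no ordinal strictly between $\omega_1^{V[G]}$ and $\kappa$ is a cardinal of $V[G]$, so $\kappa$ is the least cardinal of $V[G]$ greater than $\omega_1^{V[G]}$. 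Therefore $\p$ forces $\kappa = \omega_2$.

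There is essentially no obstacle here, since the work has been done in the preceding two corollaries and Lemma 7.21; the only minor point to be careful about is handling non-cardinal ordinals between $\omega_1$ and $\kappa$, which is dispatched by the observation above that their $V$-cardinalities lie in the collapsed range.
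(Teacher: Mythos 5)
Your proof is correct and follows the same route as the paper, which simply cites Corollaries 6.16 and 7.20 and Lemma 7.21 as immediately yielding the result. Your extra care about non-cardinal ordinals between $\omega_1$ and $\kappa$ is just a spelled-out version of that same "immediate" step.
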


\begin{proof}
Immediate from Corollaries 6.16 and 7.20 and 
Lemma 7.21.
\end{proof}

\bigskip

\addcontentsline{toc}{section}{8. Further analysis}

\textbf{\S 8. Further analysis}

\stepcounter{section}

\bigskip

The goal of the next two sections is to prove that certain quotients 
of the forcing poset $\p$ satisfy the $\omega_1$-approximation property. 
This fact will follow from the equation 
$$
(q \oplus_N p) \restriction Q = 
(q \restriction Q) \oplus_{N \cap Q} (p \restriction Q),
$$
which is proved in Proposition 8.6. 
Lemmas 8.1 and 8.2 provide some additional information about 
$\p$ which we will need to prove the approximation property in Section 9. 
Then Lemmas 8.3, 8.4, and 8.5 prepare us for proving Proposition 8.6.
The information which we provide here will be used again in Part III to analyze 
products of the partial square forcing poset.

\begin{lemma}
Let $N \in \mathcal X$ be simple and $r \in D_N$. 
Suppose that $v$ and $w$ are in $N \cap \p$ and 
$$
w \le v \le r \restriction N.
$$
Then $w \oplus_N r \le v \oplus_N r$.
\end{lemma}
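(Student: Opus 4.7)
The plan is to verify properties (a)--(d) of Definition 4.2 for the pair $(w \oplus_N r, v \oplus_N r)$, exploiting the hypothesis that $w \le v$ in $N \cap \p$ and that $v, w \le r \restriction N$. Property (a) is immediate: $A_v \subseteq A_w$ gives $A_{v \oplus_N r} = A_v \cup A_r \subseteq A_w \cup A_r = A_{w \oplus_N r}$.

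For (b), the key observation is that for each $x \in \dom(f_{v \oplus_N r})$, the case of Definition 7.11 under which $f_{v \oplus_N r}(x)$ is defined coincides with the case for $f_{w \oplus_N r}(x)$. Whether cases 1, 2, 3, or 7 apply depends only on whether $x$ (or the relevant $\alpha$) belongs to $\dom(f_v) \subseteq \dom(f_w)$ and $\dom(f_r)$, so the same case, with the same anchor $\alpha$ (unique by Lemma 7.10 when relevant), applies on the $w$ side; whether cases 4, 5, or 6 apply depends only on $r$ and $N$ and matches automatically. Within each matched case the formula for $f_{v \oplus_N r}(x)$ is a union of terms of the form $f_v(y)$, $f_r(\cdot)$, and fixed sets depending on $A_r$; replacing $f_v(y)$ by $f_w(y)$ yields the formula for $f_{w \oplus_N r}(x)$, and since $f_v(y) \subseteq f_w(y)$, the inclusion $f_{v \oplus_N r}(x) \subseteq f_{w \oplus_N r}(x)$ follows. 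The inclusion $\dom(f_{v \oplus_N r}) \subseteq \dom(f_{w \oplus_N r})$ similarly follows from $\dom(f_v) \subseteq \dom(f_w)$. For (c), unfolding Definition 7.16 and using (b) to see that $x \in f_{v \oplus_N r}(y)$ implies $x \in f_{w \oplus_N r}(y)$, every term $g_v(K,y) \cup g_r(K,y)$ in the union defining $g_{v \oplus_N r}(K,x)$ is contained in the corresponding term $g_w(K,y) \cup g_r(K,y)$ in the union defining $g_{w \oplus_N r}(K,x)$, using $g_v(K,y) \subseteq g_w(K,y)$ from $w \le v$.

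For (d), I would run a case analysis parallel to that of (b). Suppose $K, x \in \dom(f_{v \oplus_N r})$ and $K \in f_{w \oplus_N r}(x)$. Using the matching of cases just described, $f_{w \oplus_N r}(x)$ splits as an ``$r$-part'' (a subset of $f_r(x)$, $f_r(\alpha)$, or $\{M \cap \alpha : M \in A_r,\ N \le M,\ \alpha \in M\}$) together with an ``$f_w$-part'' of the form $f_w(y)$ for some $y$ determined by the case. If $K$ lies in the $r$-part, then $K$ lies in $f_{v \oplus_N r}(x)$ by the matched formula. If $K$ lies in $f_w(y)$, then $K \in N$; combined with $K \in \dom(f_{v \oplus_N r}) \cap N = \dom(f_v)$ (by Lemma 7.13(1) applied to $v$) and $y \in \dom(f_v)$ (given in cases 1--3 and 7, and obtained from Lemma 7.8(2) for $v$ in cases 4 and 6), property (d) of $w \le v$ gives $K \in f_v(y)$, and hence $K \in f_{v \oplus_N r}(x)$.

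The main obstacle is keeping the bookkeeping of (b) and (d) straight across the seven cases of Definition 7.11, in particular verifying that the ``anchor'' $\alpha$ selected in cases 2, 3, 4, and 7 is the same on the $v$ and $w$ sides and that the $f_w$-parts always land over a $y$ already in $\dom(f_v)$, which is where Lemmas 7.8(2), 7.9, 7.10, and 7.13 do the essential work.
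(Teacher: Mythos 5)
Your proof is correct and follows essentially the same route as the paper: property (a) is immediate, (b) and (d) are verified by matching the seven cases of Definition 7.11 on the $v$-side and $w$-side (using $\dom(f_v) \subseteq \dom(f_w)$, the uniqueness of the anchor $\alpha$, Lemmas 7.8(2), 7.9, 7.13(1), and property (d) of $w \le v$), and (c) follows by the term-wise comparison of the unions in Definition 7.16. The paper's argument is the same analysis written out case by case, with your use of property (d) of $w \le v$ packaged there as a preliminary claim.
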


The proof of this lemma is straightforward, but due to the 
multitude of cases in Definition 7.11, it is also somewhat lengthy.

\begin{proof}
Let $s := v \oplus_N r$ and $t := w \oplus_N r$. 
We will prove that $t \le s$. 
We verify properties (a)--(d) of Definition 4.2 for $s$ and $t$.

\bigskip

(a) By Definition 7.18, $A_s = A_v \cup A_r$ and $A_t = A_w \cup A_r$. 
Since $w \le v$, $A_v \subseteq A_w$. 
Therefore, $A_s \subseteq A_t$.

\bigskip

(b,d) The domain of $f_s$ is equal to the union of $\dom(f_v)$, $\dom(f_r)$, 
and the set 
$$
\{ M \cap \alpha : M \in A_r, \ N \le M, \ 
\alpha \in (M \cap \dom(f_v) \cap S) \setminus \dom(f_r) \}.
$$
The domain of $f_t$ is equal to the union of $\dom(f_w)$, $\dom(f_r)$, 
and the set 
$$
\{ M \cap \alpha : M \in A_r, \ N \le M, \ 
\alpha \in (M \cap \dom(f_w) \cap S) \setminus \dom(f_r) \}.
$$
But $w \le v$ implies that $\dom(f_v) \subseteq \dom(f_w)$. 
It is easy to check from the above definitions and the fact that 
$\dom(f_v) \subseteq \dom(f_w)$ that $\dom(f_s) \subseteq \dom(f_t)$.

\bigskip

Let $x \in \dom(f_s)$, and we will prove that $f_s(x) \subseteq f_t(x)$. 
At the same time, we will also show that if $K$ is in $\dom(f_s)$ and 
$K \in f_t(x)$, then $K \in f_s(x)$. 
Note that these assertions imply (b) and (d). 
The proof will split into the seven cases of Definition 7.11 
for how $f_s(x)$ is defined. 
First, we prove a claim.

\bigskip

\noindent \emph{Claim 1:}  If $K \in \dom(f_s)$ and 
$K \in f_w(x)$, then $K \in f_s(x)$.

\bigskip

Since $K \in f_w(x)$, $K$ and $x$ are in $N$. 
By Lemma 7.13(1), 
$K$ and $x$ are in $\dom(f_s) \cap N = \dom(f_v)$. 
So $K$ and $x$ are in $\dom(f_v)$ and $K \in f_w(x)$. 
Since $w \le v$, it follows that $K \in f_v(x)$. 
But $s \le v$ implies that $f_v(x) \subseteq f_s(x)$. 
So $K \in f_s(x)$.

\bigskip

\noindent \emph{Case 1:} $x \in \dom(f_v) \setminus S$ 
and $f_s(x) = f_v(x)$. 
Since $w \le v$, $x \in \dom(f_w) \setminus S$, and so $f_t(x) = f_w(x)$. 
But $w \le v$ implies that $f_v(x) \subseteq f_w(x)$, 
hence $f_s(x) \subseteq f_t(x)$.

Suppose that $K \in \dom(f_s)$ and $K \in f_t(x)$. 
Then $K \in f_t(x) = f_w(x)$. 
So $K \in f_s(x)$ by Claim 1.

\bigskip

\noindent \emph{Case 2:} $x = \alpha \in \dom(f_v) \cap S \cap \dom(f_r)$ 
and $f_s(\alpha) = f_v(\alpha) \cup f_r(\alpha)$. 
Since $\dom(f_v) \subseteq \dom(f_w)$, 
$\alpha \in \dom(f_w) \cap S \cap \dom(f_r)$, 
so $f_t(\alpha) = f_w(\alpha) \cup f_r(\alpha)$. 
Also, $w \le v$ implies that 
$f_v(\alpha) \subseteq f_w(\alpha)$. 
Therefore, 
$f_s(\alpha) \subseteq f_t(\alpha)$.

Suppose that $K \in \dom(f_s)$ and 
$K \in f_t(\alpha)$. 
Then $K \in f_t(\alpha) =  f_w(\alpha) \cup f_r(\alpha)$. 
So either $K \in f_w(\alpha)$ or $K \in f_r(\alpha)$. 
If $K \in f_w(\alpha)$, then $K \in f_s(\alpha)$ by Claim 1. 
If $K \in f_r(\alpha)$, then $K \in f_s(\alpha)$ by definition.

\bigskip

\noindent \emph{Case 3:} $x = \alpha \in (\dom(f_v) \cap S) \setminus \dom(f_r)$ and 
$$
f_s(\alpha) = f_v(\alpha) \cup \{ M \cap \alpha : M \in A_r, \ N \le M, \ 
\alpha \in M \}.
$$
Since $\dom(f_v) \subseteq \dom(f_w)$, 
$\alpha \in (\dom(f_w) \cap S) \setminus \dom(f_r)$, and therefore 
$$
f_t(\alpha) = f_w(\alpha) \cup 
\{ M \cap \alpha : M \in A_r, \ N \le M, \ \alpha \in M \}.
$$
Since $f_v(\alpha) \subseteq f_w(\alpha)$, it follows that 
$f_s(\alpha) \subseteq f_t(\alpha)$.

Suppose that $K \in \dom(f_s)$ and $K \in f_t(\alpha)$. 
Then $K \in f_t(\alpha) = 
f_w(\alpha) \cup \{ M \cap \alpha : M \in A_r, \ N \le M, \ \alpha \in M \}$. 
If $K$ is in the second set of this union, then 
$K \in f_s(\alpha)$ by definition. 
If $K \in f_w(\alpha)$, then $K \in f_s(\alpha)$ by Claim 1.

\bigskip

\noindent \emph{Case 4:} $x \in \dom(f_r) \setminus N$, 
and for some $\alpha \in \dom(f_r) \cap S \cap N$, 
$N \cap \alpha \in f_r(x) \cup \{ x \}$. 
Then $f_s(x) = f_v(\alpha) \cup f_r(x)$. 
Clearly we are also in case 4 in the definition of $f_t(x)$. 
So $f_t(x) = f_w(\alpha) \cup f_r(x)$. 
Since $w \le v$, $f_v(\alpha) \subseteq f_w(\alpha)$. 
Therefore, $f_s(x) \subseteq f_t(x)$.

Suppose that $K \in \dom(f_s)$ and $K \in f_t(x)$. 
Then $K \in f_w(\alpha) \cup f_r(x)$. 
If $K \in f_r(x)$, then $K \in f_s(x)$ by definition. 
Suppose that $K \in f_w(\alpha)$. 
Then $K$ is in $N$. 
So $K$ and $\alpha$ are in $\dom(f_s) \cap N = \dom(f_v)$ by 
Lemma 7.13(1). 
Since $K \in f_w(\alpha)$ and $w \le v$, it follows that 
$K \in f_v(\alpha)$. 
So $K \in f_s(x)$ by definition.

\bigskip

\noindent \emph{Case 5:} $x \in \dom(f_r) \setminus N$, case 4 is false, 
and $f_r(x) \cap N = \emptyset$. 
Then clearly we are also in case 5 in the definition of $f_t(x)$. 
So by definition, $f_s(x)$ and $f_t(x)$ are both equal to $f_r(x)$. 
In particular, $f_s(x) \subseteq f_t(x)$. 
Also, if $K \in \dom(f_s)$ and $K \in f_t(x)$, then 
$K \in f_t(x) = f_s(x)$.

\bigskip

\noindent \emph{Case 6:} $x \in \dom(f_r) \setminus N$, case 4 is false, 
and $f_s(x) = f_r(x) \cup f_v(M)$, 
where $M$ is the membership largest element of $f_r(x) \cap N$. 
Then we are obviously also in case 6 in the definition of $f_t$. 
So $f_t(x) = f_r(x) \cup f_w(M)$. 
Since $w \le v$, we have that $f_v(M) \subseteq f_w(M)$. 
Therefore, $f_s(x) \subseteq f_t(x)$.

Suppose that $K \in \dom(f_s)$ and $K \in f_t(x)$. 
Then $K \in f_t(x) = f_r(x) \cup f_w(M)$. 
If $K \in f_r(x)$, then $K \in f_s(x)$ by definition. 
Otherwise, $K \in f_w(M)$. 
Since $M \in \dom(f_r) \subseteq \dom(f_s)$, 
Claim 1 implies that $K \in f_s(M)$. 
But $M \in f_r(x) \subseteq f_s(x)$. 
So $K \in f_s(M)$ and $M \in f_s(x)$. 
Since $s$ is a condition, $K \in f_s(x)$.

\bigskip

\noindent \emph{Case 7:} $x = M \cap \alpha$, 
where $M \in A_r$, $N \le M$, 
and $\alpha \in (M \cap \dom(f_v) \cap S) \setminus \dom(f_r)$. 
Then 
$$
f_s(M \cap \alpha) = f_s(\alpha) \cap Sk(M \cap \alpha).
$$
Since $w \le v$, $\alpha \in (M \cap \dom(f_w) \cap S) \setminus \dom(f_r)$. 
So we are also in case 7 in the definition of $f_t(x)$. 
Therefore, 
$$
f_t(M \cap \alpha) = f_t(\alpha) \cap Sk(M \cap \alpha).
$$
Since $f_s(\alpha) \subseteq f_t(\alpha)$ by case 3 handled above, 
it follows that $f_s(M \cap \alpha) \subseteq f_t(M \cap \alpha)$.

Suppose that $K \in \dom(f_s)$ and $K \in f_t(M \cap \alpha)$. 
Then by the definition of $f_t(M \cap \alpha)$, $K \in f_t(\alpha)$ 
and $K \in Sk(M \cap \alpha)$. 
Since $\alpha \in \dom(f_v)$ and $s \le v$, $\alpha \in \dom(f_s)$. 
By case 3 handled above, it follows that 
$K \in f_s(\alpha)$. 
So $K \in f_s(\alpha) \cap Sk(M \cap \alpha) = f_s(M \cap \alpha)$.

This completes the proof of (b) and (d).

\bigskip

(c) Let $(K,x) \in \dom(g_s)$, and we will show that 
$g_s(K,x) \subseteq g_t(K,x)$. 
By Definition 7.16, $K \in f_s(x)$ and 
$$
g_s(K,x) = \bigcup \{ g_v(K,y) \cup g_r(K,y) : x = y, \textrm{or} \ x \in f_s(y) \}.
$$
Since $f_s(x) \subseteq f_t(x)$ as just shown, $K \in f_t(x)$, and 
by Definition 7.16,
$$
g_t(K,x) = \bigcup \{ g_w(K,y) \cup g_r(K,y) : x = y, \textrm{or} \ x \in f_t(y) \}.
$$
Let $\xi \in g_s(K,x)$. 
To show that $\xi \in g_t(K,x)$, we will consider the different possibilities for 
why $\xi$ is in $g_s(K,x)$.

\bigskip

\noindent \emph{Case 1:} $\xi \in g_v(K,x)$. 
Then $(K,x) \in \dom(g_v)$, which means that $K \in f_v(x)$. 
Since $w \le v$, $K \in f_w(x)$ and 
$g_v(K,x) \subseteq g_w(K,x)$. 
So $\xi \in g_w(K,x) \subseteq g_t(K,x)$.

\bigskip

\noindent \emph{Case 2:} $\xi \in g_v(K,y)$, where $x \in f_s(y)$. 
Then $(K,y) \in \dom(g_v)$, which means that $K \in f_v(y)$. 
Since $w \le v$, $K \in f_w(y)$ and 
$g_v(K,y) \subseteq g_w(K,y)$. 
So $\xi \in g_w(K,y)$. 
But $x \in f_s(y) \subseteq f_t(y)$. 
So by definition, 
$g_w(K,y) \subseteq g_t(K,x)$. 
So $\xi \in g_t(K,x)$.

\bigskip

\noindent \emph{Case 3:} $\xi \in g_r(K,x)$. 
Since $t \le r$, it follows that 
$g_r(K,x) \subseteq g_t(K,x)$, so $\xi \in g_t(K,x)$.

\bigskip

\noindent \emph{Case 4:} $\xi \in g_r(K,y)$, where $x \in f_s(y)$. 
By (b), $f_s(y) \subseteq f_t(y)$. 
So $x \in f_t(y)$. 
Since $\xi \in g_r(K,y)$ and $x \in f_t(y)$, it follows by definition that 
$\xi \in g_t(K,x)$.
\end{proof}

\begin{lemma}
Let $N \in \mathcal X$ and $Q \in \mathcal Y$. 
Let $p \in \p$, and suppose that $N \in A_p$. 
Then there is $s \le p$ such that $s \in D_N \cap D_Q$.
\end{lemma}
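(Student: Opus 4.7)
The plan is to combine the closure constructions used in Lemmas 6.1 and 7.2 and then apply Lemma 4.9 at the end to secure the $g$-coherence property of $D_N$.

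First I would produce the desired $A$-component. Set
$$
A_1 := A_p \cup \{\, M \cap N : M \in A_p,\ M < N \,\},
$$
which is adequate by Proposition 2.24 (note $N \in A_p$ and the $M \cap N$ entries are automatically in the set for $M < N$). Then set
$$
A^* := A_1 \cup \{\, M' \cap Q : M' \in A_1 \,\},
$$
which is adequate by Proposition 2.27. A direct case analysis, using that $M \sim M \cap N \sim M \cap N \cap Q$ whenever $M < N$ and using $M \sim M \cap Q$ in general, shows that $A^*$ is closed under both operations $M \mapsto M \cap N$ (for $M < N$) and $M \mapsto M \cap Q$. In particular $N \in A^*$ and $A^*$ witnesses both conditions (1), (2) of Definition 7.3 and the defining condition of $D_Q$ at the level of the $A$-component.

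Next I would handle the $f$-component. Let $x_0 := r^*(A^*) \cap S$ and $x := x_0 \setminus \dom(f_p)$, and form $r := p + x$ via Definition 4.7; by Lemma 4.8, $r \in \p$, $r \le p$, $A_r = A_p$, and $x_0 \subseteq \dom(f_r)$. Now define $s'$ by $f_{s'} := f_r$, $g_{s'} := g_r$, and $A_{s'} := A^*$. To verify $s' \in \p$, I would check Definition 4.2(1)--(7). Requirement (1) is the adequacy of $A^*$; requirements (2)--(5) are inherited directly from $r$ since the $f,g$-components are unchanged; requirement (7) holds because $r^*(A_{s'}) \cap S = x_0 \subseteq \dom(f_{s'})$. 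The one nontrivial check is (6): for $M \in A^* \setminus A_p$ and $\alpha \in \dom(f_{s'}) \cap S$ with $\alpha \in M$, I need $M \cap \alpha \in f_{s'}(\alpha)$. Here $M$ has the form $M_0 \cap N$, $M_0 \cap Q$, or $M_0 \cap N \cap Q$ for some $M_0 \in A_p$, and in each case $M \cap \alpha = M_0 \cap \alpha$: when the $N$-intersection is involved, Proposition 2.11 gives $\alpha < \beta_{M_0, N}$, and then $M_0 < N$ forces $M_0 \cap \alpha \subseteq N$; when the $Q$-intersection is involved, $\alpha < Q \cap \kappa$ yields $M_0 \cap \alpha \subseteq Q$. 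Hence $M \cap \alpha = M_0 \cap \alpha$ with $M_0 \in A_p$ and $\alpha \in M_0$, so either $M_0 \cap \alpha \in f_p(\alpha) \subseteq f_{s'}(\alpha)$ (if $\alpha \in \dom(f_p)$) or $M_0 \cap \alpha \in f_r(\alpha) = f_{s'}(\alpha)$ by Definition 4.7(4) (if $\alpha \in x$).

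Finally, I would apply Lemma 4.9 to $s'$ to obtain $s \le s'$ satisfying $g_s(K,x) \subseteq g_s(K,y)$ whenever $K \in f_s(x)$ and $x \in f_s(y)$, and with $f_s = f_{s'}$, $A_s = A_{s'} = A^*$. Since $A_s$ is closed under $\cdot \cap N$ (for $\cdot < N$) and $\cdot \cap Q$, and contains $N$, we have $s \in D_N \cap D_Q$, while $s \le s' \le r \le p$ as desired. The main obstacle, as already indicated, is the verification of Definition 4.2(6) for the newly added models of $A^*$; everything else is a routine combination of the earlier dense-set constructions in Sections 6 and 7.
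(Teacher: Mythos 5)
Your proposal is correct and takes essentially the same route as the paper: the paper produces exactly the same condition (with $A$-component your $A^*$) by applying Lemma 7.2 and then Lemma 6.1 as black boxes, its only new work being the check that the $\cap\, Q$-closure step preserves closure under $M \mapsto M \cap N$ for $M < N$ (which is precisely your closure case analysis), before finishing with Lemma 4.9 just as you do. The only difference is organizational: you build the doubly closed adequate set in one step and re-verify Definition 4.2(6),(7) directly, which amounts to inlining the verifications already carried out inside Lemmas 6.1 and 7.2.
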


\begin{proof}
We construct $s$ in several steps. 
We begin by applying Lemma 7.2 to find $q \le p$ such that for all 
$M \in A_q$, if $M < N$ then $M \cap N \in A_q$. 
Next, we apply Lemma 6.1 to fix $r \le q$ such that for all 
$M \in A_r$, $M \cap Q \in A_r$, and moreover, 
$A_r = A_q \cup \{ M \cap Q : M \in A_q \}$.

We claim that for all $M \in A_r$, if $M < N$ then $M \cap N \in A_r$. 
This is certainly true if $M \in A_q$, so assume that 
$M = M_1 \cap Q$, where $M_1 \in A_q$. 
By Lemma 2.29, $M_1 \sim M_1 \cap Q = M$. 
Since $M < N$, it follows that $M_1 < N$ by Lemma 2.18. 
As $M_1 \in A_q$, we have that 
$M_1 \cap N \in A_q$ by the choice of $q$. 
Hence, 
$$
M \cap N = (M_1 \cap Q) \cap N = (M_1 \cap N) \cap Q.
$$
But $M_1 \cap N \in A_q$ implies that 
$M \cap N = (M_1 \cap N) \cap Q \in A_r$ by the definition of $A_r$.

Now apply Lemma 4.9 to find $s \le r$ such that $A_s = A_r$ 
and whenever $K \in f_s(x)$ and $x \in f_s(y)$, then 
$g_s(K,x) \subseteq g_s(K,y)$. 
Then $s \le p$ and $s \in D_N \cap D_Q$.
\end{proof}

The next three lemmas will prepare us for proving Proposition 8.6.

\begin{lemma}
Let $N \in \mathcal X$ be simple, $Q \in \mathcal Y \cap N$ 
be simple, and 
$p \in D_N \cap D_Q$. 
Then $p \restriction N \in D_Q$ and $p \restriction Q \in D_{N \cap Q}$.
\end{lemma}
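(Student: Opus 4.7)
The plan is to verify directly that $p \restriction N$ satisfies the single requirement of Definition 6.2 defining $D_Q$, and that $p \restriction Q$ satisfies the three requirements of Definition 7.3 defining $D_{N \cap Q}$. Conditionhood of the restrictions themselves is already supplied by Lemmas 7.6 and 6.5, so nothing needs to be reverified on that front. The statement about $D_{N \cap Q}$ makes sense because $N \cap Q \in \mathcal X$ by Assumption 2.5(2) and $N \cap Q$ is simple by Lemma 2.30.

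For $p \restriction N \in D_Q$ the argument is short. Given $M \in A_{p \restriction N} = A_p \cap N$, the hypothesis $p \in D_Q$ gives $M \cap Q \in A_p$, while $M, Q \in N$ together with elementarity of $N$ gives $M \cap Q \in N$. Intersecting, $M \cap Q \in A_p \cap N = A_{p \restriction N}$, as required.

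For $p \restriction Q \in D_{N \cap Q}$ there are three clauses of Definition 7.3 to check. Clause (1), that $N \cap Q \in A_p \cap Q$, follows by chaining $N \in A_p$ (from $p \in D_N$), $N \cap Q \in A_p$ (from $p \in D_Q$), and $N \cap Q \in Q$ (from simplicity of $Q$ applied to $N \in \mathcal X$ via Definition 2.21). Clause (3), the $g$-chain property, transfers unchanged from the corresponding property of $p \in D_N$, since $f_{p \restriction Q}$ and $g_{p \restriction Q}$ coincide with $f_p$ and $g_p$ on the parts of their domain inside $Q$. Clause (2) is where the real work lies: given $M \in A_p \cap Q$ with $M < N \cap Q$, I would first use adequacy of $\{M, N\} \subseteq A_p$ together with $\omega_1 \le Q \cap \kappa$ to see that $M \cap \omega_1 < (N \cap Q) \cap \omega_1 = N \cap \omega_1$, invoking Lemma 2.17(1) to promote $M < N \cap Q$ into $M < N$; then successively apply the closure properties of $D_N$ and $D_Q$ to obtain $M \cap N \in A_p$ and then $(M \cap N) \cap Q \in A_p$; and finally invoke simplicity of $Q$ on $M \cap N \in \mathcal X$ (using Assumption 2.19) to place the result inside $Q$, so that $M \cap (N \cap Q) = (M \cap N) \cap Q \in A_p \cap Q$.

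The only step that is not a direct unpacking of definitions is clause (2) of $D_{N \cap Q}$ for $p \restriction Q$, and even there the main insight is the promotion of $M < N \cap Q$ to $M < N$ inside the adequate set $A_p$, which rests on the basic fact that the $\omega_1$-intersection decides the adequacy relation (Lemma 2.17). No new amalgamation-theoretic content is needed beyond what is already collected in Section 2.
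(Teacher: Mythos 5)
Your proof is correct and follows essentially the same route as the paper's: unpack the restrictions via Definitions 6.4 and 7.5, transfer the closure and $g$-chain properties directly, and for clause (2) promote $M < N \cap Q$ to $M < N$ before applying the $D_N$ and $D_Q$ closures together with simplicity of $Q$. The only (immaterial) difference is that you promote $M < N \cap Q$ to $M < N$ by comparing $\omega_1$-intersections via Lemma 2.17, whereas the paper cites $N \sim N \cap Q$ (Lemma 2.29) and Lemma 2.18 — the same underlying fact.
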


Recall that by Lemma 2.30, $N \cap Q$ is simple.

\begin{proof}
We prove first that $p \restriction N \in D_Q$, which by 
Definition 6.2 means that for all $M \in A_{p \restriction N}$, 
$M \cap Q \in A_{p \restriction N}$. 
So let $M \in A_{p \restriction N}$. 
Then by Definition 7.5, 
$M \in A_{p \restriction N} = A_p \cap N$. 
As $p \in D_Q$, we have that $M \cap Q \in A_p$. 
And since $M$ and $Q$ are in $N$, $M \cap Q \in N$. 
Thus, $M \cap Q \in A_p \cap N = A_{p \restriction N}$.

Next, we prove that $p \restriction Q \in D_{N \cap Q}$. 
First, we show that $N \cap Q \in A_{p \restriction Q}$. 
Since $A_{p \restriction Q} = A_p \cap Q$ by Definition 6.4, 
we need to show that 
$N \cap Q \in A_p \cap Q$. 
Since $Q$ is simple, $N \cap Q \in Q$. 
As $p \in D_N$, $N \in A_p$, and since $p \in D_Q$, 
it follows that $N \cap Q \in A_p$. 
So $N \cap Q \in A_p \cap Q$.

Secondly, let $M \in A_{p \restriction Q} = A_p \cap Q$ 
be such that $M < N \cap Q$, 
and we will show that 
$$
M \cap (N \cap Q) \in A_{p \restriction Q} = A_p \cap Q.
$$
Since $M \cap N \in \mathcal X$ and $Q$ is simple, we have that 
$M \cap N \cap Q \in Q$. 
By Lemma 2.29, $N \sim N \cap Q$. 
Since $M < N \cap Q$, Lemma 2.18 implies that $M < N$. 
As $p \in D_N$ and $M \in A_p$, it follows that $M \cap N \in A_p$. 
Since $p \in D_Q$, $M \cap N \cap Q \in A_p$. 
So $M \cap N \cap Q \in A_p \cap Q$, as required.

Thirdly, let $K \in f_{p \restriction Q}(x)$ and 
$x \in f_{p \restriction Q}(y)$, 
and we will show that 
$g_{p \restriction Q}(K,x) \subseteq g_{p \restriction Q}(K,y)$. 
By the definition of $p \restriction Q$, we have that 
$K \in f_p(x)$ and $x \in f_p(y)$. 
Since $p \in D_N$, $g_p(K,x) \subseteq g_p(K,y)$. 
As $g_p(K,x) = g_{p \restriction Q}(K,x)$ and 
$g_p(K,y) = g_{p \restriction Q}(K,y)$ by the definition of 
$p \restriction Q$, we are done.
\end{proof}

\begin{lemma}
Let $N \in \mathcal X$ be simple, $Q \in \mathcal Y \cap N$ be simple, 
and $p \in D_N \cap D_Q$. 
Then 
$$
(p \restriction N) \restriction Q = 
(p \restriction Q) \restriction (N \cap Q).
$$
\end{lemma}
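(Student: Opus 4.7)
The plan is to compute both triples directly from Definitions 6.4 and 7.5 and compare components. Write $(p \restriction N) \restriction Q = (f', g', A')$ and $(p \restriction Q) \restriction (N \cap Q) = (f'', g'', A'')$. Unwinding definitions gives
\[
A' = (A_p \cap N) \cap Q = A_p \cap N \cap Q = (A_p \cap Q) \cap (N \cap Q) = A'',
\]
and similarly $\dom(g') = \dom(g_p) \cap N \cap Q = \dom(g'')$ with both functions taking the value $g_p(K,x)$ on this common domain. The domains of $f'$ and $f''$ are also each equal to $\dom(f_p) \cap N \cap Q$, by the same bracket-rearrangement. So all that remains is the comparison of values of $f'$ and $f''$.

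For $x \in \dom(f_p) \cap N \cap Q$, Definition 7.5(1) followed by Definition 6.4(1) (which does not intersect again) yields
\[
f'(x) = f_{p \restriction N}(x) = f_p(x) \cap N,
\]
while Definition 6.4(1) followed by Definition 7.5(1) yields
\[
f''(x) = f_{p \restriction Q}(x) \cap (N \cap Q) = f_p(x) \cap N \cap Q.
\]
So the equality $f'(x) = f''(x)$ reduces to showing that $f_p(x) \cap N \subseteq Q$; in fact I will establish the stronger inclusion $f_p(x) \subseteq Q$.

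Let $K \in f_p(x)$. By Definition 4.2(2), $K \notin S$, so $K$ is countable, and by Lemma 4.3(2), $K \subseteq x$. Now $x \in N \cap Q \subseteq Q$, and I split on whether $x \in S$. If $x \in S$, then $x$ is an ordinal with $x \in Q$ and $x < \kappa$, hence $x < Q \cap \kappa$. If $x \notin S$, then $x$ is countable with $x \in Q$, so $x \subseteq Sk(x) \subseteq Q$ by elementarity, and in particular $\sup(x) < Q \cap \kappa$. In either case, $\sup(K) \le \sup(x) < Q \cap \kappa$, and Lemma 4.3(4) applied to $K$ and $Q$ gives $K \in Q$.

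The entire argument is essentially a bookkeeping calculation; the one substantive ingredient is the membership fact $f_p(x) \subseteq Q$ for $x \in \dom(f_p) \cap Q$, which is an immediate consequence of Lemma 4.3. No amalgamation machinery or properties specific to $N$ being simple are needed for this particular lemma, although those assumptions enter to ensure that $p \restriction N$ and $p \restriction Q$ (and their further restrictions) are conditions in the first place, as guaranteed by Lemma 8.3.
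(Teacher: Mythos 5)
Your proof is correct and takes essentially the same route as the paper's: a componentwise unwinding of Definitions 6.4 and 7.5, with the $A$- and $g$-components and the domains matching automatically, and the only substantive point being that $f_p(x) \subseteq Q$ for $x \in \dom(f_p) \cap Q$ so that the extra intersection with $Q$ on the right-hand side is harmless. The paper gets that containment from the remark following Definition 6.4 (namely $f_p(x) \subseteq Sk(x) \subseteq Q$), whereas you derive it from Lemma 4.3(2),(4); this is an inessential variation.
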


Note that we needed Lemma 8.3 to see that 
$(p \restriction N) \restriction Q$ and  
$(p \restriction Q) \restriction (N \cap Q)$ are defined.

\begin{proof}
By Definitions 6.4 and 7.5, we have that 
\begin{multline*}
\dom(f_{(p \restriction N) \restriction Q}) = 
\dom(f_{p \restriction N}) \cap Q = 
\dom(f_p) \cap N \cap Q = \\ 
= (\dom(f_p) \cap Q) \cap (N \cap Q) = 
\dom(f_{p \restriction Q}) \cap (N \cap Q) = 
\dom(f_{(p \restriction Q) \restriction (N \cap Q)}).
\end{multline*}
And for each $x \in \dom(f_{(p \restriction N) \restriction Q})$, 
$$
f_{(p \restriction N) \restriction Q}(x) = 
f_{p \restriction N}(x) = f_{p}(x) \cap N = 
f_{p \restriction Q}(x) \cap N,
$$
which, since $f_{p \restriction Q}(x) \subseteq Q$, is equal to 
$$
f_{p \restriction Q}(x) \cap (N \cap Q) = 
f_{(p \restriction Q) \restriction (N \cap Q)}(x).
$$
Thus, $f_{(p \restriction N) \restriction Q} = 
f_{(p \restriction Q) \restriction (N \cap Q)}$.

Again by Definitions 6.4 and 7.5, we have that 
\begin{multline*}
\dom(g_{(p \restriction N) \restriction Q}) = 
\dom(g_{p \restriction N}) \cap Q = 
\dom(g_p) \cap N \cap Q = \\
= (\dom(g_p) \cap Q) \cap (N \cap Q) = 
\dom(g_{p \restriction Q}) \cap (N \cap Q) = 
\dom(g_{(p \restriction Q) \restriction (N \cap Q)}).
\end{multline*}
And for each $(K,x) \in \dom(g_{(p \restriction N) \restriction Q})$, 
$$
g_{(p \restriction N) \restriction Q}(K,x) = 
g_{p \restriction N}(K,x) = g_p(K,x) = 
g_{p \restriction Q}(K,x) = 
g_{(p \restriction Q) \restriction (N \cap Q)}(K,x).
$$
This proves that 
$g_{(p \restriction N) \restriction Q} = 
g_{(p \restriction Q) \restriction (N \cap Q)}$.

Finally, by Definitions 6.4 and 7.5, 
\begin{multline*}
A_{(p \restriction N) \restriction Q} = 
A_{p \restriction N} \cap Q = 
A_p \cap N \cap Q = \\ 
= (A_p \cap Q) \cap (N \cap Q) = 
A_{p \restriction Q} \cap (N \cap Q) = 
A_{(p \restriction Q) \restriction (N \cap Q)}.
\end{multline*}
\end{proof}

\begin{lemma}
Let $N \in \mathcal X$ be simple, $Q \in \mathcal Y \cap N$ be simple, 
and $p \in D_N \cap D_Q$. 
Suppose that $q \in N \cap D_Q$ and 
$q \le p \restriction N$. 
Then:
\begin{enumerate}
\item $q \oplus_{N} p$ is in $D_Q$;
\item $q \restriction Q \in N \cap Q$ and 
$$
q \restriction Q \le (p \restriction Q) \restriction (N \cap Q).
$$
\end{enumerate}
\end{lemma}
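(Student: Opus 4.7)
The plan is to reduce everything to previously established facts, so that no new technical machinery is required.

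For part (1), I would first note that $q \oplus_N p$ is already known to be a condition by Proposition 7.19, since $N$ is simple, $p \in D_N$, $q \in N \cap \p$, and $q \le p \restriction N$. By Definition 7.18(3), its $A$-component is $A_q \cup A_p$, so membership in $D_Q$ reduces to checking that for every $M \in A_q \cup A_p$, we have $M \cap Q \in A_q \cup A_p$. But if $M \in A_q$ then $M \cap Q \in A_q$ because $q \in D_Q$, and if $M \in A_p$ then $M \cap Q \in A_p$ because $p \in D_Q$. This gives (1).

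For part (2), to see that $q \restriction Q \in N \cap Q$, I would argue that $q \restriction Q \in Q \cap \p$ by Lemma 6.5 (using $q \in D_Q$ and $Q$ simple), and that $q \restriction Q \in N$ follows from the elementarity of $N$, since $q \restriction Q$ is definable from the parameters $q$ and $Q$, both of which lie in $N$.

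To prove the inequality $q \restriction Q \le (p \restriction Q) \restriction (N \cap Q)$, I would apply Lemma 8.4, which identifies $(p \restriction Q) \restriction (N \cap Q)$ with $(p \restriction N) \restriction Q$; the hypotheses of that lemma hold because $p \in D_N \cap D_Q$ and $N$, $Q$ are simple with $Q \in N$. So it suffices to show $q \restriction Q \le (p \restriction N) \restriction Q$. By Lemma 8.3, $p \restriction N \in D_Q$; together with $q \in D_Q$ and the hypothesis $q \le p \restriction N$, Lemma 6.6(3) yields $q \restriction Q \le (p \restriction N) \restriction Q$, as desired.

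There is no real obstacle in this lemma — it is a bookkeeping result whose role is to assemble Lemmas 6.6, 8.3, and 8.4 into the precise statement that will be invoked in the proof of Proposition 8.6. The only mild care needed is in verifying that the correct hypotheses of the cited lemmas are in place at each step, in particular that $p \restriction N$ inherits membership in $D_Q$ (from Lemma 8.3) so that Lemma 6.6(3) is actually applicable with $p \restriction N$ playing the role of the upper condition.
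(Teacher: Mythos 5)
Your proof is correct and follows essentially the same route as the paper: part (1) is the same direct check on $A_q \cup A_p$ using $p,q \in D_Q$, and part (2) combines Lemmas 8.3, 8.4, and 6.6(3) exactly as the paper does (with $p \restriction N$ as the upper condition in Lemma 6.6(3)).
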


\begin{proof}
(1) Since $q \le p \restriction N$, $q \oplus_N p$ is a condition which 
is below $q$ and $p$. 
We claim that $q \oplus_N p$ is in $D_Q$, which means that for all 
$M \in A_{q \oplus_N p}$, $M \cap Q \in A_{q \oplus_N p}$. 
Now $A_{q \oplus_N p} = A_q \cup A_p$. 
So if $M \in A_{q \oplus_N p}$, then either $M \in A_q$ or $M \in A_p$. 
But $q$ and $p$ are both in $D_Q$, so in the first case, $M \cap Q \in A_q$, 
and in the second case, $M \cap Q \in A_p$. 
In either case, $M \cap Q \in A_q \cup A_p = A_{q \oplus_N p}$.

(2) Since $q$ and $Q$ are in $N$, $q \restriction Q \in N$. 
Also, $q \restriction Q \in Q$, so $q \restriction Q \in N \cap Q$. 
By Lemmas 8.3 and 8.4, $p \restriction N \in D_Q$ and 
$$
(p \restriction N) \restriction Q = (p \restriction Q) \restriction (N \cap Q).
$$
Now $q \le p \restriction N$, so by Lemma 6.6(3), we have that 
$$
q \restriction Q \le (p \restriction N) \restriction Q = 
(p \restriction Q) \restriction (N \cap Q).
$$
\end{proof}

\begin{proposition}
Let $N \in \mathcal X$ be simple, $Q \in \mathcal Y \cap N$ be simple, 
and suppose that $Q \cap \kappa \notin S$. 
Let $p \in D_N \cap D_Q$. 
Assume that $q \in N \cap D_Q$ 
and $q \le p \restriction N$. 
Then 
$$
(q \oplus_N p) \restriction Q = 
(q \restriction Q) \oplus_{N \cap Q} (p \restriction Q). 
$$
\end{proposition}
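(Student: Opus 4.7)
The plan is to verify the equality component-wise between the two triples $(f, g, A)$. The $A$-component is straightforward: both $A_{(q \oplus_N p) \restriction Q}$ and $A_{(q \restriction Q) \oplus_{N \cap Q} (p \restriction Q)}$ unfold to $(A_q \cap Q) \cup (A_p \cap Q)$, using $A_{q \oplus_N p} = A_q \cup A_p$ from Definition 7.18 and the definitions of restriction and $\oplus_{N \cap Q}$. The substantive work is the matching of the $f$- and $g$-components.

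For the $f$-domain, I use the decomposition of Definition 7.11: $\dom(f_{q \oplus_N p}) = \dom(f_q) \cup \dom(f_p) \cup E$, where $E := \{M \cap \alpha : M \in A_p,\, N \le M,\, \alpha \in (M \cap \dom(f_q) \cap S) \setminus \dom(f_p)\}$. Intersecting with $Q$ gives $\dom(f_{q \restriction Q}) \cup \dom(f_{p \restriction Q}) \cup (E \cap Q)$, while the RHS domain has the analogous union with an $E'$ indexed by $M' \in A_p \cap Q$ with $N \cap Q \le M'$. The inclusion $E' \subseteq E \cap Q$ follows from Lemma 2.29 ($N \sim N \cap Q$) and Lemma 2.17, converting $N \cap Q \le M'$ into $N \le M'$. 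For $E \cap Q \subseteq E'$, the natural candidate for $M \cap \alpha \in E \cap Q$ is $(M \cap Q) \cap \alpha$, which coincides with $M \cap \alpha$ when $\alpha < Q \cap \kappa$. The case $\alpha > Q \cap \kappa$ (strict inequality being forced by the hypothesis $Q \cap \kappa \notin S$ together with $\alpha \in S$) is excluded as follows: $M \cap \alpha \in Q$ forces $M \cap \alpha \subseteq Q \cap \kappa$, so $\alpha = \min((M \cap \kappa) \setminus (Q \cap \kappa))$; by Lemma 3.6 this equals $\min((M \cap \kappa) \setminus \beta_{M \cap Q, M})$, and since $M \sim M \cap Q$ are both in $A_p$ (using $p \in D_Q$), we conclude $\alpha \in r^*(A_p) \cap S \subseteq \dom(f_p)$ by Definition 4.2(7), contradicting $\alpha \notin \dom(f_p)$.

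Next I verify the $f$-values on the common domain by working through the seven cases of Definition 7.11. Cases 1, 2, 5, and 7 transfer directly: the restrictions $\dom(f_q) \cap Q$, $\dom(f_p) \cap Q$, and $A_p \cap Q$ on the LHS line up with the RHS constructions, and the values agree verbatim. Case 3 reduces to matching the set $\{M \cap \alpha : M \in A_p,\, N \le M,\, \alpha \in M\}$ (for $\alpha \in Q$) to its RHS analogue via the substitution $M \mapsto M \cap Q$, the same reformulation used in the domain argument. For Cases 4 and 6, the LHS witness $\alpha \in \dom(f_p) \cap S \cap N$ (respectively the maximal $M \in f_p(x) \cap N$) must yield a corresponding witness in $N \cap Q$; the main tool is again the $r^*$-closure combined with Lemma 3.6, along with the identification $N \cap Sk(\alpha) = N \cap Q$ that arises from Lemma 2.7(1). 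Once the $f$-chains agree on $Q$, the $g$-component equality then follows from Definition 7.16 directly, since $g$ is computed as a union indexed by these chains.

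The main obstacle is the bookkeeping for Cases 4 and 6, where a witness $\alpha \in N \setminus Q$ can produce an element $N \cap \alpha$ that nonetheless lies in $Q$ via the identity $N \cap Sk(\alpha) = N \cap Q$. Verifying that the LHS value $f_q(\alpha) \cup f_p(N \cap \alpha)$ coincides with the corresponding RHS value (from Case 5 or 6) requires carefully tracking the constraint $f_q(\alpha) \subseteq N \cap Sk(\alpha) = N \cap Q$ (from $q \in N$) together with the identity $f_p(\alpha) \cap N = f_p(N \cap \alpha)$ that the edge configuration forces. The hypothesis $Q \cap \kappa \notin S$ is indispensable throughout for ruling out degenerate boundary cases, while Lemma 3.6 combined with Definition 4.2(7) is the main technical engine that drives the whole argument.
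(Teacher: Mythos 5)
Your skeleton (componentwise comparison, the seven-case analysis of Definition 7.11 for the $f$-part, and the exclusion argument via Lemma 3.6 and $r^*(A_p)\cap S\subseteq \dom(f_p)$) matches the paper, and that exclusion argument is indeed the right tool for the case-7/domain situation. But there are two genuine gaps. First, your treatment of Cases 4 and 6 is based on the wrong mechanism. In Case 4 the point is that the witness $\alpha\in\dom(f_p)\cap S\cap N$ must already lie in $Q$: since $x\in Q$ and $N\cap\alpha\in f_p(x)\cup\{x\}$, we get $\sup(N\cap\alpha)\le\sup(x)<Q\cap\kappa$, while if $\alpha\notin Q$ then $Q\cap\kappa<\alpha$ (using $Q\cap\kappa\notin S$) and $Q\in N$ gives $Q\cap\kappa\in N\cap\alpha$, a contradiction. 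The $r^*$/Lemma 3.6 engine you invoke gives nothing here, because in Case 4 the conclusion $\alpha\in\dom(f_p)$ is not a contradiction --- it is part of the hypothesis. Consequently the ``main obstacle'' you describe (an LHS Case 4 whose RHS lands in Case 5 or 6 via some $\alpha\in N\setminus Q$ with $N\cap\alpha\in Q$) cannot occur, and the identities you propose to handle it, in particular $N\cap Sk(\alpha)=N\cap Q$ and $f_p(\alpha)\cap N=f_p(N\cap\alpha)$, are not justified (the first is simply false: $N\cap Sk(\alpha)=Sk(N\cap\alpha)$ by Lemma 2.7(1), which is not $N\cap Q$). A similar remark applies to Case 6, where one only needs that the maximal element $M$ of $f_p(x)\cap N$ is automatically in $Q$ (because $M\in Sk(x)$ and $x\in Q$) and remains maximal in $f_{p\restriction Q}(x)\cap(N\cap Q)$.

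Second, and more seriously, the $g$-component does not ``follow directly'' once the $f$-chains agree. By Definition 7.16, $g_{q\oplus_N p}(K,x)$ is a union of $g_q(K,y)\cup g_p(K,y)$ over all $y$ with $x=y$ or $x\in f_{q\oplus_N p}(y)$, and such $y$ may lie outside $Q$, whereas on the right side every index $y$ lies in $Q$. So one must show that every contribution coming from some $y\notin Q$ is absorbed by a contribution indexed inside $Q$ (in practice, that $\xi\in g_q(K,y)\cup g_p(K,y)$ implies $\xi\in g_q(K,x)$ or $\xi\in g_p(K,x)$ or comes via some intermediate model). This requires the coherence properties $g(K,y)\subseteq g(K,x)$ along the chains (Definition 4.2(5)), the extra condition $g_r(K,N\cap\alpha)\subseteq g_r(K,\alpha)$ built into $D_N$ (Definition 7.3(3)), Lemma 7.8(2,3), and further uses of $Q\in N$ and $Q\cap\kappa\notin S$ (e.g.\ to rule out $y=\alpha\notin Q$ with $x\in f_{q\oplus_N p}(\alpha)$, again via $\sup(N\cap\alpha)\le\sup(x)<Q\cap\kappa<\alpha$ against $Q\cap\kappa\in N$). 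This case analysis is roughly half of the paper's proof, and your proposal as written does not address it.
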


Note that Lemma 8.5 implies that 
$(q \oplus_N p) \restriction Q$ and 
$(q \restriction Q) \oplus_{N \cap Q} (p \restriction Q)$ are defined.

\begin{proof}
Let 
$$
s := (q \oplus_N p) \restriction Q
$$
and 
$$
t := (q \restriction Q) \oplus_{N \cap Q} (p \restriction Q).
$$
Our goal is to prove that $s = t$. 
The proof will be split into three steps, namely, showing that 
$A_s = A_t$, $f_s = f_t$, and $g_s = g_t$.

\bigskip

By Definitions 6.4 and 7.18, we have that 
\begin{multline*}
A_s = A_{(q \oplus_N p) \restriction Q} = A_{q \oplus_N p} \cap Q = 
(A_q \cup A_p) \cap Q = \\ 
= (A_q \cap Q) \cup (A_p \cap Q) = 
A_{q \restriction Q} \cup A_{p \restriction Q} = 
A_{(q \restriction Q) \oplus_{N \cap Q} (p \restriction Q)} = A_t.
\end{multline*}
So $A_s = A_t$.

\bigskip

We begin the proof of $f_s = f_t$ by showing that 
$\dom(f_t) \subseteq \dom(f_s)$. 
So let $x \in \dom(f_t)$, and we will show that $x \in \dom(f_s)$. 
By Definition 7.11, either 
\begin{enumerate}
\item[(a)] $x \in \dom(f_{q \restriction Q})$, or
\item[(b)] $x \in \dom(f_{p \restriction Q})$, or 
\item[(c)] $x = M \cap \alpha$, where 
$M \in A_{p \restriction Q}$, $N \cap Q \le M$, and 
$\alpha \in (M \cap \dom(f_{q \restriction Q}) \cap S) \setminus 
\dom(f_{p \restriction Q})$.
\end{enumerate}
By Definition 6.4, the domain of $f_s$ is equal to 
$\dom(f_{q \oplus_N p}) \cap Q$. 
So it suffices to show that $x \in \dom(f_{q \oplus_N p})$ and
$x \in Q$. 
We consider cases a, b, and c separately.

(a) $x \in \dom(f_{q \restriction Q}) = \dom(f_q) \cap Q$. 
So $x \in Q$. 
Also, $\dom(f_q) \subseteq \dom(f_{q \oplus_N p})$, so 
$x \in \dom(f_{q \oplus_N p})$.

(b) $x \in \dom(f_{p \restriction Q}) = \dom(f_p) \cap Q$. 
Then $x \in Q$. 
Also, $\dom(f_p) \subseteq \dom(f_{q \oplus_N p})$, so 
$x \in \dom(f_{q \oplus_N p})$.

(c) $x = M \cap \alpha$, where 
$M \in A_{p \restriction Q}$, $N \cap Q \le M$, and 
$\alpha \in (M \cap \dom(f_{q \restriction Q}) \cap S) \setminus 
\dom(f_{p \restriction Q})$. 
Then $M \in A_{p \restriction Q} = A_p \cap Q$ and 
$\alpha \in \dom(f_{q \restriction Q}) = \dom(f_q) \cap Q$. 
In particular, $M$ and $\alpha$ are in $Q$, so $M \cap \alpha = x$ 
is in $Q$.

It remains to show that $M \cap \alpha \in \dom(f_{q \oplus_N p})$. 
By the definition of the domain of $f_{q \oplus_N p}$ in Definition 7.11, 
it suffices to show that $M \in A_p$, $N \le M$, and 
$\alpha \in (M \cap \dom(f_q) \cap S) \setminus \dom(f_p)$. 
We know that $M \in A_p$ from the last paragraph. 
By Lemma 2.29, $N \sim N \cap Q$, and since $N \cap Q \le M$, it 
follows by Lemma 2.18 that $N \le M$. 
In the previous paragraph, we observed that $\alpha \in \dom(f_q) \cap Q$. 
Also, by the choice of $\alpha$, $\alpha \in M \cap S$ and 
$\alpha \notin \dom(f_{p \restriction Q}) = \dom(f_p) \cap Q$. 
Since $\alpha \in Q$, the latter statement implies that 
$\alpha \notin \dom(f_p)$. 
To summarize, 
$\alpha \in (M \cap \dom(f_q) \cap S) \setminus \dom(f_p)$, as required.

This completes the proof that $\dom(f_t) \subseteq \dom(f_s)$. 
To finish the proof that $f_s = f_t$, we show that if 
$x \in \dom(f_s)$, then $x \in \dom(f_t)$ and 
$f_s(x) = f_t(x)$.

Let $x \in \dom(f_s)$. 
Then 
$$
x \in \dom(f_s) = \dom(f_{q \oplus_N p}) \cap Q
$$
and 
$$
f_s(x) = f_{(q \oplus_N p) \restriction Q}(x) = 
f_{q \oplus_N p}(x).
$$
Thus, it suffices to prove that $x \in \dom(f_t)$ and 
$$
f_{q \oplus_N p}(x) = 
f_{(q \restriction Q) \oplus_{N \cap Q} (p \restriction Q)}(x).
$$ 
The proof splits into the seven cases of Definition 7.11 for how 
$f_{q \oplus_N p}(x)$ is defined. 
We remind the reader that $x \in Q$, as just noted.

\bigskip

(1) $x \in \dom(f_q) \setminus S$ and $f_s(x) = f_q(x)$. 
Then 
$$
x \in \dom(f_q) \cap Q = \dom(f_{q \restriction Q}) \subseteq \dom(f_t).
$$
Also, $x \in \dom(f_{q \restriction Q}) \setminus S$ implies that 
$f_t(x)$ is defined as in case 1 of Definition 7.11. 
So $f_t(x) = f_{q \restriction Q}(x) = f_q(x) = f_s(x)$. 

\bigskip

(2) $x = \alpha \in \dom(f_q) \cap S \cap \dom(f_p)$ and 
$f_s(\alpha) = f_q(\alpha) \cup f_p(\alpha)$. 
Then $\alpha \in \dom(f_q) \cap Q = \dom(f_{q \restriction Q}) 
\subseteq \dom(f_t)$ and 
$\alpha \in \dom(f_p) \cap Q = \dom(f_{p \restriction Q})$. 
Hence,
$$
\alpha \in \dom(f_{q \restriction Q}) \cap S \cap \dom(f_{p \restriction Q}),
$$
and therefore $f_t(\alpha)$ is defined as in case 2 of Definition 7.11. 
So 
$$
f_t(\alpha) = f_{q \restriction Q}(\alpha) \cup f_{p \restriction Q}(\alpha) = 
f_q(\alpha) \cup f_p(\alpha) = f_s(\alpha).
$$

\bigskip

(3) $x = \alpha \in (\dom(f_q) \cap S) \setminus \dom(f_p)$ and 
$$
f_s(x) = f_q(\alpha) \cup \{ M \cap \alpha : M \in A_p, \ N \le M, \ 
\alpha \in M \}.
$$
Then $\alpha \in \dom(f_q) \cap Q = \dom(f_{q \restriction Q}) 
\subseteq \dom(f_t)$. 
Also, $\alpha \notin \dom(f_p)$ implies that 
$\alpha \notin \dom(f_p) \cap Q = \dom(f_{p \restriction Q})$.

To summarize, we have that 
$\alpha \in (\dom(f_{q \restriction Q}) \cap S) \setminus 
\dom(f_{p \restriction Q})$, which means that we are in case 3 
in the definition of $f_t(\alpha)$.
So 
$$
f_t(\alpha) = f_{q \restriction Q}(\alpha) \cup 
\{ K \cap \alpha : K \in A_{p \restriction Q}, \ N \cap Q \le K, \ 
\alpha \in K \}.
$$
Since $f_{q \restriction Q}(\alpha) = f_q(\alpha)$, in order to show that 
$f_s(x) = f_t(x)$, the above equations imply that it suffices to show that 
\begin{multline*}
\{ M \cap \alpha : M \in A_p, \ N \le M, \ 
\alpha \in M \} = \\ 
= \{ K \cap \alpha : K \in A_{p \restriction Q}, \ N \cap Q \le K, \ 
\alpha \in K \}.
\end{multline*}

Let $K \cap \alpha$ be in the set on the right, where $K \in A_{p \restriction Q}$, 
$N \cap Q \le K$, and $\alpha \in K$. 
Then $K \in A_{p \restriction Q} = A_p \cap Q$. 
By Lemma 2.29, $N \sim N \cap Q$. 
Since $N \cap Q \le K$, Lemma 2.18 implies that $N \le K$. 
So $K \in A_p$, $N \le K$, and $\alpha \in K$. 
Thus, $K \cap \alpha$ is in the set on the left.

Conversely, let $M \cap \alpha$ be a member of the set on the left, 
where $M \in A_p$, $N \le M$, and $\alpha \in M$. 
We will show that $M \cap \alpha$ is a member of the set on the right. 
It suffices to show that $M \cap \alpha$ is equal to $K \cap \alpha$, 
for some $K \in A_{p \restriction Q}$ such that $N \cap Q \le K$ and 
$\alpha \in K$.

Let $K := M \cap Q$. 
Since $p \in D_Q$ and $M \in A_p$, $M \cap Q \in A_p$. 
As $Q$ is simple, $M \cap Q \in Q$. 
So $K = M \cap Q \in A_p \cap Q = A_{p \restriction Q}$. 
By Lemma 2.29, $M \sim M \cap Q$ and $N \sim N \cap Q$. 
Since $N \le M$, Lemma 2.18 implies that $N \cap Q \le M \cap Q = K$. 
And $\alpha \in Q$, so $\alpha \in M \cap Q = K$. 
It remains to show that $M \cap \alpha = K \cap \alpha$. 
But $\alpha \in Q$ implies that 
$K \cap \alpha = M \cap Q \cap \alpha = M \cap \alpha$.

\bigskip

(4) $x \in \dom(f_p) \setminus N$, and for some 
$\alpha \in \dom(f_p) \cap S \cap N$, 
$N \cap \alpha \in f_p(x) \cup \{ x \}$. 
Then $f_s(x) = f_q(\alpha) \cup f_p(x)$.

We claim that $\alpha \in Q$. 
Suppose for a contradiction that $\alpha \notin Q$. 
Since $\alpha \in S$ and $Q \cap \kappa \notin S$, it follows that 
$Q \cap \kappa < \alpha$. 
As $N \cap \alpha \in f_p(x) \cup \{ x \}$, 
$N \cap \alpha$ is either equal to $x$ or is in $f_p(x)$. 
In particular, since $x \in Q$, we have that 
$$
\sup(N \cap \alpha) \le \sup(x) < Q \cap \kappa < \alpha.
$$
So $\sup(N \cap \alpha) < Q \cap \kappa$.  
But $Q \in N$, so $Q \cap \kappa \in N \cap \alpha$, which contradicts 
that $\sup(N \cap \alpha) < Q \cap \kappa$.
This completes the proof that $\alpha \in Q$. 

Now we argue that we are in case 4 of Definition 7.11 for $f_t(x)$. 
We know that $\alpha$ and $x$ are in 
$\dom(f_p) \cap Q = \dom(f_{p \restriction Q})$. 
So $x \in \dom(f_{p \restriction Q}) \setminus (N \cap Q)$, 
$\alpha \in \dom(f_{p \restriction Q}) \cap S \cap (N \cap Q)$, and 
$N \cap \alpha \in f_p(x) \cup \{ x \}$. 
Since $\alpha \in Q$, $N \cap \alpha = N \cap Q \cap \alpha$, so 
$N \cap Q \cap \alpha \in f_p(x) \cup \{ x \} = 
f_{p \restriction Q}(x) \cup \{ x \}$. 
This proves that 
we are in case 4 in the definition of $f_t(x)$. 
So 
$$
f_t(x) = f_{q \restriction Q}(\alpha) \cup f_{p \restriction Q}(x) = 
f_q(\alpha) \cup f_p(x) = f_s(x).
$$

\bigskip

(5,6) $x \in \dom(f_p) \setminus N$ and case 4 fails. 
Then $x \in \dom(f_p) \cap Q = \dom(f_{p \restriction Q})$, 
so $x \in \dom(f_{p \restriction Q}) \setminus (N \cap Q)$. 
In particular, $x \in \dom(f_t)$.

We claim that case 4 of Definition 7.11 fails for $f_t(x)$. 
Suppose for a contradiction that for some 
$\alpha \in \dom(f_{p \restriction Q}) \cap S \cap (N \cap Q)$, 
we have that 
$(N \cap Q) \cap \alpha \in f_{p \restriction Q}(x) \cup \{ x \}$. 
Then $\alpha \in \dom(f_{p \restriction Q}) = \dom(f_p) \cap Q$. 
Since $\alpha \in Q$, $N \cap Q \cap \alpha = N \cap \alpha$. 
So $\alpha \in \dom(f_p) \cap S \cap N$ and 
$N \cap \alpha \in f_{p \restriction Q}(x) \cup \{ x \} = 
f_p(x) \cup \{ x \}$. 
Hence, we are in case 4 of Definition 7.11 in the definition of 
$f_{q \oplus_N p}(x)$, which is a contradiction.

It follows that we are in 
either case 5 or 6 in the definition of $f_t(x)$. 
Assume that we are in case 5 in the definition of $f_{q \oplus_N p}(x)$. 
Then 
$f_p(x) \cap N = \emptyset$ and $f_s(x) = f_p(x)$. 
It follows that 
$$
f_{p \restriction Q}(x) \cap (N \cap Q) = 
f_p(x) \cap (N \cap Q) \subseteq f_p(x) \cap N = \emptyset.
$$
So we are also in case 5 in the definition of $f_t(x)$. 
Hence, 
$$
f_t(x) = f_{p \restriction Q}(x) = f_p(x) = f_s(x).
$$

Now assume that we are in case 6 in the definition of $f_{q \oplus_N p}(x)$. 
Then $f_s(x) = f_p(x) \cup f_q(M)$, where $M$ is the 
membership largest element of $f_p(x) \cap N$. 
Since $M \in f_p(x)$, it follows that $M \in Sk(x)$. 
And since $x \in Q$, $M \in Q$. 
Thus, $M \in f_p(x) \cap Q = f_{p \restriction Q}(x) \cap Q$. 
Since $M \in N$, $M$ is in $f_{p \restriction Q}(x) \cap (N \cap Q)$. 
In particular, $f_{p \restriction Q}(x) \cap (N \cap Q)$ is nonempty, 
so we are in case 6 in the definition of $f_t(x)$.

We claim that $M$ is the membership largest element of 
$f_{p \restriction Q}(x) \cap (N \cap Q)$. 
Otherwise there is $M_1 \in f_{p \restriction Q}(x) \cap (N \cap Q)$ 
such that $M \in Sk(M_1)$. 
But then 
$$
M_1 \in f_{p \restriction Q}(x) \cap (N \cap Q) = 
f_p(x) \cap N \cap Q \subseteq f_p(x) \cap N.
$$
Since $M$ is the membership largest element of $f_p(x) \cap N$, this 
is a contradiction. 
By the definition of $f_t(x)$, we have that 
$$
f_t(x) = f_{p \restriction Q}(x) \cup f_{q \restriction Q}(M) = 
f_p(x) \cup f_q(M) = f_s(x).
$$

\bigskip

(7) $x = M \cap \alpha$, where $M \in A_p$, 
$N \le M$, and 
$\alpha \in (M \cap \dom(f_q) \cap S) \setminus \dom(f_p)$. 
Then $f_s(x) = f_s(\alpha) \cap Sk(M \cap \alpha)$.

We claim that $M \cap \alpha$ is equal to $K \cap \alpha$, for 
some $K \in A_{p \restriction Q}$ such that $N \cap Q \le K$ and 
$\alpha \in (K \cap \dom(f_{q \restriction Q}) \cap S) \setminus 
\dom(f_{p \restriction Q})$. 
Let $K := M \cap Q$. 
Since $M \in A_p$ and $p \in D_Q$, $M \cap Q \in A_p$. 
As $Q$ is simple, $M \cap Q \in Q$. 
So $K = M \cap Q \in A_p \cap Q = A_{p \restriction Q}$. 
By Lemma 2.29, $M \sim M \cap Q$ and $N \sim N \cap Q$. 
Since $N \le M$, it follows by Lemma 2.18 
that $N \cap Q \le M \cap Q = K$. 

Suppose for a moment that $\alpha \in Q$. 
Then $\alpha \in M \cap Q = K$ and 
$\alpha \in \dom(f_q) \cap Q = \dom(f_{q \restriction Q})$. 
Also, since $\alpha \notin \dom(f_p)$ and 
$\dom(f_{p \restriction Q}) = \dom(f_p) \cap Q$, 
it follows that $\alpha \notin \dom(f_{p \restriction Q})$. 
So $\alpha \in (K \cap \dom(f_{q \restriction Q}) \cap S) \setminus 
\dom(f_{p \restriction Q})$. 
Finally, assuming that $\alpha \in Q$, we have that 
$$
K \cap \alpha = M \cap Q \cap \alpha = M \cap \alpha.
$$

Thus, assuming that $\alpha \in Q$, we have shown that 
$K \cap \alpha = M \cap \alpha$ 
is in the domain of $f_t$ and $f_t(K \cap \alpha)$ is defined as in 
case 7 of Definition 7.11. 
So $f_t(K \cap \alpha) = f_t(\alpha) \cap Sk(K \cap \alpha)$. 
By case 3 handled above, $f_s(\alpha) = f_t(\alpha)$. 
So 
\begin{multline*}
f_t(M \cap \alpha) = 
f_t(K \cap \alpha) = f_t(\alpha) \cap Sk(K \cap \alpha) = \\ 
= f_s(\alpha) \cap Sk(M \cap \alpha) = f_s(M \cap \alpha).
\end{multline*}

It remains to prove that $\alpha \in Q$. 
Suppose for a contradiction that $\alpha \notin Q$. 
Since $p \in D_Q$ and $M \in A_p$, we have that 
$M \cap Q \in A_p$. 
By Lemma 2.29, $M \sim M \cap Q$. 
As $M \cap \alpha = x \in Q$ and $\alpha \notin Q$, clearly 
$\alpha = \min((M \cap \kappa) \setminus (Q \cap \kappa))$. 
By Lemma 3.6, we have that 
$$
\alpha = \min((M \cap \kappa) \setminus \beta_{M \cap Q,M}).
$$
Now $M$ and $M \cap Q$ are both in $A_p$ and $M \sim M \cap Q$. 
Therefore, $\alpha \in r^*(A_p) \cap S$. 
Since $p$ is a condition, $\alpha \in \dom(f_p)$. 
But this contradicts our original choice of $\alpha$.

\bigskip

This completes the proof that $f_s = f_t$.

\bigskip

Now we show that $g_s = g_t$. 
Since the domain of $g_s$ is equal to the set of pairs $(K,x)$ such that 
$K \in f_s(x)$, and the domain of $g_t$ is equal to 
the set of pairs $(K,x)$ such that 
$K \in f_t(x)$, the fact that $f_s = f_t$ implies that 
$\dom(g_s) = \dom(g_t)$.

Let $(K,x) \in \dom(g_s)$, and we will show 
that $g_s(K,x) = g_t(K,x)$. 
So $K \in f_s(x) = f_{q \oplus_N p}(x)$. 
By Definition 6.4, we have that 
$$
g_s(K,x) = g_{(q \oplus_N p) \restriction Q}(K,x) = 
g_{q \oplus_N p}(K,x).
$$
Hence, by Definition 7.16, 
$$
g_s(K,x) = 
\bigcup \{ g_q(K,y) \cup g_p(K,y) : 
x = y, \textrm{or} \ x \in f_{q \oplus_N p}(y) \}.
$$
Also, since $g_{q \restriction Q} = g_q \restriction Q$ and 
$g_{p \restriction Q} = g_p \restriction Q$, we have that 
\begin{multline*}
g_t(K,x) = g_{(q \restriction Q) \oplus_{N \cap Q} (p \restriction Q)}(K,x) = \\ 
= \bigcup \{ g_{q}(K,y) \cup g_p(K,y) : x = y, \textrm{or} \ x \in f_t(y) \}.
\end{multline*}
Therefore, to show that $g_s(K,x) = g_t(K,x)$, it suffices to show 
that for any ordinal $\xi$, the following are equivalent:
\begin{enumerate}
\item $\xi \in g_q(K,y) \cup g_p(K,y)$, for some $y$ such that either $x = y$ or 
$x \in f_{q \oplus_N p}(y)$;
\item $\xi \in g_q(K,y) \cup g_p(K,y)$, for some $y$ such that either $x = y$ or  
$x \in f_t(y)$.
\end{enumerate}
Obviously (1) and (2) are equivalent in the special case that $x = y$.

\bigskip

(2) implies (1): Suppose that $\xi \in g_q(K,y) \cup g_p(K,y)$, where 
$x \in f_t(y)$. 
Then 
$$
x \in f_t(y) = f_s(y) = f_{(q \oplus_N p) \restriction Q}(y) = 
f_{q \oplus_N p}(y).
$$
So $\xi \in g_q(K,y) \cup g_p(K,y)$, where 
$x \in f_{q \oplus_N p}(y)$. 
Hence, (1) holds.

\bigskip

(1) implies (2): Suppose that $\xi \in g_q(K,y) \cup g_p(K,y)$, where 
$x \in f_{q \oplus_N p}(y)$. 
If $y \in Q$, then 
$$
y \in \dom(f_{q \oplus_N p}) \cap Q = 
\dom(f_{(q \oplus_N p) \restriction Q}) = \dom(f_s)
$$
and 
$$
x \in f_{q \oplus_N p}(y) = 
f_{(q \oplus_N p) \restriction Q}(y) = f_s(y) = f_t(y).
$$
Therefore, (2) holds.

The more difficult case is when $y$ is not in $Q$. 
We split the proof into the two cases of whether $\xi$ is in $g_q(K,y)$ 
or in $g_p(K,y)$.

\bigskip

\noindent \emph{Case 1:} $\xi \in g_q(K,y)$. 
Then in particular, $g_q(K,y)$ is nonempty, which implies that 
$K \in f_q(y)$. 
So $K$ and $y$ are in $\dom(f_q)$. 
Since $q \in N$, $K$ and $y$ are in $N$.

We claim that $x \in \dom(f_q)$. 
Assume for a moment that this claim is true, and we finish the proof. 
We have that $K$, $x$, and $y$ are in $\dom(f_q)$, and also, 
$K \in f_s(x) = f_{q \oplus_N p}(x)$ and 
$x \in f_{q \oplus_N p}(y)$. 
Since $q \oplus_N p \le q$, it follows that 
$K \in f_q(x)$ and $x \in f_q(y)$. 
As $q$ is a condition, we have that $g_q(K,y) \subseteq g_q(K,x)$. 
Thus, $\xi \in g_q(K,x)$, which implies that (2) holds, as required.

Suppose for a contradiction that $x$ is not in $\dom(f_q)$. 
If $x \in N$, then $x \in \dom(f_{q \oplus_N p}) \cap N = \dom(f_q)$ 
by Lemma 7.13(1). 
So $x \notin N$. 
Since $x \in f_{q \oplus_N p}(y)$ and $y \in N$, the only way that 
$x$ would not be in $N$ is if $y \in S$. 

So $y = \alpha$, for some $\alpha \in \dom(f_q) \cap S$. 
Since $N \in A_p \subseteq A_{q \oplus_N p}$, we have that 
$N \cap \alpha \in f_{q \oplus_N p}(\alpha)$. 
And since $x \in f_{q \oplus_N p}(\alpha)$ and $x \notin N$, 
$N \cap \alpha$ is either equal to $x$ or is in $f_{q \oplus_N p}(x)$. 
In particular, $\sup(N \cap \alpha) \le \sup(x)$.

Since $s \in Q$ and $x \in \dom(f_s)$, $x \in Q$. 
Hence, $\sup(x) < Q \cap \kappa$. 
Since $y = \alpha$ is not in $Q$, $Q \cap \kappa \le \alpha$. 
But $\alpha \in S$ and $Q \cap \kappa \notin S$, so 
$Q \cap \kappa < \alpha$. 
Therefore, 
$$
\sup(N \cap \alpha) \le \sup(x) < Q \cap \kappa < \alpha.
$$
But $Q \in N$ implies that $Q \cap \kappa \in N \cap \alpha$, which 
contradicts that $\sup(N \cap \alpha) < Q \cap \kappa$.

\bigskip

\noindent \emph{Case 2:}  $\xi \in g_p(K,y)$. 
Then in particular, $g_p(K,y)$ is nonempty, which implies that $K \in f_p(y)$. 
So $K$ and $y$ are in $\dom(f_p)$.

The easier case is when $x \in \dom(f_p)$. 
Then since $K \in f_s(x) = f_{q \oplus_N p}(x)$, 
$x \in f_{q \oplus_N p}(y)$, and $q \oplus_N p \le p$, 
it follows that $K \in f_p(x)$ and $x \in f_p(y)$. 
Since $p$ is a condition, it follows that 
$g_p(K,y) \subseteq g_p(K,x)$. 
So $\xi \in g_p(K,x)$, and (2) holds.

\bigskip

Assume that $x \notin \dom(f_p)$. 
Since $x \in \dom(f_{q \oplus_N p})$, Definition 7.11 implies that 
either $x \in \dom(f_q)$, or 
$x = M \cap \alpha$ for some $M \in A_p$ with 
$N \le M$ and some 
$\alpha \in (M \cap \dom(f_q) \cap S) \setminus \dom(f_p)$.

\bigskip

\noindent \emph{Case 2a:} $x \in \dom(f_q)$. 
Then $x \in N$. 
Since $K \in f_s(x)$ and $x \notin S$, $K \in N$ as well. 
By Lemma 7.8(2), 
$K \in \dom(f_p) \cap N \subseteq \dom(f_q)$. 
So $K \in \dom(f_q)$. 
Since $K$ and $x$ are in $\dom(f_q)$, 
$K \in f_{q \oplus_N p}(x)$, 
and $q \oplus_N p \le q$, it follows that 
$K \in f_q(x)$.

\bigskip

First, assume that $y \in N$. 
By Lemma 7.8(2), $y \in \dom(f_p) \cap N \subseteq \dom(f_q)$. 
So $x$ and $y$ are in $\dom(f_q)$. 
Since $x \in f_{q \oplus_N p}(y)$ 
and $q \oplus_N p \le q$, we have that $x \in f_q(y)$. 
So $K \in f_q(x)$ and $x \in f_q(y)$. 
Since $q$ is a condition, $g_q(K,y) \subseteq g_q(K,x)$. 
On the other hand, by Lemma 7.8(3), 
$(K,y) \in \dom(g_p) \cap N \subseteq \dom(g_q)$ and 
$g_p(K,y) \subseteq g_q(K,y)$. 
Thus, 
$$
\xi \in g_p(K,y) \subseteq g_q(K,y) \subseteq g_q(K,x).
$$ 
So (2) holds.

\bigskip

Secondly, assume that $y \notin N$. 
Let us consider cases 1--7 of Definition 7.11 
in the definition of $f_{q \oplus_N p}(y)$. 
Since $y \notin N$, cases 1, 2, and 3 are false. 
As $K \in f_p(y) \cap N$, case 5 is false. 
Case 7 is false by Lemma 7.9, since $y \in \dom(f_p)$. 
So we are left with cases 4 and 6.

In case 4, $y \in \dom(f_p) \setminus N$, 
and $N \cap \alpha \in f_p(y) \cup \{ y \}$ 
for some $\alpha \in \dom(f_p) \cap S \cap N$. 
Then $f_{q \oplus_N p}(y) = f_q(\alpha) \cup f_p(y)$. 
Since $x \in f_{q \oplus_N p}(y)$ and $x \notin \dom(f_p)$, 
it follows that $x \in f_q(\alpha)$.

Since $K \in f_p(y) \cap N$ and $N \cap \alpha \in f_p(y) \cup \{ y \}$, 
we have that $K \in f_p(N \cap \alpha)$. 
Since $p$ is a condition, $g_p(K,y) \subseteq g_p(K,N \cap \alpha)$. 
As $N \cap \alpha \in f_p(\alpha)$ and $p \in D_N$, it follows that 
$g_p(K,N \cap \alpha) \subseteq g_p(K,\alpha)$. 
So $g_p(K,y) \subseteq g_p(K,\alpha)$. 
Since $K$ and $\alpha$ are in $N$, 
$g_p(K,\alpha) \subseteq g_q(K,\alpha)$ by Lemma 7.8(3). 
Now $K \in f_q(x)$ and $x \in f_q(\alpha)$, so 
$g_q(K,\alpha) \subseteq g_q(K,x)$ since $q$ is a condition. 
Putting it all together, 
$$
\xi \in g_p(K,y) \subseteq g_p(K,\alpha) \subseteq g_q(K,\alpha) 
\subseteq g_q(K,x).
$$
So (2) holds.

In case 6, we have that case 4 fails, and 
$f_{q \oplus_N p}(y) = f_p(y) \cup f_q(M)$, where $M$ 
is the membership largest element of $f_p(y) \cap N$. 
Since $x \in f_{q \oplus_N p}(y)$ and $x \notin \dom(f_p)$, 
we have that $x \in f_q(M)$. 
Also, $K \in f_p(y) \cap N$, so $K \in f_p(M) \cup \{ M \}$. 
But $K \in Sk(x)$ and $x \in f_q(M)$, so $K \ne M$. 
Hence, $K \in f_p(M)$. 
As $p$ is a condition, $g_p(K,y) \subseteq g_p(K,M)$. 
Also, since $K \in f_p(M)$ and $K$ and $M$ are in $N$, 
$K \in f_q(M)$ by Lemma 7.8(2). 
As $x \in f_q(M)$ and $K \in Sk(x)$, $K \in f_q(x)$. 
Since $q$ is a condition, $g_q(K,M) \subseteq g_q(K,x)$. 
And by Lemma 7.8(3), $g_p(K,M) \subseteq g_q(K,M)$. 
Therefore,
$$
\xi \in g_p(K,y) \subseteq g_p(K,M) \subseteq 
g_q(K,M) \subseteq g_q(K,x).
$$
So (2) holds.

\bigskip

\noindent \emph{Case 2b:} $x = M \cap \alpha$ 
for some $M \in A_p$ with $N \le M$ and 
some $\alpha \in (M \cap \dom(f_q) \cap S) \setminus \dom(f_p)$. 
Note that $\alpha \in \dom(f_q)$ implies that $\alpha \in N$. 
We will show, in fact, that this case is impossible.

We claim that $\alpha < Q \cap \kappa$. 
If not, then since $\alpha \in S$ and $Q \cap \kappa \notin S$, 
we have that $Q \cap \kappa < \alpha$. 
Now $M \cap \alpha = x \in Q$, so $\sup(x) < Q \cap \kappa$. 
And $\alpha \in M \cap N \cap \kappa$, so 
$\alpha < \beta_{M,N}$ by Proposition 2.11. 
Therefore, $Q \cap \kappa < \alpha < \beta_{M,N}$. 
Since $N \le M$, we have that $N \cap \beta_{M,N} \subseteq M$. 
So $N \cap \alpha \subseteq M \cap \alpha$. 
Therefore,
$$
\sup(N \cap \alpha) \le \sup(M \cap \alpha) = \sup(x) < Q \cap \kappa < 
\alpha.
$$
So $\sup(N \cap \alpha) < Q \cap \kappa$. 
Since $Q \in N$, we have that 
$Q \cap \kappa \in N \cap \alpha$, which contradicts that 
$\sup(N \cap \alpha) < Q \cap \kappa$.

So indeed, $\alpha < Q \cap \kappa$. 
Since $y \notin Q$, Lemma 4.3(4) implies that 
$Q \cap \kappa \le \sup(y)$. 
Hence, $\alpha < \sup(y)$. 
But we know that $M \cap \alpha = x \in f_{q \oplus_N p}(y)$. 
And by Definition 7.11 and Lemma 4.6, the only value of $y$ for which 
$M \cap \alpha$ is in $f_{q \oplus_N p}(y)$ 
is either $y = \alpha$ or $y = L \cap \alpha$ for some $L$. 
Neither of these cases is possible, since $\alpha < \sup(y)$.
\end{proof}

\bigskip

\addcontentsline{toc}{section}{9. The approximation property}

\textbf{\S 9. The approximation property}

\stepcounter{section}

\bigskip

In this section we will prove that certain quotients of the forcing poset 
$\p$ have the $\omega_1$-approximation property.

\begin{lemma}
Suppose that $Q \in \mathcal Y$ is simple and 
$Q \prec (H(\lambda),\in,\p)$. 
Then $Q \cap \p$ is a regular suborder of $\p$.
\end{lemma}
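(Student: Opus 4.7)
The plan is to verify conditions (1) and (2) of the definition of regular suborder directly, using the amalgamation machinery for uncountable models already developed in Section 6. The fact that $Q \cap \p$ is a suborder of $\p$ in the sense that $\le_{Q \cap \p} = \le_{\p} \cap ((Q \cap \p) \times (Q \cap \p))$ is immediate from the definitions; note also that every $q \in Q \cap \p$ automatically lies in $D_Q$, as noted after Lemma 6.3.

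For condition (1), I would prove the contrapositive: any two conditions $p, q \in Q \cap \p$ which are compatible in $\p$ are compatible in $Q \cap \p$. Given a common lower bound $r \le p, q$ in $\p$, I would apply Lemma 6.3 to find $s \le r$ with $s \in D_Q$; so $s \le p, q$ and $s \in D_Q$. Then Lemma 6.6(1), applied twice (once with $p$ and once with $q$), gives $s \restriction Q \le p$ and $s \restriction Q \le q$ in $\p$. By Lemma 6.5, $s \restriction Q$ lies in $Q \cap \p$, so it is a common lower bound there.

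For condition (2), let $A$ be a maximal antichain of $Q \cap \p$, and let $r \in \p$ be arbitrary; I need to exhibit some $a \in A$ compatible with $r$ in $\p$. Using Lemma 6.3, extend $r$ to $s \le r$ with $s \in D_Q$. By Lemma 6.5, $s \restriction Q$ is in $Q \cap \p$. By maximality of $A$, there is $a \in A$ compatible in $Q \cap \p$ with $s \restriction Q$; fix $w \in Q \cap \p$ with $w \le a$ and $w \le s \restriction Q$. Since $w \in Q \cap \p$ and $w \le s \restriction Q$, Proposition 6.15 produces the amalgam $w \oplus_Q s \in \p$ which is below both $w$ and $s$. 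Hence $w \oplus_Q s \le w \le a$ and $w \oplus_Q s \le s \le r$, witnessing that $a$ and $r$ are compatible in $\p$.

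There is essentially no obstacle here, because all the heavy lifting has already been done: density of $D_Q$ (Lemma 6.3), the fact that the projection $q \mapsto q \restriction Q$ lands in $Q \cap \p$ and interacts correctly with the ordering (Lemmas 6.5 and 6.6), and the existence of the amalgam $w \oplus_Q s$ (Proposition 6.15). The elementarity hypothesis $Q \prec (H(\lambda), \in, \p)$ is used implicitly in phrasing $Q \cap \p$ as a genuine suborder and in ensuring that the relevant objects (projections, amalgams) behave as expected, but no new combinatorial content is required beyond assembling the cited results.
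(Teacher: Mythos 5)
Your proof is correct, and for the substantive half — predensity of a maximal antichain $A$ of $Q \cap \p$ in $\p$ — it is exactly the paper's argument: pass to $s \le r$ in $D_Q$ via Lemma 6.3, restrict to $s \restriction Q \in Q \cap \p$ via Lemma 6.5, use maximality of $A$ to find $w \le a, s \restriction Q$ in $Q \cap \p$, and amalgamate with Proposition 6.15. The only place you diverge is the incompatibility-preservation clause: the paper disposes of it in one line by elementarity of $Q$ (a common lower bound in $\p$ of two conditions of $Q$ yields, by $Q \prec (H(\lambda),\in,\p)$, a common lower bound inside $Q \cap \p$), whereas you reroute through $D_Q$ and Lemmas 6.5 and 6.6(1) to produce the explicit lower bound $s \restriction Q$. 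Your version is slightly longer but equally valid, and it has the mild bonus of showing that this clause (indeed, your whole argument) needs only the simplicity of $Q$, not the elementarity hypothesis; the paper's version is shorter but leans on $Q \prec (H(\lambda),\in,\p)$.
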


\begin{proof}
If $p$ and $q$ are in $Q \cap \p$ and are compatible in $\p$, then 
by the elementarity of $Q$, there is $r \in Q \cap \p$ such that $r \le p, q$. 
So $p$ and $q$ are compatible in $Q \cap \p$.

Let $B$ be a maximal antichain of $Q \cap \p$, 
and we will show that $B$ is predense in $\p$. 
So let $p \in \p$, and we will find $s \in B$ which is 
compatible with $p$. 
Since the set $D_Q$ is dense by Lemma 6.3, fix $q \le p$ in $D_Q$. 
Then $q \restriction Q$ exists and is in $Q \cap \p$.

Since $B$ is maximal, we can find $s \in B$ such that 
$s$ and $q \restriction Q$ are compatible in $Q \cap \p$. 
Fix $w \le s, q \restriction Q$ in $Q \cap \p$. 
By Proposition 6.15, $w$ and $q$ are compatible in $\p$. 
Fix $t \le w, q$. 
Then $t \le q \le p$ and $t \le w \le s$. 
So $p$ and $s$ are compatible. 
\end{proof}

\begin{thm}
Suppose that $Q \in \mathcal Y$ is simple, 
$Q \prec (H(\lambda),\in,\p)$, and $Q \cap \kappa \notin S$. 
Then $Q \cap \p$ forces that $\p / \dot G_{Q \cap \p}$ has the 
$\omega_1$-approximation property.\footnote{As 
discussed in the introduction of the paper, the proof of this result is 
similar to the proof of \cite[Lemma 2.22]{mitchell}, 
except that we replace the tidy 
condition property of \cite[Definition 2.20]{mitchell} with Proposition 8.6.}
\end{thm}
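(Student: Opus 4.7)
The plan is to adapt the contradiction argument of Mitchell \cite[Lemma 2.22]{mitchell}, replacing the tidy equation used there with the amalgamation identity of Proposition 8.6. By Lemma 9.1, $Q \cap \p$ is a regular suborder of $\p$, so the quotient $\p / \dot G_{Q \cap \p}$ makes sense, and by Lemma 1.5 it suffices to show that $\p$ forces the pair $(V[\dot G_\p \cap Q], V[\dot G_\p])$ to have the $\omega_1$-approximation property. Using the reformulation of Lemma 1.3, I suppose for contradiction that some $p \in \p$ forces $\dot X \subseteq \check \mu$ to be countably approximated by $V[\dot G_\p \cap Q]$ yet not a member of $V[\dot G_\p \cap Q]$. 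Invoking Assumption 2.22 and Lemma 2.30, I fix a simple $N \in \mathcal X$ containing $\p, Q, p, \dot X, \mu$ with $N \cap Q$ also simple, and using Lemmas 7.1 and 8.2 extend $p$ to some $q \in D_N \cap D_Q$ with $N \in A_q$.

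Since $N \cap \mu$ is countable and lies in $V$, the countable approximation hypothesis produces, after a further extension of $q$ inside $D_N \cap D_Q$ if needed, a $Q \cap \p$-name $\sigma \in N$ with $q \Vdash \dot X \cap N = \sigma$; one obtains $\sigma \in N$ by a standard elementarity-plus-genericity maneuver using that $N$ sees $\dot X$, $q \restriction Q$, and the forcing relation. Since $q$ does not force $\dot X \in V[\dot G_\p \cap Q]$, I then extract an ordinal $\alpha < \mu$ together with conditions $r_0, r_1 \le q$ (both in $D_N \cap D_Q$ with $N \in A_{r_i}$, and after a further extension inside $Q \cap \p$) having a common restriction $r_0 \restriction Q = r_1 \restriction Q$ but with $r_0 \Vdash \alpha \in \dot X$ and $r_1 \Vdash \alpha \notin \dot X$. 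Because both $r_i$ force $\dot X \cap N = \sigma$ and $\sigma$ is a $Q \cap \p$-name, they decide $\alpha \in \sigma$ identically; so if $\alpha \in N$ we obtain a contradiction. Hence $\alpha \notin N$.

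The heart of the argument, and the place where Proposition 8.6 enters, is the translation of the disagreement at $\alpha \notin N$ into one visible to $N$ through the name $\sigma$. Using Lemma 8.1 to refine $r_0, r_1$ so that $r_0 \restriction N$ and $r_1 \restriction N$ share a common extension (ideally by keeping them essentially equal to $q \restriction N$), I pick $t \in N \cap D_Q$ below both $r_0 \restriction N$ and $r_1 \restriction N$. Then Proposition 8.6 (whose hypothesis $Q \cap \kappa \notin S$ is exactly the assumption of Theorem 9.2) yields
\[
(t \oplus_N r_0) \restriction Q \;=\; (t \restriction Q) \oplus_{N \cap Q} (r_0 \restriction Q) \;=\; (t \oplus_N r_1) \restriction Q,
\]
so $t \oplus_N r_0$ and $t \oplus_N r_1$ agree on $Q$ while still forcing opposite decisions at $\alpha$. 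Densely varying $t$ inside $N \cap D_Q$ below $q \restriction N$ and reading the induced decisions against the generic $\dot G_{Q \cap \p}$ produces, by elementarity of $N$, a dense set of $Q \cap \p$-conditions that force $\sigma$ to be ambiguous on some ordinal in $N \cap \mu$, contradicting the fact that $\sigma$ is a $Q \cap \p$-name correctly computing $\dot X \cap N$.

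The main obstacle is the execution of this last paragraph: arranging the $r_i$ compatibly on $N$ so that a common $t$ exists, and cleanly converting the $N$-internal, elementarity-based disagreement into an ordinal $\alpha' \in N$ witnessing the same contradiction as $\alpha$ would have. The density argument must be carried out inside $(N \cap Q) \cap \p$, using that $\sigma$ is itself a $Q \cap \p$-name in $N$, so that the ensuing contradiction actually contradicts $q \Vdash \dot X \cap N = \sigma$ rather than a weaker statement. It is precisely at this step that Proposition 8.6 substitutes for Mitchell's tidy equation, by permitting $\restriction Q$ to commute with the amalgamation operation $\oplus_N$ and thereby reducing the external disagreement to an internal one in $N$.
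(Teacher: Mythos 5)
Your overall strategy (Mitchell-style, with Proposition 8.6 replacing the tidy equation) is the right family of argument, but the proposal has a genuine gap at its central step, and two of its supporting claims are not available as stated. First, you cannot in general find a $(Q\cap\p)$-name $\sigma\in N$ with $q\Vdash \dot X\cap N=\sigma$: the set $N\cap\mu$ is not an element of $N$, and there is no ``elementarity-plus-genericity maneuver'' that puts a name for $\dot X\cap (N\cap\mu)$ inside $N$ -- indeed, having such a name in $N$ is essentially the conclusion you are trying to prove. The paper's proof never needs this: the name $\dot s$ for $\dot k\restriction(M^*\cap\mu)$ is fixed \emph{outside} the countable model, and the transfer statement (Claim 1 there, proved from Proposition 6.15) is also proved externally. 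Second, your extraction of $r_0,r_1\le q$ with $r_0\restriction Q=r_1\restriction Q$ deciding some $\alpha\in\dot X$ oppositely, from the mere assumption that $q$ does not force $\dot X\in V[\dot G_\p\cap Q]$, is not justified; and even granting it, your inference that $r_0,r_1$ ``decide $\alpha\in\sigma$ identically'' because they have the same $Q$-restriction is exactly the nontrivial content of Claim 1 and needs the strong-genericity/amalgamation argument, not just the fact that $\sigma$ is a $(Q\cap\p)$-name.

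The decisive problem is the step you yourself flag as the main obstacle: your disagreement lives at an ordinal $\alpha\notin N$, and two conditions with equal $Q$-restrictions disagreeing about $\alpha\in\dot X$ for such an $\alpha$ is not by itself contradictory -- it is precisely what one expects when $\dot X\notin V[\dot G_\p\cap Q]$ -- so the proposed ``conversion of the external disagreement into an internal one in $N$'' is where the whole proof must happen, and no mechanism is given. The paper resolves this by a different arrangement of quantifiers: it formulates a coherence statement (Claim 2) about \emph{all} $q\le p_2\restriction M$ in $D_Q$, all $\nu<\mu$ and all $x$, whose parameters ($p_2\restriction M$, $Q$, $D_Q$, $\mu$, $\dot k$, $\p$) lie in $M^*$, and verifies it by elementarity \emph{inside} $M^*$; this forces the disagreeing conditions $q,r$ and the coordinate $\nu$ into $M^*$, so that $\nu\in M^*\cap\mu$ and the name $\dot s$ applies. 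The amalgamation then goes the other way from your sketch: the two small conditions $q,r\in M^*$ are amalgamated with the single fixed condition $p_2$ over $M$, and Proposition 8.6 together with Lemma 8.1 (whose actual content is monotonicity of $w\mapsto w\oplus_N r$ in $w$, not a device for making $r_0\restriction N$ and $r_1\restriction N$ compatible) yields $(r\oplus_M p_2)\restriction Q\le (q\oplus_M p_2)\restriction Q$, while Claim 1 makes these two restrictions force contradictory values of $\dot s(\nu)$ -- the contradiction. Finally, the paper does not argue by global contradiction at all: from Claim 2 it directly defines, in $V[G\cap Q]$, a function $h$ (via conditions $t\in G\cap Q$ such that everything below both $p_2\restriction M$ and $t$ decides $\dot k(\nu)$ the same way) and proves $h=\dot k^G$, which is what actually establishes the approximation property. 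Your proposal would need to be restructured along these lines; as written, the key step is missing.
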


\begin{proof}
By Lemma 1.4, 
it suffices to show that $\p$ forces that the pair 
$$
(V[\dot G_\p \cap Q],V[\dot G_\p])
$$
has the $\omega_1$-approximation property. 
So let $p$, $\mu$, and $\dot k$ be given such that $\mu$ is an ordinal, 
and $p$ forces in $\p$ that $\dot k : \mu \to On$ is a function satisfying that 
for any 
countable set $a$ in $V[\dot G_\p \cap Q]$, 
$\dot k \restriction a \in V[\dot G_\p \cap Q]$. 
We will find an extension of $p$ which forces that 
$\dot k$ is in $V[\dot G_\p \cap Q]$.

Fix a regular cardinal $\theta$ large enough so that 
$\p$, $\mu$, and $\dot k$ are members of $H(\theta)$. 
By the stationarity of the simple models in $\mathcal X$ 
as described in Assumption 2.22, we can fix a 
countable set $M^* \prec H(\theta)$ which contains 
the parameters 
$\p$, $Q$, $p$, $\mu$, $\dot k$, and satisfies that the set 
$M^* \cap H(\lambda)$ is in $\mathcal X$ and is simple. 

Let $M := M^* \cap H(\lambda)$. 
Note that since $\p \subseteq H(\lambda)$, 
$M^* \cap \p = M \cap \p$. 
In particular, $p \in M \cap \p$. 
Also, observe that $Q \in M^* \cap H(\lambda) = M$.

By Lemma 7.1, we can fix $p_0 \le p$ such that 
$M \in A_{p_0}$. 
Since $M^* \cap \mu$ is in $V$, by the choice of $p$ and $\dot k$ we can fix 
$p_1 \le p_0$ and 
a $(Q \cap \p )$-name $\dot s$ such that 
$$
p_1 \Vdash_\p \dot k \restriction (M^* \cap \mu) = 
\dot s^{\dot G_\p \cap Q}.
$$ 
Since $M \in A_{p_1}$, apply Lemma 8.2 
to fix $p_2 \le p_1$ such that $p_2 \in D_{M} \cap D_Q$.

Note that since $p_2 \le p$ and $p \in M$, it follows that 
$p_2 \restriction M \le p$ by Lemma 7.7. 
So it will suffice to prove that $p_2 \restriction M$ forces that 
$\dot k$ is in $V[\dot G_\p \cap Q]$.

\bigskip

\noindent \emph{Claim 1:} If $t \le p_2$ is in $D_Q$, 
$\nu \in M^* \cap \mu$, and 
$t \Vdash_{\p} \dot k(\nu) = x$ 
(or $t \Vdash_{\p} \dot k(\nu) \ne x$, respectively) 
then $t \restriction Q \Vdash_{Q \cap \p} \dot s(\nu) = x$ (or 
$t \restriction Q \Vdash_{Q \cap \p} \dot s(\nu) \ne x$, respectively).

\bigskip

We will prove only the main part of Claim 1, since the 
parenthetical part has the essentially the same proof. 
So assume that $t \Vdash_\p \dot k(\nu) = x$. 
If $t \restriction Q \not \Vdash_{Q \cap \p} \dot s(\nu) = x$, 
then there is $u \le t \restriction Q$ in $Q \cap \p$ such that 
$u \Vdash_{Q \cap \p} \dot s(\nu) \ne x$. 
By Proposition 6.15, $u$ and $t$ are compatible in $\p$. 
Fix $z \in \p$ such that $z \le u, t$.

Fix a generic filter $G$ on $\p$ with $z \in G$. 
Then $t \in G$, which implies that $\dot k^G(\nu) = x$. 
Also $p_1 \in G$, which implies that 
$$
\dot k^G \restriction (M^* \cap \mu) = \dot s^{G \cap Q}.
$$
It follows that $\dot s^{G \cap Q}(\nu) = x$. 
But $z \le u$, so $u \in G \cap Q$. 
By the choice of $u$, $\dot s^{G \cap Q}(\nu) \ne x$, 
which is a contradiction. 
This completes the proof of Claim 1.

\bigskip

\noindent \emph{Claim 2:} For all 
$q \le p_2 \restriction M$ in $D_Q$, 
$\nu < \mu$, and $x$, 
$$
q \Vdash_{\p} \dot k(\nu) = x \implies \forall r \in \p 
( ( r \le p_2 \restriction M \land r \le q \restriction Q ) \implies 
(r \Vdash_\p \dot k(\nu) = x)).
$$

\bigskip

Note that $p_2 \restriction M$, $Q$, 
$D_Q$, $\mu$, $\dot k$, and $\p$ are in $M^*$. 
So by the elementarity of $M^*$, it suffices to show that the statement 
holds in $M^*$.

Suppose for a contradiction that there exists 
$q \le p_2 \restriction M$ 
in $M^* \cap D_Q$, $\nu \in M^* \cap \mu$, and $x \in M^*$ such that 
$$
q \Vdash_\p \dot k(\nu) = x,
$$
but there is $r_0 \in M^* \cap \p$ with 
$r_0 \le p_2 \restriction M$ and 
$r_0 \le q \restriction Q$ such that 
$$
r_0 \not \Vdash_\p \dot k(\nu) = x.
$$
By the elementarity of $M^*$, 
we can fix $r \le r_0$ in $M^* \cap D_Q$ such that 
$$
r \Vdash_\p \dot k(\nu) \ne x.
$$
Then $r \le p_2 \restriction M$ and $r \le q \restriction Q$. 
Since $r \le q \restriction Q$, it follows that 
$r \restriction Q \le q \restriction Q$ 
by Lemma 6.6(2). 

We have that $q \le p_2 \restriction M$ and 
$q \in M^* \cap \p = M \cap \p$. 
By Proposition 7.19, $q \oplus_{M} p_2$ is a condition which is 
below $q$ and $p_2$. 
Similarly, $r \le p_2 \restriction M$ and $r \in M \cap \p$. 
By Proposition 7.19, $r \oplus_{M} p_2$ is a condition which is below 
$r$ and $p_2$. 

By Proposition 8.6, we have that 
$$
(q \oplus_{M} p_2) \restriction Q = 
(q \restriction Q) \oplus_{M \cap Q} (p_2 \restriction Q),
$$
and 
$$
(r \oplus_{M} p_2) \restriction Q = 
(r \restriction Q) \oplus_{M \cap Q} (p_2 \restriction Q).
$$

We would like to apply Lemma 8.1 to $M \cap Q$, 
$p_2 \restriction Q$, $q \restriction Q$, and 
$r \restriction Q$. 
Let us check that the assumptions of Lemma 8.1 hold for these objects. 
By Lemma 2.30, $M \cap Q$ is simple. 
Since $p_2 \in D_{M} \cap D_Q$, 
it follows that $p_2 \restriction Q \in D_{M \cap Q}$ by Lemma 8.3. 
As $q$, $r$, and $Q$ are in $M^*$, we have that 
$q \restriction Q$ and $r \restriction Q$ are in 
$M^* \cap Q = M \cap Q$. 
We observed above that $r \restriction Q \le q \restriction Q$. 
Finally, $q \le p_2 \restriction M$ implies that 
$q \restriction Q \le (p_2 \restriction Q) \restriction (M \cap Q)$ 
by Lemma 8.5.

Thus, all of the assumptions of Lemma 8.1 hold. 
So by Lemma 8.1,
$$
(r \restriction Q) \oplus_{M \cap Q} (p_2 \restriction Q) \le 
(q \restriction Q) \oplus_{M \cap Q} (p_2 \restriction Q).
$$
Combining this with the equalities above, we have that 
$$
(r \oplus_{M} p_2) \restriction Q \le (q \oplus_{M} p_2) \restriction Q.
$$

We claim that this last inequality is impossible. 
In fact, we will show that $(r \oplus_{M} p_2) \restriction Q$ and 
$(q \oplus_{M} p_2) \restriction Q$ are incompatible. 
This contradiction will complete the proof of Claim 2.

We know that $r \Vdash_\p \dot k(\nu) \ne x$, and therefore, since 
$r \oplus_{M} p_2 \le r$, 
$r \oplus_{M} p_2 \Vdash_\p \dot k(\nu) \ne x$. 
By Lemma 8.5(1), $r \oplus_M p_2$ is in $D_Q$. 
So by Claim 1, 
$$
(r \oplus_{M} p_2) \restriction Q \Vdash_{Q \cap \p} \dot s(\nu) \ne x.
$$
Similarly, $q \Vdash_\p \dot k(\nu) = x$, and therefore, since 
$q \oplus_{M} p_2 \le q$, 
$q \oplus_{M} p_2 \Vdash_\p \dot k(\nu) = x$. 
By Lemma 8.5(1), $q \oplus_M p_2$ is in $D_Q$. 
So by Claim 1,
$$
(q \oplus_{M} p_2) \restriction Q \Vdash_{Q \cap \p} \dot s(\nu) = x.
$$
So indeed, 
$(r \oplus_{M} p_2) \restriction Q$ and 
$(q \oplus_{M} p_2) \restriction Q$ are incompatible, since they 
force contradictory information.

\bigskip

Now we finish the proof that $p_2 \restriction M$ forces that 
$\dot k \in V[\dot G_\p \cap Q]$. 
Let $G$ be a generic filter on $\p$ with $p_2 \restriction M \in G$, 
and we will show that $\dot k^G \in V[G \cap Q]$.

In the model $V[G \cap Q]$, 
define a partial function $h : \mu \to V$ by letting, for 
every $\nu < \mu$, $h(\nu) = x$ iff 
there exists $t \in G \cap Q$ such that for every $r \in \p$, 
if $r$ is below both $p_2 \restriction M$ and $t$, 
then 
$$
r \Vdash_\p^V \dot k(\nu) = x. 
$$
We claim that $h = \dot k^G$. 

First, let us show that $h$ is well-defined. 
So assume that $t_0$ and $t_1$ are in $G \cap Q$ and 
witness respectively that $h(\nu) = x_0$ and $h(\nu) = x_1$. 
We will show that $x_0 = x_1$. 
Fix $u \le t_0, t_1$ in $G \cap Q$. 
Since $p_2 \restriction M$ is in $G$, 
we can fix $r \le u, p_2 \restriction M$ in $G$. 
Now $r$ is below both $p_2 \restriction M$ and $t_0$, 
so by the choice of $t_0$, 
$r \Vdash_\p^V \dot k(\nu) = x_0$. 
Similarly, $r$ is below both $p_2 \restriction M$ and $t_1$, 
so by the choice of $t_1$, 
$r \Vdash_\p^V \dot k(\nu) = x_1$. 
Thus, $r \Vdash_\p^V x_0 = x_1$, which implies that $x_0 = x_1$.

Secondly, we prove that $h = \dot k^G$. 
As just shown, 
$h$ is a well-defined function whose domain is a subset of $\mu$. 
So it suffices to show that for all $\nu < \mu$, 
$\nu \in \dom(h)$ and $h(\nu) = \dot k^G(\nu)$. 
Fix $\nu < \mu$, and let $x := \dot k^G(\nu)$.
Fix $q \le p_2 \restriction M$ in $G \cap D_Q$ 
such that $q \Vdash_\p \dot k(\nu) = x$. 
Then by Claim 2, for all $r \in \p$, if $r$ is an extension of 
both $p_2 \restriction M$ and $q \restriction Q$, then 
$r \Vdash_\p \dot k(\nu) = x$.

Let $t := q \restriction Q$. 
By Lemma 6.5, $q \le t$. 
Since $q \in G$, it follows that $t \in G \cap Q$. 
So by the definition of $h$, in order 
to show that $h(\nu) = x$, it suffices 
to show that for all $r \in \p$, if 
$r$ is an extension of both $p_2 \restriction M$ and $t = q \restriction Q$, 
then $r \Vdash_\p^V \dot k(\nu) = x$. 
But this statement is exactly what we observed to be true 
at the end of the previous paragraph.
\end{proof}

Recall that in Theorem 9.2 we assumed that $Q \cap \kappa \notin S$. 
Suppose, on the other hand, that $Q \in \mathcal Y$ is simple, 
$Q \prec (H(\lambda),\in,\p)$, and $Q \cap \kappa \in S$. 
Let $G$ be a generic filter on $\p$. 
Let $\langle c_\alpha : \alpha \in S \rangle$ be the partial square sequence 
in $V[G]$ as defined in Section 5. 
Since $Q \cap \kappa \in S$, we have that 
$c_{Q \cap \kappa}$ is defined. 
Using the coherence property of the partial square sequence, 
one can show that every proper initial segment of $c_{Q \cap \kappa}$ 
is in $V[G \cap Q]$. 
But by a density argument, $c_{Q \cap \kappa}$ is not in $V[G \cap Q]$. 
Thus, the quotient 
$\p / (G \cap Q)$ does not have the $\omega_1$-approximation 
property in $V[G \cap Q]$.

\bigskip

\part{Combining Forcings}

\bigskip

\addcontentsline{toc}{section}{10. A product forcing}

\textbf{\S 10. A product forcing}

\stepcounter{section}

\bigskip

We now develop a forcing poset which simultaneously adds partial 
square sequences on multiple stationary subsets of $\kappa$. 
This forcing poset will be a kind of side condition product forcing. 
Before we get started, we need to make some additional assumptions.

\begin{assumption}
The cardinal $\lambda$ introduced in Notation 2.1 is at least $\kappa^+$.
\end{assumption}

\begin{notation}
Fix an ordinal $\lambda^* \le \lambda$. 
Fix a sequence $\langle S_i : i < \lambda^* \rangle$ 
such that for all $i < \lambda^*$, $S_i \subseteq \Lambda$ is stationary 
in $\kappa$ and 
for all $\alpha \in S_i$, $Sk(\alpha) \cap \kappa = \alpha$; 
moreover, for all $i < j < \lambda^*$, $S_i \cap S_j$ is nonstationary. 
\end{notation}

\begin{assumption}
For all $i < j < \lambda^*$, there is a club set $C_{i,j} \subseteq \kappa$ 
satisfying that $S_i \cap S_j \cap C_{i,j} = \emptyset$, and moreover, 
$C_{i,j}$ is definable in the structure $\mathcal A$ of Notation 2.3 
from $i$ and $j$.
\end{assumption}

\begin{notation}
For each $i < \lambda^*$, let $\p_i$ denote the forcing poset defined 
in Definition 4.2 which adds a partial square sequence on $S_i$.
\end{notation}

We introduce a side condition product forcing which combines 
the forcing posets $\p_i$, for all $i < \lambda^*$. 
Before giving the definition, we need the following technical lemma to 
make sure that the definition makes sense.

\begin{lemma}
Suppose that $M$ and $N$ are in $\mathcal X$, $M \le N$, and 
$\gamma = \min((M \cap \kappa) \setminus \beta_{M,N})$. 
Then there is at most one ordinal $i < \lambda^*$ 
such that $i \in M \cap N$ and $\gamma \in S_i$.
\end{lemma}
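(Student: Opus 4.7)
The plan is a proof by contradiction. Assume distinct indices $i, j$ both lie in $M \cap N$ and satisfy $\gamma \in S_i \cap S_j$, and without loss of generality take $i < j$. By Assumption 10.3 there is a club $C_{i,j} \subseteq \kappa$ with $S_i \cap S_j \cap C_{i,j} = \emptyset$ which is definable in $\mathcal A$ from $i$ and $j$. Since $M$ and $N$ are elementary substructures of $\mathcal A$ containing $i$ and $j$, definability yields $C_{i,j} \in M \cap N$. The entire argument then reduces to showing $\gamma \in C_{i,j}$, which will contradict $\gamma \in S_i \cap S_j$.

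For this I would argue contrapositively. Suppose toward contradiction that $\gamma \notin C_{i,j}$. Since $\gamma \in S_i \subseteq \Lambda$ has uncountable cofinality and $C_{i,j}$ is closed, the set $C_{i,j} \cap \gamma$ is bounded below $\gamma$; let $\delta := \sup(C_{i,j} \cap \gamma) < \gamma$ (take $\delta := 0$ if $C_{i,j} \cap \gamma = \emptyset$). Then $\delta$ is definable in $\mathcal A$ from parameters $C_{i,j}$ and $\gamma$, both of which lie in $M$, so $\delta \in M$. By the extremal definition of $\gamma$, any element of $M \cap \kappa$ strictly below $\gamma$ lies below $\beta_{M,N}$; hence $\delta \in M \cap \beta_{M,N}$. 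Now the hypothesis $M \le N$, together with the discussion following Definition 2.13, gives $M \cap \beta_{M,N} \subseteq N$, so $\delta \in N$. Next let $\eta := \min(C_{i,j} \setminus (\delta+1))$, well defined and below $\kappa$ since $C_{i,j}$ is unbounded in $\kappa$. By construction $(\delta, \gamma] \cap C_{i,j} = \emptyset$ and $\gamma \notin C_{i,j}$, so $\eta > \gamma$. Since $\eta$ is definable in $\mathcal A$ from $C_{i,j}$ and $\delta$, and both parameters lie in $M \cap N$, we conclude $\eta \in M \cap N \cap \kappa$. Proposition 2.11 then forces $\eta < \beta_{M,N} \le \gamma$, contradicting $\eta > \gamma$.

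The main obstacle is securing, just above $\gamma$, a witness ordinal that belongs simultaneously to $M$ and $N$. Membership in $M$ is cheap, coming from elementarity together with the definability of $C_{i,j}$. Membership in $N$ is the delicate point, and the trick making it work is that the auxiliary ordinal $\delta$ constructed below $\gamma$ automatically sits below $\beta_{M,N}$ by the extremal choice of $\gamma$; this is precisely what lets the hypothesis $M \le N$ push $\delta$, and hence the $\eta$ defined in terms of $\delta$, into $N$, where Proposition 2.11 can then be applied to derive the contradiction.
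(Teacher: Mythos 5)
Your proof is correct, and it reaches the contradiction by a somewhat different mechanism than the paper, so a brief comparison is in order. The paper also fixes $i<j$ in $M\cap N$ with $\gamma\in S_i\cap S_j$ and pulls $C_{i,j}$ into $M\cap N$, but then argues positively that $\gamma\in C_{i,j}$: using Lemma 2.15 and the minimality of $\gamma$ it identifies $M\cap N\cap\kappa=M\cap\beta_{M,N}=M\cap\gamma$, shows by elementarity of the structure $M\cap N$ (available since $\{M,N\}$ is adequate, so $M\cap N\in\mathcal X$ by Assumption 2.19) that $C_{i,j}$ is cofinal in $M\cap N\cap\kappa$, and then transfers cofinality through the elementarity of $M$ to conclude that $C_{i,j}\cap\gamma$ is cofinal in $\gamma$, whence $\gamma\in C_{i,j}$ because $C_{i,j}$ is closed. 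You instead assume $\gamma\notin C_{i,j}$, take $\delta=\sup(C_{i,j}\cap\gamma)\in M$, use the minimality of $\gamma$ and the inclusion $M\cap\beta_{M,N}\subseteq N$ (the content of Lemma 2.15) to push $\delta$ into $N$, and then exhibit the single witness $\eta=\min(C_{i,j}\setminus(\delta+1))\in M\cap N\cap\kappa$ with $\eta>\gamma\ge\beta_{M,N}$, contradicting Proposition 2.11. The ingredients are the same (the separating club, elementarity, minimality of $\gamma$, $M\le N$), but your arrangement never needs $M\cap N$ itself to be an elementary substructure, trading the paper's cofinality-transfer argument for a direct appeal to Proposition 2.11; the paper's route, in turn, yields the slightly stronger intermediate fact that $\gamma$ is a limit point of $C_{i,j}$. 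Both are complete proofs of the lemma.
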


\begin{proof}
Suppose for a contradiction that there are 
$i < j$ in $M \cap N \cap \lambda^*$ 
such that $\gamma \in S_i \cap S_j$. 
By Assumption 10.3, $S_i \cap S_j \cap C_{i,j} = \emptyset$, and 
by the elementarity of $M$ and $N$, $C_{i,j} \in M \cap N$. 
By the elementarity of $M \cap N$, it is easy to show that 
$C_{i,j}$ is cofinal in $M \cap N \cap \kappa$. 
Since $M \le N$, by Lemma 2.15 and the minimality of $\gamma$, 
$$
M \cap N \cap \kappa = M \cap \beta_{M,N} = M \cap \gamma.
$$
Therefore, $C_{i,j}$ is cofinal in $M \cap \gamma$. 
Since $C_{i,j} \in M$, 
by the elementarity of $M$ it follows that 
$C_{i,j}$ is cofinal in $\gamma$. 
As $C_{i,j}$ is a club, $\gamma \in C_{i,j}$. 
But then $\gamma \in S_i \cap S_j \cap C_{i,j}$, contradicting 
the choice of $C_{i,j}$.
\end{proof}

\begin{definition}
Let $\q$ be the forcing poset consisting of pairs $p = (F_p,A_p)$ satisfying:
\begin{enumerate}
\item $A_p$ is an adequate set;

\item $F_p$ is a function whose domain is a finite subset of $\lambda^*$;

\item for all $i \in \dom(F_p)$, $F_p(i) \in \p_i$ and 
$$
\{ M \in A_p : i \in M \} \subseteq A_{F_p(i)};\footnote{Recall from 
Definition 4.2 that $F_p(i) = (f_{F_p(i)},g_{F_p(i)},A_{F_p(i)})$.}
$$
\item if $M$ and $N$ are in $A_p$, $M \sim N$, 
$i \in M \cap N \cap \lambda^*$, 
and the ordinal $\min((M \cap \kappa) \setminus \beta_{M,N})$ 
exists and is in $S_i$, then $i \in \dom(F_p)$.
\end{enumerate}
Let $q \le p$ if $A_p \subseteq A_q$, $\dom(F_p) \subseteq \dom(F_q)$, 
and for all $i \in \dom(F_p)$, $F_q(i) \le F_p(i)$ in $\p_i$. 
\end{definition}

Note that since $A_p$ is finite, Lemma 10.5 implies that there are only finitely many ordinals $i$ as described in (4).

Let us see that we can add any ordinal in $\lambda^*$ to the domain of $F_p$.

\begin{definition}
Let $p \in \q$, and let $x$ be a finite subset of $\lambda^* \setminus \dom(F_p)$. 
Define 
$$
p \uplus x
$$
as the pair $(F,A)$ satisfying:
\begin{enumerate}
\item $A := A_p$;
\item $\dom(F) := \dom(F_p) \cup x$;
\item for all $j \in \dom(F_p)$, $F(j) := F_p(j)$, and for all $i \in x$, 
$$
F(i) := (\emptyset,\emptyset,B_i),
$$
where 
$$
B_i := \{ M \in A_p : i \in M \}.
$$
\end{enumerate}
\end{definition}

\begin{lemma}
Let $p \in \q$, and let $x$ be a finite subset of 
$\lambda^* \setminus \dom(F_p)$. Then:
\begin{enumerate}
\item $p \uplus x \in \q$;
\item $p \uplus x \le p$;
\item $x \subseteq \dom(F_{p \uplus x})$;
\item whenever $q \le p$ and $x \subseteq \dom(F_q)$, then 
$q \le p \uplus x$.
\end{enumerate}
\end{lemma}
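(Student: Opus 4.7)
\smallskip

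The plan is to check the four clauses in sequence, observing that (2), (3), and (4) are essentially bookkeeping and that the only real content is in (1), specifically in verifying that the ``placeholder'' condition $(\emptyset,\emptyset,B_i)$ that we insert for each new coordinate $i\in x$ is genuinely a member of $\p_i$.

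For (1) I would verify requirements (1)--(4) of Definition 10.6. Requirement (1) is trivial since $A_{p \uplus x}=A_p$ is already adequate, and (2) is immediate. For requirement (3), if $i\in\dom(F_p)$ then the desired inclusion is inherited from $p$; so the content lies in the case $i\in x$, where we must show that $(\emptyset,\emptyset,B_i)\in\p_i$ with $B_i=\{M\in A_p:i\in M\}$, and verify the trivial inclusion $\{M\in A_p:i\in M\}\subseteq B_i$. To see $(\emptyset,\emptyset,B_i)$ is a condition in $\p_i$, I would go through Definition 4.2: requirement (1) holds because $B_i\subseteq A_p$ is adequate, requirements (2)--(6) are vacuous, and for requirement (7) I need $r^*(B_i)\cap S_i\subseteq\emptyset$. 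This is where condition (4) of Definition 10.6 applied to $p$ does the work: any $\gamma\in r^*(B_i)$ arises from some $M\sim N$ in $B_i$ as $\gamma=\min((M\cap\kappa)\setminus\beta_{M,N})$, and by the definition of $B_i$ we have $i\in M\cap N\cap\lambda^*$; if moreover $\gamma\in S_i$ then Definition 10.6(4) for $p$ would force $i\in\dom(F_p)$, contradicting $i\in x\subseteq\lambda^*\setminus\dom(F_p)$. Finally, requirement (4) of Definition 10.6 for $p \uplus x$ transfers directly from $p$, since $A_{p\uplus x}=A_p$ and $\dom(F_p)\subseteq\dom(F_{p\uplus x})$.

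Claims (2) and (3) are immediate from the construction: $A_p\subseteq A_{p\uplus x}$ (equality), $\dom(F_p)\subseteq\dom(F_p)\cup x=\dom(F_{p\uplus x})$, and $F_{p\uplus x}(j)=F_p(j)$ for $j\in\dom(F_p)$; and $x\subseteq\dom(F_{p\uplus x})$ by construction.

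For (4), assume $q\le p$ and $x\subseteq\dom(F_q)$. Since $A_{p\uplus x}=A_p\subseteq A_q$ and $\dom(F_{p\uplus x})=\dom(F_p)\cup x\subseteq\dom(F_q)$, the only work is to show $F_q(i)\le F_{p\uplus x}(i)$ in $\p_i$ for every $i\in\dom(F_{p\uplus x})$. For $i\in\dom(F_p)$ this reduces to $F_q(i)\le F_p(i)$, which is given. For $i\in x\setminus\dom(F_p)$, $F_{p\uplus x}(i)=(\emptyset,\emptyset,B_i)$, so in Definition 4.2 the comparison collapses to requirement (a): $B_i\subseteq A_{F_q(i)}$. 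Since $i\in\dom(F_q)$ and $q$ satisfies Definition 10.6(3), $\{M\in A_q:i\in M\}\subseteq A_{F_q(i)}$, and since $A_p\subseteq A_q$ we have $B_i=\{M\in A_p:i\in M\}\subseteq\{M\in A_q:i\in M\}\subseteq A_{F_q(i)}$, as required. The main obstacle, and the only nontrivial step, is thus the verification that $r^*(B_i)\cap S_i=\emptyset$ when $i\in x$; this is exactly why property (4) of the $\q$-definition was included.
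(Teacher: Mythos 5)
Your proof is correct and follows essentially the same route as the paper: the only substantive point is that $(\emptyset,\emptyset,B_i)\in\p_i$ for $i\in x$, which you verify exactly as the paper does by using Definition 10.6(4) for $p$ to rule out any $\gamma\in r^*(B_i)\cap S_i$, and your treatment of (2)--(4), including reducing the comparison at new coordinates to clause (a) of Definition 4.2, matches the paper's argument.
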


\begin{proof}
(3) is immediate. 

\bigskip

(1) To see that $p \uplus x$ is a condition, 
requirements (1), (2), and (4) of Definition 10.6 are immediate. 
For requirement (3), for all $j \in \dom(F_p)$, 
$F_{p \uplus x}(j) = F_p(j) \in \p_j$ and 
$\{ M \in A_p : j \in M \} \subseteq A_{F_p(j)} = A_{F_{p \uplus x}(j)}$, 
since $p$ is a condition. 
Consider $i \in x$. 
Then by definition, 
$F_{p \uplus x}(i) = (\emptyset,\emptyset,B_i)$, where 
$B_i = \{ M \in A_p : i \in M \}$. 
Thus, it suffices to show that $(\emptyset,\emptyset,B_i) \in \p_i$.

We check that $(\emptyset,\emptyset,B_i)$ 
satisfies properties (1)--(7) of Definition 4.2. 
(1) $B_i$ is adequate, because it is a subset of $A_p$. 
(2)--(6) are vacuously true. 
We claim that (7) is vacuously true as well.

Suppose that $\gamma \in r^*(B_i) \cap S_i$. 
Then for some $M$ and $N$ in $B_i$, $M \sim N$ and 
$\gamma = \min((M \cap \kappa) \setminus \beta_{M,N})$. 
By the definition of $B_i$, $M$ and $N$ are in $A_p$ and 
$i \in M \cap N \cap \lambda^*$. 
So $M$ and $N$ are in $A_p$, $M \sim N$, 
$i \in M \cap N \cap \lambda^*$, 
and $\min((M \cap \kappa) \setminus \beta_{M,N})$ exists 
and is in $S_i$. 
Since $p$ is a condition, Definition 10.6(4) implies that 
$i \in \dom(F_p)$. 
But $i \in x$ and $x \cap \dom(F_p) = \emptyset$, which is a contradiction.

\bigskip

(2) It is trivial to check that $p \uplus x \le p$.

\bigskip

(4) Assume that $q \le p$ and $x \subseteq \dom(F_q)$, and 
we will show that $q \le p \uplus x$. 
Since $q \le p$, $A_{p \uplus x} = A_p \subseteq A_q$ and 
$\dom(F_p) \subseteq \dom(F_q)$. 
Since $x \subseteq \dom(F_q)$, $\dom(F_{p \uplus x}) = 
\dom(F_p) \cup x \subseteq \dom(F_q)$.

Let $j \in \dom(F_{p \uplus x})$, and we will show that 
$F_q(j) \le F_{p \uplus x}(j)$. 
If $j \in \dom(F_p)$, then by definition, 
$F_{p \uplus x}(j) = F_p(j)$. 
And since $q \le p$, $F_q(j) \le F_p(j)$. 
So $F_q(j) \le F_{p \uplus x}(j)$. 

Suppose that $i \in x$. 
We claim that $F_q(i) \le F_{p \uplus x}(i)$, that is, 
$F_q(i) \le (\emptyset,\emptyset,B_i)$, 
where $B_i = \{ M \in A_p : i \in M \}$. 
We verify properties (a)--(d) of Definition 4.2. 
Note that (b), (c), and (d) are vacuously true. 
For (a), since $i \in \dom(F_q)$, by Definition 10.6(3) we have that 
$\{ M \in A_q : i \in M \} \subseteq A_{F_q(i)}$. 
But $q \le p$ implies that $A_p \subseteq A_q$. 
Hence, $B_i = \{ M \in A_p : i \in M \} \subseteq 
\{ M \in A_q : i \in M \} \subseteq A_{F_q(i)}$. 
\end{proof}

The next two easy lemmas will be useful in what follows.

\begin{lemma}
Let $p \in \q$. 
For each $i \in \dom(F_p)$, suppose that 
$r_i$ is a condition in $\p_i$ such that 
$r_i \le F_p(i)$ in $\p_i$. 
Define $r$ as follows:
\begin{enumerate}
\item $A_r := A_p$;
\item $\dom(F_r) := \dom(F_p)$;
\item for all $i \in \dom(F_r)$, $F_r(i) := r_i$.
\end{enumerate}
Then $r \in \q$ and $r \le p$.
\end{lemma}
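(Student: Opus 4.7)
The plan is to verify directly that $r$ satisfies the four requirements of Definition 10.6 and then check the three clauses in the ordering on $\q$. Since $r$ is obtained from $p$ by leaving $A_p$ and $\dom(F_p)$ untouched and merely replacing each coordinate $F_p(i)$ by an extension in $\p_i$, most of the work is inherited from $p$.

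First I would verify that $r$ is a condition. Requirement (1) is immediate since $A_r = A_p$ is adequate. Requirement (2) holds because $\dom(F_r) = \dom(F_p)$ is finite. For requirement (3), fix $i \in \dom(F_r)$; by hypothesis $F_r(i) = r_i \in \p_i$, so it remains to show $\{M \in A_r : i \in M\} \subseteq A_{F_r(i)}$. Since $p$ is a condition we have $\{M \in A_p : i \in M\} \subseteq A_{F_p(i)}$, and $r_i \le F_p(i)$ in $\p_i$ together with clause (a) of the ordering on $\p_i$ gives $A_{F_p(i)} \subseteq A_{r_i}$; composing these inclusions yields the desired containment, using $A_r = A_p$ and $F_r(i) = r_i$. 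Requirement (4) transfers verbatim from $p$ to $r$ because both $A_p = A_r$ and $\dom(F_p) = \dom(F_r)$.

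Next I would check $r \le p$ against Definition 10.6. The inclusions $A_p \subseteq A_r$ and $\dom(F_p) \subseteq \dom(F_r)$ are trivial equalities, and for each $i \in \dom(F_p)$ the inequality $F_r(i) = r_i \le F_p(i)$ in $\p_i$ is given by hypothesis.

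I do not anticipate any real obstacle here; the statement is essentially a bookkeeping observation that one may extend the coordinate conditions of a member of $\q$ independently, provided the shared side-condition component is kept fixed. The only subtlety worth flagging is the verification of requirement (3), which uses in a crucial way that strengthening in $\p_i$ enlarges the $A$-component, so the side-condition constraint between the outer $A_r$ and the inner $A_{F_r(i)}$ is automatically preserved.
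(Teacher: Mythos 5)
Your proof is correct and matches the paper's approach, which simply labels this verification "Straightforward"; the key observation you flag — that $r_i \le F_p(i)$ in $\p_i$ gives $A_{F_p(i)} \subseteq A_{r_i}$, so requirement (3) of Definition 10.6 is preserved — is exactly the point that makes the check go through.
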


\begin{proof}
Straightforward.
\end{proof}

\begin{lemma}
Let $x$ be a finite subset of $\lambda^*$, and assume that for each 
$i \in x$, $D_i$ is a dense subset of $\p_i$. 
Then for any $p \in \q$, there is $r \le p$ satisfying:
\begin{enumerate}
\item $A_r = A_p$;
\item $\dom(F_r) = \dom(F_p) \cup x$;
\item for each $i \in x$, $F_r(i) \in D_i$.
\end{enumerate}
\end{lemma}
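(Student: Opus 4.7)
The plan is to reduce Lemma 10.10 to a routine combination of Lemmas 10.8 and 10.9. The idea is that Lemma 10.8 lets us enlarge the domain of $F_p$ to include the new coordinates of $x$ without touching $A_p$, and then Lemma 10.9 lets us independently extend each coordinate entry to hit the prescribed dense sets, again without touching the common side condition.

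First I would set $x' := x \setminus \dom(F_p)$ and form the auxiliary condition $p' := p \uplus x'$. By Lemma 10.8 we have $p' \in \q$, $p' \le p$, $A_{p'} = A_p$, and $\dom(F_{p'}) = \dom(F_p) \cup x' = \dom(F_p) \cup x$. Thus $p'$ already satisfies clauses (1) and (2) of the conclusion, and it only remains to arrange clause (3) by strengthening the coordinate entries at each $i \in x$.

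Second, for each $i \in x$, since $D_i$ is dense in $\p_i$ and $F_{p'}(i) \in \p_i$, fix $r_i \le F_{p'}(i)$ in $\p_i$ with $r_i \in D_i$. For the remaining coordinates $j \in \dom(F_{p'}) \setminus x$, set $r_j := F_{p'}(j)$, so trivially $r_j \le F_{p'}(j)$. Now apply Lemma 10.9 to $p'$ together with the family $\{ r_i : i \in \dom(F_{p'}) \}$: this yields a condition $r \in \q$ with $A_r = A_{p'} = A_p$, $\dom(F_r) = \dom(F_{p'}) = \dom(F_p) \cup x$, and $F_r(i) = r_i$ for every $i \in \dom(F_r)$, so that $F_r(i) \in D_i$ for each $i \in x$. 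By transitivity, $r \le p' \le p$, and $r$ satisfies (1), (2), (3).

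There is no real obstacle here; the only thing to check is that the hypotheses of Lemma 10.9 are met, which is immediate because each $r_i$ was chosen to extend $F_{p'}(i)$. The key point that makes this easy is that in the product $\q$, the $A$-component and the individual coordinate entries $F_p(i)$ are only linked through Definition 10.6(3,4), and both of these are monotone: enlarging $F_r(i)$ within $\p_i$ preserves (3), while (4) depends only on $A_r$, which we keep fixed at $A_p$. Consequently the coordinate-wise strengthening performed in the second step is automatically compatible with the unchanged side condition.
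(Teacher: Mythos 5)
Your proposal is correct and follows essentially the same route as the paper: form $p \uplus (x \setminus \dom(F_p))$ via Lemma 10.8 to get clauses (1) and (2), then choose $r_i \le F_{p'}(i)$ in $D_i$ for $i \in x$ (keeping the other coordinates unchanged) and apply Lemma 10.9 to obtain $r$.
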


\begin{proof}
Let $q := p \uplus (x \setminus \dom(F_p))$. 
Then $q$ is a condition, $A_q = A_p$, 
$\dom(F_q) = \dom(F_p) \cup x$, and $q \le p$.

For each $i \in x$, fix $r_i \le F_q(i)$ in $D_i$. 
By Lemma 10.9, there is $r \le q$ such that 
$A_r = A_q = A_p$, $\dom(F_r) = \dom(F_q) = \dom(F_p) \cup x$, 
and for all $i \in \dom(F_q)$, 
if $i \in x$ then $F_r(i) = r_i$, and if $i \notin x$ then 
$F_r(i) = F_q(i)$. 
Then $r$ is as required.
\end{proof}

The next result justifies our informal 
use of the word ``product'' to describe $\q$.

\begin{proposition}
For each $i < \lambda^*$, 
there is a projection mapping from a dense subset of $\q$ into $\p_i$.
\end{proposition}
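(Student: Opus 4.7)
The plan is to take the obvious dense subset and projection. Define
\[
D_i^* := \{p \in \q : i \in \dom(F_p)\},
\]
regarded as a forcing poset under the restriction of $\le_{\q}$, and define $\pi : D_i^* \to \p_i$ by $\pi(p) := F_p(i)$. Density of $D_i^*$ in $\q$ is immediate from Lemma 10.8: if $p \in \q$ and $i \notin \dom(F_p)$, then $p \uplus \{i\}$ lies in $D_i^*$ and extends $p$.

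The maximum condition of $D_i^*$ is the element with $A = \emptyset$, $\dom(F) = \{i\}$, and $F(i)$ equal to the maximum condition $(\emptyset,\emptyset,\emptyset)$ of $\p_i$, and $\pi$ sends this to the maximum of $\p_i$. Order preservation for $\pi$ is a direct reading of the ordering on $\q$ in Definition 10.6.

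The only step requiring any work is the lifting property: given $p \in D_i^*$ and $v \le_{\p_i} F_p(i) = \pi(p)$, I need to produce $r \le p$ in $D_i^*$ with $\pi(r) \le v$. For this I would apply Lemma 10.9 to the family $\langle r_j : j \in \dom(F_p)\rangle$ given by $r_i := v$ and $r_j := F_p(j)$ for $j \in \dom(F_p) \setminus \{i\}$. This yields $r \in \q$ with $A_r = A_p$, $\dom(F_r) = \dom(F_p)$, $F_r(i) = v$, and $r \le p$. Then $i \in \dom(F_r)$, so $r \in D_i^*$, and $\pi(r) = v$, as desired.

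There is no substantial obstacle: the product-like structure of $\q$, together with requirement (3) of Definition 10.6, is precisely arranged so that strengthening a single coordinate of $F_p$ while leaving $A_p$ and the other coordinates intact produces a valid condition. This is exactly what Lemma 10.9 encapsulates, and the verification of requirement (3) at coordinate $i$ reduces to the trivial inclusion $\{M \in A_p : i \in M\} \subseteq A_{F_p(i)} \subseteq A_v$, which holds because $v \le_{\p_i} F_p(i)$.
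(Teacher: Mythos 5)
Your proof is correct and follows essentially the same route as the paper: the same dense set of conditions with $i$ in the domain of $F_p$, the same map $p \mapsto F_p(i)$, and the same appeal to Lemma 10.9 for the lifting property. The only (inessential) difference is that the paper adjoins the maximum condition $(\emptyset,\emptyset)$ of $\q$ to the dense set and treats it as a separate case, whereas you observe that $D_i^*$ already has its own maximum, namely $(\{(i,(\emptyset,\emptyset,\emptyset))\},\emptyset)$, which works just as well.
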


\begin{proof}
Let $D$ be the set of conditions $q \in \q$ such that $i \in \dom(F_q)$, 
together with the maximum condition $(\emptyset,\emptyset)$. 
By Lemma 10.8, if $p \in \q$, then there is $q \le p$ such that $i \in \dom(F_q)$. 
Thus, $D$ is dense in $\q$.

Define $\pi_i : D \to \p_i$ as follows. 
Let $\pi_i(\emptyset,\emptyset)$ be the maximum condition 
of $\p_i$, namely, $(\emptyset,\emptyset,\emptyset)$. 
If $q \in D$ and $q$ is not the maximum condition, then $i \in \dom(F_q)$. 
In that case, let $\pi_i(q) := F_q(i)$.

We claim that $\pi_i$ is a projection mapping. 
Obviously, $\pi_i$ maps the maximum condition of $\q$ to the 
maximum condition of $\p_i$.

Suppose that $q \le p$ in $D$, 
and we will show that $\pi_i(q) \le \pi_i(p)$ in $\p_i$. 
This is immediate if $p$ is the maximum condition of $\q$, so 
assume that $i \in \dom(F_p)$. 
Then since $q \le p$, we have that 
$\pi_i(q) = F_q(i) \le F_p(i) = \pi_i(p)$.

Suppose that $v \le \pi_i(p)$ in $\p_i$, and we will find 
$r \le p$ in $D$ such that $\pi_i(r) = v$. 
First, assume that $p$ is not the maximum condition of $\q$. 
Then $v \le \pi_i(p) = F_p(i)$. 
By Lemma 10.9, there exists $r \le p$ satisfying that 
$F_r(i) = v$ and $F_r(j) = F_p(j)$ for all 
$j \in \dom(F_p) \setminus \{ i \}$. 
Then $\pi_i(r) = v$, as required.

Secondly, assume that $p$ is the maximum condition of $\q$. 
We construct a condition $r$ as follows. 
Let $A_r := \emptyset$, and let $F_r$ be the function with domain 
equal to $\{ i \}$ such that $F_r(i) = v$. 
Since $A_r = \emptyset$, it is easy to check that $r$ is a condition, 
with most properties of Definition 10.6 being vacuously true. 
Also, $r \le p$, since $p$ is the maximum condition, 
and $\pi_i(r) = F_r(i) = v$.
\end{proof}

It follows that if $G$ is a generic filter on $\q$, then 
$\pi_i[G \cap D]$ generates a generic filter on $\p_i$, where $D$ is 
the dense subset of $\q$ which is the domain of $\pi_i$. 
We will prove in Section 11 that $\q$ preserves $\omega_1$ and is 
$\kappa$-c.c. 
It follows by Corollary 7.22 
that $\q$ collapses $\kappa$ to become $\omega_2$. 
And by Proposition 5.4, $\q$ adds a partial square sequence on $S_i$, 
for all $i < \lambda^*$.  
Since $\q$ is $\kappa$-c.c., it also preserves the stationarity of $S_i$. 
Hence, for all $i < \lambda^*$, $\q$ forces that $S_i$ is a stationary 
subset of $\omega_2 \cap \cof(\omega_1)$ in the approachability 
ideal $I[\omega_2]$. 
See Corollary 11.22 below for more details.

\bigskip

We conclude this section by introducing a set $s^*$ which is analogous 
to the set $r^*$ from Parts I and II.

\begin{definition}
Let $A$ be an adequate set. 
Define $s^*(A)$ as the set of $i < \lambda^*$ 
such that for some $M$ and $N$ in $A$, 
$M \sim N$, $i \in M \cap N$, and the ordinal 
$\min((M \cap \kappa) \setminus \beta_{M,N})$ exists and is in $S_i$.
\end{definition}

Note that requirement (4) of Definition 10.6 is equivalent to the 
statement that $s^*(A_p) \subseteq \dom(F_p)$.

Observe that if $A \subseteq B$, then $s^*(A) \subseteq s^*(B)$.

The following is an analogue of Propositions 3.5 and 3.8 
for $s^*$. 

\begin{proposition}
Let $A$ be an adequate set, $N \in \mathcal X \cup \mathcal Y$ be simple, 
and suppose that:
\begin{enumerate}
\item if $N \in \mathcal X$, then $N \in A$ and for all $M \in A$, 
if $M < N$ then $M \cap N \in A$;
\item if $N \in \mathcal Y$, then for all $M \in A$, $M \cap N \in A$.
\end{enumerate}
Let $B$ be an adequate set such that 
$$
A \cap N \subseteq B \subseteq N.
$$
Then 
$$
s^*(A \cup B) = s^*(A) \cup s^*(B).
$$
\end{proposition}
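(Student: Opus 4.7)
The plan is to mirror the proofs of Propositions 3.5 and 3.8, which handled the analogous statement for $r^*$. The reverse inclusion $s^*(A) \cup s^*(B) \subseteq s^*(A \cup B)$ is immediate from the monotonicity of $s^*$ noted just after Definition 10.12. For the forward inclusion, fix $i \in s^*(A \cup B)$ and a witnessing pair $P, Q \in A \cup B$ with $P \sim Q$, $i \in P \cap Q$, and $\gamma := \min((P \cap \kappa) \setminus \beta_{P,Q}) \in S_i$. If $\{P, Q\} \subseteq A$ or $\{P, Q\} \subseteq B$, we are done. Note also that $A \cup B$ is adequate by Proposition 2.25 (if $N \in \mathcal{X}$) or Proposition 2.28 (if $N \in \mathcal{Y}$), so $\{P, Q\}$ is adequate, and $N \in A$ in the countable case by hypothesis.

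The remaining case is when exactly one of $P, Q$ lies in $B \setminus A$ (equivalently in $N$). I would split this into the sub-case $P \in A$, $Q \in B$ and the sub-case $P \in B$, $Q \in A$. In the countable case, Lemma 3.4 is the key tool. First, since $Q$ (or $P$) lies in $N$, the hypotheses of Lemma 3.4 are met after appropriate relabeling. In either sub-case, one verifies $P < N$ or $Q < N$ from $P \sim Q$ and Lemma 2.17, so by the hypothesis of the proposition, $P \cap N$ or $Q \cap N$ lies in $A$, and since $N$ is simple this intersection also lies in $A \cap N \subseteq B$. One then applies Lemma 3.4(2) (when the model in $B$ supplies the minimum) to rewrite $\gamma = \min((P \cap \kappa) \setminus \beta_{P, Q \cap N})$, with $P, Q \cap N \in B$, placing $i \in s^*(B)$; or Lemma 3.4(3) (when the model in $A$ supplies the minimum), where the two outcomes place $i$ in $s^*(B)$ via the pair $(P \cap N, Q)$ or in $s^*(A)$ via $(P, P \cap N)$ respectively. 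In each case, checking $i$ lies in the required intersection follows from $i \in P \cap Q$ together with $Q \subseteq N$ or $P \subseteq N$, so $i$ automatically drops into the truncated model; checking $\sim$ uses Lemma 2.26 and the transitivity of $\sim$ on adequate sets (Lemma 2.18).

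The uncountable case ($N \in \mathcal{Y}$) is entirely analogous, with Lemma 3.7 replacing Lemma 3.4 and Lemma 2.29 replacing Lemma 2.26; here $M \cap N$ is forced into $A$ by hypothesis (2), and the fact that $N$ is simple places it in $A \cap N \subseteq B$. The asymmetry in Definition 10.12 (the minimum is taken from one specific model's perspective) is mild because, by the general analysis in Lemmas 3.4(2,3) and 3.7(2,3), whichever side contributes the witness minimum gets transformed into a witness among models both in $A$ or both in $B$.

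The main obstacle, as in the proofs of Propositions 3.5 and 3.8, is bookkeeping: namely, choosing the correct labeling so that Lemmas 3.4 and 3.7 apply in the stated direction, and then verifying in each sub-case that the relevant truncation $P \cap N$ or $Q \cap N$ still contains $i$ (using $i \in P \cap Q$ with one side lying in $N$) and that the new pair is $\sim$-related (using Lemmas 2.26 and 2.29). No new ideas beyond those in Section 3 are required; the proof is essentially a transcription of the $r^*$-arguments, keeping track of the ordinal $i \in \lambda^*$ in place of the remainder ordinal.
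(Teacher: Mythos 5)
Your proposal is correct and follows essentially the same route as the paper's proof: both reduce to a witnessing pair with one model in $A$ and one in $B \subseteq N$, show $K \cap N \in B$ using hypothesis (1) or (2) together with the simplicity of $N$ (and Lemma 2.17 in the countable case), and then apply Lemmas 3.4(2,3) or 3.7(2,3) to transfer the witness into a pair lying entirely in $A$ or entirely in $B$, with the $\sim$-relations and the membership $i \in (K \cap N) \cap M$ checked via Lemmas 2.18, 2.26, 2.29 and $M \subseteq N$. No substantive difference from the paper's argument.
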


\begin{proof}
By Propositions 2.25 and 2.28, $A \cup B$ is adequate. 
The reverse inclusion is immediate. 
For the forward inclusion, 
let $K \in A$ and $M \in B$, and assume that 
$K \sim M$, $i \in K \cap M \cap \lambda^*$, $\gamma$ is equal to 
either $\min((K \cap \kappa) \setminus \beta_{K,M})$ or 
$\min((M \cap \kappa) \setminus \beta_{K,M})$, 
and $\gamma \in S_i$. 
We will show that $i \in s^*(A) \cup s^*(B)$.

We claim that $K \cap N \in B$. 
First, assume that $N \in \mathcal X$. 
Since $M \in N$, $M \cap \omega_1 < N \cap \omega_1$. 
As $K \sim M$, $K \cap \omega_1 = M \cap \omega_1$ by Lemma 2.17(2). 
So $K \cap \omega_1 < N \cap \omega_1$, and hence $K < N$ 
by Lemma 2.17(1). 
By (1), $K \cap N \in A$. 
As $N$ is simple, $K \cap N \in N$. 
So $K \cap N \in A \cap N \subseteq B$. 
Secondly, assume that $N \in \mathcal Y$. 
Then since $K \in A$, $K \cap N \in A$ by (2). 
As $N$ is simple, $K \cap N \in N$. 
So $K \cap N \in A \cap N \subseteq B$.

Since $M \in N$, $i \in N$. 
So $i \in (K \cap N) \cap M$. 
Also note that by Lemmas 2.18, 2.26, and 2.29, 
$K \sim K \cap N \sim M$.

We consider the two possibilities for $\gamma$. 
Suppose that $\gamma = \min((K \cap \kappa) \setminus \beta_{K,M})$. 
Then by Lemma 3.4(3) in the case that $N \in \mathcal X$ 
and Lemma 3.7(3) in the case that $N \in \mathcal Y$, either 
$\gamma = \min((K \cap N \cap \kappa) \setminus \beta_{K \cap N,M})$, 
or $\gamma = \min((K \cap \kappa) \setminus \beta_{K,K \cap N})$. 
In the first case, since $\gamma \in S_i$, $i \in (K \cap N) \cap M$, 
and $K \cap N$ and $M$ are in $B$, it follows that $i \in s^*(B)$. 
In second case, since $\gamma \in S_i$, $i \in (K \cap N) \cap K$, 
and $K \cap N$ and $K$ are in $A$, we have that $i \in s^*(A)$.

Now suppose that $\gamma = \min((M \cap \kappa) \setminus \beta_{K,M})$. 
Then by Lemma 3.4(2) in the case that $N \in \mathcal X$ and 
Lemma 3.7(2) in the case that $N \in \mathcal Y$, 
$\gamma = \min((M \cap \kappa) \setminus \beta_{M,K \cap N})$. 
Since $\gamma \in S_i$, $i \in (K \cap N) \cap M$, 
and $K \cap N$ and $M$ are in $B$, it follows that $i \in s^*(B)$.
\end{proof}

\bigskip

\addcontentsline{toc}{section}{11. Amalgamation}

\textbf{\S 11. Amalgamation}

\stepcounter{section}

\bigskip

In this section we will prove 
cardinal preservation results for $\q$, namely, that 
$\q$ is strongly proper on a stationary set and is $\kappa$-c.c. 
The arguments are simpler than those from Sections 6 and 7, and as a result 
we are able to handle the amalgamation arguments 
for countable and uncountable models 
at the same time. 
The order of topics and results is similar to that of Sections 6 and 7.

\begin{lemma}
Let $p \in \q$, $N \in \mathcal X$, and suppose that $p \in N$. 
Then there is $r \le p$ such that $N \in A_r$.
\end{lemma}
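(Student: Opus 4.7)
The plan is to adapt the construction from Lemma 7.1 to the product setting, handling each coordinate separately and taking the $A$-component to be $A_p \cup \{N\}$. Specifically, I will define $A_r := A_p \cup \{N\}$ and $\dom(F_r) := \dom(F_p)$; then, for each $i \in \dom(F_p)$, I will apply Lemma 7.1 in the forcing poset $\p_i$ to the condition $F_p(i)$ to obtain $F_r(i) \le F_p(i)$ in $\p_i$ with $N \in A_{F_r(i)}$. This application is legitimate because $F_p \in N$ and $i \in \dom(F_p) \subseteq N$ together imply $F_p(i) \in N \cap \p_i$.

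Next, I would verify the four requirements of Definition 10.6 for $r$. Requirement (1), that $A_r$ is adequate, is immediate from Lemma 2.16, since $p \in N$ implies $A_p \in N$, and thus $A_p \cup \{N\}$ is adequate. Requirement (2) is trivial. For requirement (3), fix $i \in \dom(F_r)$. Every $M \in A_p$ lies in $N$ (because $A_p \in N$ is finite), so $\{M \in A_r : i \in M\} = \{M \in A_p : i \in M\} \cup \{N\}$, where the inclusion of $N$ uses $i \in \dom(F_p) \subseteq N$. The first set sits inside $A_{F_p(i)} \subseteq A_{F_r(i)}$ because $p$ is a condition and $F_r(i) \le F_p(i)$; the element $N$ lies in $A_{F_r(i)}$ by construction.

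The main point of the proof, and the only place care is needed, is requirement (4), which amounts to showing $s^*(A_r) \subseteq \dom(F_r) = \dom(F_p)$. Since $A_p \subseteq N$, Lemma 2.16 gives $M < N$ strictly for every $M \in A_p$, hence no member of $A_p$ is $\sim$-equivalent to $N$. Consequently, any pair witnessing membership in $s^*(A_r)$ must already lie in $A_p$, so $s^*(A_r) = s^*(A_p) \subseteq \dom(F_p)$ because $p$ itself is a condition. Finally, $r \le p$ follows immediately from $A_p \subseteq A_r$, $\dom(F_p) = \dom(F_r)$, and $F_r(i) \le F_p(i)$ for each $i \in \dom(F_p)$. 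I do not anticipate any serious obstacle: all the real work is done by Lemma 7.1 applied coordinatewise, and the only global issue, namely control of $s^*$, is handled cleanly by the observation that $N$ strictly dominates every old model.
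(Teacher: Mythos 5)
Your proof is correct and follows essentially the same route as the paper: apply Lemma 7.1 coordinatewise to the conditions $F_p(i) \in N \cap \p_i$, add $N$ to the $A$-component, and verify Definition 10.6, with requirement (4) handled by noting that every $M \in A_p$ satisfies $M < N$, so $s^*(A_p \cup \{N\}) = s^*(A_p)$. The only difference is organizational: the paper first packages the coordinatewise extensions into an intermediate condition $q \le p$ via Lemma 10.9 and then adds $N$, whereas you verify the requirements for $r$ directly, which is equally fine.
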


\begin{proof}
Since $p \in N$, for all $i \in \dom(F_p)$, 
$F_p(i) \in N$. 
So we can apply Lemma 7.1 and fix, for each $i \in \dom(F_p)$, 
a condition $q_i \le F_p(i)$ in $\p_i$ such that 
$N \in A_{q_i}$. 
Now apply Lemma 10.9 and fix $q \le p$ such that 
$A_q = A_p$, $\dom(F_q) = \dom(F_p)$, and 
for each $i \in \dom(F_p)$, $F_q(i) = q_i$.

Define $r$ by letting $F_r := F_q$ and $A_r := A_q \cup \{ N \}$. 
It is easy to see that if $r$ is a condition, then $r \le q$ and $N \in A_r$. 
So we will be done if we can prove that $r$ is a condition. 
We verify requirements (1)--(4) of Definition 10.6. 

(1) Since $A_p = A_q$ and $p \in N$, 
$A_r$ is adequate by Lemma 2.16. 
(2) is immediate. 
(4) Since $M \in N$ for all $M \in A_q$, easily 
$s^*(A_r) = s^*(A_q \cup \{ N \}) = s^*(A_q)$. 
Since $q$ is a condition, 
$s^*(A_q) \subseteq \dom(F_q) = \dom(F_r)$.

(3) Let $i \in \dom(F_r)$. 
Then $F_r(i) = F_q(i) = q_i$, which is in $\p_i$. 
Let $M \in A_r = A_q \cup \{ N \}$ and suppose that $i \in M$. 
We will show that $M \in A_{q_i}$. 
Since $M \in A_r$, either $M \in A_q$ or $M = N$. 

First, assume that $M = N$. 
Then by the choice of $q_i$, $M = N \in A_{q_i}$, and we are done. 
Secondly, assume that $M \in A_q$. 
Now $A_q = A_p$ and $\dom(F_r) = \dom(F_q) = \dom(F_p)$. 
So $M \in A_p$ and $i \in \dom(F_p)$. 
Since $p$ is a condition, $M \in A_{F_p(i)}$. 
But $q_i \le F_p(i)$, so $A_{F_p(i)} \subseteq A_{q_i}$. 
Hence, $M \in A_{q_i}$.
\end{proof}

\begin{lemma}
Let $p \in \q$ and $N \in \mathcal X \cup \mathcal Y$. 
Suppose that if $N \in \mathcal X$, then $N \in A_p$. 
Then there is $s \le p$ satisfying:
\begin{enumerate}
\item if $N \in \mathcal X$, then for all $M \in A_s$, if 
$M < N$ then $M \cap N \in A_s$, and moreover, 
$A_s = A_p \cup \{ M \cap N : M \in A_p, \ M < N \}$;
\item if $N \in \mathcal Y$, then for all $M \in A_s$, 
$M \cap N \in A_s$, and moreover, 
$A_s = A_p \cup \{ M \cap N : M \in A_p \}$.
\end{enumerate}
\end{lemma}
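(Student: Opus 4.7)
The plan is to modify $p$ in two stages: first enlarge the domain of $F_p$ to absorb any new indices forced into the domain by the enlargement of the adequate set, and then individually extend each coordinate $F_p(i)$ inside $\p_i$ using the single-forcing closure lemmas already proved. At the end we overwrite the side condition with the desired enlarged adequate set.

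Define $B$ to be $A_p \cup \{M \cap N : M \in A_p, \ M < N\}$ in the $\mathcal{X}$ case and $A_p \cup \{M \cap N : M \in A_p\}$ in the $\mathcal{Y}$ case. By Proposition 2.24 or Proposition 2.27, $B$ is adequate, and the ``moreover'' clauses in both cases of the lemma are immediate from the very definition of $B$. Set $y := s^*(B) \setminus \dom(F_p)$, which is finite, and let $p_0 := p \uplus y$. By Lemma 10.8, $p_0$ is a condition with $p_0 \le p$, $A_{p_0} = A_p$, and $\dom(F_{p_0}) = \dom(F_p) \cup y$; in particular $s^*(B) \subseteq \dom(F_{p_0})$.

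For each $i \in \dom(F_{p_0})$ choose $q_i \le F_{p_0}(i)$ in $\p_i$ as follows. In the $\mathcal{Y}$ case, apply Lemma 6.1 to get $q_i$ with $A_{q_i} = A_{F_{p_0}(i)} \cup \{M \cap N : M \in A_{F_{p_0}(i)}\}$. In the $\mathcal{X}$ case, if $i \in N$ then, because $N \in A_p = A_{p_0}$ and $p_0$ satisfies Definition 10.6(3), we have $N \in A_{F_{p_0}(i)}$; Lemma 7.2 then yields $q_i$ with the property that $M \cap N \in A_{q_i}$ for every $M \in A_{q_i}$ with $M < N$. If $i \notin N$, simply set $q_i := F_{p_0}(i)$. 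Apply Lemma 10.9 to glue these into a condition $p_1 \le p_0$ with $A_{p_1} = A_p$, $\dom(F_{p_1}) = \dom(F_{p_0})$, and $F_{p_1}(i) = q_i$. Finally, define $s := (F_{p_1}, B)$.

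It remains to check that $s$ is a condition with $s \le p$. Requirement (1) of Definition 10.6 is the adequacy of $B$; requirement (2) is immediate; requirement (4) holds because $s^*(A_s) = s^*(B) \subseteq \dom(F_{p_0}) = \dom(F_s)$. For requirement (3), let $i \in \dom(F_s)$ and $M \in B$ with $i \in M$. If $M \in A_p$, then $M \in A_{F_{p_0}(i)} \subseteq A_{q_i} = A_{F_s(i)}$ by Definition 10.6(3) applied to $p_0$. Otherwise $M = M_0 \cap N$ for some $M_0 \in A_p$ (with $M_0 < N$ in the $\mathcal{X}$ case); since $i \in M \subseteq N$, the coordinate $i$ fell in the case where we invoked Lemma 6.1 or Lemma 7.2, and since $i \in M_0$ gives $M_0 \in A_{F_{p_0}(i)} \subseteq A_{q_i}$, the closure property of $q_i$ delivers $M_0 \cap N = M \in A_{q_i} = A_{F_s(i)}$. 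The inequality $s \le p$ is straightforward: $A_p \subseteq B$, $\dom(F_p) \subseteq \dom(F_s)$, and $F_s(i) = q_i \le F_{p_0}(i) = F_p(i)$ for $i \in \dom(F_p)$. The only conceptually delicate point, and really the only obstacle to a direct coordinate-by-coordinate extension, is that the new models $M \cap N \in B \setminus A_p$ involve the index $i$ only when $i \in N$; this is precisely the configuration in which the assumption $N \in A_p$ (in the $\mathcal{X}$ case) ensures $N \in A_{F_{p_0}(i)}$ and makes Lemma 7.2 applicable.
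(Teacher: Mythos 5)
Your proposal is correct and follows essentially the same route as the paper's proof: form the enlarged adequate set $B$ via Propositions 2.24/2.27, absorb $s^*(B)$ into the domain with $\uplus$, extend the relevant coordinates using Lemmas 6.1 and 7.2 (the latter made applicable because $N \in A_p$ puts $N$ into $A_{F_{p_0}(i)}$ for $i \in N$), glue with Lemma 10.9, and finally replace the side condition with $B$, checking Definition 10.6(3) exactly as the paper does. The only differences are cosmetic, e.g.\ applying Lemma 6.1 at every coordinate in the $\mathcal Y$ case rather than only at $i \in N$, which is harmless.
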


\begin{proof}
Define 
$$
B := A_p \cup \{ M \cap N : M \in A_p, \ M < N \}
$$
in the case that $N \in \mathcal X$, and 
$$
B := A_p \cup \{ M \cap N : M \in A_p \}
$$
in the case that $N \in \mathcal Y$. 
By Propositions 2.24 and 2.27, $B$ is adequate. 
Define 
$$
q := p \uplus (s^*(B) \setminus \dom(F_p)).
$$
By Definition 10.7 and Lemma 10.8, 
$q \in \q$, $q \le p$, $A_q = A_p$, and 
$\dom(F_q) = \dom(F_p) \cup s^*(B)$.

If $N \in \mathcal X$, then since $N \in A_q$, 
Definition 10.6(3) implies that 
for all $i \in \dom(F_q) \cap N$, $N \in A_{F_q(i)}$. 
Applying Lemma 7.2 in the case that $N \in \mathcal X$ and 
Lemma 6.1 in the case that $N \in \mathcal Y$, we can fix, 
for each $i \in \dom(F_q) \cap N$, 
a condition $r_i \le F_q(i)$ in $\p_i$ 
satisfying:
\begin{enumerate}
\item if $N \in \mathcal X$, then for all 
$M \in A_{r_i}$, if $M < N$ then $M \cap N \in A_{r_i}$;
\item if $N \in \mathcal Y$, then for all $M \in A_{r_i}$, 
$M \cap N \in A_{r_i}$.
\end{enumerate}
Now apply Lemma 10.9 and fix $r \le q$ such that $A_r = A_q$, 
$\dom(F_r) = \dom(F_q)$, for each $i \in \dom(F_q) \cap N$, 
$F_r(i) = r_i$, and for each $i \in \dom(F_q) \setminus N$, 
$F_r(i) = F_q(i)$.

Finally, define 
$$
s := (F_r, B).
$$
We claim that $s$ is as required. 
Note that if $s$ is a condition, then clearly $s \le r$. 
Also, by Propositions 2.24 and 2.27, since $A_p = A_q = A_r$, 
$A_s = B$ satisfies (1) and (2) of the lemma.

It remains to show that $s$ is a condition. 
We verify requirements (1)--(4) of Definition 10.6. 
(1) We already observed that $B$ is adequate. 
(2) is immediate.
(4) We have that 
$$
s^*(A_s) = s^*(B) \subseteq \dom(F_q) = \dom(F_r) = \dom(F_s).
$$

(3) Let $i \in \dom(F_s)$. 
Then $F_s(i) = F_r(i) \in \p_i$, since $r$ is a condition. 
Suppose that $M \in A_s = B$ and $i \in M$. 
We will show that $M \in A_{F_s(i)}$. 
If $M \in A_p = A_r$, then since 
$i \in \dom(F_s) = \dom(F_r)$, it follows that 
$M \in A_{F_r(i)} = A_{F_s(i)}$ since $r$ is a condition.

Suppose that $M \in B \setminus A_p$. 
Then $M = M_1 \cap N$ for some $M_1 \in A_p$, 
where $M_1 < N$ in the case that $N \in \mathcal X$. 
Then $i \in M = M_1 \cap N$, so $i \in M_1$ and $i \in N$. 
Now $M_1$ is in $A_p = A_r$. 
Since $i$ is in $M_1 \cap \dom(F_r)$ and $r$ is a condition, 
it follows that $M_1$ is in $A_{F_r(i)}$. 
Also, $i \in \dom(F_r) \cap N = \dom(F_q) \cap N$, 
so by the choice of $r$, $F_r(i) = r_i$. 
And by the choice of $r_i$, $M_1 \cap N \in A_{r_i}$. 
So $M = M_1 \cap N \in A_{r_i} = A_{F_r(i)} = A_{F_s(i)}$.
\end{proof}

\begin{notation}
Let $N \in \mathcal X$. 
For each $i < \lambda^*$, let $D_{i,N}$ denote the set of conditions 
in $\p_i$ defined as $D_N$ in Definition 7.3.
\end{notation}

We introduce an analogue $D(N)$ of $D_{i,N}$ for $\q$.

\begin{definition}
Let $N \in \mathcal X$. 
Define $D(N)$ as the set of conditions $p \in \q$ satisfying:
\begin{enumerate}
\item $N \in A_p$;
\item for all $M \in A_p$, if $M < N$ then $M \cap N \in A_p$;
\item for all $i \in \dom(F_p) \cap N$, $F_p(i) \in D_{i,N}$.
\end{enumerate}
\end{definition}

\begin{lemma}
Let $N \in \mathcal X$. 
Then for any condition $p \in \q$, if $N \in A_p$, then there is $r \le p$ 
such that $r \in D(N)$.
\end{lemma}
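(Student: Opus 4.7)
The plan is to produce $r$ in two stages: first adjust the adequate set component to satisfy clause (2) of Definition 11.4, and then shrink each relevant $F_p(i)$ to a condition in $D_{i,N}$, leveraging Lemma 7.4. Finally, we reassemble via Lemma 10.9 and verify that the result lies in $D(N)$.

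More precisely, I would begin by applying Lemma 11.2 in the case $N \in \mathcal X$ to obtain $s \le p$ with
$$
A_s = A_p \cup \{ M \cap N : M \in A_p, \ M < N \},
$$
and such that for every $M \in A_s$ with $M < N$, we have $M \cap N \in A_s$. Since $N \in A_p \subseteq A_s$ and $F_s(i) \le F_p(i)$ for each $i \in \dom(F_s) = \dom(F_p)$, clauses (1) and (2) of Definition 11.4 are now met. Moreover, for each $i \in \dom(F_s) \cap N$, Definition 10.6(3) applied to $s$ (with $N \in A_s$ and $i \in N$) guarantees $N \in A_{F_s(i)}$.

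Next, for each such $i \in \dom(F_s) \cap N$, I would invoke Lemma 7.4 inside $\p_i$ to produce $r_i \le F_s(i)$ with $r_i \in D_{i,N}$. For $i \in \dom(F_s) \setminus N$, set $r_i := F_s(i)$. Then Lemma 10.9 yields a condition $r \le s$ with $A_r = A_s$, $\dom(F_r) = \dom(F_s)$, and $F_r(i) = r_i$ for every $i \in \dom(F_r)$.

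It remains to verify that $r \in D(N)$. Clauses (1) and (2) of Definition 11.4 transfer immediately from $s$ to $r$ since $A_r = A_s$. For clause (3), let $i \in \dom(F_r) \cap N$; by construction $F_r(i) = r_i \in D_{i,N}$, as required. The only routine subtlety — ensuring that replacing $F_s(i)$ by $r_i$ does not violate Definition 10.6(3) — is already handled by Lemma 10.9, because $A_{F_s(i)} \subseteq A_{r_i}$ guarantees that every $M \in A_r$ with $i \in M$ remains in $A_{F_r(i)}$. No step here is genuinely hard; the only place one has to be slightly careful is noticing that the hypothesis $N \in A_{F_s(i)}$ needed to invoke Lemma 7.4 is automatic from Definition 10.6(3) once the first stage has arranged $N \in A_s$.
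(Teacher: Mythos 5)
Your proof is correct and follows essentially the same route as the paper: apply Lemma 11.2(1) to arrange closure of the adequate set under intersections with $N$, note that Definition 10.6(3) gives $N \in A_{F_s(i)}$ for $i \in \dom(F_s) \cap N$ so that Lemma 7.4 applies coordinatewise, and reassemble with Lemma 10.9. No gaps.
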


\begin{proof}
Let $p \in \p$. 
By Lemma 11.2(1), fix $q \le p$ such that for all $M \in A_q$, 
if $M < N$ then $M \cap N \in A_q$. 
Since $N \in A_q$, for each $i \in \dom(F_q) \cap N$, 
$N \in A_{F_q(i)}$ by Definition 10.6(3). 
So by Lemma 7.4, 
for each $i \in \dom(F_q) \cap N$, 
we can fix $r_i \le F_q(i)$ such that $r_i \in D_{i,N}$. 
By Lemma 10.9, there is $r \le q$ such that $A_r = A_q$, $\dom(F_r) = \dom(F_q)$, 
for all $i \in \dom(F_r) \cap N$, $F_r(i) = r_i$, and for all 
$i \in \dom(F_r) \setminus N$, $F_r(i) = F_q(i)$. 
Then $r \le p$ and $r \in D(N)$.
\end{proof}

\begin{lemma}
Let $N \in \mathcal X$ and $q \in D(N)$. 
Let $x$ be a finite subset of $\lambda^* \setminus \dom(F_q)$. 
Then $q \uplus x \in D(N)$.
\end{lemma}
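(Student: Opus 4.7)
The plan is to verify the three defining clauses of $D(N)$ from Definition 11.4 for the condition $q \uplus x$, exploiting the fact that by Definition 10.7, the $A$-component is unchanged, so most of what needs checking reduces to clauses already guaranteed by $q \in D(N)$ and to a small vacuous verification on the fresh coordinates in $x$.

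First I would note that by Definition 10.7, $A_{q \uplus x} = A_q$, so clauses (1) and (2) of Definition 11.4 hold for $q \uplus x$ precisely because they hold for $q$. This reduces the proof to verifying clause (3): for every $i \in \dom(F_{q \uplus x}) \cap N$, $F_{q \uplus x}(i) \in D_{i,N}$.

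Next I would split this into the two cases $i \in \dom(F_q)$ and $i \in x \cap N$. In the first case, $F_{q \uplus x}(i) = F_q(i)$ by Definition 10.7, and $q \in D(N)$ directly gives $F_q(i) \in D_{i,N}$. In the second case, $F_{q \uplus x}(i) = (\emptyset,\emptyset,B_i)$, where $B_i = \{ M \in A_q : i \in M \}$, and the three requirements of $D_{i,N}$ from Definition 7.3 need to be checked. Clause (1), namely $N \in B_i$, follows since $N \in A_q$ (because $q \in D(N)$) and $i \in N$. Clause (2) says that whenever $M \in B_i$ and $M < N$, then $M \cap N \in B_i$; this follows because $M \in A_q$ and $q \in D(N)$ yield $M \cap N \in A_q$, while $i \in M \cap N$ is automatic from $i \in M$ and $i \in N$, giving $M \cap N \in B_i$. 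Clause (3), concerning the inclusion $g_r(K,x) \subseteq g_r(K,y)$ whenever $K \in f_r(x)$ and $x \in f_r(y)$, is vacuous because the $f$-component of $(\emptyset,\emptyset,B_i)$ is empty.

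There is no real obstacle here: the argument is essentially bookkeeping, since $q \uplus x$ was designed to leave the adequate set and the values of $F$ on $\dom(F_q)$ untouched, and the new coordinates in $x$ carry only trivial $f$ and $g$ components whose $A$-part inherits the relevant closure properties from $A_q$.
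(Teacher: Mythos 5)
Your proof is correct and follows essentially the same route as the paper's: use $A_{q \uplus x} = A_q$ to inherit clauses (1) and (2) of Definition 11.4, reduce clause (3) to the fresh coordinates $i \in x \cap N$, and verify the three requirements of Definition 7.3 for $(\emptyset,\emptyset,B_i)$ exactly as the paper does (with the $g$-clause vacuous since the $f$-component is empty). No gaps.
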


\begin{proof}
Let $r := q \uplus x$. 
Since $q \in D(N)$ and $A_q = A_r$, we have that $N \in A_r$, 
and for all $M \in A_r$, if $M < N$ then $M \cap N \in A_r$. 
Also, since $q \in D(N)$, for all $i \in \dom(F_q) \cap N$, 
$F_r(i) = F_q(i) \in D_{i,N}$. 
It remains to show that for all $i \in x \cap N$, 
$F_r(i) \in D_{i,N}$.

Let $i \in x \cap N$. 
By Definition 10.7, 
$F_r(i) = (\emptyset,\emptyset,B_i)$, where 
$B_i = \{ M \in A_q : i \in M \}$. 
We will verify that $(\emptyset,\emptyset,B_i)$ 
satisfies requirements (1), (2), and (3) of Definition 7.3.

(1) Since $q \in D(N)$, $N \in A_q$. 
As $i \in N$, $N \in B_i$ by definition. 
(3) is immediate, since $f_{F_r(i)} = \emptyset$.
(2) Suppose that $M \in B_i$ and $M < N$, and we will show that 
$M \cap N \in B_i$. 
By the definition of $B_i$, $M \in A_q$ and $i \in M$. 
So $i \in M \cap N$. 
Since $q \in D(N)$ and $M < N$, $M \cap N \in A_q$. 
So $M \cap N \in A_q$ and $i \in M \cap N$, which 
by definition implies that $M \cap N \in B_i$. 
\end{proof}

\begin{notation}
Let $P \in \mathcal Y$. 
For each $i < \lambda^*$, let $D_{i,P}$ denote the set of conditions 
in $\p_i$ defined as $D_P$ in Definition 6.2.
\end{notation}

We introduce an analogue $D(P)$ of $D_{i,P}$ for $\q$.

\begin{definition}
Let $P \in \mathcal Y$. 
Define $D(P)$ as the set of conditions $p \in \q$ satisfying:
\begin{enumerate}
\item for all $M \in A_p$, $M \cap P \in A_p$;
\item for all $i \in \dom(F_p) \cap P$, $F_p(i) \in D_{i,P}$.
\end{enumerate}
\end{definition}

\begin{lemma}
Let $P \in \mathcal Y$. 
Then $D(P)$ is dense in $\q$.
\end{lemma}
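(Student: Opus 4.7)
The plan is to follow the template established by Lemma 11.4, replacing the countable-model inputs with their uncountable-model analogues. Given an arbitrary $p \in \q$, I first want to massage $A_p$ so that requirement (1) of Definition 11.7 holds, and then refine each coordinate $F_p(i)$ with $i \in P$ to land in $D_{i,P}$.

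The first step is to invoke Lemma 11.2(2), applied to $p$ and $P \in \mathcal Y$, to obtain $q \le p$ with $A_q = A_p \cup \{M \cap P : M \in A_p\}$ and such that $M \cap P \in A_q$ for every $M \in A_q$. This takes care of clause (1) of Definition 11.7 and will be preserved by any further extension that does not enlarge $A$. Next, for each $i \in \dom(F_q) \cap P$, I appeal to Lemma 6.3 applied in $\p_i$: the set $D_{i,P}$ is dense in $\p_i$, so I may fix $r_i \le F_q(i)$ in $D_{i,P}$. Finally, I apply Lemma 10.9 to the collection $\langle r_i : i \in \dom(F_q) \cap P\rangle$ (setting $r_i := F_q(i)$ for $i \in \dom(F_q) \setminus P$) to obtain a single condition $r \le q$ with $A_r = A_q$, $\dom(F_r) = \dom(F_q)$, and $F_r(i) = r_i$ at each coordinate.

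It then remains to verify $r \in D(P)$. Clause (1) of Definition 11.7 is immediate from $A_r = A_q$ and the choice of $q$. Clause (2) is immediate from $F_r(i) = r_i \in D_{i,P}$ for each $i \in \dom(F_r) \cap P = \dom(F_q) \cap P$. Since $r \le q \le p$, we are done.

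No substantial obstacle arises: the required adequacy closure under $M \mapsto M \cap P$ is already encapsulated in Lemma 11.2, the coordinatewise density is already the content of Lemma 6.3, and Lemma 10.9 is precisely the tool for assembling coordinatewise refinements into a single extension in $\q$ while automatically verifying Definition 10.6(3). The only point to watch is that Lemma 10.9 is being applied with the same $A$-part as $q$, so the inclusion $\{M \in A_r : i \in M\} \subseteq A_{r_i}$ follows from $\{M \in A_q : i \in M\} \subseteq A_{F_q(i)} \subseteq A_{r_i}$, which is built into Lemma 10.9's proof.
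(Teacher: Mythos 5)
Your proof is correct and is essentially the paper's argument: the paper simply invokes Lemma 10.10 (which packages your combination of the density of $D_{i,P}$ from Lemma 6.3 with the coordinatewise assembly of Lemma 10.9) after the same initial application of Lemma 11.2(2). The only blemish is a citation slip: the definition of $D(P)$ you are verifying is Definition 11.8, not 11.7.
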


\begin{proof}
Let $p \in \q$. 
By Lemma 11.2(2), we can find $q \le p$ such that 
for all $M \in A_q$, $M \cap P \in A_q$. 
Let $x := \dom(F_q) \cap P$. 
For each $i \in x$, the set $D_{i,P}$ is dense in $\p_i$ by Lemma 6.3. 
By Lemma 10.10, fix $r \le q$ such that $A_r = A_q$, 
$\dom(F_r) = \dom(F_q) \cup x = \dom(F_q)$, and for each $i \in x$, 
$F_r(i) \in D_{i,P}$. 
Then $r \le p$ and $r \in D(P)$.
\end{proof}

\begin{lemma}
Let $P \in \mathcal Y$ and $q \in D(P)$. 
Let $x$ be a finite subset of $\lambda^* \setminus \dom(F_q)$. 
Then $q \uplus x \in D(P)$.
\end{lemma}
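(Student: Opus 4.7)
The plan is to mimic the proof of Lemma 11.6, with the verification simpler in the uncountable case because $D(P)$ has only two requirements rather than three. Let $r := q \uplus x$. By Definition 10.7, $A_r = A_q$, so requirement (1) of Definition 11.8 for $r$ is immediate from the fact that $q \in D(P)$. It remains to verify requirement (2), namely that for all $i \in \dom(F_r) \cap P$, $F_r(i) \in D_{i,P}$.

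For $i \in \dom(F_q) \cap P$, the equality $F_r(i) = F_q(i)$ together with $q \in D(P)$ gives the conclusion directly. So the content of the argument is to handle $i \in x \cap P$. For such $i$, Definition 10.7 gives $F_r(i) = (\emptyset,\emptyset,B_i)$, where $B_i = \{ M \in A_q : i \in M \}$. Recalling Definition 6.2, I need to check that for every $M \in A_{F_r(i)} = B_i$, the set $M \cap P$ is again in $B_i$.

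So fix $M \in B_i$; then $M \in A_q$ and $i \in M$. Since $q \in D(P)$, we have $M \cap P \in A_q$. Moreover, $i \in x \cap P$ so $i \in P$, and combining this with $i \in M$ yields $i \in M \cap P$. By the definition of $B_i$, it follows that $M \cap P \in B_i$. This completes the verification, so $F_r(i) \in D_{i,P}$, and hence $r = q \uplus x \in D(P)$. No obstacle is anticipated here — the argument is a straightforward bookkeeping check, parallel to (and shorter than) the proof of Lemma 11.6.
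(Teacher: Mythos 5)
Your proof is correct and follows essentially the same route as the paper's: the only content is the case $i \in x \cap P$, where you verify $F_r(i) = (\emptyset,\emptyset,B_i) \in D_{i,P}$ by showing $M \in B_i$ implies $M \cap P \in A_q$ (from $q \in D(P)$) and $i \in M \cap P$, hence $M \cap P \in B_i$. This matches the paper's argument for Lemma 11.10 exactly.
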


\begin{proof}
Let $r := q \uplus x$. 
Then by Definition 10.7, $A_r = A_q$. 
Since $q \in D(P)$, it follows that for all $M \in A_r$, $M \cap P \in A_r$. 
It remains to show that for all $i \in \dom(F_r) \cap P$, 
$F_r(i) \in D_{i,P}$. 

By Definition 10.7, $\dom(F_r) = \dom(F_q) \cup x$, for 
all $i \in \dom(F_q)$, $F_r(i) = F_q(i)$, and for all $i \in x$, 
$F_r(i) = (\emptyset,\emptyset,B_i)$, where 
$B_i = \{ M \in A_q : i \in M \}$. 
Since $q \in D(P)$, for all $i \in \dom(F_q) \cap P$, 
$F_r(i) = F_q(i) \in D_{i,P}$. 

It remains to show that for all $i \in x \cap P$, 
$F_r(i) = (\emptyset,\emptyset,B_i)$ is in $D_{i,P}$. 
By Definition 6.2, we need to show that for all 
$M \in B_i$, $M \cap P \in B_i$. 
So let $M \in B_i$. 
Then $M \in A_q$ and $i \in M$. 
So $i \in M \cap P$. 
Since $q \in D(P)$, $M \cap P \in A_q$. 
Hence, $M \cap P \in A_q$ and $i \in M \cap P$, which 
means that $M \cap P \in B_i$.
\end{proof}

\begin{definition}
Suppose that $N \in \mathcal X \cup \mathcal Y$ is simple 
and $q \in D(N)$. 
Define $q \restriction N$ as the pair $(F,A)$ satisfying:
\begin{enumerate}
\item $A := A_q \cap N$;
\item $\dom(F) := \dom(F_q) \cap N$;
\item for all $i \in \dom(F)$, 
$F(i) := F_q(i) \restriction N$, as defined in Definition 7.5 if $N \in \mathcal X$, 
and as defined in Definition 6.4 if $N \in \mathcal Y$.
\end{enumerate}
\end{definition}

Note that (3) makes sense because $F_q(i) \in D_{i,N}$, for all 
$i \in \dom(F_q) \cap N$.

\begin{lemma}
Suppose that $N \in \mathcal X \cup \mathcal Y$ is simple and $q \in D(N)$. 
Then $q \restriction N$ is in $N \cap \q$ and $q \le q \restriction N$.
\end{lemma}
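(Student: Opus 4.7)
The plan is to verify the three claims of the lemma: (i) $q \restriction N$ lies in $N$, (ii) $q \restriction N$ is a condition, and (iii) $q \le q \restriction N$ in the product ordering. Most of the work reduces coordinatewise to the restriction lemmas already proved for the single forcing, namely Lemma 6.5 (uncountable case) and Lemma 7.6 (countable case), applied to each $F_q(i)$ with $i \in \dom(F_q) \cap N$. Note that the hypothesis $q \in D(N)$ guarantees (via Definition 11.4(3) or 11.9(2)) that each such $F_q(i)$ lies in $D_{i,N}$, so $F_q(i) \restriction N$ is defined and lies in $N \cap \p_i$, and moreover $F_q(i) \le F_q(i) \restriction N$ by those lemmas.

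For (i), the set $A_q \cap N$ is a finite subset of $N$, hence in $N$; the domain $\dom(F_q) \cap N$ is a finite subset of $N$, hence in $N$; and each value $F(i) = F_q(i) \restriction N$ lies in $N$ by Lemmas 6.5 and 7.6. A finite function with domain in $N$ and all values in $N$ is itself in $N$. For (iii), the three conditions in Definition 10.6's ordering are immediate: $A_q \cap N \subseteq A_q$, $\dom(F_q) \cap N \subseteq \dom(F_q)$, and for each $i \in \dom(F_q) \cap N$, the single-forcing restriction lemmas give $F_q(i) \le F_q(i) \restriction N$ in $\p_i$.

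For (ii), I verify Definition 10.6(1)--(4). Item (1) is immediate since $A_q \cap N$ is a subset of the adequate set $A_q$; item (2) is trivial. For item (3), fix $i \in \dom(F_q) \cap N$. Then $F(i) = F_q(i) \restriction N$ is a condition in $\p_i$ by Lemmas 6.5 and 7.6. The inclusion $\{M \in A_q \cap N : i \in M\} \subseteq A_{F(i)} = A_{F_q(i)} \cap N$ holds because if $M \in A_q \cap N$ and $i \in M$, then Definition 10.6(3) applied to $q$ gives $M \in A_{F_q(i)}$, and $M \in N$ by assumption.

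The most substantive step is item (4), namely $s^*(A_q \cap N) \subseteq \dom(F_q) \cap N$, and this is where I expect the only genuine (if mild) obstacle. Let $i \in s^*(A_q \cap N)$, witnessed by $M_1, M_2 \in A_q \cap N$ with $M_1 \sim M_2$, $i \in M_1 \cap M_2$, and $\gamma := \min((M_1 \cap \kappa) \setminus \beta_{M_1,M_2}) \in S_i$. Since $M_1, M_2 \in A_q$, these same witnesses show $i \in s^*(A_q)$, and because $q$ is a condition, $i \in \dom(F_q)$. To conclude $i \in N$, observe that $M_1$ is countable and $M_1 \in N$; since $\omega \subseteq N$ and $N$ is elementary in $\mathcal A$, any surjection $\omega \to M_1$ in $N$ shows $M_1 \subseteq N$, so $i \in M_1 \subseteq N$. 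Hence $i \in \dom(F_q) \cap N = \dom(F)$, completing the verification.
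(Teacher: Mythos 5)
Your proposal is correct and follows essentially the same route as the paper: membership in $N$, the ordering, and requirements (1)--(3) of Definition 10.6 are reduced coordinatewise to Lemmas 6.5 and 7.6, and requirement (4) is handled exactly as in the paper by noting the witnessing models lie in $A_q \cap N$, so $i \in \dom(F_q)$ and $i \in N$ (your explicit justification that a countable element of $N$ is a subset of $N$ is the same fact the paper uses implicitly). The only blemish is a citation slip: for $N \in \mathcal Y$ the relevant clause is Definition 11.8(2), not 11.9(2).
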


\begin{proof}
Let $q \restriction N = (F,A)$. 
Then $A = A_q \cap N$ and $\dom(F) = \dom(F_q) \cap N$ are 
finite subsets of $N$, and hence are in $N$. 
For each $i \in \dom(F)$, $F(i) = F_q(i) \restriction N$ is in $N$ by 
Lemmas 6.5 and 7.6. 
So $F$ is in $N$. 
Since $A$ and $F$ are in $N$, so is $q \restriction N$.

To prove that $q \restriction N$ is in $\q$, we verify requirements 
(1)--(4) of Definition 10.6. 
(1) and (2) are immediate. 
For (3), let $i \in \dom(F)$. 
Then $F(i) = F_q(i) \restriction N$ is in $\p_i$ by Lemmas 6.5 and 7.6. 
Suppose that $M \in A$ and $i \in M$, and we will show 
that $M \in A_{F(i)}$. 
Then $M \in A = A_q \cap N$, so $M \in A_q$ and $M \in N$. 
Since $q$ is a condition, the fact that 
$M \in A_q$ and $i \in M \cap \dom(F_q)$ implies that $M \in A_{F_q(i)}$. 
Since $F(i) = F_q(i) \restriction N$, Definitions 6.4 and 7.5 imply that 
$$
M \in A_{F_q(i)} \cap N = A_{F_q(i) \restriction N} = A_{F(i)}.
$$

For (4), suppose that $K$ and $L$ are in $A$, 
$K \sim L$, $i \in K \cap L \cap \lambda^*$, and 
the ordinal $\min((K \cap \kappa) \setminus \beta_{K,L})$ exists 
and is in $S_i$. 
We will show that $i \in \dom(F)$. 
Since $A = A_q \cap N$, $K$ and $L$ are in $A_q$ and in $N$. 
As $q$ is a condition, $i$ must be in $\dom(F_q)$. 
Since $K \in N$ and $i \in K$, we have that $i \in N$. 
So $i \in \dom(F_q) \cap N = \dom(F)$.

This completes the proof that $q \restriction N$ is a condition. 
Now we show that $q \le q \restriction N$. 
We have that $A = A_q \cap N \subseteq A_q$ 
and $\dom(F) = \dom(F_q) \cap N \subseteq \dom(F_q)$. 
Let $i \in \dom(F)$, and we will show that 
$F_q(i) \le F(i)$ in $\p_i$. 
But $F(i) = F_q(i) \restriction N$ and 
$F_q(i) \le F_q(i) \restriction N$ in $\p_i$ by Lemmas 6.5 and 7.6.
\end{proof}

The next lemma will be needed in Section 12.

\begin{lemma}
Let $N \in \mathcal X \cup \mathcal Y$ be simple and $q \in D(N)$. 
\begin{enumerate}
\item Suppose that $p \in N \cap \q$ and $q \le p$. 
Then $q \restriction N \le p$.
\item Suppose that $p$ is in $D(N)$ and $q \le p$. 
Then $q \restriction N \le p \restriction N$.
\end{enumerate}
\end{lemma}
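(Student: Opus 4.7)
The proof parallels the arguments of Lemmas 6.6 and 7.7 for $\p_i$, lifted to the product forcing $\q$.

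For (1), we verify the three conditions defining $\le$ in $\q$ between $q \restriction N$ and $p$. Since $p \in N$, we have $A_p \subseteq N$ and $\dom(F_p) \subseteq N$; combined with $q \le p$, this yields
$$A_p \subseteq A_q \cap N = A_{q \restriction N}, \qquad \dom(F_p) \subseteq \dom(F_q) \cap N = \dom(F_{q \restriction N}).$$
Now fix $i \in \dom(F_p)$. Then $F_p(i) \in N \cap \p_i$, and from $q \le p$ we have $F_q(i) \le F_p(i)$ in $\p_i$. Since $q \in D(N)$ and $i \in \dom(F_q) \cap N$, we have $F_q(i) \in D_{i,N}$. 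Applying Lemma 7.7 when $N \in \mathcal{X}$ and Lemma 6.6(1) when $N \in \mathcal{Y}$ to the condition $F_q(i) \le F_p(i)$ with $F_p(i) \in N \cap \p_i$, we obtain
$$F_{q \restriction N}(i) = F_q(i) \restriction N \le F_p(i) \quad \text{in } \p_i.$$
This establishes $q \restriction N \le p$.

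For (2), apply Lemma 11.12 to $p \in D(N)$ to conclude that $p \restriction N \in N \cap \q$ and $p \le p \restriction N$. Then $q \le p \le p \restriction N$, so $q \le p \restriction N$ with $p \restriction N \in N \cap \q$. Now apply part (1) with $p \restriction N$ in place of $p$ to conclude $q \restriction N \le p \restriction N$, as required.
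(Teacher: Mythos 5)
Your proof is correct and follows essentially the same route as the paper's: the same computation for the $A$ and $F$ components in part (1), reducing the $\p_i$-component comparison to Lemmas 6.6(1) and 7.7 via $F_q(i) \in D_{i,N}$, and the same reduction of part (2) to part (1) through Lemma 11.12. Your explicit check that $F_q(i) \in D_{i,N}$ (so that $F_q(i) \restriction N$ is defined and those lemmas apply) is a nice touch that the paper leaves implicit.
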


\begin{proof}
(1) Since $p \in N$ and $q \le p$, 
we have that $A_{p} \subseteq A_q \cap N = A_{q \restriction N}$ 
and $\dom(F_p) \subseteq \dom(F_q) \cap N = \dom(F_{q \restriction N})$. 
Let $i \in \dom(F_p)$, and we will show that 
$F_{q \restriction N}(i) \le F_p(i)$ in $\p_i$. 
Since $q \le p$, we have that $F_q(i) \le F_p(i)$. 
As $F_p(i) \in N \cap \p_i$, it follows that 
$F_{q \restriction N}(i) = 
F_q(i) \restriction N \le F_p(i)$ by Lemmas 6.6(1) and 7.7.

(2) By Lemma 11.12, $p \le p \restriction N$. 
So $q \le p \restriction N$. 
Now $p \restriction N \in N$, so  
by (1), $q \restriction N \le p \restriction N$.
\end{proof}

We will now begin analyzing the situation where $q \in D(N)$ and 
$w \le q \restriction N$ is in $N \cap \q$.

\begin{lemma}
Let $N \in \mathcal X \cup \mathcal Y$ be simple and $q \in D(N)$. 
Suppose that $w \in N \cap \q$ and $w \le q \restriction N$. 
Then:
\begin{enumerate}
\item $A_q \cap N \subseteq A_w$;
\item $\dom(F_q) \cap N \subseteq \dom(F_w)$, and for all 
$i \in \dom(F_q) \cap N$, $F_w(i) \le F_q(i) \restriction N$ in $\p_i$.
\end{enumerate}
\end{lemma}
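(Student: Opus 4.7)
The plan is to observe that this lemma is essentially just an unpacking of the definition of $q \restriction N$ given in Definition 11.11, combined with the definition of the order on $\q$ from Definition 10.6. The same pattern appears in Lemmas 6.7 and 7.8, which handle the analogous statements for the single-forcing posets $\p_i$ in the uncountable and countable cases respectively; the proofs there are given as ``Immediate from the definition,'' and the proof here should have the same character. No interesting combinatorics of adequate sets or comparison points should enter.

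Specifically, I would first read off from Definition 11.11 that $A_{q \restriction N} = A_q \cap N$, that $\dom(F_{q \restriction N}) = \dom(F_q) \cap N$, and that $F_{q \restriction N}(i) = F_q(i) \restriction N$ for every $i \in \dom(F_q) \cap N$. Then I would apply the definition of $\le$ in $\q$ to the assumption $w \le q \restriction N$, which gives $A_{q \restriction N} \subseteq A_w$, $\dom(F_{q \restriction N}) \subseteq \dom(F_w)$, and $F_w(i) \le F_{q \restriction N}(i)$ in $\p_i$ for every $i \in \dom(F_{q \restriction N})$. Substituting the identifications from the previous sentence yields both (1) and (2) directly.

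The only thing to verify before quoting these identifications is that $q \restriction N$ is actually a condition in $N \cap \q$, so that the relation $w \le q \restriction N$ makes sense in $\q$; but this has already been established in Lemma 11.12. Similarly, the definition $F_{q \restriction N}(i) = F_q(i) \restriction N$ is meaningful because $q \in D(N)$ guarantees $F_q(i) \in D_{i,N}$ for $i \in \dom(F_q) \cap N$, which is what is needed to invoke Definitions 6.4 and 7.5.

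I expect no obstacle in this lemma: the entire content is bookkeeping between the component-wise definition of restriction on $\q$ and the component-wise definition of the order on $\q$. The real work this lemma sets up will come later, where one combines it with the amalgamation results of Sections~6 and~7 applied coordinatewise (together with Proposition 10.12 on $s^*$) to build the amalgam $w \oplus_N q$ in $\q$.
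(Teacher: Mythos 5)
Your proof is correct and is exactly the paper's argument: the paper proves this lemma by noting it is immediate from the definition of $q \restriction N$ (Definition 11.11) together with the meaning of $w \le q \restriction N$, with Lemma 11.12 and the hypothesis $q \in D(N)$ guaranteeing that everything is well-defined, just as you say.
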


\begin{proof}
Immediate from the definition of $q \restriction N$ and the fact 
that $w \le q \restriction N$.
\end{proof}

Note that in (2) above, if $i \in \dom(F_q) \cap N$, then 
since $w \in N$, $F_w(i)$ is a condition in $N \cap \p_i$ which is 
below $F_q(i) \restriction N$ in $\p_i$. 
As $F_q(i) \in D_{i,N}$, it follows by 
Propositions 6.15 and 7.19 that $F_w(i) \oplus_N F_q(i)$ is a condition 
in $\p_i$ which is below $F_w(i)$ and $F_q(i)$.

As in Sections 6 and 7, we are going to show that whenever 
$w \le q \restriction N$, where $q \in D(N)$ and $w \in N \cap \q$, 
then $w$ and $q$ are compatible. 
We will define a specific lower bound of $w$ and $q$, namely, 
$w \oplus^N q$. 
However, unlike the situation in Sections 6 and 7, 
the condition $w \oplus^N q$ will exist only 
under the assumption that $\dom(F_w) \subseteq \dom(F_q)$.

\begin{definition}
Let $N \in \mathcal X \cup \mathcal Y$ be simple and $q \in D(N)$. 
Suppose that $w \in N \cap \q$ and $w \le q \restriction N$. 
Assume, moreover, that $\dom(F_w) \subseteq \dom(F_q)$. 
Define $w \oplus^N q$ as the pair $(F,A)$ satisfying:
\begin{enumerate}
\item $A := A_w \cup A_q$;
\item $\dom(F) := \dom(F_q)$;
\item for all $i \in \dom(F_q) \setminus \dom(F_w)$, 
$F(i) := F_q(i)$, and for all $i \in \dom(F_w) \cap \dom(F_q)$, 
$F(i) := F_w(i) \oplus_N F_q(i)$, 
as defined in Definition 7.18 if $N \in \mathcal X$, 
and as defined in Definition 6.14 if $N \in \mathcal Y$.
\end{enumerate}
\end{definition}

\begin{proposition}
Let $N \in \mathcal X \cup \mathcal Y$ be simple and $q \in D(N)$. 
Suppose that $w \in N \cap \q$ and $w \le q \restriction N$. 
Assume, moreover, that $\dom(F_w) \subseteq \dom(F_q)$. 
Then $w$ and $q$ are compatible. 
In fact, $w \oplus^N q$ is in $\q$ and $w \oplus^N q \le w, q$.
\end{proposition}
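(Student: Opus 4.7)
The plan is to verify that $w \oplus^N q = (F, A)$ as defined in Definition 11.16 satisfies requirements (1)--(4) of Definition 10.6, and then check the two inequalities $w \oplus^N q \le w$ and $w \oplus^N q \le q$. The main engine is the compatibility of $F_w(i)$ and $F_q(i)$ inside each coordinate forcing $\p_i$, supplied by the single amalgamation results (Propositions 6.15 and 7.19), together with the adequacy preservation from Propositions 2.25 and 2.28 and the $s^*$-decomposition from Proposition 10.13. The remark immediately following Lemma 11.14 already records that $F_w(i) \oplus_N F_q(i)$ is a legitimate condition in $\p_i$ below both components whenever $i \in \dom(F_w) \cap \dom(F_q)$, so most of the hard work is factored through coordinatewise.

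First I would verify (1). In the case $N \in \mathcal{X}$, apply Proposition 2.25 to $A_q$ and $A_w$, using that $q \in D(N)$ supplies the hypothesis on intersections $M \cap N$, that $A_w$ is adequate, and that $A_q \cap N \subseteq A_w \subseteq N$ by Lemma 11.14(1) combined with $w \in N$. For $N \in \mathcal{Y}$, apply Proposition 2.28 analogously. Requirement (2) is immediate since $\dom(F) = \dom(F_q)$ is finite. For (4), apply Proposition 10.13 to conclude $s^*(A_w \cup A_q) = s^*(A_w) \cup s^*(A_q)$; since $w$ and $q$ are conditions, both pieces lie in $\dom(F_w) \cup \dom(F_q)$, which equals $\dom(F_q) = \dom(F)$ by the standing hypothesis $\dom(F_w) \subseteq \dom(F_q)$.

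The step that demands care is (3). Fix $i \in \dom(F) = \dom(F_q)$ and $M \in A_w \cup A_q$ with $i \in M$; one must show $M \in A_{F(i)}$. Split into two subcases. If $i \in \dom(F_q) \setminus \dom(F_w)$, then $F(i) = F_q(i)$; the contrapositive of Lemma 11.14(2) forces $i \notin N$, so any $M \in A_w$ would satisfy $i \in M \subseteq N$, a contradiction, leaving $M \in A_q$, and then $M \in A_{F_q(i)}$ since $q$ is a condition. If instead $i \in \dom(F_w) \cap \dom(F_q)$, then $F(i) = F_w(i) \oplus_N F_q(i)$; since this amalgam is below $F_w(i)$ and $F_q(i)$ in $\p_i$, we have $A_{F_w(i)} \cup A_{F_q(i)} \subseteq A_{F(i)}$, and the required containment follows by feeding $M$ into whichever of $w$ or $q$ it belongs to. That $F(i) \in \p_i$ in this subcase is exactly the conclusion of Proposition 6.15 or 7.19 applied with the model $N$.

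Finally, to check $w \oplus^N q \le q$, note $A_q \subseteq A$, $\dom(F_q) = \dom(F)$, and for each $i \in \dom(F_q)$ either $F(i) = F_q(i)$ or $F(i) \le F_q(i)$ in $\p_i$ by Propositions 6.15 or 7.19. For $w \oplus^N q \le w$, we have $A_w \subseteq A$; the inclusion $\dom(F_w) \subseteq \dom(F)$ is the standing assumption; and for $i \in \dom(F_w)$, $F(i) = F_w(i) \oplus_N F_q(i) \le F_w(i)$ by the same amalgamation propositions. The only potentially subtle point in the whole proof is the coordinate check in (3) when $i \in \dom(F_q) \setminus \dom(F_w)$, and I expect it to rest cleanly on the contrapositive of Lemma 11.14(2) as indicated.
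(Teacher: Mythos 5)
Your proposal is correct and follows essentially the same route as the paper: adequacy via Propositions 2.25/2.28, requirement (4) via Proposition 10.13, the two-case coordinate check for (3) using Lemma 11.14(2) to rule out $i \in N$ in the case $i \in \dom(F_q)\setminus\dom(F_w)$, and the coordinatewise amalgamation results (Propositions 6.15 and 7.19) both for membership in $\p_i$ and for the final ordering checks. The only cosmetic difference is that you obtain $A_{F_w(i)} \cup A_{F_q(i)} \subseteq A_{F(i)}$ from the ordering, whereas the paper reads it off as an equality from Definitions 6.14 and 7.18; both are fine.
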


\begin{proof}
Let $w \oplus^N q = (F,A)$. 
To prove that $w \oplus^N q$ is a condition, we verify requirements 
(1)--(4) of Definition 10.6. 
For (1), the set $A = A_w \cup A_q$ is adequate by Propositions 
2.25 and 2.28. 
For (2), obviously $F$ is a function whose domain is a finite subset of 
$\lambda^*$.

For (4), by Proposition 10.13 we have that 
\begin{multline*}
s^*(A) = s^*(A_w \cup A_q) = 
s^*(A_w) \cup s^*(A_q) \subseteq \\ 
\subseteq \dom(F_w) \cup \dom(F_q) = \dom(F_q) = \dom(F).
\end{multline*}

It remains to prove (3). 
Let $i \in \dom(F)$. 
If $i \in \dom(F_q) \setminus \dom(F_w)$, then 
$F(i) = F_q(i)$, which is in $\p_i$ since $q$ is a condition. 
If $i \in \dom(F_q) \cap \dom(F_w)$, then 
$F(i) = F_w(i) \oplus_N F_q(i)$, which is in $\p_i$ by 
Propositions 6.15 and 7.19.

Assume that $i \in \dom(F)$, and we will show that 
$$
\{ M \in A_{w \oplus^N q} : i \in M \} \subseteq A_{F(i)},
$$
that is,
$$
\{ M \in A_w \cup A_q : i \in M \} \subseteq A_{F(i)}.
$$

First, assume that $i \in \dom(F_q) \setminus \dom(F_w)$, so 
$F(i) = F_q(i)$. 
Since $\dom(F_q) \cap N \subseteq \dom(F_w)$ 
by Lemma 11.14(2), it follows that $i \notin N$. 
In particular, if $M \in A_w$ then $M \in N$, so $i$ cannot be in $M$ 
since otherwise it would be in $N$. 
It follows that 
$$
\{ M \in A_w \cup A_q : i \in M \} = \{ M \in A_q : i \in M \}.
$$
Since $F(i) = F_q(i)$, $q$ being a condition implies that 
$$
\{ M \in A_q : i \in M \} \subseteq A_{F_q(i)} = A_{F(i)}.
$$

Secondly, assume that $i \in \dom(F_q) \cap \dom(F_w)$. 
Then $F(i) = F_w(i) \oplus_N F_q(i)$. 
By Definitions 6.14 and 7.18, 
$$
A_{F(i)} = A_{F_w(i)} \cup A_{F_q(i)}.
$$
Since $w$ and $q$ are conditions, 
$$
\{ M \in A_w : i \in M \} \subseteq A_{F_w(i)}, \ \ \ 
\{ M \in A_q : i \in M \} \subseteq A_{F_q(i)}.
$$
Therefore, 
$$
\{ M \in A_w \cup A_q : i \in M \} \subseteq 
A_{F_w(i)} \cup A_{F_q(i)} = A_{F(i)}.
$$
This completes the proof that $w \oplus^N q$ is in $\q$.

\bigskip

It remains to show that 
$w \oplus^N q \le q, w$. 
First, we prove that $w \oplus^N q \le w$. 
We have that $A_w \subseteq A_w \cup A_q = A$. 
Since $\dom(F_w) \subseteq \dom(F_q)$ by assumption, 
it follows that $\dom(F_w) \subseteq \dom(F_q) = \dom(F)$. 
Let $i \in \dom(F_w)$, and we will show that $F(i) \le F_w(i)$ in $\p_i$. 
But $F(i) = F_w(i) \oplus_N F_q(i)$, which is less than or equal to 
$F_w(i)$ in $\p_i$ by Propositions 6.15 and 7.19.

Secondly, we prove that $w \oplus^N q \le q$. 
We have that $A_q \subseteq A_w \cup A_q = A$, and 
$\dom(F) = \dom(F_q)$. 
Let $i \in \dom(F_q)$, and we will show that $F(i) \le F_q(i)$ in $\p_i$. 
If $i \notin \dom(F_w)$, then $F(i) = F_q(i)$, and we are done. 
If $i \in \dom(F_w)$, then $F(i) = F_w(i) \oplus_N F_q(i)$, which is 
less than or equal to $F_q(i)$ in $\p_i$ by Propositions 6.15 and 7.19.
\end{proof}

In the above amalgamation result, we assumed that 
$\dom(F_w) \subseteq \dom(F_q)$. 
To prove the amalgamation result in general, we need a lemma.

\begin{lemma}
Suppose that 
$N \in \mathcal X \cup \mathcal Y$ is simple and $q \in D(N)$. 
Let $x$ be a finite subset of $\lambda^* \setminus \dom(F_q)$. 
Then 
$$
(q \uplus x) \restriction N = (q \restriction N) \uplus (x \cap N).
$$
\end{lemma}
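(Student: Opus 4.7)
The plan is to verify the identity by unwinding Definitions 10.7 and 11.11 on each side and comparing the three components (the adequate set, the domain of the $F$-function, and the values of $F$) pointwise. First I will note that both sides are actually defined: the left-hand side requires $q \uplus x \in D(N)$, which holds by Lemma 11.6 (if $N \in \mathcal{X}$) or Lemma 11.10 (if $N \in \mathcal{Y}$), since $q \in D(N)$; and the right-hand side requires $x \cap N$ to be disjoint from $\dom(F_{q \restriction N}) = \dom(F_q) \cap N$, which is immediate from $x \cap \dom(F_q) = \emptyset$.

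For the $A$-components, both sides give $A_q \cap N$: on the left, $A_{(q \uplus x) \restriction N} = A_{q \uplus x} \cap N = A_q \cap N$, using that $\uplus$ does not alter the adequate set; on the right, $A_{(q \restriction N) \uplus (x \cap N)} = A_{q \restriction N} = A_q \cap N$, again because $\uplus$ preserves the adequate set. For the domains of $F$, both sides give $(\dom(F_q) \cap N) \cup (x \cap N)$: the left-hand side is $(\dom(F_q) \cup x) \cap N$, and the right-hand side is $(\dom(F_q) \cap N) \cup (x \cap N)$, which are the same set.

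For the values of $F$, I would split into two cases. For $j \in \dom(F_q) \cap N$, both sides yield $F_q(j) \restriction N$: on the left, $F_{(q \uplus x) \restriction N}(j) = F_{q \uplus x}(j) \restriction N = F_q(j) \restriction N$, and on the right, $F_{(q \restriction N) \uplus (x \cap N)}(j) = F_{q \restriction N}(j) = F_q(j) \restriction N$. For $i \in x \cap N$, on the right-hand side Definition 10.7 gives $(\emptyset, \emptyset, B_i')$ with $B_i' = \{M \in A_q \cap N : i \in M\}$. On the left-hand side, first Definition 10.7 gives $F_{q \uplus x}(i) = (\emptyset, \emptyset, B_i)$ with $B_i = \{M \in A_q : i \in M\}$, and then restricting via Definition 6.4 or 7.5 (depending on whether $N \in \mathcal{Y}$ or $N \in \mathcal{X}$) yields a condition in $\p_i$ whose $f$- and $g$-components are empty and whose adequate set is $B_i \cap N$. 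The key point that closes the argument is then the set-theoretic identity $B_i \cap N = \{M \in A_q : i \in M\} \cap N = \{M \in A_q \cap N : i \in M\} = B_i'$.

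There is no real obstacle here; the content of the lemma is that the operations $\uplus$ and $\restriction N$ commute in a definitional sense, and the verification is a routine case analysis once one is careful about which ingredients ($A$-component, $F$-domain, function values at old coordinates, function values at newly added coordinates) go where. The only place a subtle point enters is in the newly added coordinates $i \in x \cap N$, where one must check that the ``fresh'' triple $(\emptyset, \emptyset, B_i)$ introduced by $\uplus$ and then restricted to $N$ agrees with the triple introduced by first restricting and then performing $\uplus$, and this reduces to the identity $B_i \cap N = B_i'$ noted above.
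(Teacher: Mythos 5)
Your proposal is correct and follows essentially the same route as the paper's proof: compare the $A$-components, the $F$-domains, and then the $F$-values in the two cases $i \in \dom(F_q) \cap N$ and $i \in x \cap N$, with the new-coordinate case reducing to the identity $\{M \in A_q : i \in M\} \cap N = \{M \in A_q \cap N : i \in M\}$. Your preliminary remark that both sides are defined (via Lemmas 11.6 and 11.10) matches the observation the paper makes just before its proof.
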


Recall that by Lemmas 11.6 and 11.10, 
$q \uplus x$ is in $D(N)$.
 
\begin{proof}
By Definitions 10.7 and 11.11,
$$
A_{(q \uplus x) \restriction N} = 
A_{q \uplus x} \cap N = 
A_q \cap N = 
A_{(q \restriction N)} =
A_{(q \restriction N) \uplus (x \cap N)}.
$$
Also, 
\begin{multline*}
\dom(F_{(q \uplus x) \restriction N}) = 
\dom(F_{q \uplus x}) \cap N = 
(\dom(F_q) \cup x) \cap N = \\
(\dom(F_q) \cap N) \cup (x \cap N) = \dom(F_{q \restriction N}) \cup (x \cap N) = 
\dom(F_{(q \restriction N) \uplus (x \cap N)}).
\end{multline*}
Let $i \in \dom(F_{(q \uplus x) \restriction N})$, and we will show that 
$$
F_{(q \uplus x) \restriction N}(i) = 
F_{(q \restriction N) \uplus (x \cap N)}(i).
$$
By the above equalities, we have that either 
$i \in \dom(F_{q \restriction N})$ or $i \in x \cap N$.

First, assume that $i \in \dom(F_{q \restriction N})$. 
Then by Definitions 10.7 and 11.11, we have that 
$F_{(q \restriction N) \uplus (x \cap N)}(i) = 
F_{q \restriction N}(i) = F_q(i) \restriction N$. 
On the other hand, since $\dom(F_{q \restriction N}) = 
\dom(F_q) \cap N$, it follows that $i \in \dom(F_q)$, and hence 
$F_{(q \uplus x) \restriction N}(i) = 
F_{q \uplus x}(i) \restriction N = F_q(i) \restriction N$.

Now assume that $i \in x \cap N$. 
Then by definition, 
$$
F_{(q \restriction N) \uplus (x \cap N)}(i) = (\emptyset,\emptyset,B),
$$
where 
$$
B = \{ M \in A_{q \restriction N} : i \in M \}.
$$
Also,
$$
F_{(q \uplus x) \restriction N}(i) = 
F_{q \uplus x}(i) \restriction N = 
(\emptyset,\emptyset,C) \restriction N,
$$
where 
$$
C = \{ M \in A_q : i \in M \}.
$$
Hence, it suffices to show that 
$$
(\emptyset,\emptyset,B) = (\emptyset,\emptyset,C) \restriction N.
$$
By Definitions 6.4 and 7.5, 
$$
(\emptyset,\emptyset,C) \restriction N = 
(\emptyset,\emptyset,C \cap N).
$$
So it suffices to show that 
$$
B = C \cap N.
$$
But $M \in B$ iff ($M \in A_{q \restriction N} = A_q \cap N$ and $i \in M$) iff 
$M \in C \cap N$. 
\end{proof}

\begin{proposition}
Let $N \in \mathcal X \cup \mathcal Y$ be simple and $q \in D(N)$. 
Then for all $w \le q \restriction N$ in $N \cap \q$, $w$ and $q$ are compatible. 
In fact, let $x := \dom(F_w) \setminus \dom(F_q)$. 
Then $w \le (q \uplus x) \restriction N$, 
$\dom(F_w) \subseteq \dom(F_{q \uplus x})$, and 
$w \oplus^N (q \uplus x)$ is less than or equal to $w$, $q \uplus x$, and $q$.
\end{proposition}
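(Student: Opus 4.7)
The plan is to reduce to Proposition 11.17 by first enlarging $q$ so that its domain contains all of $\dom(F_w)$, and then applying the amalgamation result already proved. Let $x := \dom(F_w) \setminus \dom(F_q)$. Since $w \in N$, we have $\dom(F_w) \subseteq N$, and therefore $x \subseteq N$, so in particular $x \cap N = x$. By Lemma 10.8(2), $q \uplus x \le q$ and $x \subseteq \dom(F_{q \uplus x})$, so $\dom(F_w) \subseteq \dom(F_q) \cup x = \dom(F_{q \uplus x})$.

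Next I would verify that $q \uplus x$ remains in $D(N)$: this is immediate from Lemma 11.6 in the case $N \in \mathcal X$ and from Lemma 11.10 in the case $N \in \mathcal Y$. In particular, $(q \uplus x) \restriction N$ is defined. Now I would apply Lemma 11.16, which (since $x \cap N = x$) gives
$$
(q \uplus x) \restriction N = (q \restriction N) \uplus x.
$$
From the hypothesis $w \le q \restriction N$, together with $x \subseteq \dom(F_w)$, Lemma 10.8(4) yields
$$
w \le (q \restriction N) \uplus x = (q \uplus x) \restriction N.
$$

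At this point the assumptions of Proposition 11.17 hold for $w$ and $q \uplus x$: namely, $N$ is simple, $q \uplus x \in D(N)$, $w \in N \cap \q$, $w \le (q \uplus x) \restriction N$, and $\dom(F_w) \subseteq \dom(F_{q \uplus x})$. Therefore $w \oplus^N (q \uplus x)$ is defined, is a condition in $\q$, and lies below both $w$ and $q \uplus x$. Since $q \uplus x \le q$ by Lemma 10.8(2), the amalgam is also below $q$, witnessing the compatibility of $w$ and $q$.

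There is no real obstacle here; the main observation is simply that $x \subseteq N$ (forced by $w \in N$), which ensures both that $D(N)$-membership is preserved under $\uplus x$ and that the restriction to $N$ of the enlarged condition matches the $\uplus x$-enlargement of the restriction, making the hypotheses of the previously proved amalgamation result available verbatim.
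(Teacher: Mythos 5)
Your proof is correct and takes essentially the same route as the paper: enlarge $q$ to $q \uplus x$ (noting $x \subseteq N$ since $w \in N$), use the identity $(q \uplus x) \restriction N = (q \restriction N) \uplus (x \cap N)$ together with Lemma 10.8(4) to get $w \le (q \uplus x) \restriction N$, and then invoke the amalgamation result proved under the domain-containment hypothesis, finishing with $q \uplus x \le q$. The only slip is notational: your citations are swapped relative to the paper's numbering (the amalgamation with $\dom(F_w) \subseteq \dom(F_q)$ is Proposition 11.16, while the restriction identity is Lemma 11.17), and your explicit check that $q \uplus x \in D(N)$ via Lemmas 11.6 and 11.10 is exactly the remark the paper makes after Lemma 11.17.
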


\begin{proof}
Note that $x \subseteq N$. 
By Lemma 11.17,
$$
(q \uplus x) \restriction N = (q \restriction N) \uplus (x \cap N).
$$
By Lemma 10.8(4), since $w \le q \restriction N$ and 
$x \cap N = x \subseteq \dom(F_w)$, it follows that 
$$
w \le (q \restriction N) \uplus (x \cap N).
$$
Hence, 
$w \le (q \uplus x) \restriction N$. 
As $x = \dom(F_w) \setminus \dom(F_q)$, 
clearly $\dom(F_w) \subseteq \dom(F_q) \cup x = \dom(F_{q \uplus x})$. 
By Proposition 11.16, it follows that 
$w \oplus^N (q \uplus x)$ is a condition which is less than or 
equal to $w$ and $q \uplus x$. 
Since $q \uplus x \le q$, also $w \oplus^N (q \uplus x) \le q$.
\end{proof}

\begin{corollary}
The forcing poset $\q$ is strongly proper on a stationary set. 
In particular, it preserves $\omega_1$.
\end{corollary}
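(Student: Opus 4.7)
The plan is to mimic the proof of Corollary 7.20, using the infrastructure built up in Section 11 in place of the corresponding results in Sections 6--7. By Assumption 2.22, the set of simple models $N \in \mathcal{X}$ is stationary in $P_{\omega_1}(H(\lambda))$, so it suffices to show that for every simple $N \in \mathcal{X}$ with $\q \in N$ and every $p \in N \cap \q$, there is $q \le p$ which is strongly $N$-generic.

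First, given $p \in N \cap \q$, I would apply Lemma 11.1 to extend $p$ to a condition $q \le p$ with $N \in A_q$. I claim that this $q$ is strongly $N$-generic. To verify the claim, let $D$ be any dense subset of $N \cap \q$ and let $r \le q$ be arbitrary; the goal is to produce some $w \in D$ compatible with $r$. Since $N \in A_r$ (because $N \in A_q \subseteq A_r$), Lemma 11.5 supplies $s \le r$ with $s \in D(N)$. By Lemma 11.12, $s \restriction N$ is a condition in $N \cap \q$, so by density of $D$ in $N \cap \q$, there is $w \le s \restriction N$ with $w \in D$.

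At this point, Proposition 11.18 provides the key amalgamation: since $N$ is simple, $s \in D(N)$, $w \in N \cap \q$, and $w \le s \restriction N$, the proposition (applied with $x := \dom(F_w) \setminus \dom(F_s)$) guarantees that $w \oplus^N (s \uplus x)$ is a condition below both $w$ and $s$. Thus $w$ and $s$ are compatible, and since $s \le r$, also $w$ and $r$ are compatible. This shows $D$ is predense below $q$, establishing strong $N$-genericity.

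I do not expect any genuine obstacles in this proof: all of the hard work has been done in Sections 10 and 11, specifically in the construction of $D(N)$, the projection $q \restriction N$, and the amalgamation result Proposition 11.18. The only subtle point is the need to first pass from $r$ to an extension $s \in D(N)$ before the amalgamation machinery applies, but this is precisely the role of Lemma 11.5. The final clause about preservation of $\omega_1$ then follows from the standard fact (noted in Section 1) that strong properness on a stationary set implies $\omega_1$-preservation.
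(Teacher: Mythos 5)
Your proof is correct and follows essentially the same route as the paper: Lemma 11.1 to put $N$ into the side condition, Lemma 11.5 to pass to $D(N)$, Lemma 11.12 for the restriction $s \restriction N$, and Proposition 11.18 for the amalgamation, with $\omega_1$-preservation then following from strong properness. The remark about $x := \dom(F_w) \setminus \dom(F_s)$ is exactly the adjustment Proposition 11.18 already builds in, so no further argument is needed.
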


\begin{proof}
By Assumption 2.22, the set of $N \in \mathcal X$ such that $N$ is simple 
is stationary. 
So it suffices to show that for all simple $N \in \mathcal X$, for all 
$p \in N \cap \q$, there is $q \le p$ such that $q$ is strongly $N$-generic. 

Let $p \in N \cap \q$. 
By Lemma 11.1, fix $q \le p$ with $N \in A_q$. 
We claim that $q$ is strongly $N$-generic. 
So let $D$ be a dense subset of $N \cap \q$, and we will show that 
$D$ is predense below $q$. 
Let $r \le q$, and we will find $w \in D$ which is compatible 
with $r$.

Since $N \in A_r$, we can apply Lemma 11.5 to fix $s \le r$ such that 
$s \in D(N)$. 
Then by Lemma 11.12, $s \restriction N$ is in $N \cap \q$. 
As $D$ is dense in $N \cap \q$, fix $w \le s \restriction N$ in $D$. 
By Proposition 11.18, $w$ and $s$ are compatible. 
Since $s \le r$, it follows that $w$ and $r$ are compatible.
\end{proof}

\begin{corollary}
Suppose that $P \in \mathcal Y$ is simple and 
$P \prec (H(\lambda),\in,\q)$. 
Then the maximum condition of $\q$ is strongly $P$-generic. 
Moreover, $P \cap \q$ is a regular suborder of $\q$.
\end{corollary}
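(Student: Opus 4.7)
The plan is to mirror the argument given in Corollary 7.20 for the single forcing, using the infrastructure built in Section 11 as substitutes for the results of Sections 6 and 7. To prove that the maximum condition of $\q$ is strongly $P$-generic, let $D$ be any dense subset of $P \cap \q$, and let $r \in \q$; I will produce $w \in D$ which is compatible with $r$. First I would apply Lemma 11.9 to find $s \le r$ with $s \in D(P)$. By Lemma 11.12, $s \restriction P$ is a condition in $P \cap \q$, so by density I can fix $w \in D$ with $w \le s \restriction P$. Then Proposition 11.18, applied with $N := P$, yields a common extension of $w$ and $s$; since $s \le r$, this is also a common extension of $w$ and $r$, as required.

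For the moreover statement, I would verify the two clauses of the definition of a regular suborder. Both follow from strong $P$-genericity of the maximum condition combined with the elementarity hypothesis on $P$. For the incompatibility clause: if $p,q \in P \cap \q$ are compatible in $\q$, then by the elementarity of $P$ (since $p,q \in P$) they have a common extension in $P \cap \q$, so they are compatible there; contrapositively, incompatibility in $P \cap \q$ forces incompatibility in $\q$. For the predensity clause: if $A$ is a maximal antichain of $P \cap \q$, then the downward closure $D := \{v \in P \cap \q : \exists a \in A\ v \le a\}$ is dense in $P \cap \q$, so by strong $P$-genericity $D$ is predense in $\q$; since every element of $D$ sits below an element of $A$, the antichain $A$ is itself predense in $\q$.

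The main work is entirely hidden in Proposition 11.18, whose hypotheses match our situation verbatim ($P$ is simple in $\mathcal Y$, $s \in D(P)$, $w \in P \cap \q$, and $w \le s \restriction P$), with no restriction on $\dom(F_w)$ versus $\dom(F_s)$ because the proposition already handles the extension by $\dom(F_w) \setminus \dom(F_s)$ internally via the $\uplus$ operation. Thus no fresh obstacle arises at the level of the corollary: everything reduces cleanly to applying the density, extension, restriction, and amalgamation lemmas of Section 11 in the same pattern used in Corollary 7.20 and Lemma 9.1 for the single-forcing case.
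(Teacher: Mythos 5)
Your proposal is correct and follows the paper's own argument essentially verbatim: Lemma 11.9 to enter $D(P)$, Lemma 11.12 for the restriction, Proposition 11.18 for amalgamation, and then elementarity plus the downward closure of the maximal antichain for regularity. No gaps.
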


\begin{proof}
Let $D$ be a dense subset of $P \cap \q$, and we will show that $D$ is predense in $\q$. 
So let $p \in \q$, and we will find $w$ in $D$ which is compatible with $p$. 
By Lemma 11.9, fix $q \le p$ in $D(P)$. 
Then $q \restriction P$ is in $P \cap \q$ by Lemma 11.12. 
Since $D$ is dense in $P \cap \q$, fix $w \le q \restriction P$ in $D$. 
By Proposition 11.18, $w$ and $q$ are compatible. 
Since $q \le p$, it follows that $w$ and $p$ are compatible. 
This completes the proof that the maximum condition in $\q$ 
is strongly $P$-generic.

Now we show that $P \cap \q$ is a regular suborder of $\q$. 
If $p$ and $q$ are in $P \cap \q$ and are compatible in $\q$, then 
by the elementarity of $P$, there is $r \in P \cap \q$ with $r \le p, q$. 
So $p$ and $q$ are compatible in $P \cap \q$.

Let $B \subseteq P \cap \q$ be a maximal antichain of $P \cap \q$, and 
we will prove that $B$ is predense in $\q$. 
Let $D$ be the set of conditions in $P \cap \q$ which are below some member of $B$. 
Then $D$ is dense in $P \cap \q$. 
Since the maximum condition is strongly $P$-generic, $D$ is predense in $\q$. 
It easily follows that $B$ is predense in $\q$.
\end{proof}

\begin{corollary}
The forcing poset $\q$ is $\kappa$-c.c.
\end{corollary}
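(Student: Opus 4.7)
The plan is to follow the same template as the proof of Corollary 6.16 for the single forcing, now using the uncountable-model amalgamation machinery developed for $\q$ in the present section.

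Assume for contradiction that $A \subseteq \q$ is an antichain of size at least $\kappa$, and without loss of generality take $A$ to be maximal. By Assumption 2.23, there exist stationarily many simple $P \in \mathcal Y$, so I can fix a simple $P \in \mathcal Y$ with $P \prec (H(\lambda), \in, \q, A)$. Since $|P| < \kappa \le |A|$, there is $s \in A \setminus P$.

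Next I apply Lemma 11.9 to fix $q \le s$ with $q \in D(P)$. Then Lemma 11.12 gives that $q \restriction P$ is a condition in $P \cap \q$. By the elementarity of $P$ and the maximality of $A$, there is some $t \in A \cap P$ which is compatible with $q \restriction P$ in $P \cap \q$, and again by elementarity I can pick $w \in P \cap \q$ with $w \le q \restriction P$ and $w \le t$. At this point Proposition 11.18 applies: since $P \in \mathcal Y$ is simple, $q \in D(P)$, $w \in P \cap \q$, and $w \le q \restriction P$, the conditions $w$ and $q$ are compatible in $\q$. Fix $v \le w, q$. Then $v \le w \le t$ and $v \le q \le s$, so $s$ and $t$ are compatible. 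Since both lie in the antichain $A$, this forces $s = t$, contradicting $t \in P$ and $s \notin P$.

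There is essentially no new obstacle: all of the work has already been done in establishing Proposition 11.18 (amalgamation of a condition in $P \cap \q$ with a condition in $D(P)$) and in Lemmas 11.9 and 11.12 (density of $D(P)$ and existence of the restriction $q \restriction P$). The only thing to be careful about is that Proposition 11.18 requires $w \le q \restriction P$ with $w \in P \cap \q$ but does \emph{not} require $\dom(F_w) \subseteq \dom(F_q)$; indeed the proposition handles the general case by first passing to $q \uplus x$ where $x = \dom(F_w) \setminus \dom(F_q)$, so no adjustment of $w$ is needed before invoking it. The rest is verbatim the standard Mahlo-style chain-condition argument.
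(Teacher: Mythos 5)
Your proof is correct and follows essentially the same route as the paper's: fix a simple $P \in \mathcal Y$ elementary in $(H(\lambda),\in,\q,A)$, pass to $q \le s$ in $D(P)$ via Lemma 11.9, restrict to $q \restriction P$ via Lemma 11.12, use elementarity and maximality of $A$ to find $t \in A \cap P$ and $w \le q \restriction P, t$ in $P \cap \q$, and amalgamate with Proposition 11.18 to get the contradiction. Your side remark that Proposition 11.18 already absorbs the case $\dom(F_w) \not\subseteq \dom(F_q)$ by passing to $q \uplus x$ is accurate and is exactly why no extra adjustment is needed.
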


\begin{proof}
Let $A$ be an antichain of $\q$, and suppose for a contradiction 
that $A$ has size at least $\kappa$. 
Without loss of generality, assume that $A$ is maximal. 
By Assumption 2.23, there are stationarily many simple models 
in $\mathcal Y$. 
So we can fix a simple model $P \in \mathcal Y$ 
such that $P \prec (H(\lambda),\in,\q,A)$. 
As $A$ has size at least $\kappa$ and $|P| < \kappa$, 
fix $s \in A \setminus P$.

By Lemma 11.9, fix $q \le s$ such that $q \in D(P)$. 
By Lemma 11.12, $q \restriction P$ is a condition in $P \cap \q$. 
By the elementarity of $P$ and the maximality of $A$, 
there is $t \in A \cap P$ which is compatible with $q \restriction P$. 
By elementarity, fix $w \in P \cap \q$ such that $w \le q \restriction P, t$. 

By Proposition 11.18, $w$ and $q$ are compatible. 
Fix $v \le w, q$. 
Then $v \le w \le t$ and $v \le q \le s$. 
Hence, $s$ and $t$ are compatible. 
But $s$ and $t$ are in $A$ and $A$ is an antichain. 
Therefore, $s = t$. 
This is impossible, since $t \in P$ and $s \notin P$.
\end{proof}

The next result summarizes the main properties which we have 
proven about the forcing poset $\q$.

\begin{corollary}
The forcing poset $\q$ preserves $\omega_1$, is $\kappa$-c.c., 
forces that $\kappa$ is equal to $\omega_2$, and 
forces that for all $i < \lambda^*$, $S_i \in I[\omega_2]$.
\end{corollary}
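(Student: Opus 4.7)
Claims (1) and (2) are immediate: Corollary 11.19 gives that $\q$ is strongly proper on a stationary set and hence preserves $\omega_1$, and Corollary 11.21 gives the $\kappa$-c.c. So the real content of the corollary is (3) and (4).

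For (3), the plan is to mimic Lemma 7.21 almost verbatim, with Lemma 11.1 substituted for Lemma 7.1 at the one point where it is used. Fix a cardinal $\mu$ with $\omega_1 < \mu < \kappa$ and a $V$-generic $G$ on $\q$; in $V[G]$ let
$$
A := \{ M : \exists p \in G \ M \in A_p, \ \mu \in M \}.
$$
For $M, N \in A$ and a common witnessing condition $p \in G$ with $M, N \in A_p$, the inclusion $\mu \in M \cap N \cap \kappa$ forces $\mu < \beta_{M,N}$ by Proposition 2.11, and if $M \cap \omega_1 = N \cap \omega_1$ then Lemma 2.17 gives $M \sim N$ and hence $M \cap \mu = N \cap \mu$. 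Thus the map $M \mapsto M \cap \omega_1$ is injective on $A^* := \{M \cap \mu : M \in A\}$, so $|A^*| \le \omega_1$ in $V[G]$. A density argument, now using Lemma 11.1 in place of Lemma 7.1, shows that for every $\xi < \mu$ some $M \in A$ contains $\xi$, so $\bigcup A^* = \mu$. Hence $\mu$ has cardinality at most $\omega_1$ in $V[G]$, and since $\omega_1$ and $\kappa$ are preserved, $\kappa = \omega_2^{V[G]}$.

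For (4), fix $i < \lambda^*$, and let $\pi_i : D \to \p_i$ be the projection mapping from a dense subset $D \subseteq \q$ supplied by Proposition 10.11. If $G$ is a $V$-generic filter on $\q$, then $\pi_i[G \cap D]$ generates a $V$-generic filter $H_i$ on $\p_i$ with $V[H_i] \subseteq V[G]$. Applying Proposition 5.4 inside $V[H_i]$ to $\p_i$ (whose hypotheses follow from the material developed in Part II for the stationary set $S_i$), the sequence $\langle \dot c_\alpha^{H_i} : \alpha \in S_i \rangle$ is a partial square sequence on $S_i$ in $V[H_i]$, and remains one in $V[G]$. As explained in the introduction, any partial square sequence on $S_i$ witnesses $S_i \in I[\omega_2]$ over the model in which it is interpreted, so in $V[G]$, where $\kappa = \omega_2$, we obtain $S_i \in I[\omega_2]^{V[G]}$.

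The only genuine obstacle is modest: one must check that the density argument embedded in the proof of Lemma 7.21 actually lifts when Lemma 7.1 is replaced by its product version Lemma 11.1, so that the models $M \in A$ truly exhaust $\mu$, and that the projection $\pi_i$ really does transport a $\q$-generic into a $\p_i$-generic filter so that the Section 5 partial square sequence is realized in $V[G]$. Everything else is an assembly of results already in place.
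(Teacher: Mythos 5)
Your proposal is correct. Claims (1) and (2) are cited exactly as in the paper, and your argument for (4) is the paper's argument: pull a $\p_i$-generic $H_i$ out of $G$ via the projection of Proposition 10.11, apply Proposition 5.4 in $V[H_i]$, and use upward absoluteness of being a partial square sequence (which, you should note explicitly, rests on $\omega_1$ and $\omega_2$ being the same in $V[H_i]$ and $V[G]$, i.e.\ on the preservation results) to conclude $S_i \in I[\omega_2]$ in $V[G]$. Where you diverge is claim (3): the paper does not re-run the collapsing argument inside $\q$; it instead fixes any $i < \lambda^*$, invokes Corollary 7.22 to get $\kappa = \omega_2$ in the intermediate model $V[H_i]$, and observes that the collapse of every $\mu$ with $\omega_1 < \mu < \kappa$ transfers upward to $V[G] \supseteq V[H_i]$. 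Your route instead repeats the proof of Lemma 7.21 verbatim for $\q$, replacing Lemma 7.1 by Lemma 11.1, and this goes through without difficulty: the filter argument giving a common $p \in G$ with $M, N \in A_p$, Proposition 2.11, and Lemma 2.17 all apply unchanged, and the density step only needs Assumption 2.22 together with Lemma 11.1. The paper's route is shorter, reusing Corollary 7.22 wholesale; yours is self-contained in $\q$ and does not depend on picking a coordinate $i < \lambda^*$, at the modest cost of redoing an argument already on record. Both of the "genuine obstacles" you flag at the end are indeed non-issues: the paper itself asserts, in the remarks following Proposition 10.11, that $\pi_i[G \cap D]$ generates a $\p_i$-generic filter.
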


\begin{proof}
By Corollaries 11.19 and 11.21, $\q$ preserves $\omega_1$ and is 
$\kappa$-c.c. 
Let $i < \lambda^*$, and consider a generic filter $G$ on $\q$. 
Then by Proposition 10.11 and the comments which follow, 
there is a generic filter $H$ on $\p_i$ 
such that $V[H] \subseteq V[G]$. 
By Corollary 7.22, $\kappa$ is equal to $\omega_2$ in $V[H]$. 
Since $V[H] \subseteq V[G]$, it follows that any cardinal $\mu$ such that  
$\omega_1 < \mu < \kappa$ has size $\omega_1$ 
in $V[G]$. 
Therefore, $\kappa = \omega_2$ in $V[G]$.

By Proposition 5.4, there is a partial square sequence on 
$S_i$ in $V[H]$. 
But being a partial square sequence is upwards absolute between 
$V[H]$ and $V[G]$, since they have the same $\omega_1$ and $\omega_2$. 
So there is a partial square sequence on $S_i$ in $V[G]$. 
Therefore, $S_i \in I[\omega_2]$ in $V[G]$.
\end{proof}

\bigskip

\addcontentsline{toc}{section}{12. Approximation}

\textbf{\S 12. Approximation}

\stepcounter{section}

\bigskip

In this section we will prove that if $P \in \mathcal Y$ is simple, 
$P \prec (H(\lambda),\in,\q)$, and for all $i \in P \cap \lambda^*$, 
$P \cap \kappa \notin S_i$, then $P \cap \q$ forces that 
$\q / \dot G_{P \cap \q}$ has the $\omega_1$-approximation property.
The proof is similar to the analogous result 
given in Sections 8 and 9 for the forcing poset $\p$, 
albeit somewhat easier. 
The order of topics and results follows along the same lines as in those 
previous sections.

\begin{lemma}
Let $N \in \mathcal X$ be simple and $q \in D(N)$. 
Suppose that $v$ and $w$ are in $N \cap \q$ and 
$$
w \le v \le q \restriction N.
$$
Assume, moreover, that $\dom(F_w) \subseteq \dom(F_q)$. 
Then $w \oplus^N q \le v \oplus^N q$.
\end{lemma}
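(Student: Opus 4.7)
The plan is to unpack the three pieces of the $\q$-ordering (the inclusion of adequate sets, the inclusion of domains of $F$, and the coordinatewise comparison of the $\p_i$-conditions) and verify each directly from the definition of $\oplus^N$ in Definition 11.15. First I would note that both amalgams are in fact defined: since $w \le v$ in $\q$ we have $\dom(F_v) \subseteq \dom(F_w) \subseteq \dom(F_q)$, and both $v$ and $w$ lie in $N \cap \q$ with $w \le v \le q \restriction N$, so Proposition 11.16 applies to the pairs $(v,q)$ and $(w,q)$.

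For the adequate-set component, $A_{v \oplus^N q} = A_v \cup A_q \subseteq A_w \cup A_q = A_{w \oplus^N q}$, using $A_v \subseteq A_w$. For the domain component, $\dom(F_{v \oplus^N q}) = \dom(F_q) = \dom(F_{w \oplus^N q})$ by Definition 11.15(2), so the inclusion is an equality.

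The main work is comparing $F_{w \oplus^N q}(i)$ and $F_{v \oplus^N q}(i)$ in $\p_i$ for each $i \in \dom(F_q)$. I would split into three cases based on where $i$ lies relative to $\dom(F_v) \subseteq \dom(F_w)$. If $i \notin \dom(F_w)$, then also $i \notin \dom(F_v)$, so both amalgams take the value $F_q(i)$ at $i$. If $i \in \dom(F_w) \setminus \dom(F_v)$, then $F_{v \oplus^N q}(i) = F_q(i)$ while $F_{w \oplus^N q}(i) = F_w(i) \oplus_N F_q(i)$, which is $\le F_q(i)$ by Propositions 6.15 and 7.19 (depending on whether $N$ is countable or uncountable; here $N \in \mathcal X$ so it is Proposition 7.19). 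The only nontrivial case is $i \in \dom(F_v)$, where both amalgams are genuine $\p_i$-amalgams over $N$. Here I would invoke Lemma 8.1 applied inside the coordinate forcing $\p_i$: since $F_w(i) \le F_v(i) \le F_{q \restriction N}(i) = F_q(i) \restriction N$ (the first inequality from $w \le v$ in $\q$, the second from $v \le q \restriction N$ together with Definition 11.11(3)), Lemma 8.1 yields
\[
F_w(i) \oplus_N F_q(i) \le F_v(i) \oplus_N F_q(i)
\]
in $\p_i$, which is exactly what is needed.

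The only mild obstacle I anticipate is bookkeeping in the third case, specifically checking that the hypotheses of Lemma 8.1 are in force at coordinate $i$: one has to observe that $F_q(i) \in D_{i,N}$ (from $q \in D(N)$ and Definition 11.4(3)), that $F_v(i)$ and $F_w(i)$ lie in $N \cap \p_i$ (from $v,w \in N$), and that the restriction $F_q(i) \restriction N$ really equals $F_{q \restriction N}(i)$ by Definition 11.11(3). Once these are in place, Lemma 8.1 applies verbatim and the proof concludes.
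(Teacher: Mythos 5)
Your proof is correct and takes essentially the same route as the paper: compare the $A$-components and domains directly from Definition 11.15, then reduce the coordinatewise comparison in $\p_i$ to Lemma 8.1, whose hypotheses you verify exactly as the paper does. The only cosmetic difference is your three-way case split on $i$; the paper splits on $i \in N$ versus $i \notin N$ and uses Lemma 11.14 to see that your middle case $i \in \dom(F_w) \setminus \dom(F_v)$ never actually occurs for $i \in \dom(F_q)$, though your handling of it via Proposition 7.19 would be valid in any event.
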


Note that since $w \le v$, 
$\dom(F_v) \subseteq \dom(F_w) \subseteq \dom(F_q)$. 
So $v \oplus^N q$ is defined.

\begin{proof}
Let $s := v \oplus^N q$ and $t := w \oplus^N q$. 
We will prove that $t \le s$. 
Since $w \le v$, $A_v \subseteq A_w$. 
By Definition 11.15, we have that 
$$
A_s = A_v \cup A_q \subseteq A_w \cup A_q = A_t.
$$
So $A_s \subseteq A_t$. 
Also, by Definition 11.15, 
$\dom(F_s) = \dom(F_q) = \dom(F_t)$.

Let $i \in \dom(F_s)$, and we will show that 
$F_t(i) \le F_s(i)$ in $\p_i$. 
First, assume that $i \notin N$. 
Then by Definition 11.15, $F_s(i) = F_q(i)$ and $F_t(i) = F_q(i)$, and 
we are done.

Secondly, assume that $i \in N$. 
Then by Lemma 11.14, 
$i \in \dom(F_q) \cap N \subseteq \dom(F_v) \subseteq \dom(F_w)$. 
So by Definition 11.15, $F_s(i) = F_v(i) \oplus_N F_q(i)$ and 
$F_t(i) = F_w(i) \oplus_N F_q(i)$.
Also, since $v$, $w$, and $i$ are in $N$, so are 
$F_v(i)$ and $F_w(i)$.

As $q \in D(N)$ and $i \in \dom(F_q) \cap N$, $F_q(i) \in D_{i,N}$. 
Also, since $w \le v \le q \restriction N$, we have that in $\p_i$, 
$$
F_w(i) \le F_v(i) \le F_{q \restriction N}(i) = F_q(i) \restriction N.
$$
By Lemma 8.1, it follows that 
$$
F_t(i) = F_w(i) \oplus_N F_q(i) \le F_v(i) \oplus_N F_q(i) = F_s(i).
$$
\end{proof}

\begin{lemma}
Let $N \in \mathcal X$ and $P \in \mathcal Y$. 
Let $p \in \q$, and suppose that $N \in A_p$. 
Then there is $s \le p$ such that $s \in D(N) \cap D(P)$.
\end{lemma}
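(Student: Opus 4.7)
The plan is to mirror the proof of Lemma 8.2 inside the product $\q$. I first construct an adequate set closed under both operations $M \mapsto M \cap N$ (for $M < N$) and $M \mapsto M \cap P$; then I adjust each coordinate to lie in the appropriate dense subsets of the $\p_i$'s.

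First, I apply Lemma 11.2(1) with $N$ to obtain $q_0 \le p$ with $A_{q_0} = A_p \cup \{ M \cap N : M \in A_p, \ M < N \}$, and I observe that $N \in A_p \subseteq A_{q_0}$. Then I apply Lemma 11.2(2) with $P$ to obtain $q_1 \le q_0$ with $A_{q_1} = A_{q_0} \cup \{ M \cap P : M \in A_{q_0} \}$, which is now closed under intersection with $P$. The key point is to check that $A_{q_1}$ remains closed under $M \mapsto M \cap N$ for $M < N$. Let $M \in A_{q_1}$ with $M < N$. If $M \in A_{q_0}$ there is nothing to do. Otherwise $M = M_1 \cap P$ for some $M_1 \in A_{q_0}$. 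By Lemma 2.29, $M_1 \sim M_1 \cap P = M$, so Lemma 2.18 gives $M_1 < N$; hence $M_1 \cap N \in A_{q_0}$, and $(M_1 \cap N) \cap P \in A_{q_1}$ by the closure under $\cap P$. But $M \cap N = (M_1 \cap P) \cap N = (M_1 \cap N) \cap P$, so $M \cap N \in A_{q_1}$, as desired.

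Next I work coordinate-wise. Since $N \in A_{q_1}$, for each $i \in \dom(F_{q_1}) \cap N$, Definition 10.6(3) gives $N \in A_{F_{q_1}(i)}$, so Lemma 8.2 applied inside $\p_i$ yields $r_i \le F_{q_1}(i)$ with $r_i \in D_{i,N} \cap D_{i,P}$. For $i \in \dom(F_{q_1}) \cap P \setminus N$, Lemma 6.3 yields $r_i \le F_{q_1}(i)$ in $D_{i,P}$. For the remaining $i \in \dom(F_{q_1}) \setminus (N \cup P)$, I set $r_i := F_{q_1}(i)$. Applying Lemma 10.9 produces $s \le q_1$ with $A_s = A_{q_1}$, $\dom(F_s) = \dom(F_{q_1})$, and $F_s(i) = r_i$ for each $i \in \dom(F_s)$. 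Then $s \le p$, and the membership $s \in D(N) \cap D(P)$ can be read off clause-by-clause from Definitions 11.4 and 11.8: the $A$-side closure comes from the construction of $A_{q_1}$, while the conditions $F_s(i) \in D_{i,N}$ and $F_s(i) \in D_{i,P}$ hold by the choices of $r_i$.

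The main obstacle is confirming that closure under $M \mapsto M \cap N$ (for $M < N$) is preserved after introducing the $M \cap P$ elements, since the two closure operations are carried out sequentially rather than simultaneously. This, however, reduces cleanly to the set-theoretic identity $(M_1 \cap P) \cap N = (M_1 \cap N) \cap P$ together with Lemmas 2.18 and 2.29, which transfer the relation $M_1 \cap P < N$ back to $M_1 < N$. Everything else in the argument is routine bookkeeping using the coordinate-wise density lemmas from the single-forcing analysis and the product combination lemma (Lemma 10.9).
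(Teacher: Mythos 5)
Your proof is correct and takes essentially the same route as the paper's: apply Lemma 11.2(1) and then 11.2(2), transfer the closure under $M \mapsto M \cap N$ through the new $M \cap P$ elements via the identity $(M_1 \cap P) \cap N = (M_1 \cap N) \cap P$ together with Lemmas 2.18 and 2.29, and then strengthen coordinate-wise using Lemma 10.9. The only cosmetic difference is that you invoke Lemma 8.2 at every $i \in \dom(F_{q_1}) \cap N$, while the paper uses Lemma 7.4 for $i \in N \setminus P$ and reserves Lemma 8.2 for $i \in N \cap P$; your stronger choice is harmless since Lemma 8.2 only needs $N \in A_{F_{q_1}(i)}$, which Definition 10.6(3) provides.
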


\begin{proof}
By Lemma 11.2(1), there is $q \le p$ such that for all $M \in A_q$, 
if $M < N$ then $M \cap N \in A_q$. 
By Lemma 11.2(2), there is $r \le q$ such that for all $M \in A_r$, 
$M \cap P \in A_r$, and moreover, 
$$
A_r = A_q \cup \{ M \cap P : M \in A_q \}.
$$

We claim that for all $M \in A_r$, if $M < N$ then $M \cap N \in A_r$. 
This is certainly true if $M \in A_q$, so assume that 
$M = M_1 \cap P$, where $M_1 \in A_q$. 
By Lemma 2.29, $M_1 \sim M_1 \cap P = M$. 
Since $M < N$, it follows that $M_1 < N$ by Lemma 2.18. 
As $M_1 \in A_q$, we have that 
$M_1 \cap N \in A_q$ by the choice of $q$. 
Now 
$M \cap N = (M_1 \cap P) \cap N = (M_1 \cap N) \cap P$. 
But $M_1 \cap N \in A_q$ implies that 
$M \cap N = (M_1 \cap N) \cap P \in A_r$, by the definition of $A_r$.

Since $r$ is a condition and $N \in A_r$, we have that 
for all $i \in \dom(F_r) \cap N$, $N \in A_{F_r(i)}$. 
Let $x_1 := \dom(F_r) \cap N$ and $x_2 := \dom(F_r) \cap P$. 
Then:
\begin{enumerate}
\item For each $i \in x_1 \setminus x_2$, since $N \in A_{F_r(i)}$ we 
can fix, by Lemma 7.4, a condition $s_i \le F_r(i)$ in $D_{i,N}$.

\item For each $i \in x_1 \cap x_2$, since $N \in A_{F_r(i)}$ we 
can fix, by Lemma 8.2, a condition $s_i \le F_r(i)$ in 
$D_{i,N} \cap D_{i,P}$.

\item For each $i \in x_2 \setminus x_1$, we can fix, by Lemma 6.3, 
a condition $s_i \le F_r(i)$ in $D_{i,P}$.
\end{enumerate}

Now apply Lemma 10.9 to fix $s \le r$ satisfying that 
$A_s = A_r$, $\dom(F_s) = \dom(F_r)$, for all 
$i \in x_1 \cup x_2$, $F_s(i) = s_i$, and for all $i \in \dom(F_r) \setminus (x_1 \cup x_2)$, 
$F_s(i) = F_r(i)$. 
Then $s \le p$ and $s \in D(N) \cap D(P)$.
\end{proof}

The next three lemmas will be used in the proof of Proposition 12.6.

\begin{lemma}
Let $N \in \mathcal X$ be simple, $P \in \mathcal Y \cap N$ be simple, 
and $p \in D(N) \cap D(P)$. 
Then $p \restriction N \in D(P)$ 
and $p \restriction P \in D(N \cap P)$.
\end{lemma}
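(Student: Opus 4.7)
The plan is to unpack the definitions of $D(N)$ (Definition 11.8) and $D(P)$ (Definition 11.4) and verify each clause directly, using the componentwise definition of $p \restriction N$ and $p \restriction P$ from Definition 11.11. The key observation is that the clauses splitting the adequate‑set part from the $\p_i$‑part reduce, for the component‑wise conditions, exactly to the single‑forcing version of this lemma, namely Lemma 8.3, applied inside each $\p_i$ for $i \in \dom(F_p) \cap N$ or $i \in \dom(F_p) \cap P$. Along the way I will need Lemma 2.29 ($N \sim N \cap P$), Lemma 2.18 (the relations $<$, $\sim$ respect $\sim$), and Lemma 2.30 ($N \cap P$ is simple), the last in order to have $p \restriction P \in D(N \cap P)$ even make sense.

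For the first assertion, that $p \restriction N \in D(P)$, I would first verify clause (1) of Definition 11.8: if $M \in A_{p \restriction N} = A_p \cap N$, then $M, P \in N$ gives $M \cap P \in N$, while $p \in D(P)$ gives $M \cap P \in A_p$, whence $M \cap P \in A_p \cap N = A_{p \restriction N}$. For clause (2), fix $i \in \dom(F_{p \restriction N}) \cap P = \dom(F_p) \cap N \cap P$; then $F_{p \restriction N}(i) = F_p(i) \restriction N$, and since $p \in D(N) \cap D(P)$ we have $F_p(i) \in D_{i,N} \cap D_{i,P}$, so Lemma 8.3 applied in $\p_i$ to the pair $N$, $P$ gives $F_p(i) \restriction N \in D_{i,P}$, as required.

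For the second assertion, that $p \restriction P \in D(N \cap P)$, I would verify the three clauses of Definition 11.4. Clause (1): since $P$ is simple and $N \in A_p$, we have $N \cap P \in P$, and $p \in D(P)$ together with $N \in A_p$ gives $N \cap P \in A_p$, hence $N \cap P \in A_p \cap P = A_{p \restriction P}$. Clause (2): let $M \in A_p \cap P$ with $M < N \cap P$; by Lemma 2.29, $N \sim N \cap P$, so Lemma 2.18 yields $M < N$; then $p \in D(N)$ gives $M \cap N \in A_p$, $p \in D(P)$ gives $M \cap N \cap P \in A_p$, and $P$ simple gives $M \cap N \cap P \in P$, so $M \cap (N \cap P) = M \cap N \cap P \in A_p \cap P = A_{p \restriction P}$. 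Clause (3): for $i \in \dom(F_p) \cap P \cap N$, $F_{p \restriction P}(i) = F_p(i) \restriction P$, and Lemma 8.3 applied in $\p_i$ (with the roles of $N$ and $Q$ played by $N$ and $P$) gives $F_p(i) \restriction P \in D_{i,N \cap P}$.

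None of the individual steps looks like a serious obstacle; the proof is genuinely routine bookkeeping once one sees that Lemma 8.3 does the componentwise work and the adequate‑set part is essentially a two‑line closure argument using $p \in D(N) \cap D(P)$ and the simplicity of $N$ and $P$. The only delicate point is matching the form of Lemma 8.3 precisely: it requires the outer model to be countable and simple, and the uncountable model to lie inside it and be simple, which is exactly the hypothesis we have when verifying the component condition in each of the two cases.
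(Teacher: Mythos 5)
Your proof is correct and takes essentially the same route as the paper: the adequate-set clauses of $D(P)$ and $D(N\cap P)$ are verified by the same closure argument using $p \in D(N) \cap D(P)$, Lemmas 2.18 and 2.29, and the simplicity of $P$, while the componentwise clauses reduce exactly to Lemma 8.3 applied to $F_p(i)$ in each $\p_i$. The only slip is cosmetic: in your opening sentence the citations are swapped ($D(N)$ is Definition 11.4 and $D(P)$ is Definition 11.8), though you use them correctly in the body.
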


\begin{proof}
We prove first that $p \restriction N \in D(P)$, which means 
that for all $M \in A_{p \restriction N}$, 
$M \cap P \in A_{p \restriction N}$, and 
for all $i \in \dom(F_{p \restriction N}) \cap P$, 
$F_{p \restriction N}(i) \in D_{i,P}$. 
Let $M \in A_{p \restriction N}$. 
Then $M \in A_{p \restriction N} = A_p \cap N$, so $M \in A_p \cap N$. 
Since $p \in D(P)$, we have that $M \cap P \in A_p$. 
And as $M$ and $P$ are in $N$, $M \cap P \in N$. 
Therefore, $M \cap P \in A_p \cap N = A_{p \restriction N}$.

Now let $i \in \dom(F_{p \restriction N}) \cap P = 
\dom(F_p) \cap N \cap P$, and we will show that 
$F_{p \restriction N}(i) \in D_{i,P}$. 
Since $i \in N \cap P$ and 
$p \in D(N) \cap D(P)$, $F_p(i) \in D_{i,N} \cap D_{i,P}$. 
By Lemma 8.3, it follows that $F_p(i) \restriction N \in D_{i,P}$. 
But $F_p(i) \restriction N = F_{p \restriction N}(i)$. 
This completes the proof that $p \restriction N \in D(P)$.

Next, we prove that $p \restriction P \in D(N \cap P)$. 
First, we show that $N \cap P \in A_{p \restriction P}$. 
Since $p \in D(N)$, $N \in A_p$. 
As $p \in D(P)$, $N \cap P \in A_p$. 
Since $P$ is simple, $N \cap P \in P$. 
So $N \cap P \in A_p \cap P = A_{p \restriction P}$. 

Secondly, we prove that if $M \in A_{p \restriction P}$ and 
$M < N \cap P$, then $M \cap N \cap P \in A_{p \restriction P}$. 
Let $M \in A_{p \restriction P}$, and assume that $M < N \cap P$. 
Then $M \in A_{p \restriction P} = A_p \cap P$. 
By Lemma 2.29, $N \sim N \cap P$. 
Since $M < N \cap P$, it follows by Lemma 2.18 that 
$M < N$. 
Since $p \in D(N)$ and $M < N$, it follows that $M \cap N \in A_p$. 
And as $p \in D(P)$, $M \cap N \cap P \in A_p$. 
Since $P$ is simple, $M \cap N \cap P \in P$. 
So $M \cap N \cap P \in A_p \cap P = A_{p \restriction P}$.

Thirdly, we show that if $i \in \dom(F_{p \restriction P}) \cap (N \cap P)$, 
then $F_{p \restriction P}(i) \in D_{i,N \cap P}$. 
Since $p \in D(P) \cap D(N)$ and $i \in N \cap P$, 
we have that $F_p(i) \in D_{i,N} \cap D_{i,P}$. 
By Lemma 8.3, $F_p(i) \restriction P$ 
is in $D_{i,N \cap P}$. 
But $F_p(i) \restriction P = F_{p \restriction P}(i)$.
\end{proof}

\begin{lemma}
Let $N \in \mathcal X$ be simple, $P \in \mathcal Y \cap N$ be simple, 
and $p \in D(N) \cap D(P)$. 
Then 
$$
(p \restriction N) \restriction P = 
(p \restriction P) \restriction (N \cap P).
$$
\end{lemma}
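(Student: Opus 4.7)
The plan is to verify the equality pair by pair, namely to check that the $A$-components agree, that the domains of the $F$-components agree, and that the $F$-components agree pointwise on that common domain. By Definition 11.11 together with Lemma 12.3, both $(p \restriction N) \restriction P$ and $(p \restriction P) \restriction (N \cap P)$ are well-defined conditions in $\q$, so the task is purely computational.

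First I would compute the $A$-components. By Definition 11.11 applied twice,
$$
A_{(p \restriction N) \restriction P} = A_{p \restriction N} \cap P = (A_p \cap N) \cap P = A_p \cap N \cap P,
$$
and similarly
$$
A_{(p \restriction P) \restriction (N \cap P)} = A_{p \restriction P} \cap (N \cap P) = (A_p \cap P) \cap (N \cap P) = A_p \cap N \cap P,
$$
so these agree. The same style of computation handles the domains of the $F$-components:
$$
\dom(F_{(p \restriction N) \restriction P}) = (\dom(F_p) \cap N) \cap P = \dom(F_p) \cap N \cap P,
$$
and likewise $\dom(F_{(p \restriction P) \restriction (N \cap P)}) = \dom(F_p) \cap N \cap P$.

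Next, for each $i \in \dom(F_p) \cap N \cap P$, by Definition 11.11 we have
$$
F_{(p \restriction N) \restriction P}(i) = F_{p \restriction N}(i) \restriction P = (F_p(i) \restriction N) \restriction P,
$$
and
$$
F_{(p \restriction P) \restriction (N \cap P)}(i) = F_{p \restriction P}(i) \restriction (N \cap P) = (F_p(i) \restriction P) \restriction (N \cap P).
$$
Since $p \in D(N) \cap D(P)$ and $i \in N \cap P$, the definitions of $D(N)$ and $D(P)$ give $F_p(i) \in D_{i,N} \cap D_{i,P}$. So the pointwise equality reduces exactly to the statement
$$
(F_p(i) \restriction N) \restriction P = (F_p(i) \restriction P) \restriction (N \cap P)
$$
in $\p_i$, which is precisely Lemma 8.4 applied to the simple models $N$ and $P$ (using that $P \in N$ and $P$, $N$ are simple) and the condition $F_p(i) \in D_{i,N} \cap D_{i,P}$.

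There is no main obstacle: everything is bookkeeping that pushes the restriction operations of Definition 11.11 down to the individual coordinates, where the coordinatewise version of the equality has already been established in Lemma 8.4. I would simply present the three computations above in sequence to conclude the proof.
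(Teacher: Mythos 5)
Your proposal is correct and matches the paper's own proof essentially step for step: the same componentwise computation of the $A$-parts and the domains of the $F$-parts, followed by the pointwise reduction to Lemma 8.4 using that $F_p(i) \in D_{i,N} \cap D_{i,P}$ for $i \in N \cap P$ (with Lemma 12.3 guaranteeing the restrictions are defined). Nothing further is needed.
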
 

Note that we needed Lemma 12.3 to see that 
$(p \restriction N) \restriction P$ and  
$(p \restriction P) \restriction (N \cap P)$ are defined.

\begin{proof}
By Definition 11.11, 
we have that 
\begin{multline*}
A_{(p \restriction N) \restriction P} = 
A_{p \restriction N} \cap P = 
A_p \cap N \cap P = \\
(A_p \cap P) \cap (N \cap P) = A_{p \restriction P} \cap (N \cap P) = 
A_{(p \restriction P) \restriction (N \cap P)}.
\end{multline*}
And 
\begin{multline*}
\dom(F_{(p \restriction N) \restriction P}) = 
\dom(F_{p \restriction N}) \cap P = 
\dom(F_p) \cap N \cap P = \\
(\dom(F_p) \cap P) \cap (N \cap P) = 
\dom(F_{p \restriction P}) \cap (N \cap P) = 
\dom(F_{(p \restriction P) \restriction (N \cap P)}).
\end{multline*}
Let $i \in \dom(F_{(p \restriction N) \restriction P})$, and we 
will show that 
$$
F_{(p \restriction N) \restriction P}(i) = 
F_{(p \restriction P) \restriction (N \cap P)}(i).
$$
By the above equations, $i \in N \cap P$. 
Since $p \in D(N) \cap D(P)$, we have that 
$F_p(i) \in D_{i,N} \cap D_{i,P}$. 
So by Definition 11.11 and Lemma 8.4, 
\begin{multline*}
F_{(p \restriction N) \restriction P}(i) = 
F_{p \restriction N}(i) \restriction P = 
(F_{p}(i) \restriction N) \restriction P = \\
(F_{p}(i) \restriction P) \restriction (N \cap P) = 
F_{p \restriction P}(i) \restriction (N \cap P) = 
F_{(p \restriction P) \restriction (N \cap P)}(i).
\end{multline*}
\end{proof}

\begin{lemma}
Let $N \in \mathcal X$ be simple, $P \in \mathcal Y \cap N$ be simple, 
and $p \in D(N) \cap D(P)$. 
Suppose that $q \in N \cap D(P)$, $q \le p \restriction N$, and 
$\dom(F_q) \subseteq \dom(F_p)$. 
Then:
\begin{enumerate}
\item $q \oplus^{N} p$ is in $D(P)$;
\item $q \restriction P \in N \cap P$ and 
$$
q \restriction P \le (p \restriction P) \restriction (N \cap P).
$$
\end{enumerate}
\end{lemma}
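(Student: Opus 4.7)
The plan is to prove both parts by direct adaptation of the analogous single-forcing result Lemma 8.5, using the componentwise structure of $\q$ and the bookkeeping lemmas that have been established in Section 11 and the start of Section 12.

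For (1), I need to verify the two clauses of Definition 11.8 for $q \oplus^N p$. The $A$-component is $A_{q \oplus^N p} = A_q \cup A_p$ by Definition 11.15, and since both $q$ and $p$ lie in $D(P)$, each of $A_q$ and $A_p$ is closed under $M \mapsto M \cap P$; hence so is their union. This handles clause (1) of $D(P)$. For clause (2), note that $\dom(F_{q \oplus^N p}) \cap P = \dom(F_p) \cap P$ by Definition 11.15 (using $\dom(F_q) \subseteq \dom(F_p)$). Fix $i \in \dom(F_p) \cap P$ and split on whether $i \in \dom(F_q)$. If $i \notin \dom(F_q)$, then $F_{q \oplus^N p}(i) = F_p(i) \in D_{i,P}$ since $p \in D(P)$. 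If $i \in \dom(F_q)$, then $i \in N$ because $q \in N$, so $i \in N \cap P$; now $F_p(i) \in D_{i,N} \cap D_{i,P}$ (since $p \in D(N) \cap D(P)$), and $F_q(i) \in N \cap D_{i,P}$ (since $q \in N \cap D(P)$), and $F_q(i) \le F_p(i) \restriction N$ by Lemma 11.14(2). These are exactly the hypotheses of Lemma 8.5(1) applied to the single forcing $\p_i$, yielding $F_q(i) \oplus_N F_p(i) \in D_{i,P}$; but this is $F_{q \oplus^N p}(i)$ by Definition 11.15.

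For (2), membership $q \restriction P \in N \cap P$ is immediate: $q \restriction P \in P$ by Lemma 11.12, while $q, P \in N$ give $q \restriction P \in N$. For the inequality, the idea is to restrict the relation $q \le p \restriction N$ to $P$ and then rewrite using Lemma 12.4. More precisely, Lemma 12.3 gives $p \restriction N \in D(P)$, and $q \le p \restriction N$ with $q, p \restriction N \in D(P)$ triggers Lemma 11.13(2) to conclude $q \restriction P \le (p \restriction N) \restriction P$. Finally, Lemma 12.4 yields
\[
(p \restriction N) \restriction P = (p \restriction P) \restriction (N \cap P),
\]
which combined with the previous inequality gives $q \restriction P \le (p \restriction P) \restriction (N \cap P)$, as desired.

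I do not expect any real obstacle here; the work has already been done in Lemmas 8.5, 11.13, 11.14, 12.3, and 12.4, and the present lemma is a straightforward repackaging. The only place where care is required is the case $i \in \dom(F_q) \cap \dom(F_p) \cap P$ in part (1), where one must verify that all the hypotheses of Lemma 8.5(1) (for $\p_i$) hold for the triple $F_q(i), F_p(i), N, P$ before invoking it; this is precisely where the assumption $\dom(F_q) \subseteq \dom(F_p)$ and the membership $i \in N$ (forced by $i \in \dom(F_q) \subseteq N$) enter.
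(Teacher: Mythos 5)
Your proposal is correct and follows essentially the same route as the paper's proof: part (1) splits on whether $i \in \dom(F_q)$ and reduces the mixed case to Lemma 8.5(1) componentwise (your citation of Lemma 11.14(2) for $F_q(i) \le F_p(i) \restriction N$ is just a packaged form of the paper's direct use of $q \le p \restriction N$), and part (2) is the same chain through Lemmas 12.3, 11.13(2), and 12.4. No gaps.
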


\begin{proof}
(1) Let us prove that $q \oplus^N p$ is in $D(P)$, which means that for all 
$M \in A_{q \oplus^N p}$, $M \cap P \in A_{q \oplus^N p}$, and for all 
$i \in \dom(F_{q \oplus^N p}) \cap P$, 
$F_{q \oplus^N p}(i) \in D_{i,P}$. 
Now $A_{q \oplus^N p} = A_q \cup A_p$. 
So if $M \in A_{q \oplus^N p}$, then either $M \in A_q$ or $M \in A_p$. 
But $q$ and $p$ are both in $D(P)$, so in the first case, 
$M \cap P \in A_q$, 
and in the second case, $M \cap P \in A_p$. 
In either case, $M \cap P \in A_q \cup A_p = A_{q \oplus^N p}$.

Now let $i \in \dom(F_{q \oplus^N p}) \cap P$, 
and we will show that $F_{q \oplus^N p}(i) \in D_{i,P}$. 
By Definition 11.15, $\dom(F_{q \oplus^N p}) = \dom(F_p)$, for all 
$i \in \dom(F_p) \setminus \dom(F_q)$, $F_{q \oplus^N p}(i) = F_p(i)$, 
and for all $i \in \dom(F_q) \cap \dom(F_p)$, 
$F_{q \oplus^N p}(i) = F_q(i) \oplus_N F_p(i)$. 

First, assume that $i \in \dom(F_p) \setminus \dom(F_q)$. 
Then $F_{q \oplus^N p}(i) = F_p(i)$. 
Since $p \in D(P)$ and $i \in \dom(F_p) \cap P$, we have that 
$F_p(i) \in D_{i,P}$. 

Secondly, assume that $i \in \dom(F_p) \cap \dom(F_q)$. 
Since $q \in N$, we have that $i \in N$. 
So $F_{q \oplus^N p}(i) = F_q(i) \oplus_N F_p(i)$. 
Thus, it suffices to show that 
$F_q(i) \oplus_N F_p(i) \in D_{i,P}$. 
This will follow from Lemma 8.5, provided that the assumptions of 
this lemma are true for $F_p(i)$ and $F_q(i)$.

Since $i \in N \cap P$ and $p \in D(N) \cap D(P)$, 
$F_p(i) \in D_{i,N} \cap D_{i,P}$. 
As $q$ and $i$ are in $N$, $F_q(i) \in N$, and since $q \in D(P)$ and 
$i \in P$, $F_q(i) \in N \cap D_{i,P}$. 
Finally, as $q \le p \restriction N$, 
$F_q(i) \le F_{p \restriction N}(i) = F_p(i) \restriction N$. 
This completes the verification of the assumptions of 
Lemma 8.5. 
By Lemma 8.5(1), we have that 
$F_q(i) \oplus_N F_p(i)$ is in $D_{i,P}$.

(2) Since $q$ and $P$ are in $N$, $q \restriction P \in N$. 
Also, $q \restriction P \in P$, so $q \restriction P \in N \cap P$. 
By Lemmas 12.3 and 12.4, we have that $p \restriction N \in D(P)$ and 
$$
(p \restriction N) \restriction P = (p \restriction P) \restriction (N \cap P).
$$
As $q \le p \restriction N$, it follows by Lemma 11.13(2) that 
$$
q \restriction P \le (p \restriction N) \restriction P = 
(p \restriction P) \restriction (N \cap P).
$$
\end{proof}

\begin{proposition}
Let $N \in \mathcal X$ be simple, 
$P \in \mathcal Y \cap N$ be simple, 
and suppose that $P \cap \kappa \notin S_i$, 
for all $i \in P \cap \lambda^*$. 
Let $p \in D(N) \cap D(P)$, $q \in N \cap D(P)$, and 
$q \le p \restriction N$. 
Assume, moreover, that $\dom(F_q) \subseteq \dom(F_p)$. 
Then 
$$
(q \oplus^N p) \restriction P = 
(q \restriction P) \oplus^{N \cap P} (p \restriction P). 
$$
\end{proposition}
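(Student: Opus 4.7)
The plan is to prove the identity componentwise, showing that both sides agree on their $A$-components and on their $F$-components. Let $s := (q \oplus^N p) \restriction P$ and $t := (q \restriction P) \oplus^{N \cap P} (p \restriction P)$; note first that Lemma~12.3 guarantees both sides are defined, and Lemma~12.5 gives $q \restriction P \in N \cap P$ with $q \restriction P \le (p \restriction P) \restriction (N \cap P)$, while $\dom(F_{q \restriction P}) = \dom(F_q) \cap P \subseteq \dom(F_p) \cap P = \dom(F_{p \restriction P})$, so $t$ is a legitimate amalgam in the sense of Definition~11.15.

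First I would handle the $A$-components: by Definitions~11.11 and~11.15,
\begin{equation*}
A_s = (A_q \cup A_p) \cap P = (A_q \cap P) \cup (A_p \cap P) = A_{q \restriction P} \cup A_{p \restriction P} = A_t.
\end{equation*}
Similarly, $\dom(F_s) = \dom(F_p) \cap P = \dom(F_{p \restriction P}) = \dom(F_t)$, using $\dom(F_q) \subseteq \dom(F_p)$. So the two conditions have the same adequate set and the same domain for $F$, and it remains to show $F_s(i) = F_t(i)$ for each $i$ in this common domain.

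Next I would split into two cases based on whether $i \in \dom(F_q)$. If $i \in \dom(F_p) \cap P$ with $i \notin \dom(F_q)$, then by Definition~11.15 the LHS gives $F_{q \oplus^N p}(i) = F_p(i)$, so $F_s(i) = F_p(i) \restriction P = F_{p \restriction P}(i)$; on the other side, $i \notin \dom(F_q) \cap P = \dom(F_{q \restriction P})$, so $F_t(i) = F_{p \restriction P}(i)$ by Definition~11.15 applied to $t$. If instead $i \in \dom(F_q) \cap P$, then $i \in N \cap P$, and Definition~11.15 yields
\begin{equation*}
F_s(i) = (F_q(i) \oplus_N F_p(i)) \restriction P, \qquad F_t(i) = (F_q(i) \restriction P) \oplus_{N \cap P} (F_p(i) \restriction P).
\end{equation*}
Equality here is exactly the single-forcing identity of Proposition~8.6 applied inside $\p_i$ to the models $N$, $P$, the stationary set $S_i$, and the conditions $F_p(i)$, $F_q(i)$.

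The only real work is verifying that the hypotheses of Proposition~8.6 are satisfied at each such $i$: I need $F_p(i) \in D_{i,N} \cap D_{i,P}$ (which follows from $p \in D(N) \cap D(P)$ and $i \in N \cap P$), $F_q(i) \in N \cap D_{i,P}$ (from $q \in N \cap D(P)$ and $i \in P$, since $q$ and $i$ in $N$ give $F_q(i) \in N$), the inequality $F_q(i) \le F_p(i) \restriction N$ in $\p_i$ (which follows from $q \le p \restriction N$, since $F_q(i) \le F_{p \restriction N}(i) = F_p(i) \restriction N$), and crucially $P \cap \kappa \notin S_i$, which is exactly the hypothesis we have imposed on $P$ since $i \in P \cap \lambda^*$. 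With these verifications in place, Proposition~8.6 delivers the pointwise equality and completes the proof. The main obstacle, expositionally, is keeping the levels of restriction and amalgamation (one for $\q$, one for each $\p_i$) cleanly separated so that the reduction to Proposition~8.6 at each coordinate $i$ is transparent.
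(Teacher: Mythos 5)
Your proposal is correct and follows essentially the same route as the paper's proof: equate the $A$-components and the domains of the $F$-components directly, then split on whether $i \in \dom(F_q)$, with the nontrivial case reduced coordinatewise to Proposition 8.6 after verifying its hypotheses ($F_p(i) \in D_{i,N} \cap D_{i,P}$, $F_q(i) \in N \cap D_{i,P}$, $F_q(i) \le F_p(i) \restriction N$, and $P \cap \kappa \notin S_i$), exactly as the paper does.
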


Note that since $\dom(F_q) \subseteq \dom(F_p)$, we also have that 
$$
\dom(F_{q \restriction P}) = \dom(F_q) \cap P \subseteq 
\dom(F_p) \cap P = \dom(F_{p \restriction P}).
$$
By this fact and Lemma 12.5, it follows that 
$(q \oplus^N p) \restriction P$ and 
$(q \restriction P) \oplus^{N \cap P} (p \restriction P)$ are defined.

\begin{proof}
Let 
$$
s := (q \oplus^N p) \restriction P
$$
and 
$$
t := (q \restriction P) \oplus^{N \cap P} (p \restriction P).
$$
Our goal is to prove that $s = t$.

\bigskip

We have that 
\begin{multline*}
A_s = A_{(q \oplus^N p) \restriction P} = A_{q \oplus^N p} \cap P = 
(A_q \cup A_p) \cap P = \\
= (A_q \cap P) \cup (A_p \cap P) = 
A_{q \restriction P} \cup A_{p \restriction P} = 
A_{(q \restriction P) \oplus^{N \cap P} (p \restriction P)} = A_t.
\end{multline*}
Thus, $A_s = A_t$.

Similarly,
\begin{multline*}
\dom(F_s) = \dom(F_{(q \oplus^N p) \restriction P}) = 
\dom(F_{q \oplus^N p}) \cap P = \\
= \dom(F_p) \cap P = \dom(F_{p \restriction P}) 
= \dom(F_{(q \restriction P) \oplus^{N \cap P} (p \restriction P)}) 
= \dom(F_t).
\end{multline*}
So $\dom(F_s) = \dom(F_t)$.

Let $i \in \dom(F_s)$, and we will show that 
$F_s(i) = F_t(i)$. 
Note that $\dom(F_s) \subseteq P$, so $i \in P$. 
By definition, we have that 
$$
F_s(i) = F_{(q \oplus^N p) \restriction P}(i) = 
F_{q \oplus^N p}(i) \restriction P.
$$
The definition of $F_{q \oplus^N p}(i)$ splits into two cases, 
depending on whether $i \in \dom(F_p) \setminus \dom(F_q)$, 
or $i \in \dom(F_p) \cap \dom(F_q)$. 

First, assume that $i \in \dom(F_p) \setminus \dom(F_q)$. 
Then $F_{q \oplus^N p}(i) = F_p(i)$ by Definition 11.15. 
Thus, by the above, 
$$
F_s(i) = F_p(i) \restriction P.
$$
Since $i \notin \dom(F_q)$, also $i \notin \dom(F_{q}) \cap P = 
\dom(F_{q \restriction P})$. 
Thus, by Definition 11.15,
$$
F_t(i) = F_{(q \restriction P) \oplus^{N \cap P} (p \restriction P)}(i) = 
F_{p \restriction P}(i) = F_p(i) \restriction P = F_s(i).
$$

Secondly, assume that $i \in \dom(F_p) \cap \dom(F_q)$. 
Then $i \in N$. 
By Definition 11.15 and the above,
$$
F_s(i) = F_{q \oplus^N p}(i) \restriction P = 
(F_q(i) \oplus_N F_p(i)) \restriction P.
$$
Also, $i \in \dom(F_p) \cap \dom(F_q) \cap P = 
\dom(F_{p \restriction P}) \cap \dom(F_{q \restriction P})$. 
So by Definition 11.15, 
\begin{multline*}
F_t(i) = F_{(q \restriction P) \oplus^{N \cap P} (p \restriction P)}(i) =
F_{q \restriction P}(i) \oplus_{N \cap P} F_{p \restriction P}(i) = \\ 
(F_{q}(i) \restriction P) \oplus_{N \cap P} 
(F_p(i) \restriction P).
\end{multline*}

Thus, to show that $F_s(i) = F_t(i)$, it suffices to show that 
$$
(F_q(i) \oplus_N F_p(i)) \restriction P = 
(F_{q}(i) \restriction P) \oplus_{N \cap P} 
(F_p(i) \restriction P).
$$
This equation follows immediately from Proposition 8.6 for the conditions 
$F_q(i)$ and $F_p(i)$, so it is enough to verify that the assumptions of 
Proposition 8.6 hold.

Since $i \in P$, $P \cap \kappa \notin S_i$. 
As $i \in N \cap P$ and $p \in D(N) \cap D(P)$, 
$F_p(i) \in D_{i,N} \cap D_{i,P}$. 
Since $q \in N \cap D(P)$, $F_q(i) \in N \cap D_{i,P}$. 
And as $q \le p \restriction N$, 
$F_q(i) \le F_{p \restriction N}(i) = F_p(i) \restriction N$. 
Thus, all of the assumptions of Proposition 8.6 are true, and we are done.
\end{proof}

\begin{thm}
Let $P \in \mathcal Y$ be simple, 
$P \prec (H(\lambda),\in,\q)$, and 
suppose that for all $i \in P \cap \lambda^*$, 
$P \cap \kappa \notin S_i$. 
Then $P \cap \q$ forces that $\q / \dot G_{P \cap \q}$ has the 
$\omega_1$-approximation property.
\end{thm}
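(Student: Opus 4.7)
The plan is to adapt the proof of Theorem 9.2 to the product forcing setting, with Proposition 12.6 playing the role of Proposition 8.6. By Lemma 1.4, it suffices to show that $\q$ forces the pair $(V[\dot G_\q \cap P], V[\dot G_\q])$ has the $\omega_1$-approximation property. So fix $p \in \q$, an ordinal $\mu$, and a $\q$-name $\dot k$ such that $p$ forces $\dot k : \mu \to \mathrm{On}$ and that $\dot k \restriction a \in V[\dot G_\q \cap P]$ for every $a \in V[\dot G_\q \cap P]$ which is countable there. I would then find an extension of $p$ forcing $\dot k \in V[\dot G_\q \cap P]$.

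First I would fix a regular $\theta$ large enough that all relevant parameters lie in $H(\theta)$, and use Assumption 2.22 (in the version giving stationarily many simple $M^* \prec H(\theta)$ with $M := M^* \cap H(\lambda) \in \mathcal X$ simple) to pick such $M^*$ containing $\q, P, p, \mu, \dot k$. Note $P \in M$. Using Lemma 11.1 extend $p$ to $p_0$ with $M \in A_{p_0}$. Since $M^* \cap \mu \in V$, by the hypothesis on $\dot k$ I can find $p_1 \leq p_0$ and a $(P \cap \q)$-name $\dot s$ such that $p_1 \Vdash_\q \dot k \restriction (M^* \cap \mu) = \dot s^{\dot G_\q \cap P}$. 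Then apply Lemma 12.2 to find $p_2 \leq p_1$ in $D(M) \cap D(P)$. By Lemma 11.13(1), $p_2 \restriction M \leq p$, and I will show $p_2 \restriction M$ forces $\dot k \in V[\dot G_\q \cap P]$.

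The core claims are exact analogs of Claims 1 and 2 in Theorem 9.2. Claim 1 (if $t \leq p_2$ in $D(P)$ and $t \Vdash_\q \dot k(\nu) = x$, then $t \restriction P \Vdash_{P\cap \q} \dot s(\nu) = x$) is proved by the same compatibility argument using Proposition 11.18. The heart of the matter is Claim 2: for $q \leq p_2 \restriction M$ in $M \cap D(P)$, $\nu \in M^* \cap \mu$, $x \in M^*$, if $q \Vdash_\q \dot k(\nu) = x$, then every common extension of $p_2 \restriction M$ and $q \restriction P$ also forces $\dot k(\nu) = x$. Suppose for contradiction we have $r \leq p_2 \restriction M$ in $M \cap D(P)$ with $r \leq q \restriction P$ and $r \Vdash_\q \dot k(\nu) \neq x$. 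I form $q \oplus^M p_2$ and $r \oplus^M p_2$, apply Proposition 12.6 (whose hypothesis $P \cap \kappa \notin S_i$ for $i \in P \cap \lambda^*$ is exactly our standing assumption) to get
\[ (q \oplus^M p_2) \restriction P = (q \restriction P) \oplus^{M \cap P} (p_2 \restriction P), \]
\[ (r \oplus^M p_2) \restriction P = (r \restriction P) \oplus^{M \cap P} (p_2 \restriction P), \]
and then invoke Lemma 12.1, together with Lemma 12.5(2) giving $q \restriction P, r \restriction P \leq (p_2 \restriction P) \restriction (M \cap P)$ and $r \restriction P \leq q \restriction P$, to conclude that $(r \oplus^M p_2) \restriction P \leq (q \oplus^M p_2) \restriction P$. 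But Claim 1 forces these to give contradictory values to $\dot s(\nu)$. Once Claim 2 is established, I define in $V[G \cap P]$ the partial function $h(\nu) = x$ iff some $t \in G \cap P$ witnesses that every common extension of $p_2 \restriction M$ and $t$ forces $\dot k(\nu) = x$, and verify $h = \dot k^G$ exactly as in Theorem 9.2.

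The main obstacle is a bookkeeping complication absent from Theorem 9.2: Proposition 12.6, and already its amalgam $\oplus^N$, requires $\dom(F_q) \subseteq \dom(F_p)$, which does not hold automatically for arbitrary $q \leq p_2 \restriction M$. To handle this I would, just before forming the amalgams, replace $p_2$ by $p_2' := p_2 \uplus (\dom(F_q) \cup \dom(F_r)) \setminus \dom(F_{p_2}))$; since $q, r \in M$ the added coordinates lie in $M$, and by Lemmas 11.6 and 11.10 one has $p_2' \in D(M) \cap D(P)$, while by Lemma 11.17 the restrictions behave well so that $q, r \leq p_2' \restriction M$ and both Claims carry over with $p_2'$ in place of $p_2$. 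A secondary point is that the elementarity argument producing $r$ must be set up inside $M^*$ so that the domain extension operation is available to elementarity; this is routine but needs the parameters $\q, P, p_2, \dot k, \mu$ to be in $M^*$, which they are by the choice of $M^*$.
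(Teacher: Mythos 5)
Your outline reproduces the paper's proof of this theorem almost step for step (same reduction via Lemma 1.4, same choice of $M^*$, $M$, $p_0,p_1,p_2$, the same Claims 1 and 2 proved from Proposition 11.18, Lemmas 12.1, 12.3, 12.5, and Proposition 12.6, and the same definition of the function $h$ at the end), so the only point that needs scrutiny is your handling of the domain hypothesis, and there your specific fix does not work. You propose to leave $q$ and $r$ alone and only pad the upper condition, setting $p_2' := p_2 \uplus \bigl((\dom(F_q) \cup \dom(F_r)) \setminus \dom(F_{p_2})\bigr)$, claiming that then $q, r \le p_2' \restriction M$. By Lemma 11.17, $p_2' \restriction M = (p_2 \restriction M) \uplus y$ where $y = (\dom(F_q) \cup \dom(F_r)) \setminus \dom(F_{p_2})$ (note $y \subseteq M$), so $\dom(F_{p_2' \restriction M}) \supseteq y$; but the order on $\q$ requires $\dom(F_{p_2' \restriction M}) \subseteq \dom(F_q)$ for $q \le p_2' \restriction M$ to hold, and this fails as soon as some $i \in \dom(F_r)$ lies outside $\dom(F_q) \cup \dom(F_{p_2})$ (symmetrically for $r$). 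Moreover this is not a removable inconvenience: to form both amalgams $q \oplus^M p_3$ and $r \oplus^M p_3$ over a single auxiliary condition $p_3$ you need $\dom(F_q) \cup \dom(F_r) \subseteq \dom(F_{p_3})$ (Definition 11.15, Proposition 12.6, and also Lemma 12.5 all carry this hypothesis) together with $q, r \le p_3 \restriction M$, i.e.\ $\dom(F_{p_3}) \cap M \subseteq \dom(F_q) \cap \dom(F_r)$; since $\dom(F_q), \dom(F_r) \subseteq M$, these two requirements force $\dom(F_q) = \dom(F_r)$. So you cannot avoid equalizing the domains of $q$ and $r$ themselves, and using two different auxiliary conditions for $q$ and for $r$ would leave you with two amalgams over different base conditions, to which Lemma 12.1 (which compares $w \oplus^N q$ and $v \oplus^N q$ over the \emph{same} $q$) no longer applies.

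The repair is exactly the step in the paper's proof that your shortcut skips: before introducing $p_3$, replace $q$ and $r$ by $q' := q \uplus (\dom(F_r) \setminus \dom(F_q))$ and $r' := r \uplus (\dom(F_q) \setminus \dom(F_r))$, and check that these retain all the relevant properties: they are in $M^* \cap D(P)$ (Lemma 11.10), still extend $p_2 \restriction M$, still force $\dot k(\nu) = x$ and $\dot k(\nu) \ne x$ respectively, and $r' \le q' \restriction P$ (via Lemmas 10.8(4) and 11.17), hence $r' \restriction P \le q' \restriction P$ by Lemma 11.13(1). Only then, with $\dom(F_{q'}) = \dom(F_{r'})$, set $p_3 := p_2 \uplus (\dom(F_{q'}) \setminus \dom(F_{p_2}))$ and use Proposition 11.18 to get $q', r' \le p_3 \restriction M$, after which your applications of Proposition 12.6, Lemma 12.5 and Lemma 12.1 go through verbatim. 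With that correction your argument coincides with the paper's; as written, the claim ``$q, r \le p_2' \restriction M$'' is false in general and the proof of Claim 2 breaks at that point.
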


Recall that by Corollary 11.20, $P \cap \q$ is a regular suborder of $\q$.

The proof of this theorem is almost identical in several places 
to the proof of Theorem 9.2. 
In those places, we will ask the reader to refer to the proof of Theorem 9.2 
for some of the details instead of repeating everything here.

\begin{proof}
By Lemma 1.4, 
it suffices to show that $\q$ forces that the pair 
$$
(V[\dot G_\q \cap P],V[\dot G_\q])
$$
has the 
$\omega_1$-approximation property. 
So let $p$, $\mu$, and $\dot k$ be given such that 
$\mu$ is an ordinal, and 
$p$ forces in $\q$ that $\dot k : \mu \to On$ is a function satisfying that 
for any countable set $a$ in $V[\dot G_\q \cap P]$, 
$\dot k \restriction a \in V[\dot G_\q \cap P]$. 
We will find an extension of $p$ which forces that 
$\dot k$ is in $V[\dot G_\q \cap P]$. 

Fix a regular cardinal $\theta$ large enough so that 
$\q$, $\mu$, and $\dot k$ are members of $H(\theta)$. 
By the stationarity of the simple models in $\mathcal X$ as described 
in Assumption 2.22, fix a countable set 
$M^* \prec H(\theta)$ such that 
$M^*$ contains the parameters $\q$, $P$, $p$, $\mu$, and $\dot k$, and 
satisfies that $M^* \cap H(\lambda)$ is in $\mathcal X$ and is simple.

Let $M := M^* \cap H(\lambda)$. 
Note that since $\q \subseteq H(\lambda)$, 
$M \cap \q = M^* \cap \q$. 
In particular, $p \in M \cap \q$. 
Also, note that since $P \in H(\lambda)$, we have that 
$P \in M$ and $M \cap P = M^* \cap P$.

By Lemma 11.1, fix $p_0 \le p$ such that $M \in A_{p_0}$. 
By the choice of $p$ and $\dot k$, and since $M^* \cap \mu$ is 
in $V$, 
we can fix $p_1 \le p_0$ and a $(P \cap \q)$-name $\dot s$ such that 
$$
p_1 \Vdash_\q \dot k \restriction (M^* \cap \mu) = 
\dot s^{\dot G_\q \cap P}.
$$
Since $M \in A_{p_1}$, 
by Lemma 12.2 we can fix $p_2 \le p_1$ such that 
$p_2 \in D(M) \cap D(P)$.

Since $p_2 \le p$ and $p \in M$, 
it follows that $p_2 \restriction M \le p$ by Lemma 11.13(1). 
So it suffices to prove that 
$p_2 \restriction M$ forces that $\dot k$ is in 
$V[\dot G_\q \cap P]$.

\bigskip

\noindent \emph{Claim 1:} If $t \le p_2$ is in $D(P)$, 
$\nu \in M^* \cap \mu$, and 
$t \Vdash_{\q} \dot k(\nu) = x$ 
(or $t \Vdash_{\q} \dot k(\nu) \ne x$, respectively) 
then $t \restriction P \Vdash_{P \cap \q} \dot s(\nu) = x$ (or 
$t \restriction P \Vdash_{P \cap \q} \dot s(\nu) \ne x$, respectively).

\bigskip

The proof of Claim 1 is identical to the proof of Claim 1 of 
Theorem 9.2, except that the reference to Proposition 6.15 is 
replaced with a reference to Proposition 11.18.

\bigskip

\noindent \emph{Claim 2:} For all 
$q \le p_2 \restriction M$ in $D(P)$, 
$\nu < \mu$, and $x$, 
$$
q \Vdash_{\q} \dot k(\nu) = x \implies \forall r \in \q 
( ( r \le p_2 \restriction M \land r \le q \restriction P ) \implies 
(r \Vdash_\q \dot k(\nu) = x)).
$$

\bigskip

Note that $p_2 \restriction M$, $P$, 
$D(P)$, $\mu$, $\dot k$, and $\q$ are in $M^*$. 
So by the elementarity of $M^*$, it suffices to show that the statement 
holds in $M^*$.

Suppose for a contradiction that there exists 
$q \le p_2 \restriction M$ 
in $M^* \cap D(P)$, $\nu \in M^* \cap \mu$, and $x \in M^*$ such that 
$$
q \Vdash_\q \dot k(\nu) = x,
$$
but there is $r_0 \in M^* \cap \q$ with $r_0 \le p_2 \restriction M$ and 
$r_0 \le q \restriction P$ such that 
$$
r_0 \not \Vdash_\q \dot k(\nu) = x.
$$
By the elementarity of $M^*$, 
we can fix $r \le r_0$ in $M^* \cap D(P)$ such that 
$$
r \Vdash_\q \dot k(\nu) \ne x.
$$
Then $r \le p_2 \restriction M$ and $r \le q \restriction P$. 
Since $r \le q \restriction P$ and $q \restriction P \in P$, it follows that 
$r \restriction P \le q \restriction P$ 
by Lemma 11.13(1). 

Observe that if we let 
$$
q' := q \uplus (\dom(F_r) \setminus \dom(F_q))$$
and 
$$
r' := r \uplus (\dom(F_q) \setminus \dom(F_r)),
$$
then $q'$ and $r'$ satisfy exactly the 
same properties which we stated that 
$q$ and $r$ satisfy, and moreover, $\dom(F_{q'}) = \dom(F_{r'})$. 
Let us check this observation carefully.

Since $q$ and $r$ are in $M^*$, so are $q'$ and $r'$. 
And by Definition 10.7, 
$$
\dom(F_{q'}) = \dom(F_q) \cup \dom(F_r) = 
\dom(F_{r'}).
$$
Since $q' \le q \le p_2 \restriction M$, we have that 
$q' \le p_2 \restriction M$. 
As $q$ and $r$ are in $D(P)$, so are $q'$ and $r'$ by Lemma 11.10. 
And since $q' \le q$ and $r' \le r$, we have that 
$q' \Vdash_\q \dot k(\nu) = x$ and 
$r' \Vdash_\q \dot k(\nu) \ne x$. 
Finally, $r' \le r \le p_2 \restriction M$ implies that 
$r' \le p_2 \restriction M$. 
Letting $y := \dom(F_r) \setminus \dom(F_q)$, 
the fact that $r \le q \restriction P$ implies by Lemmas 10.8(4) 
and 11.17 that 
$$
r' \le r \le (q \restriction P) \uplus (y \cap P) = 
(q \uplus y) \restriction P = q' \restriction P,
$$
so $r' \le q' \restriction P$. 
And by Lemma 11.13(1), this last inequality implies that 
$r' \restriction P \le q' \restriction P$.

By replacing $q$ and $r$ with $q'$ and $r'$ respectively if necessary, 
we can assume without loss of generality that 
$\dom(F_q) = \dom(F_r)$. 
Let $x := \dom(F_q) \setminus \dom(F_{p_2})$. 
Define 
$$
p_3 := p_2 \uplus x.
$$
Then by Proposition 11.18, 
$q$ and $r$ are below $p_3 \restriction M$. 
Also, $q \oplus^{M} p_3$ is a condition below $q$ and $p_3$, and 
$r \oplus^{M} p_3$ is a condition below $r$ and $p_3$. 
Also, by Lemmas 11.6 and 11.10, $p_3 \in D(M) \cap D(P)$.

Since $\dom(F_q)$ and $\dom(F_r)$ are subsets of 
$\dom(F_{p_3})$, by Proposition 12.6 
we have that 
$$
(q \oplus^{M} p_3) \restriction P = 
(q \restriction P) \oplus^{M \cap P} (p_3 \restriction P)
$$
and 
$$
(r \oplus^{M} p_3) \restriction P = 
(r \restriction P) \oplus^{M \cap P} (p_3 \restriction P).
$$

We would like to apply Lemma 12.1 to $M \cap P$, 
$p_3 \restriction P$, $q \restriction P$, and 
$r \restriction P$. 
Let us check that the assumptions of Lemma 12.1 hold for these objects. 
By Lemma 2.30, $M \cap P$ is simple. 
Since $p_3 \in D(M) \cap D(P)$, 
it follows that $p_3 \restriction P \in D(M \cap P)$ by Lemma 12.3. 
As $q$, $r$, and $P$ are in $M^*$, we have that 
$q \restriction P$ and $r \restriction P$ are in 
$M^* \cap P = M \cap P$. 
Finally, we observed above that $r \restriction P \le q \restriction P$, and 
$q \le p_3 \restriction M$ implies that 
$q \restriction P \le (p_3 \restriction P) \restriction (M \cap P)$ 
by Lemma 12.5.

Thus, all of the assumptions of Lemma 12.1 hold. 
Consequently,
$$
(r \restriction P) \oplus^{M \cap P} (p_3 \restriction P) \le 
(q \restriction P) \oplus^{M \cap P} (p_3 \restriction P).
$$
Combining this with the equalities above, we have that 
$$
(r \oplus^{M} p_3) \restriction P \le (q \oplus^{M} p_3) \restriction P.
$$

We claim that this last inequality is impossible. 
In fact, we will show that $(r \oplus^M p_3) \restriction P$ and 
$(q \oplus^M p_3) \restriction P$ are incompatible. 
This contradiction will complete the proof of Claim 2.

We know that $r \Vdash_\q \dot k(\nu) \ne x$, and therefore, since 
$r \oplus^{M} p_3 \le r$, we have that 
$r \oplus^{M} p_3 \Vdash_\q \dot k(\nu) \ne x$. 
By Claim 1, 
$$
(r \oplus^{M} p_3) \restriction P \Vdash_{P \cap \q} \dot s(\nu) \ne x.
$$
Similarly, $q \Vdash_\q \dot k(\nu) = x$, and therefore, since 
$q \oplus^M p_3 \le q$, we have that 
$q \oplus^M p_3 \Vdash_\q \dot k(\nu) = x$. 
By Claim 1,
$$
(q \oplus^{M} p_3) \restriction P \Vdash_{P \cap \q} \dot s(\nu) = x.
$$
Thus, indeed 
$(r \oplus^M p_3) \restriction P$ and 
$(q \oplus^M p_3) \restriction P$ are incompatible, since they 
force contradictory information.
This completes the proof of Claim 2.

\bigskip

The proof that $p_2 \restriction M$ forces that 
$\dot k$ is in $V[\dot G_\q \cap P]$ follows from Claim 
2 in exactly the same way that the analogous conclusion in Theorem 9.2 
followed from Claim 2 there.
\end{proof}

\bigskip

\addcontentsline{toc}{section}{13. The consistency result}

\textbf{\S 13. The consistency result}

\stepcounter{section}

\bigskip

We now fulfill the mission of the paper and prove that 
it is consistent, relative to the consistency of a 
greatly Mahlo cardinal, that the approachability ideal $I[\omega_2]$ 
does not have a maximal set modulo clubs.

We work in a ground model $V$ in which $\kappa$ is a greatly Mahlo cardinal, 
$2^\kappa = \kappa^+$, and $\Box_\kappa$ holds. 
The consistency of a greatly Mahlo cardinal easily implies the 
consistency of these assumptions.

It is a standard fact that $\kappa$ being greatly Mahlo implies that there 
exists a sequence 
$\langle B_i : i < \kappa^+ \rangle$ 
of stationary subsets of $\kappa$ 
satisfying the following properties:
\begin{enumerate}
\item for each $i < \kappa^+$, for all $\beta \in B_i$, $\beta$ 
is strongly inaccessible;

\item for each $i < \kappa^+$, for all $\beta \in B_{i+1}$, 
$B_i \cap \beta$ is stationary in $\beta$;

\item for all $i < j < \kappa^+$, there is a club set $C \subseteq \kappa$ 
such that $B_j \cap C \subseteq B_i$;

\item for each $i < \kappa^+$, $B_i \setminus B_{i+1}$ 
is stationary.
\end{enumerate}
Such a sequence is obtained by iterating the Mahlo operation 
$$
M(A) := \{ \alpha \in \kappa \cap \cof(>\! \omega) : 
A \cap \alpha \ \textrm{is stationary in $\alpha$} \},
$$
starting with the set of inaccessibles in $\kappa$, 
and taking diagonal intersections 
of some form at limit stages. 
We refer the reader to \cite[Section 4]{baumgartner} 
for more information about greatly Mahlo cardinals.

\bigskip

The results of this paper up to now were made in the context of several 
fixed objects, together with some assumptions about these objects. 
Specifically, in Section 2 we fixed $\kappa$, $\lambda$, $\Lambda$, 
$\mathcal A$, $\mathcal X$, and $\mathcal Y$, 
satisfying the properties described in 
Notations 2.1, 2.2, 2.3, and 2.4, and Assumptions 
2.5, 2.6, 2.19, 2.22, 2.23, and 10.1. 
In addition, in Section 10 we fixed an ordinal $\lambda^*$ and a 
sequence $\langle S_i : i < \lambda^* \rangle$ satisfying 
Notation 10.2 and Assumption 10.3.

We now specify such objects explicitly and justify the properties which we 
have been assuming about them. 
We will refer to our previous paper \cite{jk27} for some of the 
definitions and proofs.

The greatly Mahlo cardinal $\kappa$ which we fixed at the beginning of this section 
is the cardinal described in Notation 2.1. 
The cardinal $\lambda$ described in Notation 2.1 is equal to $\kappa^+$.

We refer to \cite[Notation 1.7]{jk27} for the definition of $\Lambda$. 
In that paper, we have that $\Lambda = C^* \cap \cof(>\! \omega)$, 
where $C^*$ is a club subset of $\kappa$. 
The club set $C^*$, in turn, is defined in terms of a thin stationary 
set $T^* \subseteq P_{\omega_1}(\kappa)$. 
We must justify, therefore, the existence of a thin stationary set. 
But $\kappa$ is strongly inaccessible, so we can let $T^*$ be 
equal to the entire set $P_{\omega_1}(\kappa)$. 
The properties of $\kappa$, $\lambda$, and $\Lambda$ described 
in Notations 2.1 and 2.2 and Assumption 10.1 are now immediate.

We refer to \cite[Section 7]{jk27} for the definitions of 
$\mathcal A$, $\mathcal X$, and $\mathcal Y$. 
At the beginning of that section, it is assumed that 
$2^\kappa = \kappa^+$ and $\Box_\kappa$, which are exactly the same 
assumptions which we made above. 
Let $\mathcal A$ denote the structure which is obtained by 
expanding the structure on $H(\kappa^+)$ 
specified in \cite[Notation 7.6]{jk27} by adding the sequence 
$\langle B_i : i < \kappa^+ \rangle$ as a predicate. 
This structure has a 
well-ordering of $H(\kappa^+)$ as a predicate, 
and therefore has definable Skolem functions. 
It also has $\kappa$ and $\Lambda$ as constants. 
Thus, the description of $\mathcal A$ made in 
Notation 2.3 is satisfied.

We define $\mathcal X$ exactly as in \cite[Notation 7.7]{jk27}. 
Then by definition, 
for all $M \in \mathcal X$, $M$ is a countable elementary substructure 
of $\mathcal A$. 
We define $\mathcal Y$ as the set of models $P$ which are in the set 
defined in \cite[Notation 7.8]{jk27} and also satisfy that 
$\cf(P \cap \kappa) > \omega$. 
Then by definition, for all $P \in \mathcal Y$, 
$P$ is an elementary substructure of $\mathcal A$, 
$|P| < \kappa$, and $P \cap \kappa \in \kappa$. 
Thus, the properties of $\mathcal X$ and $\mathcal Y$ described in 
Notation 2.4 are satisfied.

The next lemma verifies Assumptions 2.5, 2.6, and 2.19.

\begin{lemma}
\begin{enumerate}
\item If $P$ and $Q$ are in $\mathcal Y$, then $P \cap Q \in \mathcal Y$;

\item if $M \in \mathcal X$ and $P \in \mathcal Y$, then 
$M \cap P \in \mathcal X$;

\item if $M$ and $N$ are in $\mathcal X$ and $\{ M, N \}$ 
is adequate, then $M \cap N \in \mathcal X$;

\item if $M \in \mathcal X$, $\alpha \in \Lambda \cup \{ \kappa \}$, and 
$Sk(\alpha) \cap \kappa = \alpha$ if $\alpha < \kappa$, then 
$M \cap \alpha \in Sk(\alpha)$.
\end{enumerate}
\end{lemma}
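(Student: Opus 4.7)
The plan is to verify each clause by unfolding the definitions of $\mathcal X$ and $\mathcal Y$ given in \cite[Notations 7.7, 7.8]{jk27} and citing the corresponding closure lemmas proved there, with two adjustments needed for our setting: the structure $\mathcal A$ has been enriched by the predicate $\langle B_i : i < \kappa^+ \rangle$, and membership in $\mathcal Y$ is restricted by the additional clause $\cf(P \cap \kappa) > \omega$. Neither change affects the closure arguments, because adding a predicate to $\mathcal A$ only shrinks $\mathcal X$ and $\mathcal Y$ in a uniform way (the Skolem functions of the enriched structure extend those of the structure used in \cite{jk27}), and the cofinality restriction is preserved under the operations considered.

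For clause (1), the verification runs by first applying the closure of the larger class of $P \in \mathcal Y$ defined in \cite{jk27} under pairwise intersection. Since both $P$ and $Q$ have $P \cap \kappa$ and $Q \cap \kappa$ of cofinality greater than $\omega$, the intersection $P \cap Q$ still satisfies $\cf((P \cap Q) \cap \kappa) > \omega$: indeed, $(P \cap Q) \cap \kappa = \min\{P \cap \kappa, Q \cap \kappa\}$ if the ordinals are ordered, and otherwise one uses the standard fact that comparison of two uncountable models in the framework of \cite{jk27} yields $(P \cap Q) \cap \kappa \in \{P \cap \kappa, Q \cap \kappa\}$. This immediately secures membership in $\mathcal Y$. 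Clause (2) is essentially identical to \cite[Lemma 7.X]{jk27} (the corresponding closure under taking the countable restriction by an uncountable model), combined with the observation that the predicate $\langle B_i \rangle$ is definable in $\mathcal A$ and so $M \cap P$ remains an elementary substructure in the enriched language.

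Clause (3) is the main technical point and will be the principal obstacle. Here one needs the fact, proved in \cite{jk27}, that the intersection of two countable elementary submodels in $\mathcal X$ forming an adequate pair lies again in $\mathcal X$. This requires the careful choice of $\mathcal X$ made in \cite[Notation 7.7]{jk27}, designed precisely so that when $\{M, N\}$ is adequate, the common part $M \cap N$ inherits the structural properties (closure under the chosen Skolem functions, correctness of the associated sequence under $\Box_\kappa$, etc.). The plan is therefore to invoke \cite[Lemma 7.Y]{jk27} directly, with only the remark that the new predicate $\langle B_i \rangle$ causes no difficulty because $M \cap N$ is closed under the relevant Skolem functions, and the predicate is merely added as a unary decoration. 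One must double-check that the defining property of $\mathcal X$ in \cite{jk27} — which typically involves agreement of certain $\Box_\kappa$-sequences on a tail — continues to hold under intersection, but this is exactly the content of the cited lemma.

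Finally, clause (4) is an instance of \cite[Proposition 1.11]{jk21}, already invoked in the paper as Assumption 2.6. The verification is that this proposition applies to the current $\mathcal X$ and $\mathcal A$: since $\mathcal A$ has definable Skolem functions, $\kappa$ is a definable constant, and $Sk(\alpha) \cap \kappa = \alpha$ by assumption, the standard argument using elementarity and the minimality of the Skolem hull produces $M \cap \alpha \in Sk(\alpha)$. The main obstacle throughout the lemma is purely bookkeeping: making sure the enriched structure and the cofinality-restricted $\mathcal Y$ do not break any of the closure proofs of \cite{jk27}; once this is confirmed, all four clauses follow from the results cited there.
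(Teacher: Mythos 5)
Your treatment of clauses (1)--(3) is essentially the paper's: both reduce these to the closure lemma of \cite{jk27} (the paper cites \cite[Lemma 7.16]{jk27} for all three), and your extra check that $\cf((P \cap Q) \cap \kappa) > \omega$ --- via $(P \cap Q) \cap \kappa = (P \cap \kappa) \cap (Q \cap \kappa) = \min(P\cap\kappa, Q\cap\kappa)$ --- is the right, if routine, bookkeeping for the cofinality restriction placed on $\mathcal Y$ in Section 13.

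The gap is in clause (4). That clause is exactly Assumption 2.6, and Lemma 13.1 is precisely where that assumption is to be verified for the concrete $\mathcal A$ and $\mathcal X$ of Section 13, so appealing to it as ``already invoked in the paper as Assumption 2.6'' is circular. More seriously, your stated reason --- that definable Skolem functions, $\kappa$ being a definable constant, and $Sk(\alpha) \cap \kappa = \alpha$ give $M \cap \alpha \in Sk(\alpha)$ ``by elementarity and minimality of the Skolem hull'' --- is not a proof and is false at that level of generality: $Sk(\alpha)$ has cardinality $|\alpha|$, while $\alpha$ has $|\alpha|^{\omega}$ countable subsets, so most countable subsets of $\alpha$ are not elements of $Sk(\alpha)$, and one needs a genuine cardinality input to see that $M \cap \alpha$ is among the exceptional ones. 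In \cite{jk21} that input is the thin stationary set $T^*$ together with $M \cap \kappa \in T^*$ for $M \in \mathcal X$; in the present setting the missing ingredient is the strong inaccessibility of $\kappa$, which your proposal never uses. The paper argues directly: for $\beta < \alpha$, the cardinality of $P(\beta)$ lies in $Sk(\alpha) \cap \kappa = \alpha$ by inaccessibility and elementarity, hence $P(\beta) \subseteq Sk(\alpha)$; since $M$ is countable and $\cf(\alpha) > \omega$, $M \cap \alpha \in P(\beta)$ for some $\beta < \alpha$, so $M \cap \alpha \in Sk(\alpha)$. Alternatively, you could repair your route by observing that inaccessibility allows $T^* = P_{\omega_1}(\kappa)$, which is thin, and then verifying the hypotheses of \cite[Proposition 1.11]{jk21} for the structures of \cite{jk27}; either way, inaccessibility (or thinness) must enter explicitly.
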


\begin{proof}
(1), (2), and (3) follow immediately from 
\cite[Lemma 7.16]{jk27}. 
(4) Note that $Sk(\kappa) \cap \kappa = \kappa$. 
Let $M \in \mathcal X$ and 
$\alpha \in \Lambda \cup \{ \kappa \}$ be as in (4). 
Since $\kappa$ is strongly inaccessible, 
for all $\beta < \alpha$, the cardinality of $P(\beta)$ 
is in $Sk(\alpha) \cap \kappa = \alpha$ by elementarity. 
So again by elementarity, 
$P(\beta) \subseteq Sk(\alpha)$. 
As $M$ is countable and $\cf(\alpha) > \omega$, 
$M \cap \alpha \in P(\beta)$ for some $\beta < \alpha$. 
Hence, $M \cap \alpha \in Sk(\alpha)$.
\end{proof}

In \cite[Definition 7.18]{jk27}, the notion of a simple model 
in $\mathcal X \cup \mathcal Y$ is defined. 
This notion is different from what we are calling simple in this paper, so 
let us momentarily refer to the property from 
\cite[Definition 7.18]{jk27} as strongly simple. 
By \cite[Lemma 8.2]{jk27}, a set in $\mathcal X \cup \mathcal Y$ 
which is strongly simple is also simple in the sense that we are using in 
the present paper.

The next lemma verifies Assumption 2.22.

\begin{lemma}
There are stationarily many sets 
$N \in P_{\omega_1}(H(\kappa^+))$ such that 
$N \in \mathcal X$ and $N$ is simple.
\end{lemma}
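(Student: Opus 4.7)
The plan is to derive this lemma almost entirely from results of \cite{jk27}, using the notion of strongly simple model from \cite[Definition 7.18]{jk27} as the intermediate concept. Specifically, I would first invoke the main stationarity theorem from Section 7 (or 8) of \cite{jk27}, which asserts that under the assumptions $2^\kappa = \kappa^+$ and $\Box_\kappa$ (both of which hold in our ground model $V$), the collection of countable $N \prec \mathcal{A}$ satisfying $N \in \mathcal{X}$ and $N$ strongly simple is stationary in $P_{\omega_1}(H(\kappa^+))$. This is the technical core, and the actual proof in \cite{jk27} uses the $\Box_\kappa$-sequence together with $2^\kappa = \kappa^+$ in an essential way to control, for each countable $a \subseteq H(\kappa^+)$, the Skolem hull $Sk(a)$ and its interaction with the thin stationary set $T^*$ built into $\mathcal{A}$.

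Next, I would apply \cite[Lemma 8.2]{jk27}, which states that every strongly simple $N \in \mathcal{X}$ is simple in the sense of Definition 2.21 of the present paper (i.e., for all $M \in \mathcal{X}$, $M < N$ implies $M \cap N \in N$). Combining these two steps, the set of simple $N \in \mathcal{X}$ contains the set of strongly simple $N \in \mathcal{X}$, and hence is stationary in $P_{\omega_1}(H(\kappa^+))$.

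The only real subtlety is to check that the hypotheses under which the cited results of \cite{jk27} are proved actually hold in our current setup. Specifically: our $\mathcal{A}$ is the structure of \cite[Notation 7.6]{jk27} expanded by the single predicate $\langle B_i : i < \kappa^+ \rangle$, and our $\mathcal{X}$ is defined literally as in \cite[Notation 7.7]{jk27}. Adding a predicate to $\mathcal{A}$ only refines the class $\mathcal{X}$ by requiring elementarity with respect to that extra predicate; it does not affect the thin stationary set $T^*$ nor the $\Box_\kappa$-based construction of strongly simple models. Since the arguments in \cite{jk27} are carried out for an arbitrary structure $\mathcal{A}$ satisfying the listed conditions, they apply verbatim here.

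The main obstacle, if one were to unpack the proof rather than cite it, would be the strongly simple stationarity result itself, since that is where $\Box_\kappa$ is used to choose, for a club of $\alpha < \kappa$, canonical cofinal sequences in $\alpha$ that allow one to witness $M \cap N \in N$ for countable $M < N$. Here, however, that work has already been done in \cite{jk27}, so the present lemma reduces to citation plus the observation that expanding $\mathcal{A}$ by $\langle B_i : i < \kappa^+ \rangle$ is harmless.
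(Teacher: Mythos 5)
Your proposal is correct and follows essentially the same route as the paper: cite the stationarity of strongly simple models in $\mathcal X$ from \cite[Proposition 7.20]{jk27} and then apply \cite[Lemma 8.2]{jk27} to conclude that strongly simple implies simple in the present sense. The extra remark about the expanded structure $\mathcal A$ being harmless is a reasonable observation, but it does not change the argument, which matches the paper's proof.
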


\begin{proof}
By \cite[Proposition 7.20]{jk27}, there are stationarily many 
strongly simple models in $\mathcal X$. 
Since strongly simple implies simple, there are stationarily many 
simple models in $\mathcal X$.
\end{proof}

The next lemma gives a sufficient criterion for a set $P$ being a simple 
model in $\mathcal Y$.

\begin{lemma}
Suppose that $P \in P_{\kappa}(H(\kappa^+))$ and $P$ satisfies:
\begin{enumerate}
\item $P \prec \mathcal A$;
\item $P \cap \kappa \in \kappa$;
\item $\cf(\sup(P \cap \kappa^+)) = P \cap \kappa$.
\end{enumerate}
Then $P \in \mathcal Y$ and $P$ is simple.
\end{lemma}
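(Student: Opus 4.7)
The plan is to verify the two conclusions separately: first that $P \in \mathcal{Y}$, and then that $P$ is simple in the sense of Definition 2.22. Both verifications reduce to standard facts about the framework inherited from \cite{jk27}, augmented by the extra cofinality hypothesis (3).

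For $P \in \mathcal{Y}$: the conditions listed in Notation 2.4 are that $P \prec \mathcal{A}$, $|P| < \kappa$, $P \cap \kappa \in \kappa$, and $\cf(P \cap \kappa) > \omega$. The first three are immediate from the hypotheses (1) and (2) together with $P \in P_{\kappa}(H(\kappa^+))$. For the cofinality clause, I would first note that $P \cap \kappa = \cf(\sup(P \cap \kappa^+))$ is automatically a regular cardinal. Since $\omega_1$ is definable in $\mathcal{A}$ (from the well-ordering predicate), $\omega_1 \in P$, and a standard elementarity argument gives $\omega_1 \subseteq P$; hence $P \cap \kappa \geq \omega_1$, so $\cf(P \cap \kappa) = P \cap \kappa > \omega$. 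The additional structural requirements that are built into the actual definition of $\mathcal{Y}$ from \cite[Notation 7.8]{jk27} are verified directly from $P \prec \mathcal{A}$, since all relevant predicates (such as the Skolem hull operator, the well-ordering, $\kappa$, and $\Lambda$) are part of $\mathcal{A}$.

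For simplicity: given $M \in \mathcal{X}$, the goal is $M \cap P \in P$. The driving idea is that hypothesis (3) forces $M$ to be captured by a parameter lying in $P$, after which the well-ordering predicate of $\mathcal{A}$ lets $P$ reconstruct $M \cap P$ internally. More concretely, because $M$ is countable and $\cf(\sup(P \cap \kappa^+)) = P \cap \kappa > \omega$, there exists $\gamma \in P \cap \kappa^+$ with $M \cap P \subseteq H(|\gamma|^+)$; moreover $M \cap \kappa$ belongs to the thin stationary set $T^*$ built into $\mathcal{A}$, so by thinness and elementarity $P$ can enumerate the small family $\{a \cap \gamma : a \in T^*\}$. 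Using the definable Skolem functions of $\mathcal{A}$ and the lexicographically least tuple of parameters that realizes $M$, one then recovers $M$ (and therefore $M \cap P$) as an element of $P$.

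The main obstacle is the simplicity step, where the argument depends on the precise definition of $\mathcal{X}$ and $\mathcal{Y}$ from \cite[Section 7]{jk27} rather than on anything we have stated in the paper. The cleanest way forward is to parallel the proof of the analogous simplicity statement for countable models given in \cite[Proposition 7.20]{jk27}, substituting the countable-closure arguments used there by an appeal to the uncountable-cofinality hypothesis (3), which plays exactly the role of bounding a countable trace of $M$ inside a single element of $P$. Everything else --- the use of thin stationarity, the definable well-ordering of $\mathcal{A}$, and the Skolem hull machinery of Section 2 --- transfers verbatim from the countable case.
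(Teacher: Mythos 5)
Your proposal has a genuine gap, and it is worth noting first that the paper itself does not attempt a self-contained argument here: its proof of this lemma is a citation, namely that (1)--(3) give $P \in \mathcal Y$ by \cite[Lemma 7.15]{jk27}, and that hypothesis (3) gives that $P$ is strongly simple by \cite[Lemma 8.3]{jk27}, which implies simple by \cite[Lemma 8.2]{jk27}. Since the actual definitions of $\mathcal X$ and $\mathcal Y$ from \cite[Section 7]{jk27} are never stated in this paper, the lemma cannot be verified from the material presented here alone, and your attempt to do so runs into exactly that wall. For membership in $\mathcal Y$, your verification of the clauses listed in Notation 2.4 is fine (regularity of $P \cap \kappa = \cf(\sup(P \cap \kappa^+))$ and $\omega_1 \subseteq P$ do give $\cf(P \cap \kappa) > \omega$), but your claim that ``the additional structural requirements built into \cite[Notation 7.8]{jk27} are verified directly from $P \prec \mathcal A$'' is an unsubstantiated assertion about a definition you have not seen; if those requirements followed from elementarity alone, hypothesis (3) would be irrelevant to membership in $\mathcal Y$, whereas the cited Lemma 7.15 of \cite{jk27} uses all three hypotheses.

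The more serious problem is the simplicity step. What must be shown is that $M \cap P \in P$ for every $M \in \mathcal X$, and your mechanism is to ``recover $M$ (and therefore $M \cap P$) as an element of $P$'' from a bounded trace, thinness of $T^*$, and the definable Skolem functions. That strategy cannot work in general: a typical $M \in \mathcal X$ is not an element of $P$ (for instance, $M$ may contain ordinals above $\sup(P \cap \kappa^+)$, and if $M$ were in $P$ it would be a subset of $P$ by countability and elementarity), so no amount of internal reconstruction will produce $M$ inside $P$. The point of hypothesis (3) and of the machinery of \cite{jk27} is that the \emph{intersection} $M \cap P$ is determined by data lying in $P$ even when $M$ itself is not, and establishing this requires the specific form of the models in $\mathcal X$ (built from the thin stationary set and the $\Box_\kappa$-based structure of \cite[Section 7]{jk27}), not just the abstract properties stated in Section 2 of this paper. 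Your closing suggestion to ``parallel \cite[Proposition 7.20]{jk27}'' concedes this, but as written the proposal neither supplies that argument nor correctly identifies what has to be proved, so the simplicity half remains unproven.
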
 

\begin{proof}
By \cite[Lemma 7.15]{jk27}, assumptions (1), (2), and (3) imply that 
$P \in \mathcal Y$. 
By \cite[Lemma 8.3]{jk27}, assumption (3) implies that 
$P$ is strongly simple, and hence simple.
\end{proof}

The next lemma justifies Assumption 2.23.

\begin{lemma}
The set of $P \in P_{\kappa}(H(\kappa^+))$ such that 
$P \in \mathcal Y$ and $P$ is simple is stationary.
\end{lemma}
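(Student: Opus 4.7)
By Lemma 13.3, it suffices to produce stationarily many $P \in P_\kappa(H(\kappa^+))$ satisfying
(i) $P \prec \mathcal A$, (ii) $P \cap \kappa \in \kappa$, and (iii)
$\cf(\sup(P \cap \kappa^+)) = P \cap \kappa$. So fix an arbitrary function
$F : H(\kappa^+)^{<\omega} \to H(\kappa^+)$; the plan is to build a single $P$ closed under $F$ satisfying (i)--(iii). The main input is the abundance of inaccessibles below the Mahlo cardinal $\kappa$: the set of strongly inaccessible $\beta < \kappa$ is stationary, and in particular meets the club of ordinals $\beta < \kappa$ such that $F[\beta^{<\omega}] \subseteq H(\beta^+)$ and $\beta$ is closed under some natural auxiliary functions coming from $F$ and $\mathcal A$.

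Fix such an inaccessible $\beta$. I would then construct a continuous, strictly increasing elementary chain $\langle P_\xi : \xi < \beta \rangle$ of elementary substructures of $\mathcal A$ with the following properties: $|P_\xi| < \beta$; $P_\xi$ is closed under $F$; $P_\xi \cap \kappa \in \beta$; and $\xi \le P_{\xi+1} \cap \kappa$. At successor stages $\xi + 1$ one takes $P_{\xi+1}$ to be the Skolem hull in $\mathcal A$ (closed also under $F$) of $P_\xi \cup \{\xi\} \cup \{\gamma_\xi\}$ for some $\gamma_\xi \in \kappa^+ \setminus \sup(P_\xi \cap \kappa^+)$, chosen to guarantee $\sup(P_{\xi+1} \cap \kappa^+) > \sup(P_\xi \cap \kappa^+)$; inaccessibility of $\beta$ ensures that at each such stage the resulting hull has size less than $\beta$ and meets $\kappa$ in an ordinal less than $\beta$. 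At limit stages one takes unions, which preserves all of these properties by the regularity of $\beta$.

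Setting $P := \bigcup_{\xi < \beta} P_\xi$, elementarity and closure under $F$ pass to the union, giving (i) and closure under $F$. Because each $P_\xi \cap \kappa$ is an element of $\beta$ and the $P_\xi \cap \kappa$ are cofinal in $\beta$ (by the requirement $\xi \le P_{\xi+1} \cap \kappa$), we get $P \cap \kappa = \beta < \kappa$, which yields (ii) and also $|P| = \beta < \kappa$. Finally, the sequence $\langle \sup(P_\xi \cap \kappa^+) : \xi < \beta \rangle$ is strictly increasing and continuous of length $\beta$ with supremum $\sup(P \cap \kappa^+)$, so $\cf(\sup(P \cap \kappa^+)) = \cf(\beta) = \beta = P \cap \kappa$, establishing (iii).

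The only real obstacle is the need to keep $|P_\xi| < \beta$ while simultaneously being elementary in $\mathcal A$, closed under $F$, and having $P_\xi \cap \kappa \in \beta$; this is exactly where inaccessibility of $\beta$ (guaranteed by $\kappa$ being greatly Mahlo, indeed just Mahlo) is used. Since $\beta$ was arbitrary in a stationary set and $F$ was an arbitrary function, the collection of such $P$ is stationary in $P_\kappa(H(\kappa^+))$, and by Lemma 13.3 every such $P$ belongs to $\mathcal Y$ and is simple.
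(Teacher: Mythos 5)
Your overall strategy is the same as the paper's: reduce to the criterion of Lemma 13.3, build an increasing continuous chain of small elementary substructures of $\mathcal A$ closed under the given function $F$, and use Mahloness of $\kappa$ to land on a strongly inaccessible ordinal where the trace on $\kappa$ is that ordinal and the cofinality condition holds. However, you have inverted the order of quantifiers, and that is where a genuine gap appears. You fix the inaccessible $\beta$ \emph{first} and then try to build the chain of length $\beta$ so that every hull ``meets $\kappa$ in an ordinal less than $\beta$,'' justifying this by ``inaccessibility of $\beta$.'' Inaccessibility of $\beta$ ensures the hulls have size less than $\beta$, but it does not ensure their trace on $\kappa$ is contained in $\beta$ (let alone is an ordinal). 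The hulls contain parameters from $\kappa^+\setminus\beta$ — the ordinals $\gamma_\xi$ you add, and whatever they generate — and $\mathcal A$ has a well-ordering of $H(\kappa^+)$ as a predicate, so it has many definable functions pushing such parameters, and even ordinals below $\beta$, to ordinals in $[\beta,\kappa)$. For a concrete failure: the map sending $\delta<\kappa$ to the least strongly inaccessible $\ge\delta$ is definable in $\mathcal A$, so if $\beta$ is inaccessible but not a limit of inaccessibles, then \emph{any} $P\prec\mathcal A$ with $\beta\subseteq P$ contains $\beta$ itself, and hence $P\cap\kappa\neq\beta$; for such $\beta$ your construction cannot succeed no matter how the $\gamma_\xi$ are chosen. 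Your hedge that $\beta$ lies in ``the club of ordinals closed under some natural auxiliary functions coming from $F$ and $\mathcal A$'' does not repair this: the needed closure is not a condition on functions restricted to $\beta$, but closure under the hull operation applied to parameters ranging over $H(\kappa^+)$, and making that precise essentially forces you to fix a chain (or filtration) before choosing $\beta$.

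The repair is exactly the paper's ordering: first build a membership-increasing, continuous chain $\langle P_i : i<\kappa\rangle$ of elementary substructures of $\mathcal A$ of size less than $\kappa$, closed under $F$, with $i\in P_{i+1}$. Then $\{\alpha<\kappa : P_\alpha\cap\kappa=\alpha\}$ is club, and since $\kappa$ is Mahlo you may choose a strongly inaccessible $\alpha$ in this club; membership-increasingness gives $\sup(P_i\cap\kappa^+)\in P_{i+1}$, so $\cf(\sup(P_\alpha\cap\kappa^+))=\alpha=P_\alpha\cap\kappa$, and Lemma 13.3 applies to $P_\alpha$. Your treatment of the cofinality condition (adding $\gamma_\xi$ above $\sup(P_\xi\cap\kappa^+)$) is fine in itself, and also unnecessary once the chain is membership-increasing; but as written, the step keeping the traces below the pre-chosen $\beta$ is unjustified and is the heart of the matter.
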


\begin{proof}
Given a function $F : H(\kappa^+)^{<\omega} \to 
H(\kappa^+)$, 
build a membership increasing and continuous chain 
$\langle P_i : i < \kappa \rangle$ of elementary substructures of 
$\mathcal A$ which have size less than $\kappa$ and are closed 
under $F$. 
Then there is a club of $\alpha$ such that 
$P_\alpha \cap \kappa = \alpha$. 
Fix a strongly inaccessible cardinal $\alpha$ in this club. 
Then $P_\alpha$ is closed under $F$ and 
satisfies properties (1), (2), and (3) of 
Lemma 13.3. 
Hence, $P_\alpha$ is in $\mathcal Y$ and is simple.
\end{proof}

Let the ordinal $\lambda^*$ from Notation 10.2 be equal to 
$\kappa^+$. 
Define, for each $i < \kappa^+$, 
$$
S_i := (B_i \setminus B_{i+1}) \cap C^* \cap C',
$$
where $C^*$ and $C'$ are club subsets of $\kappa$ such that 
$\Lambda = C^* \cap \cof(>\! \omega)$, and for all 
$\alpha \in C'$, $Sk(\alpha) \cap \kappa = \alpha$. 
By the properties described at the beginning of the section 
for $\langle B_i : i < \kappa^+ \rangle$, 
each $S_i$ is a stationary subset of 
$\kappa \cap \cof(>\!\omega)$, and 
for all $\alpha \in S_i$, $\alpha \in \Lambda$ and 
$Sk(\alpha) \cap \kappa = \alpha$.

Consider $i < j < \kappa^+$. 
We claim that there is a club set 
$C_{i,j} \subseteq \kappa$, which is definable 
in $\mathcal A$ from $i$ and $j$, such that 
$S_i \cap S_j \cap C_{i,j} = \emptyset$. 
Since $i +1 \le j$, 
we know that there exists a club $C$ such that 
$B_j \cap C \subseteq B_{i+1}$. 
Let $C_{i,j}$ be the least such club in the well-ordering of $H(\kappa^+)$ 
which is a predicate of $\mathcal A$. 
Since $S_j \subseteq B_j$, 
$S_j \cap C_{i,j} \subseteq B_{i+1}$. 
But by definition, $B_{i+1}$ is disjoint from $S_i$. 
Thus, $S_i \cap S_j \cap C_{i,j} = \emptyset$. 
This completes the verification of the properties described 
in Notation 10.2 and Assumption 10.3.

Finally, for each $i < \kappa^+$, let $\p_i$ denote the forcing 
poset defined in Definition 4.2 for adding a partial square sequence on 
$S_i$, and let $\q$ be the product forcing defined in Definition 10.6.

\bigskip

This completes the choice of all of the background 
objects and the verification of all of 
the assumptions which we made about them. 
By Corollary 11.22, the forcing poset $\q$ preserves $\omega_1$, 
is $\kappa$-c.c., forces that $\kappa$ is equal to $\omega_2$, and 
forces that for all $i < \kappa^+$, $S_i \in I[\omega_2]$. 

\bigskip

It remains to show that $\q$ forces that $I[\omega_2]$ does not 
have a maximal set modulo clubs.
We will prove a technical lemma about names and then finish 
the proof of the consistency result.

\begin{lemma}
Suppose that $P \in \mathcal Y$ is simple and 
$P \prec (H(\kappa^+),\in,\q)$. 
Let $\dot a \in P$ be a nice $\q$-name for a 
countable subset of $\kappa$. 
Then for any generic filter $G$ on $\q$, 
$\dot a^G \in V[G \cap P]$.
\end{lemma}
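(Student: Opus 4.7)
The plan is to exploit elementarity of $P$ to show that $\dot a$ effectively resides inside $P$. I set $H := G \cap P$; by Corollary 11.20 and Lemma 1.2, $H$ is a $V$-generic filter on $P \cap \q$, so the task reduces to recovering $\dot a^G$ inside $V[H]$.

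The first step will be to bound $\dot a^G$ inside $P \cap \kappa$. Since $\Vdash_\q \dot a$ is a countable subset of $\kappa$ and $\kappa$ has uncountable cofinality, every generic realization of $\dot a$ is bounded in $\kappa$, so $\Vdash_\q \sup(\dot a) < \check\kappa$. Applying the $\kappa$-c.c.\ (Corollary 11.21) to the name $\sup(\dot a)$, a standard antichain argument produces a single $\gamma^* < \kappa$ with $\Vdash_\q \dot a \subseteq \check\gamma^*$. By elementarity of $P \prec (H(\kappa^+), \in, \q)$ with parameters $\dot a$ and $\q$ in $P$, such a bound can be chosen inside $P$; since $\gamma^* < \kappa$, this places $\gamma^* \in P \cap \kappa$, and in particular $\gamma^* \subseteq P \cap \kappa$. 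Therefore $\dot a^G \subseteq \gamma^* \subseteq P \cap \kappa$.

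The second step will be to show that, for each $\alpha \in P \cap \kappa$, the antichain $A_\alpha := \{p \in \q : (\check\alpha, p) \in \dot a\}$ is contained in $P \cap \q$. Since $A_\alpha$ is definable from $\dot a$ and $\alpha$, both of which lie in $P$, elementarity gives $A_\alpha \in P$. The $\kappa$-c.c.\ ensures $|A_\alpha| < \kappa$, and since $|A_\alpha| \in P$ it follows that $|A_\alpha| \in P \cap \kappa$; because $P \cap \kappa$ is an ordinal less than $\kappa$ contained in $P$, this gives $|A_\alpha| \subseteq P$. Elementarity then yields a bijection $f_\alpha : |A_\alpha| \to A_\alpha$ inside $P$, and since $\dom(f_\alpha) \subseteq P$, elementarity forces $A_\alpha = \ran(f_\alpha) \subseteq P$.

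Combining the two steps, for each $\alpha \in P \cap \kappa$ we have $A_\alpha \subseteq P \cap \q$, so $G \cap A_\alpha = H \cap A_\alpha$, and consequently
\[
\dot a^G \ = \ \{\alpha \in P \cap \kappa : G \cap A_\alpha \neq \emptyset\} \ = \ \{\alpha \in P \cap \kappa : H \cap A_\alpha \neq \emptyset\},
\]
which is computed inside $V[H]$. The crux is the first step: it is essential that $\dot a$ is forced to be \emph{countable} rather than merely of size less than $\kappa$, because this together with $\cf(\kappa) > \omega$ is what yields the ground-model bound $\gamma^*$ and hence the containment $\dot a^G \subseteq P \cap \kappa$; without this, elements of $\dot a^G$ lying outside $P$ would have no reason to be computable from $H$.
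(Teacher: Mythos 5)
Your proof is correct and follows essentially the same route as the paper's: bound $\dot a$ below $P \cap \kappa$ via the $\kappa$-c.c.\ and elementarity, then use elementarity plus $|A_\alpha| < \kappa$ and $P \cap \kappa \in \kappa$ to get each antichain of the nice name inside $P \cap \q$, so that $\dot a^G$ is recoverable from $G \cap P$. The only difference is cosmetic: the paper phrases the conclusion as "$\dot a$ is a $(P \cap \q)$-name, hence $\dot a^G = \dot a^{G\cap P}$," whereas you compute $\dot a^G$ directly from the antichains, which is equally valid.
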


Recall that by Corollary 11.20, $P \cap \q$ is a regular suborder of $\q$. 
Therefore, $G \cap P$ is a generic filter on $P \cap \q$ and 
$V[G \cap P] \subseteq V[G]$.

\begin{proof}
Let $\alpha := P \cap \kappa$. 
Since $\q$ is $\kappa$-c.c., by elementarity we can fix a set $b \in P$ 
such that $b$ is a bounded subset of $\kappa$ and 
$\q$ forces that $\dot a \subseteq b$. 
Note that $b \subseteq \alpha$.

Since $\dot a$ is a nice name, 
for each $\gamma < \kappa$ there is a unique antichain 
$A_\gamma$ such that 
$$
(p,\check \gamma) \in \dot a \iff p \in A_\gamma.
$$
Moreover, as $\q$ forces that $\dot a \subseteq \alpha$, 
$A_\gamma = \emptyset$ for all $\gamma \in \kappa \setminus \alpha$. 
Let $\gamma < \alpha$. 
Since $\dot a \in P$, by elementarity $A_\gamma$ is in $P$. 
As $\q$ is $\kappa$-c.c., $|A_\gamma| < \kappa$.  
Since $P \cap \kappa \in \kappa$, we have that 
$A_\gamma \subseteq P \cap \q$. 
It follows that $\dot a$ is actually a $(P \cap \q)$-name. 
Since $P \cap \q$ is a regular suborder of $\q$, 
$\dot a^G = \dot a^{G \cap P}$. 
Thus, $\dot a^G \in V[G \cap P]$.
\end{proof}

\begin{thm}
The forcing poset $\q$ forces that the approachability ideal 
$I[\omega_2]$ does not have a maximal set modulo clubs.
\end{thm}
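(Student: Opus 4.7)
The plan is to argue by contradiction. Suppose some condition $p^* \in \q$ and $\q$-name $\dot T$ satisfy $p^* \Vdash_\q$ ``$\dot T$ is a maximal set for $I[\omega_2]$ modulo clubs''. First I would select a simple $P \in \mathcal Y$ containing $\dot T, p^*, \langle B_i : i < \kappa^+\rangle$ and such that $P \cap \kappa \in B_j$ for every $j \in P \cap \kappa^+$; the greatly Mahlo structure of $\kappa$ ensures such $P$ can be found stationarily often, via a diagonal argument applied to the stationary sets $B_j$ (using that $|P \cap \kappa^+| < \kappa$). Since $P \cap \kappa$ then lies in every $B_j$ for $j \in P \cap \kappa^+$, in particular $P \cap \kappa$ never lies in $B_i \setminus B_{i+1}$ for $i \in P \cap \kappa^+$, and so $P \cap \kappa \notin S_i$ for all $i \in P \cap \kappa^+$, meeting the hypothesis of Theorem 12.7.

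Fix a $V$-generic filter $G$ on $\q$ containing $p^*$, and write $T = \dot T^G$. By Theorem 12.7, $\q / (G \cap P)$ has the $\omega_1$-approximation property over $V[G \cap P]$. The critical reduction is to deduce $T \in V[G \cap P]$: by the approximation property, it suffices to show $T \cap a \in V[G \cap P]$ for every countable $a \in V[G \cap P]$ with $a \subseteq \omega_2$. The maximality hypothesis on $T$ is essential here — intuitively, maximality forces $T$ to coincide modulo a club with $S_{\vec b}$ for a canonical enumeration $\vec b$ of bounded subsets of $\omega_2$, and the value of $T \cap a$ is controlled by countably many approachability witnesses which, via a variant of Lemma 13.5 applied to each of them, can be chosen as $(P \cap \q)$-names and hence lie in $V[G \cap P]$.

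Once $T \in V[G \cap P]$ is in hand, the contradiction comes from selecting $i^* \in \kappa^+ \setminus P$. The partial square sequence $\vec c^{i^*} = \langle c^{i^*}_\alpha : \alpha \in S_{i^*}\rangle$ added by the $\p_{i^*}$-factor of $\q$ lies in $V[G] \setminus V[G \cap P]$ by a genericity argument, since the canonical name for $\vec c^{i^*}$ is not in $P$. On the other hand, by the maximality of $T$ together with $S_{i^*} \in I[\omega_2]$ (Corollary 11.22), $S_{i^*} \setminus T$ is nonstationary in $V[G]$, so $S_{i^*} \cap T$ is stationary and lies in $V[G \cap P]$. Using the coherence of the partial square sequence together with an approachability witness for $T$ in $V[G \cap P]$, I would extract $\vec c^{i^*}$ (or enough of it) inside $V[G \cap P]$, yielding the desired contradiction. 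The hard part will be the reduction step $T \in V[G \cap P]$: converting the maximality of $T$ into the countable-approximation criterion requires a delicate use of the normality of $I[\omega_2]$ and an analysis of $(P \cap \q)$-names for approachability witnesses along the lines of Lemma 13.5.
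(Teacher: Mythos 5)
There is a genuine gap, in fact two. First, your central reduction ``deduce $T \in V[G \cap P]$'' is not only unproved but false in general: a maximal set is in no way canonical. If $T$ is maximal, then so is $T \cup Y$ for any $Y$ contained in a fixed nonstationary (even countable) set, since $I[\omega_2]$ contains the nonstationary ideal and is closed under unions; taking $Y$ to code generic information from a coordinate of $\q$ outside $P$ produces a maximal set whose countable traces are not in $V[G \cap P]$, so the $\omega_1$-approximation property cannot yield $T \in V[G \cap P]$. Maximality modulo clubs gives no control over $T \cap a$ for countable $a$, and the appeal to ``countably many approachability witnesses'' does not repair this, because the witnesses govern only points of $S_{\vec b}$, not the raw set $T$. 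The correct move is to replace $T$ without loss of generality by a set of the canonical form $S_{\vec a}$, fix the sequence of nice names $\langle \dot a_i : i < \kappa \rangle$, and then use Lemma 13.5 only on the individual names $\dot a_i$ for $i \in P$ (each a name for a countable subset of $\kappa$ lying in $P$ by elementarity) --- one never needs, and cannot get, the whole set in $V[G \cap P]$.

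Second, your intended contradiction does not materialize. From $S_{i^*} \in I[\omega_2]$, maximality of $T$, and coherence, you propose to ``extract'' the generic partial square sequence $\vec c^{\,i^*}$ inside $V[G \cap P]$; but membership of $S_{i^*}$ in $I[\omega_2]$, even with a witnessing sequence in $V[G \cap P]$, only provides \emph{some} approachability witnesses for points of $S_{i^*}$, and these need not cohere nor bear any relation to the clubs $c^{i^*}_\alpha$ added by the $\p_{i^*}$ coordinate; there is no mechanism to recover that particular generic object. (Also, ``the canonical name for $\vec c^{\,i^*}$ is not in $P$'' does not by itself show $\vec c^{\,i^*} \notin V[G \cap P]$.) The paper's contradiction runs through a different mechanism which your outline omits entirely: working in $V[G]$, maximality covers $S_{\tau+1}$ (where $\tau = X \cap \kappa^+$ for a suitable $X$ of size $\kappa$) by $S_{\vec a}$ modulo a club $C$, so one can choose $\beta \in S_{\tau+1} \cap C \cap \lim(D)$ with $\beta = P_\beta \cap \kappa$ strongly inaccessible in $V$, $\beta \in S_{\vec a}$, and $\beta \notin S_i$ for $i \in P_\beta \cap \kappa^+$. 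The witnessing club $c \subseteq \beta$ of order type $\omega_1$ has all proper initial segments among $\{a_i : i < \beta\}$, each in $V[G \cap P_\beta]$ by Lemma 13.5, hence $c \in V[G \cap P_\beta]$ by Theorem 12.7; this makes $\cf(\beta) = \omega_1$ there, contradicting that $P_\beta \cap \q$ is $\beta$-c.c.\ (the paper's Claim 2, which uses $\beta \in B_{\tau+1}$ and regular suborders $P_\gamma \cap \q$). Note also that the choice of $\beta$ must be made \emph{after} passing to $V[G]$, interleaved with the club $C$ coming from maximality; your plan of fixing $P$ in advance in $V$ loses exactly this interaction.
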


\begin{proof}
Suppose for a contradiction that there is a condition $p$ and a 
sequence $\vec{\dot a} = \langle \dot a_i : i < \kappa \rangle$ of 
$\q$-names for countable subsets of $\kappa$ such that $p$ forces that 
$S_{\vec{\dot a}}$ is a maximal set in $I[\omega_2]$ modulo clubs. 
This means that $p$ forces that whenever $S \in I[\omega_2]$, 
then there is a club $C \subseteq \omega_2$ such that 
$S \cap C \subseteq S_{\vec{\dot a}}$.

Without loss of generality, we may assume that each $\dot a_i$ is a 
nice $\q$-name for a countable subset of $\kappa$. 
Since $\q$ is $\kappa$-c.c., it follows that each name $\dot a_i$, and therefore 
the entire sequence of names $\vec{\dot a}$, is a member of $H(\kappa^+)$.

In the ground model $V$, fix a set $X$ satisfying:
\begin{enumerate}
\item $X \prec \mathcal A$ and 
$X \prec (H(\kappa^+),\in,\q,\vec{\dot a})$;

\item $|X| = \kappa$;

\item $\tau := X \cap \kappa^+$ is an ordinal in $\kappa^+$;

\item $X^{<\kappa} \subseteq X$.
\end{enumerate}
This is possible since $\kappa$ is strongly inaccessible. 
Note that by (3) and (4), $\cf(\tau) = \kappa$.

Fix a membership increasing and continuous sequence 
$\langle P_i : i < \kappa \rangle$ of sets of size 
less than $\kappa$, whose union is equal to $X$, such that 
each $P_i$ is an elementary substructure of $\mathcal A$ and 
$(H(\kappa^+),\in,\q,\vec{\dot a})$. 
This is possible since $X^{< \kappa} \subseteq X$ by (4). 
Note that by elementarity, $\sup(P_i \cap \kappa^+) \in P_{i+1}$ 
for all $i < \kappa^+$.

Using the properties of the sequence 
$\langle B_i : i < \kappa^+ \rangle$ described 
at the beginning of this section, 
we can fix, for each $i < \tau$, a club set $C_i \subseteq \kappa$ such that 
$$
B_\tau \cap C_i \subseteq B_i.
$$
Define a function $F : \tau \times \kappa \to \kappa$ by letting 
$$
F(i,\gamma) := \min(C_i \setminus \gamma),
$$
for all $(i,\gamma) \in \tau \times \kappa$. 
Note that $F \subseteq X$.
Also, fix a club $C^\tau \subseteq \kappa$ 
such that $B_{\tau+1} \cap C^\tau \subseteq B_\tau$.

Fix a club 
$D \subseteq \kappa$ such that for all $\beta \in D$, 
$P_\beta \cap \kappa = \beta$, $\beta \in C^\tau$, and 
$P_\beta$ is closed under the function $F$.

\bigskip

\noindent \emph{Claim 1:} Suppose that $\beta \in D$ and 
$\beta$ is strongly inaccessible. Then 
$P_\beta \in \mathcal Y$, $P_\beta$ is simple, 
$P_\beta \prec (H(\kappa^+),\in,\q)$, 
$\beta \in C^\tau$, and for all 
$i \in P_\beta \cap \kappa^+$, $\beta \in C_i$.

\bigskip

The fact that $P_\beta$ is closed under $F$ easily implies that 
for all $i \in P_\beta \cap \kappa^+ = P_\beta \cap \tau$, 
$P_\beta \cap \kappa = \beta$ is a limit point of $C_i$, and therefore 
is in $C_i$. 
And $\beta \in C^\tau$ by the definition of $D$. 
The set $P_\beta$ is an elementary substructure of $\mathcal A$ and 
$(H(\kappa^+),\in,\q)$ by the choice of the sequence 
$\langle P_i : i < \kappa \rangle$. 
Since $P_\beta$ is the union of the sequence 
$\langle P_i : i < \beta \rangle$, and 
$\sup(P_i \cap \kappa^+) \in P_{i+1}$ for all 
$i < \beta$, it follows that 
$$
\cf(\sup(P_\beta \cap \kappa^+)) = \cf(\beta) = \beta = P_\beta \cap \kappa.
$$
By Lemma 13.3, it follows that $P_\beta \in \mathcal Y$ and $P_\beta$ is simple. 
This completes the proof of Claim 1.

\bigskip

Let $G$ be a generic filter on $\q$ with $p \in G$. 
For each $i < \kappa$, let $a_i := \dot a_i^G$, and 
let $\vec a  := \langle a_i : i < \kappa \rangle$. 
Then by assumption, in $V[G]$ the set 
$S_{\vec a}$ is maximal in $I[\omega_2]$ modulo clubs. 
In $V[G]$ the set $S_{\tau+1}$ is in $I[\omega_2]$. 
Since $S_{\vec a}$ is maximal modulo clubs, 
fix a club $C \subseteq \kappa$ in $V[G]$ such that 
$$
S_{\tau+1} \cap C \subseteq S_{\vec a}.
$$

As the set $S_{\tau+1}$ is stationary in $V[G]$, we can fix 
$$
\beta \in S_{\tau+1} \cap \lim(D) \cap C.
$$
Then $\beta \in S_{\tau+1} \cap C \subseteq S_{\vec a}$. 
Let $P := P_\beta$. 

Note that since $\beta \in S_{\tau+1}$, 
$\beta$ is strongly inaccessible in $V$. 
Also, $\beta \in D$. 
So by Claim 1, $P \in \mathcal Y$, $P$ is simple, 
$P \prec (H(\kappa^+),\in,\q)$, 
$\beta \in C^\tau$, 
and for all 
$i \in P \cap \kappa^+$, $\beta \in C_i$.

Since $\beta \in C^\tau$, $S_{\tau+1} \subseteq B_{\tau+1}$, 
and $B_{\tau+1} \cap C^\tau \subseteq B_\tau$, it follows that 
$\beta \in B_\tau$. 
Also, for all $i \in P \cap \kappa^+$, 
$\beta \in B_\tau \cap C_i \subseteq B_i$, so $\beta \in B_i$. 
Therefore, for all $i \in P \cap \kappa^+$, 
since $i+1 \in P \cap \kappa^+$ by elementarity, $\beta \in B_{i+1}$. 
But $S_i \subseteq B_{i} \setminus B_{i+1}$. 
So for all $i \in P \cap \kappa^+$, $P \cap \kappa = \beta \notin S_i$.

By Corollary 11.20 and Theorem 12.7, it follows that 
$P \cap \q$ is a regular suborder of $\q$ and 
$P \cap \q$ forces that $\q / \dot G_{P \cap \q}$ has the 
$\omega_1$-approximation property.

\bigskip

\noindent \emph{Claim 2:} The forcing poset $P \cap \q$ 
is $\beta$-c.c.

\bigskip

Let $A$ be an antichain of $P \cap \q$, and we will prove that 
$|A| < \beta$. 
Without loss of generality, assume that $A$ is maximal. 
Since $\beta$ is strongly inaccessible and is a limit point of $D$, 
we have that $D \cap \beta$ is a club subset of $\beta$. 
As $P$ is the union of the $\subseteq$-increasing and continuous 
sequence $\langle P_i : i < \beta \rangle$, there is a club 
$E \subseteq \beta$ such that for all 
$\gamma \in E$:
\begin{enumerate}
\item $\gamma \in D \cap \beta$;
\item $P_\gamma \prec (P,\in,P \cap \q,A)$.
\end{enumerate}

Now $\beta \in B_{\tau+1}$, which implies that $B_\tau \cap \beta$ 
is stationary in $\beta$. 
Since $E$ is a club subset of $\beta$, we can fix 
$\gamma \in E \cap B_\tau$. 
Then in particular, $\gamma \in D$ and $\gamma$ is strongly 
inaccessible.

By Claim 1, it follows that 
$P_\gamma \in \mathcal Y$, $P_\gamma$ is simple, and 
$P_\gamma \prec (H(\kappa^+),\in,\q)$. 
Therefore, by Corollary 11.20, 
$P_\gamma \cap \q$ is a regular suborder of $\q$. 
Since $P_\gamma \cap \q \subseteq P \cap \q$, it follows 
from Lemma 1.1 that $P_\gamma \cap \q$ is a regular 
suborder of $P \cap \q$.

We claim that $P_\gamma \cap A$ is a maximal antichain of 
$P_\gamma \cap \q$. 
It is obviously an antichain. 
Let $v \in P_\gamma \cap \q$. 
Then since $A$ is a maximal antichain of $P \cap \q$, 
there is $s \in A$ such that $s$ is compatible in $P \cap \q$ with $v$. 
But $P_\gamma$ 
is an elementary substructure of $(P,\in,P \cap \q,A)$. 
So by elementarity, there is $s \in A \cap P_\gamma$ which is 
compatible in $P \cap \q$ with $v$. 
Again by elementarity, $s$ and $v$ are compatible in $P_\gamma \cap \q$. 
This completes the proof that $P_\gamma \cap A$ is a maximal 
antichain of $P_\gamma \cap \q$.

As $P_\gamma \cap A$ is a maximal antichain of $P_\gamma \cap \q$ 
and $P_\gamma \cap \q$ is a regular suborder of $P \cap \q$, 
it follows that $P_\gamma \cap A$ is predense in $P \cap \q$. 
But $A$ is a maximal antichain of $P \cap \q$; therefore, 
it must be the case that $A = P_\gamma \cap A$. 
So $A \subseteq P_{\gamma}$. 
But as $P_\gamma \in P$, we have that 
$|P_\gamma| \in P \cap \kappa = \beta$ by elementarity. 
So $|A| < \beta$. 
This completes the proof of Claim 2.

\bigskip

Recall that $\beta \in S_{\vec a}$ in $V[G]$. 
Therefore, in $V[G]$ there is a set $c$ which is cofinal in 
$\beta$ with order type $\omega_1$, and for all $\xi < \beta$, 
$c \cap \xi \in \{ a_i : i < \beta \}$. 
Now for all $i < \beta$, the name $\dot a_i$ is in $P$ by elementarity. 
By Lemma 13.5, it follows that $\dot a_i^G = a_i$ is in $V[G \cap P]$.

So $c$ is a cofinal subset of $\beta$ with order type 
$\omega_1$, and every proper initial segment of $c$ is in 
$V[G \cap P]$. 
It easily follows that whenever 
$x$ is a countable set in $V[G \cap P]$, then 
$c \cap x \in V[G \cap P]$. 
Since $\q / (G \cap P)$ has the $\omega_1$-approximation property 
in $V[G \cap P]$, we have that $c \in V[G \cap P]$. 
As $c$ has order type $\omega_1$ and $\beta$ is a strongly 
inaccessible cardinal in $V$, it follows that $\beta$ is no longer 
regular in $V[G \cap P]$. 
But by Claim 2, $P \cap \q$ is $\beta$-c.c., and so $P \cap \q$ 
preserves the regularity of $\beta$. 
Since $V[G \cap P]$ is a generic extension of $V$ 
by the forcing poset 
$P \cap \q$, we have a contradiction.
\end{proof}

\bibliographystyle{plain}
\bibliography{paper29}

\end{document}